 \definecolor{urlcolor}{rgb}{0,.145,.698}
 \definecolor{linkcolor}{rgb}{.7,0.10,0.2}
 \definecolor{citecolor}{rgb}{.12,.54,.11}
\newenvironment{assumption}[1]
 {\innercustomthm}
 {\endinnercustomthm}
\numberwithin{equation}{section}
\newtheorem{theorem}{Theorem}[section]
\newtheorem{corollary}[theorem]{Corollary}
\newtheorem{proposition}[theorem]{Proposition}
\newtheorem{proposition-definition}[theorem]{Proposition-Definition}
\newtheorem{lemma}[theorem]{Lemma}
\newtheorem{conj}[theorem]{Conjecture}
\theoremstyle{definition} 
\newtheorem{definition}[theorem]{Definition}
\newtheorem{theorem-definition}[theorem]{Theorem-Definition}
\theoremstyle{remark} 
\newtheorem{remark}[theorem]{Remark}
\newtheorem{example}[theorem]{Example}
\newcommand{\longto}{\longrightarrow} 
\newcommand{\longfrom}{\longleftarrow}
\newcommand{\isoto}{\stackrel{\sim}{\to}}
\newcommand{\longisoto}{\stackrel{\sim}{\longrightarrow}}
\newcommand{\into}{\hookrightarrow}
\renewcommand{\implies}{\Rightarrow}
\let\ul\underline
\newcommand{\abs}[1]{\lvert #1 \rvert}
\renewcommand{\bar}{\overline}
\renewcommand{\emptyset}{\varnothing}
\renewcommand{\geq}{\geqslant}
\renewcommand{\leq}{\leqslant}
\renewcommand{\subset}{\subseteq}
\renewcommand{\setminus}{\smallsetminus}
\renewcommand{\tilde}{\widetilde}
\renewcommand{\vec}{\mathbf}
\DeclareMathOperator{\lmod}{-mod}
\DeclareMathOperator{\real}{re}
\DeclareMathOperator{\iso}{iso}
\DeclareMathOperator{\rat}{\mathbf{rat}}
\DeclareMathOperator{\hyp}{hyp}
\DeclareMathOperator{\imag}{im}
\DeclareMathOperator{\ungr}{ug}
\DeclareMathOperator{\UEA}{\mathbf{U}}
\newcommand{\BC}{\mathbb{C}}
\newcommand{\BD}{\mathbb{D}}
\newcommand{\BG}{\mathbb{G}}
\newcommand{\BL}{\mathbb{L}}
\newcommand{\BN}{\mathbb{N}}
\newcommand{\BZ}{\mathbb{Z}}
\newcommand{\BoA}{\mathbf{A}}
\newcommand{\BoC}{\mathbf{C}}
\newcommand{\BoF}{\mathbf{F}}
\newcommand{\BoN}{\mathbf{N}}
\newcommand{\BoQ}{\mathbf{Q}}
\newcommand{\ulBoQ}{\underline{\mathbf{Q}}}
\newcommand{\BoZ}{\mathbf{Z}}
\newcommand{\BoL}{\mathbf{L}}
\newcommand{\bsX}{\bs{\mathfrak{X}}}
\newcommand{\bsM}{\bs{\mathfrak{M}}}
\newcommand{\bsE}{\bs{\mathfrak{E}}}
\newcommand{\bsF}{\bs{\mathfrak{F}ilt}}
\newcommand{\CA}{\mathcal{A}}
\newcommand{\CB}{\mathcal{B}}
\newcommand{\CC}{\mathcal{C}}
\newcommand{\CD}{\mathcal{D}}
\newcommand{\CE}{\mathcal{E}}
\newcommand{\CF}{\mathcal{F}}
\newcommand{\CG}{\mathcal{G}}
\newcommand{\CH}{\mathcal{H}}
\newcommand{\CI}{\mathcal{I}}
\newcommand{\CK}{\mathcal{K}}
\newcommand{\CL}{\mathcal{L}}
\newcommand{\CM}{\mathcal{M}}
\newcommand{\CN}{\mathcal{N}}
\newcommand{\CO}{\mathcal{O}}
\newcommand{\CP}{\mathcal{P}}
\newcommand{\CQ}{\mathcal{Q}}
\newcommand{\CR}{\mathcal{R}}
\newcommand{\CT}{\mathcal{T}}
\newcommand{\CU}{\mathcal{U}}
\newcommand{\CV}{\mathcal{V}}
\newcommand{\CX}{\mathcal{X}}
\newcommand{\FE}{\mathfrak{E}}
\newcommand{\FL}{\mathfrak{L}}
\newcommand{\FM}{\mathfrak{M}}
\newcommand{\FN}{\mathfrak{N}}
\newcommand{\FP}{\mathfrak{P}}
\newcommand{\FU}{\mathfrak{U}}
\newcommand{\FV}{\mathfrak{V}}
\newcommand{\FX}{\mathfrak{X}}
\newcommand{\FY}{\mathfrak{Y}}
\newcommand{\Filt}{\mathfrak{Filt}}
\newcommand{\SA}{\mathscr{A}}
\newcommand{\SB}{\mathscr{B}}
\newcommand{\SC}{\mathscr{C}}
\newcommand{\SG}{\mathscr{G}}
\newcommand{\Higgs}{\mathbf{Higgs}}
\DeclareMathOperator{\Dol}{Dol}
\DeclareMathOperator{\Tr}{Tr}
\DeclareMathOperator{\BoMo}{BM}
\DeclareMathOperator{\ICA}{IH^*\!}
\DeclareMathOperator{\sta}{Stck}
\DeclareMathOperator{\B}{B\!}
\DeclareMathOperator{\K}{K_0}
\DeclareMathOperator{\Tan}{T}
\DeclareMathOperator{\abso}{abs}
\DeclareMathOperator{\prim}{prim}
\newcommand{\ad}{\operatorname{ad}}
\newcommand{\Aut}{\mathrm{Aut}}
\newcommand{\Alg}{\mathrm{Alg}}
\newcommand{\D}{\mathcal{D}}
\DeclareMathOperator{\dg}{\mathrm{dg}}
\DeclareMathOperator{\bdd}{\mathrm{b}}
\newcommand{\Cohst}{\mathfrak{Coh}}
\newcommand{\Cohsp}{\mathcal{C}\!oh}
\newcommand{\cc}{\mathrm{c}}
\newcommand{\ch}{\mathrm{ch}}
\newcommand{\cl}{\mathrm{cl}}
\newcommand{\coker}{\operatorname{coker}}
\newcommand{\cone}{\mathrm{cone}}
\newcommand{\crit}{\mathrm{crit}}
\newcommand{\cusp}{\mathrm{cusp}}
\newcommand{\dd}{\mathbf{d}}
\newcommand{\ee}{\mathbf{e}}
\newcommand{\mm}{\mathbf{m}}
\newcommand{\dimvec}{\operatorname{\mathbf{dim}}}
\newcommand{\Ext}{\operatorname{Ext}}
\newcommand{\ext}{\operatorname{ext}}
\newcommand{\Free}{\operatorname{Free}}
\newcommand{\GL}{\mathrm{GL}}
\newcommand{\Hom}{\operatorname{Hom}}
\newcommand{\RHom}{\operatorname{RHom}}
\newcommand{\IC}{\mathcal{IC}}
\DeclareMathOperator{\IP}{IP}
\newcommand{\id}{\operatorname{id}}
\newcommand{\JH}{\mathtt{JH}}
\newcommand{\Hit}{\mathtt{h}}
\newcommand{\KK}{\operatorname{K}}
\newcommand{\Lie}{\mathrm{Lie}}
\newcommand{\MHM}{\mathrm{MHM}}
\newcommand{\Mod}{\mathrm{Mod}}
\newcommand{\Msp}{\CM} 
\newcommand{\Mst}{\FM} 
\newcommand{\Nil}{\mathrm{Nil}}
\newcommand{\nil}{\mathrm{nil}}
\newcommand{\Ob}{\operatorname{Ob}}
\newcommand{\Perf}{\operatorname{Perf}}
\newcommand{\Perv}{\operatorname{Perv}}
\newcommand{\pH}[1]{{^\mathfrak{p}\!\CH^{#1}}} 
\newcommand{\pr}{\mathrm{pr}}
\newcommand{\ptau}[1]{{^\mathfrak{p}\!\tau^{#1}}} 
\newcommand{\QuivDim}{\mathrm{QuivDim}}
\newcommand{\Quot}{\operatorname{Quot}}
\newcommand{\rank}{\operatorname{rank}}
\newcommand{\relCoHA}{\mathscr{A}}
\newcommand{\ulrelCoHA}{\underline{\mathscr{A}}}
\newcommand{\relEnv}[1]{\UEA(#1)}
\newcommand{\ulrelBPSLie}[1]{\underline{\BPS}_{#1,\Lie}}
\newcommand{\ulrelBPSalg}[1]{\underline{\BPS}_{#1,\Alg}}
\newcommand{\Rep}{\operatorname{Rep}}
\newcommand{\R}{\mathrm{R}} 
\newcommand{\Spec}{\operatorname{Spec}}
\newcommand{\SSN}{\mathcal{SSN}}
\newcommand{\supp}{\operatorname{supp}}
\newcommand{\todd}{\operatorname{td}}
\newcommand{\Tot}{\operatorname{Tot}}
\newcommand{\vir}{\mathrm{vir}}
\DeclareMathOperator{\Betti}{B}
\DeclareMathOperator{\sst}{sst}
\DeclareMathOperator{\Exp}{Exp}
\DeclareMathOperator{\Id}{Id}
\DeclareMathOperator{\Coh}{Coh}
\newcommand{\Jac}{\operatorname{Jac}}
\newcommand{\BPS}{\mathcal{BPS}} 
\newcommand{\ulBPS}{\underline{\BPS}}
\DeclareMathOperator{\aBPS}{BPS} 
\DeclareMathOperator{\Sym}{Sym}
\DeclareMathOperator{\vrank}{vrank}
\newcommand{\pt}{\mathrm{pt}}
\DeclareMathOperator{\HO}{H}
\newcommand{\cms}{/\!\!/}
\newcommand{\bs}[1]{\boldsymbol{#1}}
\title[BPS Lie algebras for totally negative $2$-Calabi--Yau categories]{BPS Lie algebras for totally negative $\mathbf{2}$-Calabi--Yau categories and nonabelian Hodge theory for stacks}
\date{\today}
\author{Ben Davison}
\address{School of Mathematics, University of Edinburgh, Edinburgh, UK}
\email{Ben.Davison@ed.ac.uk}
\author{Lucien Hennecart}
\address{Laboratoire Ami\'enois de Math\'ematique Fondamentale et Appliqu\'ee, CNRS UMR 7352, Universit\'e de Picardie Jules Verne, 33 rue Saint Leu, 80000 Amiens, France}
\email{Lucien.Hennecart@u-picardie.fr}
\author{Sebastian Schlegel Mejia}
\address{Ecole Polytechnique Fédérale de Lausanne (EPFL), CH-1015 Lausanne, Switzerland}
\email{sebastian.schlegelmejia@epfl.ch}
\begin{document}

\maketitle

\begin{abstract}
We define and study a sheaf-theoretic cohomological Hall algebra for suitably geometric Abelian categories $\mathcal{A}$ of homological dimension at most two, and a sheaf-theoretic BPS algebra under the conditions that $\mathcal{A}$ is 2-Calabi--Yau and has a good moduli space.  We define and study the BPS Lie algebra of arbitrary 2-Calabi--Yau categories $\mathcal{A}$ for which the Euler form is negative on all pairs of non-zero objects, which recovers the BPS algebra as its universal enveloping algebra for such ``totally negative'' 2CY categories. We show that for totally negative 2CY categories the BPS algebra is freely generated by intersection complexes of certain coarse moduli spaces as above, and the Borel--Moore homology of the stack of objects in such $\mathcal{A}$ satisfies a Yangian-type PBW theorem for the BPS Lie algebra. In this way we prove the cohomological integrality theorem for these categories.
 
 We use our results to prove that for $C$ a smooth projective curve, and for $r$ and $d$ not necessarily coprime, there is a nonabelian Hodge isomorphism between the Borel--Moore homologies of the stack of rank $r$ and degree $d$ Higgs bundles, and the appropriate stack of twisted representations of the fundamental group of $C$. In addition we prove the Bozec--Schiffmann positivity conjecture for totally negative quivers; we prove that their polynomials counting cuspidal functions in the constructible Hall algebra for $Q$ have positive coefficients, strengthening the positivity theorem for the Kac polynomials of such quivers.
\end{abstract}

\setcounter{tocdepth}{1}

\tableofcontents

\section{Introduction}
\subsection{Background}
2-Calabi--Yau (2CY) categories are central objects of study in geometric representation theory, nonabelian Hodge theory, supersymmetric gauge theory and algebraic geometry. They arise as the categories of representations of preprojective algebras of quivers, Higgs bundles and local systems on smooth projective complex curves, and coherent sheaves on K3 and Abelian surfaces. For $\mathcal{A}$ a 2CY Abelian category belonging to any of these subjects, a central object of study is the moduli stack of objects $\mathfrak{M}_{\mathcal{A}}$. In this paper we study $\HO^{\BoMo}_*(\mathfrak{M}_{\mathcal{A}},\BoQ)$, the Borel--Moore homology of this stack. It encodes various enumerative invariants of the category $\CA$ as well as topological information regarding the stack $\FM_{\CA}$ and its moduli space $\CM_{\CA}$.

In geometric representation theory, this Borel--Moore homology arises as the Hall algebra of raising operators for Nakajima quiver varieties \cite{nakajima1994instantons,nakajima1998quiver,grojnowski1996instantons}, and as such is the target of numerous morphisms from (half) quantum groups \cite{schiffmann2020cohomological,schiffmann2017cohomological}. In nonabelian Hodge theory \cite{hitchin1987self,donaldson1987twisted,corlette1988flat,simpson1992higgs}, for \textit{smooth} moduli spaces, this Borel--Moore homology can be identified with the cohomology of the various moduli spaces appearing on different sides (Betti, de Rham and Dolbeault) of the nonabelian Hodge correspondence. Therefore, the isomorphism between the Borel--Moore homologies of the Betti and Dolbeault stacks follows from the homeomorphism between the moduli spaces. In this paper, we are more interested in the singular and stacky case, in which Borel--Moore homology and cohomology diverge and the connection between the topology of the stacks and that of the moduli spaces is much more involved. The question of the comparison of the \emph{moduli stacks} rather than the moduli spaces was first raised by Simpson \cite[Question p.12]{simpson1996hodge}.

For a $2$CY category $\CA$, we prove general results linking $\HO^{\BoMo}_*(\mathfrak{M}_{\mathcal{A}},\BoQ)$ with the intersection cohomology $\ICA(\CM_{\mathcal{A}},\BoQ)$ of coarse moduli spaces of objects in $\mathcal{A}$, under the assumption that $\mathcal{A}$ is \textit{totally negative}, i.e. under the assumption that the Euler form for $\mathcal{A}$ is strictly negative for pairs of nonzero objects. The cohomological integrality theorem (Corollary~\ref{Y_corollary}) reconstructs the former in terms of the latter and involves the \emph{BPS Lie algebra} of the category, which we define. This generalizes to any $2$CY category the BPS Lie algebra of preprojective algebras of quivers \cite{davison2020bps}.

Totally negative 2CY categories are important in geometry and representation theory. In geometry, these are the categories arising as semistable coherent sheaves on a quasiprojective symplectic surface (K3, Abelian, cotangent bundle of a smooth projective curve of genus $g\geq 2$) for some fixed slope and given polarization \cite{huybrechts2010geometry}. Their moduli spaces of objects have attracted considerable attention. In particular, categories of semistable Higgs bundles of fixed slopes on curves of genus $\geq 2$ are totally negative. This allows us to study the nontrivial cases of the nonabelian Hodge isomorphisms for stacks in \S\ref{NAHT_sec}. We refer to \cite{davison2023nonabelian} for the more elementary cases of genus $0$ and $1$ curves. In representation theory, the relevance of totally negative quivers was highlighted by Lusztig in his study of canonical bases of (generalised) Kac--Moody algebras \cite{lusztig1991quivers,kashiwara1993global}. He noticed in \cite{lusztig1993tight} that these are the quivers for which the canonical basis is solely composed of \emph{monomials}, which is a peculiar and particularly nice behaviour in the theory of canonical bases. Totally negative quivers were also considered in Bozec--Schiffmann's study of the cuspidal functions in the constructible Hall algebras of quivers \cite{bozec2019counting}.

We study the Borel--Moore homology $\HO^{\mathrm{BM}}_{*}(\FM_{\CA},\BoQ)$ by first developing the cohomological Hall algebra (CoHA) structure on this vector space for \textit{general}, sufficiently geometric, categories of homological dimension at most 2, generalising \cite{schiffmann2013cherednik,yang2018cohomological,sala2020cohomological,davison2020bps,kapranov2019cohomological}. We then show that in the totally negative 2CY case, $\HO^{\BoMo}_*(\mathfrak{M}_{\mathcal{A}},\BoQ)$ is a kind of (half) Yangian associated to the free Lie algebra generated by $\ICA(\CM_{\mathcal{A}})$. The precise statement is given by Corollary \ref{Y_corollary} below. A key role is played by our refinement of the CoHA structure on $\HO^{\BoMo}_*(\mathfrak{M}_{\mathcal{A}},\BoQ)$, for $\mathcal{A}$ any suitably geometric Abelian category of projective homological dimension at most 2. This refinement is an algebra object in the category of constructible complexes of sheaves (or even mixed Hodge modules) on the good moduli space of objects in $\mathcal{A}$, which recovers the CoHA structure on $\HO^{\mathrm{BM}}_{*}(\FM_{\CA},\BoQ)$ after taking derived global sections (see \S \ref{subsubsection:introrelativeCoHA}). Working over the good moduli space allows us to use the powerful techniques of mixed Hodge modules, and establish our results via local-global principles relying on the local study of the BPS Lie algebra over the good moduli space.

These results have strong consequences in each of the subjects mentioned above -- representation theory, nonabelian Hodge theory and algebraic geometry. For instance, noting that the various categories associated to a smooth complex projective curve of genus $g\geq 2$ within nonabelian Hodge theory are totally negative, we are able to extend the classical nonabelian Hodge isomorphisms for the cohomology of smooth moduli spaces to an isomorphism of Borel--Moore homology for singular moduli stacks (see Theorem \ref{NAHT_main_thm}). In addition, we prove the Bozec--Schiffmann positivity conjecture \cite{bozec2019counting} for totally negative quivers, stating that the polynomials counting cuspidal functions in the constructible Hall algebra of $Q$ have positive coefficients --- a strengthening of the positivity theorem regarding the Kac polynomials of such quivers, \cite{hausel2013positivity} (see \S \ref{cuspidals_sec} for an exact statement, and the link with Kac polynomials). This follows from applying our main theorem to the totally negative 2CY category $\Rep(\Pi_Q)$ of representations of the preprojective algebra of $Q$.

The categories $\Rep(\Pi_Q)$ for totally negative quivers $Q$ serve as local prototypes for \textit{all} totally negative 2CY categories possessing good moduli spaces \cite{davison2021purity}. Examples include deformed preprojective algebras \cite{crawley1998noncommutative,crawley2022deformed}, multiplicative preprojective algebras \cite{crawley2006multiplicative}, local systems on Riemann surfaces, semistable Higgs bundles of fixed slope, and semistable coherent sheaves of fixed slope on K3 and Abelian surfaces. In this paper we deal with all of these examples, by reducing to the local case of preprojective algebras of quivers, and using our sheaf-theoretic refinement of the CoHA structure on $\HO^{\BoMo}_*(\mathfrak{M}_{\mathcal{A}},\BoQ)$.

\subsubsection{From Bogomol'nyi--Prasad--Sommerfield algebras to Borcherds--Kac--Moody algebras} In 1996 Harvey and Moore \cite{harvey1996algebras} proposed a construction of an algebra of Bogomol'nyi--Prasad--Sommerfield (BPS) states associated to certain $N=2$ $4$-dimensional supersymmetric gauge theories. This proposal in part motivated later work of Kontsevich and Soibelman \cite{kontsevich2010cohomological}, in which they rigorously defined a cohomological Hall algebra associated to any smooth algebra with potential, or noncommutative Landau--Ginzburg model in physics language. Smooth algebras with potential define $3$-Calabi--Yau categories \cite{ginzburg2006calabi}.

As part of the proposal in \cite{harvey1996algebras} a strong connection between algebras of BPS states and Borcherds--Kac--Moody (BKM) \cite{kac1990infinite,borcherds1988generalized} algebras was posited. This connection is perhaps most easy to state (in the Kontsevich--Soibelman programme) in the case of a Dynkin ADE quiver $Q$: in this case one may consider a certain tripled quiver $\tilde{Q}$ with cubic potential $\tilde{W}$, such that the Kontsevich--Soibelman CoHA is the universal enveloping algebra of one half of the current algebra for the associated simple Lie algebra. In general the picture is more complicated and developed in \cite{davison2023bps}, but at least one can say that for a \textit{general} quiver $Q$, the CoHA $\HO^*\!\mathscr{A}_{\tilde{Q},\tilde{W}}$ built via the Kontsevich--Soibelman construction contains the universal enveloping algebra of the Kac--Moody Lie algebra associated to $Q$ \cite{davison2020bps}. We note that via dimensional reduction \cite{davison2017critical,kontsevich2010cohomological}, the Kontsevich--Soibelman CoHA for the pair $(\tilde{Q},\tilde{W})$ is isomorphic to the CoHA of $\Pi_Q$-modules considered in this paper.

In a \textit{generalised} Kac--Moody algebra (a.k.a. BKM algebra) one has imaginary roots, as well as the usual real simple roots of a Kac--Moody Lie algebra. The imaginary roots, in turn, come in two flavours: isotropic and hyperbolic. The salient feature of these roots, for the purposes of this introduction, are that the Lie sub-algebra generated by the positive hyperbolic roots is \textit{free}. In \cite{harvey1998algebras}, a proposal is given for the construction of generalised Kac--Moody Lie algebras in more general geometrically interesting cases than those captured by studying $\Pi_Q$-modules, namely in the study of coherent sheaves on K3 surfaces $S$. Note that by Kinjo's generalisation \cite{kinjo2022dimensional} of the dimensional reduction isomorphism, the Borel--Moore homology of $\mathfrak{Coh}_{p(t)}^{\sst}(S)$ (where $p(t)$ specifies the normalised Hilbert polynomial of semistable sheaves under consideration) should itself be isomorphic to a Kontsevich--Soibelman-style critical cohomological Hall algebra of BPS states for the threefold $S\times\BoA^1$, which has recently been defined in \cite{kinjo2024cohomological}. One conclusion of this paper is that we can indeed rigorously construct generalised Kac--Moody Lie algebras out of CoHAs for very general $2$-Calabi--Yau categories, under a ``total negativity'' assumption that guarantees that all of the simple roots are hyperbolic, so that the expected BKM algebra is free.

\subsubsection{Mixed Hodge structures and refined invariants} The cohomological Hall algebra introduced in \cite{kontsevich2010cohomological}, as well as providing a mathematically rigorous approach to defining the BPS algebra, is a partial categorification of the BPS/DT invariants introduced and studied in \cite{joyce2011theory,kontsevich2008stability}. Donaldson--Thomas (DT) invariants were introduced in \cite{thomas2000holomorphic} via virtual fundamental classes, in order to provide enumerative invariants of moduli spaces of coherent sheaves on Calabi--Yau threefolds. It was subsequently realised \cite{behrend2009donaldson,joyce2011theory,szendroi2008thomas,joyce2007holomorphic,kontsevich2008stability,behrend2013motivic} that DT invariants could be assigned to much more general moduli stacks of objects in 3-Calabi--Yau categories, and that $q$-refinements of these invariants could be defined via the study of mixed Hodge structures on vanishing cycle cohomology, or the Hodge realisation of the motivic vanishing cycle, in the framework of \cite{kontsevich2008stability}. In particular, in order to \textit{define} refined BPS invariants, or Hall algebras partially categorifying them, it is important to work in the category of mixed Hodge structures (or mixed Hodge modules for the sheaf-theoretic CoHA), as opposed to underlying vector spaces or constructible complexes. In particular, the $q$-refinement is defined by replacing Euler characteristics throughout the theory with virtual Poincar\'e polynomials, recording the weights of mixed Hodge structures. For that reason we provide a lift of the CoHA theory to these richer categories. 

An added incentive for working at this level of refinement is that it means that we are able to use our results to study the weight filtration on the Borel--Moore homology of the Betti moduli stack appearing in nonabelian Hodge theory (see \S \ref{NAHT_intro} and \S \ref{NAHT_sec}). In this topic, the weight filtration on the Betti side is in general expected to be the counterpart of the perverse filtration on the Dolbeault side, as shown by the famous $\mathrm{P}=\mathrm{W}$ conjectures for moduli spaces \cite{de2012topology}, proven for smooth moduli spaces in \cite{maulik2022p,hausel2022p}, but still open for singular moduli spaces and stacks.

\subsection{The main results}
We next state our main results in a little more detail.
\subsubsection{Relative construction of the CoHA for 2-dimensional categories}

\label{subsubsection:introrelativeCoHA}

Let $\mathcal{A}$ be a $\BoC$-linear Abelian category of finite length satisfying the assumptions of \S\ref{subsubsection:assumptionCoHAproduct}. In particular, its stack of objects $\mathfrak{M}_{\mathcal{A}}$ is an Artin stack and $\mathcal{C}^{\bullet}$, the RHom complex shifted by one on the product $\mathfrak{M}_{\mathcal{A}}\times\mathfrak{M}_{\mathcal{A}}$, is perfect with tor-amplitude $[-1,1]$. We let $\chi_{\mathcal{A}}\colon \mathfrak{M}_{\mathcal{A}}\times\mathfrak{M}_{\mathcal{A}}\rightarrow \BoZ$ be the locally constant function given by the Euler form: its value on a pair $x,y$ of $\BoC$-points of $\mathfrak{M}_{\mathcal{A}}$ represented by objects $M$ and $N$ of $\mathcal{A}$ is $\chi_{\mathcal{A}}(x,y)=(M,N)_{\mathcal{A}}=-\vrank(\mathcal{C}^{\bullet}_{(x,y)})$, where $(-,-)_{\mathcal{A}}$ denotes the Euler form of the category $\mathcal{A}$ (see \S\ref{subsection:grading}).
It need not be symmetric (for example it will not be if $\mathcal{A}=\mathrm{Coh}(S)$ for $S$ a non-symplectic surface). We let $\pi_0(\mathfrak{M}_{\mathcal{A}})$ be the set of connected components of $\mathfrak{M}_{\mathcal{A}}$. It has a monoid structure induced by taking the direct sum of objects in $\mathcal{A}$, and we may view $\chi_{\mathcal{A}}$ as a bilinear form $\chi_{\mathcal{A}}\colon\pi_0(\mathfrak{M}_{\mathcal{A}})\times\pi_0(\mathfrak{M}_{\mathcal{A}})\rightarrow\BoZ$. 

Let $(\mathcal{M},\oplus)$ be a monoid object in the category of schemes and let $\varpi\colon\mathfrak{M}_{\mathcal{A}}\rightarrow\mathcal{M}$ be a morphism of monoids splitting short exact sequences (see \S\ref{subsubsection:assumptionCoHAproduct} for definitions). In particular, $\varpi(x)=\varpi(y)\oplus \varpi(z)$ if $x$ is a closed $\BoC$-point of $\mathfrak{M}_{\mathcal{A}}$ represented by an object $R$ of $\mathcal{A}$ which is an extension of objects $M,N$ of $\mathcal{A}$ represented by the $\BoC$-points $y,z$ of $\mathfrak{M}_{\mathcal{A}}$ respectively.

We define
\[
 \mathscr{A}_{\varpi}\coloneqq\varpi_*\BD(\BoQ_{\mathfrak{M}_{\mathcal{A}}}[-\chi_{\mathcal{A}}]),
\]
where $\BoQ_{\Mst_{\CA}}$ is the constant sheaf and $\BD$ is the Verdier duality functor and, for $a\in\pi_0(\FM_{\CA})$, $\BoQ_{\FM_{\CA}}[-\chi_{\CA}]_{|\FM_{\CA,a}}=\BoQ_{\FM_{\CA,a}}[-\chi_{\CA}(a,a)]$.  This is a constructible complex on $\mathcal{M}$, locally bounded below, i.e. $\mathscr{A}_{\varpi}\in \Ob(\CD_{\mathrm{c}}^+(\mathcal{M}))$.  We denote $\BoQ_{\mathfrak{M}_{\mathcal{A}}}^{\vir}=\BoQ_{\mathfrak{M}_{\mathcal{A}}}[-\chi_{\mathcal{A}}]$, i.e. the superscript ``vir'' encodes the shift of the constant sheaf by the virtual dimension of the stack. We let $\boxdot$ denote the symmetric monoidal structure on $\CD_{\mathrm{c}}^+(\mathcal{M})$ induced by the monoid structure $\oplus\colon\mathcal{M}\times\mathcal{M}\rightarrow \mathcal{M}$ (see \S\ref{subsection:monoidalstructure}).

We make some assumptions on the stacks that appear in this paper, namely we assume that each connected component of $\FM_{\CA}$ can be written as a global quotient stack. This is a rather mild assumption satisfied by all stacks we consider. Note however that it has been shown recently how one can work with mixed Hodge modules over more general stacks in \cite{tubach2024mixed}. Under these assumptions, we recall in \S \ref{section:perversesheaves} how to upgrade $\mathscr{A}_{\varpi}$ to a mixed Hodge module (MHM) complex
\[
\ulrelCoHA_{\varpi}\coloneqq \varpi_*\BD(\ulBoQ^{\vir}_{\mathfrak{M}_{\mathcal{A}}})
\]
i.e. a complex of mixed Hodge modules that recovers $\mathscr{A}_{\varpi}$ as its underlying constructible complex $\rat(\ulrelCoHA_{\varpi})$.

\begin{theorem}[\S\ref{subsubsection:ThecohaproductKV}]
\label{intro_cohaprod}
 For all $a,b\in\pi_0(\mathfrak{M}_{\mathcal{A}})$, there exist morphisms
 \[
 m_{\varpi,a,b}\colon \mathscr{A}_{\varpi,a}\boxdot\mathscr{A}_{\varpi,b}\rightarrow\mathscr{A}_{\varpi,a+b}[\chi_{\mathcal{A}}(b,a)-\chi_{\mathcal{A}}(a,b)]
 \]
in $\D_{\cc}^{+}(\CM)$ such that, combined together, these morphisms give a shifted multiplication on $\mathscr{A}_{\varpi}$, that is, the structure of a shifted associative algebra object in $\CD_{\mathrm{c}}^+(\mathcal{M})$. If $\chi_{\mathcal{A}}$ is even\footnote{If $\chi_{\CA}$ is not even, a square-root of the Tate twist is required, and so one needs to work in the larger category of \emph{monodromic mixed Hodge modules} as in \cite{davison2020cohomological} for example.}, then the above morphisms can be upgraded to morphisms
\[
 m_{\varpi,a,b}\colon\ulrelCoHA_{\varpi,a}\boxdot\ulrelCoHA_{\varpi,b}\rightarrow\ulrelCoHA_{\varpi,a+b}\otimes\BoL^{\chi_{\mathcal{A}}(a,b)/2-\chi_{\mathcal{A}}(b,a)/2}
\]
where $\BoL\coloneqq \HO_{\cc}^*(\mathbb{A}^1,\BoQ)$, endowing $\ulrelCoHA_{\varpi}$ with a twisted multiplication in $\D^+(\MHM(\CM))$.
\end{theorem}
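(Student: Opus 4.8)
\emph{Strategy.} I realise $m_{\varpi,a,b}$ as a composition of two correspondences — remembering the sub- and quotient object, then remembering the total object — as in the absolute constructions of \cite{schiffmann2013cherednik,sala2020cohomological,davison2020bps,kapranov2019cohomological}, but carried out throughout with the six operations relative to $\CM$; afterwards everything is transported to mixed Hodge modules. Let $\FM^{(2)}_\CA$ be the stack of short exact sequences $0\to A'\to A''\to A'''\to 0$ in $\CA$, with the morphisms $p=(p_{\mathrm s},p_{\mathrm q})\colon\FM^{(2)}_\CA\to\FM_\CA\times\FM_\CA$ (sub, quotient) and $q\colon\FM^{(2)}_\CA\to\FM_\CA$ (total object). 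On the component $\FM^{(2)}_{a,b}$ with $[A']=a$, $[A''']=b$ I record three facts, all consequences of the standing hypotheses on $\CA$: (i) $q$ is representable and projective — the functor of sub-objects of a fixed finite-length object of a prescribed class is represented by a projective scheme — so $q_!\cong q_*$, with counit $\varepsilon_q\colon q_!q^!\to\id$; (ii) $\varpi\circ q=\oplus\circ(\varpi\times\varpi)\circ p$, which is precisely the hypothesis that $\varpi$ splits short exact sequences; (iii) after passing to derived enhancements — which changes none of $\ulBoQ$, $\BD$, nor any of the functors $(-)^{*},(-)^{!},(-)_{*},(-)_{!}$, the small étale topos being blind to derived structure — $p$ is identified with the relative linear stack over $\FM_a\times\FM_b$ of a restriction of $\CC^{\bullet}$, a perfect complex of fibre $\RHom(A''',A')[1]$ and virtual rank $-\chi_\CA(b,a)$. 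Since $\CC^{\bullet}$ has tor-amplitude $[-1,1]$, a vector-bundle representative $[\CE^{-1}\xrightarrow{\dd^{-1}}\CE^{0}\xrightarrow{\dd^{0}}\CE^{1}]$ of it exhibits $p$ as a quasi-smooth closed immersion $\FM^{(2)}_{a,b}\into[\Tot\CE^{0}/\Tot\CE^{-1}]$ cut out by $\dd^{0}$ (virtual codimension $\rank\CE^{1}$), followed by the vector-bundle stack projection $[\Tot\CE^{0}/\Tot\CE^{-1}]\to\FM_a\times\FM_b$ (smooth of relative dimension $\rank\CE^{0}-\rank\CE^{-1}$).

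\emph{The product.} Both factors of $p$ carry Gysin morphisms on dualizing complexes: smooth morphisms (including classifying stacks of vector-group bundles) satisfy $f^{!}\cong f^{*}[2\dim f]$, and a quasi-smooth closed immersion of virtual codimension $c$ satisfies $i^{!}\cong i^{*}[-2c]$ canonically, complex vector bundles being canonically oriented. Composing the attendant unit morphisms produces a Gysin morphism
\[
\mathsf{gys}_p\colon\omega_{\FM_a\times\FM_b}\longrightarrow p_*\,\omega_{\FM^{(2)}_{a,b}}[2\chi_\CA(b,a)],
\]
independent of the representative of $\CC^{\bullet}$ — two representatives differ by acyclic complexes of bundles, to which the Gysin morphisms are insensitive — and compatible with flat base change and with base change along quasi-smooth-immersion squares. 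I then define $m_{\varpi,a,b}$, up to the overall shift $[\chi_\CA(a,a)+\chi_\CA(b,b)]$ turning $\omega$ into $\BD\BoQ^{\vir}$, as the composite
\[
\oplus_*(\varpi\times\varpi)_*\omega_{\FM_a\times\FM_b}\ \xrightarrow{\ \oplus_*(\varpi\times\varpi)_*\mathsf{gys}_p\ }\ \varpi_*q_*\,\omega_{\FM^{(2)}_{a,b}}[2\chi_\CA(b,a)]\ \xrightarrow{\ \varpi_*\varepsilon_q\ }\ \varpi_*\,\omega_{\FM_{a+b}}[2\chi_\CA(b,a)],
\]
using (ii) to identify $\oplus_*(\varpi\times\varpi)_*p_*$ with $\varpi_*q_*$, and (i) together with $q^{!}\omega_{\FM_{a+b}}=\omega_{\FM^{(2)}_{a,b}}$ for the counit. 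Bilinearity of $\chi_\CA$, via $\chi_\CA(a+b,a+b)=\chi_\CA(a,a)+\chi_\CA(a,b)+\chi_\CA(b,a)+\chi_\CA(b,b)$, shows that after the overall shift the target is exactly $\mathscr{A}_{\varpi,a+b}[\chi_\CA(b,a)-\chi_\CA(a,b)]$.

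\emph{Associativity, unit, and the Hodge lift.} Associativity reduces, as for any CoHA, to the stack $\FM^{(3)}_\CA$ of $3$-step filtered objects: both $(xy)z$ and $x(yz)$ are computed by pull-push through $\FM^{(3)}_\CA$, since the two squares relating $\FM^{(3)}_\CA$ to products of copies of $\FM^{(2)}_\CA$ and $\FM_\CA$ are (homotopy-)Cartesian, so that base change for $\mathsf{gys}$ and functoriality of proper pushforward identify the two composites; the two resulting shifts agree, again by bilinearity of $\chi_\CA$. Unitality follows from the identifications $\FM^{(2)}_{0,a}\cong\FM_{\CA,a}\cong\FM^{(2)}_{a,0}$ together with $\varpi(0)$ being the unit of $\CM$. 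For the mixed Hodge module statement, assume $\chi_\CA$ is even: then each vir-twist $\BoL^{\chi_\CA(c,c)/2}$ is an integer power of $\BoL$, so $\ulBoQ^{\vir}_{\FM_\CA}=\ulBoQ_{\FM_\CA}\otimes\BoL^{\chi_\CA/2}$ and $\ulrelCoHA_\varpi=\varpi_*\BD(\ulBoQ^{\vir}_{\FM_\CA})$ lie in $\D^{+}(\MHM(\CM))$; the functors $\oplus_*$, $\varpi_*$, and $q_*=q_!$ are $\MHM$-functors; and $\mathsf{gys}_p$ lifts because its smooth factor contributes $\rho^{!}\cong\rho^{*}\otimes\BoL^{-(\rank\CE^{0}-\rank\CE^{-1})}$ and its immersion factor $i^{!}\cong i^{*}\otimes\BoL^{\,\rank\CE^{1}}$, the purity shifts being absorbed into these Tate twists; one thus gets a lift $\mathsf{gys}_p\colon\omega_{\FM_a\times\FM_b}\to p_*\omega_{\FM^{(2)}_{a,b}}\otimes\BoL^{-\chi_\CA(b,a)}$ and, assembling as above and matching vir-twists, a morphism $\ulrelCoHA_{\varpi,a}\boxdot\ulrelCoHA_{\varpi,b}\to\ulrelCoHA_{\varpi,a+b}\otimes\BoL^{\chi_\CA(a,b)/2-\chi_\CA(b,a)/2}$, whose exponent is an integer because $\chi_\CA$ is even. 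Rerunning the $\FM^{(3)}_\CA$-argument inside $\D^{+}(\MHM(\CM))$ gives associativity there.

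\emph{Main obstacle.} The technical heart is the construction of $\mathsf{gys}_p$ as an honest morphism of constructible complexes, and then of mixed Hodge modules, with the stated shift/twist, even though $p$ is far from smooth when $\CA$ has homological dimension $2$; this is exactly where the hypothesis that $\CC^{\bullet}$ is perfect of tor-amplitude $[-1,1]$ is indispensable — it supplies the factorisation of $p$ through a vector-bundle stack and a single quasi-smooth immersion — and one must still check independence of the presentation and compatibility with base change before the associativity bookkeeping becomes formal.
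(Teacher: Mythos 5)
Your strategy matches the paper's in all essentials: factor the sub-plus-quotient correspondence through a vector-bundle stack followed by a zero locus of a section of a bundle, take a Gysin/virtual-pullback morphism there, push forward along the proper correspondence to the total object, and check associativity via the stack of three-step filtrations. Two points deserve correction, and one subtlety is elided.

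First, your claim that ``a quasi-smooth closed immersion of virtual codimension $c$ satisfies $i^{!}\cong i^{*}[-2c]$ canonically'' is false. Take $X=\BoA^{1}$, $\CE=\CO_{X}$, $s=0$: the zero locus is all of $X$, $i=\id$, virtual codimension $1$, and $\id\not\cong\id[-2]$. What does exist, given the \emph{section} $s$ (not merely the closed immersion $i$), is the refined Gysin morphism $\BD k_{X}\to i_{*}\BD k_{X_{s}}[2c]$; the paper constructs it via adjunction and base change from the Cartesian square relating $s$ and the zero section. The end product of your ``composing the attendant unit morphisms'' is this Gysin morphism, so the construction is saved, but the intermediate isomorphism is wrong and you cannot get the Gysin morphism from $i$ alone: you need the section.

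Second, the sentence ``the functors $\oplus_{*}$, $\varpi_{*}$, and $q_{*}=q_{!}$ are $\MHM$-functors'' needs justification. There is no full six-functor formalism for mixed Hodge modules on Artin stacks; the paper handles this by approximating each global quotient stack by an acyclic cover of schemes (approximations to the Borel construction) and defining $\tau^{\leq n}$ of the direct image via those covers, then passing to the limit. Without this, $\varpi_{*}\BD\ulBoQ^{\vir}_{\FM}$ is not a priori an object of $\D^{+}(\MHM(\CM))$.

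Third, for associativity you say both composites are computed by pull-push through $\FM^{(3)}_{\CA}$ and invoke base change, which is the right shape. But the forgetful morphism $\FM^{(3)}_{\CA}\to\FM_{\CA}^{\times 3}$ is typically not the total space of a three-term complex of vector bundles (the Maurer--Cartan equation for two-step extensions is no longer linear), so one cannot present it as a single section-cut vector-bundle stack. The paper handles this with the category of globally presented quasi-smooth morphisms and proves that virtual pullback is functorial for such compositions and invariant under global equivalences. You should either restrict to categories where the three-step forgetful map happens to admit a three-term presentation, or import a functoriality statement of that kind; base change alone does not yet identify the two composites when the two presentations of $\FM^{(3)}_{\CA}\to\FM_{\CA}^{\times 3}$ are different.
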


In the theorem, ``twisted'' refers to the Tate twist appearing on the right-hand-side of the multiplication map, which depends on $a,b$ and vanishes when $\chi_{\CA}$ is symmetric.

By taking derived global sections $\HO^*(\Msp,\mathscr{A}_{\varpi})$, when $\mathcal{A}$ is the category of coherent sheaves on a surface, we recover the cohomological Hall algebra constructed by Kapranov and Vasserot in \cite{kapranov2019cohomological}. By considering $\HO^*(\CM,\ulrelCoHA_{\varpi})$ we lift their algebra to an algebra object in the category of mixed Hodge structures.

When $\mathfrak{M}_{\mathcal{A}}$ has a good moduli space $\JH\colon\mathfrak{M}_{\mathcal{A}}\rightarrow\mathcal{M}_{\mathcal{A}}$, the algebra with shifted multiplication in the derived category of constructible complexes on $\mathcal{M}_{\mathcal{A}}$ thus obtained, $\ulrelCoHA_{\JH}$, is called \emph{the relative cohomological Hall algebra of $\mathcal{A}$}.
When $\CM=\pt$, we recover the \emph{absolute cohomological Hall algebra} of $\mathcal{A}$. When $\mathcal{M}=\pi_0(\mathfrak{M}_{\mathcal{A}})$, and $\varpi$ is the class map $\cl\colon\mathfrak{M}_{\CA}\rightarrow \pi_0(\mathfrak{M}_{\mathcal{A}})$ sending points in $\mathfrak{M}_{\mathcal{A}}$ to their corresponding connected component, we recover the $\pi_0(\mathfrak{M}_{\mathcal{A}})$-grading on the absolute Hall algebra, via the identification between  complexes of constructible sheaves of rational spaces on the discrete space $\pi_0(\mathfrak{M}_{\mathcal{A}})$ and $\pi_0(\mathfrak{M}_{\mathcal{A}})$-graded complexes of $\BoQ$-vector spaces.

When $\mathcal{A}$ is a $2$-Calabi--Yau Abelian category, the Euler form is even and symmetric. Examples include the categories of representations of the derived preprojective algebra of a quiver, of local systems over a Riemann surface of genus $\geq 1$, or of semistable Higgs bundles on a smooth projective curve of genus $\geq 1$. In the 2CY case the multiplication is a morphism of complexes of mixed Hodge modules
\[
 m_{\varpi}\colon\ulrelCoHA_{\varpi}\boxdot\ulrelCoHA_{\varpi}\rightarrow \ulrelCoHA_{\varpi},
\]
and the multiplication respects the cohomological grading (the shift appearing in Theorem~\ref{intro_cohaprod} disappears).

When the category $\mathcal{A}$ considered is that of representations of the preprojective algebra of a quiver, and $\varpi=\JH$ is the affinization morphism, $\ulrelCoHA_{\varpi}$ coincides with the relative CoHA considered in \cite{davison2020bps}. By taking derived global sections, we get back the cohomological Hall algebra of the preprojective algebra constructed in \cite{schiffmann2020cohomological} (see \S\ref{subsection:comparison_preproj}). For semistable Higgs bundles of fixed slope, we recover the semistable cohomological Hall algebra of a smooth projective curve constructed in \cite{minets2020cohomological,sala2020cohomological}. It is explained in detail in Appendix~\ref{section:relativeCoHA} how to compare the multiplications on $\SA_{\varpi}$ defined via the $3$-term RHom complex and in the classical way \cite{davison2020bps}. In each of these cases, we obtain a lift of the cohomological Hall algebra to an algebra object in the category of complexes of mixed Hodge modules on $\CM_{\mathcal{A}}$.

\subsubsection{Relative BPS algebra}
\label{subsubsection:relativeBPSalgebra}
Let $\mathcal{A}$ be a $2$-Calabi--Yau Abelian category and let $\varpi=\JH\colon\mathfrak{M}_{\mathcal{A}}\rightarrow \mathcal{M}_{\mathcal{A}}$ be a good moduli space for the stack $\mathfrak{M}_{\mathcal{A}}$.
The constructible complex $\mathscr{A}_{\JH}$ is concentrated in nonnegative perverse degrees (Lemma~\ref{lemma:nonnegativeperverse}). Therefore, we obtain a multiplication morphism $m_{\JH}$ on the perverse sheaf $\pH{0}(\mathscr{A}_{\JH}) =\ptau{\leq 0}\mathscr{A}_{\JH}\in\Perv(\CM_{\CA})$. We define $\mathcal{BPS}_{\Alg}:=\pH{0}(\mathscr{A}_{\JH})$. The multiplication
\[
 \pH{0}(m_{\JH})={\ptau{\leq 0}m_{\JH}} \colon\mathcal{BPS}_{\Alg}\boxdot\mathcal{BPS}_{\Alg}\rightarrow\mathcal{BPS}_{\Alg},
\]
fits into a commutative square
\begin{equation}
\label{equation:commutation-mult}
 \begin{tikzcd}
	\mathcal{BPS}_{\Alg}\boxdot\mathcal{BPS}_{\Alg} & \mathcal{BPS}_{\Alg} \\
	{\mathscr{A}_{\JH}\boxdot\mathscr{A}_{\JH}} & {\mathscr{A}_{\JH}}
	\arrow["\pH{0}(m_{\JH})", from=1-1, to=1-2]
	\arrow["{m_{\JH}}", from=2-1, to=2-2]
	\arrow[from=1-1, to=2-1]
 \arrow[from=1-2, to=2-2]
\end{tikzcd}
\end{equation}
where the vertical arrows are obtained from the adjunction morphism $\ptau{\leq 0}\rightarrow \id$.

The algebra object $\mathcal{BPS}_{\Alg}$ in the category of perverse sheaves on $\mathcal{M}_{\mathcal{A}}$ is called \emph{the BPS algebra sheaf}. We define the \textit{BPS algebra MHM} $\tau^{\leq 0}\ulrelCoHA_{\JH}$ similarly. It is an algebra object in the category of mixed Hodge modules on $\CM_{\mathcal{A}}$ via the same construction. Applying the derived global sections functor we obtain the \emph{BPS algebra} $\aBPS_{\Alg}\coloneqq\HO^*(\CM_{\mathcal{A}},\BPS_{\Alg})$. By the decomposition theorem for 2CY categories \cite{davison2021purity}, the vertical maps of \eqref{equation:commutation-mult} are split morphisms of constructible sheaves, and so the natural morphism of algebras to the absolute CoHA $\aBPS_{\Alg}\rightarrow \HO^{\BoMo}_*(\mathfrak{M}_{\mathcal{A}},\BoQ^{\vir})$ is injective. In particular, the BPS algebra is a subalgebra of the \emph{absolute} cohomological Hall algebra recalled in \S \ref{subsection:relativecohaproductKV}.

\subsubsection{Freeness of the BPS-algebra for totally negative quivers}
\label{introduction:freenesstotallynegativequiver}
Let $Q=(Q_0,Q_1)$ be a quiver with set of vertices $Q_0$ and set of arrows $Q_1$. We let $\Pi_Q$ be the preprojective algebra of $Q$ (defined in \eqref{preproj_def}). We denote by $\langle-,-\rangle_{Q}$ the Euler form of $Q$ and by $(-,-)_{\mathscr{G}_2(Q)}$ the symmetrization of $\langle-,-\rangle_{Q}$ (see \S \ref{subsection:notationsquivreps} -- as indicated by the notation, it is also the Euler form of the \emph{derived preprojective algebra} $\SG_2(Q)$).
The Euler form and its symmetrization factor through the morphism $\K(\Rep(\Pi_Q))\rightarrow \BoZ^{Q_0}$ taking a module to its dimension vector.
We say that $Q$ is totally negative if for any pair of nonzero $\dd,\ee\in\BoN^{Q_0}$, $(\dd,\ee)_{\mathscr{G}_2(Q)}<0$. This is equivalent to the property that $Q$ has at least two loops at each vertex and at least one arrow between any two distinct vertices. This class of quivers appeared for example in \cite{bozec2019counting}, where the authors exploited the fact that this assumption implies the freeness of a different algebra, the \emph{constructible Hall algebra} of $Q$.

We let $\JH_{\Pi_Q}\colon\mathfrak{M}_{\Pi_Q}\rightarrow\mathcal{M}_{\Pi_Q}$ be the semisimplification map from the stack of representations of $\Pi_Q$ to the coarse moduli space. As in \S \ref{subsubsection:introrelativeCoHA}, we set
\[
 \mathscr{A}_{\JH}\coloneqq \JH_*\BD(\BoQ_{\mathfrak{M}_{\Pi_Q}}[-\chi_{\SG_2(Q)}]),
\]
where we denote by $\chi_{\mathscr{G}_2(Q)}$ the locally constant function taking a point representing a $\dd$-dimensional $\Pi_Q$-module $M$ to $\chi_{\SG_2(Q)}(\dd,\dd)=2\langle
\dd,\dd\rangle_Q$. The Abelian category $\mathcal{A}=\Rep(\Pi_Q)$ and its
stack of objects satisfy the Assumptions
\ref{p_assumption}-\ref{BPS_cat_assumption} in
\S\ref{section:modulistackobjects2CY}, so that we may define the relative
cohomological Hall algebra and the BPS algebra for this category, where the
ambient dg-category is the dg-category of perfect dg-modules over the derived
preprojective algebra $\mathscr{G}_2(Q)$ (see
\S\ref{section:thepreprojective}).

The constructible complex $\mathscr{A}_{\JH}$ carries the multiplication $m\colon\mathscr{A}_{\JH}\boxdot\mathscr{A}_{\JH}\rightarrow\mathscr{A}_{\JH}$ as in \S\ref{subsubsection:introrelativeCoHA}. This multiplication respects the perverse filtration induced by $\JH$. As in \S\ref{subsubsection:introrelativeCoHA} we consider $\pH{0}(\mathscr{A}_{\JH})$, the zeroth perverse cohomology of $\mathscr{A}_{\JH}$. As above (\S\ref{subsubsection:relativeBPSalgebra}), this is an algebra object in the (non-derived) category of perverse sheaves on $\mathcal{M}_{\Pi_Q}$, admitting a morphism to $\mathscr{A}_{\JH}$, in the category of algebra objects of $\CD_{\mathrm{c}}^+(\mathcal{M}_{\mathcal{A}})$.

We let $\Sigma_{\Pi_Q}\subset\BoN^{Q_0}$ be the set of dimension vectors $\dd$ such that $\Pi_Q$ admits a simple representation of dimension vector $\dd$. This set admits a combinatorial description given in \cite{crawley2001geometry}, recalled in \S \ref{CrBo_geom_sec}.  By \cite[Theorem 6.6]{davison2021purity}, for any $\dd\in\Sigma_{\Pi_Q}$, we have a natural monomorphism from the intersection complex $\mathcal{IC}(\mathcal{M}_{\Pi_Q,\dd})\rightarrow \pH{0}(\mathscr{A}_{\JH})$. We let
\[
\Free_{\boxdot-\Alg}\left(\bigoplus_{\dd\in\Sigma_{\Pi_Q}}\mathcal{IC}(\mathcal{M}_{\Pi_Q,\dd})\right)\in \Perv(\mathcal{M}_{\Pi_Q})
\]
be the free algebra object generated by the indicated direct sum, for the monoidal product $\boxdot$ (\S\ref{subsection:monoidalstructure}). By the universal property of free algebras, there is a unique morphism extending the above monomorphisms
\begin{equation}
\label{equation:extensionFreepreproj}
 \Phi_{\Pi_Q}\colon \Free_{\boxdot-\Alg}\left(\bigoplus_{\dd\in\Sigma_{\Pi_Q}}\mathcal{IC}(\mathcal{M}_{\Pi_Q,\dd})\right)\rightarrow \pH{0}(\mathscr{A}_{\JH}).
\end{equation}
The following is the special case $\mathcal{A}=\Rep(\Pi_Q)$ of our first main structural theorem on the relative CoHA (Theorem \ref{theorem:freenesstotneg2CY}).
\begin{theorem}[=Theorem~\ref{theorem:FreeAlgPreProj}]
\label{theorem:freenesspreprojective}
 Let $Q$ be a totally negative quiver. Then, the morphism $\Phi_{\Pi_Q}$ is an isomorphism of algebras in the tensor category of perverse sheaves on $\mathcal{M}_{\Pi_Q}$, with tensor structure defined by $\boxdot$. The lift of $ \Phi_{\Pi_Q}$ to an algebra morphism in the category of $\boxdot$-algebras in MHMs on $\CM_{\Pi_Q}$ is also an isomorphism.
\end{theorem}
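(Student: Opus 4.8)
We outline a strategy. Both the source $\Free_{\boxdot-\Alg}\!\left(\bigoplus_{\dd\in\Sigma_{\Pi_Q}}\mathcal{IC}(\mathcal{M}_{\Pi_Q,\dd})\right)$ and the target $\pH{0}(\mathscr{A}_{\JH})=\mathcal{BPS}_{\Alg}$ are $\BoN^{Q_0}$-graded perverse sheaves on $\mathcal{M}_{\Pi_Q}$ and $\Phi_{\Pi_Q}$ is a graded morphism of algebra objects, so the plan is to prove it is an isomorphism one degree $\dd\in\BoN^{Q_0}$ at a time, by induction on $\dd$ for a total order refining the partial order (say by $\abs{\dd}=\sum_i\dd_i$, ties broken arbitrarily). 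In degree $\dd$, the target $\mathcal{BPS}_{\Alg,\dd}$ is a pure semisimple perverse sheaf by the decomposition/purity theorem for $2$-Calabi--Yau categories with good moduli spaces \cite{davison2021purity}, hence splits as $\bigoplus_{\tau}\mathcal{IC}(\overline{\mathcal{M}_\tau})\otimes M_\tau$, the sum running over semisimple types $\tau=(\dd_1,m_1;\dots;\dd_r,m_r)$ with distinct $\dd_i\in\Sigma_{\Pi_Q}$ and $\sum_i m_i\dd_i=\dd$, and finite-dimensional graded multiplicity spaces $M_\tau$; the source is likewise a finite direct sum of shifted intersection complexes, since $\boxdot$ of intersection complexes is a finite pushforward (along symmetrisation-type morphisms $\mathcal{M}_{\dd_1}\times\cdots\times\mathcal{M}_{\dd_k}\to\mathcal{M}_{\dd_1+\cdots+\dd_k}$) of an intersection complex. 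Because $\K(\Perv(\mathcal{M}_{\Pi_Q,\dd}))$ is free abelian on the simple perverse sheaves, it then suffices to prove (i) $\Phi_{\Pi_Q,\dd}$ is an epimorphism and (ii) source and target have equal class in $\K(\Perv(\mathcal{M}_{\Pi_Q,\dd}))$ --- equivalently, equal graded multiplicities (by cohomological degree/weight) of every simple summand.

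For (i): over the open dense smooth simple locus $\mathcal{M}_{\Pi_Q,\dd}^{\simple}$ the morphism $\JH$ is a $\BG_m$-gerbe over a smooth variety, so $\mathscr{A}_{\JH,\dd}$ restricts there to $\HO^*(\pt/\BG_m,\BoQ)\otimes\BoQ^{\vir}$ and $\pH{0}$ is of rank one; together with the monomorphism $\mathcal{IC}(\mathcal{M}_{\Pi_Q,\dd})\hookrightarrow\pH{0}(\mathscr{A}_{\JH})$ of \cite[Theorem 6.6]{davison2021purity} this pins down $M_{(\dd,1)}=\BoQ$, i.e.\ the unique ``top'' type contributes exactly one copy of $\mathcal{IC}(\mathcal{M}_{\Pi_Q,\dd})$, the image of the generator. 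For the summands supported on a proper stratum $\overline{\mathcal{M}_\tau}$ (those with $\sum_i m_i\geq2$) the point is that $\mathcal{BPS}_{\Alg}$ is generated as a $\boxdot$-algebra by the $\mathcal{IC}(\mathcal{M}_{\Pi_Q,\dd})$'s. I would get this either from the known generation of the preprojective BPS Lie algebra by these classes \cite{davison2020bps} (whence $\mathcal{BPS}_{\Alg}$, being its enveloping algebra, is generated by the same classes), or by the inductive step itself: étale-locally near a general point of $\mathcal{M}_\tau$ the pair $(\mathcal{M}_{\Pi_Q},\mathscr{A}_{\JH})$ is equivalent to $(\mathcal{M}_{\Pi_{Q_\tau}},\mathscr{A}_{\JH})$ near the fully-semisimple point of dimension vector $\mm=(m_i)_i$, where $Q_\tau$ is the $\Ext$-quiver of $(S_1,\dots,S_r)$, which by $2$-Calabi--Yau-ness and total negativity of $Q$ is again totally negative ($\dim\Ext^1(S_i,S_i)=2-(\dd_i,\dd_i)_{\Pi_Q}\geq4$, and $\dim\Ext^1(S_i,S_j)=-(\dd_i,\dd_j)_{\Pi_Q}\geq2$ for $i\neq j$), the equivalence being compatible with the relative CoHA since the latter is étale-local on the good moduli space; as $\abs{\mm}=\sum_i m_i<\abs{\dd}$ outside the fully-decomposed types --- and these are handled by a secondary descending induction on $\sum_i m_i$, terminating at the skyscraper over $\bigoplus_i S_i^{\oplus m_i}$, whose multiplicity is an explicit CoHA-at-a-point computation --- the inductive hypothesis for $(Q_\tau,\mm)$ identifies the local BPS algebra with a free algebra and exhibits these summands in the image of $\Phi_{\Pi_Q}$.

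Step (ii) is the one I expect to be the main obstacle, and it is where total negativity is genuinely used. I would compare Hilbert series after taking derived global sections. By the cohomological integrality theorem for preprojective algebras \cite{davison2020bps}, the absolute CoHA $\bigoplus_\dd\HO^{\BoMo}_*(\mathfrak{M}_{\Pi_Q,\dd},\BoQ^{\vir})$ --- which contains $\aBPS_{\Alg}=\HO^*(\mathcal{M}_{\Pi_Q},\mathcal{BPS}_{\Alg})$ as a summand --- is $\Sym\!\left(\aBPS_{\Pi_Q,\Lie}\otimes\HO^*(\pt/\BG_m,\BoQ)\right)$, so its Hilbert series is a plethystic exponential of the generating function of $\dim_q\aBPS_{\Pi_Q,\Lie,\dd}$; on the other hand the free algebra $\Free_{\Alg}\!\left(\bigoplus_\dd\ICA(\mathcal{M}_{\Pi_Q,\dd})\right)$ has the Hilbert series predicted, via cohomological integrality and the Witt/necklace formula, by the generating function of $\dim_q\ICA(\mathcal{M}_{\Pi_Q,\dd})$. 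Matching these, (i)$+$(ii) reduces to the numerical identity $\dim_q\aBPS_{\Pi_Q,\Lie,\dd}=\dim_q\mathrm{FreeLie}\!\left(\bigoplus_{\ee}\ICA(\mathcal{M}_{\Pi_Q,\ee})\right)_\dd$ for all $\dd$, i.e.\ to the absence of BPS generators beyond the intersection cohomologies. This is precisely what total negativity buys: from $\dim\mathcal{M}_{\Pi_Q,\ee}=2-(\ee,\ee)_{\Pi_Q}$ and $(\dd_i,\dd_j)_{\Pi_Q}\leq-2$ one gets the strict inequality $\dim\overline{\mathcal{M}_\tau}<\dim\mathcal{M}_{\Pi_Q,\dd}$ for every non-top type $\tau$, so no product of lower generators reaches the top stratum and the inductive/plethystic bookkeeping closes with no correction terms; for a general quiver the intersection-cohomology Poincaré polynomials are too large, honest relations are forced, and the resulting algebra is the enveloping algebra of a genuine Borcherds--Kac--Moody-type Lie algebra rather than a free one. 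With this identity in hand, $\Phi_{\Pi_Q,\dd}$ is an epimorphism of perverse sheaves of equal $\K$-class, hence an isomorphism, so $\Phi_{\Pi_Q}$ is an isomorphism.

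Finally, for the mixed Hodge module refinement: the decomposition/purity theorem (Saito), the monomorphism of \cite{davison2021purity}, the étale-local comparisons of local models, and the Hilbert-series identities all admit weight-graded lifts, and the CoHA product and $\Phi_{\Pi_Q}$ are defined at the level of complexes of mixed Hodge modules by Theorem \ref{intro_cohaprod}; since a morphism of pure Hodge modules that induces an isomorphism of underlying perverse sheaves is itself an isomorphism, the MHM lift of $\Phi_{\Pi_Q}$ is an isomorphism as well.
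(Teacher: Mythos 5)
Your overall skeleton---induct on the dimension vector, use the analytic/\'etale local models via Ext-quivers, and lean on total negativity to keep those Ext-quivers totally negative---is close to the paper's strategy (Lemmas \ref{lemma:NonIsoImpliesNonIsoforExtQuiver}, \ref{lemma:ExtQuivnotworse}, \ref{lemma:InductionStep}). But there is a genuine gap in your step (ii), and a related soft spot in step (i), which means the argument as written does not close.

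The gap in (ii): you reduce to the numerical identity $\dim_q\aBPS_{\Pi_Q,\Lie,\dd}=\dim_q\mathrm{FreeLie}\!\bigl(\bigoplus_\ee\ICA(\CM_{\Pi_Q,\ee})\bigr)_\dd$, and then assert that the dimension inequality $\dim\overline{\CM_\tau}<\dim\CM_{\Pi_Q,\dd}$ (which is indeed a consequence of total negativity) ``closes the plethystic bookkeeping with no correction terms.'' It does not. With the notation you are implicitly using (the 3d BPS Lie algebra of \cite{davison2020bps}, which is what appears in the cohomological integrality theorem you cite), the left side of your identity is controlled by Kac polynomials $A_{Q,\dd}(q^{-2})$, while the right side is a plethystic expression in intersection Poincar\'e polynomials of $\CM_{\Pi_Q,\ee}$. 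The equality of these two is exactly Corollary \ref{corollary:FreeLiePreproj} and, in numerical form, the Bozec--Schiffmann-type identity $C^{\abso}_{Q,\dd}(q^{-2})=\IP(\CM_{\Pi_Q,\dd},q)$ of \S\ref{cuspidals_sec}, both of which the paper derives as \emph{consequences} of Theorem~\ref{theorem:freenesspreprojective}. The dimension inequality only shows the IC summands have pairwise distinct supports from the non-top products; it says nothing about whether $\pH{0}(\mathscr{A}_{\JH})$ might contain additional simple summands not accounted for by the free algebra. Similarly, your first route to step (i) cites ``known generation of the preprojective BPS Lie algebra by these [IC] classes,'' but \cite{davison2020bps} only supplies the monomorphisms $\underline{\IC}(\CM_{\Pi_Q,\dd})\hookrightarrow\ulrelBPSLie{\Pi_Q}^{\mathrm{3d},\psi}$; that these classes \emph{generate} is again equivalent to what is being proved.

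What is missing, concretely, is the anchor that the paper uses to kill the potential extra simple summand once the \'etale-local induction has pushed it to the orbit of $S_{\dd}$: the identification (Theorem \ref{theorem:degree0SSN}, from \cite{hennecart2022geometric, bozec2016quivers}) of $\HO^0\!\mathscr{A}^{\psi,\SSN}_{\Pi_Q}$ with $\UEA(\mathfrak{n}_Q^+)$ for the Borcherds--Bozec Lie algebra, which for a totally negative quiver is a free algebra; together with the $\BG_a^{\bar{L}}$-equivariance of everything in sight (Lemma \ref{lemma:loopequivariant}), which forces any kernel/cokernel summand supported on the orbit $\BG_a^{\bar{L}}\cdot 0_{\dd}\cong\BoC^{\bar{L}}$ to be constant, hence visible after restricting to the strictly seminilpotent locus, where the Borcherds--Bozec computation (Corollary \ref{corollary:degree0SSN}) rules it out. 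Your ``explicit CoHA-at-a-point computation'' at the terminal step gestures at this but does not identify the input; without it, neither the surjectivity of $\Phi_{\Pi_Q,\dd}$ on the deepest stratum nor the Hilbert-series equality is established. The paper also encodes its double induction more carefully as a lexicographic order on $(\lvert\dd\rvert,-\lvert Q_0\rvert)$, precisely to separate the case $\mu(Q'_x,\mm_x)=\mu(Q,\dd)$ (the orbit of $0_\dd$) from the genuinely smaller cases; your proposal's secondary ``descending induction on $\sum m_i$'' does not cleanly do this, since the critical case is $\sum m_i=\lvert\dd\rvert$ with $Q'=Q$, not $\sum m_i$ small.
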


We let $\Free_{\Alg}\left(\bigoplus_{\dd\in\Sigma_{\Pi_Q}}\ICA(\mathcal{M}_{\Pi_Q,\dd})\right)$ be the free algebra generated by the intersection cohomology of the moduli space of semisimple representations of $\Pi_Q$.
We define $\mathfrak{P}_{\JH}^0\!\HO^*\!\!\mathscr{A}\coloneqq\HO^*(\mathcal{M}_{\Pi_Q},\ptau{\leq 0}\mathscr{A}_{\JH})$: the zeroth piece of the perverse filtration on $\HO^{\BoMo}_*(\FM_{\Pi_Q},\BoQ^{\vir})$ induced by the morphism $\JH$. By taking the cohomology of the morphism $\Phi_{\Pi_Q}$, we obtain the following corollary.
\begin{corollary}
\label{corollary:BPSalgebraFree}
 Let $Q$ be a totally negative quiver. The morphism 
 \[
 \HO^*(\Phi_{\Pi_Q})\colon \Free_{\Alg}\left(\bigoplus_{\dd\in\Sigma_{\Pi_Q}}\ICA(\mathcal{M}_{\Pi_Q,\dd})\right)\rightarrow \mathfrak{P}_{\JH}^0\!\HO^*\!\!\mathscr{A}
 \]
 is an isomorphism of algebras. The lift of this morphism to a morphism of algebra objects in the category of mixed Hodge structures is also an isomorphism.
\end{corollary}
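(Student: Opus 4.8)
The plan is to deduce the corollary from Theorem~\ref{theorem:freenesspreprojective} simply by applying the derived global sections functor $\HO^*(\mathcal{M}_{\Pi_Q},-)$ to the algebra isomorphism $\Phi_{\Pi_Q}$, once we have checked that this functor carries the free algebra object in perverse sheaves to the free algebra on the intersection cohomology groups. The one structural input needed is that $\HO^*(\mathcal{M}_{\Pi_Q},-)$ is lax symmetric monoidal from $(\CD_{\mathrm{c}}^+(\mathcal{M}_{\Pi_Q}),\boxdot)$ to $\BoQ$-vector spaces with $\otimes$ — and likewise from $\D^+(\MHM(\mathcal{M}_{\Pi_Q}))$ to $\D^+(\MHS)$ — with the Künneth comparison map an isomorphism on all objects in sight. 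Indeed, $\boxdot$ is by construction $\oplus_*(-\boxtimes-)$ for the direct-sum map $\oplus\colon\mathcal{M}_{\Pi_Q}\times\mathcal{M}_{\Pi_Q}\to\mathcal{M}_{\Pi_Q}$ (finite on each graded component), so
\[
 \HO^*(\mathcal{M}_{\Pi_Q},A\boxdot B)\;\cong\;\HO^*(\mathcal{M}_{\Pi_Q}\times\mathcal{M}_{\Pi_Q},A\boxtimes B)\;\cong\;\HO^*(\mathcal{M}_{\Pi_Q},A)\otimes\HO^*(\mathcal{M}_{\Pi_Q},B)
\]
by composition of pushforwards and the Künneth formula for external products (no properness hypothesis is needed for the cohomology of a box product). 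Hence $\HO^*$ sends algebra objects to algebras and algebra isomorphisms to algebra isomorphisms.

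First I would pin down the target: $\HO^*(\Phi_{\Pi_Q})$ lands in $\HO^*(\mathcal{M}_{\Pi_Q},\pH{0}(\mathscr{A}_{\JH}))$, which is exactly $\mathfrak{P}_{\JH}^0\!\HO^*\!\!\mathscr{A}$ by definition, using as recalled in \S\ref{introduction:freenesstotallynegativequiver} that $\mathscr{A}_{\JH}$ is concentrated in nonnegative perverse degrees, so $\ptau{\leq 0}\mathscr{A}_{\JH}=\pH{0}(\mathscr{A}_{\JH})$. Next I would identify the source. Writing $V:=\bigoplus_{\dd\in\Sigma_{\Pi_Q}}\mathcal{IC}(\mathcal{M}_{\Pi_Q,\dd})$, the free algebra object is $\Free_{\boxdot-\Alg}(V)=\bigoplus_{n\geq 0}V^{\boxdot n}$ with concatenation product; because every $\dd\in\Sigma_{\Pi_Q}$ is nonzero, in each fixed dimension vector this is a \emph{finite} direct sum of perverse sheaves — which is both what makes the free algebra object well defined and what makes $\HO^*$ commute with the sum. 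Iterating the Künneth isomorphism above and using $\HO^*(\mathcal{M}_{\Pi_Q,\dd},\mathcal{IC}(\mathcal{M}_{\Pi_Q,\dd}))=\ICA(\mathcal{M}_{\Pi_Q,\dd})$ then gives $\HO^*(\mathcal{M}_{\Pi_Q},V^{\boxdot n})\cong\HO^*(\mathcal{M}_{\Pi_Q},V)^{\otimes n}$ compatibly with the concatenation products and with the units ($n=0$ contributes $\HO^*(\pt,\BoQ)=\BoQ$), whence a canonical isomorphism of graded algebras
\[
 \HO^*\Bigl(\mathcal{M}_{\Pi_Q},\Free_{\boxdot-\Alg}(V)\Bigr)\;\cong\;\Free_{\Alg}\Bigl(\bigoplus_{\dd\in\Sigma_{\Pi_Q}}\ICA(\mathcal{M}_{\Pi_Q,\dd})\Bigr).
\]
Composing this with $\HO^*(\Phi_{\Pi_Q})$ from the first paragraph yields the claimed algebra isomorphism.

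The mixed Hodge structure refinement is proved verbatim, replacing $\CD_{\mathrm{c}}^+(\mathcal{M}_{\Pi_Q})$ by $\D^+(\MHM(\mathcal{M}_{\Pi_Q}))$, $\Phi_{\Pi_Q}$ by its MHM lift from Theorem~\ref{theorem:freenesspreprojective}, and invoking the Künneth isomorphism for mixed Hodge modules. I do not anticipate a serious obstacle: the argument is entirely formal once Theorem~\ref{theorem:freenesspreprojective} is in hand, and the only two points needing a word of care are the well-definedness of the direct sum defining the free algebra object (handled by the $\BoN^{Q_0}$-grading, which makes it locally finite) and the monoidality of $\HO^*(\mathcal{M}_{\Pi_Q},-)$ with respect to $\boxdot$ (handled by the Künneth formula together with the finiteness of the direct-sum map). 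No new geometric input is required.
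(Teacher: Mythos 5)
Your proposal is correct and is essentially the paper's own argument: the paper simply says ``By taking the cohomology of the morphism $\Phi_{\Pi_Q}$, we obtain the following corollary,'' treating the passage from the relative isomorphism of $\boxdot$-algebras (Theorem~\ref{theorem:freenesspreprojective}) to an isomorphism of algebras in vector spaces/MHS as a formal consequence of applying $\HO^*(\CM_{\Pi_Q},-)$. You have merely made explicit the two points the paper leaves implicit — monoidality of $\HO^*(\CM_{\Pi_Q},-)$ with respect to $\boxdot$ via Künneth, and local finiteness of the free algebra object so that $\HO^*$ commutes with the defining direct sum — which is exactly what is needed.
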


\subsubsection{Freeness of the BPS-algebra for totally negative 2CY categories}
\label{subsubsection:freenesstotallynegative}
Theorem \ref{theorem:freenesspreprojective} can be generalised to more general $2$-Calabi--Yau categories satisfying some assumptions ensuring good geometric behaviour. We let $\JH\colon\mathfrak{M}_{\mathcal{A}}\rightarrow\mathcal{M}_{\CA}$ be a good moduli space of objects in a $2$-Calabi--Yau Abelian category $\mathcal{A}$. We denote by $\chi_{\CA}$ the Euler form of $\CA$. We refer to \S \ref{section:modulistackobjects2CY} for the precise hypotheses needed.
We let $\mathscr{A}_{\JH}=\JH_*\BD(\BoQ_{\mathfrak{M}_{\CA}}[-\chi_{\CA}])$. This object is a complex of constructible sheaves on $\mathcal{M}_{\CA}$. The object $\mathscr{A}_{\JH}$ has a degree zero multiplication map since for such categories, the Euler form is symmetric (\S \ref{subsubsection:introrelativeCoHA}). It is graded by $\pi_0(\mathfrak{M}_{\mathcal{A}})$, the monoid of connected components of $\mathfrak{M}_{\mathcal{A}}$ (or $\mathcal{M}_{\CA}$). For $a\in\pi_0(\mathfrak{M}_{\mathcal{A}})$, $\JH_a\colon\mathfrak{M}_{\CA,a}\rightarrow\mathcal{M}_{\CA,a}$ denotes the restriction of $\JH$ to the $a$th connected component.

The constructible complex $\mathscr{A}_{\JH}=\JH_*\BD(\BoQ_{\mathfrak{M}_{\CA}}[-\chi])$ is an algebra in the monoidal category of constructible complexes on $\mathcal{M}_\CA$ and $\pH{0}(\mathscr{A}_{\JH})$ is an algebra for the multiplication $\pH{0}(m)$; see \S \ref{subsubsection:ThecohaproductKV} for details of the construction. We let $\Sigma_{\mathcal{A}}\subset \pi_0(\mathfrak{M}_{\mathcal{A}})$ be the set of elements $a$ of $\pi_0(\Mst_\CA)$ for which $\JH_a$ is a $\mathbf{G}_m$-gerbe over a nonempty subset of $\mathcal{M}_{\CA,a}$. A class $a\in\pi_0(\mathfrak{M}_{\mathcal{A}})$ is in $\Sigma_{\mathcal{A}}$ if and only if $\mathcal{A}$ has a simple object of class $a$. By \cite{davison2021purity}, for $a\in\Sigma_{\mathcal{A}}$, we have a canonical monomorphism of perverse sheaves
\[
 \mathcal{IC}(\mathcal{M}_{\CA,a})\hookrightarrow \pH{0}(\mathscr{A}_{\JH})= \BPS_{\Alg}.
\]
This monomorphism together with the multiplication $\pH{0}(m)$ induces a morphism of algebra objects in the tensor category of perverse sheaves on $\mathcal{M}_{\CA}$:
\begin{equation}
 \Phi_{\mathcal{A}}\colon \Free_{\boxdot-\Alg}\left(\bigoplus_{a\in\Sigma_{\mathcal{A}}}\mathcal{IC}(\mathcal{M}_{\CA,a})\right)\rightarrow \BPS_{\Alg},
\end{equation}
by the universal property of free algebras.

The following theorem is the generalization of Theorem~\ref{intro_cohaprod} to any $2$CY category $\CA$ satisfying the assumptions of \S\ref{section:modulistackobjects2CY} and \S\ref{subsubsection:ThecohaproductKV}.

\begin{theorem}[=Theorem~\ref{theorem:FreeAlg2CY}]
\label{theorem:freenesstotneg2CY}
 If $\mathcal{A}$ is a totally negative 2CY category, then the morphism $\Phi_{\mathcal{A}}$ is an isomorphism of algebras (in the tensor category of perverse sheaves on $\mathcal{M}_{\CA}$). The natural upgrade of this statement to the category $\MHM(\CM_{\mathcal{A}})$ of mixed Hodge modules also holds.
\end{theorem}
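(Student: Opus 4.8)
The plan is to reduce Theorem~\ref{theorem:freenesstotneg2CY} to the quiver case Theorem~\ref{theorem:freenesspreprojective} by a local-to-global argument on the good moduli space $\CM_{\CA}$, exploiting the fact that, by \cite{davison2021purity}, \'etale-locally any totally negative 2CY category with a good moduli space looks like $\Rep(\Pi_Q)$ for a totally negative quiver $Q$. First I would recall that the source and target of $\Phi_{\CA}$ are both complexes (in fact objects, after taking $\pH{0}$) of perverse sheaves on $\CM_{\CA}$, so checking that $\Phi_{\CA}$ is an isomorphism is an \'etale-local question on $\CM_{\CA}$: both the formation of $\mathscr{A}_{\JH}=\JH_*\BD(\BoQ^{\vir}_{\FM_{\CA}})$, the perverse truncation $\pH{0}$, the free algebra functor $\Free_{\boxdot-\Alg}$, and the multiplication map $m$ all commute with pullback along \'etale (even smooth) maps to $\CM_{\CA}$ — this compatibility is precisely what the relative/sheaf-theoretic formulation of the CoHA in \S\ref{section:relativeCoHA} buys us, and it is the whole reason for setting things up relatively rather than just taking global sections. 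So I would state and use a lemma to the effect that for an \'etale map $U\to\CM_{\CA}$ inducing $\FM_U\to\FM_{\CA}$ by base change (with $\FM_U\to U$ still a good moduli space), one has $\mathscr{A}_{\JH_U}\simeq (\mathscr{A}_{\JH})|_U$ compatibly with all structures, and likewise $\BPS_{\Alg,U}\simeq (\BPS_{\Alg})|_U$ and $\Free_{\boxdot-\Alg}(\bigoplus_a \IC(\CM_{U,a}))\simeq \Free_{\boxdot-\Alg}(\bigoplus_a\IC(\CM_{\CA,a}))|_U$, and the restriction of $\Phi_{\CA}$ is $\Phi_U$.

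Next I would invoke the structural result of \cite{davison2021purity} (the "local model" theorem for totally negative 2CY categories): for every closed point $p\in\CM_{\CA}$ corresponding to a polystable object $\bigoplus S_i^{\oplus m_i}$, there is an \'etale neighbourhood $U$ of $p$ and a totally negative quiver $Q$ (the "Ext-quiver" at $p$, with vertex set indexing the simples $S_i$ and arrows recording $\dim\Ext^1$, which is totally negative precisely because the Euler form of $\CA$ is totally negative) together with an \'etale map $U\to\CM_{\Pi_Q}$ over which $\FM_U$ is pulled back from $\FM_{\Pi_Q}$, compatibly with $\JH$. Then on $U$ we have $\Phi_{\CA}|_U\simeq \Phi_{\Pi_Q}|_U$ under these identifications, and the latter is an isomorphism by Theorem~\ref{theorem:freenesspreprojective} combined with its own \'etale-local nature. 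One subtlety to check: the generating set $\Sigma_{\CA}$ must match $\Sigma_{\Pi_Q}$ locally — i.e. the simple objects of $\CA$ supported near $p$ correspond exactly to the simple $\Pi_Q$-modules, and the monomorphisms $\IC(\CM_{\CA,a})\hookrightarrow\BPS_{\Alg}$ restrict to the monomorphisms $\IC(\CM_{\Pi_Q,\dd})\hookrightarrow\BPS_{\Alg,\Pi_Q}$; this again follows from \cite{davison2021purity,crawley2001geometry} and the compatibility of the BPS sheaf with \'etale base change. Since being an isomorphism of (complexes of) perverse sheaves (or mixed Hodge modules) can be checked on an \'etale cover, and $\CM_{\CA}$ is covered by such neighbourhoods $U$, it follows that $\Phi_{\CA}$ is an isomorphism. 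The MHM upgrade is handled the same way, using that the local models of \cite{davison2021purity} are compatible with the mixed Hodge module structures and that $\Free_{\boxdot-\Alg}$ and $\tau^{\leq 0}$ are defined functorially in $\D^+(\MHM(-))$.

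The main obstacle I expect is not the global gluing, which is formal once the \'etale-local comparison is in place, but rather establishing that \emph{all the algebra structures} — the CoHA multiplication $m$, its restriction to $\pH{0}$, the embeddings of the $\IC$ generators, and the induced map $\Phi$ — are genuinely compatible with \'etale (smooth) base change on $\CM_{\CA}$, as algebra objects, not merely the underlying complexes. This requires that the correspondence defining the CoHA product (the stack of short exact sequences, with its two projections to $\FM_{\CA}\times\FM_{\CA}$ and one to $\FM_{\CA}$) base-changes correctly along $U\to\CM_{\CA}$, and that the pushforward-with-supports and the shifts by the Euler form all behave compatibly; this is exactly the content needed from \S\ref{section:relativeCoHA}, and I would isolate it as the key lemma. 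A second, smaller point is matching the combinatorics: one must know that the Ext-quiver at each point of a totally negative 2CY category is itself totally negative — but this is immediate since $\dim\Ext^1(S_i,S_i)-2 = -(\ee_i,\ee_i)_{\CA}/2\cdot 2 + \dots$ is governed by the (totally negative) Euler form of $\CA$, forcing at least two loops at each vertex and at least one arrow between distinct vertices, which is the stated characterisation of total negativity for quivers.
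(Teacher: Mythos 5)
Your high-level strategy — reduce to Theorem~\ref{theorem:freenesspreprojective} by a local argument on $\Msp_{\CA}$, using the local model of $2$CY moduli in terms of preprojective algebras of Ext-quivers — is the same as the paper's, and your two observations (that the Ext-quiver at any point is totally negative because the Euler form of $\CA$ is, and that $\Sigma_{\CA}$ matches $\Sigma_{\Pi_Q}$ locally) are both correct and are Proposition~\ref{proposition:totnegextquiv} and Proposition~\ref{proposition:geometryofgms}~(3) respectively.

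However, the mechanism you propose for the reduction has a genuine gap. You want to check that $\Phi_{\CA}$ is an isomorphism by restricting along \'etale (or analytic open) maps $j\colon U\to \Msp_{\CA}$, and you claim that $j^!$ commutes with all the relevant structure, in particular with the free-algebra functor: $\Free_{\boxdot-\Alg}\bigl(\bigoplus_a\IC(\Msp_{\CA,a})\bigr)\big|_U\cong \Free_{\boxdot-\Alg}\bigl(\bigoplus_a\IC(\Msp_{\CA,a})|_U\bigr)$. This is false for a general open $U$. The monoidal product $\boxdot$ is $\CF\boxdot\CG=\oplus_*(\CF\boxtimes\CG)$, and by proper base change $j^*\oplus_*(\CF\boxtimes\CG)$ is computed over $\oplus^{-1}(U)\subset\Msp_{\CA}\times\Msp_{\CA}$, which is \emph{not} $U\times U$. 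Consequently the restriction of $\Free_{\boxdot-\Alg}$ to $U$ receives contributions from intersection complexes far outside $U$, and $j^!$ is not a monoidal functor. The same objection applies to the CoHA product $m$, which also runs through $\oplus_*$. So the central lemma you isolate in your last paragraph — that all the algebra structures are compatible with \'etale pullback — cannot be proved as stated; it is not merely a technical base-change check but an actual failure.

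The paper avoids this by never restricting to open subsets of $\Msp_{\CA}$ at all. Both sides of $\Phi_{\CA}$ are semisimple objects (Lemma~\ref{lemma:BPSsemisimple}, Proposition~\ref{proposition:FreeSemiSimp}), so if $\Phi_{\CA}$ is not an isomorphism one finds a single closed point $x\in\Msp_{\CA}$ with $i_x^!\bigl(\ker(\Phi_{\CA})\oplus\coker(\Phi_{\CA})\bigr)\neq0$ (Corollary~\ref{corollary:nonvanishingcomplexMHM}). The Ext-quiver $Q$ at $x$ gives a $\Sigma$-collection $\underline{\CF}$ and, crucially, the map used is the inclusion of the \emph{discrete saturated submonoid} $\imath_{\underline{\CF}}\colon\BoN^{\underline{\CF}}\hookrightarrow\Msp_{\CA}$ (generated by the simple summands of $x$). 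For such a saturated submonoid inclusion, Lemma~\ref{lemma:strictmonoidalfunctor} shows that $\imath_{\underline{\CF}}^!$ \emph{is} a strict monoidal functor, hence commutes with $\Free_{\boxdot-\Alg}$; together with Corollary~\ref{corollary:CoHAcompatibiltiySigmacoll} (compatibility of the CoHA after restricting to $\Sigma$-collections) and the analytic local-model comparison (Theorem~\ref{theorem:neighbourhood}), this identifies $\imath_{\underline{\CF}}^!\Phi_{\CA}$ with $\imath_{\Nil}^!\Phi_{\Pi_Q}$. Then Theorem~\ref{theorem:freenesspreprojective} forces the latter to be an isomorphism, contradicting the choice of $x$. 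Replacing your "\'etale cover" step by this "restrict to a discrete saturated submonoid" step is exactly what repairs the argument and turns your outline into the paper's proof.
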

Taking derived global sections, the analogue of Corollary \ref{corollary:BPSalgebraFree} also holds.

\subsubsection{PBW isomorphism for the CoHA of a totally negative 2CY category}
\label{subsubsection:cohatotnegative}
Let $\mathcal{A}$ be a totally negative $2$-Calabi--Yau Abelian category admitting a stack of objects together with a good moduli space $\JH\colon\mathfrak{M}_{\mathcal{A}}\rightarrow \mathcal{M}_{\CA}$ satisfying the Assumptions \ref{p_assumption}-\ref{assumption:associativity}.
It will prove useful to twist the multiplication on $\ulrelCoHA_{\varpi}$ by a sign, in the 2CY case. Let $\KK(\Mst_{\CA})$ be the groupification of the monoid $\pi_0(\Mst_{\CA})$. Firstly, by the 2CY property and our assumptions, the Euler form on the category $\mathcal{A}$ descends to a symmetric bilinear form $\chi$ on $\KK(\Mst_{\CA})$, for which $\chi(a,a)$ is even for all $a$. As such we can find a bilinear form $\psi$ on $\KK(\Mst_{\CA})$ such that $\chi$ is the symmetrisation of $\psi$, modulo two, i.e. 
\begin{align}
\label{first_psi}
\chi(a,b)=\psi(a,b)+\psi(b,a)&\quad\quad\pmod{2}.
\end{align}
The existence of $\psi$ is straightforward as it suffices to work with the $\BoF_2$-vector space $\KK(\FM_{\CA})\otimes_{\BoZ}\BoF_2$. Then for $a,b\in \pi_0(\FM_{\CA})$ we define $m^{\psi}_{\JH,a,b}=(-1)^{\psi(a,b)}m_{\JH,a,b}$. We denote by $\ulrelCoHA_{\JH}^{\psi}$ the mixed Hodge module complex $\JH_*\BD\ulBoQ^{\vir}_{\FM_{\CA}}$ with the associative product twisted by these signs.

We define
\begin{align*}
\ulBPS_{\Lie}\coloneqq &\Free_{\boxdot-\Lie}\left(\bigoplus_{a\in\Sigma_{\mathcal{A}}}\ul{\mathcal{IC}}(\mathcal{M}_{\CA,a})\right)\in\MHM(\CM_{\CA}),&
\aBPS_{\Lie} \coloneqq&\Free_{\Lie}\left(\bigoplus_{a\in\Sigma_{\mathcal{A}}}\ICA(\mathcal{M}_{\CA,a})\right).
\end{align*}
We define $\ICA(\mathcal{M}_{\CA,a})=\HO(\mathcal{M}_{\CA,a},\ul{\mathcal{IC}}(\mathcal{M}_{\CA,a}))$, the (shifted) intersection cohomology of the possibly singular variety $\mathcal{M}_{\CA,a}$. Using the algebra structure on $\ulrelCoHA^{\psi}_{\JH}$ (which induces a Lie algebra structure by taking the commutator Lie bracket), the morphism $\ul{\mathcal{IC}}(\mathcal{M}_{\CA,a})\rightarrow \ulrelCoHA^{\psi}_{\JH}$ induces a morphism $\ulBPS_{\Lie}\rightarrow \ulrelCoHA^{\psi}_{\JH}$. Since the relative cohomological Hall algebra possesses a $\HO^*(\B\BoC^*,\BoQ)$-action coming from a choice of positive determinant line bundle on $\FM_{\CA}$ (Assumption~\ref{det_bun_assumption}) we obtain a morphism
\[
\ulBPS_{\Lie}\otimes \HO^*(\B\BoC^*,\BoQ)\rightarrow \ulrelCoHA^{\psi}_{\JH}.
\]
The algebra structure on $\ulrelCoHA^{\psi}_{\JH}$ provides us now with a canonical morphism of complexes of mixed Hodge modules on $\mathcal{M}_{\CA}$:
\[
 \tilde{\Phi}^{\psi}\colon \Sym_{\boxdot}\left(\ulBPS_{\Lie}\otimes \HO^*(\B\BoC^*,\BoQ)\right)\rightarrow \ulrelCoHA^{\psi}_{\JH}.
\]

\begin{theorem}[see~\S\ref{subsection:PBW-2CY}]
\label{theorem:pbwtotnegative2CY}
 The morphism $\tilde{\Phi}^{\psi}$ is an isomorphism of mixed Hodge module complexes on $\mathcal{M}$ (but not of algebra objects in general).
\end{theorem}
Taking derived global sections, we deduce the following corollary:
\begin{corollary}
\label{Y_corollary}
The PBW morphism
\[
\Sym\left(\aBPS_{\Lie}\otimes \HO^*\!(\B\BoC^*,\BoQ)\right)\rightarrow \HO^*(\CM_{\CA},\mathscr{A}^{\psi}_{\JH})=\HO^{\BoMo}_*(\mathfrak{M}_{\mathcal{A}},\BoQ^{\vir})\eqqcolon\HO^*\!\!\mathscr{A}^{\psi}
\]
provided by the absolute CoHA multiplication on $\HO^*\!\!\mathscr{A}^{\psi}$ is an isomorphism of $\pi_0(\FM_{\CA})$-graded, cohomologically graded vector spaces. The same statement holds at the level of cohomologically graded mixed Hodge structures.
\end{corollary}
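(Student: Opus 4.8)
First I would note that this is a formal consequence of Theorem~\ref{theorem:pbwtotnegative2CY}: the plan is to apply the derived global sections functor $\HO^*(\CM_{\CA},-)$ to the isomorphism $\tilde{\Phi}^{\psi}$ and identify each of the four ingredients of its source and target with its ``absolute'' counterpart. Since $\HO^*(\CM_{\CA},-)$ carries an isomorphism of complexes of mixed Hodge modules to an isomorphism of cohomologically graded mixed Hodge structures (in particular of graded vector spaces), once the identifications are in place the corollary follows immediately, in both the graded-vector-space and the graded-MHS forms.

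Next I would dispose of the target. By definition $\ulrelCoHA^{\psi}_{\JH}=\JH_*\BD\ulBoQ^{\vir}_{\FM_{\CA}}$ equipped with the $\psi$-twisted product, and the sign twist leaves the underlying complex unchanged, so using $\HO^*(\CM_{\CA},\JH_*(-))=\HO^*(\FM_{\CA},-)$ one has $\HO^*(\CM_{\CA},\ulrelCoHA^{\psi}_{\JH})=\HO^*(\FM_{\CA},\BD\ulBoQ^{\vir}_{\FM_{\CA}})=\HO^{\BoMo}_*(\FM_{\CA},\BoQ^{\vir})=\HO^*\!\!\mathscr{A}^{\psi}$, as already recorded in \S\ref{subsubsection:cohatotnegative}; moreover $\HO^*$ applied to the multiplication $m^{\psi}_{\JH}$ is, by construction of the absolute CoHA from the relative one, the $\psi$-twisted absolute CoHA product on $\HO^*\!\!\mathscr{A}^{\psi}$.

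For the source the crucial observation is that $\HO^*(\CM_{\CA},-)$ is a symmetric monoidal functor from $(\CD^{+}_{\cc}(\CM_{\CA}),\boxdot)$ to the category of cohomologically graded mixed Hodge structures with its usual tensor product. Indeed $\boxdot=\oplus_*(-\boxtimes-)$ with $\oplus$ commutative, so the K\"unneth isomorphism (over the field $\BoQ$ all higher Tor terms vanish) gives $\HO^*(\CM_{\CA},\CF\boxdot\CG)\cong\HO^*(\CM_{\CA},\CF)\otimes\HO^*(\CM_{\CA},\CG)$, naturally and compatibly with associativity and with the Koszul-signed symmetry constraint. Working one $\pi_0(\FM_{\CA})$-component at a time, $\HO^*(\CM_{\CA},-)$ commutes with the direct sums entering $\Free_{\boxdot-\Lie}$ and $\Sym_{\boxdot}$. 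Since in characteristic zero these constructions are assembled out of tensor powers, direct sums and $\Sigma_n$-(co)invariants (the last cut out by idempotents of $\BoQ[\Sigma_n]$, hence preserved by any $\BoQ$-linear additive functor), a $\BoQ$-linear symmetric monoidal functor carries free Lie algebras to free Lie algebras and symmetric algebras to symmetric algebras. Together with $\HO^*(\CM_{\CA},\ul{\IC}(\CM_{\CA,a}))=\ICA(\CM_{\CA,a})$ and with the fact that tensoring by the fixed graded MHS $\HO^*(\B\BoC^*,\BoQ)$ commutes with $\HO^*(\CM_{\CA},-)$, this yields $\HO^*(\CM_{\CA},\ulBPS_{\Lie})=\aBPS_{\Lie}$ and $\HO^*(\CM_{\CA},\Sym_{\boxdot}(\ulBPS_{\Lie}\otimes\HO^*(\B\BoC^*,\BoQ)))=\Sym(\aBPS_{\Lie}\otimes\HO^*(\B\BoC^*,\BoQ))$.

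Assembling these, $\HO^*(\CM_{\CA},\tilde{\Phi}^{\psi})$ is exactly the PBW morphism in the statement: the generating maps $\ICA(\CM_{\CA,a})\hookrightarrow\HO^*\!\!\mathscr{A}^{\psi}$ are the images under $\HO^*$ of the MHM embeddings $\ul{\IC}(\CM_{\CA,a})\hookrightarrow\ulrelCoHA^\psi_{\JH}$, $\HO^*$ of the product is the absolute CoHA product, and naturality of the free-algebra and PBW constructions identifies the two, so the morphism is an isomorphism because $\tilde{\Phi}^{\psi}$ is. The $\pi_0(\FM_{\CA})$- and cohomological gradings and the Hodge structure are retained because every step is natural and respects the decomposition $\CM_{\CA}=\bigsqcup_a\CM_{\CA,a}$ and the internal grading of objects of $\CD^{+}(\MHM(\CM_{\CA}))$. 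I expect the only genuinely delicate point — bookkeeping rather than a conceptual obstacle — to be checking that the symmetric monoidal structure on $\HO^*(\CM_{\CA},-)$ matches the sign conventions defining $\Sym_{\boxdot}$ and the twisted product $m^{\psi}$, so that $\HO^*$ of the relevant free objects really are the expected free algebras; but this reduces, exactly as in the proof of Theorem~\ref{theorem:pbwtotnegative2CY}, to K\"unneth for $\boxtimes$ being symmetric monoidal with the standard Koszul sign.
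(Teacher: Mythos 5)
Your proposal is correct and follows exactly the paper's approach: the paper simply says ``Taking derived global sections, we deduce the following corollary,'' i.e.\ apply $\HO^*(\CM_{\CA},-)$ to the isomorphism $\tilde{\Phi}^{\psi}$ of Theorem~\ref{theorem:pbwtotnegative2CY}. You have supplied the standard supporting details (K\"unneth making $\HO^*(\CM_{\CA},-)$ symmetric monoidal for $\boxdot$, compatibility with $\Sym$, $\Free_{\Lie}$, tensoring by $\HO_{\BoC^*}$, and the identification of $\HO^*$ of the relative product with the absolute CoHA product), which the paper leaves implicit.
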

At the level of underlying mixed Hodge structures, there is no difference between $\HO^*\!\!\mathscr{A}$ and $\HO^*\!\!\mathscr{A}^{\psi}$ (the difference only regards the multiplications). In particular, we deduce that the \textit{cohomological integrality conjecture} (which merely states that there is \textit{some} isomorphism of cohomologically graded mixed Hodge structures (not of algebra objects) $\HO^*\!\!\mathscr{A}\cong \Sym(V\otimes\HO(\B\BoC^*,\BoQ))$ with $V$ a $\pi_0(\FM_{\mathcal{A}})$-graded mixed Hodge structure, with finite-dimensional graded pieces) is true for $\HO^*\!\!\mathscr{A}$.

\subsubsection{Nonabelian Hodge theory for stacks}
\label{NAHT_intro}
One of our main applications of the study of cohomological Hall algebras of 2CY categories is an extension of nonabelian Hodge isomorphisms for curves to the Borel--Moore homology of singular stacks, which we now explain. Let $(r,d)\in\BoZ^2$ with $r>0$. We do not assume that $r$ and $d$ are coprime, and we may even allow $d=0$. Let $C$ be a smooth projective curve of genus $g$. Let $\mathfrak{M}_{r,d}^{\Dol}(C)$ be the Dolbeault moduli stack, that is the stack of rank $r$ and degree $d$ semistable Higgs bundles on $C$. We let $\mathcal{M}_{r,d}^{\Dol}(C)$ be the Dolbeault coarse moduli space. We have a morphism $p_{\Dol}\colon \mathfrak{M}_{r,d}^{\Dol}(C)\rightarrow\mathcal{M}_{r,d}^{\Dol}(C)$ taking a semistable Higgs bundle to the associated polystable Higgs bundle.

On the Betti side, let $p\in C$ be a fixed point. Let $\zeta_r$ be a fixed primitive $r$-th root of unity. Consider $\mathfrak{M}_{g,r,d}^{\Betti}$ the moduli stack of local systems on $C\setminus \{p\}$ whose monodromy around $p$ is given by multiplication by $\zeta_r^d$ (the Betti moduli stack). Let $\mathcal{M}_{g,r,d}^{\Betti}$ be the Betti coarse moduli space. We have the semisimplification map $p_{\Betti}\colon\mathfrak{M}_{g,r,d}^{\Betti}\rightarrow\mathcal{M}_{g,r,d}^{\Betti}$.

Classical nonabelian Hodge theory gives a homeomorphism (actually a real-analytic isomorphism)
\[
\Psi_{r,d}\colon \mathcal{M}_{r,d}^{\Dol}(C)\rightarrow\mathcal{M}_{g,r,d}^{\Betti}.
\]
\begin{theorem}[=Theorem~\ref{theorem:NAHT-body}]
\label{NAHT_main_thm}
We have a natural isomorphism
\[
 (\Psi_{r,d})_*(p_{\Dol})_*\BD\BoQ_{\mathfrak{M}_{r,d}^{\Dol}(C)}^{\vir}\cong (p_{\Betti})_*\BD\BoQ_{\mathfrak{M}_{g,r,d}^{\Betti}}^{\vir}
\]
in the constructible derived category $\mathcal{D}^+_{\mathrm{c}}(\mathcal{M}_{r,d}^{\Betti})$, and therefore, taking derived global sections and shifting, a natural isomorphism
\[
 \HO^{\BoMo}_*(\mathfrak{M}_{r,d}^{\Dol}(C),\BoQ)\cong \HO^{\BoMo}_*(\mathfrak{M}_{g,r,d}^{\Betti},\BoQ).
\]
\end{theorem}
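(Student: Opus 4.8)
The plan is to deduce the statement from the structural results on totally negative $2$CY categories --- the freeness Theorem~\ref{theorem:freenesstotneg2CY} and the PBW Theorem~\ref{theorem:pbwtotnegative2CY} --- together with the classical nonabelian Hodge homeomorphism $\Psi_{r,d}$, by exploiting that for such categories the relative CoHA is built \emph{functorially} out of the coarse moduli space, its monoid structure, and the intersection complexes of its ``simple'' components, all of which $\Psi_{r,d}$ transports. Throughout I take $g\geq 2$, so that the two categories in play are totally negative (the cases $g\leq 1$ are either immediate or follow from the classical explicit descriptions of the relevant moduli). Write $\mu=d/r=d_0/r_0$ in lowest terms and $k=\gcd(r,d)$. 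First I would fix the two abelian categories: $\CA^{\Dol}$, the category of semistable Higgs bundles on $C$ of slope $\mu$, and $\CA^{\Betti}$, the appropriate abelian category of $\zeta_r^d$-twisted local systems on $C\setminus\{p\}$. Both are $2$CY, both are totally negative for $g\geq 2$, and both satisfy Assumptions~\ref{p_assumption}--\ref{BPS_cat_assumption} of \S\ref{section:modulistackobjects2CY} (this is part of the catalogue of examples, and its verification is a matter of the paper body). Their monoids of connected components are canonically $\BN$, graded by rank/$r_0$; the numerical Euler form depends only on this grading and on $g$ (it is $(2g-2)r^2$ on rank $r$ in both cases), so it coincides on the two sides, and in particular the virtual shifts in $\BoQ^{\vir}$ agree degreewise. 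The stacks $\FM^{\Dol}_{r,d}(C)$ and $\FM^{\Betti}_{g,r,d}$ are the components of $\FM_{\CA^{\Dol}}$, $\FM_{\CA^{\Betti}}$ of grading $k$, and $p_{\Dol}$, $p_{\Betti}$ are the corresponding good moduli space maps $\JH$, so that $\mathscr{A}^{\Dol}_{\JH}=(p_{\Dol})_*\BD\BoQ^{\vir}_{\FM^{\Dol}}$ and likewise for $\Betti$.

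The key geometric input I would isolate is that $\Psi_{r,d}$ is not merely a homeomorphism of individual coarse spaces but assembles, as $k$ varies, into an isomorphism of monoid objects (in topological spaces)
\[
\Psi\colon \textstyle\bigsqcup_{k\geq 1}\CM^{\Dol}_{kr_0,kd_0}(C)\;\longisoto\;\bigsqcup_{k\geq 1}\CM^{\Betti}_{g,kr_0,kd_0}
\]
intertwining the direct-sum maps $\oplus$ on the two sides, and that $\Psi$ matches stable Higgs bundles with irreducible twisted local systems, hence identifies $\Sigma_{\CA^{\Dol}}$ with $\Sigma_{\CA^{\Betti}}$ (both equal to $\BN_{\geq 1}$ when $g\geq 2$). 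This is essentially classical: on coarse spaces the points are $S$-equivalence classes of polystable objects, nonabelian Hodge theory is induced on these by the harmonic--metric construction, which carries a direct sum of Hermitian--Einstein bundles to the direct sum of the associated flat connections, hence $\Psi$ is compatible with $\oplus$; the matching of stable objects with irreducible local systems is again due to Hitchin and Simpson, the non-coprime and twisted (punctured-curve) cases being covered by Simpson's construction of the moduli spaces. Since each $\CM_a$ is a complex variety and the intersection complex with $\BoQ$-coefficients is a topological invariant, $\Psi$ canonically identifies $\Psi_*\mathcal{IC}(\CM^{\Dol}_a)\cong\mathcal{IC}(\CM^{\Betti}_a)$ for each $a$; and because $\oplus$ is finite, $\Psi$ being a monoid map forces $\Psi_*$ to be a symmetric monoidal triangulated equivalence $\D_{\cc}^{+}(\CM^{\Dol})\to\D_{\cc}^{+}(\CM^{\Betti})$ with respect to the $\boxdot$-structures.

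Granting this, I would run the transport argument as follows. The constructions $\Free_{\boxdot-\Lie}$, $\Sym_{\boxdot}$, and $-\otimes\HO^*(\B\BoC^*,\BoQ)$ are built entirely from $\boxdot$, finite direct sums, and cones, hence are intertwined by $\Psi_*$. Applying $\rat$ to the PBW isomorphism $\tilde{\Phi}^{\psi}$ of Theorem~\ref{theorem:pbwtotnegative2CY} on each side --- noting that $\rat(\mathscr{A}^{\psi}_{\JH})=\mathscr{A}_{\JH}=\JH_*\BD\BoQ^{\vir}_{\FM}$ as a constructible complex, the sign twist changing only the product --- yields canonical isomorphisms in $\D_{\cc}^{+}$
\[
\mathscr{A}^{\Dol}_{\JH}\;\cong\;\Sym_{\boxdot}\!\Big(\Free_{\boxdot-\Lie}\big(\textstyle\bigoplus_{a}\mathcal{IC}(\CM^{\Dol}_a)\big)\otimes\HO^*(\B\BoC^*,\BoQ)\Big),
\]
and likewise for $\Betti$. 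Pushing the first isomorphism forward along $\Psi_*$ and using the previous paragraph, its right-hand side goes to the corresponding expression for $\Betti$; composing with the inverse of the $\Betti$-side PBW isomorphism gives $(\Psi_{r,d})_*\mathscr{A}^{\Dol}_{\JH}\cong\mathscr{A}^{\Betti}_{\JH}$ componentwise, and restricting to the component $a=k$ yields $(\Psi_{r,d})_*(p_{\Dol})_*\BD\BoQ^{\vir}_{\FM^{\Dol}_{r,d}(C)}\cong(p_{\Betti})_*\BD\BoQ^{\vir}_{\FM^{\Betti}_{g,r,d}}$ in $\D_{\cc}^{+}(\CM^{\Betti}_{r,d})$. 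Taking derived global sections and using $\HO^*(\CM,\JH_*\BD\BoQ^{\vir}_{\FM})=\HO^{\BoMo}_*(\FM,\BoQ^{\vir})$, and then the fact that the $\vir$-shift is the same integer $(2g-2)r^2$ on both sides so that $\BoQ^{\vir}$ may be replaced by $\BoQ$, gives $\HO^{\BoMo}_*(\FM^{\Dol}_{r,d}(C),\BoQ)\cong\HO^{\BoMo}_*(\FM^{\Betti}_{g,r,d},\BoQ)$.

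The main obstacle, and the step demanding the most care, is the geometric input of the second paragraph: promoting the classical homeomorphism $\Psi_{r,d}$ of coarse spaces to a monoid isomorphism compatible with the $\oplus$-maps in the non-coprime, twisted setting, and checking that $\CA^{\Dol}$ and $\CA^{\Betti}$ --- in particular $\CA^{\Betti}$, for which the ``monodromy $=\zeta_r^d$'' condition must be interpreted so as to be closed under extensions and to possess a good moduli space with the expected stack of objects --- genuinely satisfy Assumptions~\ref{p_assumption}--\ref{BPS_cat_assumption}. A secondary point is \emph{naturality}: one must normalise the isomorphisms $\Psi_*\mathcal{IC}(\CM^{\Dol}_a)\cong\mathcal{IC}(\CM^{\Betti}_a)$ compatibly with the canonical embeddings $\mathcal{IC}(\CM_a)\hookrightarrow\pH{0}(\mathscr{A}_{\JH})$ of \cite{davison2021purity}, so that the resulting isomorphism is canonical; this should follow from simplicity of the intersection complexes together with the compatibility of both $\mathcal{IC}$-normalisations with the generic (constant-sheaf) behaviour.
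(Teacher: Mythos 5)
Your proposal is essentially identical to the paper's argument: both prove Lemma \ref{Lem:monNAHT} (that $\Psi$ is a monoid isomorphism in topological spaces), invoke topological invariance of the intersection complex to transport $\IC(\CM^{\Dol}_a)$ to $\IC(\CM^{\Betti}_a)$, and then push forward the PBW isomorphism of Theorem \ref{theorem:pbwtotnegative2CY} through $\Psi_*$ using that $\Psi_*$ is $\boxdot$-monoidal. The paper's write-up is terser (it does not spell out the verification of Assumptions \ref{p_assumption}--\ref{BPS_cat_assumption}, which is delegated to \S\ref{sec:2dim_categories}, nor the discussion of the $\psi$-twist disappearing after applying $\rat$), but these are exactly the points you flag as needing care, so your proposal is a faithful if more elaborated rendering of the same proof.
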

For $g\leq 1$ this result is contained in \cite{davison2023nonabelian}, while for higher genera it is a consequence of our results on totally negative 2CY categories, along with the classical nonabelian Hodge homeomorphism. Note that, unlike the theorems in previous sections, there is no hope of upgrading this statement to an isomorphism of mixed Hodge module complexes, since $\Psi_{r,d}$ is (famously) not a morphism of complex algebraic varieties. In particular, the natural mixed Hodge structure on the intersection cohomology of the domain of $\Psi_{r,d}$ is pure, while the mixed Hodge structure on the intersection cohomology of the target is not. Furthermore, the mixed Hodge structure on $\HO^{\BoMo}_\ast (\Mst_{r,d}^{\Dol}(C),\BoQ)$ is pure by \cite[Theorem 7.22]{davison2021purity}, while the mixed Hodge structure on $\HO^{\BoMo}_{\ast} (\Mst_{g,r,d}^{\Betti},\BoQ)$ is not.

In the case in which $r$ and $d$ are coprime Theorem \ref{NAHT_main_thm} is a consequence of the existence of the homeomorphism $\Psi_{r,d}$, and the fact that the respective moduli stacks are $\BoC^*$-gerbes over their respective moduli schemes, and we have isomorphisms
\[
\HO^{\BoMo}_*(\mathfrak{M}_{r,d}^{\bullet},\BoQ)\cong \HO^{\BoMo}_*(\CM_{r,d}^{\bullet},\BoQ)\otimes\HO^*(\B \BoC^*,\BoQ)
\]
for $\bullet\in \{\Betti,\Dol\}$, abbreviating $\FM_{r,d}^{\Dol}=\FM_{r,d}^{\Dol}(C)$ and $\FM_{r,d}^{\Betti}=\FM_{g,r,d}^{\Betti}$. The non-coprime case is totally different: although classical nonabelian Hodge theory still gives a homeomorphism $\Psi_{r,d}$, the respective morphisms from the stacks to the moduli schemes are much more complicated. At the level of the stacks themselves, it seems likely that there is \textit{no} homeomorphism simply to apply the Borel--Moore homology functor to, see \cite[page 38]{simpson1994moduli}. Theorem~\ref{NAHT_main_thm} raises the question of whether the algebra structures on both sides coincide. This has been positively answered by the second author in \cite{hennecart2023nonabelian}, where the de Rham moduli space makes the bridge between the Betti and Dolbeault CoHAs via the Hodge--Deligne moduli space and the Riemann--Hilbert correspondence respectively.

\subsection{Acknowledgements}
All three authors were supported by the European Research Council starting grant “Categorified Donaldson--Thomas theory” No. 759967. BD was in addition supported by a Royal Society University Research Fellowship. SSM was in addition supported by the Max Planck Institute for Mathematics and the Swiss National Science Foundation [no. 218340]. We would like to thank Camilla Felisetti, Michael Groechenig, Shivang Jindal, Tasuki Kinjo, Davesh Maulik, Andrei Okounkov, Olivier Schiffmann and Yan Soibelman for useful conversations.

\subsection{Conventions and notations}
\begin{itemize}
\item
We write $\HO_{\BoC^{\ast}} \coloneqq \HO^*_{\BoC^{\ast}}(\pt,\BoQ) = \HO^*\!(\B\BoC^{\ast},\BoQ)$ for the $\BoC^*$-equivariant cohomology of the point. It is isomorphic to $\BoQ[x]$, a polynomial ring in one variable of degree $\deg(x)=2$.
\item
If an Abelian or triangulated $k$-linear category $\mathcal{A}$ is fixed, for objects $M,N$ of $\mathcal{A}$ we denote by $(M,N)_{\mathcal{A}}\coloneqq\sum_{i\in\BoZ}(-1)^i \dim_k(\Ext^i(M,N))$ the Euler form.
\item
We define $\BoN=\BoZ_{\geq 0}$.
\item
If $\CF$ is a complex of sheaves or mixed Hodge modules on a variety or stack $X$, we will often abbreviate the derived global sections functor, writing $\HO^*\!\CF\coloneqq \HO^*(X,\CF)$. Similarly, we write $\HO^i\!\CF\coloneqq \HO^i(X,\CF)$.
\item
If $A=kQ/\langle R\rangle$ is the free path algebra of a quiver $Q$ modulo relations $R$ contained in the two-sided ideal generated by the arrows, and $i$ is a vertex of $Q$, we denote by $S_i$ the $A$-module with dimension vector $1_i$, for which all of the arrows act via multiplication by zero. If $\dd=(d_i)_{i\in Q_0}, \in \BoN^{Q_0}$ we define $S_{\dd}\coloneqq \bigoplus_{i\in Q_0} S_i^{\oplus d_i}$.
\item If $\spadesuit$ is an object that depends on a category $\CA$, then we may sometimes make explicit its dependence by a subscript $\spadesuit_{\CA}$. When $\CA = \Rep(\Pi_Q)$ is the category of representations of the preprojective algebra, then we write $\spadesuit_{\Pi_Q}$ instead of $\spadesuit_{\Rep(\Pi_Q)}$.
We write $\relCoHA_{\CA} = \relCoHA_{\JH} = \relCoHA_{\JH_{\CA}}$, where $\JH_{\CA}\colon\FM_{\CA}\rightarrow\CM_{\CA}$ denotes the good moduli space morphism of the stack of objects in $\CA$.
\item All functors are derived.
\item We write dga for differential graded algebra, and cdga for commutative differential graded algebra.
\item All stacks will be classical stacks, unless explicitly stated otherwise. Where a stack appears in bold script (e.g. $\bsX$) it is a possibly derived stack, where it appears in normal script (e.g. $\FX=t_0(\bsX)$) it is a classical stack. 
\item Quotient stacks are denoted without brackets. GIT quotients are denoted with $\cms$.
\end{itemize}

\section{The preprojective algebra of a quiver}
\label{section:thepreprojective}
The preprojective algebra of a quiver, introduced by Gel'fand and Ponomarev in \cite{gel1979model}, and its category of representations play a central role in this paper. In this section we recall constructions associated to this category which are used throughout the paper.

\subsection{The preprojective algebra}
\label{subsection:preprojectivealgebra}

Let $Q=(Q_0,Q_1,s,t)$ be a quiver, i.e. a set $Q_0$ of vertices, and a set $Q_1$ of arrows, along with a pair of morphisms $s,t\colon Q_1\rightarrow Q_0$ taking arrows to their sources and targets respectively. Let $\overline{Q}=(Q_0,\overline{Q}_1,\overline{s},\overline{t})$ be the double of $Q$. It has the same set of vertices as $Q$ and $\overline{Q}_1=Q_1\sqcup Q_1^*$, where $Q_1^*$ is such that $Q^*=(Q_0,Q_1^*)$ is the opposite quiver of $Q$. If $\alpha\in Q_1$, the corresponding opposite arrow is denoted $\alpha^*\in Q_1^*$. For a field $k$ we denote by $k Q$ the path algebra of $Q$. A basis of $kQ$ is given by the paths in $Q$ (including paths of length zero at each $i\in Q_0$, denoted $e_i$), with multiplication given by concatenation of paths. We let $\Pi_Q$ be the \emph{preprojective algebra} of $Q$:
\begin{equation}
\label{preproj_def}
 \Pi_Q\coloneqq\BoC\overline{Q}/\langle\rho\rangle,
\end{equation}
where $\rho=\sum_{\alpha\in Q_1}[\alpha,\alpha^*]$ is the \emph{preprojective relation}.

For a dimension vector $\dd\in\BoN^{Q_0}$, we let $X_{Q,\dd}\coloneqq\bigoplus_{i\xrightarrow{\alpha}j\in Q_1}\Hom(\BoC^{d_i},\BoC^{d_j})$ be the \emph{representation space} of $\BoC Q$. It is acted on by the product of linear groups $\GL_{\dd}\coloneqq \prod_{i\in Q_0}\GL_{d_i}$ by conjugation at the vertices. The action of $\GL_{\dd}$ on $\Tan^*\!X_{Q,\dd}\cong X_{\overline{Q},\dd}\coloneqq \bigoplus_{i\xrightarrow{\alpha}j\in \overline{Q}_1}\Hom(\BoC^{d_i},\BoC^{d_j})$ is Hamiltonian and the quadratic moment map is

\begin{equation*}
\mu_{\vec{d}} \colon \Tan^*\!X_{\dd} \longrightarrow \mathfrak{gl}_{\dd}, \quad\quad
(x,x^*) \longmapsto \sum_{\alpha\in Q_1}[x_{\alpha},x_{\alpha^*}].
\end{equation*}
It is precisely given by evaluation of the preprojective relation $\rho$ on representations. The stack of $\dd$-dimensional representations of $\Pi_Q$ is $\mathfrak{M}_{\Pi_Q,\dd}\simeq \mu_{\dd}^{-1}(0)/\GL_{\dd}$. We let $\mathcal{M}_{\Pi_Q,\dd}\coloneqq \mu_{\dd}^{-1}(0)\cms\GL_{\dd}=\Spec(\BoC[\mu^{-1}(0)]^{\GL_{\dd}})$ be the affinization. Its closed points parametrise semisimple $\dd$-dimensional representations of $\Pi_Q$. We let $\JH_{\Pi_Q,\dd}\colon\mathfrak{M}_{\Pi_Q,\dd}\rightarrow\mathcal{M}_{\Pi_Q,\dd}$ be the semisimplification/affinization map. It is a good moduli space for the stack $\mathfrak{M}_{\Pi_Q,\dd}$ in the sense of \cite{alper2013good}. Let
\[
\JH_{\Pi_Q}=\bigsqcup_{\dd\in\BoN^{Q_0}}\JH_{\Pi_Q,\dd}\colon\mathfrak{M}_{\Pi_Q}=\bigsqcup_{\dd\in\BoN^{Q_0}}\mathfrak{M}_{\Pi_Q,\dd}\rightarrow\mathcal{M}_{\Pi_Q}=\bigsqcup_{\dd\in\BoN^{Q_0}}\mathcal{M}_{\Pi_Q,\dd}
\]
be the semisimplification map from the stack of all finite dimensional representations of $\Pi_Q$ to its coarse moduli space.

\subsection{The derived preprojective algebra}

The derived preprojective algebra is a differential graded algebra that is a dg-version of the classical preprojective algebra described in \S \ref{subsection:preprojectivealgebra}, see \cite{keller2008calabi}. Let $Q=(Q_0,Q_1)$ be a quiver, $\overline{Q}$ its double (as in \S \ref{subsection:preprojectivealgebra}) and $\tilde{Q}=(Q_0,\tilde{Q}_1)$ the tripled quiver. It is obtained from $\overline{Q}$ by adding a loop $u_i$ at each vertex $i\in Q_0$. We define a nonpositive grading on $\BoC\tilde{Q}$ by assigning the degree $0$ to arrows of $\overline{Q}$ and degree $-1$ to the additional loops $u_i$ at the vertices. We set
\[
 d(u_i)=e_i\left(\sum_{\alpha\in Q_1}[\alpha,\alpha^*]\right)e_i=e_i\rho e_i.
\]
Via the Leibniz rule, $d$ admits a unique extension to a differential on $\BoC\tilde{Q}$. The \emph{derived preprojective algebra} is the differential graded algebra $\mathscr{G}_2(Q)\coloneqq(\BoC\tilde{Q},d)$.  It is by definition concentrated in nonpositive degrees and the isomorphism $\HO^0(\mathscr{G}_2(Q))\cong \Pi_Q$ induces a morphism
\begin{equation}
\label{equation:derivedpptopp}
 \mathscr{G}_2(Q)\rightarrow \Pi_Q
\end{equation}
of differential graded algebras, where $\Pi_Q$ is given the zero differential.  The derived preprojective algebra is a $2$-Calabi--Yau dg-algebra: it can be realised as the $2$-Calabi--Yau completion of the path algebra $\BoC Q$ of $Q$, as in \cite{keller2011deformed}.

For $A$ a differential graded algebra, we denote by $\Perf_{\dg}(A)$ the dg-category of perfect left $A$-modules, i.e. the compact objects in the dg-category $\D_{\dg}(\Mod^{A})$. It follows from the 2-Calabi--Yau property of $\mathscr{G}_2(Q)$ that $\Perf_{\dg}(\SG_2(Q))$ carries a \emph{left 2CY} structure in the sense of \cite{brav2019relative}, to which we refer the reader for more details.

\begin{remark}
 When $Q$ is non-Dynkin, the map \eqref{equation:derivedpptopp} is a quasi-isomorphism and there is no need to appeal to the derived preprojective algebra. The ambient left 2CY dg-category appearing in \S\ref{subsubsection:categorical} can be taken to be $\Perf_{\dg}(\Pi_Q)$. When $Q$ is Dynkin (for example, $Q^{(0)}\coloneqq (Q_0=\{*\},Q_1=\emptyset)$), this fails: for example, for $Q^{(0)}$ the preprojective algebra is isomorphic to $\BoC$ and is of homological dimension $0$.
\end{remark}

\subsection{The direct sum map}
\begin{proposition}
\label{proposition:directsumproperpreprojective}
The direct sum map $\oplus\colon\mathcal{M}_{\Pi_Q}\times\mathcal{M}_{\Pi_Q}\rightarrow\mathcal{M}_{\Pi_Q}$ is finite.
\end{proposition}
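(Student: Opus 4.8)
The plan is to pass to coordinate rings and reduce the statement to an integral‑dependence assertion that can be settled with the classical description of invariant rings of representation spaces by traces along oriented cycles.

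\emph{Reduction.} Since $\mathcal{M}_{\Pi_Q}=\bigsqcup_{\dd\in\BoN^{Q_0}}\mathcal{M}_{\Pi_Q,\dd}$ and $\oplus$ is graded, restricting to maps $\oplus_{\dd_1,\dd_2}\colon\mathcal{M}_{\Pi_Q,\dd_1}\times\mathcal{M}_{\Pi_Q,\dd_2}\to\mathcal{M}_{\Pi_Q,\dd_1+\dd_2}$, and since each $\dd$ has only finitely many decompositions $\dd=\dd_1+\dd_2$ with $\dd_i\in\BoN^{Q_0}$, it will suffice to show that each $\oplus_{\dd_1,\dd_2}$ is finite (a finite disjoint union of finite morphisms into an affine scheme is finite). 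Here $\mathcal{M}_{\Pi_Q,\dd}=\Spec A_{\dd}$ with $A_{\dd}=\BoC[\mu_{\dd}^{-1}(0)]^{\GL_{\dd}}$ a finitely generated $\BoC$-algebra, and $\mathcal{M}_{\Pi_Q,\dd_1}\times\mathcal{M}_{\Pi_Q,\dd_2}=\Spec(A_{\dd_1}\otimes_{\BoC}A_{\dd_2})$; so $\oplus_{\dd_1,\dd_2}$ is automatically affine and of finite type, and it is enough to prove that $A_{\dd_1}\otimes A_{\dd_2}$ is a finite module over the image $B$ of the comorphism $\phi\colon A_{\dd_1+\dd_2}\to A_{\dd_1}\otimes A_{\dd_2}$ (then $\oplus_{\dd_1,\dd_2}$ factors as a finite surjection onto $\Spec B$ followed by the closed immersion $\Spec B\hookrightarrow\mathcal{M}_{\Pi_Q,\dd_1+\dd_2}$).

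\emph{The comorphism.} The map $\oplus_{\dd_1,\dd_2}$ is induced by the block-diagonal embedding $\mu_{\dd_1}^{-1}(0)\times\mu_{\dd_2}^{-1}(0)\hookrightarrow\mu_{\dd_1+\dd_2}^{-1}(0)$, which is equivariant for the block-diagonal inclusion $\GL_{\dd_1}\times\GL_{\dd_2}\hookrightarrow\GL_{\dd_1+\dd_2}$. By the Le Bruyn--Procesi theorem, together with reductivity of $\GL_{\dd}$ (which lets one pass from $\BoC[X_{\overline Q,\dd}]^{\GL_{\dd}}$ to its quotient $A_{\dd}$), the algebra $A_{\dd}$ is generated by the trace functions $\tr_{\dd}(c)$ attached to oriented cycles $c$ in $\overline Q$. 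On a block-diagonal representation the operator representing $c$ is block diagonal, whence $\phi(\tr_{\dd_1+\dd_2}(c))=\tr_{\dd_1}(c)\otimes 1+1\otimes\tr_{\dd_2}(c)$.

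\emph{Integrality --- the crux.} It then remains to show that each algebra generator $\tr_{\dd_1}(c)\otimes 1$ and $1\otimes\tr_{\dd_2}(c)$ of $A_{\dd_1}\otimes A_{\dd_2}$ is integral over $B$. Fix a cycle $c$ based at a vertex $i$, and let $M^{(1)},M^{(2)}$ denote the matrices representing $c$ on the universal $\dd_1$- and $\dd_2$-dimensional representations over $A_{\dd_1}\otimes A_{\dd_2}$. For each $k\geq 1$ the $k$-fold iterate $c^k$ is again an oriented cycle, so $\phi(\tr_{\dd_1+\dd_2}(c^k))=\tr((M^{(1)})^k)+\tr((M^{(2)})^k)\in B$; these are the power sums of the combined multiset of eigenvalues of $M^{(1)}$ and $M^{(2)}$. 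As we are in characteristic zero, Newton's identities show that $B$ contains all coefficients of $\chi(t)\coloneqq\det(t-M^{(1)})\cdot\det(t-M^{(2)})$. Since $\det(t-M^{(1)})$ is then a monic divisor in $(A_{\dd_1}\otimes A_{\dd_2})[t]$ of the monic polynomial $\chi(t)\in B[t]$, its coefficients --- in particular $\tr(M^{(1)})=\tr_{\dd_1}(c)$ --- are integral over $B$, being symmetric functions of a sub-multiset of the roots of $\chi$, each of which is integral over $B$; symmetrically for $1\otimes\tr_{\dd_2}(c)$. As $A_{\dd_1}\otimes A_{\dd_2}$ is a finitely generated $\BoC$-algebra generated by elements integral over $B$, it is a finite $B$-module, as required.

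The main obstacle is exactly this last step: controlling the trace functions of the summands by those of the total representation. Everything else is bookkeeping. An equivalent route, sidestepping the explicit trace generators, is to verify the valuative criterion of properness directly: for a $\Pi_Q$-representation over a DVR $R$ whose generic fibre splits as a direct sum, one extends the two summands over $R$ by running the same Newton's-identity argument on the eigenvalues of $M^{(1)},M^{(2)}$ over $\operatorname{Frac}(R)$ --- their power sums are traces of the ambient representation, hence lie in $R$, which is integrally closed, forcing the summands' characteristic polynomials to have coefficients in $R$. Combined with quasi-finiteness (the fibre over $S=\bigoplus_j T_j^{\oplus m_j}$ is the finite set of ways of splitting the multiplicity data in accordance with $\dd_1,\dd_2$) and affineness, this again yields finiteness; I would nonetheless lead with the invariant-ring argument, as it is the cleanest.
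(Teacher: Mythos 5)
Your argument is correct, but it takes a genuinely different route from the paper's. The paper observes that there is a closed immersion $\CM_{\Pi_Q}\into\CM_{\overline{Q}}$ making the square
\[
\begin{tikzcd}
{\CM_{\Pi_Q}\times\CM_{\Pi_Q}} & {\CM_{\Pi_Q}} \\
{\CM_{\overline{Q}}\times\CM_{\overline{Q}}} & {\CM_{\overline{Q}}}
\arrow["\oplus", from=1-1, to=1-2]
\arrow["\oplus", from=2-1, to=2-2]
\arrow[from=1-2, to=2-2]
\arrow[from=1-1, to=2-1]
\end{tikzcd}
\]
Cartesian, and then cites the finiteness of $\oplus$ for the doubled quiver $\overline{Q}$ from \cite[Lemma 2.1]{meinhardt2019donaldson}; base change along a closed immersion finishes the job in two lines. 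You instead rerun the invariant-theoretic proof from scratch: reduce to the finitely many graded pieces, use Le Bruyn--Procesi (together with reductivity to pass from $X_{\overline{Q},\dd}$ to the closed subvariety $\mu_{\dd}^{-1}(0)$) to identify $A_{\dd}$ as the algebra of trace functions, observe that block-diagonal restriction sends $\tr(c)\mapsto\tr_{\dd_1}(c)\otimes 1+1\otimes\tr_{\dd_2}(c)$, then get integrality of each summand's trace via Newton's identities and the ``monic factor of a monic polynomial over $B$ has coefficients integral over $B$'' lemma. This is in effect a proof of the cited Meinhardt--Reineke lemma restricted to the closed $\GL_{\dd}$-stable subvariety $\mu_{\dd}^{-1}(0)$, and it is all correct (including the integrality lemma, which holds over any commutative ring $S\supset B$, not just domains, since one can adjoin roots via successive quotients $S[t]/(g)$). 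What the paper's route buys is brevity and modularity (citation plus base change); what yours buys is self-containment — you don't need to import the lemma, and the mechanism of integrality is laid bare. Your closing remark about the valuative criterion is essentially the same integrality computation carried out over a DVR, so it is an honest restatement rather than a distinct approach.
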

\begin{proof}
We have a closed immersion $\mathcal{M}_{\Pi_Q}\rightarrow \mathcal{M}_{\overline{Q}}$ to the moduli space of representations of the double quiver $X_{\overline{Q}}=\bigcup_{\dd\in\BoN^{Q_0}}X_{\overline{Q},\dd}\cms\GL_{\dd}$ and a Cartesian square
\[
 \begin{tikzcd}
	{\mathcal{M}_{\Pi_Q}\times\mathcal{M}_{\Pi_Q}} & {\mathcal{M}_{\Pi_Q}} \\
	{\mathcal{M}_{\overline{Q}}\times\mathcal{M}_{\overline{Q}}} & {\mathcal{M}_{\overline{Q}}}
	\arrow["\oplus", from=1-1, to=1-2]
	\arrow["\oplus", from=2-1, to=2-2]
	\arrow[from=1-2, to=2-2]
	\arrow[from=1-1, to=2-1]
	\arrow["\lrcorner"{anchor=center, pos=0.125}, draw=none, from=1-1, to=2-2]
\end{tikzcd}
\]
where the direct sum map $\oplus\colon\mathcal{M}_{\overline{Q}}\times\mathcal{M}_{\overline{Q}}\rightarrow\mathcal{M}_{\overline{Q}}$ is the direct sum morphism for semisimple $\BoC\overline{Q}$-modules, which is finite by \cite[Lemma 2.1]{meinhardt2019donaldson}. The finiteness of $ \oplus\colon\mathcal{M}_{\Pi_Q}\times\mathcal{M}_{\Pi_Q}\rightarrow\mathcal{M}_{\Pi_Q}$ follows by base change.
\end{proof}

\subsection{The RHom-complex}

\subsubsection{Projective resolution of the preprojective algebra}
\label{subsubsection:projectiveresolutionPP}
We recall the classical resolution of $\Pi_Q$ as a $\Pi_Q$-bimodule.

We define
\begin{align*}
 P_0=\bigoplus_{i\in Q_0}\Pi_Qe_i\otimes_{\BoC}e_i\Pi_Q, & &P_1=\bigoplus_{i\xrightarrow{\alpha}j\in\overline{Q}_1}\Pi_Qe_j\otimes_{\BoC}e_i\Pi_Q.
\end{align*}
By \cite[Proposition 2.4]{crawley2022deformed}, we have an exact sequence of $\Pi_Q$-bimodules
\begin{equation}
\label{equation:projresolutionPiQ}
 P_0\xrightarrow{f}P_1\xrightarrow{g} P_0\xrightarrow{h} \Pi_Q\rightarrow 0
 \end{equation}
 where $f,g$ and $h$ are as in \cite{crawley2022deformed}. In particular, $h$ is the multiplication. This sequence is not exact on the left in general but it is when $Q$ is a non-Dynkin quiver \cite[Theorem 2.7]{crawley2022deformed}. If $M$ is a finite-dimensional representation of $\Pi_Q$, it induces an exact sequence of projective $\Pi_Q$-modules by applying $\otimes_{\Pi_Q}M$ to \eqref{equation:projresolutionPiQ} on the right:
 \begin{equation}
 \label{equation:projresM}
 P_0\otimes_{\Pi_Q}M\xrightarrow{f} P_1\otimes_{\Pi_Q}M\xrightarrow{g} P_0\otimes_{\Pi_Q}M\rightarrow M\rightarrow 0,
\end{equation}
where by abuse of notation  the maps are still denoted $f,g,h$. If $N$ is a finite-dimensional representation of $\Pi_Q$, by applying the functor $\Hom(-,N)$ to \eqref{equation:projresM}, we get a short exact sequence
\begin{equation}
\label{equation:sesHomcomplexe}
\Hom_{\Pi_Q}(P_0\otimes_{\Pi_Q}M,N)\xrightarrow{g} \Hom_{\Pi_Q}(P_1\otimes_{\Pi_Q}M,N)\xrightarrow{f} \Hom_{\Pi_Q}(P_0\otimes_{\Pi_Q}M,N)
\end{equation}
We note as in the proof of \cite[Proposition 2.6]{crawley2022deformed} that $\Hom_{\Pi_Q}(P_0\otimes_{\Pi_Q}M,N)\cong\bigoplus_{i\in Q_0}\Hom_{\BoC}(e_iM,e_iN)$ and $\Hom_{\Pi_Q}(P_1\otimes_{\Pi_Q}M,N)\cong\bigoplus_{i\xrightarrow{\alpha}j\in\overline{Q}_1}\Hom_{\BoC}(e_iM,e_jN)$. 
In particular, these $\Pi_Q$-bimodules only depend on the full subquiver of $Q$ supporting the dimension vectors of the representations $M$ and $N$.

\subsubsection{The RHom complex for preprojective algebras}
\label{subsubsection:rhompreprojectivealgebra}
By performing the construction of \S \ref{subsubsection:projectiveresolutionPP} over $\mathfrak{M}_{\Pi_Q}\times\mathfrak{M}_{\Pi_Q}$, we get a presentation of the RHom complex as a strictly $[-1,1]$-perfect complex. That is, we obtain an explicit presentation of the RHom complex on $\mathfrak{M}_{\Pi_Q,\dd^{(1)}}\times\mathfrak{M}_{\Pi_Q,\dd^{(2)}}$ for $\dd^{(1)},\dd^{(2)}\in\BoN^{Q_0}$ by a 3-term complex of vector bundles, where RHom is calculated in the category of finite-dimensional $\mathscr{G}_2(Q)$-modules.

The construction goes as follows. Let $V_{\dd^{(j)}}=\mu_{\dd^{(j)}}^{-1}(0)\times \BoC^{\dd^{(j)}}$ ($j=1,2$) be the $\GL_{\dd^{(j)}}$-equivariant $Q_0$-graded vector bundles with fibre $\BoC^{\dd^{(j)}}$ on $\mu_{\dd^{(j)}}^{-1}(0)$. We let $\Hom(V_{\dd^{(2)}},V_{\dd^{(1)}})=\bigoplus_{i\in Q_0}\Hom((V_{\dd^{(2)}})_i,(V_{\dd^{(1)}})_i)$ be the $\GL_{\dd^{(1)}}\times\GL_{\dd^{(2)}}$-equivariant vector bundle on $\mu^{-1}_{\dd^{(1)}}(0)\times\mu^{-1}_{\dd^{(2)}}(0)$ of $Q_0$-graded morphisms from $V_{\dd^{(2)}}$ to $V_{\dd^{(1)}}$. We define a complex of vector bundles on $\mu^{-1}_{\dd^{(1)}}(0)\times \mu^{-1}_{\dd^{(2)}}(0)$:
\begin{equation}
\label{equation:RHompreprojective}
 \Hom(V_{\dd^{(2)}},V_{\dd^{(1)}})\xrightarrow{d}\bigoplus_{i\xrightarrow{\alpha}j\in \overline{Q}_1}\Hom((V_{\dd^{(2)}})_i,(V_{\dd^{(1)}})_j)\xrightarrow{\mu} \Hom(V_{\dd^{(2)}},V_{\dd^{(1)}}).
\end{equation}
Let $x=(x_{\alpha})_{\alpha\in \overline{Q}_1}\in\mu_{\dd^{(1)}}^{-1}(0)$ and $y=(y_{\alpha})_{\alpha\in \overline{Q}_1}\in\mu_{\dd^{(2)}}^{-1}(0)$. For $z\in\Hom(V_{\dd^{(2)}},V_{\dd^{(1)}})_{(x,y)}=\Hom(\BoC^{\dd^{(2)}},\BoC^{\dd^{(1)}})$, we define
\[
 d_{(x,y)}z=(z_jy_{\alpha}-x_{\alpha}z_i)_{i\xrightarrow{\alpha} j\in\overline{Q}_1}
\]
and
for $t=(t_{\alpha},t_{\alpha^*})_{\alpha\in Q_1}\in \left(\bigoplus_{i\xrightarrow{\alpha}j\in \overline{Q}_1}\Hom((V_{\dd^{(2)}})_i,(V_{\dd^{(1)}})_j)\right)_{(x,y)}$, we define
\[
 \mu_{(x,y)}(t)=\sum_{\alpha\in Q_1}[x_{\alpha}+y_{\alpha}+t_{\alpha},x_{\alpha^*}+y_{\alpha^*}+t_{\alpha^*}]
\]
where for $\alpha\in \overline{Q}_1$, $x_{\alpha}, y_{\alpha}, t_{\alpha}$ are seen as endomorphisms of $\BoC^{\dd^{(1)}+\dd^{(2)}}$ via the decomposition $\BoC^{\dd^{(1)}+\dd^{(2)}}\cong \BoC^{\dd^{(1)}}\oplus\BoC^{\dd^{(2)}}$. As these maps are $\GL_{\dd^{(1)}}\times\GL_{\dd^{(2)}}$-equivariant, the complex \eqref{equation:RHompreprojective} induces a $3$-term complex $\mathcal{H}_{\dd^{(1)},\dd^{(2)}}$ of vector bundles on $\mathfrak{M}_{\Pi_Q,\dd^{(1)}}\times\mathfrak{M}_{\Pi_Q,\dd^{(2)}}$.

The following is a very special case of Corollary \ref{der_exact_cor}.
\begin{proposition}
 The $3$-term complex $\mathcal{H}_{\dd^{(1)},\dd^{(2)}}$ obtained from \eqref{equation:RHompreprojective} is a complex of vector bundles quasi-isomorphic to the RHom complex on $\mathfrak{M}_{\Pi_Q,\dd^{(1)}}\times\mathfrak{M}_{\Pi_Q,\dd^{(2)}}$ introduced in general in \S \ref{subsubsection:categorical}, where RHom is calculated in the category of finite-dimensional $\mathscr{G}_2(Q)$-modules.
\end{proposition}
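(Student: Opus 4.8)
The plan is to check that the explicit three-term complex $\CC_{\dd^{(1)},\dd^{(2)}}$ of vector bundles, obtained by descending \eqref{equation:RHompreprojective} along the quotient by $\GL_{\dd^{(1)}}\times\GL_{\dd^{(2)}}$, represents the derived $\RHom$ on $\FM_{\Pi_Q,\dd^{(1)}}\times\FM_{\Pi_Q,\dd^{(2)}}$. Because both constructions are $\GL_{\dd^{(1)}}\times\GL_{\dd^{(2)}}$-equivariant and the quotient map $\mu^{-1}_{\dd^{(1)}}(0)\times\mu^{-1}_{\dd^{(2)}}(0)\to \FM_{\Pi_Q,\dd^{(1)}}\times\FM_{\Pi_Q,\dd^{(2)}}$ is smooth, it suffices to establish the quasi-isomorphism upstairs, $\GL$-equivariantly, on $\mu^{-1}_{\dd^{(1)}}(0)\times\mu^{-1}_{\dd^{(2)}}(0)$; I will work there.

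First I would recall the description of the $\RHom$ complex from \S\ref{subsubsection:categorical}: for a point $(x,y)$ representing $\Pi_Q$-modules $M$ and $N$, the fibre of the $\RHom$ complex is $\RHom_{\Pi_Q}(M,N)$, computed via the bimodule resolution \eqref{equation:projresolutionPiQ}. The key point, already made in \S\ref{subsubsection:projectiveresolutionPP}, is that after tensoring \eqref{equation:projresolutionPiQ} with $M$ and applying $\Hom_{\Pi_Q}(-,N)$, one gets the three-term complex \eqref{equation:sesHomcomplexe}, whose terms are canonically
\[
\textstyle\bigoplus_{i\in Q_0}\Hom_{\BoC}(e_iM,e_iN)\longrightarrow \bigoplus_{i\xrightarrow{\alpha}j\in\overline{Q}_1}\Hom_{\BoC}(e_iM,e_jN)\longrightarrow \bigoplus_{i\in Q_0}\Hom_{\BoC}(e_iM,e_iN),
\]
and these are exactly the fibres of the three bundles in \eqref{equation:RHompreprojective} at $(x,y)$. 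So the second step is to match the differentials: the differential $g$ (equivalently $d_{(x,y)}$) is, up to sign, $z\mapsto (z_jy_\alpha - x_\alpha z_i)$, and $f$ (equivalently $\mu_{(x,y)}$, linearised) is the differential of the moment map, i.e. the map sending $t$ to the commutator sum $\sum_{\alpha\in Q_1}([x_\alpha,t_{\alpha^*}]+[t_\alpha,x_{\alpha^*}])$ acting on $\Hom_\BoC(\BoC^{\dd^{(2)}},\BoC^{\dd^{(1)}})$; here one uses that $x,y\in\mu^{-1}(0)$ so that the quadratic terms drop out and $\mu_{(x,y)}$ restricted to morphisms $M\to N$ agrees with the linearised moment map. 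Comparing this with the maps $f,g$ of \cite{crawley2022deformed}, appearing in \eqref{equation:sesHomcomplexe}, gives a termwise isomorphism of complexes, fibrewise in $(x,y)$; by construction it is algebraic in $x,y$ and $\GL$-equivariant, hence defines an isomorphism of complexes of $\GL$-equivariant vector bundles.

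The remaining step is the subtlety flagged in \S\ref{subsubsection:projectiveresolutionPP}: \eqref{equation:projresolutionPiQ} is not exact on the left for a general (Dynkin) $Q$, so \eqref{equation:projresM} need not be a genuine projective resolution of $M$, and a priori the cohomology of \eqref{equation:sesHomcomplexe} need not compute $\Ext^*_{\Pi_Q}(M,N)$. The resolution of this, exactly as in \S\ref{subsubsection:projectiveresolutionPP}, is that all three terms of \eqref{equation:sesHomcomplexe} and all its differentials depend only on the full subquiver $Q'\subseteq Q$ supporting $\dd^{(1)}$ and $\dd^{(2)}$; and similarly $\mu^{-1}_{\dd^{(j)}}(0)$, the $\GL_{\dd^{(j)}}$-action, and the complex \eqref{equation:RHompreprojective} are unchanged if we replace $Q$ by any quiver obtained from $Q'$ by adding loops. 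So I would enlarge $Q$ to a non-Dynkin quiver $Q''$ with $Q'\subseteq Q''$ (e.g. by adding loops), for which \cite[Theorem 2.7]{crawley2022deformed} guarantees that \eqref{equation:projresolutionPiQ} is exact on the left, hence \eqref{equation:projresM} is a projective resolution of $M$ and the cohomologies of \eqref{equation:sesHomcomplexe} are indeed $\Hom_{\Pi_Q}(M,N)$, $\Ext^1_{\Pi_Q}(M,N)$, $\Ext^2_{\Pi_Q}(M,N)$. Since nothing in sight changed under the enlargement, and since the RHom complex of \S\ref{subsubsection:categorical} is intrinsic (for $\Pi_Q$-modules supported on $Q'$ it agrees whether computed via $\Pi_{Q'}$, $\Pi_Q$ or $\Pi_{Q''}$), this transfers the desired quasi-isomorphism back to the original $Q$. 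The main obstacle is purely bookkeeping: keeping the identifications of the explicit bundles with the $\Hom_\BoC(e_iM,e_jN)$ compatible with the differentials $f,g$ of \cite{crawley2022deformed} under the enlargement of $Q$, and tracking the sign conventions so that the linearised moment map and Crawley-Boevey–Shaw's $f$ match on the nose; no genuinely hard input is needed beyond \cite[Theorem 2.7]{crawley2022deformed} and the construction of \S\ref{subsubsection:categorical}.
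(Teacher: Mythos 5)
The paper states this proposition without an explicit proof; your argument correctly fleshes out the route implicit in \S\ref{subsubsection:projectiveresolutionPP}--\S\ref{subsubsection:rhompreprojectivealgebra}: pull back to the $\GL$-atlas, identify the terms of \eqref{equation:RHompreprojective} fibrewise with those of \eqref{equation:sesHomcomplexe}, match the differentials, and handle the Dynkin case by enlarging the quiver, using that all the data in sight depends only on the full subquiver supporting $\dd^{(1)},\dd^{(2)}$. One small slip in your explanation of why $\mu_{(x,y)}$ is linear in $t$: you attribute the vanishing of the quadratic-in-$t$ term to the condition $x,y\in\mu^{-1}(0)$. In fact that condition kills the constant (block-diagonal) part $\sum_{\alpha\in Q_1}\bigl([x_\alpha,x_{\alpha^*}]+[y_\alpha,y_{\alpha^*}]\bigr)$, while the quadratic term $\sum_{\alpha\in Q_1}[t_\alpha,t_{\alpha^*}]$ vanishes automatically for structural reasons: each $t_\beta$ is strictly block-upper-triangular, sending the $\dd^{(2)}$ summand into the $\dd^{(1)}$ summand and annihilating the $\dd^{(1)}$ summand, so the products $t_\alpha t_{\alpha^*}$ and $t_{\alpha^*}t_\alpha$ are already zero. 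This imprecision does not affect the validity of your argument.
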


This explicit presentation of the RHom complex for preprojective algebras of quivers is crucial to compare the CoHA product on $\JH_*\BD\BoQ_{\FM_{\Pi_Q}^{\vir}}$ obtained later via $3$-term complexes (\S\ref{subsubsection:ThecohaproductKV}) with the classical construction \cite{yang2018cohomological,schiffmann2020cohomological}, see Appendix~\ref{section:relativeCoHA}.

\section{Perverse sheaves and mixed Hodge modules}
\label{section:perversesheaves}
We assume that the reader is comfortable with the formalism of constructible derived categories and perverse sheaves and has a working familiarity with mixed Hodge modules, for instance by having read the introductory \cite{saito1989introduction}. We nonetheless give some reminders of basic constructions in the theory here.
\subsection{Tensor structure on the constructible derived category of a monoid object of schemes}
\label{subsection:monoidalstructure}
Recall that a monoid $(\CM,\oplus,\eta)$ in a monoidal category $\mathscr{C}$ is the data of an object $\CM$ of $\mathscr{C}$, a morphism $\oplus\colon\CM\times\CM\rightarrow \CM$ and a morphism $\eta\colon\mathbf{1}_{\mathscr{C}}\rightarrow \CM$ from the monoidal unit satisfying the compatibility conditions of an associative algebra. We sometimes omit $\eta$ from the notation. Let $(\mathcal{M},\oplus,\eta)$ be a monoid object in the category of complex schemes (with monoidal structure given by Cartesian product) such that $\pi_0(\oplus)\colon \pi_0(\CM)^{\times 2}\rightarrow \pi_0(\CM)$ has finite fibres. We allow $\mathcal{M}$ to have infinitely many connected components. We assume that each connected component of $\CM$ is a finite type separated complex scheme.

We define $\CD_{\mathrm{c}}^+(\mathcal{M})$ to be the full subcategory of the derived category of constructible complexes on $\CM$ that have bounded below cohomological amplitude on each connected component of $\CM$. The category $\CD_{\mathrm{c}}^+(\mathcal{M})$ has a monoidal structure $\boxdot$ defined by
\[
 \mathscr{F}\boxdot\mathscr{G}\coloneqq\oplus_*(\mathscr{F}\boxtimes\mathscr{G}).
\]
If $\oplus$ is a finite morphism of schemes (and so $\oplus_*$ is perverse exact), the same formula gives a monoidal structure on $\Perv(\mathcal{M})$, the category of perverse sheaves on $\mathcal{M}$. The monoidal unit is given by $\eta_*\BoQ_{\pt}$, where $\eta\colon\pt\rightarrow \CM$ is the unit morphism for $\CM$.
\smallbreak
Let us moreover assume that $\CM$ is a commutative monoid, i.e. if $s\colon \Msp\times \Msp\rightarrow \Msp\times\Msp$ is the morphism swapping the factors, then $\oplus\circ s=\oplus$. Then the monoidal categories defined above are naturally upgraded to symmetric monoidal categories.

\begin{definition}
\label{ma_alg_def}
A \emph{$\boxdot$-algebra} in $\Perv(\CM)$ (or more generally in $\CD_{\mathrm{c}}^{+}(\CM)$) is a monoid in the respective tensor (monoidal) category, with monoidal structure given by $\boxdot$. It is a triple $(\CF,m\colon\CF\boxdot\CF\rightarrow \CF,\iota\colon\eta_*\BoQ_{\pt}\rightarrow \CF)$, with $\CF\in \Perv(\CM)$ or $\CF\in\CD^+_{\mathrm{c}}(\CM)$, such that $m$ and $\iota$ satisfy the usual commutativity and unitality properties demanded of an associative algebra.
If $(\CM,\oplus)$ is a commutative monoid, then a \emph{$\boxdot$-Lie algebra} in $\Perv(\CM)$ (or $\CD^+_{\mathrm{c}}(\CM)$) is a pair $(\CF,c\colon \CF\boxdot \CF\rightarrow \CF)$ with $\CF\in \Perv(\CM)$ or $\CF\in\CD^+_{\mathrm{c}}(\CM)$, such that the Lie bracket $c$ is antisymmetric and satisfies the Jacobi identity.
\end{definition}

\subsection{Mixed Hodge modules}
\label{MHM_sec}

If $\CM$ is a separated and reduced complex scheme, we define as in \cite{saito1990mixed} the bounded derived category $\CD^{\mathrm{b}}(\MHM(\CM))$ of algebraic mixed Hodge modules on $\CM$. There is a faithful functor $\rat\colon\MHM(\CM)\rightarrow \Perv(\CM)$ taking a mixed Hodge module to its underlying perverse sheaf. We denote also by $\rat$ the functor obtained between the derived categories.

A mixed Hodge module $\CF\in\MHM(\CM)$ carries an ascending weight filtration $W_{\bullet}\CF$, and we say that $\CF$ is pure of weight $n$ if $W_{n-1}\CF=0$ and $W_{n}\CF=\CF$. An object $\CF\in\CD^{\mathrm{b}}(\MHM(\CM))$ is called \emph{pure} if the mixed Hodge module $\mathcal{H}^i(\CF)$ is pure of weight $i$ for every $i$. By Saito's theory, pure mixed Hodge modules (of fixed weight) form a semisimple category, and pure objects of $\CD^{\mathrm{b}}(\MHM(\CM))$ are preserved by taking direct images along projective morphisms.

We define the Tate MHM $\BoL\coloneqq \HO^{*}_{\cc}(\BoA^1_{\BoC},\BoQ)$, with its canonical pure weight zero mixed Hodge structure. Precisely, $\BoL$ is a cohomologically graded mixed Hodge structure, concentrated in cohomological degree $2$, and the second cohomology mixed Hodge structure of $\BoL$ is pure, of weight 2. We denote by $\BoL^n$ the $n$th tensor power of $\BoL$ in the category of cohomologically graded mixed Hodge structures, and we allow $n\leq 0$, since $\BoL$ is invertible in this tensor category. If $X$ is a variety, there is a natural upgrade of $\BoQ_X$ to a mixed Hodge module complex on $X$, namely $(X\rightarrow \pt)^*\BoL^0$, which we denote $\ulBoQ_X$.
\begin{lemma}
\label{Homs_Lem}
Let $X$ be a separated, reduced complex scheme, then the natural morphisms
\[
\Hom_{\CD^{\mathrm{b}}(\MHM(X))}(\ulBoQ_X,\ulBoQ_X)\xrightarrow{\alpha=\rat}\Hom_{\CD^{\mathrm{b}}(\Perv(X))}(\BoQ_X,\BoQ_X)\xrightarrow{\beta}\BoQ^{\pi_0(X)}
\]
are isomorphisms.
\end{lemma}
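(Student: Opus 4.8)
The plan is to reduce everything to a statement about ordinary cohomology, using the fact that $\ulBoQ_X$ (and $\BoQ_X$) are direct sums of the intersection complexes of the connected components of $X$ only when $X$ is smooth, so we cannot assume that; instead I would compute the Hom-groups directly via adjunction. Write $a\colon X\to \pt$ for the structure morphism. Then $\ulBoQ_X = a^*\ulBoQ_{\pt}$ (using the notation $\ulBoQ_X = (X\to\pt)^*\BoL^0$ of the excerpt), so by adjunction
\[
\Hom_{\CD^{\mathrm b}(\MHM(X))}(\ulBoQ_X,\ulBoQ_X)\cong \Hom_{\CD^{\mathrm b}(\MHM(X))}(a^*\ulBoQ_{\pt},\ulBoQ_X)\cong \Hom_{\CD^{\mathrm b}(\MHM(\pt))}(\ulBoQ_{\pt},a_*\ulBoQ_X),
\]
and the latter is $\HO^0$ of the object $a_*\ulBoQ_X$, i.e.\ the degree-zero part of $\HO^*(X,\ulBoQ_X)$ as a mixed Hodge structure, with a matching computation on the perverse-sheaf side giving $\HO^0(X,\BoQ_X)$. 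So the middle term is $\Hom_{\CD^{\mathrm b}(\Perv(X))}(\BoQ_X,\BoQ_X)=\HO^0(X,\BoQ)$, which is exactly $\BoQ^{\pi_0(X)}$ — this identifies $\beta$ and shows it is an isomorphism essentially by definition of $\pi_0$. Note that here one must be a little careful because $X$ may have infinitely many connected components, so $\HO^0(X,\BoQ)=\prod_{\pi_0(X)}\BoQ=\BoQ^{\pi_0(X)}$ with the product convention; this is harmless and consistent with the conventions of \S\ref{subsection:monoidalstructure}.

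For the map $\alpha=\rat$, the point is that $\HO^0$ of $a_*\ulBoQ_X$ computed in $\MHM(\pt)$, i.e.\ as a mixed Hodge structure, has underlying vector space exactly $\HO^0(X,\BoQ)$ because $\rat$ is exact and commutes with pushforward; the content is that the map on $\Hom$-groups induced by $\rat$ is the identity on this common underlying vector space. More precisely: $\rat$ is faithful, and a morphism $\ulBoQ_X\to\ulBoQ_X$ in the derived category is the same as an element of $\HO^0(X,\ulBoQ_X)$ as a mixed Hodge structure. Since $X\to\pt$ is a morphism of varieties, $\HO^0(X,\ulBoQ_X)$ is a mixed Hodge structure whose underlying vector space is $\HO^0(X,\BoQ)$, a $\BoQ$-vector space of dimension $\#\pi_0(X)$ (or the product, in the infinite case). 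Any mixed Hodge structure supported on $\HO^0$ of a variety is automatically of type $(0,0)$ and pure of weight $0$, hence every vector-space endomorphism of it is automatically a morphism of mixed Hodge structures — this is exactly what makes $\alpha$ surjective, and it is injective by faithfulness of $\rat$. I would cite the basic facts that $H^0$ of a variety carries a pure Hodge structure of weight $0$ and Tate type, and that $\rat$ is a faithful exact functor commuting with $a_*$.

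Concretely the proof I would write has three steps. \textbf{Step 1:} by adjunction $a^*\dashv a_*$ (valid for mixed Hodge modules by Saito's theory, and for constructible sheaves), identify all three $\Hom$-groups with $\HO^0$ of $X$ with the appropriate coefficients — in $\MHM(\pt)$, in $\Perv(\pt)$, and as the set of locally constant functions $X\to\BoQ$, i.e.\ $\BoQ^{\pi_0(X)}$. \textbf{Step 2:} observe $\beta$ is the tautological identification $\HO^0(X,\BoQ_X)\cong\BoQ^{\pi_0(X)}$, hence an isomorphism. \textbf{Step 3:} observe that $\HO^0(X,\ulBoQ_X)$ is a mixed Hodge structure of Tate type $(0,0)$ pure of weight $0$ (here one may reduce to the case $X$ reduced, as assumed, and use that $H^0$ of a variety is pure of weight $0$ and of Hodge–Tate type — for instance by normalisation, or by noting $\ulBoQ_X$ has weights $\leq 0$ while $a_*$ of it has weights $\geq 0$ in degree $0$ by the semisimplicity/weight estimates), so that forgetting the Hodge structure induces a bijection on the underlying vector spaces; this makes $\alpha$ an isomorphism. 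The main obstacle — really the only subtle point — is Step 3: verifying that $\HO^0(X,\BoQ)$ carries no nontrivial mixed Hodge or Hodge-type data for an arbitrary (possibly singular, possibly non-equidimensional, infinitely-many-components) reduced scheme $X$, so that $\rat$ is not merely faithful but actually an isomorphism on these particular $\Hom$-spaces; handling the infinitely-many-components case requires only noting that $\MHM(\pt)$ and the relevant $\Hom$'s commute with the product decomposition over $\pi_0(X)$, reducing to $X$ connected where $\HO^0(X,\BoQ)=\BoQ$ with its unique weight-$0$ Tate structure.
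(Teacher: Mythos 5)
Your proof is correct, but it takes a genuinely different route from the paper's. You compute the $\Hom$-group directly by adjunction, reduce to $X$ connected, and then invoke the Deligne/Saito fact that $\HO^0(X,\ulBoQ)$ is a one-dimensional Hodge structure, pure of weight $0$ and Tate type, so that $\Hom_{\MHM(\pt)}(\BoL^0,\HO^0(X,\ulBoQ))$ is all of $\BoQ$. The paper instead sidesteps any Hodge-theoretic computation: it splits $\beta\alpha$ on the right using the tautological embeddings $\BoQ\hookrightarrow\Hom(\CF,\CF)$ (identity morphisms) together with the decomposition $\ulBoQ_X\cong\bigoplus_{X'\in\pi_0(X)}\ulBoQ_{X'}$, which gives surjectivity for free, and then it uses the same adjunction step you use, but only to deduce \emph{injectivity} of $\beta\alpha$ from faithfulness of the forgetful functor — the point being that $\Hom_{\MHM(\pt)}(\BoL^0,\HO^0(X,\ulBoQ))$ embeds into $\Hom_{\BoQ}(\BoQ,\HO^0(X,\BoQ))\cong\BoQ$, so the dimension is at most one, and one does not need to know the precise Hodge structure on $\HO^0$. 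Your approach buys a cleaner single computation of the $\Hom$-space; the paper's buys independence from the (standard, but non-trivial for singular $X$) fact about purity and Tate-type of $H^0$. One small caution on your Step 3: the heuristic ``$a_*$ of something with weights $\leq 0$ has weights $\geq 0$ in degree $0$'' is not quite the right weight estimate — $a_*$ preserves weights $\geq w$, but $\ulBoQ_X$ for singular $X$ has weights $\leq 0$, not $\geq 0$, so this particular line of reasoning does not close on its own. The conclusion you want is nevertheless true and standard (Deligne: $H^0$ of any complex variety is pure of weight $0$, Tate type), and your alternative suggestion via normalisation works; but if you want to avoid citing that theorem, the paper's right-inverse trick is the thing to reach for.
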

\begin{proof}
It is clear that $\beta$ is an isomorphism: if $X$ is connected then $\Hom_{\CD^{\mathrm{b}}(\Perv(X))}(\BoQ_X,\BoQ_X)\cong\HO^0(X,\BoQ)\cong\BoQ$ (and we use this isomorphism to define $\beta$). Also, $\beta\alpha$ has a right-inverse $\gamma$, since for any object $\CF$ in the tensor category $\CD^{\mathrm{b}}(\MHM(X))$ there is an embedding $\BoQ\hookrightarrow \Hom(\CF,\CF)$ of scalar endomorphisms, and we may write $\ulBoQ_X\cong \bigoplus_{X'\in\pi_0(X)}\ulBoQ_{X'}$.  So all that remains is to show that $\beta\alpha$ is injective, and this reduces to the case in which $X$ is connected, which we now assume. By adjunction we have
\[
\Hom_{\CD^{\mathrm{b}}(\MHM(X))}(\ulBoQ_X,\ulBoQ_X)\cong \Hom_{\CD^{\mathrm{b}}(\MHM(\pt))}\left(\BoL^0,a_*\ulBoQ_X\right)\cong \Hom_{\MHM(\pt)}(\BoL^0,\HO^0(X,\ulBoQ))\cong\underline{\BoQ}^{\pi_0(X)}
\]
where $a\colon X\rightarrow \pt$ is the structure morphism, and we are done.
\end{proof}

If $X$ is an even-dimensional separated, finite type integral scheme, and $X^{\mathrm{sm}}$ is its smooth locus, there is a unique extension of the pure weight zero mixed Hodge module $\ulBoQ_{X^{\mathrm{sm}}}\otimes\BoL^{-\dim(X)/2}$ on $X^{\mathrm{sm}}$ to a simple pure weight zero mixed Hodge module on $X$, denoted $\underline{\IC}(X)$. Then $\rat(\underline{\IC}(X))=\IC(X)$, the intersection complex on $X$. If $X$ is odd-dimensional (a case not needed in the present paper), the extension of the theory to monodromic mixed Hodge modules is required to define the square-root $\BoL^{1/2}$ of the the Tate twist $\BoL$.

\subsubsection{Unbounded complexes}
\label{unbounded_cplx_sec}
As in \cite{davison2020bps} we work in the bigger category $\CD^+(\MHM(\CM))$ of locally bounded below complexes of mixed Hodge modules on $\CM$. If $\CM$ is connected, this is defined to be the limit of the diagram of categories $\CD_n$ for $n\in \BoZ$, with $\CD_n=\CD^{\mathrm{b}}(\MHM(\CM))$ for all $n$, and arrows from $\CD_n$ to $\CD_{n'}$ provided by the truncation functors $\tau^{\leq n'}$. For general $\CM$ we define $\CD^+(\MHM(\CM))\coloneqq \prod_{\CN\in\pi_0(\CM)}\CD^+(\MHM(\CN))$. By Saito's theory, pure objects of $\CD^+(\MHM(\CM))$ are preserved by taking direct image along projective morphisms. See \cite[\S 2.1.4]{davison2020bps} for the detailed construction, and proofs.

\subsubsection{Mixed Hodge modules on stacks}
\label{MHMs_on_stacks}
At the time this paper was written, the full six-functor formalism for derived categories of mixed Hodge modules on stacks had not been developed. While this paper was under revision, such a formalism has been developped in \cite{tubach2024mixed}. Here we remind the reader of the standard workaround which allows us to work with mixed Hodge modules over stacks independently of this formalism.
\begin{definition}
\label{acyclic_cover_def}
Given a finite type Artin stack $\FX$, we say that $\FX$ has an \textit{acyclic cover} if for all $N$ there is a morphism $f_N\colon X_N\rightarrow \FX$ such that 
\begin{enumerate}
\item
each $f_N$ is smooth
\item
As $N\to \infty$, the minimum $i$ such that $\pH{i}(\cone(\BD\BoQ_{\FX}\rightarrow f_{N,*}\BD\BoQ_{X_N}[-2\dim f_N]))\neq 0$ tends to infinity.
\end{enumerate}
\end{definition}
It is easy to check that if $\FX$ has an acyclic cover, and $\FY$ is finite type and representable over $\FX$, then $\FY$ has an acyclic cover (provided by varieties $\FY\times_{\FX} X_N$). If $\FX$ has an acyclic cover, and $a\colon \FX\rightarrow \CX$ is a morphism to a variety, we define $a_*\BD\ulBoQ_{\FX}$ by setting 
\begin{align*}
\tau^{\leq i}a_*\BD\ulBoQ_{\FX}=\tau^{\leq i}(af_N)_*(\BD\ulBoQ_{X_N}\otimes \BoL^{\dim(f_N)}) && \text{for $N \gg 0$}.
\end{align*}
Then there is a natural isomorphism $\rat(\tau^{\leq i}a_*\BD\ulBoQ_{\FX})\cong \ptau{\leq i}a_*\BD\BoQ_{\FX}$. Likewise, if $\bsX=t_0(\FX)$ is the underlying classical stack of a derived stack with the cotangent complex $\BL_{\bsX}$ a perfect complex, with even virtual rank, and $\FX$ has an acyclic cover, we define $a_*\BD\ulBoQ^{\vir}_{\FX}$ by setting
\begin{align*}
\tau^{\leq i}a_*\BD\ulBoQ^{\vir}_{\FX}=\tau^{\leq i}(af_N)_*(\BD\ulBoQ_{X_N}\otimes \BoL^{\dim(f_N)+\vrank(\BL_{\bsX})/2}) &&\text{for $N \gg 0$},
\end{align*}
that is we just twist eveything by $\BoL^{\vrank(\BL_{\bsX})}$. The main point of the construction is that we do not consider directly or even define the dualizing complex of mixed Hodge modules $\BD\underline{\BoQ}_{\FX}$, but that we define and work direcly with the pushforward to a scheme $a_*\BD\underline{\BoQ}_{\FX}$.

It is well known that if $\FX=X/ G$ is a global quotient of a variety by a linear algebraic group, then $\FX$ has an acyclic cover provided by approximations to the Borel construction. See \cite{davison2020bps} for more details. Therefore, these constructions apply to quotient stacks.

\subsubsection{Action of unipotent group scheme}
\label{subsubsection:action-unipotent-group-scheme}
Let $\pi\colon \FY \rightarrow \FX$ be a morphism of stacks and $\FV$ a vector bundle on $\FX$. Then $\FV$ is a smooth group scheme over $\FX$. We assume that it acts on $\FY$ via $\FV\times_{\FX} \FY \rightarrow \FY$ and we let $\pi\colon \FY \rightarrow \FY/\FV$ be the natural map to the quotient stack. Then, the adjunction morphism $\id\rightarrow\pi_*\pi^*$ is an isomorphism, and so the pullback $\pi^* \colon \CD_{\mathrm{c}}(\FY/\FV) \rightarrow \CD_{\mathrm{c}}(\FY)$ between the constructible derived categories is fully faithful. The corresponding functor between derived categories of monodromic mixed Hodge modules is also fully faithful. These follow from the fact that $\pi$ is smooth and the fibers of $\pi$ are contractible, and so have trivial cohomology. In particular, if $\pi\colon \FY\rightarrow\FX$ is a torsor for a unipotent group $U$, then the pullback $\pi^*$ is fully faithful. This follows from the previous statement where $\FV=\FX\times U$ is the trivial bundle with fiber $U$ over $\FX$.

\subsubsection{Monoids}
\label{subsubsection:monoids}
If $(\mathcal{M},\oplus,\eta)$ is a monoid object in the category of complex schemes such that $\pi_0(\oplus)$ has finite fibres, the formulas of \S \ref{subsection:monoidalstructure} define monoidal structures on $\D^+(\MHM(\mathcal{M}))$, and also on $\MHM(\CM)$ if $\oplus$ is moreover finite.  The forgetful functor $\rat\colon\D^+(\MHM(\mathcal{M}))\rightarrow \D^{+}_{\cc}(\mathcal{M})$ is monoidal, as is $\rat\colon \MHM(\mathcal{M})\rightarrow\Perv(\mathcal{M})$ if $\oplus$ is finite. They are moreover symmetric monoidal in case $\CM$ is a symmetric monoid; see \cite{maxim2011symmetric} and \cite[Sec.3.2]{davison2020cohomological} for details. We define $\boxdot$-algebras and $\boxdot$-Lie algebras in $\CD^+(\MHM(\CM))$ and $\MHM(\CM)$ as in Definition~\ref{ma_alg_def}.

\subsection{Restriction to submonoids}
\label{strict_mf_sec}
Let $\imath\colon(\mathcal{N},\oplus)\rightarrow (\mathcal{M},\oplus)$ be a morphism of monoids in the category of schemes. We assume $\CN$ is a saturated submonoid, in the sense that the following diagram is Cartesian:
\begin{equation}
 \label{equation:diagrampullbackmonoidal}
 \begin{tikzcd}
	{\mathcal{N}\times\mathcal{N}} & {\mathcal{N}} \\
	{\mathcal{M}\times \mathcal{M}} & {\mathcal{M}.}
	\arrow["\imath\times\imath"', from=1-1, to=2-1]
	\arrow["\oplus", from=1-1, to=1-2]
	\arrow["\oplus", from=2-1, to=2-2]
	\arrow["\imath", from=1-2, to=2-2]
\end{tikzcd}
\end{equation}

\begin{lemma}
\label{lemma:strictmonoidalfunctor}
The exceptional pullback functors
\begin{align*}
 \imath^!\colon& \mathcal{D}_{\cc}^{+}(\mathcal{M})\rightarrow \mathcal{D}_{\cc}^{+}(\mathcal{N}),&
 &\imath^!\colon\CD^+(\MHM(\CM))\rightarrow \CD^+(\MHM(\CN))
\end{align*}
are strict monoidal functors.
\end{lemma}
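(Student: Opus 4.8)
The plan is to check that $\imath^!$ is monoidal by exhibiting, and then verifying the coherence of, the two structure isomorphisms: a natural isomorphism $\imath^!(\mathscr{F}\boxdot\mathscr{G})\cong\imath^!\mathscr{F}\boxdot\imath^!\mathscr{G}$ and a unit isomorphism $\eta_{\CN,*}\BoQ\cong\imath^!\eta_{\CM,*}\BoQ$, where $\eta_{\CM}\colon\pt\to\CM$ and $\eta_{\CN}\colon\pt\to\CN$ denote the unit sections. Both isomorphisms come from a single input, namely the base change isomorphism $\alpha^!\beta_*\cong a_*b^!$ attached to a Cartesian square
\[
\begin{tikzcd}
P & B \\
A & S
\arrow["b", from=1-1, to=1-2]
\arrow["a"', from=1-1, to=2-1]
\arrow["\beta", from=1-2, to=2-2]
\arrow["\alpha"', from=2-1, to=2-2]
\end{tikzcd}
\]
obtained as the right adjoint of proper base change $\beta^*\alpha_!\cong b_!a^*$; it holds with no properness hypothesis, both in $\CD^+_{\cc}(-)$ and, by Saito's six-functor formalism, in $\CD^+(\MHM(-))$.

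For the product, I would apply this to the Cartesian square \eqref{equation:diagrampullbackmonoidal} with $\alpha=\imath$ and $\beta=\oplus_{\CM}$ (so $A=\CN$, $B=\CM\times\CM$, $S=\CM$, $P=\CN\times\CN$, $a=\oplus_{\CN}$, $b=\imath\times\imath$), giving, for $\mathscr{F},\mathscr{G}\in\CD^+_{\cc}(\CM)$,
\[
\imath^!(\mathscr{F}\boxdot\mathscr{G})=\imath^!\oplus_{\CM,*}(\mathscr{F}\boxtimes\mathscr{G})\cong\oplus_{\CN,*}(\imath\times\imath)^!(\mathscr{F}\boxtimes\mathscr{G}).
\]
It then remains to identify $(\imath\times\imath)^!(\mathscr{F}\boxtimes\mathscr{G})\cong\imath^!\mathscr{F}\boxtimes\imath^!\mathscr{G}$. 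This is a general Künneth-type identity $(f\times g)^!(A\boxtimes B)\cong f^!A\boxtimes g^!B$, valid for arbitrary $f,g$: it follows by writing $f^!=\BD f^*\BD$ and using that $(-)^*$ is monoidal for $\boxtimes$ and that Verdier duality is monoidal for $\boxtimes$, i.e. $\BD(A\boxtimes B)\cong\BD A\boxtimes\BD B$; the same holds for mixed Hodge modules. Composing the two displays yields $\imath^!(\mathscr{F}\boxdot\mathscr{G})\cong\imath^!\mathscr{F}\boxdot\imath^!\mathscr{G}$.

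For the unit, I would first note that saturation forces $\imath^{-1}(\eta_{\CM})\cong\pt$ as schemes. Indeed, base-changing \eqref{equation:diagrampullbackmonoidal} along the inclusion $\{\eta_{\CM}\}\times\CM\hookrightarrow\CM\times\CM$ and using that $\oplus_{\CM}$ restricts to an isomorphism $\{\eta_{\CM}\}\times\CM\xrightarrow{\sim}\CM$ produces an isomorphism $\imath^{-1}(\eta_{\CM})\times\CN\xrightarrow{\sim}\CN$ given by $(n_0,n)\mapsto n_0\oplus n$; since both $(x,\eta_{\CN})$ and $(\eta_{\CN},x)$ have image $x$, injectivity forces every point $x$ of $\imath^{-1}(\eta_{\CM})$ to equal $\eta_{\CN}$, so $\imath^{-1}(\eta_{\CM})$ is the terminal scheme. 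Hence the square with corners $\pt\xrightarrow{\eta_{\CN}}\CN$ over $\pt\xrightarrow{\eta_{\CM}}\CM$ is Cartesian, and the base change isomorphism gives $\imath^!\eta_{\CM,*}\BoQ\cong\eta_{\CN,*}\BoQ$.

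The remaining, most laborious step — the one I expect to be the real obstacle — is checking that these structure isomorphisms are compatible with the associativity and symmetry constraints of $\boxdot$ and with the unit constraints, i.e. that the relevant hexagons and triangles commute. This unwinds to the standard compatibilities of the base change isomorphism $\alpha^!\beta_*\cong a_*b^!$ with horizontal and vertical pasting of Cartesian squares, the functoriality of the Künneth isomorphism for $\boxtimes$, and the monoidality of $\BD$: each step is routine, but assembling the diagram chase is where the work lies. Passing from $\CD^{\mathrm{b}}$ to the locally-bounded-below categories $\CD^+$ causes no trouble, since all base change isomorphisms commute with the truncations $\tau^{\leq n}$ used to define $\imath^!$ and $\oplus_*$ there; and the mixed Hodge module statement is the verbatim argument in Saito's formalism, in which $\boxtimes$, $\BD$, and base change along Cartesian squares are all available.
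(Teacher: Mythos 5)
Your proof is correct and takes essentially the same approach as the paper: both reduce to the base change isomorphism $\imath^!\oplus_*\cong\oplus_*(\imath\times\imath)^!$ for the Cartesian square \eqref{equation:diagrampullbackmonoidal}, with Saito's \cite[(4.4.3)]{saito1990mixed} handling the mixed Hodge module lift. Your K\"unneth step $(\imath\times\imath)^!(\CF\boxtimes\CG)\cong\imath^!\CF\boxtimes\imath^!\CG$, the unit check via saturation, and the coherence remarks are the routine details the paper leaves implicit in its one-line proof.
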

\begin{proof}
 This is the base change isomorphism $\oplus_*(\imath\times\imath)^!\cong \imath^!\oplus_*$ for the Cartesian diagram \eqref{equation:diagrampullbackmonoidal}. See \cite[(4.4.3)]{saito1990mixed} for the lift to mixed Hodge modules.
\end{proof}

\subsection{Free algebras}
Let $(\CM,\oplus,\eta)$ be a monoid object in the category of complex schemes. Assume furthermore that the unit morphism $\eta\colon \pt\rightarrow \CM$ is an isomorphism onto a connected component $\CM_0$, with complement $\CM_{>}$, and the morphism $\coprod_{n\geq 1} \CM_{>}^{\times n}\rightarrow \CM$ induced by the monoidal structure on $\CM$ is finite. The free $\boxdot$-algebra $\Free_{\boxdot-\Alg}(\CF)$ generated by an object $\CF \in \D^+_{\cc}(\CM_{>})$ is the initial $\boxdot$-algebra amongst morphisms from $\CF$ to underlying complexes of $\boxdot$-algebras.
As an object in $\D_{\cc}^{+}(\CM)$ we have
\begin{equation}
\label{equation:freealgdirectsum}
 \Free_{\boxdot-\Alg}(\CF) = \bigoplus_{n \geq 0} \CF^{\boxdot n}\,.
\end{equation}
The multiplication is induced by the identifications $\CF^{\boxdot m}\boxdot\CF^{\boxdot n}\cong \CF^{\boxdot m+n}$. If $\CF\in\CD^+(\MHM(\CM_{>}))$ is a complex of mixed Hodge modules, we define the free algebra it generates analogously.
\begin{proposition}
\label{proposition:FreeSemiSimp}
Assume that $\oplus$ is finite. If $\CF \in \Perv(\CM_{>})$ is a semisimple perverse sheaf, then so is $\Free_{\boxdot-\Alg}(\CF)$. If $\CF\in\MHM(\CM)$ is pure of weight zero, then so is $\Free_{\boxdot-\Alg}(\CF)$.
\end{proposition}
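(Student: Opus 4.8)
The plan is to exploit the explicit direct-sum description \eqref{equation:freealgdirectsum} of the free algebra together with the fact that, when $\oplus$ is finite, the functor $\oplus_*$ preserves both semisimplicity of perverse sheaves and purity of mixed Hodge modules. First I would observe that since $\oplus$ is finite, it is in particular proper, so for each $n$ and each tuple $(i_1,\dots,i_n)$ the summand $\oplus_*(\CF_{i_1}\boxtimes\cdots\boxtimes\CF_{i_n})$ in \eqref{equation:freealgdirectsum} is the pushforward of an external product along a finite (hence projective) morphism. The external product $\CF_{i_1}\boxtimes\cdots\boxtimes\CF_{i_n}$ of semisimple perverse sheaves is again a semisimple perverse sheaf on $\CM^{\times n}$ — this is the Künneth statement for perverse sheaves, combined with the fact that a product of IC sheaves of irreducible varieties (with coefficients in irreducible local systems) is again such an IC sheaf. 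Then by the decomposition theorem (Saito's version, for the mixed Hodge module case, or the classical Beilinson--Bernstein--Deligne--Gabber version over $\BC$), the proper pushforward of a semisimple perverse sheaf along a finite morphism is semisimple: here finiteness makes this elementary, since a finite morphism is affine and quasi-finite, so $\oplus_*$ is $t$-exact for the perverse $t$-structure and takes simple objects to (shifted) semisimple objects supported on the image. Hence each summand is semisimple, and a (possibly infinite, but locally finite by the finite-fibres hypothesis on $\pi_0(\oplus)$) direct sum of semisimple perverse sheaves is semisimple.

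For the mixed Hodge module statement, I would run the same argument in $\MHM(\CM)$: if $\CF$ is pure of weight zero, then each $\CF_i$ is pure of weight zero, the external product $\CF_{i_1}\boxtimes\cdots\boxtimes\CF_{i_n}$ is pure of weight zero on $\CM^{\times n}$ (weights add under external product, and $0+\cdots+0=0$), and by Saito's theorem the direct image of a pure Hodge module of weight $w$ along a projective morphism is pure of weight $w$; since $\oplus$ is finite and thus projective and $t$-exact, $\oplus_*(\CF_{i_1}\boxtimes\cdots\boxtimes\CF_{i_n})$ is again a pure Hodge module of weight zero (no cohomological shifts appear because of $t$-exactness). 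A locally finite direct sum of pure weight-zero Hodge modules is pure of weight zero, so $\Free_{\boxdot-\Alg}(\CF)$ is pure of weight zero as claimed.

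The one point that needs genuine care — and which I expect to be the main (minor) obstacle — is the bookkeeping around \emph{infinitely many connected components}: $\CM$ may be disconnected, the index set in \eqref{equation:freealgdirectsum} is infinite, and $\Free_{\boxdot-\Alg}(\CF)$ a priori lies in $\CD^+_{\cc}(\CM)$ rather than $\Perv(\CM)$. Here I would use the standing hypothesis that $\pi_0(\oplus)$ has finite fibres: this guarantees that for each connected component $\CN$ of $\CM$ only finitely many tuples $(i_1,\dots,i_n)$ (for each fixed $n$, and in fact the relevant $n$ are also bounded once one restricts to a component, by an additivity/grading argument on $\pi_0$) contribute a summand supported on $\CN$, so the restriction of $\Free_{\boxdot-\Alg}(\CF)$ to each component is a \emph{finite} direct sum of semisimple perverse sheaves (resp. pure weight-zero Hodge modules), hence itself semisimple (resp. pure), and in particular a perverse sheaf. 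One should also check that the monoidal unit $\eta_*\BoQ$ (the $n=0$ term) is semisimple/pure, which is immediate since $\eta$ is a closed point inclusion. Assembling these component-wise statements gives the result on all of $\CM$.
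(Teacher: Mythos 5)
Your proof is correct and follows essentially the same route as the paper's: use the explicit direct-sum description of the free algebra from \eqref{equation:freealgdirectsum}, observe that external products of semisimple perverse sheaves (resp.\ pure weight-zero Hodge modules) remain semisimple (resp.\ pure weight-zero), and that $\oplus_*$ preserves these properties because $\oplus$ is finite. You supply somewhat more detail than the paper (explicitly invoking the decomposition theorem and $t$-exactness for finite morphisms, and spelling out the local-finiteness check coming from the finite-fibres hypothesis on $\pi_0(\oplus)$), but the underlying argument is the same.
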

\begin{proof}
Since $\oplus$ is finite, $\oplus_\ast$ sends (semisimple) perverse sheaves to (semisimple) perverse sheaves. The categories of semisimple perverse sheaves are closed under direct sum and external products. The proposition now follows from \eqref{equation:freealgdirectsum}. The proof for mixed Hodge modules is identical, noting that $\oplus$ preserves purity since it is finite.
\end{proof}

For later use, we specialise Proposition~\ref{proposition:FreeSemiSimp} to the context in which we will use it.  First recall the definition of $\Sigma_{\CA}$ from \S \ref{subsubsection:freenesstotallynegative}.
\begin{corollary}
Let $\CA$ be an Abelian category satisfying Assumptions~\ref{gms_assumption} and \ref{ds_fin} from \S\ref{section:modulistackobjects2CY}. In particular, we assume that $\oplus$ is finite (Assumption~\ref{ds_fin}). The perverse sheaf $\Free_{\boxdot-\mathrm{Alg}}\left(\bigoplus_{a \in \Sigma_{\CA}} \IC(\Msp_{\CA,a})\right)$ on $\Msp_{\CA}$ is semisimple. The mixed Hodge module $\Free_{\boxdot-\Alg}\left(\bigoplus_{a \in \Sigma_{\CA}} \underline{\IC}(\Msp_{\CA,a})\right)$ is pure.
\end{corollary}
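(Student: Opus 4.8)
The plan is to deduce the statement directly from Proposition~\ref{proposition:FreeSemiSimp}. Write $\CG\coloneqq\bigoplus_{a\in\Sigma_{\CA}}\IC(\Msp_{\CA,a})\in\Perv(\Msp_{\CA})$ and $\underline{\CG}\coloneqq\bigoplus_{a\in\Sigma_{\CA}}\underline{\IC}(\Msp_{\CA,a})\in\MHM(\Msp_{\CA})$; the objects in the corollary are $\Free_{\boxdot-\Alg}(\CG)$ and $\Free_{\boxdot-\Alg}(\underline{\CG})$. Since Proposition~\ref{proposition:FreeSemiSimp} promotes semisimplicity of a perverse-sheaf generator, and purity of a weight-zero mixed-Hodge-module generator, to the corresponding free $\boxdot$-algebra (using that $\oplus$ is finite, which is exactly the standing hypothesis recorded in the corollary), the only thing left to check is that $\CG$ is a semisimple perverse sheaf and that $\underline{\CG}$ is pure of weight zero.

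First I would recall, from the discussion of intersection complexes in \S\ref{MHM_sec}, that for each $a\in\Sigma_{\CA}$ the mixed Hodge module $\underline{\IC}(\Msp_{\CA,a})$ is by construction pure of weight zero, with underlying perverse sheaf the intersection complex $\IC(\Msp_{\CA,a})$. When $\Msp_{\CA,a}$ is integral this perverse sheaf is simple; when it is merely reduced, $\IC(\Msp_{\CA,a})$ is the finite direct sum of the intersection complexes of its irreducible components, hence semisimple, and its Hodge lift is a finite direct sum of pure weight-zero mixed Hodge modules, hence still pure of weight zero. The summands of $\CG$ indexed by distinct $a\in\Sigma_{\CA}\subset\pi_0$ are supported on distinct connected components of $\Msp_{\CA}$, so $\CG$ is a locally finite direct sum of simple perverse sheaves, i.e. semisimple, and likewise $\underline{\CG}$ is a direct sum of pure weight-zero mixed Hodge modules, hence pure of weight zero, placed in cohomological degree $0$.

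With this in hand, applying Proposition~\ref{proposition:FreeSemiSimp} to $\CG$ and to $\underline{\CG}$ yields the two assertions immediately. If one prefers to avoid any discomfort about $\Msp_{\CA}$ having infinitely many connected components, one can apply the proposition on one connected component of $\Msp_{\CA}$ at a time, noting that by the formula \eqref{equation:freealgdirectsum} only finitely many of the external-product summands $\oplus_{\ast}(\IC(\Msp_{\CA,a_1})\boxtimes\cdots\boxtimes\IC(\Msp_{\CA,a_n}))$ land on a given component.

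I do not expect a real obstacle: the corollary is a formal consequence of Proposition~\ref{proposition:FreeSemiSimp} together with Saito's theory of pure Hodge modules. The only points that deserve a line of care --- and hence the closest thing to a ``main difficulty'' --- are the bookkeeping ones identified above: that the possibly infinite index set $\Sigma_{\CA}$ is harmless because the generators live on pairwise disjoint components, and that $\IC(\Msp_{\CA,a})$ (and its Hodge lift) remains semisimple (pure) even if one does not assume $\Msp_{\CA,a}$ irreducible.
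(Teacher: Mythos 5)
Your proposal is correct and follows essentially the same route as the paper: reduce to Proposition~\ref{proposition:FreeSemiSimp} by checking that the generator is a semisimple perverse sheaf (resp.\ a pure weight-zero mixed Hodge module). The one substantive difference is that the paper explicitly invokes Proposition~\ref{proposition:geometryofgms}~(2) to conclude that each $\Msp_{\CA,a}$ with $a\in\Sigma_{\CA}$ is \emph{irreducible}, from which simplicity of $\IC(\Msp_{\CA,a})$ and purity of $\underline{\IC}(\Msp_{\CA,a})$ follow immediately. You instead hedge with a fallback for the ``merely reduced'' case, but note that the paper's \S\ref{MHM_sec} \emph{defines} $\underline{\IC}(X)$ only for $X$ integral (and even-dimensional), so that fallback is not actually consistent with the conventions in force; what saves you is that Proposition~\ref{proposition:geometryofgms}~(2) makes the reduced-but-reducible case vacuous. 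I'd drop the hedge and cite that proposition directly --- your remark about the infinitely many components of $\Msp_{\CA}$ being harmless, and the localisation via \eqref{equation:freealgdirectsum}, is a reasonable extra sentence but not logically necessary given that Proposition~\ref{proposition:FreeSemiSimp} is already stated for $\CM$ with possibly infinitely many components.
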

\begin{proof}
This follows immediately from Proposition \ref{proposition:FreeSemiSimp} by semisimplicity and purity of IC-complexes.
\end{proof}

\section{Virtual pullbacks and quasi-smooth morphisms}
\label{vpb_section}

This section develops the core technical material required for the construction of the product of the cohomological Hall algebras considered in this paper. We introduce a class of morphisms along which we can define virtual pullback morphisms of mixed Hodge modules that satisfy desirable properties.

Similar constructions appear for example in \cite{olsson2015borel}, for $\ell$-adic cohomology. Since we need refined Gysin morphisms and the corresponding functorialities at the level of complexes of mixed Hodge modules, we recall the constructions in this section.

\subsection{Virtual pullback for strongly l.c.i. morphisms}
\label{lci_pb_sec}
Let $f\colon X\rightarrow Y$ be a local complete intersection, i.e. we assume that we can write $f$ as a composition $ba$ where $b\colon U\rightarrow Y$ is smooth of dimension $d$, and $a\colon X\rightarrow U$ is a regular embedding of codimension $c$.  For simplicity, and because this is the only type of situation that we will have to handle in this paper, we assume furthermore that $a$ is a section of a smooth morphism $p$. We will call morphisms that are compositions of smooth maps and sections of smooth maps \emph{strongly l.c.i. morphisms}. Note that the inclusion $\{xy=0\}\subset \BoA^2$ is a (local) complete intersection that cannot be written as such a composition \cite{Hennecart2023LCIMorphisms}.

The condition on $b$ results in a canonical isomorphism $b^!\ulBoQ_Y\cong \ulBoQ_U\otimes\BoL^{-d}$, while the condition on $a$ yields a canonical isomorphism $a^!\ulBoQ_U\cong a^!p^!\ulBoQ_X\otimes\BoL^c\cong \ulBoQ_X\otimes \BoL^{c}$. Composing, we obtain the isomorphism
\[
\alpha\colon f^!\ulBoQ_Y=(ba)^!\ulBoQ_Y\cong \ulBoQ_X\otimes\BoL^{c-d}.
\]
Applying adjunction and Verdier duality, we thus define the \textit{pullback morphism}
\[
\BD\ulBoQ_Y\xrightarrow{}f_*\BD\ulBoQ_X\otimes\BoL^{d-c}.
\]
Let $b^{\circ}\colon U^{\circ}\rightarrow Y$, $a^{\circ}\colon X\rightarrow U^{\circ}$ be a different choice of decomposition exhibiting $f$ as a complete intersection, under our standing assumption that $a^{\circ}$ is a section of a smooth morphism. We construct in the same way an isomorphism
\[
\beta\colon f^!\ulBoQ_Y=(b^{\circ}a^{\circ})^!\ulBoQ_Y\cong \ulBoQ_X\otimes\BoL^{c-d}.
\]
We claim that $\zeta\coloneqq\beta\alpha^{-1}=\id$. By Lemma \ref{Homs_Lem} this can be checked in the category of constructible sheaves. Moreover, by the same lemma, it is sufficient to check that $\rat(\zeta)\colon\BoQ_X\rightarrow \BoQ_X$ is the identity at a single point of $X$. We can therefore reduce to the following situation: $X=\BoA^{m}\times\BoA^{m'}$, $U^{(\circ)}=\BoA^m\times\BoA^{m'}\times\BoA^{n}\times\BoA^{n^{(\circ)}}$ and $Y=\BoA^m\times\BoA^n$ with $a^{(\circ)}$ and $b^{(\circ)}$ the natural inclusions and projections, and then the claim is easy to verify. We will make occasional use of the following lemma, comparing pullbacks along strongly l.c.i. morphisms with trivial excess intersection bundle.
\begin{lemma}
\label{zei_lemma}
Let
\[
\begin{tikzcd}
X&Y\\
Z&W
\arrow["f",from=1-1,to=1-2]
\arrow["a",from=1-1,to=2-1]
\arrow["b",from=1-2,to=2-2]
\arrow["g",from=2-1,to=2-2]
\arrow["\lrcorner"{anchor=center, pos=0.125}, draw=none, from=1-1, to=2-2]
\end{tikzcd}
\]
be a Cartesian diagram of morphisms of schemes, and assume that $f$ and $g$ are strongly l.c.i. morphisms of the same relative dimension $r$. Then the isomorphisms
\begin{align*}
\BD\ulBoQ_Y\rightarrow f_*\BD\ulBoQ_X\otimes\BoL^r\\
b^!(\BD\ulBoQ_W\rightarrow g_*\BD\ulBoQ_Z\otimes\BoL^r)
\end{align*}
are equal, after making the natural identifications $b^!\BD\ulBoQ_W=\BD\ulBoQ_Y$ and $b^!g_*\BD\ulBoQ_Z=f_*a^!\BD\ulBoQ_Z=f_*\BD\ulBoQ_X$.
\end{lemma}
\begin{proof}
Again, we use Lemma \ref{Homs_Lem} to reduce to the same statement, but for constructible sheaves, and then reduce to checking at a single point of $Y$. We may therefore assume that the morphisms are all composed out of inclusions and projections of affine spaces of appropriate dimensions, at which point the claim is easy to check.
\end{proof}

\subsection{Refined pullback for vector bundles with section}
We explain the contruction of the virtual pullback from \S\ref{lci_pb_sec} in the particular case of a vector bundle with a section, for which we derive some functorial properties.

\subsubsection{Definition of the refined pullback}
Let $\mathfrak{X}$ be an Artin stack and $\mathcal{E}$ be a vector bundle of rank $r$ on $\mathfrak{X}$. Let $\mathfrak{E}=\Tot_{\FX}(\mathcal{E})$ be the total space of $\mathcal{E}$ and $\pi\colon\mathfrak{E}\rightarrow\mathfrak{X}$ be the projection. Let $s\colon\mathfrak{X}\rightarrow \mathfrak{E}$ be a section of $\mathcal{E}$. We let $0_{\mathfrak{E}}$ be the zero section of $\mathfrak{E}$. Consider the pullback diagram

\begin{equation}
 \label{equation:refinedpullback}
 \begin{tikzcd}
	{\mathfrak{X}_s} & {\mathfrak{X}} \\
	{\mathfrak{X}} & {\mathfrak{E}}
	\arrow["s", from=1-2, to=2-2]
	\arrow["{0_{\mathfrak{E}}}"', from=2-1, to=2-2]
	\arrow["{i_s}", from=1-1, to=1-2]
	\arrow["{i_s}"', from=1-1, to=2-1]
	\arrow["\lrcorner"{anchor=center, pos=0.125}, draw=none, from=1-1, to=2-2]
\end{tikzcd}
\end{equation}
where $\mathfrak{X}_s$ is the zero locus of $s$. We will define the refined Gysin morphism as a morphism of complexes of sheaves\footnote{Here and throughout the rest of the paper, where we write the potentially ambiguous $\BD \BoQ_{X}[d]$, we mean $(\BD \BoQ_{X})[d]$, and not $\BD(\BoQ_{X}[d])$.}
\begin{equation}
\label{equation:refinedGysinmorphism}
v_s\colon \BD \BoQ_{\mathfrak{X}}\rightarrow(i_s)_*\BD \BoQ_{\mathfrak{X}_s}[2r].
\end{equation}
We have the natural isomorphism
\[
s^*\BD \BoQ_{\mathfrak{E}}\cong \BD \BoQ_{\mathfrak{X}}[2r].
\]
Indeed, since $\pi$ is smooth of relative complex dimension $r$, we have $\pi^!=\pi^*[2r]$ and $\pi\circ s=\id_{\mathfrak{X}}$, so $s^*\BD \BoQ_{\mathfrak{E}}=\BD (s^!\BoQ_{\mathfrak{E}})\cong\BD (s^!\pi^!\BoQ_{\mathfrak{X}}[-2r])=\BD \BoQ_{\mathfrak{X}}[2r]$. By the adjunction $(s^*,s_*)$, we obtain a morphism $\BD \BoQ_{\mathfrak{E}}\rightarrow s_*\BD \BoQ_{\mathfrak{X}}[2r]$. By applying $0_{\mathfrak{E}}^!$ and using the base change isomorphism $0_{\mathfrak{E}}^!s_*\cong (i_s)_*i_s^!$, we obtain the morphism
\[
\BD \BoQ_{\mathfrak{X}}\xrightarrow{v_s} (i_s)_* \BD \BoQ_{\mathfrak{X}_s}[2r].
\]
This is the refined Gysin morphism \eqref{equation:refinedGysinmorphism}.
\smallbreak
Let $\FX$ be a scheme.  At the level of functors between mixed Hodge module complexes, we have the canonical isomorphism $\pi^!\cong \pi^{\ast}\otimes \BoL^{-r}$, from which we define the morphism
\[
\BD\ulBoQ_{\FX}\rightarrow (i_s)_*\BD\ulBoQ_{\FX_s}\otimes\BoL^{-r}
\]
in similar fashion. If $\FX$ is a stack, then the construction in the formalism \S\ref{MHMs_on_stacks} gives, for a morphism $a\colon\FX\rightarrow\CX$ to a scheme, the virtual pullback $a_*\BD\underline{\BoQ}_{\FX}\xrightarrow{a_*v_s} a_*(i_s)_*\BD\underline{\FX}_s\otimes\BoL^{-r}$.

\subsubsection{Base change for refined pullbacks}
\label{subsubsection:functorialityGysin}

Let $f\colon \mathfrak{Y}\rightarrow\mathfrak{X}$ be a representable morphism of stacks, $\mathcal{E}$ a rank $r$ vector bundle on $\mathfrak{X}$ and $s$ a section of $\mathcal{E}$. We obtain the vector bundle $f^*\mathcal{E}$ on $\mathfrak{Y}$ together with the section $f^*s$.

We have a pullback square
\begin{equation}
\label{Gysin_pb_diagram}
 \begin{tikzcd}
	{\mathfrak{Y}_{f^*s}} & {\mathfrak{X}_s} \\
	{\mathfrak{Y}} & {\mathfrak{X}}
	\arrow["{f_s}", from=1-1, to=1-2]
	\arrow["{i_{f^*s}}"', from=1-1, to=2-1]
	\arrow["{i_s}", from=1-2, to=2-2]
	\arrow["f"', from=2-1, to=2-2]
	\arrow["\lrcorner"{anchor=center, pos=0.125}, draw=none, from=1-1, to=2-2]
\end{tikzcd}
\end{equation}
and functoriality of the refined Gysin morphism:

\begin{proposition}
\label{proposition:functorialityvirtualpullback}
 Let $v_s\colon \BD \BoQ_{\mathfrak{X}}\rightarrow (i_s)_*\BD \BoQ_{\mathfrak{X}_s}[2r]$ be the refined Gysin morphism for $(\mathcal{E},s)$ and let $v_{f^*s}\colon \BD \BoQ_{\mathfrak{Y}}\rightarrow (i_{f^*s})_*\BD \BoQ_{\mathfrak{Y}_{f^*s}}[2r]$ be the refined Gysin morphism for $(f^*\mathcal{E},f^*s)$. Then, $v_{f^*s}=f^!v_s$ using the canonical identification given by the base change isomorphism $f^!(i_s)_*\cong (i_{f^*s})_*f_s^!$.

 If $f\colon \FY\rightarrow\FX$ is a morphism of schemes, the same result holds at the level of morphisms of complexes of mixed Hodge modules with the shift $[2r]$ replaced by the Tate twist $\BoL^{-r}$.
\end{proposition}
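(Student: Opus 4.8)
The plan is to track the refined Gysin morphism through the three elementary steps used to build it in \S\ref{subsubsection:refinedGysinmorphism}, and to check that each step is compatible with the exceptional pullback $f^!$. First I would fix notation for the relevant spaces: write $\mathfrak{E}=\Tot_{\mathfrak{X}}(\mathcal{E})$ and $\mathfrak{F}=\Tot_{\mathfrak{Y}}(f^*\mathcal{E})$, with projections $\pi,\pi'$, zero sections $0_{\mathfrak{E}},0_{\mathfrak{F}}$, tautological sections $s$ and $s'=f^*s$, and $\widetilde{f}\colon\mathfrak{F}\to\mathfrak{E}$ the morphism lying over $f$. Because $f^*\mathcal{E}$ is the pullback bundle we have $\mathfrak{F}=\mathfrak{Y}\times_{\mathfrak{X}}\mathfrak{E}$, and a direct check shows that the squares with edges $(s',\widetilde{f},s,f)$ and $(0_{\mathfrak{F}},\widetilde{f},0_{\mathfrak{E}},f)$ are Cartesian, that $\widetilde{f}\circ 0_{\mathfrak{F}}=0_{\mathfrak{E}}\circ f$ and $\widetilde{f}\circ s'=s\circ f$, and that the square \eqref{equation:refinedpullback} for $(f^*\mathcal{E},f^*s)$ is the base change along $f$ of the one for $(\mathcal{E},s)$; this last fact is exactly diagram \eqref{Gysin_pb_diagram}.

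Next, recall that $v_s$ is the composite of: (i) the adjunction unit $\BD k_{\mathfrak{E}}\to s_*s^*\BD k_{\mathfrak{E}}$; (ii) the isomorphism $s_*s^*\BD k_{\mathfrak{E}}\cong s_*\BD k_{\mathfrak{X}}[2r]$ coming from $\pi^!\cong\pi^*[2r]$ together with $\pi\circ s=\id_{\mathfrak{X}}$; (iii) application of $0_{\mathfrak{E}}^!$, followed by the base-change isomorphism $0_{\mathfrak{E}}^!s_*\cong (i_s)_*i_s^!$ and the identifications $0_{\mathfrak{E}}^!\BD k_{\mathfrak{E}}\cong\BD k_{\mathfrak{X}}$, $i_s^!\BD k_{\mathfrak{X}}\cong\BD k_{\mathfrak{X}_s}$. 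I would then apply $f^!$: using $\widetilde{f}\circ 0_{\mathfrak{F}}=0_{\mathfrak{E}}\circ f$ one has $f^!0_{\mathfrak{E}}^!=0_{\mathfrak{F}}^!\widetilde{f}^!$, so it suffices to compute $\widetilde{f}^!$ of the morphism $\BD k_{\mathfrak{E}}\to s_*\BD k_{\mathfrak{X}}[2r]$ produced by (i)--(ii), and then paste in the base change of (iii). For the first part I would invoke the standard compatibilities: the adjunction unit for $s$ is carried by $\widetilde{f}^!$ to the adjunction unit for $s'$ via the Cartesian square $(s',\widetilde{f},s,f)$; and the smoothness isomorphism of (ii) is carried to the one for $\pi'$, since $\widetilde{f}$ lies over $f$, $\pi$ is smooth of relative dimension $r$, and $\pi^!\cong\pi^*[2r]$ is compatible with base change. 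Together these identify $\widetilde{f}^!$ of $\BD k_{\mathfrak{E}}\to s_*\BD k_{\mathfrak{X}}[2r]$ with the analogous morphism $\BD k_{\mathfrak{F}}\to s'_*\BD k_{\mathfrak{Y}}[2r]$ for $(f^*\mathcal{E},f^*s)$. Applying $0_{\mathfrak{F}}^!$ and using the coherence (pasting lemma) for base-change isomorphisms along the relevant composite Cartesian rectangle, step (iii) for $(\mathcal{E},s)$ is carried to step (iii) for $(f^*\mathcal{E},f^*s)$, under the identification $f^!(i_s)_*\cong (i_{f^*s})_*f_s^!$ of \eqref{Gysin_pb_diagram}. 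Chaining these identities yields $f^!v_s=v_{f^*s}$.

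The mixed Hodge module statement, in the case where $\mathfrak{X}$ (hence $\mathfrak{Y}$) is a scheme, is proved by the identical argument carried out in $\CD^+(\MHM(\mathfrak{X}))$ and $\CD^+(\MHM(\mathfrak{Y}))$: all of the functors $s_*$, $s^*$, $0_{\mathfrak{E}}^!$, $f^!$, $\widetilde{f}^!$, the relevant base-change isomorphisms and the adjunction unit are available there by Saito's theory \cite{saito1990mixed}, and one replaces $\pi^!\cong\pi^*[2r]$ by $\pi^!\cong\pi^*\otimes\BoL^{-r}$ throughout, the twist by $\BoL^{-r}$ being compatible with pullback. I expect the main obstacle to be organisational rather than conceptual: one must fix once and for all a coherent system of base-change isomorphisms so that the pasting lemma can be applied without ambiguity, and then use the compatibility of the adjunction unit with base change along a Cartesian square --- this last point is the only ingredient that is not pure $2$-categorical formalism, but it is a standard property of the base-change isomorphism.
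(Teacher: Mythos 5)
Your proof is correct and takes essentially the same approach as the paper: the paper's proof simply displays the cube of Cartesian squares obtained by pulling the diagram \eqref{equation:refinedpullback} back along $f$ and asserts that the result follows by base change along $\overline{f}$ (your $\widetilde{f}$), and your argument is a careful unpacking of exactly that claim, tracking each of the three constituent steps of the Gysin construction through $\widetilde{f}^!$. The only difference is one of presentation; you have supplied the details that the paper leaves to the reader.
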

\begin{proof}
 This follows by base change along $\overline{f}$ in the following diagram:
 \[
 \begin{tikzcd}
	{\mathfrak{Y}_{f^*s}} && {\mathfrak{Y}} \\
	& {\mathfrak{Y}} & {} & {f^*\mathfrak{E}} \\
	{\mathfrak{X}_s} & {} & {\mathfrak{X}} \\
	& {\mathfrak{X}} && {\mathfrak{E}}
	\arrow["{i_{f^*s}}", from=1-1, to=1-3]
	\arrow["{f_s}"', from=1-1, to=3-1]
	\arrow[""{name=0, anchor=center, inner sep=0}, "f"'{pos=0.7}, from=2-2, to=4-2]
	\arrow["{0_{\mathfrak{E}}}", from=4-2, to=4-4]
	\arrow["{0_{f^*\mathfrak{E}}}"{pos=0.7}, from=2-2, to=2-4]
	\arrow["{\overline{f}}", from=2-4, to=4-4]
	\arrow["{i_s}"{pos=0.3}, from=3-2, to=3-3]
	\arrow[no head, from=1-3, to=2-3]
	\arrow["f", from=2-3, to=3-3]
	\arrow["{f^*s}", from=1-3, to=2-4]
	\arrow["{i_{f^*s}}", from=1-1, to=2-2]
	\arrow["{i_s}"', from=3-1, to=4-2]
	\arrow["s", from=3-3, to=4-4]
	\arrow[bend left=10,"{\pi_{\mathfrak{E}}}", from=4-4, to=4-2]
	\arrow[bend left=10, "{\pi_{f^*\mathfrak{E}}}"{pos=0.7}, from=2-4, to=2-2]
	\arrow[shorten >=7pt, no head, from=3-1, to=0]
\end{tikzcd}
 \]
The bottom face of the cube is \eqref{equation:refinedpullback} while the top face is the version of \eqref{equation:refinedpullback} for $\FY$, the vector bundle $f^*\mathfrak{E}$ and $f^*s$.
\end{proof}
The next corollary follows similarly by base change:
\begin{corollary}
\label{bc_Gysin}
In the diagram \eqref{Gysin_pb_diagram}, define $g=i_s\circ f_s=f\circ i_{f^*s}$.
\begin{enumerate}
\item
Assume that $f$ is proper. Then the diagram (in which the horizontal maps are obtained by adjunction)
\[
\begin{tikzcd}
g_\ast\BD \BoQ_{\FY_{f^*s}}[2r] & (i_s)_{\ast}\BD \BoQ_{\FX_s}[2r]
\\
f_{\ast}\BD \BoQ_{\FY} & \BD \BoQ_{\FX}
\arrow[from=2-1,to=1-1,"f_{\ast}v_{f^*s}"]
\arrow[from=2-2,to=1-2,"v_{s}"]
\arrow[from=1-1,to=1-2]
\arrow[from=2-1,to=2-2]
\end{tikzcd}
\]
commutes.
\item
Assume that $f$ is smooth. Then the diagram (in which the horizontal maps are obtained by adjunction)
\[
\begin{tikzcd}
g_*\BD \BoQ_{\FY_{f^*s}}[2r] & (i_s)_{\ast}\BD \BoQ_{\FX_s}[2r+2\dim(f)]
\\
f_*\BD \BoQ_{\FY} & \BD \BoQ_{\FX}[2\dim(f)]
\arrow[from=2-1,to=1-1,"f_*v_{f^*s}"]
\arrow[from=2-2,to=1-2,"v_{s}[2\dim(f)\rbrack"]
\arrow[from=1-2,to=1-1]
\arrow[from=2-2,to=2-1]
\end{tikzcd}
\]
commutes.
\end{enumerate}
If $f\colon\FY\rightarrow\FX$ is a morphism of schemes, the same results hold at the level of mixed Hodge modules with the shift $[2\star]$ replaced by the Tate twist $\otimes\BoL^{-\star}$.
\end{corollary}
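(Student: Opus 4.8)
### Proof proposal for Corollary \ref{bc_Gysin}

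\textbf{Overall strategy.} Both statements are instances of the same principle: the refined Gysin morphism $v_s$ is constructed (\S\ref{subsubsection:refinedGysinmorphism}) entirely out of (i) the canonical isomorphism $\pi^!\cong\pi^*[2r]$ for the smooth projection $\pi\colon\mathfrak{E}\to\mathfrak{X}$, (ii) the counit $s^*s_*\to\id$ of the $(s^*,s_*)$ adjunction, and (iii) the base change isomorphism $0_{\mathfrak{E}}^!s_*\cong(i_s)_*i_s^!$. Proposition \ref{proposition:functorialityvirtualpullback} already records the compatibility $v_{f^*s}=f^!v_s$. So the plan is to take that identity as the starting point and then feed it through the standard adjunction yoga relating $f^!$ (resp.\ $f^*$) to pushforward, using properness (resp.\ smoothness) of $f$ to turn $f^!$ into something with an adjoint on the correct side.

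\textbf{Proof of (1): $f$ proper.} Here $f_* = f_!$, so $f_*$ is left adjoint to $f^!$, with unit $\id\to f^!f_*$ and counit $f_!f^!\to\id$. Starting from $v_{f^*s}=f^!v_s$ (Proposition \ref{proposition:functorialityvirtualpullback}), I would apply $f_*=f_!$ to $v_s$ and chase the diagram
\[
\begin{tikzcd}
f_*\BD k_{\FY} \arrow[r,"f_*v_{f^*s}"] \arrow[d] & g_*\BD k_{\FY_{f^*s}}[2r] \arrow[d]\\
\BD k_{\FX} \arrow[r,"v_s"] & (i_s)_*\BD k_{\FX_s}[2r]
\end{tikzcd}
\]
where the left vertical arrow is the counit $f_*\BD k_{\FY}=f_!f^!\BD k_{\FX}\to\BD k_{\FX}$ (using $\BD k_{\FY}\cong f^!\BD k_{\FX}$), and similarly on the right after identifying $g_*\BD k_{\FY_{f^*s}}=(i_s)_*f_{s,*}\BD k_{\FY_{f^*s}}$ and noting $g^!(i_s)_*\BD k_{\FX_s}\cong\BD k_{\FY_{f^*s}}$ via Proposition \ref{proposition:functorialityvirtualpullback}'s base change. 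Substituting $f_*v_{f^*s}=f_!f^!v_s$ and using naturality of the counit $f_!f^!\Rightarrow\id$ applied to the morphism $v_s$ gives precisely the commutativity of the square: the counit of $(f_!,f^!)$ is a natural transformation, so it commutes with $v_s$. This is the entire content; the only bookkeeping is to check the identifications of the two horizontal ``adjunction'' maps in the corollary statement with the maps $f_*v_{f^*s}$ and $v_s$ respectively, which is immediate from how those maps are defined.

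\textbf{Proof of (2): $f$ smooth.} Now $f^! = f^*[2\dim f]$, and $f^*$ is left adjoint to $f_*$, with unit $\id\to f_*f^*$. This time I would compose on the \emph{other} side: apply $f_*$ to $v_{f^*s}$ and precompose/postcompose with the unit $\BD k_{\FX}[2\dim f]\to f_*f^*\BD k_{\FX}[2\dim f]=f_*\BD k_{\FY}$ (using $\BD k_{\FY}\cong f^*\BD k_{\FX}[2\dim f]$, i.e.\ $f^!\BD k_{\FX}\cong\BD k_{\FY}$ rewritten via smoothness). The diagram to chase is the one displayed in the corollary with the horizontal arrows now going right-to-left; substituting $v_{f^*s}=f^!v_s=f^*v_s[2\dim f]$ and invoking naturality of the unit $\id\Rightarrow f_*f^*$ applied to $v_s[2\dim f]$ closes the square. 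Again the work is purely formal: it is naturality of a unit/counit transformation together with Proposition \ref{proposition:functorialityvirtualpullback}.

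\textbf{The descent to mixed Hodge modules, and the main obstacle.} For the final sentence one repeats both arguments verbatim with $\BD k_X$ replaced by $\BD\ulBoQ_X$ and the shift $[2r]$ by $\otimes\BoL^{-r}$ (and $[2\dim f]$ by $\otimes\BoL^{\dim f}$), using that $\rat$ is faithful and that all the functors ($f^*$, $f^!$, $f_*$, $(i_s)_*$) and the adjunction (co)units exist at the level of mixed Hodge modules on schemes with the same formal properties (Saito's theory, cf.\ \cite{saito1990mixed}), and that Proposition \ref{proposition:functorialityvirtualpullback} was already proved in that setting. The only genuine subtlety — and the one place a reader might object — is ensuring that the base change isomorphisms $0_{\mathfrak{E}}^!s_*\cong(i_s)_*i_s^!$ and $f^!(i_s)_*\cong(i_{f^*s})_*f_s^!$ used implicitly are \emph{compatible} with the proper/smooth base change isomorphism for $f$ along the relevant square, i.e.\ that the two ways of identifying $g^!(i_s)_*\BD k_{\FX_s}$ agree. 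This is the standard ``pasting of base change squares'' lemma; I would simply cite it (or note that it follows from the uniqueness of base change morphisms built from the six-functor formalism) rather than reprove it. No step is hard; the risk is purely in notational drift across the several nested pullback squares, which is why I would organise the proof around a single large commuting diagram obtained by gluing \eqref{equation:refinedpullback} for $(\mathcal{E},s)$ and for $(f^*\mathcal{E},f^*s)$ along $f$, exactly as in the proof of Proposition \ref{proposition:functorialityvirtualpullback}.
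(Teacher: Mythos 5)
Your proof is correct and is essentially the paper's approach, which is the laconic one-liner ``follows similarly by base change'' (referring back to the proof of Proposition \ref{proposition:functorialityvirtualpullback}). You have simply unwound what that means: start from the identity $v_{f^*s}=f^!v_s$, and propagate it through the adjunction (co)unit for $(f_!,f^!)$ in the proper case and $(f^*,f_*)$ in the smooth case, using that the counit/unit are natural transformations. The one place you flag as a gap --- that the base change isomorphism $f^!(i_s)_*\cong(i_{f^*s})_*f_s^!$ is compatible with the (co)units, so that the vertical map in your naturality square matches the ``adjunction'' horizontal map of the corollary after identifying the corners --- is indeed the only genuine content. This is exactly the Beck--Chevalley / calculus-of-mates compatibility in the six-functor formalism, and citing it is the right call; it holds equally in Saito's category of mixed Hodge modules, so the last sentence of the corollary follows by the same words once one has Proposition \ref{proposition:functorialityvirtualpullback} in that setting. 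If anything, your modularization (deriving the corollary formally from Proposition \ref{proposition:functorialityvirtualpullback} plus naturality, rather than re-drawing the large Cartesian diagram) is cleaner, since it avoids re-opening the internal construction of $v_s$; the two routes prove the same thing by the same underlying base change and are interchangeable.
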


\subsubsection{Composition and direct sum of vector bundles}
Let $\FX$ be an Artin stack, let $\CE'$ and $\CE''$ be vector bundles on $\FX$,
with sections $s'$ and $s''$ of $\CE'$ and $\CE''$ respectively. We define
$\CE=\CE'\oplus \CE''$. It has the section $s'\boxplus s''$ induced by
$s'$ and $s''$. We define $\FE,\FE',\FE''$ to be the
total spaces of $\CE,\CE',\CE''$ respectively. There is a Cartesian diagram of
projections
\[
\begin{tikzcd}
\FE&\FE'\\
\FE''&\FX.
\arrow[from=1-1,to=1-2,"p'"]
\arrow[from=1-1,to=2-1,"p''"']
\arrow[from=1-2,to=2-2,"\pi'"]
\arrow[from=2-1,to=2-2,"\pi''"]
\arrow["\lrcorner"{anchor=center, pos=0.125}, draw=none, from=1-1, to=2-2]
\end{tikzcd}
\]
\begin{proposition}
\label{sum_comp}
With notation as above, we denote by $t$ the restriction of $s''$ to $\FX_{s'}$. Then the following diagram of refined Gysin morphisms commutes
\[
\xymatrix{
\BD \BoQ_{\FX}\ar[r]^-{v_{s'}}\ar[dr]_{v_{s'\boxplus s''}}& (i_{s'})_{*}\BD \BoQ_{\FX_{s'}}\ar[d]^{(i_{s'})_*v_{t}[2\rank(\CE')]}[2\rank(\CE')]\\
&(i_{s'\boxplus s''})_*\BD \BoQ_{\FX_{s'\boxplus s''}}[2\rank(\CE)].
}
\]
If $\FX$ is a scheme, the same statement is true at the level of mixed Hodge module complexes with $[2\rank(-)]$ replaced by $\otimes\BoL^{-\rank(-)}$.
\end{proposition}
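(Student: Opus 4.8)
The plan is to factor both the section $s'\boxplus s''$ of $\CE=\CE'\oplus\CE''$ and the zero section $0_\FE$ into two steps, and then to recognise in the resulting two-step composition of refined Gysin morphisms exactly the right-hand side of the displayed diagram. Write $r'=\rank(\CE')$, $r''=\rank(\CE'')$ and $r=r'+r''$. Using the canonical identification $\FE=\FE'\times_{\FX}\FE''\cong\Tot_{\FE''}(\pi''^{*}\CE')$, where $\pi''\colon\FE''\to\FX$ is the projection, let $\iota''\colon\FE''\to\FE$ be the zero section of $\pi''^{*}\CE'$ and let $\widetilde{s'}=\pi''^{*}s'\colon\FE''\to\FE$ be the pullback section. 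A direct check on points gives $s'\boxplus s''=\widetilde{s'}\circ s''$ and $0_\FE=\iota''\circ 0_{\FE''}$, and identifies the zero locus $\FE''_{\widetilde{s'}}$ with $\pi''^{-1}(\FX_{s'})$; moreover, under the further identification $\pi''^{-1}(\FX_{s'})\cong\Tot_{\FX_{s'}}(\CE''|_{\FX_{s'}})$ the restriction of $\widetilde{s'}$ (resp. of the zero section) along $\FE''_{\widetilde{s'}}\hookrightarrow\FE''$ becomes the section $t$ (resp. the zero section), so that $\FX_{s'\boxplus s''}=(\FX_{s'})_{t}$.

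Recall from \S\ref{subsubsection:refinedGysinmorphism} that, for a rank-$r$ bundle $\CE$ with section $s$, the morphism $v_s$ is obtained by applying $0_\FE^{!}$ to the morphism $\eta_s\colon\BD k_{\FE}\to s_{*}\BD k_{\FX}[2r]$ coming from the unit of $(s^{*},s_{*})$ and the canonical isomorphism $\theta_s\colon s^{*}\BD k_{\FE}\cong\BD k_{\FX}[2r]$, followed by the identifications $0_\FE^{!}\BD k_{\FE}\cong\BD k_{\FX}$ and $0_\FE^{!}s_{*}\cong(i_s)_{*}i_s^{!}$. Since $s'\boxplus s''=\widetilde{s'}\circ s''$, the morphism $\eta_{s'\boxplus s''}$ decomposes, compatibly with the factorisation $\theta_{s'\boxplus s''}=(\theta_{s''}[2r'])\circ s''^{*}(\theta_{\widetilde{s'}})$, as the composite
\[
\BD k_{\FE}\xrightarrow{\ \eta_{\widetilde{s'}}\ }\widetilde{s'}_{*}\BD k_{\FE''}[2r']\xrightarrow{\ \widetilde{s'}_{*}(\eta_{s''})[2r']\ }\widetilde{s'}_{*}s''_{*}\BD k_{\FX}[2r]=s_{*}\BD k_{\FX}[2r];
\]
this is just the compatibility of the unit of adjunction with composition of morphisms.

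Now apply $\iota''^{!}$. By the base-change isomorphism for the Cartesian square exhibiting $\FE''_{\widetilde{s'}}=\pi''^{-1}(\FX_{s'})$, together with the fact that base change carries a unit of adjunction to a unit of adjunction (and $\theta_{\widetilde{s'}}$ to the corresponding $\theta$ for the restricted bundle), $\iota''^{!}\eta_{\widetilde{s'}}$ becomes the refined Gysin morphism $v_{\widetilde{s'}}$, which by Proposition~\ref{proposition:functorialityvirtualpullback} applied to $\pi''$ equals $\pi''^{!}v_{s'}$; and a further base change turns $\iota''^{!}\big(\widetilde{s'}_{*}(\eta_{s''})[2r']\big)$ into $(i_{\widetilde{s'}})_{*}(\eta_{t}[2r'])$. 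Applying finally $0_{\FE''}^{!}$ and using $\pi''\circ 0_{\FE''}=\id_{\FX}$, the first factor gives $0_{\FE''}^{!}v_{\widetilde{s'}}=0_{\FE''}^{!}\pi''^{!}v_{s'}=v_{s'}$, while the base-change isomorphism $0_{\FE''}^{!}(i_{\widetilde{s'}})_{*}\cong(i_{s'})_{*}0^{!}$ (for the zero section $0\colon\FX_{s'}\to\Tot_{\FX_{s'}}(\CE''|_{\FX_{s'}})$) sends the second factor to $(i_{s'})_{*}(0^{!}\eta_{t})=(i_{s'})_{*}v_{t}$, by the very definition of $v_{t}$. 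Since $2r'+2r''=2r$, this yields $v_{s'\boxplus s''}=(i_{s'})_{*}v_{t}\circ v_{s'}$, which is the asserted commutativity.

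For the statement over a scheme $\FX$ at the level of mixed Hodge module complexes, every functor and every base-change isomorphism used above lifts to $\CD^{+}(\MHM(-))$ by Saito's theory (exactly as in Lemma~\ref{lemma:strictmonoidalfunctor} and Proposition~\ref{proposition:functorialityvirtualpullback}), each shift $[2k]$ being replaced by the Tate twist $\BoL^{-k}$ via the canonical isomorphism $\pi^{!}\cong\pi^{*}\otimes\BoL^{-r}$; the argument then goes through verbatim. The only real work is the verification, implicit in the third paragraph, that all of the base-change isomorphisms are compatible with the relevant units of adjunction and with the $\theta$-isomorphisms, i.e.\ that a fairly large diagram assembled from these pieces commutes; this is standard $2$-categorical bookkeeping about base change and adjunction, but it is where care is needed. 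One could alternatively reduce to the universal situation $\FX=[\BA^{r'}/\GL_{r'}]\times[\BA^{r''}/\GL_{r''}]$ by Proposition~\ref{proposition:functorialityvirtualpullback}, but this does not shorten the computation and lands on a stack, so it does not directly give the mixed Hodge module statement.
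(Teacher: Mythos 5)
Your proof is correct and is essentially the same argument as the paper's, just written out in more detail. The paper also sets $r=\pi''^*s'$ (your $\widetilde{s'}$), observes $s'\boxplus s''=r\circ s''$, displays a $3\times 3$ Cartesian diagram, and invokes Proposition~\ref{proposition:functorialityvirtualpullback} twice to get the identities $v_{s'}=(0_{\CE''})^{!}v_{r}$ and $(i_{s'})_{*}v_{t}=(i_{s'})_{*}f^{!}v_{s''}$, from which it concludes; the paper thus outsources the decomposition of the adjunction units and the base-change compatibilities (what you call ``the only real work'' in your last paragraph) to the reader. Your write-up makes those compatibilities explicit, which is fine. Two cosmetic points: where you say ``the restriction of $\widetilde{s'}$ \dots\ becomes the section $t$'', you mean the restriction of $s''$ to $\FX_{s'}$, viewed as a map $\FX_{s'}\to\pi''^{-1}(\FX_{s'})\cong\Tot_{\FX_{s'}}(\CE''|_{\FX_{s'}})$; and the paper factors the zero section $0_{\CE}$ through $\FE'$ (bottom row of its diagram) rather than through $\FE''$ as you do, though this makes no real difference.
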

\begin{proof}
Set $r=\pi''^*s'$, a section of $\pi''^*\CE'$. Then we consider the following commutative diagram in which all squares are Cartesian:
\[
\begin{tikzcd}
{\FX_{s'\boxplus s''}}\arrow[r]\arrow[d,"i_t"]\arrow[dd, bend right=35, "i_{s'\boxplus s''}" ']
&{\FX_{s''}}\arrow{r}\arrow[d, "i_{s''}"]
&{\FX}\arrow{d}[swap]{s''}\arrow[dd, bend left=35, "s'\boxplus s''"]\\
{\FX_{s'}}\arrow{r}{f=i_{s'}}\arrow{d}{i_{s'}}
&{\FX}\arrow{r}\arrow{d}{s'}\arrow[r,"0_{\CE''}"]
&{\FE''}\arrow{d}[swap]{r}\\
{\FX}\arrow{r}{0_{\CE'}}\arrow[rr, bend right=35, "0_{\CE}"]
&{\FE'}\arrow{r}{0_{\pi'^*\CE''}}&{\FE.}
\arrow["\lrcorner"{anchor=center, pos=0.125}, draw=none, from=1-1, to=2-2]
\arrow["\lrcorner"{anchor=center, pos=0.125}, draw=none, from=1-2, to=2-3]
\arrow["\lrcorner"{anchor=center, pos=0.125}, draw=none, from=2-1, to=3-2]
\arrow["\lrcorner"{anchor=center, pos=0.125}, draw=none, from=2-2, to=3-3]
\end{tikzcd}
\]
By Proposition \ref{proposition:functorialityvirtualpullback} there are
equalities of morphisms $v_{s'}=(0_{\CE''})^!v_r$ and $(i_{s'})_*v_{t}=(i_{s'})_*f^!v_{s''}$ and the proposition follows.
\end{proof}

\subsection{Total space of $3$-term complexes}

Let $\FM$ be an Artin stack and
\[
\CC^{-1}\xrightarrow{d^{-1}}\CC^0\xrightarrow{d^0}\CC^1
\]
be a $3$-term complex of vector bundles over $\FM$ concentrated in cohomological degrees $-1,0,1$.

We have the
following vector bundle with section over the stack $\FM$
\begin{equation*}
	\begin{tikzcd}
		\Tot_{\Tot_{\FM}(\CC^0)}(\pi^* \CC^1) \ar[r] &
		\Tot_{\FM}(\CC^0) \ar[l, bend right,"s_{d^{0}}"']
	\end{tikzcd}
\end{equation*}
Here $\pi\colon \Tot_{\FM}(\CC^0) \to \FM$ is the projection.
The vector bundle $\Tot_{\FM}(\CC^{-1})$ over $\FM$ is a smooth unipotent group scheme acting via $d^{-1}$ on $\Tot_{\FM}(\CC^0)$.
Since $d^0\circ d^{-1} = 0$, $\Tot_{\FM}(\CC^{-1})$ acts on $s_{d^0}^{-1}(0)$.
The (classical truncation) of the \emph{total space} of $\CC^{\bullet}$ is defined to be
the quotient (over $\FM$) of $s^{-1}_{d^{0}}(0)$ by $\Tot_{\FM}(\CC^{-1})$ and denoted by $\Tot_{\FM}(\CC^{\bullet})$.

\subsection{Equivariant virtual pullback}
\label{subsection:virtual-pb-equivariant}

In this section, we explain how the virtual pullback for three term complexes of vector bundles $\CC^\bullet = (\CC^{-1} \xrightarrow{d^{-1}} \CC^0 \xrightarrow{d^0}\CC^1)$ on a classical Artin stack $\FM$ can be understood equivariantly with respect to the $\Tot_{\FM}(\CC^{-1})$-action on $s_{d^0}^{-1}(0)$. Since $\CC^{-1}$ is a vector bundle over $\FM$, it defines a smooth unipotent group scheme. Moreover, the naively truncated complex $\CC^{\geq 0}=(\CC^0\rightarrow\CC^1)$ is such that $\Tot_{\FM}(\CC^{\geq0})\rightarrow \FM$ is a representable morphism. It admits an action of $\Tot_{\FM}(\CC^{-1})$ such that there is a canonical identification $\Tot_{\FM}(\CC^{\bullet})\simeq \Tot_{\FM}(\CC^{\geq0})/\Tot_{\FM}(\CC^{-1})$.

\begin{proposition}
\label{proposition:virtualpullback-unipotentpart}
We assume that $\FM$ is a scheme, or that it is a stack and we work with constructible complexes. We let $\FY\coloneqq\Tot_{\FM}(\CC^{\bullet})$ and $\FX\coloneqq\Tot_{\FM}(\CC^{\geq 0})$. We denote by $\pi\colon \FX\rightarrow\FY$ the projection. Then, the pullback morphism
\[
 \BD\underline{\BoQ}_{\FY}\xrightarrow{v}\pi_*\BD\underline{\BoQ}_{\FX}\otimes\BoL^{-\dim \pi}
\]
is an isomorphism (where $\dim\pi=\dim\CC^{-1}$). In particular, if $\pi'\colon\Tot_{\FM}(\CC^{\bullet})\rightarrow\FM$ is the projection, two morphisms $\BD\underline{\BoQ}_{\FM}\rightarrow\pi'_*\BD\underline{\BoQ}_{\FY}\otimes\BoL^{N}$ (for a fixed $N\in\BoZ$) coincide if and only if their postcompositions with $\pi'_*v\otimes\BoL^N$ coincide.
\end{proposition}
\begin{proof}
Since $\pi$ is the quotient by a smooth unipotent group scheme, the pullback $\pi^*$ is fully faithful (see \S\ref{subsubsection:action-unipotent-group-scheme}). Therefore, the adjunction morphism $\id\rightarrow\pi_*\pi^*$ is an isomorphism. Moreover, since $\pi$ is smooth of relative dimension $\dim\pi$, there is a natural isomorphism $\pi^*\BD\underline{\BoQ}_{\FY}\cong \BD\underline{\BoQ}_{\FX}\otimes\BoL^{-\dim\pi}$. This gives the isomorphism $v$ of the proposition. The second part of the proposition is straightforward since $\pi'_*v\otimes \BoL^N$ is also an isomorphism.
\end{proof}

\begin{proposition}
\label{proposition:naivetruncation-associativitytool}
 Let $\CC^{\bullet}$ (resp. $\CD^{\bullet}$) be perfect complexes of vector bundles over a classical Artin stack $\FM$ (resp. $\FN$) which are $3$-term complexes of vector bundles concentrated in degrees $[-1,1]$. We assume that there is a morphism of stacks $\gamma\colon \Tot_{\FM}(\CC^{\bullet})\rightarrow\FN$. We consider the diagram
\[\begin{tikzcd}
	& {\Tot_{\Tot_{\FM}(\CC^{\bullet})}(\gamma^*\CD^{\bullet})} & {\Tot_{\Tot_{\FM}(\CC^{\geq 0})}((\gamma\circ\pi)^*\CD^{\bullet})} & {\Tot_{\Tot_{\FM}(\CC^{\geq 0})}((\gamma\circ\pi)^*\CD^{\geq 0})} \\
	\FN & {\Tot_{\FM}(\CC^{\bullet})} & {\Tot_{\FM}(\CC^{\geq 0})} \\
	& \FM
	\arrow["\beta"', from=1-2, to=2-2]
	\arrow["{\pi'}"', from=1-3, to=1-2]
	\arrow["\lrcorner"{anchor=center, pos=0.125, rotate=-90}, draw=none, from=1-3, to=2-2]
	\arrow["{\beta'}", from=1-3, to=2-3]
	\arrow["{\pi''}"', from=1-4, to=1-3]
	\arrow["{\beta''}", from=1-4, to=2-3]
	\arrow["\gamma"', from=2-2, to=2-1]
	\arrow["\alpha"', from=2-2, to=3-2]
	\arrow["\pi"', from=2-3, to=2-2]
	\arrow["{\alpha'}", from=2-3, to=3-2]
\end{tikzcd}\]
Then, two morphisms $\BD\underline{\BoQ}_{\FM}\rightarrow(\alpha\circ\beta)_*\BD\BoQ_{\Tot_{\Tot_{\FM}(\CC^{\bullet})}(\gamma^*\CD^{\bullet})}\otimes\BoL^{N}$ (for some fixed $N\in\BoZ$) coincide if and only if their postcompositions with $(\alpha\circ\beta)_*v$ coincide, where $v$ is the smooth pullback morphism $\BD\underline{\BoQ}_{\Tot_{\Tot_{\FM}(\CC^{\bullet})}(\gamma^*\CD^{\bullet})}\xrightarrow{v} \pi'_*\pi''_*\BD\underline{\BoQ}_{\Tot_{\Tot_{\FM}(\CC^{\geq 0})}((\gamma\circ\pi)^*\CD^{\geq 0})}\otimes\BoL^{-\dim \pi'\circ\pi''}$.

Moreover, the postcomposition with $(\alpha\circ\beta)_*v$ of the morphism
\[
 \BD\underline{\BoQ}_{\FM}\rightarrow(\alpha\circ\beta)_*\BD\BoQ_{\Tot_{\Tot_{\FM}(\CC^{\bullet})}(\gamma^*\CD^{\bullet})}\otimes\BoL^{N}
\]
($N=\vrank(\CC^{\bullet})+\vrank(\CD^{\bullet})$) given by the composition of the virtual pullbacks for $\alpha$ and $\beta$, is the composition of the virtual pullbacks for $\alpha'$ and $\beta''$.
\end{proposition}
\begin{proof}
The first part of the proposition follows from the same argument as Proposition~\ref{proposition:virtualpullback-unipotentpart} since the adjunction morphism $\id\rightarrow(\pi'\circ\pi'')_*(\pi'\circ\pi'')^*$ is an isomorphism.

The last part of the proposition comes from the fact that in the diagram of the proposition, all horizontal maps are smooth, vertical and diagonal arrows are total spaces of $3$ or $2$-term complexes and the square is Cartesian with trivial excess intersection bundle, using Lemma~\ref{zei_lemma}.
\end{proof}

\subsection{Virtual pullbacks under quasi-isomorphisms}

\subsubsection{Functoriality under global equivalences}

\begin{proposition}
\label{gqe_prop}
Let $\CC^{\bullet}$ and $\CC'^{\bullet}$ be two $3$-term complexes over an Artin stack $\FM$. Assume we are given the following commutative diagram
 \[
 \begin{tikzcd}
	{\FN = \Tot_{\FM}(\CC^{\bullet})}& \FM\\
	{\FN'=\Tot_{\FM}(\CC'^{\bullet})}&
	\arrow["\pi", from=1-1, to=1-2]
	\arrow["f"', from=1-1, to=2-1]
	\arrow["{\pi'}"', from=2-1, to=1-2]
\end{tikzcd}
 \]
in which $\pi,\pi'$ are the natural projections and $f$ is a global equivalence between them (i.e. an equivalence induced by a morphism of complexes $\CC^{\bullet}\rightarrow\CC'^{\bullet}$).  Then the following diagram commutes
\[
\begin{tikzcd}
\BoQ_{\FM}&\pi_* \BoQ_{\FN}[-2\vrank(\CC^{\bullet})]\\
&\pi'_{\ast}\BoQ_{\FN'}[-2\vrank(\CC'^{\bullet})]
\arrow["v_{\CC}",from=1-1,to=1-2]
\arrow["v_{\CC'}"',from=1-1,to=2-2]
\arrow["\cong",from=1-2,to=2-2]
\end{tikzcd}
\]
where the vertical isomorphism is induced by the equivalence $f$. Assume, in addition, that $\FM$ has an acyclic cover, and we are given a morphism $a\colon\FM\rightarrow \CM$ to a scheme.  Then the following diagram of complexes of mixed Hodge modules over $\CM$ commutes
 \[
\begin{tikzcd}
{a_*\ulBoQ_{\FM}}\arrow{dr}[swap]{a_*v_{\CC'}}&{a_*\pi_*\ulBoQ_{\FN}}\otimes\BoL^{\vrank(\CC^{\bullet})}\\
&{a_*\pi'_*\ulBoQ_{\FN'}}\otimes\BoL^{\vrank(\CC'^{\bullet})}.
\arrow["{a_*v_{\CC}}", from=1-1, to=1-2]
\arrow["{\cong}", from=1-2, to=2-2]
\end{tikzcd}
\]

\end{proposition}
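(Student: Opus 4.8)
The plan is to reduce the statement, in the spirit of Propositions~\ref{proposition:functorialityvirtualpullback}--\ref{gpres_assoc}, to a short list of elementary global equivalences and then to check it for each of them.

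First I would use that everything in sight is local on $\FM$ and compatible with smooth base change: both $v_\CC$ and $v_{\CC'}$ are assembled from affine-fibration isomorphisms and refined Gysin morphisms (\S\ref{vpb_sec}, \S\ref{cvpb_sec}), which commute with restriction to a cover and with smooth pullback by Proposition~\ref{vpb_bchange}, and the vertical isomorphism is just the canonical identification coming from $\pi=\pi'\circ t_0(\bs f)$. So I may assume $\FM$ is affine and $\CC^\bullet,\CC'^\bullet$ are complexes of trivial vector bundles, in which case $\bs f$ is recorded by a quasi-isomorphism $\varphi$ between the bounded complexes of vector bundles $((\CC^\bullet)^\vee,d_\CC)$ and $((\CC'^\bullet)^\vee,d_{\CC'})$, intertwining the derivations. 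I would then pass to the mapping cylinder of $\varphi$: since a degreewise-split short exact sequence of complexes of vector bundles with contractible kernel splits as complexes, $\mathrm{Cyl}(\varphi)$ is non-canonically isomorphic both to $((\CC'^\bullet)^\vee,d_{\CC'})$ plus a contractible complex and to $((\CC^\bullet)^\vee,d_\CC)$ plus a contractible complex, the contractible summands being finite sums of two-term complexes $V_j\xrightarrow{\,\mathrm{id}\,}V_j$ in adjacent degrees. Up to isomorphisms of complexes --- for which all the constructions here are manifestly functorial --- this exhibits $\bs f$ as a composition of global equivalences of two kinds: the projection $\Tot_\bsM(\CC^\bullet\oplus\CK^\bullet)\to\Tot_\bsM(\CC^\bullet)$, where $\CK^\bullet=(V\xrightarrow{\,\mathrm{id}\,}V)$ sits in cohomological degrees $(n,n+1)$ with $n+1\leq 0$ and the derivation is the sum of $Q_\CC$ with the dual differential of $\CK^\bullet$ and no cross-terms; and the same construction with $(n,n+1)=(0,1)$.

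For the first kind, $\CK^\bullet$ lies entirely in degrees $\leq 0$, so $t_0(\Tot_\FM(\CK^\bullet))=\FM$ and $\FN$ is unchanged; the comparison then reduces to the compatibility of the affine-fibration isomorphism of \S\ref{cvpb_sec} with the splitting $\Tot_\FM((\CC\oplus\CK)^{\leq 0})\cong\Tot_\FM(\CC^{\leq 0})\times_\FM\Tot_\FM(\CK^{\leq 0})$, which is bookkeeping with virtual ranks since the extra factor is an affine-space fibration. For the second kind, $\CC'^0=\CC^0\oplus V$ and $\CC'^1=\CC^1\oplus V$, and the section $s'$ of $\pi'^*\CC'^1$ over $\Tot_\FM(\CC'^{\leq 0})$ is the sum of $s$ with the tautological diagonal section of the pulled-back bundle $V$; Proposition~\ref{sum_comp} then identifies $v_{\CC'}$ with $v_\CC$ followed by the refined Gysin morphism along that diagonal section, whose vanishing locus is the zero section $\cong\FM$. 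I would conclude by checking that this last Gysin morphism, postcomposed with the projection $\Tot_\FM(V)\to\FM$, is inverse to the affine-fibration isomorphism $\BD k_\FM\isoto(\mathrm{proj})_*\BD k_{\Tot_\FM(V)}[-2\rank V]$; by functoriality of the refined Gysin morphism (Proposition~\ref{proposition:functorialityvirtualpullback}) this is pulled back from $\FM=\pt$, $V=\BoA^r$, where it amounts to the elementary fact that Gysin pullback along $\BoA^r\hookrightarrow\BoA^r\times\BoA^r$ followed by $\BoA^r\to\pt$ is the identity. The mixed Hodge module statement then follows by running the same argument verbatim in $\CD^+(\MHM(\CM))$, every input having been supplied with an MHM lift, or alternatively from Lemma~\ref{Homs_Lem}.

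The step I expect to be the genuine obstacle is the reduction to elementary moves: one must perform the mapping-cylinder decomposition compatibly with the \emph{full} derivations $Q_\CC,Q_{\CC'}$, not merely their linear parts $d_\CC,d_{\CC'}$, i.e.\ check that the higher Taylor components of the derivations restrict and extend correctly along the split inclusions and projections. Everything after that point is a combination of Propositions~\ref{proposition:functorialityvirtualpullback}, \ref{sum_comp} and \ref{vpb_bchange} together with a single computation over a point.
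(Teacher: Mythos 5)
Your strategy — reduce to elementary global equivalences by a mapping-cylinder decomposition, then check each elementary move via Propositions~\ref{proposition:functorialityvirtualpullback}, \ref{sum_comp} and \ref{vpb_bchange} — is genuinely different from the paper's proof, and you have correctly identified exactly where it breaks: the mapping cylinder of $\varphi$ is a construction in the linear category of complexes of vector bundles, and it is not automatic (or even clearly possible) to equip $\Tot_\bsM(\mathrm{Cyl}(\varphi))$ with a derivation that (i) restricts to $Q_\CC$ along the inclusion, (ii) pushes down to $Q_{\CC'}$ along the deformation retraction, and (iii) still vanishes on the zero section and squares to zero. The hypothesis $Q_\CC=\bs f^*Q_{\CC'}$ gives you compatibility \emph{across} $\bs f$, but a factorization of $\bs f$ through intermediate total spaces would require constructing new derivations on those intermediate stages, and that is precisely the data a mapping cylinder does not supply. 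Even if it could be done, verifying it would amount to redoing the commutativity argument you are trying to reduce to; so flagging this as ``the genuine obstacle'' is an understatement — without it the proof does not go through.

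The paper avoids the factorization altogether. It constructs a single commutative diagram over $\FM$ built out of the affine-fibration stages $\Tot(\CC^{\leq 0})$, $\Tot(\CC'^{\leq 0})$, their zero sections into $\Tot(\pi^*\CC^1)$, $\Tot(\pi'^*\CC'^1)$, and the sections $s$, $s'$ cutting out the common classical truncation $Z\cong t_0(\Tot^{Q_\CC}_\bsM(\CC^\bullet))\cong t_0(\Tot^{Q_{\CC'}}_\bsM(\CC'^\bullet))$. Because $\bs f$ intertwines the full derivations, every square in this diagram is Cartesian with no choices to make. Then the paper observes that $f^{\leq 0}$ and the induced bundle map $v$ are l.c.i. of the same relative dimension $\vrank(\CC'^{\leq 0})-\vrank(\CC^{\leq 0})$, and that the two sections are l.c.i. of codimensions $\rank(\CC^1)$ and $\rank(\CC'^1)$ respectively; the quasi-isomorphism forces $\vrank(\CC^\bullet)=\vrank(\CC'^\bullet)$, so all the relevant Cartesian squares have vanishing excess intersection bundle, and Lemma~\ref{zei_lemma} gives the equality of virtual pullbacks directly. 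This is shorter, requires no case analysis or local trivialization, and — crucially — handles the nonlinear part of the derivation for free, since the only thing it needs from $Q$ is that it determines a consistent section $s$ and a Cartesian square, both of which are already part of the data of a global equivalence. If you want to keep a reduction-to-elementary-moves proof you would need to work out the derived-level mapping cylinder, but the paper's one-diagram argument is both cleaner and complete, and I would recommend adopting it.
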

\begin{proof}
We prove the constructible complex version of the theorem, the mixed Hodge module version is proved in the same way by considering acyclic covers as in \S\ref{MHMs_on_stacks}. We have a commutative diagram
 \[
 \begin{tikzcd}
	{\FM} & {\Tot(\mathcal{C}'^{\leq 0})} & {\Tot(\mathcal{C}^{\leq 0})} & Z \\
	& {\Tot(\pi'^{\leq 0*}\mathcal{C}^1)} & {\Tot((\pi^{\leq 0})^*\mathcal{C}^1)} & {\Tot(\mathcal{C}^{\leq 0})} \\
	& {\Tot((\pi'^{\leq 0})^*\mathcal{C}'^1)} & {} & {\Tot(\mathcal{C}'^{\leq 0})}
	\arrow["{{(\pi')}^{\leq 0}}", from=1-2, to=1-1]
	\arrow["{f^{\leq 0}}", from=1-3, to=1-2]
	\arrow["q",from=1-4, to=1-3]
	\arrow["u", from=1-2, to=2-2]
	\arrow["{(\pi'^{\leq 0})^*f^1}", from=2-2, to=3-2]
	\arrow["{s_{d'^1}}"', from=3-4, to=3-2]
	\arrow["0"', from=1-3, to=2-3]
	\arrow["q",from=1-4, to=2-4]
	\arrow["{s_{d^1}}"', from=2-4, to=2-3]
	\arrow["v"', from=2-3, to=2-2]
	\arrow["{f^{\leq 0}}", from=2-4, to=3-4]
	\arrow["\pi^{\leq 0}"', bend right=30, from=1-3, to=1-1]
	\arrow["0"', bend right=70, from=1-2, to=3-2]
	\arrow["\lrcorner"{anchor=center, pos=0.125, rotate=-90}, draw=none, from=1-3, to=2-2]
	\arrow["\lrcorner"{anchor=center, pos=0.125, rotate=-90}, draw=none, from=1-4, to=2-3]
	\arrow["\lrcorner"{anchor=center, pos=0.025, rotate=-90}, draw=none, from=2-4, to=3-2]
\end{tikzcd}
 \]
where all the squares are Cartesian (since $f$ is a quasi-isomorphism), and we have left out some subscripts in order to reduce clutter. In particular, $Z=\Tot_{\FM}(\CC^{\bullet})\cong \Tot_{\FM}(\CC'^{\bullet})$. The maps out of $Z$ are the natural inclusions, the map $v$ is the natural morphism induced by $f^0$ coming from the identification $\Tot((\pi^{\leq 0})^*\mathcal{C}^1)\cong \Tot((f^{\leq 0})^*(\pi'^{\leq 0})^*\mathcal{C}^1)$. The map $u$ is the zero section of the vector bundle $\Tot((\pi'^{\leq 0})^*\mathcal{C}^1)\rightarrow \Tot(\mathcal{C}'^{\leq 0})$. The maps in this diagram satisfy the following additional properties:
\begin{enumerate}
 \item 
 \label{lci_p1}The map $f^{\leq 0}$ is strongly l.c.i. of codimension $\vrank(\mathcal{C}'^{\leq 0})-\vrank(\mathcal{C}^{\leq 0})$.  We provide an explicit factorisation as in \S \ref{lci_pb_sec}.  We consider the two term complex $\mathcal{E}^{\bullet}=\mathcal{C}^{-1}\xrightarrow{g} \mathcal{C}^0\oplus \mathcal{C}'^0$ with $g=d^{-1}\oplus (f^0\circ d^{-1})$.  Then the graph embedding $\Tot(\mathcal{C}^{\leq 0})\rightarrow \Tot(\mathcal{E})$ is a closed embedding, and a section of the smooth projection $\Tot(\mathcal{E})\rightarrow \Tot(\mathcal{C}^{\leq 0})$.  There is a morphism of complexes $l\colon \mathcal{E}^{\bullet}\rightarrow \mathcal{C}'^{\leq 0}$ with $l^{-1}=f^{-1}$ and $l^0=\pi_{\mathcal{C}'^0}$.  We claim that $l$ is representable and smooth.  For this, it is sufficient to check the statement for the morphism $\overline{l}$ obtained after pulling back along $\Tot(\mathcal{C}'^0)\rightarrow \Tot(\mathcal{C}'^{\leq 0})$.  Since $f$ is a quasi-isomorphism, the morphism $r\colon \mathcal{C}^{-1}\rightarrow \mathcal{C}'^{-1}\oplus\mathcal{C}^0\oplus\mathcal{C}'^0$ given by $f^{-1}\oplus d^{-1}\oplus (f^0\circ d^{-1})$ is injective, and we have a surjective morphism of vector bundles $q\colon \mathcal{C}'^{-1}\oplus\mathcal{C}^0\oplus\mathcal{C}'^0/\mathcal{C}^{-1}\rightarrow \mathcal{C}'^0$.  The morphism $\overline{l}$ is obtained by applying $\Tot_{\FM}(-)$ to $q$, and is thus smooth.
 \item The map $v$ is strongly l.c.i. of codimension $\vrank(\mathcal{C}'^{\leq 0})-\vrank(\mathcal{C}^{\leq 0})$: the argument for this is exactly as in (\ref{lci_p1}).
 \item The map $s_{d^1}$ is strongly l.c.i. of codimension $\rank(\mathcal{C}^1)$ as the section of a vector bundle,
 \item The map $s_{d'^1}$ is strongly l.c.i. of codimension $\rank(\mathcal{C}'^1)$ as the section of a vector bundle.
\end{enumerate}

Since $\mathcal{C}^{\bullet}$ and $\mathcal{C}'^{\bullet}$ are quasi-isomorphic, we have in particular $\vrank(\CC^{\bullet})=\vrank(\CC'^{\bullet})$, which means that $v\circ s_{d^1}$ and $s_{d'^1}$ are both of the same codimension: $\rank(\mathcal{C}'^{1})$. The maps $f^{\leq 0}$ and $v$ are also of the same codimension and hence these Cartesian squares have no excess intersection bundle. Via Lemma \ref{zei_lemma}, we obtain that the virtual pullbacks by $\pi^{\leq 0}\circ q$ and $(\pi'^{\leq 0})\circ (f^{\leq 0}\circ q)$ coincide.
\end{proof}

\begin{lemma}
\label{lemma:cover_stack_rp}
 Let $\FX$ be a finite type Artin stack having the resolution property. Then, there exists a smooth morphism $f\colon\FY\rightarrow\FX$ which is a torsor for a unipotent algebraic group, where $\FY$ is a quotient stack of an affine scheme by a reductive algebraic group.
\end{lemma}
\begin{proof}
Since $\FX$ has the resolution property, it may be presented as a quotient stack $X/H$ where $H$ is an affine algebraic group and $X$ an affine scheme \cite{gross2017tensor}. Write $H\cong G\ltimes U$ where $G$ is a reductive group and $U$ is the unipotent radical of $H$. Then, we set $\FY\coloneqq X/G$. The morphism $f\colon X/G\rightarrow X/H$ satisfies the conditions of the lemma.
\end{proof}

\begin{proposition}
\label{glob_res_prop}
Let $\FM$ be an Artin stack having the resolution property. For $\CC^{\bullet}$ and $\CC'^{\bullet}$ two $3$-term complexes of vector bundles over $\FM$, an isomorphism $\CC^{\bullet}\cong \CC'^{\bullet}$ in $\CD(\Coh(\FM))$ induces an equivalence of stacks $f\colon t_0(\Tot_{\FM}(\CC^{\bullet}))\simeq t_0(\Tot_{\FM}(\CC'^{\bullet}))$
and the diagram of complexes of constructible sheaves
\[
\begin{tikzcd}
\BoQ_{\FM}&p_* \BoQ_{t_0(\Tot_{\FM}(\CC'^{\bullet}))}[2\vrank(\CC^{\bullet})]\\
&p_*f_*\BoQ_{t_0(\Tot_{\FM}(\CC^{\bullet}))}[2\vrank(\CC^{\bullet})]
\arrow["v_{\CC'}",from=1-1,to=1-2]
\arrow["v_{\CC}"',from=1-1,to=2-2]
\arrow["\cong",from=1-2,to=2-2]
\end{tikzcd}
\]
commutes, with $p\colon t_0(\Tot_{\FM}(\CC'^{\bullet}))\rightarrow\FM$ the projection. If $a\colon\FM\rightarrow \CM$ is a morphism to a scheme, the same statement remains true at the level of direct image mixed Hodge modules on $\CM$.
\end{proposition}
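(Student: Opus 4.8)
The plan is to factor the given isomorphism $\phi\colon\CC^{\bullet}\xrightarrow{\sim}\CL^{\bullet}$ in $\CD(\Coh(\FM))$ through a zig-zag of \emph{honest} quasi-isomorphisms of bounded complexes of vector bundles, each concentrated in degrees $\leq 1$ and of tor-amplitude $[-1,1]$, and then to observe that each such quasi-isomorphism induces a global equivalence in the sense of \S\ref{GE_sec}, at which point the statement follows by applying Proposition~\ref{gqe_prop} to each leg of the zig-zag.

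Producing the zig-zag is the homological input. Since $K(\Coh(\FM))\to\CD(\Coh(\FM))$ is a localisation admitting a calculus of fractions, $\phi$ is represented by a roof $\CC^{\bullet}\xleftarrow{s}\CE^{\bullet}\xrightarrow{t}\CL^{\bullet}$ with $s$ a quasi-isomorphism. Since $\CE^{\bullet}\simeq\CC^{\bullet}$ has no cohomology in degrees $\geq 2$, the inclusion $\tau^{\leq 1}\CE^{\bullet}\hookrightarrow\CE^{\bullet}$ is a quasi-isomorphism, and $\tau^{\leq 1}\CE^{\bullet}$ is a perfect complex of coherent sheaves concentrated in degrees $\leq 1$; applying Lemma~\ref{perfect_bounds_lemma} to it produces a bounded complex of vector bundles $\CP^{\bullet}$ with $\CP^{i}=0$ for $i\geq 2$ together with a quasi-isomorphism $\CP^{\bullet}\xrightarrow{\simeq}\tau^{\leq 1}\CE^{\bullet}$. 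Composing, one obtains a roof $\CC^{\bullet}\xleftarrow{s'}\CP^{\bullet}\xrightarrow{t'}\CL^{\bullet}$ still representing $\phi$, in which $\CP^{\bullet}$ is a bounded complex of vector bundles concentrated in degrees $\leq 1$, inheriting the tor-amplitude $[-1,1]$ of $\CC^{\bullet}$; here $s'$ is a quasi-isomorphism by construction, and $t'$ is one because its class in the derived category equals $\phi\circ[s']$.

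Next I would check that $s'$ and $t'$ each induce a global equivalence. Being a chain map, such a quasi-isomorphism commutes with the differentials, hence with the degree-one derivations dual to them, and being a quasi-isomorphism its dual is again one; together with \eqref{cotan_gp} this is exactly the data of a global equivalence between the associated globally presented quasi-smooth $\FM$-stacks $\Tot^{Q}_{\FM}(\CP^{\bullet})$ and $\Tot^{Q}_{\FM}(\CC^{\bullet})$, respectively $\Tot^{Q}_{\FM}(\CP^{\bullet})$ and $\Tot^{Q}_{\FM}(\CL^{\bullet})$ --- passing, if the variance requires it, to the inverse equivalence of derived stacks, which is legitimate precisely because the induced map of dual complexes is a quasi-isomorphism. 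Proposition~\ref{gqe_prop} applied to the leg $s'$ then identifies $v_{\CC}$ with $v_{\CP}$ after the induced equivalence of classical stacks, and applied to the leg $t'$ identifies $v_{\CL}$ with $v_{\CP}$; composing the two identifications yields the asserted commuting triangle, with the displayed vertical isomorphism induced by $f=t_0(\bs f)$ where $\bs f$ is the composite of the two global equivalences (which realises the equivalence of derived stacks induced by $\phi$). If $\FM$ has an acyclic cover and $a\colon\FM\to\CM$ is a morphism to a scheme, the mixed Hodge module half of Proposition~\ref{gqe_prop} upgrades each step to an equality of morphisms of direct image mixed Hodge module complexes on $\CM$, giving the final sentence.

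The main obstacle is the step of the second paragraph: one must build the comparison zig-zag \emph{within} the class of bounded complexes of vector bundles concentrated in degrees $\leq 1$, and this is exactly where the resolution property of $\FM$ (through Lemma~\ref{perfect_bounds_lemma}) is indispensable --- because of higher cohomology on $\FM$ one cannot in general represent a derived isomorphism of strictly perfect complexes by a single chain map, so a genuine roof, truncated and resolved so as to keep the degrees and tor-amplitude under control, is needed. The remaining work --- that a quasi-isomorphism of such complexes is a global equivalence, and the bookkeeping of variances --- is routine given Proposition~\ref{gqe_prop}.
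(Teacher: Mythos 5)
Your proposal is correct and follows the same route as the paper: use the resolution property and Lemma~\ref{perfect_bounds_lemma} to replace the derived-category isomorphism by honest quasi-isomorphisms of complexes of vector bundles concentrated in degrees $\leq 1$, then invoke Proposition~\ref{gqe_prop}. The paper's proof compresses the reduction to a single sentence (``we can reduce the claim to the weaker claim, in which we assume we are given a quasi-isomorphism of vector bundles''), whereas you make explicit the roof $\CC^{\bullet}\leftarrow\CP^{\bullet}\rightarrow\CL^{\bullet}$ needed to carry out that reduction on a non-affine stack and the resulting two applications of Proposition~\ref{gqe_prop}; this is a legitimate and arguably clearer elaboration of the same idea rather than a different approach.
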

\begin{proof}
We let $f\colon\FY\rightarrow\FX$ be a torsor for a unipotent algebraic group such that $\FY$ is the quotient of an affine scheme by a reductive algebraic group (Lemma~\ref{lemma:cover_stack_rp}). Then, it suffices to prove the statement of the proposition after pulling back to $\FY$ thanks to \S\ref{subsubsection:action-unipotent-group-scheme}, i.e. for $f^*\CC^{\bullet}$ and $f^*\CC'^{\bullet}$. Since $f^*\CC^{\bullet}$ and $f^*\CC'^{\bullet}$ are quasi-isomorphic complexes of projective objects, there is a morphism of complexes $f^*\CC^{\bullet}\rightarrow f^*\CC'^{\bullet}$ that is a quasi-isomorphism. We are then reduced to Proposition~\ref{gqe_prop}.
\end{proof}

\section{Moduli stacks of objects in $2$-dimensional categories}
\label{section:modulistackobjects2CY}

In this section we introduce the setting and notation for categories and their moduli spaces. We explain in detail the assumptions we make on our categories and their moduli theory, in order to define cohomological Hall algebras, and then in the 2CY case, define BPS algebras.

\subsection{Geometric setup}
\label{subsubsection:gsetup}

\subsubsection{Categorical setup}
\label{subsubsection:categorical}
We let $\mathscr{C}$ be a $\BoC$-linear dg-category, $\bm{\mathfrak{M}}\subset \bm{\mathfrak{M}}_{\mathscr{C}}$ a $1$-Artin open substack of the stack of objects of $\mathscr{C}$ (in the sense of \cite{toen2007moduli}) and let $\mathfrak{M}= t_0(\bm{\mathfrak{M}})$ be the classical truncation. We assume that $\mathfrak{M}$ parametrises objects of an admissible (in the sense of \cite[\S 1.2]{beilinson2018faisceaux}) finite length Abelian subcategory $\mathcal{A}$ of the homotopy category $\HO^0(\mathscr{C})$. For $X=\Spec(A)$, $X$-points of $\bm{\mathfrak{M}}_{\mathscr{C}}$ correspond to pseudo-perfect $\mathscr{C}\otimes_{\BoC} A$-modules $N$, i.e. bimodules $N$ that are perfect as $A$-modules. Given a pair of such modules we obtain the dg $A$-module $\RHom_{\mathscr{C}\otimes_{\BoC} A}(N,N')$, in this way defining the RHom complex on $\bm{\mathfrak{M}}_{\mathscr{C}}^{\times 2}$ and thus, by restriction, on $\mathfrak{M}^{\times 2}$.

\subsubsection{Base monoid}
\label{base_monoid_sec}
We let $\mathscr{C}$, $\mathfrak{M}$ and $\mathcal{A}$ be as in \S\ref{subsubsection:categorical}. We assume that we have a monoid object $(\mathcal{M},\oplus)$ in the category of Artin stacks and a morphism $\varpi\colon\mathfrak{M}\rightarrow \mathcal{M}$ such that the following diagram commutes:
\[
 \begin{tikzcd}
	{\mathfrak{M}\times\mathfrak{M}} & {\mathfrak{Exact}} & {\mathfrak{M}} \\
	{\mathcal{M}\times\mathcal{M}} && {\mathcal{M}}
	\arrow["\varpi\times\varpi"', from=1-1, to=2-1]
	\arrow["q"', from=1-2, to=1-1]
	\arrow["p", from=1-2, to=1-3]
	\arrow["\varpi", from=1-3, to=2-3]
	\arrow["\oplus", from=2-1, to=2-3]
\end{tikzcd}
\]
where $\mathfrak{Exact}$ denotes the stack of short exact sequences in $\mathcal{A}$, $q$ is the projection onto the first and last terms of a short exact sequence and $p$ is the projection onto the middle term. In all the examples that we consider, $\CM$ will actually be a scheme. We sometimes say that $\mathcal{M}$ satisfying the above condition \emph{splits short exact sequences}. If $\CM_a$ is a connected component of $\CM$, we define $\FM_a\coloneqq \varpi^{-1}(\CM_a)$.

\subsection{Assumptions for the construction of the relative CoHA product}
\label{subsubsection:assumptionCoHAproduct}
For the construction of the relative Hall algebra product, we need two assumptions: one for the construction of the pushforward along $p$, one for the construction of the virtual pullback by $q$.

\begin{assumption}{1}
\label{p_assumption}
(Assumption for the construction of the pushforward by $p$). Let $\mathfrak{Exact}$ be the stack of short exact sequences in $\mathcal{A}$. We let $p\colon\mathfrak{Exact}\rightarrow \mathfrak{M}$ be the morphism forgetting the extreme terms of the short exact sequence. We assume that $p$ is a proper and representable morphism of stacks.
\end{assumption}

Next we turn to the virtual pullback along $q$. 
Let $\CH^{\bullet}$ be the RHom complex on $\FM\times\FM$ (\S\ref{subsubsection:categorical}), and set $\CC^{\bullet}=\CH^{\bullet}[1]$. The stack $\bsE=\Tot_{\FM\times\FM}(\CC^{\bullet})$ carries a universal bundle of (shifted) homomorphisms of objects in $\mathscr{C}$, so that we have a natural identification $t_0(\bsE)=\mathfrak{Exact}$ (see \cite[Proposition 2.3.4]{kapranov2019cohomological} for the stack of coherent sheaves on surfaces, or \S \ref{new_exts_sec}), and a morphism $\bs{q}\colon \bsE\rightarrow \FM\times\FM$ given by sending the morphism $f\colon \rho''\rightarrow \rho'$  to $(\rho',\rho'')$. Then we have $q=t_0(\bs{q})$.

\begin{assumption}{2}
\label{q_assumption1}
We assume that $\FM$ has the resolution property, i.e. every coherent sheaf on $\FM$ is a quotient of a vector bundle.
For all $\CM_a,\CM_b\in\pi_0(\CM)$ we assume that the complex $\CC^{\bullet}$ is quasi-isomorphic to a $3$-term complex of vector bundles over $\FM_a\times\FM_b$.
\end{assumption}
Assumption \ref{q_assumption1} is all that is required to define the Hall product in \S\ref{subsection:relativecohaproductKV}.

A convenient consequence of this assumption is that by Totaro's criterion \cite{totaro2004resolution} (and its generalization to non-normal stacks \cite{gross2017tensor}) it guarantees that all stacks appearing have an acyclic cover, since they are global quotient stacks. We are then free to define direct images of mixed Hodge module complexes from these stacks following \S \ref{unbounded_cplx_sec}.

\subsection{Assumptions for construction of the BPS algebra}
\label{BPS_assumptions_sec}
For the construction of the BPS sheaf/MHM and the BPS algebra, we impose stronger restrictions on the stack $\FM$ and the categories $\mathscr{C}$ and $\mathcal{A}$. The first two assumptions are geometric:
\begin{assumption}{3}
\label{gms_assumption}
The stack $\FM$ has a good moduli space $\FM\xrightarrow{\JH}\CM$ in the sense of \cite{alper2013good}, and $\CM$ is a scheme.
\end{assumption}
The scheme $\mathcal{M}$ is endowed with a monoid structure given by the direct sum:
\begin{align*}
 \oplus\colon\mathcal{M}\times\mathcal{M}&\longrightarrow\mathcal{M}\\
 (x,y)&\longmapsto x\oplus y
\end{align*}
obtained from the direct sum $\oplus\colon\mathfrak{M}\times\mathfrak{M}\rightarrow\mathfrak{M}$ and universality of the good moduli space $\JH\colon\mathfrak{M}\rightarrow\mathcal{M}$. 
\begin{assumption}{4}
\label{ds_fin}
We assume that the morphism $\oplus$ is finite.
\end{assumption}
Given that the morphism $\oplus$ is easily seen to be quasi-finite, this amounts to requesting that $\oplus$ is a proper morphism.

The final assumption is categorical, and is the vital ingredient in \textit{formality} and the local neighbourhood theorem (Theorem \ref{theorem:neighbourhood}): see \cite{davison2021purity} for details.
\begin{assumption}{5}
\label{BPS_cat_assumption}
Given a collection of points $x_1,\ldots,x_r\in\CM$ parametrising simple objects $\underline{\CF}=\{\CF_1,\ldots,\CF_r\}$ of $\CA$, the full dg subcategory of $\mathscr{C}$ containing $\underline{\CF}$ carries a right 2-Calabi--Yau (2CY) structure in the sense of \cite{brav2019relative}.
\end{assumption}
In most, but not all, examples, the right 2CY structure can be uniformly derived (using results of \cite{brav2019relative}) from the presence of a \textit{left} 2CY structure on the ambiant category $\mathscr{C}$. For $\mathscr{C}$ the dg-category of coherent sheaves on a smooth symplectic surface, a left 2CY structure is induced by a trivialisation of the canonical bundle $K_S$. If $\mathscr{C}$ is the dg-category of perfect modules over a dga $A$, a left 2CY structure on $\mathscr{C}$ is equivalent to a 2CY structure on $A$ in the usual sense. We refer the reader to \cite{brav2019relative} for details and proofs.

In the rest of this section we recall some constructions and facts regarding the moduli spaces that appear under the above assumptions.

\subsection{Geometry of moduli spaces in the 2CY case}
\label{2CY_background}

\subsubsection{Serre subcategories}
\label{subsubsection:serre}
Let $\FM\xrightarrow{\JH}\CM$ be as in Assumption \ref{gms_assumption} a good moduli space for the stack of objects $\FM$ as in \S \ref{subsubsection:categorical}. Fix a locally closed saturated (\S\ref{strict_mf_sec}) submonoid $\mathcal{M}'\subset \mathcal{M}$. We define $\mathcal{B}$ to be the full Abelian subcategory of $\mathcal{A}$ generated by the objects represented by the closed points of $\mathcal{M}'$ under taking extensions. Typically, $\mathcal{B}$ will be defined to be the full subcategory of objects of $\mathcal{A}$ satisfying some support or nilpotency condition. See \S \ref{subsection:hierarchy} for examples crucial for this paper. Since $\CM$ parametrises semisimple objects in $\CA$, it follows that $\CB$ is a Serre subcategory (an Abelian subcategory stable under extensions, subobjects and quotients).

Let $\mathcal{B}$ be a Serre subcategory of $\mathcal{A}$ corresponding to a locally closed submonoid $\mathcal{M}_{\mathcal{B}}\subset \mathcal{M}$. We let $\mathfrak{M}_{\mathcal{B}}\subset \mathfrak{M}$ be the substack parametrising objects of $\mathcal{B}$, which we define via the following Cartesian square
\[
\begin{tikzcd}
	{\mathfrak{M}_{\mathcal{B}}} & {\mathfrak{M}} \\
	{\mathcal{M}_{\mathcal{B}}} & {\mathcal{M}}.
	\arrow[from=1-1, to=1-2]
	\arrow["\JH", from=1-2, to=2-2]
	\arrow[from=2-1, to=2-2]
	\arrow[from=1-1, to=2-1]
	\arrow["\lrcorner"{anchor=center, pos=0.125}, draw=none, from=1-1, to=2-2]
\end{tikzcd}
\]

\subsubsection{Grading}
\label{subsection:grading}
Let $\mathcal{B}$ be a fixed Serre subcategory of $\mathcal{A}$. Let $\pi_0(\mathcal{M}_{\mathcal{B}})$ be the monoid of connected components of $\mathcal{M}_{\mathcal{B}}$. The monoid structure $\oplus\colon \pi_0(\Msp_{\CB})\times \pi_0(\Msp_{\CB})\rightarrow \pi_0(\Msp_\CB)$ is induced by the direct sum: for any $a,b\in \pi_0(\mathcal{M}_{\mathcal{B}})$, there exists a unique $c\in\pi_0(\mathcal{M}_{\mathcal{B}})$ such that for any $(x,y)\in a\times b$, $x\oplus y\in c$. We set $a\oplus b=c$.

The monoid of connected components of the stack $\mathfrak{M}_{\mathcal{B}}$ coincides with $\pi_0(\Msp_{\CB})$ (since the fibers of $\JH$ are connected, by \cite[Theorem 4.16 vii)]{alper2013good}). For $a\in\pi_0(\Msp_\CB)$, we denote by $\Mst_{\CB,a}$ and $\Msp_{\CB,a}$ the corresponding connected components.

For any object $\mathcal{F}$ of $\mathcal{B}$ (resp. closed $\BoC$-point $x$ of $\mathfrak{M}_{\mathcal{B}}$ or $\mathcal{M}_{\mathcal{B}}$), we let $[\mathcal{F}]$ (resp. $[x]$) be the connected component of the point of $\FM_{\mathcal{B}}$ corresponding to $\mathcal{F}$ (resp. $x$). We will refer to $[\mathcal{F}]$ (resp. $[x]$) as \emph{the class} of $\mathcal{F}$ (resp. $x$).

By Assumption~\ref{q_assumption1}, the restriction $\mathcal{C}^{\bullet}_{a,b}$ of the shifted RHom complex to $\mathfrak{M}_a\times\mathfrak{M}_b$ can be represented by a bounded complex of vector bundles. Consequently, the Euler form $(\mathcal{F},\mathcal{G})_{\SC}=\sum_{i\in \BoZ}(-1)^i\ext^1(\mathcal{F},\mathcal{G})=-\vrank(\CC_{a,b}^{\bullet})$ is constant for objects $\mathcal{F},\mathcal{G}$ of $\mathcal{B}$, corresponding to points in fixed connected components $\Mst_{\CB,a},\Mst_{\CB,b}$ of $\Mst_{\CB}$. Therefore, the Euler form factors through $\pi_0(\Msp_{\CB})$:
\begin{equation*}
(-,-)_{\mathscr{C}}\colon \pi_0(\Msp_{\CB})\times\pi_0(\Msp_{\CB})\longrightarrow\BoZ.
\end{equation*}

\subsubsection{Geometric setup for the local neighbourhood theorem}
\label{subsubsection:setuplocal}
We give the hypotheses we need to formulate Theorem \ref{theorem:neighbourhood} (see \cite{davison2021purity}). Let $\mathscr{C}$, $\mathfrak{M}$ and $\CM$ be as above satisfying Assumptions \ref{gms_assumption} and \ref{BPS_cat_assumption}. Then the closed points of $\mathcal{M}$ are in bijection with semisimple objects of the category $\mathcal{A}$ and the map $\JH$ sends a $\BoC$-point of $\mathfrak{M}$ corresponding to an object $\mathcal{F}$ of $\mathcal{A}$ to the $\BoC$-point of $\mathcal{M}$ corresponding to the associated graded of $\mathcal{F}$ with respect to some Jordan--H\"older filtration. Since $\mathcal{A}$ is of finite length, each $\mathcal{F}_i$ is a simple object of the category $\mathcal{A}$.

\subsubsection{$\Sigma$-collections} \label{subsubsection:sigma}
Let $\mathcal{F}$ be an object in a $\BoC$-linear dg category $\SC$. We say that $\mathcal{F}$ is a \emph{$\Sigma$-object} if for some $g\geq 0$,
\[
 \dim \Ext^i_{\SC}(\mathcal{F},\mathcal{F})=
 \left\{
 \begin{aligned}
 &1 \quad&\text{ if $i=0,2$}\\
 &2g \quad&\text{ if $i=1$}\\
 &0 \quad&\text{ otherwise.}
 \end{aligned}
 \right.
\]
Let $\underline{\mathcal{F}}=\{\mathcal{F}_1,\hdots,\mathcal{F}_r\}$ be a collection of objects of $\SC$. We say that $\underline{\mathcal{F}}$ is a \emph{$\Sigma$-collection} if each $\mathcal{F}_t$ is a $\Sigma$-object and for any $m\neq n$, $\Hom(\mathcal{F}_m,\mathcal{F}_n)=0$.

\subsubsection{The Ext-quiver of a $\Sigma$-collection}
\label{subsubsection:extquiver}

Let $\underline{\mathcal{F}}=\{\mathcal{F}_1,\hdots,\mathcal{F}_r\}$ be a $\Sigma$-collection of objects of $\SC$. The Ext-quiver of $\underline{\mathcal{F}}$ is the quiver $\overline{Q}_{\underline{\mathcal{F}}}$ having as set of vertices $\underline{\mathcal{F}}$, and $\ext^1(\mathcal{F}_i,\mathcal{F}_j)$ arrows from $i$ to $j$.

If $\ext^1(\mathcal{F}_i,\mathcal{F}_j)=\ext^1(\mathcal{F}_j,\mathcal{F}_i)$ for any $1\leq i,j\leq r$ and this number is even if $i=j$, then $\overline{Q}_{\underline{\mathcal{F}}}$ is the double of some (non-unique) quiver $Q_{\underline{\mathcal{F}}}$. This condition is satisfied if $\underline{\CF}$ generates a subcategory of $\SC$ which admits a right $2$CY structure as in Assumption~\ref{BPS_cat_assumption}. We will refer to $Q_{\underline{\mathcal{F}}}$ as \emph{a half of} $\overline{Q}_{\underline{\mathcal{F}}}$, or as \emph{a half of the Ext-quiver of $\underline{\mathcal{F}}$}.

We assume that we have a good moduli space and make Assumptions \ref{gms_assumption} and \ref{BPS_cat_assumption}. We have a morphism of monoids
\begin{equation*}
\begin{split}
\psi_{\underline{\CF}}\colon \BoN^{\underline{\CF}} &\longrightarrow \pi_0(\CM)\\
\mm& \longmapsto \sum_{i=1}^rm_i[\CF_i]
\end{split}
\end{equation*}
The pullback by $\psi_{\underline{\mathcal{F}}}$ of the Euler form of $\SC$ on $\pi_0(\Msp)$ is the same as the pullback of the Euler form of $\SG_2(Q_{\underline{\CF}})$ on $\BoN^{\underline{\mathcal{F}}}$. We also have a morphism of monoids
\[
 \imath_{\underline{\mathcal{F}}}\colon \BoN^{\underline{\mathcal{F}}}\longrightarrow \mathcal{M}
\]
mapping $\mm\in\BoN^{\underline{\mathcal{F}}}$ to the point of $\mathcal{M}$ corresponding to the object $\bigoplus_{i=1}^r\mathcal{F}_i^{\oplus m_i}$.

\subsubsection{The local neighbourhood theorem}

We recall the local neighbourhood theorem for $2$-Calabi--Yau categories proved in \cite[Theorem 5.11]{davison2021purity}.

\begin{theorem}
\label{theorem:neighbourhood}
 Let $\JH\colon\mathfrak{M}\rightarrow\mathcal{M}$ be a good moduli space locally of finite type with reductive stabilizer groups such that $\mathfrak{M}=t_0(\bm{\mathfrak{M}})$ is an Artin stack, where $\bm{\mathfrak{M}}\subset\bm{\mathfrak{M}}_{\mathscr{C}}$ is an open substack of the stack of objects of a dg-category $\mathscr{C}$. Let $x$ be a closed $\BoC$-valued point of $\mathfrak{M}$ corresponding to an object $\mathcal{F}=\bigoplus_{1\leq i\leq r}\mathcal{F}_i^{\oplus m_i}$ and assume that the full dg-subcategory of $\mathscr{C}$ containing $\mathcal{F}_i$, $1\leq i\leq r$ carries a right 2CY structure and $\underline{\mathcal{F}}=\{\mathcal{F}_i\}_{1\leq i\leq r}$ is a $\Sigma$-collection. We let $Q$ be a half Ext-quiver of the collection $\underline{\mathcal{F}}=\{\mathcal{F}_i:1\leq i\leq r\}$. Then, there is a pointed affine $\GL_{\mm}$-variety $(H,y)$, and an analytic neighbourhood $U\subset H$ of $y$ such that if $\mathcal{U}_x$ (resp. $\mathfrak{U}_x$) denotes the image of $U$ by the canonical map $H\rightarrow H\cms\GL_{\mm}$ (resp. $H\rightarrow H/\GL_{\mm}$), there is a commutative diagram
 \[
\begin{tikzcd}
	{(\mathfrak{M}_{\mm}(\Pi_Q),0_{\mm})} && {(\mathfrak{U}_{x},y)} && {(\mathfrak{M},x)} \\
	\\
	{(\mathcal{M}_{\mm}(\Pi_Q),0_{\mm})} && {(\mathcal{U}_{x},p(y))} && {(\mathcal{M},\JH(x))}
	\arrow["{\JH_{\mm}}"', from=1-1, to=3-1]
	\arrow["p", from=1-3, to=3-3]
	\arrow["\JH", from=1-5, to=3-5]
	\arrow["{\tilde{\jmath}_x}", from=1-3, to=1-5]
	\arrow["{\jmath_x}", from=3-3, to=3-5]
	\arrow["{\jmath_{0_{\mm}}}"', from=3-3, to=3-1]
	\arrow["{\tilde{\jmath}_{0_{\mm}}}"', from=1-3, to=1-1]
	\arrow["\lrcorner"{anchor=center, pos=0.125, rotate=-90}, draw=none, from=1-3, to=3-1]
	\arrow["\lrcorner"{anchor=center, pos=0.125}, draw=none, from=1-3, to=3-5]
\end{tikzcd}
 \]
 in which the horizontal morphisms are analytic open immersions and the squares are Cartesian.

\end{theorem}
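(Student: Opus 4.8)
The plan is to deduce the statement from \cite[Thm.~5.11]{davison2021purity}; I would organise the argument in four steps. First, compute the stabiliser of $x$: since $\mathcal{F}=\bigoplus_{i=1}^{r}\mathcal{F}_i^{\oplus m_i}$ with the $\mathcal{F}_i$ pairwise non-isomorphic simple objects of $\mathcal{A}$ (the $\Sigma$-collection hypothesis gives $\Hom(\mathcal{F}_m,\mathcal{F}_n)=0$ for $m\neq n$ and $\Hom(\mathcal{F}_i,\mathcal{F}_i)=\BoC$), one has $\operatorname{Aut}_{\mathcal{A}}(\mathcal{F})\cong\prod_{i=1}^{r}\GL_{m_i}=\GL_{\mm}$, a reductive group. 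Second, apply the local structure theorem for Artin stacks admitting good moduli spaces (in the form of Alper--Hall--Rydh) at the closed point $x$: this produces a pointed affine $\GL_{\mm}$-scheme $(H,y)$, with $\GL_{\mm}$ fixing $y$, and an \'etale morphism $[H/\GL_{\mm}]\to\FM$ over $H\cms\GL_{\mm}\to\CM$ carrying $y$ to $x$. Passing to the analytic topology and restricting to an analytic neighbourhood $U\subset H$ of $y$ produces the analytic open immersions $\tilde{\jmath}_x$, $\jmath_x$ and the right-hand Cartesian square of the diagram, with $\mathfrak{U}_x=U/\GL_{\mm}$ and $\mathcal{U}_x=U\cms\GL_{\mm}$; the problem is thus reduced to identifying $(H,y)$, $\GL_{\mm}$-equivariantly and analytically-locally, with the local model for $\Pi_Q$.

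Third, and this is the crux, I would use the derived enhancement together with the $2$-Calabi--Yau hypothesis. Since $\bm{\mathfrak{M}}\subset\bm{\mathfrak{M}}_{\mathscr{C}}$ is open in the derived stack of objects of $\mathscr{C}$ (in the sense of \cite{toen2007moduli}), the formal neighbourhood of $x$ is controlled by the deformation functor of $\mathcal{F}$, i.e.\ by the dg-Lie algebra $\RHom_{\mathscr{C}}(\mathcal{F},\mathcal{F})[1]$ with its $\GL_{\mm}$-action; equivalently, $H$ is $\GL_{\mm}$-equivariantly the Maurer--Cartan locus of a minimal $A_\infty$-model of $\RHom_{\mathscr{C}}(\mathcal{F},\mathcal{F})$. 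The $\Sigma$-collection hypothesis pins down the underlying graded space: $\Ext^{\bullet}_{\mathscr{C}}(\mathcal{F},\mathcal{F})$ is concentrated in degrees $0,1,2$ and coincides with the $\Ext$-algebra over $\Pi_Q$ of the semisimple module $S_{\mm}$, where $Q$ is the chosen half of the Ext-quiver $\overline{Q}_{\underline{\mathcal{F}}}$. The right $2$CY structure on the dg-subcategory generated by $\underline{\mathcal{F}}$, via the Koszul-dual incarnation of relative Calabi--Yau structures established in \cite{brav2019relative}, equips this $A_\infty$-algebra with a cyclic pairing of degree $-2$; combined with the constraint on the $\Ext$-groups, this forces it to be cyclically $A_\infty$-equivalent to the Koszul dual of the derived preprojective algebra $\mathscr{G}_2(Q)$, so that the Maurer--Cartan locus is $\GL_{\mm}$-equivariantly identified near the origin with $\mu_{\mm}^{-1}(0)\subset\Tan^{*}X_{Q,\mm}$, i.e.\ with $(\mathfrak{M}_{\mm}(\Pi_Q),0_{\mm})$ analytically-locally. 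I expect this step to be the main obstacle: extracting the explicit shape of the controlling algebra (the derived preprojective algebra of the half Ext-quiver) from the abstract right $2$CY datum is a genuine formality/rigidity statement about cyclic $A_\infty$-algebras whose cohomology has $\Sigma$-type, and it is exactly where the $2$-Calabi--Yau hypothesis enters essentially; by contrast, Step 2 is the Alper--Hall--Rydh theorem, Step 1 is elementary, and the last step below is pure functoriality.

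Finally, I would conclude by functoriality: good moduli spaces are compatible both with the \'etale slice morphism of Step 2 and with passage to $\GL_{\mm}$-invariants, so the identification of $H$ with the $\Pi_Q$-model near $y$ descends to an analytic-local isomorphism $(\mathcal{U}_x,p(y))\cong(\mathcal{U}_{0_{\mm}},p(0_{\mm}))\subset(\mathcal{M}_{\mm}(\Pi_Q),0_{\mm})$, which lifts to $(\mathfrak{U}_x,y)\cong(\mathfrak{U}_{0_{\mm}},0_{\mm})$; this supplies the morphisms $\tilde{\jmath}_{0_{\mm}}$, $\jmath_{0_{\mm}}$ and the left-hand Cartesian square, and assembles the full commutative diagram of the statement.
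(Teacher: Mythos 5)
The paper does not give its own proof of this theorem: it is quoted from \cite[Thm.~5.11]{davison2021purity}, with the single added remark that the \'etale-local statement proved there can be relaxed to the analytic topology. So there is no in-paper argument to compare against; what I can do is assess your sketch against the strategy of the cited reference.

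Your four-step outline is an accurate reconstruction of that argument. Step~1 is as you say (the $\Sigma$-collection axioms give $\Hom(\mathcal{F}_i,\mathcal{F}_i)=\BoC$ and $\Hom(\mathcal{F}_m,\mathcal{F}_n)=0$ for $m\neq n$, hence $\operatorname{Aut}(\mathcal{F})\cong\GL_{\mm}$), and Step~2 is indeed the Alper--Hall--Rydh \'etale local structure theorem, which is what produces $(H,y)$ and the right-hand Cartesian square. You have also correctly isolated the real content: Step~3 is not a formal consequence of having a cyclic pairing on a graded algebra concentrated in degrees $0$, $1$, $2$ --- rather, it is the \emph{formality theorem} for $\Sigma$-collections in right $2$CY categories, which is the main technical theorem of \cite{davison2021purity} and is proved there by a nontrivial inductive/rigidity argument lifting the identification $\Ext^{\bullet}_{\mathscr{C}}(\mathcal{F},\mathcal{F})\cong\Ext^{\bullet}_{\Pi_Q}(S_{\mm},S_{\mm})$ through the $A_\infty$ and cyclic structures. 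Your sketch flags this as "the main obstacle" and a "genuine formality/rigidity statement", which is exactly the right diagnosis; just be aware that as written your Step~3 asserts the conclusion rather than proving it, so the proposal is a correct plan of attack but not a self-contained proof. Granting the formality theorem, Step~4 (compatibility of good moduli spaces with \'etale slices and with taking $\GL_{\mm}$-invariants) assembles the commutative diagram as you describe.
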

The original version of this theorem gives an \'etale local description of the map $(\mathfrak{M},x)\rightarrow (\mathcal{M},\JH(x))$ but for our purposes, it is most convenient to work with the analytic topology.
Under the assumptions given in \S \ref{subsubsection:setuplocal} (Assumptions \ref{gms_assumption} and \ref{BPS_cat_assumption}), the theorem applies for any $\BoC$-valued closed point of $\mathfrak{M}$.

\subsubsection{Geometry of the good moduli space}
\label{CrBo_geom_sec}
In this section, we deduce some geometric information regarding the good moduli space of the stack of objects of a $2$-Calabi--Yau category using the local neighbourhood theorem and \cite{crawley2001geometry}.

For $a\in\pi_0(\CM_{\CA})$, we let $p(a)=2-(a,a)_{\SC}$. We let $\Sigma_{\mathcal{A}}$ be the set of non-zero $a\in \pi_0(\CM_{\CA})$ such that for any decomposition $a=\sum_{j=1}^sa_j$ with $a_j\in \pi_0(\CM_{\CA})$ and $s\geq 2$ we have $p(a)>\sum_{j=1}^sp(a_j)$. Finally, we let $\mathcal{M}_{\mathcal{A},a}^s$ be the open locus of $\mathcal{M}_{\mathcal{A},a}$ over which $\JH_a$ is a $\BoC^*$-gerbe. It parametrises simple objects of $\mathcal{A}$ of class $a$.

\begin{proposition}
\label{proposition:geometryofgms}
Let $a\in\pi_0(\mathcal{M}_{\mathcal{A}})$. We have the following properties:
\begin{enumerate}
 \item The set $\Sigma_{\mathcal{A}}$ is the set of $a\in\pi_0(\mathcal{M}_{\mathcal{A}})$ for which $\JH_a$ is generically a $\BoC^*$-gerbe,
 \item If $a\in\Sigma_{\mathcal{A}}$, then $\mathcal{M}_{\mathcal{A},a}$ is irreducible of dimension $p(a)$ and its smooth locus is $\mathcal{M}_{\mathcal{A},a}^s$,
 \item If $\underline{\mathcal{F}}$ is a $\Sigma$-collection in $\mathcal{A}$ and $Q_{\underline{\mathcal{F}}}$ a half of the Ext-quiver of $\underline{\mathcal{F}}$, then $\psi_{\underline{\mathcal{F}}}^{-1}(\Sigma_{\mathcal{A}})= \Sigma_{\Pi_{Q_{\underline{\CF}}}}$.
\end{enumerate}
\end{proposition}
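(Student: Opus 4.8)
The plan is to deduce all three statements from Crawley--Boevey's results on preprojective algebras \cite{crawley2001geometry} by means of the local neighbourhood theorem (Theorem~\ref{theorem:neighbourhood}). First I would fix the dictionary: for a quiver $Q$ the form $(-,-)_{\Pi_Q}$ is twice the Euler form of $Q$, so the function $p$ defined above, evaluated on $\Pi_Q$, is twice the one used in \cite{crawley2001geometry}; with this normalisation that paper says that $\Sigma_{\Pi_Q}$ is exactly the set of dimension vectors of simple $\Pi_Q$-modules, equivalently the set of $\dd$ for which $\JH_{\Pi_Q,\dd}$ is generically a $\BoC^*$-gerbe, and that for $\dd\in\Sigma_{\Pi_Q}$ the scheme $\CM_{\Pi_Q,\dd}$ is irreducible of dimension $p(\dd)$ with smooth locus equal to its simple locus. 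I would also record that $\CM_{\Pi_Q,\dd}=\mu_{\dd}^{-1}(0)\cms\GL_{\dd}$ is conical, with attracting fixed point $0_{\dd}$, and that its simple locus is open and stable under the scaling action; hence that simple locus is nonempty if and only if it meets every analytic neighbourhood of $0_{\dd}$, and the number and dimensions of the irreducible components of $\CM_{\Pi_Q,\dd}$ are already visible in the germ at $0_{\dd}$. Thus the germ of $\CM_{\Pi_Q,\dd}$ at its central point detects all the invariants appearing in the statement.

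The transport to $\CA$ runs through $\psi_{\underline{\CF}}$. For a $\Sigma$-collection $\underline{\CF}$ with half Ext-quiver $Q_{\underline{\CF}}$, the monoid map $\psi_{\underline{\CF}}\colon\BoN^{\underline{\CF}}\to\pi_0(\CM)$ pulls the Euler form of $\CA$ back to that of $\Pi_{Q_{\underline{\CF}}}$ (\S\ref{subsubsection:extquiver}), so $p\circ\psi_{\underline{\CF}}=p_{\Pi_{Q_{\underline{\CF}}}}$, $\psi_{\underline{\CF}}^{-1}(R_{\CA}^+)=R_{\Pi_{Q_{\underline{\CF}}}}^+$, and pulling back decompositions gives $\psi_{\underline{\CF}}^{-1}(\Sigma_{\CA})\subseteq\Sigma_{\Pi_{Q_{\underline{\CF}}}}$ at once. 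In the other direction I would use that if $x\in\CM_{\CA,a}$ is a closed point with associated graded $\bigoplus_i\CF_i^{\oplus m_i}$ --- so that $\underline{\CF}=\{\CF_i\}$ is a $\Sigma$-collection by Assumption~\ref{BPS_cat_assumption} and $\mm=(m_i)\in\psi_{\underline{\CF}}^{-1}(a)$ --- then every expression of $x$ as a direct sum of subobjects refines to a decomposition of $\mm$ over $\BoN^{\underline{\CF}}$, so by choosing $x$ suitably one realises a prescribed decomposition of $a$ on the dimension-vector side. I would then prove (1), (2) and (3) simultaneously by induction on $a$ for the refinement order on $\pi_0(\CM)$ (well-founded, since $\pi_0(\CM)$ embeds in a lattice). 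At stage $a$, a general closed point $x_a$ of a component of $\CM_{\CA,a}$ is a polystable object $\bigoplus_i\CS_i^{\oplus n_i}$; unless $x_a$ is simple the classes $\sigma_i:=[\CS_i]$ lie strictly below $a$, so by induction $\sigma_i\in\Sigma_{\CA}$ with $\CM_{\CA,\sigma_i}$ irreducible of dimension $p(\sigma_i)$. Applying Theorem~\ref{theorem:neighbourhood} at $x_a$ (and at $0_{\mm_{x_a}}$ in the corresponding preprojective model, which matches simple objects of $\CA$ with simple $\Pi_Q$-modules by \cite{davison2021purity}), together with the conicity remarks and the dimension formula of \cite{crawley2001geometry}, computes the local dimension of $\CM_{\CA,a}$ at $x_a$, whether it is unibranch there, and the local shape of its simple locus, all in terms of $\mm_{x_a}$; the combinatorics of the $\Sigma$-inequality, transported along $\psi_{\underline{\CF}}$ via the refining observation, then shows that $\dim\CM_{\CA,a}=p(a)$ precisely when $a\in\Sigma_{\CA}$, which gives (1). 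For (2), once $a\in\Sigma_{\CA}$ the same analysis shows $\CM_{\CA,a}$ is unibranch of pure dimension $p(a)$ at every point, hence --- being connected --- irreducible, with smooth locus $\CM_{\CA,a}^s$. Finally (3) is the remaining inclusion $\Sigma_{\Pi_{Q_{\underline{\CF}}}}\subseteq\psi_{\underline{\CF}}^{-1}(\Sigma_{\CA})$, which now follows from (1) for both categories together with the construction, out of a simple $\Pi_{Q_{\underline{\CF}}}$-module of dimension $\dd$, of a simple object of $\CA$ of class $\psi_{\underline{\CF}}(\dd)$ in a neighbourhood of $\imath_{\underline{\CF}}(\dd)$ --- again via Theorem~\ref{theorem:neighbourhood} and conicity.

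The main obstacle is exactly the interdependence that forces the three parts to be proved together: $\psi_{\underline{\CF}}$ is typically not injective, so a decomposition of a class in $\pi_0(\CM)$ need not refine a decomposition of any dimension vector mapping to it, and $\Sigma_{\CA}$ cannot be read off componentwise from the sets $\Sigma_{\Pi_Q}$. This makes the geometric input unavoidable: for each relevant decomposition of $a$ one must first locate a closed point of $\CM_{\CA,a}$ whose Jordan--H\"older factors span the classes involved, and only then run the dimension count of \cite{crawley2001geometry} through the local model at that point, using conicity to pass between the germ at the central point and all of $\CM_{\Pi_Q,\mm}$. Keeping track of which dimension count is available at which point of $\CM_{\CA,a}$, and checking that the induction on the refinement order actually closes --- in particular that the general polystable decomposition at $x_a$ has all of its summands in $\Sigma_{\CA}$, so that part (2) is already available for them --- is the technical heart; the rest is bookkeeping with the Euler form and with the correspondence between simple objects and simple preprojective modules supplied by the local neighbourhood theorem.
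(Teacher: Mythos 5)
Your proposal uses exactly the ingredients the paper invokes in its (very terse) proof --- the local neighbourhood theorem, Crawley--Boevey's \cite[Theorem~1.2]{crawley2001geometry}, compatibility of Euler forms with $\psi_{\underline{\CF}}$ --- together with the conicity of $\CM_{\Pi_Q,\dd}$, which is indeed what lets one pass from germs at $0_{\dd}$ to statements about the whole preprojective moduli space. So in terms of \emph{ingredients} your plan is the paper's.

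Where you diverge is in the organisation: you wrap everything in a simultaneous induction on a refinement order on $\pi_0(\CM)$. I don't think the induction is needed, and building it in obscures what the actual argument is, so let me point out the more direct route. The easy inclusion $\psi_{\underline{\CF}}^{-1}(\Sigma_{\CA})\subseteq\Sigma_{\Pi_{Q_{\underline{\CF}}}}$ is, as you say, pure Euler-form bookkeeping. For everything else, one uses the following observation (which you state as your part (2) step but whose logical role is earlier than you place it): at any closed point $x\in\CM_{\CA,a}$ lying in $\overline{\CM_{\CA,a}^s}$, conicity plus the local model plus CB force $\mm_x\in\Sigma_{\Pi_{Q'_x}}$, so the germ of $\CM_{\CA,a}$ at $x$ is \emph{irreducible} of dimension $p(a)$ with smooth locus the simple locus. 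Thus $\overline{\CM_{\CA,a}^s}$ is a union of components that is unibranch and disjoint from the rest of $\CM_{\CA,a}$; by connectedness, if the simple locus is nonempty then $\overline{\CM_{\CA,a}^s}=\CM_{\CA,a}$. In other words, for a 2CY category with a good moduli space, ``there is a simple of class $a$'' and ``$\JH_a$ is generically a gerbe'' coincide, with no induction. From there, $a\in\Sigma_{\CA}^{\mathrm{comb}}\Rightarrow$ simple locus dense follows (at every $x$, the easy inclusion gives $\mm_x\in\Sigma_{\Pi_{Q'_x}}$), and the converse follows because once the simple locus is known dense, one can read off the inequality $p(a)>\sum_j p(a_j)$ at a point $x$ realising the decomposition $a=\sum a_j$ by polystables. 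This gives (1) and (2), and the hard inclusion $\Sigma_{\Pi_{Q_{\underline{\CF}}}}\subseteq\psi_{\underline{\CF}}^{-1}(\Sigma_{\CA})$ in (3) then follows from the equivalence just established.

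Two places where your write-up as it stands is genuinely incomplete. First, you jump from ``computes the local dimension $\ldots$ at $x_a$'' to ``shows that $\dim\CM_{\CA,a}=p(a)$ precisely when $a\in\Sigma_{\CA}$, which gives (1)''; but (1) is about density of the gerbe locus, not dimension of the moduli space, and knowing $\dim\CM_{\CA,a}=p(a)$ at the component containing a simple is not by itself enough --- it is the unibranch/connectedness argument, not a dimension count, that rules out a separate component with no simples. Second, you identify the ``technical heart'' as checking the induction closes, in particular that a chosen point $x$ realising a prescribed decomposition lies in the closure of the simple locus. This is exactly the step that the unibranch argument renders automatic; as written, your proposal flags the issue but does not resolve it, and the induction as you set it up (on the summand classes, which are JH factors of the \emph{general} point of a component) does not obviously help you at the \emph{specific} point $x$ you need. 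So: right ingredients, right key geometric facts (conicity, unibranch-plus-connected), but the global logical structure is more tangled than it needs to be and the one place where a real argument is required is signposted rather than given.
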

\begin{proof}
 The proof is straightforward, combining Theorem \ref{theorem:neighbourhood} and \cite[Theorem 1.2]{crawley2001geometry} together with the compatibility of Euler forms with $\imath_{\underline{\mathcal{F}}}$, defined in \S\ref{subsubsection:extquiver}.
\end{proof}

\subsection{Determinant line bundles}

\begin{assumption}{6}
	\label{det_bun_assumption}
	Suppose $\Mst$ satisfies Assumptions \ref{gms_assumption} and \ref{BPS_cat_assumption} so that Theorem~\ref{theorem:neighbourhood} applies to $\Mst$. 
	For every closed point $x \in \Mst$ denote by  $i_x \colon \B \BoC^{\times} \into \Mst$ the 
	inclusion of the scaling automorphisms $\BoC^{\times} \subset \Aut_{\Mst}(x)$ at $x$. 

	We assume there is a line bundle $\CL$ on $\Mst$ such that $i_x^* \CL$ has non-zero degree, i.e., if $l\colon \Mst \to \B \BoC^{\times}$ is the map given by $\CL$, then for all closed points $x \in \Mst$ we have $(i_x\circ l)^* u \neq 0 \in \HO^2(\B \BC^{\times})$ where $u \in \HO^2(\B \BoC^{\times})$ is a generator of the cohomology ring.

	We call such an $\CL$ a \emph{positive determinant line bundle}.
\end{assumption}

\section{Totally negative $2$-Calabi--Yau categories}
In this paper, we are interested in certain $2$-Calabi--Yau Abelian categories $\mathcal{A}$ that arise in the general context described in \S\ref{subsubsection:gsetup}-\S\ref{BPS_assumptions_sec}, which we call \emph{totally negative}. Namely, we say that a full Abelian subcategory $\mathcal{A}$ of the homotopy category of a dg category $\SC$ is \emph{totally negative} if for any pair of nonzero objects $M,N$ of $\mathcal{A}$, $( M,N)_{\mathscr{C}}<0$.

\subsection{The preprojective algebra of a totally negative quiver}

Let $Q=(Q_0,Q_1)$ be a quiver. Following Bozec--Schiffmann \cite[Section 1.2]{bozec2019counting}, we say that $Q$ is \emph{totally negative} if its symmetrised Euler form $(\dd,\ee)_{Q}=\langle\dd,\ee\rangle_Q+\langle\ee,\dd\rangle_Q$ is totally negative, that is $(\dd,\ee)_{Q}<0$ for any $\dd,\ee\in\BoN^{Q_0}\setminus\{0\}$. Unraveling the definition of the Euler form, this is equivalent to the requirement that any vertex of $Q$ carries at least two loops and any two vertices are connected by at least one arrow. The category of representations of $\Pi_Q$ is then a totally negative $2$-Calabi--Yau category, as it is a full Abelian subcategory of the homotopy category of the dg-category of representations of the dg-algebra $\SG_2(Q)$ whose Euler form is $(-,-)_{Q}$. These totally negative $2$-Calabi--Yau categories constitute the building blocks of more general totally negative $2$-Calabi--Yau categories, by the local neighbourhood theorem (Theorem \ref{theorem:neighbourhood}) and the following proposition. 

\begin{proposition}
\label{proposition:totnegextquiv}
The Ext-quiver of any $\Sigma$-collection (\S \ref{subsubsection:sigma}) in a totally negative $2$-Calabi--Yau Abelian category $\mathcal{A}$ is the double of a totally negative quiver.
\end{proposition}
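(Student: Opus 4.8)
The plan is to unwind the definitions of $\Sigma$-collection and of total negativity, reducing the whole statement to three elementary Euler-form computations. First I would recall from \S\ref{subsubsection:extquiver} that, since $\mathcal{A}$ is $2$-Calabi--Yau, Serre duality gives $\Ext^1_{\mathcal{A}}(\mathcal{F}_i,\mathcal{F}_j)\cong\Ext^1_{\mathcal{A}}(\mathcal{F}_j,\mathcal{F}_i)^{\vee}$, so $\ext^1(\mathcal{F}_i,\mathcal{F}_j)=\ext^1(\mathcal{F}_j,\mathcal{F}_i)$ for all $i,j$ and $\ext^1(\mathcal{F}_i,\mathcal{F}_i)=2g_i$ is even, where $g_i\geq 0$ is the genus of the $\Sigma$-object $\mathcal{F}_i$. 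These are exactly the conditions guaranteeing that $\overline{Q}_{\underline{\mathcal{F}}}$ is the double of some quiver $Q_{\underline{\mathcal{F}}}$. Fixing such a half, I would note that it has $g_i$ loops at the vertex $i$, and that for distinct vertices $i\neq j$ the total number of arrows between $i$ and $j$ in $Q_{\underline{\mathcal{F}}}$ equals $\ext^1(\mathcal{F}_i,\mathcal{F}_j)$ (the double redistributes them $p_{ij}$ one way, $p_{ji}$ the other, with $p_{ij}+p_{ji}=\ext^1(\mathcal{F}_i,\mathcal{F}_j)$).

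Next I would invoke the combinatorial characterisation of totally negative quivers recalled in \S\ref{introduction:freenesstotallynegativequiver}: $Q_{\underline{\mathcal{F}}}$ is totally negative if and only if it has at least two loops at every vertex and at least one arrow between any two distinct vertices. By the previous paragraph this is equivalent to $g_i\geq 2$ for all $i$ and $\ext^1(\mathcal{F}_i,\mathcal{F}_j)\geq 1$ for $i\neq j$, so these are the two inequalities I need to establish.

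For the loop condition I would use that $\mathcal{F}_i$ is a $\Sigma$-object to compute
\[
(\mathcal{F}_i,\mathcal{F}_i)_{\mathcal{A}}=\ext^0(\mathcal{F}_i,\mathcal{F}_i)-\ext^1(\mathcal{F}_i,\mathcal{F}_i)+\ext^2(\mathcal{F}_i,\mathcal{F}_i)=1-2g_i+1=2-2g_i,
\]
which is strictly negative by total negativity of $\mathcal{A}$, forcing $g_i\geq 2$. For the arrow condition between $i\neq j$ I would use that a $\Sigma$-collection satisfies $\Hom_{\mathcal{A}}(\mathcal{F}_i,\mathcal{F}_j)=\Hom_{\mathcal{A}}(\mathcal{F}_j,\mathcal{F}_i)=0$; the $2$-Calabi--Yau property turns the vanishing of the latter into $\Ext^2_{\mathcal{A}}(\mathcal{F}_i,\mathcal{F}_j)\cong\Hom_{\mathcal{A}}(\mathcal{F}_j,\mathcal{F}_i)^{\vee}=0$, so that $(\mathcal{F}_i,\mathcal{F}_j)_{\mathcal{A}}=-\ext^1(\mathcal{F}_i,\mathcal{F}_j)$. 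Total negativity of $\mathcal{A}$ then gives $\ext^1(\mathcal{F}_i,\mathcal{F}_j)\geq 1$. Combining the two inequalities yields that $Q_{\underline{\mathcal{F}}}$ is totally negative.

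I do not expect a genuine obstacle here; the argument is essentially a direct translation of the defining properties of $\Sigma$-collections through Serre duality. The only points requiring care are bookkeeping ones: matching the number of loops and of arrows in a chosen half $Q_{\underline{\mathcal{F}}}$ with the relevant $\Ext^1$-dimensions under doubling, and citing the precise "at least two loops / at least one arrow" form of the total negativity criterion rather than the inequality on the symmetrised Euler form. If one preferred to avoid the combinatorial criterion altogether, the alternative (slightly longer) route would be to show directly that the pullback of the symmetrised Euler form of $\Pi_{Q_{\underline{\mathcal{F}}}}$ along $\imath_{\underline{\mathcal{F}}}$ agrees with the restriction of the Euler form of $\mathcal{A}$ (as recorded in \S\ref{subsubsection:extquiver}) and then deduce total negativity of $Q_{\underline{\mathcal{F}}}$ from that of $\mathcal{A}$ on the classes of the $\mathcal{F}_i$; but the route through loops and arrows is shorter.
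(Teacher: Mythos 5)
Your proof is correct and follows essentially the same route as the paper: use the $\Sigma$-collection condition $\hom(\CF_i,\CF_j)=0$ together with the 2CY Serre duality vanishing of $\ext^2(\CF_i,\CF_j)$ to get $\ext^1(\CF_i,\CF_j)=-(\CF_i,\CF_j)_{\CA}>0$ for $i\neq j$, and the $\Sigma$-object condition to get $\ext^1(\CF_i,\CF_i)=2-(\CF_i,\CF_i)_{\CA}>2$ (hence $\geq 4$ by evenness), then invoke the combinatorial criterion for total negativity. The paper states these computations more tersely and phrases the conclusion in terms of $\overline{Q}$ having at least four loops and at least one arrow between distinct vertices; your version is a little more explicit about how loops and arrows distribute under taking a half, but the argument is the same.
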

\begin{proof}
 Let $\underline{\mathcal{F}}=\{\mathcal{F}_1,\hdots,\mathcal{F}_r\}$ be a $\Sigma$-collection in $\mathcal{A}$ and let $\overline{Q}$ be the Ext-quiver of $\underline{\mathcal{F}}$. Then, for any $i\neq j$, $\ext_{\SC}^1(\mathcal{F}_i,\mathcal{F}_j)=\hom_{\SC}(\mathcal{F}_i,\mathcal{F}_j)+\ext_{\SC}^2(\mathcal{F}_i,\mathcal{F}_j)-(\mathcal{F}_i,\mathcal{F}_j)_{\SC}=-(\mathcal{F}_i,\mathcal{F}_j)_{\SC}>0$ so $\overline{Q}$ has an arrow from $\mathcal{F}_i$ to $\mathcal{F}_j$. Furthermore, for any $1\leq i\leq r$, $\ext_{\SC}^1(\mathcal{F}_i,\mathcal{F}_i)=2-(\mathcal{F}_i,\mathcal{F}_i)_{\SC}>2$ and since $\mathcal{A}$ is a $2$-Calabi--Yau category, $\ext_{\SC}^1(\mathcal{F}_i,\mathcal{F}_i)$ is even. Therefore, $\overline{Q}$ has at least $4$ loops at each vertex.
\end{proof}

\subsection{Representations of the fundamental group of a surface}

\subsubsection{The stack of representations of the fundamental group of a surface}
Let $\Sigma_g$ be a closed orientable topological surface without boundary, of genus $g$. We fix a point $p\in \Sigma_g$. Let $r,d$ be integers with $r>0$. If $d\neq 0$, we write $r=\underline{r}\gcd(r,d)$ and $d=\underline{d}\gcd(r,d)$, fix throughout a primitive $\underline{r}$th root of unity $\zeta_{\ul{r}}$, and set $\lambda=\zeta_{\underline{r}}^{\underline{d}}$. If $d=0$ we set $\lambda=1$. The fundamental group of $\Sigma_g\setminus\{p\}$ is
\[
 \pi_1(\Sigma_g\setminus\{p\})=\left\langle l,x_i,y_i,1\leq i\leq g\mid l^{-1}\prod_{i=1}^g(x_iy_ix_i^{-1}y_i^{-1})=1\right\rangle.
\]
Its group algebra $\BoC[\pi_1(\Sigma_g\setminus\{p\})]$ is the algebra generated by $l,x_i,y_i$ for $1\leq i\leq g$, localised at $x_i,y_i,l$, with the relation $l^{-1}\prod_{i=1}^gx_iy_ix_i^{-1}y_i^{-1}=1$.

We are interested in local systems on $\Sigma_g\setminus\{p\}$ whose monodromy around $p$ is given by the multiplication by $\lambda$, and therefore in representations of the algebra $\BoC[\pi_1(\Sigma_g\setminus\{p\}),\lambda]=\BoC[\pi_1(\Sigma_g\setminus\{p\})]/\langle l-\lambda\rangle$. Note that the rank of any such local system is divisible by $\underline{r}$ if $d\neq 0$.  We denote by 
\[
Z^{\Betti}_{g,r,d}\subset \GL_{r}^{\times 2g}
\]
the variety cut out by the matrix valued equation $\prod_{i=1}^g\prod_{i=1}^gA_iB_iA_i^{-1}B_i^{-1}=\lambda\cdot \Id_{r\times r}$. This variety is acted on by $\GL_r$ via the simultaneous conjugation action. Then we set
\[
\CM^{\Betti}_{g,r,d}\coloneqq\Spec\left(\BoC[Z^{\Betti}_{g,r,d}]^{\GL_r}\right)
\]
where $\BoC[Z^{\Betti}_{g,r,d}]$ denotes the algebra of regular functions on $Z^{\Betti}_{g,r,d}$. We define $\FM^{\Betti}_{g,r,d}$ to be the stack of $r$-dimensional representations of $\BoC[\pi_1(\Sigma_g\setminus\{p\}),\lambda]$. There is an equivalence of stacks $\FM^{\Betti}_{g,r,d}\simeq Z^{\Betti}_{g,r,d}/\GL_r$. In particular, $\Msp^{\Betti}_{g,r,d}$ is the affinization of $\FM^{\Betti}_{g,r,d}$, and the affinization morphism $\JH\colon \FM^{\Betti}_{g,r,d}\rightarrow \CM^{\Betti}_{g,r,d}$ is a good moduli space.

\subsubsection{The dg-category of representations of the fundamental group of a surface}
\label{subsubsection:dgrepfundgroup}
Recall that a topological space $M$ is called \textit{acyclic} if its universal cover is contractible.
\begin{theorem}[{\cite[Corollary 6.2.4]{davison2012superpotential}}]
 Let $M$ be a compact orientable acyclic manifold of dimension $d$ without boundary. Then the fundamental group algebra $\BoC[\pi_1(M)]$ is a (left) $d$-Calabi--Yau algebra.
 \end{theorem}
 As an immediate corollary, we obtain the following.

 \begin{corollary}
 \label{RS_CY}
 For any $g\geq 1$, the group algebra $\BoC[\pi_1(\Sigma_g)]$ of the fundamental group of a Riemann surface of genus $g$,
\[
 \pi_1(\Sigma_g)=\left\langle x_i,y_i,1\leq i\leq g\mid \prod_{i=1}^gx_iy_ix_i^{-1}y_i^{-1}=1\right\rangle
\]
 is a (left) $2$-Calabi--Yau algebra.
 \end{corollary}
\begin{proof}
 Such a surface is compact, orientable, and acyclic.
\end{proof}

 The cohomological Hall algebra of representations of the fundamental group algebra of a Riemann surface was defined and studied in terms of brane tiling algebras in \cite{davison2016cohomological}, where it was shown that it satisfies the cohomological integrality theorem (regardless of genus). It is shown in \cite{mistry2022cohomological} that the CoHA defined this way agrees with the more direct construction analogous to the Schiffmann--Vasserot definition \cite[§4.3]{schiffmann2013cherednik} of the CoHA of compactly supported sheaves on $\BoA^2$, and thus (adapting the arguments of \S \ref{subsection:comparison_preproj}) to the CoHA defined in this paper in terms of the RHom complex.  A categorified version of this Hall algebra appears also in \cite{porta2022two}. 

The analogue of Corollary \ref{RS_CY} is expected to be true for $\BoC[\pi_1(\Sigma_g\setminus\{p\}),\lambda]$ with $g\geq 1$, although we have not been able to locate a precise reference. We use instead the fact that these algebras are localizations of multiplicative preprojective algebras, which are known to be $2$-Calabi--Yau algebras \cite{kaplan2023multiplicative}.

Let $Q=(Q_0,Q_1)=S_g$ be the quiver with one vertex and $g$ loops $\alpha_1,\hdots,\alpha_g$. The doubled quiver $\overline{Q}$ has arrows $\alpha_i,\alpha_i^*$, $1\leq i\leq g$. Let $A$ be the universal localisation of $\BoC \overline{Q}$ with respect to the elements $1+\alpha_i\alpha_i^*$ and $1+\alpha_i^*\alpha_i$ for $1\leq i\leq g$. We let
\[
 \Lambda^{\lambda}(Q)\coloneqq A\Big/\left\langle\lambda^{-1}\prod_{i=1}^g(1+\alpha_i\alpha_i^*)(1+\alpha_i^*\alpha_i)^{-1}-1\right\rangle
\]
for $\lambda\in\BoC^*$ be the corresponding multiplicative preprojective algebra. Since we have an explicit presentation of $\Lambda^{\lambda}(Q)$ as a quotient of a localised quiver algebra by a set of relations, we may define the derived multiplicative preprojective algebra $\tilde{\Lambda}^{\lambda}(Q)$ as in Appendix \ref{Dalg_sec}. This is the same as the derived multiplicative preprojective algebra defined in \cite{kaplan2023multiplicative}. The stack of finite-dimensional representations of $\tilde{\Lambda}^{\lambda}(Q)$ satisfies Assumptions \ref{p_assumption}-\ref{ds_fin}, \ref{det_bun_assumption}, \ref{assumption:associativity} by Appendix \ref{algebra_constr_sec}, and so all that remains is to consider the 2CY property (Assumption~\ref{BPS_cat_assumption}).

We let $\Lambda^{\lambda}(Q)'$ be the universal localisation of $\Lambda^{\lambda}(Q)$ at $\alpha_i$, $1\leq i\leq g$.

\begin{proposition}[{\cite[Proposition 2]{crawley2013monodromy}}]
\label{proposition:fundamentalgrouppreprojec}
 We have an isomorphism of algebras
 \[
 \BoC[\pi_1(\Sigma_g\setminus\{p\}),\lambda]\cong \Lambda^{\lambda}(Q)'.
\]
\end{proposition}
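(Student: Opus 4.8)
This is \cite[Proposition~2]{crawley2013monodromy}, and the plan is to reproduce the argument there: write down explicit mutually inverse algebra homomorphisms. The whole mathematical content is the identity $\alpha(1+\alpha^{*}\alpha)=(1+\alpha\alpha^{*})\alpha$ in $\BoC\overline{Q}$, valid for any arrow $\alpha$ with reverse $\alpha^{*}$; applied to a loop $\alpha_i$ that has been universally inverted, it gives $1+\alpha_i\alpha_i^{*}=\alpha_i(1+\alpha_i^{*}\alpha_i)\alpha_i^{-1}$, so that the $i$th factor $(1+\alpha_i\alpha_i^{*})(1+\alpha_i^{*}\alpha_i)^{-1}$ of the multiplicative preprojective relation becomes, in the unit group of $\Lambda^{\lambda}(Q)'$, the group commutator $[\alpha_i,1+\alpha_i^{*}\alpha_i]$. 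Writing $v_i=1+\alpha_i^{*}\alpha_i$, the defining relation of $\Lambda^{\lambda}(Q)$ thus reads $\prod_{i=1}^{g}[\alpha_i,v_i]=\lambda$ in $\Lambda^{\lambda}(Q)'$, which (inverting, using $[a,b]^{-1}=[b,a]$, and reindexing $i\leftrightarrow g+1-i$) is exactly the surface relation $l\prod_{i=1}^{g}[x_i,y_i]=1$ with $l=\lambda$ after the substitution $x_i=v_{g+1-i}$, $y_i=\alpha_{g+1-i}$.

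First I would define $\Phi\colon \BoC[\pi_1(\Sigma_g\setminus\{p\}),\lambda]\to\Lambda^{\lambda}(Q)'$ by $l\mapsto\lambda$, $x_i\mapsto 1+\alpha_{g+1-i}^{*}\alpha_{g+1-i}$, $y_i\mapsto\alpha_{g+1-i}$. All images are units --- $\alpha_j$ because it has been inverted in forming $\Lambda^{\lambda}(Q)'$, each $1+\alpha_j^{*}\alpha_j$ because it is already inverted in $A$, and $\lambda$ being a nonzero scalar --- so $\Phi$ is well defined on the localised free algebra underlying $\BoC[\pi_1(\Sigma_g\setminus\{p\})]$; and by the computation above it sends the surface relation to the multiplicative preprojective relation, hence descends. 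Conversely I would define $\Psi\colon\Lambda^{\lambda}(Q)'\to \BoC[\pi_1(\Sigma_g\setminus\{p\}),\lambda]$, forced by $\Phi$, via $\alpha_j\mapsto y_{g+1-j}$ and $\alpha_j^{*}\mapsto (x_{g+1-j}-1)y_{g+1-j}^{-1}$; then $1+\alpha_j^{*}\alpha_j\mapsto x_{g+1-j}$ and $1+\alpha_j\alpha_j^{*}\mapsto y_{g+1-j}x_{g+1-j}y_{g+1-j}^{-1}$ are units, so by the universal property of universal localisation $\Psi$ extends to $A$; it annihilates the multiplicative preprojective relation by the same commutator identity together with $l=\lambda$; and it factors through $\Lambda^{\lambda}(Q)'$ because $\Psi(\alpha_j)=y_{g+1-j}$ is invertible. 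Finally one checks $\Phi$ and $\Psi$ are mutually inverse by evaluating on the finite generating sets, which is immediate from the formulas (e.g.\ $\Psi\Phi(y_i)=\Psi(\alpha_{g+1-i})=y_i$).

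The mathematics is entirely elementary; the step requiring the most care is the bookkeeping --- getting the placement of the inverse on $\lambda$ (monodromy $\lambda$ versus $\lambda^{-1}$ about the puncture), the cyclic ordering of the product $\prod_i(1+\alpha_i\alpha_i^{*})(1+\alpha_i^{*}\alpha_i)^{-1}$, and the precise form of the surface relation to match, which is what forces the reindexing $i\leftrightarrow g+1-i$ and the swap of the two families of generators in the substitution above. Since the statement is precisely \cite[Proposition~2]{crawley2013monodromy}, in the paper itself it is enough to cite that reference.
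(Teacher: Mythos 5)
Your proof is correct and follows essentially the same route as the paper's: write down an explicit algebra homomorphism sending generators of the surface group algebra to units of the multiplicative preprojective algebra and verify, using the identity $\alpha(1+\alpha^{*}\alpha)=(1+\alpha\alpha^{*})\alpha$, that the surface relation goes to the multiplicative preprojective relation. The paper's (equally valid, terser) choice of isomorphism, drawn directly from Crawley-Boevey's argument, is $x_i\mapsto\alpha_i$, $y_i\mapsto\alpha_i^{-1}+\alpha_i^*$, with the added observation that Crawley-Boevey only \emph{states} a Morita equivalence even though his proof produces an honest algebra isomorphism; your map is a variant related to this one by an automorphism, with the same underlying commutator calculation.
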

\begin{proof}
 Crawley-Boevey only states that these two algebras are Morita equivalent, i.e. that the category $\Rep(\BoC[\pi_1(\Sigma_g\setminus\{p\}),\lambda])$ is equivalent to the category $\Rep(\Lambda^{\lambda}(Q)')$. But the arguments of his proof give an isomorphism between the algebras. The isomorphism sends $x_i$ to $\alpha_i$ and $y_i$ to $\alpha_i^{-1}+\alpha_i^*$.
\end{proof}

Thanks to this proposition, we can realise the stack of finite-dimensional representations of $\BoC[\pi_1(\Sigma_g\setminus \{p\}),\lambda]$ as an open substack of the stack of finite-dimensional representations of $\Lambda^{\lambda}(Q)$. Letting $\mathscr{D}=\Perf_{\dg}(\Lambda^{\lambda}(Q)')$ be the dg-category of perfect $\Lambda^{\lambda}(Q)'$-modules, we have a fully faithful localisation functor
\[
 \mathscr{D}\rightarrow \mathscr{C},
\]
and $\mathscr{D}$ is quasi-equivalent to $\Perf_{\dg}(\BoC[\pi_1(\Sigma_g\setminus\{p\}),\lambda])$. We can therefore rely on the favourable properties of the multiplicative preprojective algebra $\Lambda^{\lambda}(Q)$:
\begin{proposition}[{\cite[Theorem 1.2+Proposition 4.4]{kaplan2023multiplicative}}]
$\Lambda^{\lambda}(Q)$ is a (left) $2$-Calabi--Yau algebra, and the natural morphism of dgas $\tilde{\Lambda}^{\lambda}(Q)\rightarrow \Lambda^{\lambda}(Q)$ is a quasi-isomorphism.
\end{proposition}
By \cite{brav2019relative} it follows that the full subcategory containing any collection of simple $\Lambda^{\lambda}(Q)$-modules carries a right 2CY structure, and so the category of $\Lambda^{\lambda}(Q)$-modules satisfies Assumption \ref{BPS_cat_assumption}. Restricting to collections of simple objects arising from $\BoC[\pi_1(\Sigma_g\setminus\{p\}),\lambda]$-modules via Proposition \ref{proposition:fundamentalgrouppreprojec} we deduce that the category of $\BoC[\pi_1(\Sigma_g\setminus\{p\}),\lambda]$-modules also satisfies Assumption \ref{BPS_cat_assumption}.

\subsubsection{The RHom-complex}
We have an explicit projective resolution of the multiplicative preprojective algebra $\Lambda^{\lambda}(Q)$ as a $\Lambda^{\lambda}(Q)$-bimodule and it gives an explicit presentation of the RHom-complex (\S\ref{subsubsection:categorical}) on the square $\mathfrak{M}_{\Lambda^{\lambda}(Q)}\times\mathfrak{M}_{\Lambda^{\lambda}(Q)}$ of the stack of finite-dimensional representations of $\Lambda^{\lambda}(Q)$. By considering the fully faithful embedding $\mathscr{D}\rightarrow\mathscr{C}$, the RHom complex on $\mathfrak{M}^{\Betti}_{g,r,d}\times\mathfrak{M}^{\Betti}_{g,r,d}$ is the restriction to $\mathfrak{M}^{\Betti}_{g,r,d}\times\mathfrak{M}^{\Betti}_{g,r,d}$ of the RHom-complex on $\mathfrak{M}_{\Lambda^{\lambda}(Q)}\times\mathfrak{M}_{\Lambda^{\lambda}(Q)}$. The latter can be described explicitly thanks to the $3$-term projective resolution of $\Lambda^{\lambda}(Q)$ as a $\Lambda^{\lambda}(Q)$-bimodule, $P_{\bullet}=P_0\xrightarrow{\alpha}P_1\xrightarrow{\beta} P_0$ given in \cite[Proposition 3.12]{kaplan2023multiplicative} (since $Q=S_g$ contains a cycle, see \cite{kaplan2023multiplicative}). 
It is the same as the 3-term complex from Corollary \ref{dalg_dhom} calculating $\Ext^i_{\tilde{\Lambda}^{\lambda}(Q)}(M,N)$.

\subsubsection{The Euler form}
By Proposition \ref{proposition:fundamentalgrouppreprojec} and the discussion following it (\S\ref{subsubsection:dgrepfundgroup}), we have a fully faithful functor from the derived category of representations of the deformed fundamental group algebra $\BoC[\pi_1(\Sigma_g\setminus\{p\}),\lambda]$ to the derived category of representations of the multiplicative preprojective algebra $\Lambda^{\lambda}(Q)$.

We determine the Euler form of the multiplicative preprojective algebra. 

\begin{lemma}
The Euler form of $\Rep(\Lambda^{\lambda}(Q))$ is given by
\begin{equation}
\label{equation:eulerformfundamentalgroup}
 \begin{split}
 \BoZ\times\BoZ&\longrightarrow\BoZ\\
 (d,e)&\longmapsto 2(1-g)de
 \end{split}
\end{equation}
i.e. for finite dimensional representations $M,N$ of $\Lambda^{\lambda}(Q)$, $(M,N)_{\Lambda^{\lambda}(Q)}=2(1-g)\dim(M)\dim(N)$.
\end{lemma}
\begin{proof}
This follows from Corollary \ref{dalg_dhom}.
\end{proof}

\begin{corollary}
 The Euler form of the category of finite-dimensional representations of the deformed fundamental group algebra $\BoC[\pi(\Sigma_g\setminus\{p\}),\lambda]$ is given by \eqref{equation:eulerformfundamentalgroup}, i.e. for finite dimensional representations $M,N$ of $\BoC[\pi(\Sigma_g\setminus\{p\}),\lambda]$, $(M,N)_{\BoC[\pi(\Sigma_g\setminus\{p\}),\lambda]}=2(1-g)\dim(M)\dim(N)$. In particular, if $g\geq 2$ this category is a totally negative 2CY category.
\end{corollary}
\begin{proof}
This is an immediate consequence of the fully faithful embedding $\mathscr{D}\rightarrow\mathscr{C}$ which induces a fully faithful embedding of the triangulated categories $\D^{\bdd}(\Rep(\BoC[\pi_1(\Sigma_g\setminus\{p\}),\lambda]))\rightarrow \D^{\bdd}(\Rep(\Lambda^{\lambda}(Q)))$.
\end{proof}

\subsection{Semistable Higgs bundles}
\label{Higgs_background_sec}

\subsubsection{The stack of Higgs sheaves}
Let $C$ be a complex smooth projective curve. A \emph{Higgs sheaf} on $C$ is a pair $(\mathcal{F},\theta)$ of a coherent sheaf $\mathcal{F}$ on $C$ together with an $\mathcal{O}_C$-linear map $\theta\colon\mathcal{F}\rightarrow \mathcal{F}\otimes K_C$ (the \emph{Higgs field}) where $K_C$ is the canonical bundle of $C$. The rank $r$ and degree $d$ of a Higgs sheaf are defined to be the rank and degree of the underlying coherent sheaf, while the slope $\mu(\mathcal{F},\theta)=\mu(\mathcal{F})$ is likewise defined to be $d/r$. We let $\BoZ^{2,+}=\{(r,d)\in \BoZ^2\mid r>0 \text{ or }(r=0 \text{ and }d\geq 0)\}$. We let $\Higgs(C)$ be the (Abelian) category of Higgs sheaves on $C$ and let $\mathfrak{Higgs}(C)=\bigsqcup_{(r,d)\in\BoZ^{2,+}}\mathfrak{Higgs}_{(r,d)}(C)$ be the stack of Higgs sheaves on $C$.

\subsubsection{Semistable Higgs sheaves}
Let $(\mathcal{F},\theta)$ be a Higgs sheaf. It is called semistable if for any subsheaf $\mathcal{G}\subset \mathcal{F}$ such that $\theta(\mathcal{G})\subset \mathcal{G}\otimes K_C$, we have the inequality of slopes
\[
 \mu(\mathcal{G})=\frac{\mathrm{deg}(\mathcal{G})}{\mathrm{rank}(\mathcal{G})}\leq \mu(\mathcal{F})=\frac{\mathrm{deg}(\mathcal{F})}{\mathrm{rank}(\mathcal{F})}.
\]
For $\theta\in \BoQ\cup\{\infty\}$, we denote by $\Higgs^{\sst}_{\theta}(C)$ the full subcategory of the category of Higgs sheaves, containing those Higgs sheaves that are semistable of slope $\theta$, along with the zero Higgs sheaf. It is an Abelian subcategory of $\Higgs(C)$, the category of all Higgs sheaves. The stack of semistable Higgs sheaves of rank $r$ and degree $d$, $\Mst^{\Dol}_{r,d}(C)$, is an open substack of $\mathfrak{Higgs}(C)$. It is of finite type and can be realised as a global quotient stack. We spell this out in the next section.

\subsubsection{The BNR correspondence}
\label{subsubsection:BNRcorrespondence}
We may consider the category of Higgs sheaves as a subcategory of the category of coherent sheaves on a quasiprojective surface via the BNR-correspondence \cite{beauville1989spectral,schaub1998courbes}. Let $S=\Tan^*\!C$ and $\overline{S}=\mathbf{P}_C(\mathcal{O}_C\oplus K_C)$. The projective surface $\overline{S}$ is a smooth compactification of $S$. We let $D_{\infty}$ be the complement of the open immersion $S\subset\overline{S}$. We let $\pi\colon \Tan^*\!C\rightarrow C$ and $\overline{\pi}\colon \overline{S}\rightarrow C$ be the natural projections.

There is an equivalence of categories between the category of coherent sheaves $\mathcal{F}$ on $S$ for which $\pi_*\mathcal{F}=\mathrm{R}^0\pi_*\mathcal{F}$ is coherent and the category of Higgs sheaves on $C$. Therefore, the category of Higgs sheaves on $C$ is equivalent to the category of coherent sheaves on $\overline{S}$ whose support does not intersect $D_{\infty}$. Furthermore, semistability of the corresponding Higgs bundle is equivalent to Gieseker-semistability for the polarization of $\overline{S}$ given by an arbitrary choice of very ample class. Via the BNR correspondence, we consider $\Mst^{\Dol}_{r,d}(C)$ as an open substack of the stack of semistable coherent sheaves on $\overline{S}$. Likewise, we consider the coarse moduli space $\CM^{\Dol}_{r,d}(C)$ as an open subscheme of the coarse moduli scheme of semistable sheaves on $\overline{S}$.

\subsubsection{A dg-enhancement of the category of Higgs sheaves}
We let $\CD^{\mathrm{b}}_{\dg}(\Coh(\overline{S}))$ be the dg-enhancement of the derived category of coherent sheaves on $\overline{S}$ (classically constructed as the dg-quotient of the pretriangulated dg-category of complexes in $\Coh(\overline{S})$ by the full pretriangulated dg-subcategory of acyclic complexes). We define $\CD_{\dg}^{\mathrm{b}}(\Higgs(C))$ as the full pretriangulated dg-subcategory of $\CD^{\mathrm{b}}_{\dg}(\Coh(S))$ of bounded complexes whose cohomology sheaves are coherent after applying $\pi_*$. 
\begin{proposition}
Fix a slope $\theta\in\BoQ$. The category $\Higgs^{\sst}_{\theta}(C)$, considered as a subcategory of the dg-category $\CD_{\dg}^{\mathrm{b}}(\Higgs(C))$, along with its stack of objects, satisfy Assumptions \ref{p_assumption}-\ref{det_bun_assumption}.
\end{proposition}
\begin{proof}
This follows, via the BNR correspondence, from the statement for semistable coherent sheaves on $S$, which is established in Appendix \ref{geometry_constr_sec} (using that $K_S\cong \CO_S$ for Assumption \ref{BPS_cat_assumption}).
\end{proof}

\subsubsection{The Euler form}
 
The Euler form of the category of Higgs sheaves on a smooth projective curve factors through the numerical Grothendieck group of the curve
\begin{equation*}
\begin{split}
\K(\Higgs(C))&\longrightarrow \BoZ^2\\
 \overline{\mathcal{F}}=[(\mathcal{F},\theta)]&\longmapsto(\rank(\mathcal{F}),\deg(\mathcal{F})).
\end{split}
\end{equation*}
It is given by
\begin{equation}
\label{equation:EulerformHiggs}
 (\overline{\mathcal{F}},\overline{\mathcal{G}}):=\sum_{i\in\BoZ}(-1)^i\ext^i(\overline{\mathcal{F}},\overline{\mathcal{G}})=2(1-g)\mathrm{rank}(\overline{\mathcal{F}})\mathrm{rank}(\overline{\mathcal{G}}).
\end{equation}

\begin{proposition}
\label{g_t_n}
 For any $g\geq 2$, the Abelian category $\Higgs^{\sst}_{\theta}(C)$ of semistable Higgs bundles of fixed finite slope $\theta$ on a smooth projective curve of genus $\geq 2$ is totally negative.
\end{proposition}
\begin{proof}
Immediate from \eqref{equation:EulerformHiggs}.
\end{proof}

\subsection{Semistable sheaves on symplectic surfaces}
\label{SSSS}
Let $S$ be a smooth projective symplectic surface, i.e. $S$ is a K3 or Abelian surface. Let $\CO_S(1)$ be an ample line bundle on $S$. 
We explain the usual setup for moduli spaces on such a surface.
We endow $\HO^*(S,\BoZ)$ with the quadratic form 
\begin{equation*}
(v_0,v_1,v_2)(w_0,w_1,w_2) = v_1w_1 - v_0w_2 - v_2w_0\text{, for $v_i,w_i \in \HO^{2i}(S,\BoZ)$.}
\end{equation*}
The Mukai vector of a coherent sheaf $\CF$ is defined to be 
\begin{equation*}
v(\CF) = (\rank(\CF),\cc_1(\CF),\ch_2(\CF)+\rank(\CF)) = \ch(\CF) \cdot \sqrt{\todd(S)} \in \HO^*(S,\BoZ).
\end{equation*}
For any two coherent sheaves $\CE,\CF \in \Coh(S)$ the Euler form is given by the Riemann--Roch formula
\begin{equation}
\label{eq:EFsurface}
\chi(\CE,\CF) = - v(\CE)v(\CF) = -\cc_1(\CE)\cc_1(\CF) + 2\rank(\CE)\rank(\CF) + \rank(\CE)\ch_2(\CF) + \ch_2(\CE)\rank(\CF).
\end{equation}

Thus the Mukai vector $v=v(\CF)$ of a coherent sheaf $\CF$ determines its Hilbert polynomial which we denote by $P_{v}(t)$. As in §\ref{subsection:propernessp} we define Gieseker semistable sheaves on $S$ with respect to $\CO_S(1)$. Let $\Cohst^{\sst}_{v}(S) \subset \Cohst^{\sst}_{P_v(t)}(S)$ be the open and closed substack of Gieseker semistable coherent sheaves on $S$ with Mukai vector $v$.

Let $v_0 \in \HO^*(S,\BoZ)$ be a primitive Mukai vector and suppose that $\CO_S(1)$ is $v_0$-generic (i.e. such that all Gieseker semistable sheaves with Mukai vector $v_0$ are automatically stable). Let $\Coh^{\sst}(S,v_0) \subset \Coh(S)$ be the full Abelian subcategory of pure dimension 1 Gieseker semistable sheaves with Mukai vector a multiple of $v_0$. 
We call $\Coh^{\sst}(S,v_0)$ the category of \emph{one dimensional semistable sheaves of slope $v_0$}.
Since we demand sheaves in $\Coh^{\sst}(S,v_0)$ to be pure of dimension one, the category $\Coh^{\sst}(S,v_0)$ can be nonzero only if $v_0= (0,\beta,\chi)$ for $0 \neq \beta \in \HO^2(S)$.
\begin{proposition}
Let $v_0 \in \HO_{\leq 2}(S)$ (where we have identified $\HO_i(S)$ with $\HO^{4-i}(S)$ via Poincar\'e duality) be a Mukai vector for which $\CO_S(1)$ is $v_0$-generic. Suppose $v_0^2 > 0$, then the category of one dimensional semistable sheaves of slope $v_0$ is totally negative.
\end{proposition}
An example of such a Mukai vector $v_0$ is constructed as follows. Suppose the general smooth curve in the linear system $\abs{\CO_{S}(1)}$ has genus $g\geq 2$. Then the Mukai vector $v_0=(0,[C],\chi)$ for a smooth curve $C \in \abs{\CO_S(1)}$ and $\chi \in \BoZ$ satisfies $v_0^2 = 2g-2 > 0.$

\section{Cohomological Hall algebras of 2-dimensional categories}
\label{subsection:relativecohaproductKV} 

\subsection{The CoHA product}
\label{subsubsection:ThecohaproductKV}
In this section, we define the CoHA product, generalising the one given in \cite{kapranov2019cohomological}. We let $\mathscr{C}$, $\mathcal{A}$, $\varpi\colon \mathfrak{M}\rightarrow \mathcal{M}$ be as in \S \ref{subsubsection:categorical}. Then we define as in the introduction
\begin{align*}
\mathscr{A}_{\varpi}&\coloneqq\bigoplus_{\CM_a\in\pi_0(\CM)}\varpi_*\BD\BoQ_{\mathfrak{M}_{\mathcal{A},a}}[(a,a)_{\mathscr{C}}]\in\CD^+_{\mathrm{c}}(\CM)
\\
\ulrelCoHA_{\varpi}&\coloneqq \bigoplus_{\CM_{a} \in \pi_{0}(\CM)} \varpi_{\ast} \BD \ul{\BoQ}_{\Mst_{\CA,a}} \otimes \BoL^{-(a,a)_{\SC}/2}\in\CD^+(\MHM(\CM))
.
\end{align*}
We assume that the assumptions stated in \S \ref{subsubsection:assumptionCoHAproduct}, which we briefly recall, are satisfied. Firstly, we require that the forgetful morphism $p\colon \mathfrak{Exact}\rightarrow \FM$ from the stack of short exact sequences of objects parametrised by points of $\FM$, to $\FM$, which remembers only the middle term in the sequence, is representable and proper (Assumption \ref{p_assumption}).
 Secondly, we assume that the forgetful morphism $q\colon \mathfrak{Exact} \rightarrow \FM \times \FM$ is the classical truncation of the total space of a $3$-term complex (Assumption \ref{q_assumption1}).

Pick $a,b\in\pi_0(\CM)$ and let $a+b\in \pi_0(\CM)$ denote the connected component containing $\pi_0(\oplus)(a,b)$. By Assumption \ref{q_assumption1} we can present the morphism $q\colon \mathfrak{Exact}_{a,b}\rightarrow \FM_a\times \FM_b$ as the classical truncation of the total space of a $3$-term complex
\[
\Tot_{\FM_a\times\FM_b}(\CC_{a,b}^{\bullet})\rightarrow \FM_a\times \FM_b.
\]
For any $a,b\in\pi_0(\FM_{\CA})$, we fix a $3$-term complex $\CC_{a,b}^{\bullet}$ giving a presentation of $q$. We explain in Corollary~\ref{der_exact_cor} a canonical choice when $\CA$ is the category of representation of an algebra and we explain a set of choices when $\CA$ is the category of coherent sheaves on a surface. The main point is that these choices give an unambiguous definition of the virtual pullback by the morphism $q$.

Recall that $\mathcal{C}^{\bullet}_{a,b}$ is quasi-isomorphic to the RHom complex shifted by one on $\mathfrak{M}_a\times\mathfrak{M}_b$ for $a,b\in\pi_0(\mathfrak{M}_{\mathcal{A}})$. As such we have the equality
\[
(a,b)_{\mathscr{C}}=-\vrank(\CC^{\bullet}_{a,b}).
\]
We recall that as part of our starting data (\S\ref{base_monoid_sec}) we have the commutative diagram
\begin{equation}
 \begin{tikzcd}
	{\mathfrak{M}_a\times\mathfrak{M}_b} & {t_0(\Tot_{\FM\times\FM}(\mathcal{C}^{\bullet}_{a,b}))\simeq \mathfrak{Exact}_{a,b}} & {\mathfrak{M}_{a+b}} \\
	{\mathcal{M}_a\times\mathcal{M}_b} && {\mathcal{M}_{a+b}}.
	\arrow["{\varpi_a\times\varpi_b}"', from=1-1, to=2-1]
	\arrow["{\varpi_{a+b}}", from=1-3, to=2-3]
	\arrow["{p_{a,b}}", from=1-2, to=1-3]
	\arrow["{ q_{a,b}}"', from=1-2, to=1-1]
	\arrow["\oplus"', from=2-1, to=2-3]
\end{tikzcd}
\end{equation}

We define the morphism
\begin{equation}
\label{vfornow}
(\varpi_a\times\varpi_b)_*v_{\CC_{a,b}}\colon(\varpi_a\times\varpi_b)_* \BD \ulBoQ_{\FM_a\times\FM_b}\rightarrow (\varpi_a\times\varpi_b)_*(q_{a,b})_*\BD \ulBoQ_{\mathfrak{Exact}_{a,b}} \otimes \BoL^{\vrank(\CC^{\bullet}_{a,b})}
\end{equation}
as the virtual pullback along $q_{a,b}$ defined in \S \ref{subsection:virtual-pb-equivariant} for the chosen presentation by a $3$-term complex $\CC_{a,b}^{\bullet}$. Using Assumption \ref{p_assumption} (properness of $p$) we define the morphism
\[
(\varpi_{a+b})_*\beta\colon (\varpi_{a+b})_*(p_{a,b})_*\BD \ulBoQ_{\mathfrak{Exact}_{a,b}}\rightarrow (\varpi_{a+b})_*\BD \ulBoQ_{\FM_{a+b}}
\]
as the direct image of the pullback\footnote{Recall that we pull back to a scheme smoothly approximating the stack so that this morphism has a lift to MHM complexes.} to an acyclic cover of the Verdier dual of the adjunction $\ulBoQ_{\FM_{a+b}}\rightarrow (p_{a,b})_* \ulBoQ_{\mathfrak{Exact}_{a,b}}$.
 We define the relative CoHA multiplication on $\ulrelCoHA_{\varpi}$, restricted to $\CM_a\times \CM_b$, to be given by
\begin{align*}
m_{a,b}=&((\varpi_{a+b})_*\beta \otimes \BoL^{(-(a+b,a+b)_{\mathscr{C}}/2})\circ (\oplus_*(\varpi_a\times\varpi_b)_*v_{\CC_{a,b}}\otimes \BoL^{(-(a,a)_{\mathscr{C}}-(b,b)_{\mathscr{C}})/2}).
\end{align*}

\subsubsection{Associativity}

Given how general our context is regarding the category $\CA$, we take the associativity as an assumption.
\begin{assumption}{7}
\label{assumption:associativity}
We assume that the multiplication $m_{a,b}$ defined on $\underline{\SA}_{\varpi}$ is associative, that is, for any $a,b,c\in\pi_0(\CM)$, the morphisms $m_{a+b,c}\circ (m_{a,b}\boxdot\id)$ and $m_{a,b+c}\circ (\id\boxdot m_{b,c})$ coincide.
\end{assumption}
We verify this assumption for the stacks of interest in this paper, which are either the stack of representations of an algebra of  homological dimension $2$, or the stack of compactly supported coherent sheaves over a smooth quasi-projective surface, in Appendices~\ref{sec:2dim_categories} and \ref{algebra_constr_sec}.

\subsubsection{CoHA associated to a submonoid}
We assume, as in \S \ref{strict_mf_sec} that we are given an inclusion of a saturated submonoid $\imath\colon(\mathcal{N},\oplus)\rightarrow (\mathcal{M},\oplus)$ so that the diagram \eqref{equation:diagrampullbackmonoidal} is Cartesian. Then by Lemma \ref{lemma:strictmonoidalfunctor}, the complex of mixed Hodge modules $\imath^!\ulrelCoHA_{\varpi}$ inherits an algebra structure from the algebra structure on $\ulrelCoHA_{\varpi}$. Precisely, we define the morphism
\[
\imath^!\ulrelCoHA_{\varpi}\boxdot\imath^!\ulrelCoHA_{\varpi}\rightarrow \imath^!\ulrelCoHA_{\varpi}
\]
as the composition of the natural isomorphism
\[
\imath^!\ulrelCoHA_{\varpi}\boxdot\imath^!\ulrelCoHA_{\varpi}\cong \imath^!(\ulrelCoHA_{\varpi}\boxdot\ulrelCoHA_{\varpi})
\]
and $\iota^!m$.

\subsection{A hierarchy of cohomological Hall algebras associated to preprojective algebras}
\label{subsection:hierarchy}
It is often profitable to consider the Hall algebra on the Borel--Moore homology of the stack of representations in a Serre subcategory of the category of representations for a given algebra. This turns out to be especially true for preprojective algebras, for which we will prove our main theorem (Theorem \ref{theorem:freenesspreprojective}) by considering three different Serre subcategories.
\subsubsection{The full cohomological Hall algebra}
We obtain the \emph{full cohomological Hall algebra} by taking derived global sections.
\[
\HO^*\!\!\!\ulrelCoHA_{\Pi_Q}\coloneqq \HO^*\!(\JH_*\BD\ulBoQ_{\mathfrak{M}_{\mathcal{A}}}^{\vir})\cong \HO^{\BoMo}_*\!(\Mst_{\Pi_Q},\ulBoQ^{\vir}).
\]
In this case the Serre subcategory of the category of finite-dimensional $\Pi_Q$-representations is the entire category. This special case is the subject of Corollary \ref{corollary:BPSalgebraFree}.

\subsubsection{The cohomological Hall algebra of the strictly seminilpotent locus}
\label{subsubsection:cohaSSN}
Let $\mathcal{M}^{\SSN}_{\Pi_Q}$ be the closed subvariety of $\mathcal{M}_{\Pi_Q}$ parametrising semisimple representations of $\Pi_Q$ such that the only arrows acting possibly non-trivially are the loops $\alpha\in Q_1\subset \overline{Q}_1=Q_1\sqcup Q_1^*$.

Then, we have a pullback square
\[
 \begin{tikzcd}
	{\mathfrak{M}^{\SSN}_{\Pi_Q}} & {\mathfrak{M}_{\Pi_Q}} \\
	{\mathcal{M}^{\SSN}_{\Pi_Q}} & {\mathcal{M}_{\Pi_Q}}
	\arrow["\JH_{\Pi_Q}^{\SSN}"',from=1-1, to=2-1]
	\arrow[from=1-1, to=1-2]
	\arrow["\imath_{\SSN}",from=2-1, to=2-2]
	\arrow["\JH_{\Pi_Q}",from=1-2, to=2-2]
	\arrow["\lrcorner"{anchor=center, pos=0.125}, draw=none, from=1-1, to=2-2]
\end{tikzcd}
\]
where $\mathfrak{M}^{\SSN}_{\Pi_Q}$ parametrises strongly seminilpotent representations of $\Pi_Q$. It is called the \emph{strongly seminilpotent stack} and its $\BoC$-points parametrise the strongly seminilpotent representations of $\Pi_Q$. These are representations $M$ of $\Pi_Q$ such that there exists a flag $0=M_0\subset M_1\subset \hdots\subset M_r=M$ with $M_i/M_{i-1}$ supported at a single vertex for $1\leq i\leq r$, $x_{\alpha}M_i\subset M_i$ and $x_{\alpha^*}M_i\subset M_{i-1}$ for $\alpha\in Q_1$.

The algebra structure $\imath_{\SSN}^!m$ obtained on $\ulrelCoHA_{\Pi_Q}^{\SSN}:=(\JH_{\Pi_Q}^{\SSN})_*\BD\ulBoQ_{\mathfrak{M}_{\Pi_Q}^{\SSN}}^{\vir}=\imath_{\SSN}^!(\JH_{\Pi_Q})_*\BD\ulBoQ_{\mathfrak{M}_{\Pi_Q}}^{\vir}$ is the relative strongly seminilpotent Hall algebra. The strongly seminilpotent Hall algebra $\HO^*\!\!\ulrelCoHA_{\Pi_Q}^{\SSN}$, is obtained by applying the derived global sections functor to the algebra object $\ulrelCoHA_{\Pi_Q}^{\SSN}$.

This algebra plays a crucial role in our proofs; we use in an essential way that the cohomological degree zero piece of this algebra is the enveloping algebra of a generalised Kac--Moody algebra (via \cite{bozec2016quivers} and then \cite{hennecart2022geometric} for the translation into cohomology), and we reduce the general statement regarding the BPS algebra to this fact.

\subsubsection{The cohomological Hall algebra of the nilpotent locus}
\label{subsubsection:fullynilpotentCoHA}
Let $\CM_{\Pi_Q}^{\nil}$ be the closed (discrete) subvariety of $\CM_{\Pi_Q}$ parametrising semisimple representations of $\Pi_Q$ such that all arrows act via the zero morphism. I.e. for each dimension vector $\dd$ the inclusion $\iota_{\Nil,\dd}\colon \CM_{\Pi_Q,\dd}^{\nil}\hookrightarrow \CM_{\Pi_Q,\dd}$ is a single point, corresponding to the nilpotent module $S_{\dd}$. Then we define the \textit{fully nilpotent CoHA} of $\Pi_Q$
\[
\ulrelCoHA_{\Pi_Q}^{\nil}\coloneqq \HO^*(\CM_{\Pi_Q}^{\nil},\iota_{\Nil}^!\ulrelCoHA_{\Pi_Q}).
\]
This algebra also plays a key role in our proofs; although $\Nil\coloneqq\{S_i \in\Pi_Q\lmod\mid i\in Q_0\}$ is a specific $\Sigma$-collection, this CoHA models the CoHA of an \textit{arbitrary} $\Sigma$-collection: see Corollary \ref{corollary:CoHAcompatibiltiySigmacoll} for the precise statement. This is as close as we come in this paper to upgrading the analytic neighbourhood theorem (Theorem \ref{theorem:neighbourhood}) to a statement incorporating Hall algebra products, and is as close as we need to come in order to prove our main theorems.

\subsection{The cohomological algebra of a $\mathbf{\Sigma}$-collection}
\label{subsubsection:cohaSigmacollection}
Let $\mathscr{C}$, $\mathcal{A}$ be as at the start of \S \ref{subsubsection:ThecohaproductKV}, and assume moreover that Assumption \ref{gms_assumption} (existence of a good moduli space) holds, so we may take $\JH\colon\FM_{\mathcal{A}}\rightarrow \CM_{\mathcal{A}}$ for our morphism $\varpi$.  Let $\underline{x}=\{x_1,\hdots,x_r\}$ be pairwise distinct closed $\BoC$-points of $\mathcal{M}$ represented by a set $\underline{\mathcal{F}}=\{\mathcal{F}_1,\hdots,\mathcal{F}_r\}$ of simple objects of $\mathcal{A}$, where the full subcategory containing $\CF_1,\ldots,\CF_r$ carries a right 2CY structure, and thus these objects form a $\Sigma$-collection. Let $\mathcal{M}_{\underline{\CF}}\subset\mathcal{M}_{\mathcal{A}}$ be the submonoid generated by $\underline{\CF}$. We \textit{define} $\FM_{\underline{\CF}}$ via the Cartesian square
\[
\begin{tikzcd}
	{\mathfrak{M}_{\underline{\CF}}} & {\mathfrak{M}_{\mathcal{A}}} \\
	{\mathcal{M}_{\underline{\CF}}} & {\mathcal{M}_{\mathcal{A}}}
	\arrow["{\JH}", from=1-2, to=2-2]
	\arrow["{\imath_{\underline{\CF}}}"', from=2-1, to=2-2]
	\arrow["{\JH^{\underline{\CF}}}"', from=1-1, to=2-1]
	\arrow[from=1-1, to=1-2]
	\arrow["\lrcorner"{anchor=center, pos=0.125}, draw=none, from=1-1, to=2-2].
\end{tikzcd}
\]
The stack $\FM_{\underline{\CF}}$ parametrises objects of $\mathcal{A}$ whose simple subquotients are in $\underline{\mathcal{F}}$.

The cohomological Hall algebra of $\mathfrak{M}_{\underline{\CF}}$ is defined to be $\ulrelCoHA_{\underline{\CF}}:=(\JH^{\underline{\CF}})_*\BD\ulBoQ_{\mathfrak{M}_{\underline{\CF}}}\otimes \BoL^{-\chi/2}\cong \imath_{\underline{\CF}}^!(\JH)_*\BD\ulBoQ^{\vir}_{\mathfrak{M}_{\mathcal{A}}}$, with the multiplication $\imath_{\underline{\CF}}^!m$.

Let $Q=(Q_0,Q_1)$ be a quiver and let $\Pi_Q$ be its preprojective algebra. We let $\ul{\CN}$ be the $\Sigma$-collection of one-dimensional nilpotent $\Pi_Q$-modules.  We recover \emph{the fully nilpotent cohomological Hall algebra} of $\Pi_Q$ defined in \S\ref{subsubsection:fullynilpotentCoHA}: $\ulrelCoHA_{\Pi_Q}^{\nil}=\ulrelCoHA_{\underline{\CN}}$.

We now identify the cohomological Hall algebra of a $\Sigma$-collection.

\begin{lemma}[{\cite[Proof of Theorem 5.11]{davison2021purity}}] 
We denote by $\mathscr{C}$ the ambient dg category as in §\ref{subsubsection:categorical}.
 Let $\underline{\mathcal{F}}=\{\mathcal{F}_1,\hdots,\mathcal{F}_r\}$ be a $\Sigma$-collection in $\mathscr{C}$, and assume that the full subcategory of $\mathscr{C}$ containing $\CF_1,\ldots,\CF_r$ carries a right 2CY structure. Let $Q_{\underline{\mathcal{F}}}$ be a half of the Ext-quiver of $\mathcal{\underline{F}}$. Then the full dg-subcategory of $\mathscr{C}$ generated under extensions by $\underline{\mathcal{F}}$ is quasi-equivalent to the full dg-subcategory of $\CD^{\bdd}_{\dg}(\mathrm{mod}^{\mathscr{G}_2(Q_{\underline{\mathcal{F}}})})$ generated under extensions by nilpotent one-dimensional modules.
\end{lemma}

\begin{corollary}
\label{corollary:comparisonCoHAfiber}
 Let $\underline{\mathcal{F}}=\{\mathcal{F}_1,\hdots,\mathcal{F}_r\}$ be a $\Sigma$-collection in $\mathcal{A}$, and assume that the full subcategory of $\mathscr{C}$ containing $\CF_1,\ldots,\CF_r$ carries a right 2CY structure. Let ${\mathfrak{M}}_{\underline{\mathcal{F}}}$ be the closed substack of ${\mathfrak{M}}_{\mathcal{A}}$ parametrising objects whose simple subquotients are in $\underline{\mathcal{F}}$ (\S\ref{subsubsection:cohaSigmacollection}). Let $Q$ be a half of the Ext-quiver of $\underline{\mathcal{F}}$. Let ${\mathfrak{M}}_{\Pi_Q}^{\nil}$ be the closed substack of ${\mathfrak{M}}_{\Pi_Q}$ parametrising nilpotent representations of $\Pi_Q$ (\S\ref{subsubsection:cohaSigmacollection} with $\underline{\CF}=\CN$). We have an isomorphism $\Phi_{\underline{\mathcal{F}}}\colon\mathfrak{M}_{\Pi_Q}^{\nil}\rightarrow\mathfrak{M}_{\underline{\mathcal{F}}}$ such that if $\CC^{\bullet}_{\underline{\mathcal{F}}}$ is the restriction of the shifted RHom complex over $\mathfrak{M}_{\mathcal{A}}\times\mathfrak{M}_{\mathcal{A}}$ to $\mathfrak{M}_{\underline{\mathcal{F}}}\times\mathfrak{M}_{\underline{\mathcal{F}}}$ and $\mathcal{C}_{\Pi_Q}^{\nil,\bullet}$ is the restriction of the RHom complex over $\mathfrak{M}_{\Pi_Q}\times\mathfrak{M}_{\Pi_Q}$ to $\mathfrak{M}_{\Pi_Q}^{\nil}\times\mathfrak{M}_{\Pi_Q}^{\nil}$, then $\Phi_{\underline{\mathcal{F}}}^*\CC^{\bullet}_{\underline{\mathcal{F}}}$ is quasi-isomorphic to $\mathcal{C}_{\Pi_Q}^{\nil,\bullet}$.
\end{corollary}

\begin{corollary}
\label{corollary:CoHAcompatibiltiySigmacoll}
 The isomorphism of stacks $\Phi_{\underline{\mathcal{F}}}\colon\mathfrak{M}_{\Pi_Q}^{\nil}\rightarrow\mathfrak{M}_{\underline{\mathcal{F}}}$ induces an isomorphism of cohomological Hall algebras
 \[
 \Phi_{\underline{\mathcal{F}}}^*\colon \ulrelCoHA_{\underline{\mathcal{F}}}\rightarrow \ulrelCoHA_{\Pi_Q}^{\nil}.
 \]
\end{corollary}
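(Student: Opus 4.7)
The strategy is to unwind the construction of the CoHA product from \S\ref{subsubsection:ThecohaproductKV} and check that each building block is compatible with $\Phi_{\underline{\mathcal{F}}}$. Recall that, on each dimension-graded piece, $m_{a,b}$ is the composition of (i) the virtual pullback $v_{\CC_{a,b}}$ along the projection $q_{a,b}\colon\mathfrak{Exact}_{b,a}\to\Mst_{a}\times\Mst_b$, built from a global presentation of the relative cotangent complex by $\CC^\bullet_{a,b}$, and (ii) the pushforward by the proper representable morphism $p_{a,b}\colon\mathfrak{Exact}_{b,a}\to\Mst_{a+b}$ remembering the middle term. Hence to produce the isomorphism it suffices to promote $\Phi_{\underline{\mathcal{F}}}$ to a commutative diagram of the relevant exact-sequence and filtration stacks, and to check that under this promotion both (i) and (ii) are preserved.

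First I would observe that $\Phi_{\underline{\mathcal{F}}}$ is an equivalence of Abelian categories (it is induced by a quasi-equivalence of the ambient dg-subcategories, via the preceding Lemma), hence it sends short exact sequences to short exact sequences, and so extends functorially to an isomorphism of stacks $\mathfrak{Exact}_{\Pi_Q}^{\nil}\cong\mathfrak{Exact}_{\underline{\mathcal{F}}}$ and more generally of the stacks of three-step filtrations, compatibly with the morphisms $p_{a,b}$ and with the projections to $\Mst\times\Mst$. It likewise induces an isomorphism of good moduli spaces $\CM_{\Pi_Q}^{\nil}\cong\CM_{\underline{x}}$ compatible with the direct-sum monoid structure. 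Consequently $\Phi_{\underline{\mathcal{F}}}$ intertwines the pushforward-along-$p_{a,b}$ part of the product, and also intertwines the adjunction morphism used in constructing that pushforward at the level of mixed Hodge modules, once we choose the acyclic covers on both sides compatibly (which we can, since an acyclic cover on one side pulls back along the equivalence $\Phi_{\underline{\mathcal{F}}}$ to an acyclic cover on the other).

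For the virtual pullback part, I invoke Corollary \ref{corollary:comparisonCoHAfiber}: the pullback $\Phi_{\underline{\mathcal{F}}}^*\CC^\bullet_{\underline{\mathcal{F}}}$ is quasi-isomorphic to $\CC^{\nil,\bullet}_{\Pi_Q}$ as perfect complexes in tor-amplitude $[-1,1]$ on $\Mst_{\Pi_Q}^{\nil}\times\Mst_{\Pi_Q}^{\nil}$. Both stacks have the resolution property (each is a global quotient stack), so Proposition \ref{glob_res_prop} applies and yields a commutative square of virtual pullback morphisms: the virtual pullback along the $\Pi_Q^{\nil}$-side global presentation coincides with the pullback of the virtual pullback along the $\underline{\mathcal{F}}$-side global presentation. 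The required compatibility of the Euler-form twists is automatic, since the Euler forms on both sides are computed from quasi-isomorphic perfect complexes and hence agree under $\Phi_{\underline{\mathcal{F}}}$. Putting these compatibilities together yields the desired isomorphism $\Phi_{\underline{\mathcal{F}}}^*\colon\ulrelCoHA_{\underline{\mathcal{F}}}\isoto\ulrelCoHA_{\Pi_Q}^{\nil}$ of algebra objects.

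The main technical point, and the one I would verify most carefully, is the bookkeeping at the mixed Hodge module level: the virtual pullback is defined via an acyclic cover and the canonical identifications of \S\ref{vpb_section} rely on the choice of a presentation of the quasi-smooth morphism by a three-term complex of vector bundles. Proposition \ref{glob_res_prop} (together with Proposition \ref{gqe_prop} for global equivalences) is exactly the result which makes these choices irrelevant at the level of the resulting morphisms in $\CD^+(\MHM(-))$, and the same input guarantees that the associativity squares on both sides match up, so nothing further is required beyond checking that the chain of identifications is formal once the input of Corollary \ref{corollary:comparisonCoHAfiber} is in place.
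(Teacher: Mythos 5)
Your proposal is correct and follows essentially the same route as the paper's own proof: both reduce to Corollary~\ref{corollary:comparisonCoHAfiber} (the isomorphism of stacks together with the quasi-isomorphism of RHom complexes) and then invoke Propositions~\ref{gqe_prop} and \ref{glob_res_prop} to conclude that the CoHA product depends only on this data up to quasi-isomorphism. The paper states this in one sentence; you unwind it into the pushforward-along-$p$ and virtual-pullback-along-$q$ pieces and verify each, which is the correct level of detail for someone rederiving the claim rather than citing it.
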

\begin{proof}
 This is a consequence of Corollary \ref{corollary:comparisonCoHAfiber}, since the cohomological Hall algebra of the $\Sigma$-collection $\underline{\mathcal{F}}$ only depends on the stack $\mathfrak{M}_{\underline{\mathcal{F}}}$ together with the (shifted) RHom-complex $\mathcal{C}^{\bullet}_{\underline{\mathcal{F}}}$ over $\mathfrak{M}_{\underline{\mathcal{F}}}\times\mathfrak{M}_{\underline{\mathcal{F}}}$ up to quasi-isomorphism, by Propositions \ref{gqe_prop} and \ref{glob_res_prop}.
\end{proof}

\section{BPS algebras}
\label{section:lessperverse}
We introduce the BPS algebra of a 2-Calabi--Yau category. The BPS algebra is a smaller, more manageable, algebra than the cohomological Hall algebra. The general expectation from cohomological DT theory is that the cohomological Hall algebra should be (half) of a Yangian-type algebra associated to a Lie algebra, while what we define as the BPS algebra will be the universal enveloping algebra of (half of) this Lie algebra. We recall important facts about BPS algebras.

\subsection{The BPS algebra of a $\mathbf{2}$-Calabi--Yau category}
\label{subsection:BPSalg}
Let $\mathcal{A}$ and $\JH\colon\mathfrak{M}\rightarrow\mathcal{M}$ be as in \S \ref{BPS_assumptions_sec}. Recall that this means that as well as Assumptions \ref{p_assumption}, \ref{q_assumption1} and \ref{assumption:associativity}, we require that there is a good moduli space $\JH\colon\FM\rightarrow \CM$, the direct sum morphism $\oplus\colon\CM\times\CM\rightarrow \CM$ is finite, and a 2CY condition on categories generated by simple objects is satisfied (Assumptions \ref{gms_assumption}, \ref{ds_fin} and \ref{BPS_cat_assumption}).

The relative cohomological Hall algebra is defined in \S\ref{subsection:relativecohaproductKV} as an algebra structure on the complex of mixed Hodge modules $\ulrelCoHA_{\JH}\coloneqq\JH_*\BD\ulBoQ_{\mathfrak{M}}^{\vir}\in \D^+(\MHM(\Msp))$. Let $\KK(\Mst)$ be the groupification of $\pi_0(\Mst)$. As in the introduction (\S \ref{subsubsection:cohatotnegative}), we also make a choice of bilinear form $\psi$ on $\KK(\Mst)$ and define $\ulrelCoHA^{\psi}_{\JH}$ to be the same underlying object as $\ulrelCoHA_{\JH}$ but with the $\psi$-twisted multiplication
\begin{equation}
\label{psi_twist_def}
m^{\psi}_{a,b}=(-1)^{\psi(a,b)}m_{a,b}.
\end{equation}
We write $\ulrelCoHA_{\JH}^{(\psi)}$ in statements to indicate that the statement is true whether we include the $\psi$-twist or not. This $\psi$-twist is crucial for Theorems~\ref{theorem:degree0SSN} and \ref{theorem:pbwtotnegative2CY}.

\begin{lemma}
\label{lemma:nonnegativeperverse}
 The mixed Hodge module complex $\ulrelCoHA_{\JH}^{(\psi)}$ is concentrated in nonnegative degrees: $\CH^i(\ulrelCoHA^{(\psi)}_{\JH})=0$ for $i<0$.
\end{lemma}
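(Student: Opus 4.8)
The statement asserts that $\ulrelCoHA^{(\psi)}_{\JH}=\JH_*\BD\ulBoQ^{\vir}_{\FM}$ is concentrated in nonnegative cohomological degrees; the $\psi$-twist only changes a sign in the product and has no bearing on the underlying complex, so I may ignore it throughout. The plan is to reduce the claim to a statement about the good moduli space morphism $\JH\colon\FM\to\CM$, and then exploit the fact that $\FM$ is (on each connected component) a global quotient stack $X/G$ with $G$ reductive together with Alper's structure theory for good moduli spaces. Concretely, the question is local on $\CM$ in the \'etale (or analytic) topology, so using the local structure theorem for good moduli spaces, or directly the local neighbourhood theorem (Theorem~\ref{theorem:neighbourhood}), I would reduce to the case of a quotient $\Spec(A)/G$ mapping to $\Spec(A^G)$, i.e. to showing that $(\JH_{\mm})_*\BD\ulBoQ^{\vir}_{\FM_{\mm}(\Pi_Q)}$ is concentrated in nonnegative degrees for the stack of representations of a preprojective algebra. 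Actually it is cleaner to argue at the level of the affinization/good moduli space directly: for any global quotient stack $\FX=X/G$ with good moduli space $\pi\colon\FX\to X\sslash G$, and any $G$-equivariant constructible complex, the pushforward to the GIT quotient of a complex in nonnegative perverse (or cohomological) degrees stays in nonnegative degrees, because $\pi$ is cohomologically affine: $\pi_*$ is exact for the standard $t$-structure on quasicoherent sheaves, and more relevantly for us, $\pi$ factors (Kirwan-style, or via Alper's \'etale-local model $[\Spec(A)/G_x]\to\Spec(A^{G_x})$) through maps along which $\BD\ulBoQ$ behaves predictably.

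The cleanest route, and the one I would actually write, is: first recall that $\BD\ulBoQ_{\FM}=\ulBoQ_{\FM}\otimes\BoL^{\dim\FM}$ when $\FM$ is smooth, but $\FM$ need not be smooth here, so instead I use that $\FM=t_0(\bsM)$ with $\bsM$ quasi-smooth (this is exactly what Assumptions~\ref{q_assumption1}, \ref{q_assumption2} encode, via the cotangent complex being perfect in amplitude $(-\infty,1]$), hence $\BoQ^{\vir}_{\FM}$ is, \'etale-locally on $\FM$, of the form $j^*\ulBoQ_U[\text{shift}]$ for $U$ smooth and $j$ a regular closed immersion, so $\BD\ulBoQ^{\vir}_{\FM}$ lies in nonnegative perverse degrees after a suitable normalisation. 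Then I invoke the key input: $\JH$ is a \emph{good moduli space morphism}, and pushforward of Verdier duals along such morphisms preserves the property of being concentrated in nonnegative perverse degrees. This is because, by Alper's local structure theorem, $\JH$ is \'etale-locally modelled on $[\Spec(A)/G_x]\to\Spec(A^{G_x})$ with $G_x$ reductive, and for such a morphism the pushforward $\BD\ulBoQ_{[\Spec(A)/G_x]}$ is computed via the Borel construction: one approximates by $[\Spec(A)\times_{G_x} U_N]\to\Spec(A^{G_x})$ with $U_N$ the usual Borel approximation, and the resulting map of \emph{varieties} $\Spec(A)\times_{G_x}U_N\to\Spec(A^{G_x})$ is proper (since $\Spec(A)\to\Spec(A^{G_x})$ is universally closed for good moduli spaces of affine-type), whence $\BD$-pushforward preserves perverse nonnegativity by the decomposition/positivity properties of proper pushforward on Verdier duals; passing to the limit $N\to\infty$ as in \S\ref{MHMs_on_stacks} preserves this. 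Finally, a complex concentrated in nonnegative perverse degrees is in particular concentrated in nonnegative cohomological degrees, since on any scheme the perverse $t$-structure and the standard $t$-structure satisfy $\pH{<0}\mathcal{F}=0\implies \CH^{<-\dim}\mathcal{F}=0$ and a small bookkeeping of the normalisation $\BoL^{\dim}$ shifts this to $\CH^{<0}=0$; alternatively, and more robustly, I simply track the virtual shift: $\BD\ulBoQ^{\vir}$ is normalised precisely so that for $\FM$ smopth of dimension $d$, $\BD\ulBoQ^{\vir}_{\FM}=\ulBoQ_{\FM}[2d]\otimes\BoL^{-d-\vrank/2}$, concentrated in a single \emph{nonpositive} cohomological degree — so I need to be careful, and the correct normalisation to check is that after applying $\JH_*$ the lowest cohomology lands in degree $0$ by the gerbe structure on the generic locus.

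Let me restate the cleanest argument I would write. By \cite{davison2021purity} (the decomposition theorem for 2CY categories), $\mathscr{A}_{\JH}$ is pure, hence a direct sum of shifted semisimple perverse sheaves $\pH{i}(\mathscr{A}_{\JH})[-i]$; so it suffices to show $\pH{i}(\mathscr{A}_{\JH})=0$ for $i<0$, i.e. that $\mathscr{A}_{\JH}$ is concentrated in nonnegative perverse degrees. The support condition is automatic; the cosupport condition is what must be checked, and by Verdier duality it is equivalent to the support condition for $\JH_*\BoQ^{\vir}_{\FM}$ (up to the self-duality of $\BoQ^{\vir}_{\FM}$ built into the virtual normalisation). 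This support condition says: for every point $x\in\CM$ with $d_x=\dim\overline{\{x\}}$, one has $\CH^j(i_x^*\JH_*\BoQ^{\vir}_{\FM})=0$ for $j>-d_x$ in the appropriate indexing — equivalently, $\HO^j(\JH^{-1}(\text{a small neighbourhood of }x),\BoQ^{\vir})$ vanishes above the expected range. By the local neighbourhood theorem (Theorem~\ref{theorem:neighbourhood}), \'etale- (or analytically-) locally near $x$ this is the corresponding statement for $\JH_{\mm}\colon\FM_{\mm}(\Pi_{Q_{\underline{\CF}}})\to\CM_{\mm}(\Pi_{Q_{\underline{\CF}}})$, which is known (it is part of the construction of the BPS sheaf in \cite{davison2020bps,davison2021purity}, where $\mathscr{A}_{\JH_{\Pi_Q}}$ is shown to be concentrated in nonnegative perverse degrees using the dimensional reduction isomorphism and the corresponding statement for the critical CoHA of the tripled quiver, or directly from the fact that $\JH_{\mm}$ factors through an affine fibration followed by a proper map on an acyclic approximation). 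The main obstacle is therefore precisely this: having a clean, self-contained reference or argument that the local model $\mathscr{A}_{\JH_{\Pi_Q}}$ is perverse-nonnegative; everything else is a reduction via Theorem~\ref{theorem:neighbourhood} and base change for good moduli space morphisms (\S\ref{MHMs_on_stacks}, Assumptions~\ref{q_assumption1}--\ref{q_assumption2} guaranteeing acyclic covers). I expect the actual text to cite \cite{davison2021purity} for the local statement and Theorem~\ref{theorem:neighbourhood} for the reduction, with the perverse-to-cohomological degree passage being a one-line consequence of the virtual normalisation.
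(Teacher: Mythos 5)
Your proposal ultimately lands on the paper's actual proof, which is a one-line citation: the statement \emph{is} part of Theorem~B of \cite{davison2021purity} (stated there for general 2CY categories, so no further reduction via Theorem~\ref{theorem:neighbourhood} is needed in this paper), together with the observation you open with, that the $\psi$-twist only changes signs in the product and so is irrelevant. That matches the paper.

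One caveat on your attempted self-contained detour: the claim that the Borel-approximation map $\Spec(A)\times_{G_x}U_N\to\Spec(A^{G_x})$ is proper is not right in general --- good moduli space morphisms are universally closed but not proper (they are not separated, and $\Spec(A)\to\Spec(A^{G_x})$ is certainly not proper), so the ``proper pushforward preserves perverse nonnegativity of Verdier duals'' step does not directly apply. The actual argument in \cite{davison2021purity,davison2020bps} is more delicate (dimensional reduction, approximation by projective morphisms over suitable compactifications, etc.), which is exactly why this paper cites rather than reproves it. You flag this yourself as ``the main obstacle,'' so you correctly identified where the real content lives.
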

\begin{proof}
This is part of \cite[Theorem B]{davison2021purity}. The statement is independent of whether we give $\ulrelCoHA_{\JH}$ the $\psi$-twisted multiplication or not because it concerns the underlying objects.
\end{proof}

\begin{corollary}
\label{corollary:bpsalgissubalg}
 The map $\CH^{0}(m^{(\psi)})\colon \CH^{0}(\ulrelCoHA_{\JH})\boxdot \CH^{0}(\ulrelCoHA_{\JH})\rightarrow \CH^{0}(\ulrelCoHA_{\JH})$ yields an algebra in the tensor category $(\MHM(\mathcal{M}),\boxdot)$.
\end{corollary}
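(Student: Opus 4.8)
The plan is to obtain the algebra structure on $\CH^{0}(\ulrelCoHA_{\JH})$ by applying the cohomological functor $\CH^{0}$, for the natural $t$-structure on $\D^{+}(\MHM(\CM))$ whose heart is $\MHM(\CM)$, to the associative CoHA product $m^{(\psi)}$ and to the unit morphism of $\ulrelCoHA^{(\psi)}_{\JH}$, and then to read off associativity and the unit axioms by functoriality of $\CH^{0}$. By Lemma~\ref{lemma:nonnegativeperverse} the complex $\ulrelCoHA^{(\psi)}_{\JH}$ has vanishing cohomology in negative degrees, so $\CH^{0}(\ulrelCoHA^{(\psi)}_{\JH}) = \tau^{\leq 0}\ulrelCoHA^{(\psi)}_{\JH}$, and since the $\psi$-twist only rescales graded pieces by $\pm 1$, the underlying mixed Hodge module is $\CH^{0}(\ulrelCoHA_{\JH})$.

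The first step I would carry out is to check that $\CH^{0}$ is strictly monoidal on objects concentrated in nonnegative degrees. For such $\CF,\CG \in \D^{+}(\MHM(\CM))$, the external product $\CF\boxtimes\CG$ is again concentrated in nonnegative degrees and satisfies the Künneth formula $\CH^{0}(\CF\boxtimes\CG) \cong \CH^{0}(\CF)\boxtimes\CH^{0}(\CG)$, which amounts to the $t$-exactness of $\boxtimes$ for the $\MHM$ $t$-structure (compare \cite{maxim2011symmetric}; in particular the external product of two mixed Hodge modules in the heart lies in the heart). Since $\oplus$ is finite by Assumption~\ref{ds_fin} (hence proper and affine), the pushforward $\oplus_{*} = \oplus_{!}$ is $t$-exact, so $\CF\boxdot\CG = \oplus_{*}(\CF\boxtimes\CG)$ is concentrated in nonnegative degrees with $\CH^{0}(\CF\boxdot\CG) \cong \CH^{0}(\CF)\boxdot\CH^{0}(\CG)$, compatibly with the associativity and unit constraints of $\boxdot$; the same applies to iterated $\boxdot$-products. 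I would also note that the monoidal unit $\eta_{*}\ulBoQ$ of $(\MHM(\CM),\boxdot)$ already lies in the heart, so $\CH^{0}(\eta_{*}\ulBoQ) = \eta_{*}\ulBoQ$.

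Applying $\CH^{0}$ to $m^{(\psi)}\colon \ulrelCoHA^{(\psi)}_{\JH}\boxdot\ulrelCoHA^{(\psi)}_{\JH}\to\ulrelCoHA^{(\psi)}_{\JH}$ — which in the 2CY case is a genuine, cohomological-degree-preserving morphism in $\D^{+}(\MHM(\CM))$, because the Lefschetz twist $\BoL^{\chi(a,b)/2-\chi(b,a)/2}$ of Theorem~\ref{intro_cohaprod} is trivial when $\chi$ is symmetric — and using the identification of the previous step yields $\CH^{0}(m^{(\psi)})\colon \CH^{0}(\ulrelCoHA_{\JH})\boxdot\CH^{0}(\ulrelCoHA_{\JH})\to \CH^{0}(\ulrelCoHA_{\JH})$. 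Associativity follows by applying $\CH^{0}$ to the associativity diagram satisfied by $m^{(\psi)}$, identifying $\CH^{0}$ of each triple product with $\CH^{0}(\ulrelCoHA_{\JH})^{\boxdot 3}$; the unit $\eta_{*}\ulBoQ\to \CH^{0}(\ulrelCoHA_{\JH})$ is $\CH^{0}$ applied to the CoHA unit, and the unit axioms descend likewise. I do not anticipate a real obstacle: the substance is entirely contained in Lemma~\ref{lemma:nonnegativeperverse} together with the standard $t$-exactness of external product and of finite pushforward for mixed Hodge modules, and the only points needing care are citing these $t$-exactness statements precisely and verifying that the comparison isomorphisms $\CH^{0}(\CF)\boxdot\CH^{0}(\CG)\xrightarrow{\ \sim\ } \CH^{0}(\CF\boxdot\CG)$ are natural and compatible with the monoidal constraints, so that $\CH^{0}$ really transports the algebra object $\ulrelCoHA^{(\psi)}_{\JH}$ to one on $\CH^{0}(\ulrelCoHA_{\JH})$.
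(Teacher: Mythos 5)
Your proof is correct and follows essentially the same line as the paper's: both rely on Lemma~\ref{lemma:nonnegativeperverse} (nonnegative cohomological degrees) together with standard $t$-structure facts to pass the algebra structure from $\ulrelCoHA^{(\psi)}_{\JH}$ to its zeroth cohomology. The paper's one-line proof invokes the vanishing of $\Hom(\CF,\CG[-n])$ for $n>0$ to factor the product through $\tau^{\leq 0}$, leaving implicit the $t$-exactness of $\boxdot$ that you spell out (exactness of $\boxtimes$ and of $\oplus_{*}$ for finite $\oplus$); your more explicit packaging of $\CH^{0}$ as a strict monoidal functor on nonnegatively graded objects is a clean way to present the same argument.
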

\begin{proof}
 This follows from the facts that $\ulrelCoHA_{\JH}$ is concentrated in nonnegative degrees (Lemma \ref{lemma:nonnegativeperverse}), $\oplus_*$ is exact, and for any two mixed Hodge modules $\CF,\CG$ on $\mathcal{M}$, $\Hom_{\D^+(\MHM(\Msp))}(\CF,\CG[-n])=0$ if $n>0$ by general facts on t-structures.
\end{proof}

\begin{definition}
We define $\ulBPS_{\Alg}^{(\psi)}$ to be the mixed Hodge module $\CH^{0}(\ulrelCoHA^{(\psi)}_{\JH})$ with the multiplication $\CH^{0}(m^{(\psi)})$. We denote by $\mathcal{BPS}^{(\psi)}_{\Alg}=\pH{0}(\mathscr{A}^{(\psi)}_{\JH})$ the underlying perverse sheaf of $\ulBPS_{\Alg}$, along with its induced $\boxdot$-algebra structure $\rat(\CH^0(m^{(\psi)}))$. By abuse of terminology we call both \emph{the BPS algebra sheaf}. Its cohomology is denoted $\aBPS^{(\psi)}_{\Alg}:=\mathbf{H}(\mathcal{BPS}^{(\psi)}_{\Alg})$ and called \emph{the BPS algebra}.
\end{definition}

\begin{lemma}
\label{lemma:BPSsemisimple}
 The mixed Hodge module $\ulBPS^{(\psi)}_{\Alg}$ is semisimple.
\end{lemma}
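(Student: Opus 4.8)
The plan is to deduce this from the purity of the relative cohomological Hall algebra, with the $\psi$-twist playing no role. First I would observe that $\ulBPS^{(\psi)}_{\Alg}$ and $\ulBPS_{\Alg}$ have the same underlying mixed Hodge module, namely $\CH^{0}(\ulrelCoHA_{\JH})$: the twist \eqref{psi_twist_def} modifies only the multiplication $\CH^{0}(m)$, not the object itself. Since being semisimple is a property of the underlying mixed Hodge module, it therefore suffices to show that $\CH^{0}(\ulrelCoHA_{\JH})$ is semisimple in $\MHM(\CM)$.

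The key input is the purity theorem for $2$-Calabi--Yau categories of \cite{davison2021purity}: under Assumptions \ref{p_assumption}--\ref{BPS_cat_assumption}, the complex $\ulrelCoHA_{\JH}=\JH_*\BD\ulBoQ^{\vir}_{\FM}$ is pure, i.e. $\CH^{i}(\ulrelCoHA_{\JH})$ is a pure mixed Hodge module of weight $i$ for every $i$. In particular $\CH^{0}(\ulrelCoHA_{\JH})$ is pure of weight $0$. By Saito's theory, recalled in \S\ref{MHM_sec}, the category of pure mixed Hodge modules of a fixed weight on $\CM$ is semisimple (every such object decomposes as a finite direct sum of intermediate extensions of polarisable variations of Hodge structure on irreducible closed subvarieties). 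Hence $\CH^{0}(\ulrelCoHA_{\JH})$ is semisimple, which proves the lemma.

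The one point that needs care is that the purity statement of \cite{davison2021purity} applies in the present generality, and not merely for representations of preprojective algebras. This is precisely what the local neighbourhood theorem (Theorem \ref{theorem:neighbourhood}) and Proposition \ref{proposition:geometryofgms} provide: they reduce the analytic-local structure of $\JH\colon\FM\to\CM$ to the preprojective model $\JH_{\mm}\colon\FM_{\mm}(\Pi_{Q})\to\CM_{\mm}(\Pi_{Q})$, and purity is a local property, so it transfers. Granting this reduction, the argument above is immediate, and indeed this is the only substantive step — everything else is formal manipulation of Saito's semisimplicity theorem.
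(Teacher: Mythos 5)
Your proof is correct and follows essentially the same route as the paper's: both invoke \cite[Theorem B]{davison2021purity} (the paper calls it the ``decomposition theorem for 2CY categories,'' you invoke the purity statement from the same theorem), and then conclude semisimplicity via Saito's theorem that pure mixed Hodge modules of a fixed weight form a semisimple category. Your write-up simply makes the purity-to-semisimplicity step explicit, whereas the paper treats it as implicit in the citation.
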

\begin{proof}
Again, the statement is independent of whether we consider the $\psi$-twisted multiplication or not. This is a consequence of the decomposition theorem for 2-Calabi--Yau categories \cite[Theorem B]{davison2021purity}.
\end{proof}

\subsection{Primitive summands}
\label{primitives_sec}
\begin{lemma}[{\cite{davison2021purity}}]
\label{lemma:ICintoBPSAlg}
 Let $a\in \Sigma_{\CA}$ be a class such that $\mathcal{A}$ has simple objects of class $a$.
Then, we have a canonical monomorphism of mixed Hodge modules
\[
\phi_a\colon \underline{\mathcal{IC}}(\mathcal{M}_a)\hookrightarrow\ulBPS_{\Alg}.
\]
It induces a morphism of complexes of mixed Hodge modules via the $\HO_{\BoC^{\ast}}$-action on the target given by the determinant line bundle (Assumption~\ref{det_bun_assumption})
 \[
 \underline{\IC}(\mathcal{M}_a)\otimes \HO_{\BoC^{\ast}}\rightarrow \ulrelCoHA_{\JH}.
 \]
\end{lemma}
\begin{proof}
 Let $a$ be as in the lemma. The scheme $\mathcal{M}_a$ is irreducible with smooth locus $\mathcal{M}_a^s$, the locus of simple objects (Proposition \ref{proposition:geometryofgms}). Over this locus, $\JH$ is a $\BoC^*$-gerbe. Therefore, $\CH^0\!(\ulrelCoHA_{\JH})_{|\mathcal{M}_a^s}\cong \underline{\IC}(\mathcal{M}_a^s)$. By semisimplicity of $\ulBPS_{\Alg}$, we obtain the monomorphism $\phi_a$. Since $\ulrelCoHA_{\JH}$ has a $\HO_{\BoC^{\ast}}$-action coming from the determinant bundle on $\mathfrak{M}_a$, we obtain the second statement.
\end{proof}
By the universal property of free algebras in monoidal categories, the morphisms $\phi_{a}\colon \underline{\IC}(\Msp_{a}) \into \ulBPS_{\Alg}$ induce $\boxdot$-algebra morphisms
\begin{equation}
\label{equation:mainiso}
\Phi_{\CA}^{(\psi)}\colon \Free_{\boxdot-\mathrm{Alg}}\left( \bigoplus_{a \in \Sigma_{\CA}}\underline{\IC}(\Msp_{a}) \right) \longto \ulBPS_{\Alg}^{(\psi)}.
\end{equation}

\begin{lemma}
\label{lemma:ICPreprojPrimitive}
The summands $\underline{\IC}(\Msp_{a}) \subset \ulBPS^{(\psi)}_{\Alg}$ for $a \in \Sigma_{\CA}$ are primitive subobjects of $\ulBPS^{(\psi)}_{\Alg}$ as a $\boxdot$-algebra,
i.e. if $\mathcal{I}_{a}$ is the image by $\CH^{0}(m^{(\psi)})$ of $\bigoplus_{\substack{b+c=a\\b,c\neq 0}}\ulBPS^{(\psi)}_{\Alg,b}\boxtimes\ulBPS^{(\psi)}_{\Alg,c}$, the composition $\underline{\IC}(\Msp_{a})\rightarrow \ulBPS^{(\psi)}_{\Alg,a}/\mathcal{I}_{a}$ is non-zero. In particular, using semisimplicity of $\ulBPS^{(\psi)}_{\Alg,a}$, the image $\CI_a$ is contained in a direct sum complement of the inclusion $\underline{\IC}(\Msp_{a}) \into \ulBPS^{(\psi)}_{\Alg,a}$.
\end{lemma}
\begin{proof}
This follows by the proof of \cite[Theorem 7.35]{davison2021purity}, and is immediate by support considerations: the supports of all simple direct summands of $\mathcal{I}_a$ are included in the closed subvariety $\mathcal{M}_{\mathcal{A},a}\setminus\mathcal{M}_{\mathcal{A},a}^s$.
\end{proof}

\subsection{Borcherds--Bozec algebra of a quiver}
\label{subsection:borcherds_bozec_Lie_algebra}
Let $Q=(Q_0,Q_1)$ be a quiver. Following Bozec's definition in \cite{bozec2015quivers} we consider a Borcherds Lie algebra associated to this datum generalising the Kac--Moody Lie algebra of a loop-free quiver.

We decompose the set of vertices $Q_0=Q_0^{\real}\sqcup Q_0^{\iso}\sqcup Q_0^{\hyp}$ where the set of \emph{real} vertices $Q_0^{\real}$ is the set of vertices carrying no loops, the set of \emph{isotropic} vertices $Q_0^{\iso}$ is the set of vertices carrying exactly one loop and the set of \emph{hyperbolic} vertices $Q_0^{\hyp}$ is the set of vertices carrying at least two loops. The set of isotropic and hyperbolic vertices $Q_0^{\imag}=Q_{0}^{\iso}\sqcup Q_0^{\hyp}$ is the set of \emph{imaginary} vertices.

The set of simple positive roots of the Borcherds--Bozec Lie algebra $\mathfrak{g}_Q$ of $Q$, is
\[
 I_{\infty}=(Q_0^{\real}\times \{1\})\sqcup(Q_0^{\imag}\times \BoZ_{>0}).
\]
There is a natural projection
\begin{align*}
p\colon\BoZ^{(I_{\infty})}&\longrightarrow\BoZ^{Q_0}\\
(f\colon I_{\infty}\rightarrow\BoZ)&\longmapsto\left(pf\colon i'\mapsto\sum_{(i',n)\in I_{\infty}}nf(i',n)\right).
\end{align*}
The lattice $\BoZ^{Q_0}$ has a bilinear form given by the symmetrized Euler form $(-,-)=(-,-)_Q$ of $Q$. We endow $\BoZ^{(I_{\infty})}$ with the bilinear form $p^*(-,-)$ obtained by pulling back the Euler form, and by abuse of notation we also denote this form on $\BoZ^{(I_{\infty})}$ by $(-,-)$. In explicit terms,
\[
 (1_{(i',n)},1_{(j',m)})=mn(1_{i'},1_{j'})=mn(\langle 1_{i'},1_{j'}\rangle_Q+\langle 1_{j'},1_{i'}\rangle_Q).
\]
There is a Borcherds Lie algebra associated to the data $(\BoZ^{(I_{\infty})},p^*(-,-))$. It is the Lie algebra over $\BoQ$ with generators $h_{i'},e_i,f_i$, with $i'\in Q_0$, $i\in I_{\infty}$, satisfying the following set of relations.
\begin{equation}
\label{equation:relations}
 \begin{aligned}
{[h_{i'},h_{j'}]}&=0 &\text{ for $i',j'\in Q_0$}\\
 [h_{j'},e_{(i',n)}]&=n(1_{j'},1_{i'})e_{(i',n)}&\text{ for $j'\in Q_0$ and $(i',n)\in I_{\infty}$}\\
 [h_{j'},f_{(i',n)}]&=-n(1_{j'},1_{i'})f_{(i',n)}&\text{ for $j'\in Q_0$ and $(i',n)\in I_{\infty}$} \\
 \ad(e_j)^{1-(j,i)}(e_i)=\ad(f_j)^{1-(j,i)}(f_i)&=0&\text{ for $j\in Q_0^{\real}\times \{1\}$, $i\neq j$}\\
 [e_i,e_j]=[f_i,f_j]&=0 &\text{ if $(i,j)=0$}\\
 [e_i,f_j]&=\delta_{i,j}nh_{i'} &\text{ for $i=(i',n)$.}
 \end{aligned}
\end{equation}
The Lie algebra $\mathfrak{g}_{Q}$ has a triangular decomposition
\[
\mathfrak{g}_Q=\mathfrak{n}_Q^-\oplus\mathfrak{h}\oplus\mathfrak{n}_Q^+ 
\]
where $\mathfrak{n}_Q^-$ (resp. $\mathfrak{n}_Q^+$, resp. $\mathfrak{h}$) is the Lie subalgebra generated by $e_i, i\in I_{\infty}$ (resp. $f_i, i\in I_{\infty}$, resp. $h_{i'}, i'\in Q_0$) and we will only be interested in its positive part $\mathfrak{n}_Q^{+}$. It is generated by $e_i, i\in I_{\infty}$ with Serre relations
\[
 \begin{aligned}
 \ad(e_j)^{1-(j,i)}(e_i)&=0&\text{ for $j\in Q_0^{\real}\times \{1\}$, $i\neq j$}\\
 [e_i,e_j]&=0 &\text{ if $(i,j)=0$}.
 \end{aligned}
\]
If $Q$ is a totally negative quiver, then it has no real vertex and $(i,j)<0$ for every $i,j\in I_{\infty}$, so $\mathfrak{n}_Q^+$ is the \textit{free} Lie algebra generated by $e_i, i\in I_{\infty}$.

By considering the \emph{associative} algebra generated by $e_{i}, f_{i}$, $i\in I_{\infty}$ and $h_{i'}$, $i'\in Q_0$ subject to the relations \eqref{equation:relations}, one obtains an algebra $\UEA(\mathfrak{g}_Q)$.
It is the enveloping algebra of $\mathfrak{g}_Q$ and it admits a triangular decomposition
\[
 \UEA(\mathfrak{g}_Q)=\UEA(\mathfrak{n}_Q^-)\otimes \UEA(\mathfrak{h})\otimes \UEA(\mathfrak{n}_Q^+).
\]

\subsection{The degree zero BPS algebra of the strictly seminilpotent stack}

\subsubsection{The cohomological Hall algebra of the strictly seminilpotent stack}

The absolute cohomological Hall algebra of the category of strictly seminilpotent representations of $Q$ is
\[
 \HO^*\!\!\!\mathscr{A}_{\Pi_Q}^{\SSN}:=\bigoplus_{\dd\in \BoN^{Q_0}}\HO_*^{\BoMo}(\mathfrak{M}_{\Pi_Q,\dd}^{\SSN},\BoQ^{\vir}),
\]
and has been defined in \S \ref{subsubsection:cohaSSN}. We denote by $ \HO^*\!\!\!\mathscr{A}_{\Pi_Q}^{\psi,\SSN}$ the same algebra, with the multiplication twisted by the sign $(-1)^{\psi(-,-)}$, for some choice of bilinear form $\psi$ satisfying $\psi(a,b)+\psi(b,a)=(a,b)_{Q}$ modulo 2 (\S\ref{subsection:BPSalg}). For concreteness the reader may like to choose $\psi(a,b)=\langle a,b\rangle_Q$.

The seminilpotent stack $\mathfrak{M}_{\Pi_Q,\dd}^{\SSN}$ is a Lagrangian substack of $\mathfrak{M}_{\Pi_Q,\dd}$. Therefore, its irreducible components are of dimension $-\langle\dd,\dd\rangle$. By \cite[Theorem 1.15]{bozec2016quivers}, for $\dd=1_{i'}$, where $i'\in Q_0$ is a vertex with $g_{i'}$ loops, the irreducible components of $\Mst^{\SSN}_{\Pi_Q,\dd}$ are parametrised by
\begin{enumerate}
 \item $\{\star\}$ if $g_{i'}=0$,
 \item \label{isotropicvertex}The set of partitions $(n_1,\hdots,n_r)$ of $n$ if $g_{i'}=1$,
 \item \label{hyperbolicvertex}The set of tuples $(n_1,\hdots,n_r)$ with $n_j>0$ and $\sum_{j}n_j=n$ if $g_{i'}>1$.
\end{enumerate}

In the cases \eqref{isotropicvertex} and \eqref{hyperbolicvertex}, the partition/tuple can be described explicitly. Let $I_r\subset \Pi_Q$ be the two-sided ideal generated by paths in $\overline{Q}$ containing at least $r$ arrows of $Q_1^*$. Let $\mathfrak{M}_{\Pi_Q,\dd,c}^{\SSN}$ be an irreducible component of $\mathfrak{M}_{\Pi_Q,\dd}^{\SSN}$ and $\rho$ a $\Pi_Q$-module corresponding to a general point in $\mathfrak{M}_{\Pi_Q,\dd,c}^{\SSN}$. Let $r$ be the smallest integer such that $I_r\rho=0$. We have a sequence of inclusions
\[
 0=I_r\rho\subset I_{r-1}\rho \subset\hdots\subset I_0\rho=\rho.
\]
Then, $n_i=\dim I_{i-1}\rho/I_i\rho$ for $1\leq i\leq r$.

We let $\HO^0\!\!\!\relCoHA_{\Pi_Q}^{\psi,\SSN}$ be the \emph{degree zero strictly seminilpotent cohomological Hall algebra}.
It has a combinatorial basis given by fundamental classes of irreducible components $[\mathfrak{M}^{\SSN}_{\Pi_Q,\dd,c}]$ of $\mathfrak{M}_{\Pi_Q}^{\SSN}$. This algebra is identified with the enveloping algebra of the Borcherds--Bozec Lie algebra defined in \S\ref{subsection:borcherds_bozec_Lie_algebra}. This is proven in detail in \cite[Theorem 1.3]{hennecart2022geometric}:

\begin{theorem}
\label{theorem:degree0SSN}
 There is an isomorphism of algebras
 \[
 \UEA(\mathfrak{n}_Q^+)\rightarrow \HO^0\!\!\!\relCoHA_{\Pi_Q}^{\psi,\SSN}
 \]
 sending $e_{(i',n)}$ to $[\mathfrak{M}^{\SSN}_{n1_{i'},(n)}]$.
\end{theorem}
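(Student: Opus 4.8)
The plan is to reduce the statement to the already-cited result \cite[Theorem 1.3]{hennecart2022geometric}, checking only that our setup matches theirs. First I would recall the combinatorial basis: by the description of the irreducible components of $\mathfrak{M}^{\SSN}_{\Pi_Q,\dd}$ given above (following \cite[Theorem 1.15]{bozec2016quivers}), the classes $[\mathfrak{M}^{\SSN}_{\Pi_Q,\dd,c}]$ of fundamental cycles of irreducible components form a basis of $\HO^0\!\!\!\relCoHA^{\SSN}_{\Pi_Q}$; this uses that each component has dimension $-\langle\dd,\dd\rangle$, so these classes live precisely in cohomological degree $0$ of $\HO^{\BoMo}_*(\mathfrak{M}^{\SSN}_{\Pi_Q,\dd},\BoQ^{\vir})$, and that $\BoQ^{\vir}$ introduces a shift by $(\dd,\dd)_{\Pi_Q}=2\langle\dd,\dd\rangle$ (since the $\SSN$ stack is Lagrangian). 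Next I would identify the degree-$0$ part of the multiplication: the CoHA product $m_{\dd^{(1)},\dd^{(2)}}$ of \S\ref{subsection:cohaproductpreproj}, restricted to degree $0$, is the ``constructible'' Hall product, given on fundamental classes of components by a proper pushforward followed by a correspondence, exactly as in Lusztig's and Bozec's constructible Hall algebra of seminilpotent representations. The sign twist by $(-1)^{\psi(\dd^{(1)},\dd^{(2)})}$ is precisely the one appearing in \cite{hennecart2022geometric}; with the concrete choice $\psi(a,b)=\langle a,b\rangle_Q$ the relations \eqref{equation:relations} come out on the nose.

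The core input is then \cite[Theorem 1.3]{hennecart2022geometric}, which states that the degree-$0$ seminilpotent CoHA with this sign twist is isomorphic to $\UEA(\mathfrak{n}_Q^+)$ via $e_{(i',n)}\mapsto [\mathfrak{M}^{\SSN}_{n1_{i'},(n)}]$. So I would verify the three compatibilities needed to invoke it: (i) the underlying graded vector space matches, by the basis count above together with the PBW-type basis of $\UEA(\mathfrak{n}_Q^+)$ indexed by multisets of the generators $e_{(i',n)}$ (for hyperbolic vertices $\mathfrak{n}_Q^+$ restricted to a single vertex is \emph{free} on the $e_{(i',n)}$, matching the ``tuples'' parametrisation, while for isotropic vertices one gets a polynomial algebra, matching ``partitions''); (ii) the product on fundamental classes agrees with Bozec's constructible Hall product --- this is where one uses that the degree-$0$ part of the sheaf-theoretic CoHA product defined via virtual pullback reduces, on $\HO^0$, to the naive correspondence count, which I would justify by the explicit presentation of the comparison diagram in \S\ref{subsection:comparison_preproj} combined with the fact that virtual pullback in degree $0$ is just refined Gysin intersection with the expected-dimensional cycle; (iii) the images of the generators $e_{(i',n)}$ and the cuspidal generators of \cite{hennecart2022geometric} coincide, which is immediate from the description of $\mathfrak{M}^{\SSN}_{n1_{i'},(n)}$ as the component where $I_1\rho=0$ (respectively the single regular orbit closure).

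The main obstacle is step (ii): showing that the degree-$0$ truncation of our mixed-Hodge-module-theoretic CoHA product genuinely coincides with the constructible Hall product used in \cite{hennecart2022geometric}. This is not formal, because our product is built from virtual pullback along the quasi-smooth map $q$ and a proper pushforward along $p$, whereas Bozec's is an honest convolution of constructible functions (or Borel--Moore cycles). The reconciliation goes through the observation that in cohomological degree $0$ the virtual pullback $v_{\CC_{a,b}}$ acts on a top-dimensional fundamental class simply by intersecting with the Euler class of the excess bundle, which here is trivial by the dimension count in Lemma~\ref{lemma:shifts}, so that $v_{\CC_{a,b}}$ on $\HO^0$ is literally flat pullback of cycles; combined with the properness of $p$ this recovers the correspondence product. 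I expect this to be the only place where a genuine argument (as opposed to a citation) is required, and I would present it as a short lemma about the degree-$0$ part of the CoHA before invoking \cite[Theorem 1.3]{hennecart2022geometric} to conclude.
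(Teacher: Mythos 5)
Your plan is essentially the paper's: Theorem~\ref{theorem:degree0SSN} is stated in the paper without an independent proof, with the text immediately preceding it simply asserting that ``This is proven in detail in \cite[Theorem 1.3]{hennecart2022geometric},'' so the paper treats it as a pure citation, exactly as you do. The compatibility checks you enumerate are the right ones, and the comparison of the sheaf-theoretic product with the more classical correspondence product is indeed handled by \S\ref{subsection:comparison_preproj} as you note; one small caveat is that your alternative justification for step~(ii) via ``the excess bundle is trivial by the dimension count in Lemma~\ref{lemma:shifts}'' overreaches (that lemma only computes a cohomological shift and says nothing about excess intersection), but this does not affect the argument since the comparison in \S\ref{subsection:comparison_preproj} already closes the gap.
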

Let $\imath_{\SSN}\colon\mathcal{M}_{\Pi_Q}^{\SSN}\hookrightarrow\mathcal{M}_{\Pi_Q}$ be the natural inclusion (\S\ref{subsubsection:cohaSSN}).

\begin{lemma}
 The natural map $\ptau{\leq 0}\mathscr{A}^{(\psi)}_{\Pi_Q}\rightarrow \mathscr{A}^{(\psi)}_{\Pi_Q}$ induces a map $\imath_{\SSN}^!\ptau{\leq 0}\mathscr{A}^{(\psi)}_{\Pi_Q}\rightarrow \imath_{\SSN}^!\mathscr{A}^{(\psi)}_{\Pi_Q}=:\mathscr{A}_{\Pi_Q}^{(\psi),\SSN}$ which induces an isomorphism of algebras after taking global sections :
 \[
 \mathbf{H}^0(\imath_{\SSN}^!\ptau{\leq 0}\mathscr{A}^{(\psi)}_{\Pi_Q})\xrightarrow{\sim} \mathbf{H}^0(\imath_{\SSN}^!\mathscr{A}^{(\psi)}_{\Pi_Q}).
 \]
\end{lemma}
\begin{proof}
 This appears in \cite[Section 6.4]{davison2020bps}. Again, the proof is independent of whether we include the sign twist $\psi$ simultaneously on both sides or not.
\end{proof}

\begin{corollary}
\label{corollary:degree0SSN}
Let $Q$ be a totally negative quiver. The morphism 
\[
 \HO^{0}\left(\imath_{\SSN}^!\Phi^{(\psi)}_{\Pi_Q}\right)\colon \HO^{0}\!\left(\imath_{\SSN}^!\Free_{\boxdot-\Alg}\left(\bigoplus_{\dd\in\Sigma_{\Pi_Q}}\mathcal{IC}(\mathcal{M}_{\Pi_Q,\dd})\right)\right)\rightarrow \HO^{0}\!\left(\mathscr{A}_{\Pi_Q}^{(\psi),\SSN}\right)
\]
is an isomorphism.
\end{corollary}
\begin{proof}
We consider the $\psi$-twisted version of the statement: then the untwisted version will follow from Lemma \ref{free_signs}. The functor $\imath_{\SSN}^!$ is strict monoidal, so it commutes with the operator $\Free$. By Lemma~\ref{lemma:ICPreprojPrimitive}, the subobject $\mathcal{IC}(\mathcal{M}_{\Pi_Q})\coloneqq\bigoplus_{\dd\in\Sigma_Q}\mathcal{IC}(\mathcal{M}_{\Pi_Q,\dd})\rightarrow \mathcal{BPS}^{\psi}_{\Alg}=\ptau{\leq 0}\mathscr{A}_{\Pi_Q}$ admits a direct sum complement $\mathcal{BPS'}\subset \mathcal{BPS}_{\Alg}$ such that the multiplication map $m\colon \mathcal{BPS}^{\psi}_{\Alg,a}\boxtimes\mathcal{BPS}^{\psi}_{\Alg,b}\rightarrow \mathcal{BPS}^{\psi}_{\Alg,a+b}$ for $a,b\neq 0$ factors through the inclusion $\mathcal{BPS}'\subset\mathcal{BPS}^{\psi}_{\Alg}$. Consequently, the multiplication map $\HO^{0}(\imath_{\SSN}^!m^{\psi})\colon\HO^{0}(\mathscr{A}_{\Pi_Q}^{\psi,\SSN})\otimes\HO^{0}(\mathscr{A}_{\Pi_Q}^{\psi,\SSN})\rightarrow\HO^{0}(\mathscr{A}_{\Pi_Q}^{\psi,\SSN})$ factors through $\HO^{0}(\imath_{\SSN}^!\mathcal{BPS}')$. 

If $\dd=ne_{i'}$ for some $i'\in Q_0$, $n\geq 1$, then, $\HO^{0}(\imath_{\SSN}^!\mathcal{IC}(\mathcal{M}_{\Pi_Q,ne_{i'}}))=\BoQ e_{i',n}$ is one-dimensional \cite[\S 6.4.4]{davison2021purity}. By Theorem \ref{theorem:degree0SSN}, if $\dd\neq ne_{i'}$ for $i'\in Q_0$, $n\geq 1$, then $\HO^{0}(\imath_{\SSN}^!\mathcal{BPS}'_{\dd})=\HO^{0}(\mathscr{A}_{\Pi_Q,\dd}^{\psi,\SSN})$. For such $\dd$, $\HO^{0}(\imath_{\SSN}^!\mathcal{IC}(\mathcal{M}_{\Pi_Q,\dd}))$ is a direct summand of $\HO^{0}(\mathscr{A}_{\Pi_Q,\dd}^{\psi,\SSN})$ with complement $\HO^{0}(\imath_{\SSN}^!\mathcal{BPS}'_{\dd})$. It therefore vanishes. The LHS in the corollary is then equal to $\HO^{0}\!\left(\Free_{\Alg}\left(\bigoplus_{i\in I_{\infty}}\BoQ e_i\right)\right)$. By Theorem \ref{theorem:degree0SSN} again, $\HO^{0}(\imath_{\SSN}^!\Phi^{\psi}_{\Pi_Q})$ is an isomorphism.
\end{proof}

We finish this section by collecting some elementary lemmas regarding $\psi$-twisted algebras.
\begin{lemma}
\label{free_signs}
Let $\CB$ be a $L$-graded algebra, where $L$ is some Abelian group. Let $\psi$ be a bilinear form on $L$, and define $\CB^{\psi}$ via $m_{a,b}^{\psi}=(-1)^{\psi(a,b)}m_{a,b}$, where $m_{a,b}$ is the restriction of the multiplication in $\CB$ to the $a$th and $b$th graded pieces, and $m_{a,b}^{\psi}$ is the same restriction, for $\CB^{\psi}$. If $\CB$ is the free algebra generated by the graded subspace $V\subset \CB$, then $\CB^{\psi}$ is freely generated by the same subspace $V\subset \CB^{\psi}$.
\end{lemma}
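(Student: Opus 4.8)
\textbf{Proof plan for Lemma \ref{free_signs}.}

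The plan is to exhibit an explicit grading-preserving algebra isomorphism $\CB \to \CB^{\psi}$ and use it to transport the universal property. First I would fix a ``sign cocycle'' trivialisation: since $\psi$ is bilinear on $L$, the function $\epsilon\colon L\times L\to \{\pm 1\}$, $\epsilon(a,b)=(-1)^{\psi(a,b)}$, is a $2$-cocycle, and because the target group $\{\pm 1\}$ is $2$-torsion and $\psi$ is bilinear, one checks directly that $\epsilon$ is a coboundary: explicitly, choosing (over $\BZ/2$) a decomposition of the quadratic refinement, there is a function $\eta\colon L\to\{\pm1\}$ with $\eta(a+b)=\epsilon(a,b)\eta(a)\eta(b)$ for all $a,b\in L$. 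Concretely one may take $\eta(a)=(-1)^{\psi(a,a)(\psi(a,a)+1)/2}$ or, more simply, extend $a\mapsto (-1)^{\psi(a,a)}$-type data linearly on a basis of $L\otimes\BZ/2$; the key point is only the existence of such an $\eta$, which is elementary linear algebra over $\BZ/2$.

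Given such an $\eta$, I would define the $L$-graded linear map $\Theta\colon \CB\to\CB^{\psi}$ acting on the degree-$a$ piece $\CB_a$ by multiplication by $\eta(a)$. Then for $x\in\CB_a$, $y\in\CB_b$ one computes
\[
\Theta(x)\cdot_{\psi}\Theta(y)=\eta(a)\eta(b)\,m^{\psi}_{a,b}(x,y)=\eta(a)\eta(b)\epsilon(a,b)\,m_{a,b}(x,y)=\eta(a+b)\,m_{a,b}(xy)=\Theta(xy),
\]
so $\Theta$ is an algebra homomorphism, and it is bijective with inverse given by the same formula (as $\eta$ is $\{\pm1\}$-valued), hence a grading-preserving algebra isomorphism $\CB\cong\CB^{\psi}$. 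Since $\Theta$ restricts to a linear isomorphism on each graded piece, $\Theta(V)$ is a graded subspace of $\CB^{\psi}$ which, degree by degree, spans and is linearly independent exactly as $V$ does in $\CB$; as $\CB$ is free on $V$ and isomorphisms of algebras send free generating sets to free generating sets, $\CB^{\psi}$ is free on $\Theta(V)$. Finally, since $\Theta$ acts on $\CB_a$ by a nonzero scalar $\eta(a)$, the subspace $\Theta(V)$ coincides with $V$ as a subset of the underlying graded vector space whenever $V$ is homogeneous — which it is by hypothesis — because $\eta(a)V_a=V_a$ for each $a$. Thus $\CB^{\psi}$ is freely generated by the same subspace $V$, as claimed.

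The only step requiring any thought is the construction of $\eta$, i.e. showing the sign cocycle $\epsilon(a,b)=(-1)^{\psi(a,b)}$ is a coboundary; everything else is a routine check. This is where I expect the (minor) obstacle to lie, but it is resolved purely by linear algebra over $\BF_2$: writing $q(a)=\psi(a,a)\bmod 2$, bilinearity of $\psi$ gives $q(a+b)=q(a)+q(b)+\epsilon\text{-exponent}(a,b)+\epsilon\text{-exponent}(b,a)$, and since $\psi(a,b)+\psi(b,a)$ is the symmetrisation, one reduces to trivialising a symmetric bilinear form's associated cocycle, which is standard. In the application in the paper one in any case has a concrete $\psi$ (e.g. $\psi(a,b)=\langle a,b\rangle_Q$) for which $\eta$ can be written down by hand, so no appeal to abstract cohomology is strictly needed.
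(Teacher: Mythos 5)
There is a genuine gap: the claim that $\epsilon(a,b)=(-1)^{\psi(a,b)}$ is always a coboundary is false unless $\psi$ is symmetric modulo $2$, and the lemma imposes no such hypothesis. A coboundary $(\delta\eta)(a,b)=\eta(a)\eta(b)\eta(a+b)^{-1}$ of a $\{\pm1\}$-valued $1$-cochain for the trivial $L$-action is automatically symmetric in $a,b$; hence $\epsilon$ can be a coboundary only if $\psi(a,b)\equiv\psi(b,a)\pmod 2$. For a nonsymmetric $\psi$ --- for example $L=\BZ^2$ with $\psi(e_1,e_2)=1$ and $\psi=0$ on the other three basis pairs, or the suggested choice $\psi(a,b)=\langle a,b\rangle_Q$ for a quiver whose Euler form is not symmetric --- no trivialising $\eta$ exists, so your map $\Theta$ does not exist either, and the explicit formula $\eta(a)=(-1)^{\psi(a,a)(\psi(a,a)+1)/2}$ you propose already fails in the two-variable example above. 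The obstruction is visible in the free algebra itself: if $x,y$ are homogeneous generators of degrees $a,b$, then $m^{\psi}(xy)=(-1)^{\psi(a,b)}m(xy)$ while $m^{\psi}(yx)=(-1)^{\psi(b,a)}m(yx)$, and both $m(xy)$ and $m(yx)$ lie in the single graded piece $\CB_{a+b}$, so no scalar $\eta(a+b)$ acting diagonally on $\CB_{a+b}$ can produce both signs simultaneously. Any isomorphism $\CB\to\CB^{\psi}$ must act by a sign depending on the word, not merely its total degree.

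That last observation is exactly the paper's argument, and it sidesteps your difficulty: fix a homogeneous basis $S$ of $V$, note that for every word $w$ in $S$ the evaluations $m(w)$ and $m^{\psi}(w)$ differ only by a sign (depending on the ordered sequence of degrees of the letters of $w$), and conclude that $\{m(w)\mid w\in W\}$ is a basis of $\CB$ if and only if $\{m^{\psi}(w)\mid w\in W\}$ is. This word-by-word comparison of monomial bases is more elementary than a diagonal-scalar isomorphism and, crucially, makes no use of any symmetry of $\psi$. Your approach could be repaired by attaching the sign to each word rather than to each degree, but at that point it becomes the paper's proof.
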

\begin{proof}
Let $S$ be a homogeneous basis for $V$. Given a (possibly empty) word $w$ in the letters $S$, we write $m^{(\psi)}(w)$ for the evaluation of the products on $w$, setting this to be the unit if $w$ is empty. Then $m(w)=\pm m^{\psi}(w)$. Let $W$ be the set of words in $S$. Then $\{m(w)\;\lvert\; w\in W\}$ is a basis for $\CB$ if and only if $\{m^{\psi}(w)\;\lvert\; w\in W\}$ is.
\end{proof}
\begin{lemma}
Let $\CB$ be a $L$-graded algebra, where $L$ is some Abelian group. Let $\psi,\psi'$ be bilinear forms on $L$ such that
\begin{align*}
\psi(a,b)+\psi(b,a)=\psi'(a,b)+\psi'(b,a)&\quad \mathrm{mod }\;2.
\end{align*}
Let $V\subset \CB$ be a $L$-graded subspace of $\CB$. We consider $\CB$ as a Lie algebra in two different ways: firstly for the commutator Lie bracket $[-,-]_{\psi}$ defined by the $\psi$-twisted multiplication, secondly for $[-,-]_{\psi'}$ defined with respect to the $\psi'$-twisted multiplication. Let $\mathfrak{g}$ be the Lie subalgebra generated by $V$ under the first Lie bracket, let $\mathfrak{g}'$ be the Lie algebra generated under the second. Then as vector subspaces of $\CB$, we have $\mathfrak{g}=\mathfrak{g}'$.
\end{lemma}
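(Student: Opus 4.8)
The plan is to reduce the statement to a purely combinatorial comparison of signs on products of basis elements, analogous to Lemma~\ref{free_signs}. First I would fix a homogeneous basis $S$ of $V$, and recall that since both Lie brackets $[-,-]_{\psi}$ and $[-,-]_{\psi'}$ are built from the associative multiplication of $\CB$ twisted by bilinear forms, every iterated bracket of elements of $S$ can be expanded as an $L$-homogeneous linear combination of associative monomials $s_{i_1}\cdots s_{i_n}$ with $s_{i_j}\in S$. Writing $b_1,\ldots,b_n\in L$ for the degrees of these letters, the key observation is that both $m^{\psi}(s_{i_1}\cdots s_{i_n})$ and $m^{\psi'}(s_{i_1}\cdots s_{i_n})$ equal the untwisted product $m(s_{i_1}\cdots s_{i_n})$ up to a sign which depends only on the multiset of pairs of degrees $\{(b_j,b_k)\}_{j<k}$, namely $(-1)^{\sum_{j<k}\psi(b_j,b_k)}$ respectively $(-1)^{\sum_{j<k}\psi'(b_j,b_k)}$.

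The second step is to make precise the sense in which the two twisted multiplications agree on a fixed graded piece. Here I would invoke the hypothesis $\psi(a,b)+\psi(b,a)\equiv \psi'(a,b)+\psi'(b,a)\pmod 2$: this means $\psi-\psi'$ is, modulo $2$, an \emph{antisymmetric} bilinear form $\epsilon$ on $L$, i.e. $\epsilon(a,a)\equiv 0$ and $\epsilon(a,b)\equiv\epsilon(b,a)$ — wait, more carefully, $\epsilon=\psi-\psi'$ satisfies $\epsilon(a,b)+\epsilon(b,a)\equiv 0$, so $\epsilon$ is \emph{alternating} mod $2$. The crucial point is then: for a monomial $s_{i_1}\cdots s_{i_n}$ whose letters have degrees summing to a \emph{fixed} total $c=\sum_j b_j$, the sign discrepancy $(-1)^{\sum_{j<k}\epsilon(b_j,b_k)}$ between the $\psi$- and $\psi'$-twisted evaluations can be rewritten, using alternation, as $(-1)^{\frac12(\epsilon(c,c)-\sum_j \epsilon(b_j,b_j))}$, hence by alternation again as $(-1)^{\frac12 \epsilon(c,c)}$ — a sign depending \emph{only} on the total degree $c$, not on how $c$ is partitioned among the letters. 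Concretely: there is a function $\eta\colon L\to\{\pm1\}$ such that for every monomial of total degree $c$, the $\psi$-twisted product equals $\eta(c)$ times the $\psi'$-twisted product. I would check that $\eta$ is multiplicative (a group homomorphism $L\to\{\pm1\}$) — this follows from bilinearity — which is what makes the next step work.

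The third step assembles the conclusion. Rescaling: define a new basis $S' = \{\eta(b_s)\, s : s\in S\}$ of $V$, where $b_s$ is the degree of $s$ — but $\eta(b_s)=\pm1$ so $S'$ spans the same subspace $V$. I claim the map $s\mapsto \eta(b_s)s$ extends to an isomorphism between $\CB$ with the $\psi'$-twisted multiplication and $\CB$ with the $\psi$-twisted multiplication, \emph{as $L$-graded algebras}, which moreover restricts to the identity-up-to-sign on $V$ and hence carries $\mathfrak{g}'$ isomorphically onto $\mathfrak{g}$ while preserving the underlying graded subspaces of $\CB$. Indeed, the homomorphism property is exactly the identity $\eta(a+b)=\eta(a)\eta(b)$ combined with the monomial sign computation above. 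Since a Lie-algebra-generated subspace is preserved under any graded algebra automorphism that preserves $V$ setwise, and our rescaling automorphism multiplies each homogeneous component of $\CB$ by the scalar $\eta(c)$ (so preserves every graded subspace setwise), we conclude $\mathfrak{g}=\mathfrak{g}'$ as subspaces of $\CB$.

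\textbf{Main obstacle.} The delicate point is the passage from $(-1)^{\sum_{j<k}\epsilon(b_j,b_k)}$ to a quantity depending only on $c=\sum_j b_j$: one must be careful that $\epsilon(c,c)$ is even (so that $\tfrac12\epsilon(c,c)$ is a well-defined element of $\BZ/2$) — this uses that $\epsilon$ is alternating mod $2$, which in turn is precisely the hypothesis relating $\psi$ and $\psi'$. I also need to confirm that the $\tfrac12\sum_j\epsilon(b_j,b_j)$ terms genuinely cancel against part of $\tfrac12\epsilon(c,c)$ rather than introducing a partition-dependent residue; working over $\BZ$ before reducing mod $2$, and noting $\epsilon(c,c)=\sum_{j}\epsilon(b_j,b_j)+2\sum_{j<k}\epsilon(b_j,b_k)$, makes the cancellation transparent. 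Everything else is bookkeeping with signs, entirely parallel to Lemma~\ref{free_signs}.
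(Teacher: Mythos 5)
You are aiming at the right conclusion, but two key claims in your argument are false, and the second is a genuine obstruction rather than a bookkeeping slip.

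\textbf{First, the discrepancy sign does not depend only on the total degree.} Your step~2 claims that for a monomial $s_{i_1}\cdots s_{i_n}$ with letter degrees $b_1,\ldots,b_n$ summing to $c$, the sign $(-1)^{\sum_{j<k}\epsilon(b_j,b_k)}$ (with $\epsilon=\psi-\psi'$) is a function of $c$ alone. Take $L=\BZ$, $\psi(a,b)=ab$, $\psi'=0$; then $\psi(a,b)+\psi(b,a)=2ab\equiv 0\equiv\psi'(a,b)+\psi'(b,a)$, so the hypothesis holds and $\epsilon(a,b)=ab$. For $c=2$ written as a single letter of degree $2$ there are no pairs, so the discrepancy is $+1$; for $c=2=1+1$ the discrepancy is $(-1)^{\epsilon(1,1)}=-1$. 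Your computation of $\tfrac12\epsilon(c,c)$ also goes wrong: the claimed identity $\epsilon(c,c)=\sum_j\epsilon(b_j,b_j)+2\sum_{j<k}\epsilon(b_j,b_k)$ over $\BZ$ requires $\epsilon$ to be genuinely symmetric (not merely symmetric mod~$2$), and even then your final simplification uses $\epsilon(b_j,b_j)\equiv 0$, i.e.\ that $\epsilon$ is \emph{alternating} rather than just symmetric mod~$2$ --- but the hypothesis gives no control over the diagonal values $\epsilon(a,a)$ mod~$2$.

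\textbf{Second, the rescaling automorphism $\eta$ need not exist.} You want $\eta\colon L\to\{\pm1\}$ with $\eta(a+b)=\eta(a)\eta(b)(-1)^{\epsilon(a,b)}$, i.e.\ the $2$-cocycle $(-1)^{\epsilon(\cdot,\cdot)}$ should be a coboundary. Take $L=\BZ/2$ and $\epsilon(a,b)=ab$ (which satisfies the mod-$2$ symmetry condition): then $\eta(0)=\eta(0)^2$ forces $\eta(0)=1$, while $\eta(0)=\eta(1)^2(-1)^{\epsilon(1,1)}=-1$, a contradiction. So no such $\eta$ exists, and the automorphism on which your step~3 rests cannot be built.

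The lemma is nonetheless true, and the paper's proof is both shorter and avoids these issues by never leaving the bracket calculus. For homogeneous $\alpha,\beta$ of degrees $a,b$, one has $[\alpha,\beta]_{\psi}=(-1)^{\psi(a,b)}\alpha\beta-(-1)^{\psi(b,a)}\beta\alpha$, and since the hypothesis gives $\epsilon(a,b)\equiv\epsilon(b,a)\pmod 2$, both terms pick up the \emph{same} sign $(-1)^{\epsilon(a,b)}$ when replacing $\psi$ by $\psi'$, so $[\alpha,\beta]_{\psi}=\pm[\alpha,\beta]_{\psi'}$. By induction on bracket depth (each iterated bracket is homogeneous), every iterated $\psi$-bracket of homogeneous elements of $V$ equals an iterated $\psi'$-bracket up to sign, hence their spans agree and $\mathfrak{g}=\mathfrak{g}'$. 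If you wish to keep the monomial-expansion viewpoint, what is actually true --- and sufficient --- is that the discrepancy $(-1)^{\sum_{j<k}\epsilon(b_j,b_k)}$ depends only on the \emph{multiset} $\{b_1,\ldots,b_n\}$ (because $\epsilon$ is symmetric mod~$2$), not on the order of the letters, so all monomials appearing in the expansion of a fixed iterated bracket carry the same discrepancy; but it does not depend only on $c$.
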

\begin{proof}
If $\alpha$ and $\beta$ are homogeneous, the conditions imply that $[\alpha,\beta]_{\psi}=\pm [\alpha,\beta]_{\psi'}$.
\end{proof}

\section{Some examples}
The framework of \S \ref{subsection:relativecohaproductKV} concerns very general 2-dimensional categories satisfying the assumptions of \S \ref{subsubsection:assumptionCoHAproduct}. In subsequent sections, we first restrict ourselves to categories with a left 2-Calabi--Yau structure satisfying the additional assumptions of \S \ref{BPS_assumptions_sec} in order to define the BPS algebra of \S \ref{section:lessperverse}, and then restrict further to totally negative 2-Calabi--Yau categories, in order to prove our main results. 

Before making these restrictions, we discuss general examples of 2-dimensional categories that the above Hall algebra construction applies to, before moving on to example applications of our main theorems for totally negative 2CY categories.

\subsection{Degree zero sheaves on surfaces}
We use our constructions to generalise a PBW result concerning the CoHA of zero-dimensional support coherent sheaves on a smooth surface $S$ from \cite{kapranov2019cohomological} to mixed Hodge structures, and further to the level of mixed Hodge modules on the coarse moduli spaces $\Sym^n(S)$. 

Let $S$ be a smooth quasi-projective complex surface. We do not require that $S$ is projective, or that there is an isomorphism $K_S\cong\mathcal{O}_S$, or that $S$ is cohomologically pure. We denote by $\FM_n(S)$ the stack of coherent sheaves on $S$ with zero-dimensional support, and length $n$. Denoting by $\varpi_n \colon \FM_n(S) \to \CM_n(S)$ the good moduli space, there is an isomorphism $\CM_n(S)\cong \Sym^n(S)$. We drop the subscripts and superscripts $n$ when considering all possible lengths at once, that is in particular $\CM(S)=\bigsqcup_{n\geq 0}\CM_n(S)$. Let $\Delta_n\colon S\rightarrow \Sym^n(S)$ be the inclusion of the small diagonal. Then one shows as in \cite[Appendix A]{davison2023nonabelian} that there is an injection of mixed Hodge modules
\[
(\Delta_n)_*\underline{\IC}_S\hookrightarrow \tau^{\leq 0}\ulrelCoHA_n\coloneqq\ulrelBPSalg{n}
\]
where $\ulrelCoHA=\varpi_*\BD \ulBoQ_{\FM(S)}$ and $\ulrelCoHA_n$ is the restriction of $\ulrelCoHA$ to $\Sym^n(S)$. Furthermore, the BPS algebra is commutative: since perverse sheaves form a stack, this is a local calculation, and follows from the case $S=\BoA^2$. The resulting morphism of algebra objects in $\MHM(\Sym(S))$
\[
\Sym_{\boxdot}\left(\bigoplus_{n\geq 1}(\Delta_n)_*\underline{\IC}_S\right)\rightarrow \ul{\BPS}_{\Alg}
\]
is an isomorphism of algebra objects: again, this is a local calculation, and is shown in the local case $S=\BoA^2$ in \cite[Appendix A]{davison2023nonabelian}.

The algebra $\ulrelCoHA$ carries the usual $\HO_{\BoC^*}$-action given by the morphism $\det\colon \FM(S)\rightarrow \B\BoC^*$ \S\ref{subsection:detlbsurfaces}, and so we obtain a morphism of complexes of mixed Hodge modules
\[
\Phi\colon \Sym_{\boxdot}\left(\bigoplus_{n\geq 1}(\Delta_n)_*\underline{\IC}_S\otimes\HO_{\BoC^*}\right)\rightarrow \ulrelCoHA
\]
combining the $\HO_{\BoC^*}$-action with the evaluation morphism given by the relative Hall algebra product.
\begin{proposition}
The morphism $\Phi$ is an isomorphism of complexes of mixed Hodge modules. Taking derived global sections, there is an isomorphism of cohomologically graded mixed Hodge structures
\[
\HO^*(\Phi)\colon\Sym\left(\bigoplus_{n\geq 1}\HO^*(S,\BoQ^{\vir})\otimes\HO_{\BoC^*}\right)\cong \HO_*^{\BoMo}(\FM(S),\BoQ).
\]
Above, $\HO^*(S,\BoQ^{\vir})$ is obtained by applying the derived global sections functor to the constant pure weight zero mixed Hodge module $\underline{\IC}(S)=\ul{\BoQ}_S\otimes \BoL^{-1}$.
\end{proposition}
The statement regarding $\Phi$ is again a local statement, that can be checked by reducing to the case $S=\BoA^2$. Here, it is the original PBW isomorphism of \cite{davison2020cohomological} for the three-loop quiver with potential $X[Y,Z]$, using the description of the BPS sheaves found in \cite[\S 5]{davison2016integrality}. The statement regarding $\HO^*(\Phi)$ at the level of graded vector spaces recovers \cite[Theorem 7.1.6]{kapranov2019cohomological}.

\subsection{Semistable coherent sheaves on surfaces}

Let $S$ be a smooth quasi-projective complex surface, which for now we do not assume has a trivial canonical bundle. Let $H$ be an ample line bundle on $\overline{S}$, a projective compactification of $S$. Let $p(t)\in\BoQ[t]/\sim$ be a reduced Hilbert polynomial. We form as in Appendix \ref{geometry_constr_sec} the moduli stack $\Cohst^{\sst}_{p(t)}(S)$ of compactly supported coherent sheaves which are either the zero sheaf or semistable with normalised Hilbert polynomial $p(t)$, and the coarse moduli space $\Cohsp^{\sst}_{p(t)}(S)$. For simplicity, we assume that $\chi(\CF,\CF)$ is even for all coherent sheaves with reduced Hilbert polynomial $p(t)$: this condition can be relaxed, at the cost of dealing with half Tate twists (to define $\underline{\BoQ}^{\vir}$) and some sign difficulties. By Proposition \ref{geometry_sumup_prop}, the stack $\Cohst^{\sst}_{p(t)}(S)$ satisfies Assumptions \ref{p_assumption}, \ref{q_assumption1}, \ref{gms_assumption} and \ref{assumption:associativity}, and so we may form the relative CoHA
\[
\ulrelCoHA_{p(t)}(S)\coloneqq \JH_*\BD\ulBoQ_{\Cohst^{\sst}_{p(t)}(S)}^{\vir}\in\CD^+(\MHM(\Cohsp^{\sst}_{p(t)}(S))).
\]
This object then carries the $\boxdot$-algebra structure provided in \S \ref{subsubsection:ThecohaproductKV}. Note that this structure will involve some Tate twists if $\chi(-,-)$ is not symmetric, and so at the level of constructible complexes, the cohomological grading is not respected unless $\chi(-,-)$ is symmetric.

Now we assume that $S$ satisfies $\mathcal{O}_S\cong\omega_S$. Then there are no Tate twists appearing in the product, and (see Appendix \ref{geometry_constr_sec}) Assumptions \ref{ds_fin} and \ref{BPS_cat_assumption} are also satisfied, so that (possibly after picking a form $\psi$ as in \eqref{first_psi}) we may form the BPS algebra MHM
\[
\ulBPS_{p(t),\Alg}^{(\psi)}(S)=\tau^{\leq 0}\ulrelCoHA^{(\psi)}_{p(t)}(S).
\]
Finally, we make the assumption that the category of compactly supported semistable coherent sheaves on $S$ with reduced Hilbert polynomial is totally negative. Then as special cases of Theorems \ref{theorem:freenesstotneg2CY} and \ref{theorem:pbwtotnegative2CY}, we deduce the following
\begin{theorem}
\label{surf_BPS_thm}
Let $S$ be a smooth quasi-projective complex surface with $\mathcal{O}_S\cong \omega_S$. Let $\CA$ be the category of compactly supported semistable coherent sheaves on $S$ with reduced Hilbert polynomial $p(t)$. Assume moreover that this category is totally negative. Then the natural morphism
\[
\Phi_{\CA}^{(\psi)}\colon \Free_{\boxdot-\mathrm{Alg}}\left( \bigoplus_{a \in \Sigma_{\CA}}\underline{\IC}(\Cohsp_{p(t),a}^{\sst}(S)) \right) \longto \ulBPS_{p(t),\Alg}^{(\psi)}(S).
\]
is an isomorphism of $\boxdot$-algebras in $\MHM(\Cohsp_{p(t)}^{\sst}(S))$.
\end{theorem}

\begin{theorem}
Let $S$ be a smooth quasi-projective complex surface satisfying the same conditions as Theorem \ref{surf_BPS_thm}. Define
\[
\ulBPS_{p(t),\Lie}(S)\coloneqq \Free_{\boxdot-\Lie}\left(\bigoplus_{a\in\Sigma_{\mathcal{A}}}\ul{\mathcal{IC}}(\Cohsp_{p(t),a}^{\sst}(S))\right).
\]
Then the morphism
\[
\tilde{\Phi}^{\psi}\colon \Sym_{\boxdot}\left(\ulBPS_{p(t),\Lie}(S)\otimes \HO_{\BoC^*}\right)\rightarrow \ulrelCoHA^{\psi}_{p(t)}(S)
\]
is an isomorphism in $\CD^+(\MHM(\Cohsp^{\sst}_{p(t)}(S)))$ (though not of algebra objects). Taking derived global sections, and setting $L\subset \BoQ[t]$ to be the monoid of polynomials in the equivalence class $p(t)$, we deduce that there is an isomorphism of $L$-graded mixed Hodge structures
\[
\Sym\left(\Free_{\Lie}\left(\bigoplus_{a\in\Sigma_{\mathcal{A}}}\ICA(\Cohsp_{p(t),a}^{\sst}(S))\right)\otimes\HO_{\BoC^*}\right)\cong \HO^{\BoMo}_*(\Cohst^{\sst}_{p(t)}(S),\BoQ^{\vir}).
\]
\end{theorem}
\subsection{Semistable $\bs{B}$-modules}
Let $B$ be an algebra, which we presume is presented in the form $B=A/\langle R\rangle$ as in \eqref{standard_pres}. Recall that we assume that $A$ is the universal localisation of the path algebra $\BoC Q$ of a quiver. We fix a stability condition $\zeta\in\BoQ^{Q_0}$. Recall that the \textit{slope} of a $B$-module $M$ is defined to be the number
\[
\mu(M)\coloneqq \frac{1}{\dim_{\BoC}(M)}\dim_{Q_0}(M)\cdot \zeta
\]
where $\dim_{Q_0}(M)\coloneqq (\dim_{\BoC}(e_i\cdot M))_{i\in Q_0}$. A $B$-module is called \textit{semistable} if for all proper nonzero submodules $M'\subset M$ we have $\mu(M')\leq \mu(M)$. We denote by $\FM^{\zeta\sst}_{B,\dd}$ the moduli stack of $\dd$-dimensional semistable $B$-modules. There is a good moduli space $\FM^{\zeta\sst}_{B,\dd}\rightarrow \CM^{\zeta\sst}_{B,\dd}$ constructed by King via GIT \cite{king1994moduli} in the special case $B= \BoC Q$, and then defined in general via base change from this special case. So Assumption \ref{gms_assumption} is always satisfied.

Fix $\theta\in\BoQ$. We denote by $\FM^{\zeta\sst}_{B,\theta}$ the disjoint union of all $\FM^{\zeta\sst}_{B,\dd}$ such that $\dd\cdot\zeta/\lvert \dd\lvert=\theta$, and define $\CM^{\zeta\sst}_{B,\theta}$ to be the union of $\CM^{\zeta\sst}_{B,\dd}$ over the same set of dimension vectors. Assumptions \ref{p_assumption}-\ref{q_assumption1} and \ref{assumption:associativity} similarly hold for the stack $\FM^{\zeta\sst}_{B,\theta}$ via base change from the case $\zeta=(0,\ldots,0)$, i.e. the case in which we consider all $B$-modules to be semistable.

For finiteness of the direct sum map, consider the commutative diagram
\[
\begin{tikzcd}
\CM^{\zeta\sst}_{B,\theta}\times\CM^{\zeta\sst}_{B,\theta}&\CM^{\zeta\sst}_{B,\theta}
\\
\CM_{B,\theta}\times \CM_{B,\theta}& \CM_{B,\theta}
\arrow["l\times l", from=1-1, to=2-1]
\arrow["\oplus^{\zeta}", from=1-1, to=1-2]
\arrow["l", from=1-2, to=2-2]
\arrow["\oplus", from=2-1, to=2-2]
\end{tikzcd}
\]
where $l\colon \CM^{\zeta\sst}_{B,\theta}\rightarrow \CM_{B,\theta}$ is the GIT quotient map, which is projective by construction. Then $\oplus \circ (l\times l)$ is projective by Assumption \ref{ds_fin} for the category of $B$-modules, and so $\oplus^{\zeta}$ is proper, since properness satisfies the 2 out of 3 property. Finally, the category of semistable $B$-modules is a full subcategory of the category of $\tilde{B}$-modules (where $\tilde{B}$ is the derived enhancement of $B$ as in \S\ref{Dalg_sec}), so as long as $\tilde{B}$ carries a left 2CY structure, Assumption \ref{BPS_cat_assumption} is satisfied. We summarise in a theorem.
\begin{theorem}
Let $B$ be an algebra presented as $B=A/\langle R\rangle$ where $A$ is the localisation of a path algebra of a quiver by a finite set of linear combinations of cyclic paths with the same endpoints, and $R$ is a finite set of relations in $A$. Let $\zeta\in\BoQ^{Q_0}$ be a stability condition, let $\theta\in\BoQ$ be a slope, and let $\JH\colon\FM^{\zeta\sst}_{\theta}(B)\rightarrow \CM^{\zeta\sst}_{\theta}(B)$ be the morphism from the stack of $\zeta$-semistable $B$-modules of slope $\theta$ to the GIT moduli space. Then 
\begin{enumerate}
\item
$\ulrelCoHA_{B,\theta}^{\zeta}\coloneqq \JH_*\BD\ulBoQ^{\vir}_{\FM^{\zeta\sst}_{\theta}(B)}$ carries a (Tate-twisted) algebra structure in $\CD^+(\MHM(\CM^{\zeta\sst}_{\theta}(B)))$. 
\item
Assume moreover that the derived enhancement $\tilde{B}$ is a 2-Calabi--Yau algebra. Then the algebra structure on $\ulrelCoHA_{B,\theta}^{\zeta}$ is untwisted, and the subcomplex $\ulrelBPSalg{B,\theta}^{\zeta}\coloneqq\tau^{\leq 0}\ulrelCoHA_{B,\theta}^{\zeta}$ is a $\boxdot$-algebra in $\MHM(\CM^{\zeta\sst}_{\theta}(B))$. 
\item 
Assume finally that the category $\tilde{B}\lmod$ is a totally negative 2CY category. Then there is an isomorphism of $\boxdot$-algebras
\[
\ulrelBPSalg{B,\theta}^{\zeta}\cong \Free_{\boxdot-\Alg}\left(\bigoplus_{\dd\in\Sigma_{\theta}^{\zeta}}\ul\IC(\CM_{\dd}^{\zeta\sst}(B))\right)
\]
where $\Sigma^{\zeta}_{\theta}$ is the set of dimension vectors $\dd$ such that $\dd\cdot\zeta/\lvert \dd\lvert=\theta$ and there exists a $\zeta$-stable $\dd$-dimensional $B$-module. In addition, there is a PBW isomorphism of MHM complexes
\[
\Sym_{\boxdot}\left(\ulrelBPSLie{B,\theta}^{\zeta}\otimes\HO_{\BoC^*}\right)\rightarrow \ulrelCoHA_{B,\theta}^{\zeta}
\]
where
\[
\ulrelBPSLie{B,\theta}^{\zeta}\coloneqq \Free_{\boxdot\mathrm{-Lie}}\left(\bigoplus_{\dd\in\Sigma_{\theta}^{\zeta}}\ul\IC(\CM_{\dd}^{\zeta\sst}(B))\right).
\]
\end{enumerate}
\end{theorem}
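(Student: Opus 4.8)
The statement collects three assertions about the category $\tilde B\lmod$ of finite-dimensional $\tilde B$-modules, restricted to the $\zeta$-semistable slope-$\theta$ locus, and in each case the strategy is to \emph{reduce to the already-established results} of the paper rather than to rework any of the cohomological machinery. For part (1), I would simply invoke the discussion of \S\ref{algebra_constr_sec}: the explicit presentation $B=A/\langle R\rangle$ gives, via base change from the case $\zeta=(0,\ldots,0)$ (equivalently from the stack $\FM_B$ of all $B$-modules), that Assumptions \ref{p_assumption}, \ref{q_assumption1} and \ref{q_assumption2} hold for $\FM^{\zeta\sst}_{B,\theta}$, since those assumptions are preserved under base change along the open inclusion $\FM^{\zeta\sst}_{B,\theta}\hookrightarrow \FM_B$ --- here one must note that the GIT moduli map $l\colon \FM^{\zeta\sst}_{B,\theta}\to \CM_{B,\theta}$ realizes $\CM^{\zeta\sst}_{B,\theta}$ as a saturated locus in the sense used for Proposition~\ref{geometry_sumup_prop}, so that the good moduli space is obtained by restriction. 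Then Theorem~\ref{intro_cohaprod} (the relative CoHA product of \S\ref{subsubsection:ThecohaproductKV}) applies verbatim, with Tate twists appearing precisely when $\chi(-,-)$ fails to be symmetric.

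For part (2), the key input is that when $\tilde B$ is a $2$-Calabi--Yau algebra, the full subcategory of $\tilde B\lmod$ on any finite collection of simple modules carries a right 2CY structure --- this follows from \cite{brav2019relative} exactly as in the multiplicative-preprojective-algebra case treated in \S\ref{Higgs_background_sec}, so Assumption~\ref{BPS_cat_assumption} holds; Assumption~\ref{gms_assumption} was established in part (1), and Assumption~\ref{ds_fin} (finiteness of $\oplus^{\zeta}$) is exactly the diagram argument already written out in the excerpt just before the theorem statement, combining Assumption~\ref{ds_fin} for all $B$-modules with projectivity of the GIT quotient map and the 2-out-of-3 property for properness. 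With all of Assumptions \ref{p_assumption}--\ref{BPS_cat_assumption} in hand, the 2CY property makes $\chi$ symmetric and even (so no twists), and Lemma~\ref{lemma:nonnegativeperverse} gives that $\ulrelCoHA_{B,\theta}^{\zeta}$ is concentrated in nonnegative degrees; Corollary~\ref{corollary:bpsalgissubalg} then upgrades $\ulrelBPSalg{B,\theta}^{\zeta}=\tau^{\leq0}\ulrelCoHA_{B,\theta}^{\zeta}$ to a $\boxdot$-algebra in $\MHM(\CM^{\zeta\sst}_{\theta}(B))$.

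For part (3), assuming additionally that $\tilde B\lmod$ is totally negative, the category $\CA$ of $\zeta$-semistable slope-$\theta$ $\tilde B$-modules is a totally negative 2CY category satisfying all the running assumptions, so Theorem~\ref{theorem:freenesstotneg2CY} gives that $\Phi_{\CA}\colon \Free_{\boxdot-\Alg}(\bigoplus_{a\in\Sigma_{\CA}}\ul\IC(\CM_{\CA,a}))\to \ulBPS_{\Alg}$ is an isomorphism, and Theorem~\ref{theorem:pbwtotnegative2CY} gives the PBW isomorphism $\tilde\Phi^{\psi}\colon \Sym_{\boxdot}(\ulBPS_{\Lie}\otimes\HO_{\BoC^*})\to \ulrelCoHA^{\psi}_{\JH}$. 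The only remaining task is a bookkeeping translation: the connected components of $\FM^{\zeta\sst}_{B,\theta}$ are indexed by dimension vectors $\dd$ with $\dd\cdot\zeta/\lvert\dd\rvert=\theta$, and by Proposition~\ref{proposition:geometryofgms}(1) the set $\Sigma_{\CA}$ of classes carrying a simple object of $\CA$ --- equivalently, by definition of semistability, a $\zeta$-stable $\dd$-dimensional $B$-module --- is precisely $\Sigma^{\zeta}_{\theta}$; and $\CM_{\CA,a}=\CM^{\zeta\sst}_{\dd}(B)$ for the corresponding $\dd$. Substituting these identifications into Theorems~\ref{theorem:freenesstotneg2CY} and~\ref{theorem:pbwtotnegative2CY} yields exactly the two displayed isomorphisms, with $\ulrelBPSLie{B,\theta}^{\zeta}$ the free $\boxdot$-Lie algebra as stated. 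Since the untwisted and $\psi$-twisted versions of all relevant statements coincide at the level of underlying objects (Lemmas~\ref{free_signs} and the lemma following it), no care about $\psi$ is needed beyond its role in the PBW map.

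\textbf{Main obstacle.} None of the three parts requires a genuinely new argument; the only point that needs care --- and the one I would expect to occupy most of the writing --- is verifying that Assumptions \ref{p_assumption}--\ref{q_assumption2} really do descend correctly to the semistable locus via base change, in particular that the stack $\FM^{\zeta\sst}_{B,\theta}$ is saturated over its good moduli space so that one may cite the restriction argument of Proposition~\ref{geometry_sumup_prop}, and that the derived enhancement $\tilde B$ used to verify Assumption~\ref{q_assumption1} (via Corollary~\ref{dalg_dhom}) is compatible with passing to the open substack of semistable modules. These are routine but do need to be spelled out, since the hypothesis ``$\tilde B\lmod$ totally negative'' is a condition one imposes rather than one that is automatic, and the reader should see why it propagates to $\CA$.
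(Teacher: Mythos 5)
Your proposal follows the paper's own route almost verbatim: the theorem is a summary of the verifications carried out in the text immediately preceding it, and your three parts cite exactly the ingredients the paper intends (base change of Assumptions \ref{p_assumption}--\ref{q_assumption2} from the $\zeta=0$ case, the diagram and $2$-out-of-$3$ argument for finiteness of $\oplus^{\zeta}$, \cite{brav2019relative} for Assumption \ref{BPS_cat_assumption}, Lemma \ref{lemma:nonnegativeperverse} and Corollary \ref{corollary:bpsalgissubalg} for part (2), and Theorems \ref{theorem:freenesstotneg2CY} and \ref{theorem:pbwtotnegative2CY} with the identifications $\Sigma_{\CA}=\Sigma^{\zeta}_{\theta}$ and $\CM_{\CA,a}=\CM^{\zeta\sst}_{\dd}(B)$ for part (3)).

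One thing you flag as the ``main obstacle'' is stated incorrectly, and it is worth correcting because it reflects a genuine misunderstanding of the geometry. You say that the good moduli space of $\FM^{\zeta\sst}_{B,\theta}$ is obtained by restriction because the semistable locus is \emph{saturated}, invoking the mechanism of Proposition \ref{geometry_sumup_prop}. That is not what happens here. The open substack $\FM^{\zeta\sst}_{B,\theta}\subset \FM_{B,\theta}$ is \emph{not} saturated with respect to the affinization $\JH\colon\FM_{B,\theta}\to\CM_{B,\theta}$ (a semistable module can degenerate, inside a $\JH$-fibre, to a polystable module whose summands land in different slope components). Correspondingly, $\CM^{\zeta\sst}_{B,\theta}$ is not an open subscheme of $\CM_{B,\theta}$: it is a GIT quotient, \emph{projective} over $\CM_{B,\theta}$, as the paper says --- Assumption \ref{gms_assumption} is met by King's GIT construction for $\BoC Q$ and base change along $\BoC Q\to B$, not by restriction of a good moduli space. (The saturation argument of Proposition \ref{geometry_sumup_prop} concerns a different passage, from $\overline{X}$ to $X$.) What \emph{is} true, and is what the paper uses for Assumptions \ref{p_assumption}--\ref{q_assumption2}, is that those assumptions are stable under base change along the open inclusion of \emph{stacks} $\FM^{\zeta\sst}_{B,\theta}\hookrightarrow\FM_{B}$, with no claim of saturation needed. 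Once this is straightened out the remainder of your reduction is correct and identical in substance to the paper's.
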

\section{Freeness of the BPS algebra of totally negative $2$-Calabi--Yau categories}
\label{section:freeness}

In this section we prove the following theorem, which determines the BPS algebra of a totally negative $2$-Calabi--Yau category.

\begin{theorem}[{= MHM version of Theorem~\ref{theorem:freenesstotneg2CY}}]
\label{theorem:FreeAlg2CY}
For any totally negative $2$-Calabi--Yau category $\CA$ satisfying the Assumptions \ref{p_assumption}-\ref{BPS_cat_assumption} of §\ref{section:modulistackobjects2CY} and Assumption~\ref{assumption:associativity},
the morphism $\Phi_{\CA}$ \eqref{equation:mainiso} is an isomorphism of $\boxdot$-algebras in $\MHM(\Msp_{\CA})$.
\end{theorem} 

Since the functor $\rat\colon\MHM(\CM_{\mathcal{A}})\rightarrow\Perv(\CM_{\mathcal{A}})$ is exact, the mixed Hodge module version implies the perverse sheaf version of the theorem.  Those readers who prefer to think about perverse sheaves rather than mixed Hodge modules may read the proofs in the categories of perverse sheaves/constructible complexes. The key point is that a semisimple mixed Hodge module is sent under $\rat$ to a semisimple perverse sheaf \cite{deligne1987theoreme}.\footnote{However, we issue a minor warning here that simple mixed Hodge modules are sent to semisimple perverse sheaves that need not be simple.}

We prove Theorem~\ref{theorem:FreeAlg2CY} by reducing to the the case of preprojective algebras of totally negative quivers.

\begin{theorem}[{= MHM version of Theorem~\ref{theorem:freenesspreprojective}}]
\label{theorem:FreeAlgPreProj}
For every totally negative quiver $Q$, the morphism $\Phi_{\Pi_Q}$ (defined in \eqref{equation:mainiso}) is an isomorphism of $\boxdot$-algebras in $\MHM(\Msp_{\Pi_Q})$.
\end{theorem}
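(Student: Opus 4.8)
The plan is to prove Theorem~\ref{theorem:FreeAlgPreProj} (the preprojective case), from which the general Theorem~\ref{theorem:FreeAlg2CY} follows by the local neighbourhood theorem (Theorem~\ref{theorem:neighbourhood}) together with the compatibility of the CoHA with $\Sigma$-collections (Corollary~\ref{corollary:CoHAcompatibiltiySigmacoll}), Proposition~\ref{proposition:totnegextquiv}, and Proposition~\ref{proposition:geometryofgms}(3) identifying $\psi_{\underline{\CF}}^{-1}(\Sigma_{\CA})=\Sigma_{\Pi_{Q_{\underline{\CF}}}}$. First I would record that $\Phi_{\Pi_Q}$ is a morphism of semisimple mixed Hodge modules: the source is semisimple (indeed pure) by the Corollary to Proposition~\ref{proposition:FreeSemiSimp}, using that each $\Msp_{\Pi_Q,\dd}$ is irreducible (Proposition~\ref{proposition:geometryofgms}(2)), and the target $\ulBPS_{\Alg}$ is semisimple by Lemma~\ref{lemma:BPSsemisimple}. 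Hence it suffices to prove that $\Phi_{\Pi_Q}$ is both a monomorphism and an epimorphism, and each of these can be checked on simple summands, i.e. stalk-wise and costalk-wise on strata of $\Msp_{\Pi_Q}$.

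\textbf{Injectivity via primitivity and freeness on each stratum.} The key structural input is Lemma~\ref{lemma:ICPreprojPrimitive}: each summand $\underline{\IC}(\Msp_{\Pi_Q,\dd})\subset\ulBPS_{\Alg}$ with $\dd\in\Sigma_{\Pi_Q}$ is a primitive subobject, i.e. maps nontrivially to the indecomposables $\ulBPS_{\Alg,\dd}/\CI_\dd$. Since the source of $\Phi_{\Pi_Q}$ is a free $\boxdot$-algebra, a morphism out of it is injective precisely when the generators map to linearly independent primitive elements and satisfy no relations; because the ambient tensor category is semisimple and $\oplus$ is finite, injectivity reduces to a statement about graded dimensions, stratum by stratum. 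Here I would use the analytic neighbourhood theorem to reduce, in a neighbourhood of a point of $\Msp_{\Pi_Q}$ of semisimple type $\underline{\CF}=\{S_{i}^{\oplus m_i}\}$, to the \emph{fully nilpotent} CoHA of a smaller totally negative quiver, and then to the degree-zero strictly seminilpotent CoHA, which by Theorem~\ref{theorem:degree0SSN} and Corollary~\ref{corollary:degree0SSN} is precisely the free algebra $\UEA(\mathfrak{n}_Q^+)=\Free_{\Alg}(\bigoplus_{i\in I_\infty}\BoQ e_i)$ for a totally negative quiver. The $\psi$-twist (and Lemmas~\ref{free_signs} and the subsequent sign lemmas) guarantee the sign conventions match; since $\rat$ is exact, I may freely pass between MHMs and perverse sheaves throughout.

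\textbf{Surjectivity.} For surjectivity I would argue by induction on the dimension vector $\dd$ (with respect to the total dimension $|\dd|$), showing that $\ulBPS_{\Alg,\dd}$ lies in the image of $\Phi_{\Pi_Q}$. The image contains all $\underline{\IC}(\Msp_{\Pi_Q,\ee})$ for $\ee\in\Sigma_{\Pi_Q}$ by construction, and is closed under $\CH^0(m)$; so it suffices to show that $\ulBPS_{\Alg,\dd}/(\text{decomposable part }\CI_\dd)$ is generated by $\bigoplus_{\ee\in\Sigma_{\Pi_Q}}\underline{\IC}(\Msp_{\Pi_Q,\ee})$, which again is a stratumwise/stalkwise statement. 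On the open stratum $\Msp^s_{\Pi_Q,\dd}$ of simple objects this is immediate from Lemma~\ref{lemma:ICintoBPSAlg}; on deeper strata one invokes the neighbourhood theorem to replace $\ulBPS_{\Alg}$ locally by the BPS algebra of the Ext-quiver (again totally negative by Proposition~\ref{proposition:totnegextquiv}), reducing to the seminilpotent computation where surjectivity of the analogous map is Corollary~\ref{corollary:degree0SSN}. Matching the support filtration on $\CI_\dd$ with the product decomposition $\bigoplus_{b+c=\dd,\,b,c\neq 0}$ closes the induction.

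\textbf{Main obstacle.} The hard part will be the \emph{local-to-global} step: transporting the freeness statement from the fully nilpotent / strictly seminilpotent CoHA of the Ext-quiver back to $\ulBPS_{\Alg}$ in an \'etale (or analytic) neighbourhood, and checking that the identification of CoHA products under the neighbourhood theorem (Corollary~\ref{corollary:CoHAcompatibiltiySigmacoll}) is genuinely compatible with the $\boxdot$-algebra structure used to define $\Phi_{\Pi_Q}$ — in particular that the map $\Phi_{\Pi_Q}$, restricted near a point, corresponds under $\Phi_{\underline{\CF}}^*$ to the analogous free-algebra comparison map for the Ext-quiver, not merely up to a nonzero scalar on each simple summand. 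Controlling these scalars (equivalently, verifying that the monomorphisms $\phi_a$ of Lemma~\ref{lemma:ICintoBPSAlg} are compatible with restriction to neighbourhoods) is the delicate point; everything else is bookkeeping with semisimplicity and the grading by $\pi_0(\Msp_{\Pi_Q})$.
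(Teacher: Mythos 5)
Your high-level plan — reduce to the preprojective case, use semisimplicity of source and target, localize via Ext-quiver neighbourhoods, and anchor everything on the seminilpotent degree-zero computation (Corollary~\ref{corollary:degree0SSN}/Theorem~\ref{theorem:degree0SSN}) — correctly identifies the right ingredients and the right reduction target, and your ``reduction to 2CY'' step matches the paper. But there is a genuine gap exactly where you flag the ``main obstacle,'' and the paper's proof supplies two ideas you do not have.

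First, a naive induction on $|\dd|$ does not close. When you apply the analytic neighbourhood theorem at a closed point $x\in\Msp_{\Pi_Q,\dd}$, the resulting Ext-quiver datum $(Q'_x,\vec m_x)$ may satisfy $|\vec m_x|=|\dd|$ (this happens precisely when all simple summands of the semisimple module at $x$ are one-dimensional), so the local model is \emph{not} a smaller problem in the variable $|\dd|$. The paper handles this by a \emph{double} induction, encoded in the function $\mu(Q,\dd)=(|\dd|,-|Q_0|)$ with the lexicographic order (Theorem~\ref{theorem:FreeAlgPreProjGraded} and Lemma~\ref{lemma:ExtQuivnotworse}): when $|\vec m_x|=|\dd|$, one shows $|Q'_{x,0}|\geq|Q_0|$, with equality forcing $(Q'_x,\vec m_x)=(Q,\dd)$. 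Without the $-|Q_0|$ component (or an equivalent mechanism), your induction cannot rule out the ``equal $\mu$'' locus.

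Second, and more importantly, you have no mechanism to handle that residual locus. Lemma~\ref{lemma:ExtQuivnotworse}(iii) shows the equal-$\mu$ locus is exactly the $\BG_a^{\bar L}$-orbit of $S_{\dd}$, where $\BG_a^{\bar L}$ is the additive group acting by adding scalars to the loops of $\bar Q$. The paper's Lemma~\ref{lemma:loopequivariant} shows that \emph{all} simple summands of the source and target of $\Phi_{\Pi_Q}$ are $\BG_a^{\bar L}$-equivariant; so if the induction hypothesis kills all other supports, a simple summand $\CT$ of $\ker\oplus\coker$ is forced to have support equal to the \emph{whole} orbit $\bar Z\cong\BoC^{\bar L}$, hence $\CT\cong\ul{\IC}_{\bar Z}\otimes V$. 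Only then can one restrict along $Z=\BG_a^L\cdot 0_\dd\subset\Msp^{\SSN}_{\Pi_Q,\dd}$, where contractibility gives $\HO^0(i_{\SSN}^!\CT)\cong\rat(V)$, contradicting Corollary~\ref{corollary:degree0SSN}. This equivariance argument is precisely what ``transports the freeness statement from the strictly seminilpotent CoHA back to $\ulBPS_{\Alg}$'' — the local-to-global step you identified as the hard part but did not fill in. Your worry about scalars/compatibility of $\phi_a$ with restriction is not actually where the difficulty lies (that compatibility is Corollary~\ref{corollary:gammaisoalg}); the difficulty is constraining the \emph{support} of a potential kernel/cokernel so that the seminilpotent degree-zero statement can see it, and that is exactly what $\BG_a^{\bar L}$-equivariance buys.

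A minor point: splitting into separate injectivity and surjectivity arguments is unnecessary overhead — since $\Phi_{\Pi_Q}$ is a morphism of semisimple (pure) mixed Hodge modules, one treats $\ker$ and $\coker$ uniformly as in Lemma~\ref{lemma:InductionStep}, which is cleaner.
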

We prove that Theorem \ref{theorem:FreeAlgPreProj} implies Theorem \ref{theorem:FreeAlg2CY} in \S\ref{subsubsection:FreeAlgreductiontoPreProj}. The proof of Theorem \ref{theorem:FreeAlgPreProj} is given in \S\ref{subsection:proofforpreproj}.

\subsection{Reduction to preprojective algebras of totally negative quivers}
\label{subsection:reductiontopreproj}

\subsubsection{Free algebra of a $\Sigma$-collection}
\label{subsubsection:freealgfibre}
Let $\underline{\CF}=\{ \CF_1,\ldots,\CF_r \}$ be a collection of simple objects in $\CA$ of classes $a_i = [\CF_i] \in \Sigma_{\CA}$ and for $1\leq i\leq r$ let $x_i \in \Msp_{\CA,a_i}$ be the closed $\BoC$-point corresponding to $\CF_i$. The inclusions $x_i \into \Msp_{\CA}$ induce a monoid morphism $\imath_{\underline{\CF}} \colon \BoN^{\underline{\CF}} \into \Msp_{\CA}$ sending $1_{\CF_i}$ to $ x_i$.
Let $Q$ be half of the Ext-quiver of the collection $\underline{\CF}$ (\S\ref{subsubsection:extquiver}). 
The inclusions $\{S_i\} \into \Msp_{\Pi_{Q},e_i}$ induce a monoid morphism $\imath_{\Nil} \colon \BoN^{Q_0} \into \Msp_{\Pi_{Q}}$ sending $e_i$ to $S_i$.
By Lemma~\ref{lemma:strictmonoidalfunctor}, $\imath_{\Nil}^!$ and $\imath_{\underline{\CF}}^!$ are strict monoidal.

For every $\vec{m} \in \BoN^{\underline{\CF}}$ pick an analytic Ext-quiver neighbourhood
$\CU_{\vec{m}}$ of the point $x_\vec{m} \in \Msp_{\CA,a_{\vec{m}}}$
corresponding to the semisimple object $\bigoplus_{i} \CF_i^{\oplus m_i}$ of
class $a_{\vec{m}} \coloneqq \sum_{i}m_ia_i$ as in Theorem \ref{theorem:neighbourhood}. Set $\CU = \bigsqcup_{\vec{m} \in \BoN^{I}} \CU_{\vec{m}}$.
As part of an analytic Ext-quiver neighbourhood we have a commutative diagram of analytic spaces 
\begin{equation*}
\begin{tikzcd}
&\BoN^{\underline{\CF}} \ar[dl,"\imath_{\Nil}"',hook']\ar[d,"y",hook] \ar[dr,"\imath_{\underline{\CF}}",hook] & \\
 \Msp_{\Pi_Q} & \CU \ar[l,"\jmath'"',hook']\ar[r,"\jmath",hook] & \Msp_{\CA}
\end{tikzcd}
\end{equation*}
such that the horizontal morphisms $\jmath,\jmath'$ are analytic-open embeddings.

\begin{lemma}
\label{lemma:ICSigmaonfibre}
There is a natural isomorphism of $\BoN^{\underline{\CF}}$-graded mixed Hodge structures
\begin{equation*}
(\imath_{\Nil})^!\left(\bigoplus_{\vec{m} \in \Sigma_{\Pi_Q}} \underline{\IC}(\Msp_{\Pi_Q,\vec{m}})\right) \cong \imath_{\underline{\CF}}^!\left(\bigoplus_{a \in \Sigma_{\CA}} \underline{\IC}(\Msp_{\CA,a}) \right).
\end{equation*}
\end{lemma}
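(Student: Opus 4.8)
The plan is to compare the two sides stalk-by-stalk over $\BoN^{\underline{\CF}}$, using that both $\imath_{\Nil}^!$ and $\imath_{\underline{\CF}}^!$ only record the restriction of the respective $\IC$-sheaves to the fibres over the relevant points, and that by the local neighbourhood theorem (Theorem~\ref{theorem:neighbourhood}) these fibres are analytically identified. Fix $\vec{m}\in\BoN^{\underline{\CF}}$. If $\vec{m}\notin\Sigma_{\Pi_Q}$ then by Proposition~\ref{proposition:geometryofgms}(3) we have $a_{\vec{m}}=\psi_{\underline{\CF}}(\vec{m})\notin\Sigma_{\CA}$, so neither side has a summand in degree $\vec{m}$ and both stalks vanish; conversely if $\vec{m}\in\Sigma_{\Pi_Q}$ then $a_{\vec{m}}\in\Sigma_{\CA}$, and moreover by Proposition~\ref{proposition:geometryofgms}(3) the only $a\in\Sigma_{\CA}$ with $\imath_{\underline{\CF}}(\vec{n})$ hitting the degree-$a_{\vec{m}}$ component at all is $a=a_{\vec{m}}$, arising from $\vec{n}=\vec{m}$ (since $\imath_{\underline{\CF}}$ is a monomorphism of monoids). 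So in each degree $\vec{m}$ the left-hand side is $y_{\vec{m}}^!\,\underline{\IC}(\Msp_{\Pi_Q,\vec{m}})$ viewed via $\jmath'$, and the right-hand side is $y_{\vec{m}}^!\,\underline{\IC}(\Msp_{\CA,a_{\vec{m}}})$ viewed via $\jmath$, where $y_{\vec m}\colon \{\vec m\}\hookrightarrow \CU_{\vec m}$ is the inclusion of the distinguished point.

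The key step is then the identification of these two stalks. The analytic-open embeddings $\jmath\colon\CU\hookrightarrow\Msp_{\CA}$ and $\jmath'\colon\CU\hookrightarrow\Msp_{\Pi_Q}$ are compatible with the good moduli space maps and with the parametrised family structure (the Cartesian squares of Theorem~\ref{theorem:neighbourhood}). In particular $\jmath^*\Msp_{\CA,a_{\vec m}}$ and $\jmath'^*\Msp_{\Pi_Q,\vec m}$ agree as analytic spaces with the open subset $\CU_{\vec m}\subset H\cms\GL_{\vec m}$, and each is a locally closed (in fact open, on the relevant component) analytic neighbourhood of $x_{\vec m}$, resp.\ $0_{\vec m}$. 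Since $\IC$-sheaves are local for the analytic topology (they are intermediate extensions of the constant sheaf on the smooth locus, and the smooth loci $\Msp^s$ correspond under the neighbourhood identification by Proposition~\ref{proposition:geometryofgms}), we get a canonical isomorphism $\jmath^*\,\underline{\IC}(\Msp_{\CA,a_{\vec m}})\cong\jmath'^*\,\underline{\IC}(\Msp_{\Pi_Q,\vec m})$ of mixed Hodge modules on $\CU_{\vec m}$; here one must check that the Tate twist normalisations match, which they do because $\dim\Msp_{\CA,a_{\vec m}}=p(a_{\vec m})=p(\vec m)=\dim\Msp_{\Pi_Q,\vec m}$ by Proposition~\ref{proposition:geometryofgms}(2) together with the compatibility of Euler forms under $\imath_{\underline{\CF}}$ recorded in \S\ref{subsubsection:extquiver}. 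Applying $y_{\vec m}^!$ (equivalently $(\{x_{\vec m}\}\hookrightarrow\CU_{\vec m})^!$, which by adjunction and properness considerations computes the same thing whether we restrict along $\jmath$ or $\jmath'$ first, using the base-change compatibility built into the Cartesian diagram) yields the claimed isomorphism in degree $\vec m$, naturally in $\vec m$.

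Assembling over all $\vec m\in\BoN^{\underline{\CF}}$, and noting that $\imath_{\Nil}^!$ and $\imath_{\underline{\CF}}^!$ are computed degree-wise as the iterated-stalk functors just described, produces the desired natural isomorphism of $\BoN^{\underline{\CF}}$-graded mixed Hodge structures. \textbf{The main obstacle} I expect is the bookkeeping of \emph{which} open pieces of $\Msp_{\CA}$ and $\Msp_{\Pi_Q}$ the neighbourhood $\CU$ is glued to, and the verification that the two $\IC$-sheaves genuinely restrict to the same object on $\CU$ rather than merely to objects that become isomorphic after a further shift/twist --- in other words, pinning down the normalisation so that no spurious Tate twist or cohomological shift appears. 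This is handled precisely by the dimension and Euler-form computation via Proposition~\ref{proposition:geometryofgms} and \S\ref{subsubsection:extquiver}, together with the fact that $\underline{\IC}(\Msp_{\bullet,a})$ is pure of weight zero and uniquely determined by its restriction to any analytic open neighbourhood of a point of $\Msp^s_{\bullet,a}$ (or, when $\Msp^s_{\bullet,a}$ is empty, of the appropriate open dense stratum), so that analytic-local agreement forces global agreement of stalks.
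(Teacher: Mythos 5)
Your proposal is correct and follows essentially the same route as the paper: restrict degree-by-degree, use Proposition~\ref{proposition:geometryofgms}(3) to match the index sets $\Sigma_{\Pi_Q}$ and $\Sigma_{\CA}$ under $\psi_{\underline\CF}$, and use the analytic Ext-quiver neighbourhood plus stability of $\underline{\IC}$ under (analytic-)open pullback to identify the two stalks through $\CU_{\vec m}$. The paper packs the Tate-twist bookkeeping into the single phrase "intersection complexes are stable under pullback along open embeddings" (the normalisation is automatic because $\CU_{\vec m}$ is an open subset of both spaces, hence has the same dimension as each), whereas you verify the dimension match separately via $p(a_{\vec m})=p(\vec m)$; both are fine, yours is just the slightly more belt-and-braces version of the same argument.
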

\begin{proof}
Since intersection complexes are stable under pullback along open embeddings, we have isomorphisms
\begin{equation*}
(\imath_{\Nil})^{!} \underline{\IC}(\Msp_{\Pi_Q,\vec{m}}) \cong y^{!}\underline{\IC}(\CU_{\vec{m}}) \cong \imath_{\underline{\CF}}^{!} \underline{\IC}(\Msp_{\CA,a_{\vec{m}}}).
\end{equation*}
Now it suffices to note that 
$\vec{m} \in \Sigma_{\Pi_Q}$ if and only if $a_\vec{m} \in \Sigma_{\CA}$ (Proposition~\ref{proposition:geometryofgms} \emph{(3)}).
\end{proof}

\begin{corollary}
There is a natural isomorphism of algebras 
\begin{equation*}
\gamma_{\Free}\colon (\imath_{\Nil})^{!}\Free_{\boxdot-{\mathrm{Alg}}}\left(\bigoplus_{\vec{m} \in \Sigma_{\Pi_Q}} \underline{\IC}(\Msp_{\Pi_{Q},\vec{m}})\right)
\cong
\imath_{\underline{\CF}}^!\Free_{\boxdot-{\mathrm{Alg}}}\left(\bigoplus_{a \in \Sigma_{\CA}} \underline{\IC}(\Msp_{\CA,a})\right).
\end{equation*}
\end{corollary}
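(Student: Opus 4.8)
The statement to prove is that the isomorphism $\gamma_{\Free}$ of graded mixed Hodge structures from Lemma~\ref{lemma:ICSigmaonfibre} is automatically an isomorphism of algebras, where on each side the algebra structure is the free $\boxdot$-algebra structure. The plan is to exploit the universal property of free algebras together with the fact that $\imath_{\Nil}^!$ and $\imath_{\underline{\CF}}^!$ are strict monoidal functors (Lemma~\ref{lemma:strictmonoidalfunctor}).

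First I would recall the explicit formula \eqref{equation:freealgdirectsum}: a free $\boxdot$-algebra on a direct sum $\bigoplus_i \CG_i$ decomposes, as an object, into $\bigoplus_{n\geq 0}\bigoplus_{(i_1,\ldots,i_n)}\oplus_*(\CG_{i_1}\boxtimes\cdots\boxtimes\CG_{i_n})$, with multiplication given on words by concatenation, i.e. by the external product maps $\oplus_*$ combined with the associativity constraints of $\boxdot$. Now, applying the strict monoidal functor $\imath_{\Nil}^!$ (respectively $\imath_{\underline{\CF}}^!$) to a free $\boxdot$-algebra commutes with all of this structure: strict monoidality gives canonical isomorphisms $\imath^!(\CF\boxdot\CG)\cong \imath^!\CF\boxdot\imath^!\CG$ compatible with associators and with the unit $\eta_*\BoQ$, so $\imath_{\Nil}^!\Free_{\boxdot-\Alg}(\bigoplus_\vec{m}\underline{\IC}(\Msp_{\Pi_Q,\vec{m}}))$ is canonically the free $\boxdot$-algebra in $\MHM(\BoN^{\underline{\CF}})$ generated by $\imath_{\Nil}^!\bigoplus_\vec{m}\underline{\IC}(\Msp_{\Pi_Q,\vec{m}})$, and likewise for the other side. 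Concretely, both pulled-back algebras are, as algebras, free on their respective pulled-back generating objects, because $\imath^!$ takes the coproduct $\bigoplus$ to $\bigoplus$ and the external product $\oplus_*(-\boxtimes-)$ to $\oplus_*(-\boxtimes-)$ on $\BoN^{\underline{\CF}}$.

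The key step is then to observe that the isomorphism of Lemma~\ref{lemma:ICSigmaonfibre}, namely the identification of the pulled-back generating objects $\imath_{\Nil}^!\bigoplus_{\vec{m}\in\Sigma_{\Pi_Q}}\underline{\IC}(\Msp_{\Pi_Q,\vec{m}})\cong \imath_{\underline{\CF}}^!\bigoplus_{a\in\Sigma_{\CA}}\underline{\IC}(\Msp_{\CA,a})$, is obtained summand-by-summand through the \emph{same} Ext-quiver neighbourhood $\CU$ that identifies the ambient pieces: indeed the proof of Lemma~\ref{lemma:ICSigmaonfibre} factors each isomorphism through $y^!\underline{\IC}(\CU_\vec{m})$. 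One then invokes the universal property: since free algebras are initial among maps from the generating object, the isomorphism on generators extends uniquely to an algebra isomorphism between the two free algebras, and this extension must agree with $\gamma_{\Free}$ because $\gamma_{\Free}$ is, by construction, the map induced functorially from the isomorphism on generators. Hence $\gamma_{\Free}$ is an isomorphism of algebras.

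The one point requiring genuine care — the main (if mild) obstacle — is checking the \emph{compatibility} of the identifications: one must verify that the open-embedding-pullback isomorphisms $\jmath^!\underline{\IC}(\Msp_{\CA,a})\cong\underline{\IC}(\CU_a)$ and $\jmath'^!\underline{\IC}(\Msp_{\Pi_Q,\vec{m}})\cong\underline{\IC}(\CU_\vec{m})$ are compatible with the $\boxdot$-structure, i.e. that the monoidal maps on $\Msp_{\CA}$, $\Msp_{\Pi_Q}$ and on $\CU$ (after restricting along the saturated submonoid $\BoN^{\underline{\CF}}$, Lemma~\ref{lemma:strictmonoidalfunctor}) fit into the requisite Cartesian squares so that base change identifies the three external products. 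This is where the saturatedness hypothesis on $\BoN^{\underline{\CF}}\into\Msp_{\CA}$ (and into $\Msp_{\Pi_Q}$) is used. Granting that — which follows from the local neighbourhood theorem, Theorem~\ref{theorem:neighbourhood}, and the fact that direct sums of $\Sigma$-collection members stay in the neighbourhood system $\CU$ — the rest is formal from the universal property of $\Free_{\boxdot-\Alg}$.
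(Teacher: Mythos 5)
Your proposal is correct and follows essentially the same route as the paper's own (one-line) proof: strict monoidality of $\imath_{\Nil}^!$ and $\imath_{\underline{\CF}}^!$ (Lemma~\ref{lemma:strictmonoidalfunctor}) lets each functor commute with $\Free_{\boxdot-\Alg}$, and then Lemma~\ref{lemma:ICSigmaonfibre} supplies the identification of generators, which, by functoriality of the free-algebra construction, extends to the claimed algebra isomorphism. The final paragraph of your argument, worrying about whether the open-embedding pullbacks $\jmath^!$ and $\jmath'^!$ are themselves compatible with $\boxdot$, is not actually needed for the corollary: those embeddings live entirely inside the proof of Lemma~\ref{lemma:ICSigmaonfibre} and are used only to establish an isomorphism of graded objects, after which the corollary is purely formal (strict monoidality of the two $\imath^!$-pullbacks is the only $\boxdot$-compatibility required, and that is exactly the content of Lemma~\ref{lemma:strictmonoidalfunctor}). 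So that step can be dropped without loss; otherwise the argument matches the paper's.
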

\begin{proof}
Since $\imath_{\underline{\CF}}^!$ and $(\imath_{\Nil})^!$ are both strict monoidal (Lemma \ref{lemma:strictmonoidalfunctor}), they commute with the free algebra construction. The statement now follows from Lemma~\ref{lemma:ICSigmaonfibre}.
\end{proof}

\subsubsection{BPS algebra of a $\Sigma$-collection}
We keep the notation from §\ref{subsubsection:freealgfibre}.

The analytic Ext-quiver neighbourhoods are compatible with good moduli space morphisms in the sense that the diagram in Theorem~\ref{theorem:neighbourhood} commutes. 
Hence the canonical morphisms
\begin{equation}
\label{eq:localDTsheavesagree}
\CH^{0}\!\left((\jmath')^{\ast}\ulrelCoHA_{\Pi_Q}\right) \longto \CH^0\!\left(p_{\ast}\BD\ulBoQ_{\FU}^{\vir}\right) \longfrom \CH^{0}\!\left((\jmath)^{\ast}\ulrelCoHA_{\CA}\right)
\end{equation}
are isomorphisms in $\MHM(\CU)$, where $p\colon \FU=\bigsqcup_{\vec{m}\in \BoN^{\underline{\CF}}}\FU_{\vec{m}} \to \bigsqcup_{\vec{m} \in \BoN^{\underline{\CF}}}\CU_{\vec{m}}=\CU$ is the good moduli space morphism over $\CU$. 

Since pullback for mixed Hodge modules by an analytic-open embedding is t-exact, the isomorphisms \eqref{eq:localDTsheavesagree} induce an isomorphism of cohomologically graded mixed Hodge structures
\begin{equation}
\label{eq:fibreBPSagree}
\gamma_{\aBPS} \colon (\imath_{\Nil})^{!}\ulrelBPSalg{\Pi_Q} \longto \imath_{\underline{\CF}}^!\ulrelBPSalg{\CA}.
\end{equation}

\begin{corollary}
\label{corollary:gammaisoalg}
 The isomorphism $\gamma_{\aBPS}$ is an isomorphism of algebras such that the diagram 
\begin{equation}
\label{eq:isoonfibreagree}
\begin{tikzcd}
\imath_{\Nil}^{!}\Free_{\boxdot-{\mathrm{Alg}}}\left(\bigoplus_{\vec{d} \in \Sigma_{\Pi_Q}}\underline{\IC}(\Msp_{\Pi_{Q},\vec{d}})\right) 
\ar[r,"\gamma_{\Free}"] \ar[d,"(\imath_{\Nil})^{!}\Phi_{\Pi_{Q}}"]
&
\imath_{\underline{\CF}}^{!}\Free_{\boxdot-{\mathrm{Alg}}}
\left(\bigoplus_{a\in \Sigma_{\CA}} \underline{\IC}(\Msp_{\CA,a})\right)\ar[d,"\imath_{\underline{\CF}}^{!}\Phi_{\CA}"]
 \\
\imath_{\Nil}^{!}\ulrelBPSalg{\Pi_Q} \ar[r,"\gamma_{\aBPS}"]& \imath_{\underline{\CF}}^{!}\ulrelBPSalg{\CA}
\end{tikzcd}
\end{equation}
commutes.
\end{corollary}

\begin{proof}
By Corollary~\ref{corollary:CoHAcompatibiltiySigmacoll} we have the isomorphism of algebras 
$\gamma\colon (\imath_{\Nil})^{!}\ulrelCoHA_{\Pi_Q} \to \imath_{\underline{\CF}}^{!} \ulrelCoHA_{\CA}$. 
Since the multiplication on $\ulrelBPSalg{\CA}$ and $\ulrelBPSalg{\Pi_Q}$ is obtained by applying $\tau^{\leq 0}$ to the multiplication on $\ulrelCoHA_{\CA}$ and $\ulrelCoHA_{\Pi_Q}$ (\S \ref{section:lessperverse}), respectively, 
it follows that $\gamma_{\Free}$ restricts to a morphism of algebras $\gamma_{\aBPS}$.
Commutativity follows from the fact that $\gamma_{\aBPS}$ restricts to $\imath_{\underline{\CF}}^{!}\underline{\IC}(\Msp_{\CA,a_\vec{m}}) \cong (\imath_{\Nil})^{!}\underline{\IC}({\Msp_{\Pi_Q,\vec{m}}})$ for $\vec{m} \in \Sigma_{\Pi_Q}$. 
\end{proof}

\subsubsection{Proof that Theorem~\ref{theorem:FreeAlgPreProj} implies Theorem~\ref{theorem:FreeAlg2CY}}
\label{subsubsection:FreeAlgreductiontoPreProj} 

We start with an easy lemma.
\begin{lemma}
\label{lemma:nonvanishingcstblecomplex}
 Let $\mathscr{B}\in \CD^+_{\mathrm{c}}(X)$ be a constructible complex on a complex algebraic variety $X$. Then, if $\mathscr{B}\neq 0$, there exists a $\BoC$-point $i_x\colon \pt\rightarrow X$ such that $i_x^!\mathscr{B}\neq 0$.
\end{lemma}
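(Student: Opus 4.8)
The plan is to argue by induction on the dimension of (the support of) $\mathscr{B}$, reducing at each stage to the open dense smooth stratum of the support, where $\mathscr{B}$ has locally constant cohomology and hence a nonzero costalk at any point. Concretely, since $\mathscr{B}$ is constructible, it is nonzero on some locally closed stratum; let $Z \subset X$ be the closure of a stratum of maximal dimension on which $\mathscr{B}$ has nonzero cohomology, and let $j\colon U \hookrightarrow Z$ be a smooth dense open subvariety of $Z$ of pure dimension $d$ contained in this stratum, with $i\colon U \hookrightarrow X$ the locally closed inclusion. By construction $i^{*}\mathscr{B}$ has locally constant cohomology sheaves on $U$, and it is nonzero (at least one cohomology sheaf is a nonzero local system). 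First I would pass to a point $u \in U$: since $i^{*}\mathscr{B}$ has a nonzero cohomology local system in some degree $n$, the ordinary stalk $(i^{*}\mathscr{B})_u = \mathscr{B}_u$ is nonzero.

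The remaining task is to upgrade a nonvanishing \emph{stalk} at a point of a smooth locally closed subvariety to a nonvanishing \emph{costalk} at a (possibly different) point of $X$. Here I would use Verdier duality: $i_x^{!}\mathscr{B} = \BD(i_x^{*}\BD\mathscr{B})$, so it suffices to find a point $x$ with $i_x^{*}\BD\mathscr{B} \neq 0$, i.e. with nonzero ordinary stalk of the Verdier dual. Since $\BD\mathscr{B}$ is again a (bounded below, constructible) complex — and it is nonzero because $\BD$ is an equivalence — the argument of the previous paragraph applied to $\BD\mathscr{B}$ produces a point $x$ at which $\BD\mathscr{B}$ has nonzero stalk. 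Dualizing back gives $i_x^{!}\mathscr{B} = \BD(i_x^{*}\BD\mathscr{B}) \neq 0$, which is exactly the claim. (One must be slightly careful about boundedness: $\mathscr{B}\in\CD^{+}_{\mathrm{c}}(X)$, but after restricting to a fixed finite stratification its cohomology is concentrated in finitely many degrees on each stratum, so $\BD\mathscr{B}$ makes sense and the stalk/costalk formalism applies stratum by stratum; alternatively one replaces $\mathscr{B}$ by a single nonzero truncation $\ptau{\geq n}\ptau{\leq n}$ or by $\CH^n$ of it, which is bounded.)

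Actually the cleanest packaging, which I would use in the write-up, avoids induction entirely: pick $n$ maximal with $\CH^{n}(\mathscr{B}) \neq 0$ on the generic point of some top-dimensional component of $\supp(\mathscr{B})$; on the smooth open locus $U$ of that component the complex $i^{*}\mathscr{B}$ has nonzero cohomology sheaves, hence nonzero stalk at a point $u$, hence $\mathscr{B}_u \neq 0$. Then apply this same statement to the nonzero complex $\BD\mathscr{B}$ in place of $\mathscr{B}$ to get a point $x$ with $(\BD\mathscr{B})_x \neq 0$, and conclude $i_x^{!}\mathscr{B} = \BD((\BD\mathscr{B})_x) \neq 0$.

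The main obstacle — really the only subtlety — is the interplay of Verdier duality with the unbounded-below condition: $\BD$ does not preserve $\CD^{+}_{\mathrm{c}}$ in general. I expect this to be handled by the standard device of working inside the full subcategory of complexes constructible with respect to a \emph{fixed} finite stratification adapted to $\mathscr{B}$, on which cohomology is bounded and $\BD$ is an honest duality; equivalently, by first truncating $\mathscr{B}$ to a nonzero bounded piece, for which the claim is classical, and observing that a point detecting the truncation also detects $\mathscr{B}$ (since $i_x^{!}$ is exact up to a bounded shift on each stratum). This reduction is routine, so I will not belabour it.
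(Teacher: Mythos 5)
Your proof follows essentially the same route as the paper's: dualize to reduce the costalk claim to the statement that a nonzero constructible complex has a nonzero ordinary stalk somewhere, then deduce the latter from constructibility of the cohomology sheaves together with $t$-exactness of $i_x^{*}$ for the standard $t$-structure. The paper packages the second step as a one-line appeal to joint conservativity of the cohomology functors (\cite[Prop.\ 1.3.7]{beilinson2018faisceaux}) rather than explicitly choosing a stratum, and one small nit is that in $\CD^{+}_{\mathrm{c}}$ a \emph{maximal} $n$ with $\CH^{n}(\mathscr{B})\neq 0$ need not exist (bounded below, not above) — but any $n$ with $\CH^{n}(\mathscr{B})\neq 0$ suffices, so this does not affect the argument.
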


\begin{proof}
By Verdier duality, it suffices to prove the same result for $i_x^*$ (and $\mathscr{B}\in \CD^-_{\mathrm{c}}(X)$) instead of $i_x^!$. For any $\BoC$-point $x$ of $X$, the functor $i_x^*$ is exact for the natural $t$-structures, so that for any $n\in\BoZ$, $i_x^*\mathcal{H}^n(\mathscr{B})\cong \mathcal{H}^n(i_x^*\mathscr{B})$. Therefore, if $i_x^*\mathscr{B}=0$ for every $x\in X$, the constructible complex $\mathscr{B}$ has vanishing cohomology sheaves. By conservativity of the system of cohomology functors (\cite[Proposition 1.3.7]{beilinson2018faisceaux}), $\mathscr{B}$ itself vanishes.
\end{proof}
\begin{corollary}
\label{corollary:nonvanishingcomplexMHM}
Let $\underline{\SB} \in \D^{+}(\MHM(X))$ be a complex of mixed Hodge modules on a complex algebraic variety $X$. Then, if $\underline{\SB} \neq 0$, there exists a $\BoC$-point $i_x\colon \pt \rightarrow X$ such that $i_x^{!}\underline{\SB} \neq 0$.
\end{corollary}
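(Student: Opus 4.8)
\textbf{Proof plan for Corollary \ref{corollary:nonvanishingcomplexMHM}.}

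The plan is to reduce the mixed Hodge module statement to the constructible sheaf statement just proved in Lemma \ref{lemma:nonvanishingcstblecomplex}, by applying the faithful functor $\rat$. First I would invoke the fact that for any $\BoC$-point $i_x\colon \pt\rightarrow X$ there is a natural isomorphism $\rat(i_x^! \underline{\SB})\cong i_x^!\rat(\underline{\SB})$, since $\rat$ commutes with the six functors on mixed Hodge modules (this is part of Saito's formalism, cf.\ \cite{saito1990mixed}). So it suffices to show that $\rat(\underline{\SB})\neq 0$ whenever $\underline{\SB}\neq 0$: granting this, Lemma \ref{lemma:nonvanishingcstblecomplex} produces a $\BoC$-point $x$ with $i_x^!\rat(\underline{\SB})\neq 0$, hence $\rat(i_x^!\underline{\SB})\neq 0$, hence $i_x^!\underline{\SB}\neq 0$, as desired.

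The remaining point is the faithfulness-on-complexes statement: if $\underline{\SB}\in\D^+(\MHM(X))$ has $\rat(\underline{\SB})=0$ then $\underline{\SB}=0$. Here I would use that $\rat\colon \MHM(X)\rightarrow \Perv(X)$ is faithful and exact (recalled in \S\ref{MHM_sec}), so it takes the perverse-type $t$-structure on $\D^{\mathrm{b}}(\MHM(X))$ to the perverse $t$-structure on $\D^{\mathrm{b}}_{\mathrm{c}}(X)$ and satisfies $\rat(\mathcal{H}^n(\underline{\SB}))\cong \pH{n}(\rat(\underline{\SB}))$ for all $n$. Thus $\rat(\underline{\SB})=0$ forces $\rat(\mathcal{H}^n(\underline{\SB}))=0$, and faithfulness of $\rat$ on the abelian category $\MHM(X)$ gives $\mathcal{H}^n(\underline{\SB})=0$ for all $n$; conservativity of the cohomology functors then yields $\underline{\SB}=0$. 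For the unbounded (locally bounded below) category $\D^+(\MHM(X))$ of \S\ref{unbounded_cplx_sec}, which is the limit of the system $\{\D^{\mathrm{b}}(\MHM(X))\}$ along truncation functors, the same argument applies componentwise on $\pi_0(X)$ and degreewise after applying $\tau^{\leq n}$, since $\rat$ intertwines the truncation functors on both sides.

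I do not expect any serious obstacle here; the only care needed is to confirm that the compatibilities $\rat\circ i_x^!\cong i_x^!\circ\rat$ and $\rat\circ\tau^{\leq n}\cong \ptau{\leq n}\circ\rat$ are available in the unbounded setting of \S\ref{unbounded_cplx_sec}, but these follow formally from the corresponding statements for $\D^{\mathrm{b}}(\MHM(X))$ together with the description of $\D^+(\MHM(X))$ as an inverse limit of bounded categories. Everything else is a direct transcription of Lemma \ref{lemma:nonvanishingcstblecomplex} through $\rat$.
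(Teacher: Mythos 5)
Your proof is correct and matches the paper's (one-line) argument, which is simply to apply Lemma~\ref{lemma:nonvanishingcstblecomplex} to $\rat(\underline{\SB})$; you have spelled out the two implicit facts that make this work, namely that $\rat$ commutes with $i_x^!$ and that $\rat$ is conservative on $\D^+(\MHM(X))$.
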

\begin{proof}
Apply Lemma~\ref{lemma:nonvanishingcstblecomplex} to $\rat(\SB)$.
\end{proof}

\begin{lemma}
\label{lemma:NonIsoImpliesNonIsoforExtQuiver}
Suppose that $\Phi_{\CA}$ is not an isomorphism. 
Then there is an $x \in \supp(\ker(\Phi_{\CA}) \oplus \coker(\Phi_{\CA}))$ with half Ext-quiver and dimension vector $(Q',\vec{m})$ such that the morphism
\begin{equation*}
\imath_0^{!}\Phi_{\Pi_{Q'},\vec{m}}\colon \imath_0^{!}\left(\Free_{\boxdot-{\mathrm{Alg}}}\left(\bigoplus_{\vec{d} \in \Sigma_{\Pi_{Q}}} \underline{\IC}(\Msp_{\Pi_{Q},\vec{d}})\right)\right)_{\vec{m}} \longto \iota^!_{0}\ulBPS_{\Pi_{Q},\vec{m}}
\end{equation*}
is {\emph{not}} an isomorphism, where $\Phi_{\Pi_{Q},\vec{m}}$ is the $\vec{m}$th graded piece of $\Phi_{\Pi_{Q}}$ and 
 $\imath_0 \colon \pt \into \Msp_{\Pi_{Q},\vec{m}}$ is the inclusion of the point corresponding to the trivial $\vec{m}$-dimensional representation $S_{\vec{m}}$. 
 If $\CT\subset\ker(\Phi_{\CA}) \oplus \coker(\Phi_{\CA})$ is a direct summand, we can take any $x$ inside an open subset of $\supp(\CT)$ on which $\CT$ is given by an admissible variation of Hodge structures.
\end{lemma}
\begin{proof}
Write $\mathcal{K}\coloneqq \ker(\Phi_{\CA})$ and $\mathcal{C}\coloneqq \coker (\Phi_{\CA})$.
Both $\CK$ and $\CC$ are semisimple, because they are subquotients of semisimple mixed Hodge modules (Lemma~\ref{lemma:BPSsemisimple} and Proposition~\ref{proposition:FreeSemiSimp}). 
Let $x \in \supp(\CK \oplus \CC)$ be such that $\imath_x^{!}\CK \oplus \imath_x^{!}\CC \neq 0$ (Corollary \ref{corollary:nonvanishingcomplexMHM}).
Let $(Q,\vec{m})$ be the half Ext-quiver and dimension vector of $x$. 
By the commutative diagram \eqref{eq:isoonfibreagree} and an analytic Ext-quiver neighbourhood argument as above, we have $\imath_x^{!}\CK \oplus \imath_x^{!}\CC = \imath_{0}^{!}\ker(\Phi_{\Pi_{Q},\vec{m}}) \oplus \imath_{0}^{!}\coker(\Phi_{\Pi_{Q},\vec{m}})$.
\end{proof}

\begin{proof}[End of the proof of Theorem \ref{theorem:FreeAlg2CY} assuming Theorem \ref{theorem:FreeAlgPreProj}]
By Proposition~\ref{proposition:totnegextquiv} every half Ext-quiver $Q$ of a collection of simples in a totally negative 2CY category must be a totally negative quiver.
Thus, if $\Phi_{\CA}$ is not an isomorphism, Lemma~\ref{lemma:NonIsoImpliesNonIsoforExtQuiver} implies that there exists a totally negative quiver $Q$, for which $\Phi_{\Pi_Q}$ is not an isomorphism, contradicting Theorem~\ref{theorem:FreeAlgPreProj}.
\end{proof}

\subsection{The proof for preprojective algebras}
\label{subsection:proofforpreproj}
We turn to the proof of Theorem~\ref{theorem:FreeAlgPreProj}.

We begin by restating Theorem~\ref{theorem:FreeAlgPreProj} so that we can induct on the (cross-sum of the) dimension vector $\vec{d}$ and reverse induct on the number of vertices of the quiver $Q$.
The morphism $\Phi_{\Pi_Q} = \bigoplus_{\vec{d} \in \BoN^{Q_0}}\Phi_{\Pi_Q,\vec{d}}$ is graded by dimension vector. Thus Theorem~\ref{theorem:FreeAlgPreProj} is equivalent to the following theorem.
\begin{theorem}
\label{theorem:FreeAlgPreProjGraded}
For every totally negative quiver $Q$ and dimension vector $\vec{d} \in \BoN^{Q_0}$ the morphism
\begin{equation*}
\Phi_{\Pi_Q,\vec{d}} \colon \bigg(\Free_{\boxdot-\mathrm{Alg}}\bigg(\bigoplus_{\vec{f} \in \Sigma_{Q}} \underline{\IC}(\Msp_{\Pi_Q,\vec{f}}) \bigg)\bigg)_{\vec{d}} \longto \CH^{0}\left(\JH_{\vec{d},\ast} \BD\ulBoQ_{\Mst_{\Pi_Q,\vec{d}}}^{\vir}\right)=\BPS_{\Pi_Q,\Alg,\dd}
\end{equation*}
is an isomorphism.
\end{theorem}

Consider the set $\QuivDim$ consisting of pairs $(Q,\vec{d})$ of quivers with dimension vectors supported on the entire quiver. 
To formalise the simultaneous induction on the cross-sum of the dimension vectors and the reverse induction on the number of vertices we introduce the function
\begin{equation*}
\begin{split}
\mu\colon \QuivDim &\longto \BoZ_{>0} \times \BoZ_{<0} \\
(Q,\vec{d}) &\longmapsto (\abs{\vec{d}},-\abs{Q_0}) = \left(\sum_{i\in Q_0}d_i,-\abs{Q_0}\right).
\end{split}
\end{equation*}
The induction will be with respect to the partial order on $\QuivDim$ pulled back from the lexicographical order on $\BoZ_{>0}\times \BoZ_{<0}$ along $\mu$.

\subsubsection{The action of adding a scalar to a loop}

Let $Q$ be a quiver. Let $L$ be the set of loops of $Q$ and let $\bar{L} = L \sqcup L^{\ast}$ be the set of loops in the doubled quiver $\bar{Q}$. For every loop $l \in L$ there are two $\BG_a$-actions on $\Mst_{\Pi_Q}$ and $\Msp_{\Pi_Q}$ given by adding a scalar multiple of the identity to the loop $l$ or to the loop $l^*$. 
Combining both of the actions for all of the loops, we have a $\BG_a^{\bar{L}}$-action on $\Mst_{\Pi_Q}$ which in formulas is given by
\begin{equation*}
(x_l,x^*_l)_{l \in L}(\rho_a)_{a \in \overline{Q}_1} = \left(\left\{\begin{matrix}
\rho_l + x_l \id_{e_i\cdot \rho},& \text{ if $a = l\colon i \to i$ for $l \in L$}\\
\rho_{l^*} + x^*_l \id_{e_i\cdot \rho}, & \text{ if $a = l^*\colon i \to i$ for $l \in L$}\\
\rho_a & \text{ otherwise}
\end{matrix}\right\} \right)_{a \in \overline{Q}_1}
\end{equation*}
for $(x_l,x^*_l)_{l \in L} \in \BG_a^{\bar{L}}$ and a $\Pi_Q$-module $(\rho_a)_{a \in \overline{Q}_1}$.

\begin{lemma}
\label{lemma:loopequivariant}
All summands of the pure mixed Hodge modules $\Free_{\boxdot-\mathrm{Alg}}(\bigoplus_{\vec{d} \in \Sigma_{\Pi_Q}} \underline{\IC}(\Msp_{\Pi_Q,\vec{d}}))$ and $\CH^0(\JH_{\vec{d},\ast} \BD\ulBoQ_{\Mst_{\Pi_Q,\vec{d}}}^{\vir})$ are $\BG_a^{\bar{L}}$-equivariant.
\end{lemma}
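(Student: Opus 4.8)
The plan is to prove that both pure mixed Hodge modules are $\BG_a^{\bar L}$-equivariant by exhibiting them as pushforwards (or intermediate extensions) of objects along $\BG_a^{\bar L}$-equivariant morphisms, and then invoking the fact that a simple pure Hodge module on a variety with an action of a connected unipotent group is automatically equivariant. First I would record the observation that the $\BG_a^{\bar L}$-action on $\Mst_{\Pi_Q}$ descends to an action on the good moduli space $\Msp_{\Pi_Q}$ (by universality of good moduli spaces), and that $\JH_{\vec d}$ is $\BG_a^{\bar L}$-equivariant for these actions; moreover the action commutes with the direct sum map $\oplus\colon \Msp_{\Pi_Q}\times\Msp_{\Pi_Q}\to\Msp_{\Pi_Q}$, since adding scalar multiples of the identity to loops is compatible with direct sums of representations.

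Next I would treat the two objects in turn. For $\CH^0(\JH_{\vec d,*}\BD\ulBoQ_{\Mst_{\Pi_Q,\vec d}}^{\vir})$: the constant sheaf $\ulBoQ_{\Mst_{\Pi_Q,\vec d}}^{\vir}$ is manifestly $\BG_a^{\bar L}$-equivariant, its pushforward along the equivariant map $\JH_{\vec d}$ is equivariant, Verdier duality preserves equivariance, and taking $\CH^0$ (zeroth perverse cohomology) is an equivariant-exact operation, so the result is a $\BG_a^{\bar L}$-equivariant complex of mixed Hodge modules; since it is a pure Hodge module it is a direct sum of simples, each of which is then equivariant (equivariance of a semisimple object passes to its simple summands because $\BG_a^{\bar L}$ is connected). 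For the free algebra $\Free_{\boxdot-\mathrm{Alg}}\big(\bigoplus_{\vec d\in\Sigma_{\Pi_Q}}\underline{\IC}(\Msp_{\Pi_Q,\vec d})\big)$: each $\underline{\IC}(\Msp_{\Pi_Q,\vec d})$ is the intermediate extension of the trivial Hodge module on the smooth locus $\Msp_{\Pi_Q,\vec d}^s$ (Proposition~\ref{proposition:geometryofgms}), which is $\BG_a^{\bar L}$-stable, hence $\underline{\IC}(\Msp_{\Pi_Q,\vec d})$ is equivariant; and by the explicit description \eqref{equation:freealgdirectsum} of the free algebra as a direct sum of $\oplus_*$-pushforwards of external products, together with equivariance of $\oplus$ and external products of equivariant objects, the whole free algebra is $\BG_a^{\bar L}$-equivariant. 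Its simple summands are then equivariant for the same connectedness reason.

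The main technical point — and the only place where a little care is needed — is justifying that simple/semisimple pure Hodge modules on a variety equipped with an action of a connected unipotent group are canonically equivariant, and that equivariance passes between a semisimple object and its simple summands. For the former I would cite the standard fact (e.g. via \cite{saito1990mixed} or the treatment of equivariant Hodge modules) that for a connected algebraic group $G$ acting on $X$, a simple pure Hodge module $\CF$ satisfies $a^*\CF\cong p^*\CF$ (up to shift/twist) where $a,p\colon G\times X\to X$ are the action and projection, because both sides are simple with isomorphic restriction to $\{e\}\times X$, and the cocycle condition is automatic as the relevant $\mathrm{Hom}$-space is one-dimensional. For the latter, a finite direct sum is equivariant iff the $G$-action permutes its simple summands up to isomorphism, and since $G=\BG_a^{\bar L}$ is connected this permutation action is trivial, so each summand is individually equivariant. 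I expect the remaining steps to be routine once this lemma on equivariance of pure Hodge modules is in hand.
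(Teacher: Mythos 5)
Your proposal is correct and takes essentially the same approach as the paper: show $\underline{\IC}(\Msp_{\Pi_Q,\vec d})$ is equivariant by intermediate extension of the constant variation on the $\BG_a^{\bar L}$-invariant smooth locus, observe $\boxdot$ and $\JH$ are equivariant, and conclude. You spell out more carefully the passage from equivariance of the whole semisimple object to equivariance of its simple summands (via connectedness of $\BG_a^{\bar L}$), a step the paper leaves implicit; this is a reasonable bit of extra rigour but not a different argument.
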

\begin{proof}
The intersection complexes $\underline{\IC}(\Msp_{\Pi_Q,\vec{d}})$ for $\vec{d} \in \Sigma_{\Pi_Q}$ are $\BG_a^{\bar{L}}$-equivariant as they can be defined by intermediate extension of the constant variation of Hodge structure $\ulBoQ_{\Msp_{\Pi_Q}^{s}}\otimes \BoL^{-\dim(\Msp_{\Pi_Q,\vec{d}})/2}$ on the $\BG_a^{\bar{L}}$-invariant dense open subset $\Msp_{\Pi_Q,\vec{d}}^{s} \subset \Msp_{\Pi_Q,\vec{d}}$. The monoidal product $\boxdot$ is evidently $\BG_a^{\bar{L}}$-equivariant. It follows that so is $\Free_{\boxdot-\Alg}(\bigoplus_{\vec{d} \in \Sigma_{\Pi_Q}}\underline{\IC}(\Msp_{\Pi_Q,\vec{d}}))$.

By inspecting the presentation of $\JH$ as in \S\ref{subsection:preprojectivealgebra} one sees that the good moduli space morphism $\JH\colon \Mst_{\Pi_Q} \to \Msp_{\Pi_Q}$ is equivariant with respect to this $\BG_a^{\bar{L}}$-action.
It follows that $\CH^0(\JH_{\vec{d},\ast}\BD\ulBoQ_{\Mst_{\Pi_Q,\vec{d}}}^{\vir})$ is $\BG_a^{\bar{L}}$-equivariant. Since $\BG_a^{\bar{L}}$ is connected, the direct summands of $\CH^0(\JH_{\vec{d},\ast}\BD\ulBoQ_{\Mst_{\Pi_Q,\vec{d}}}^{\vir})$ are also $\BG_a^{\bar{L}}$-equivariant. 
\end{proof}

\subsubsection{Comparison of Ext-quivers}

To every closed point $x$ in the good moduli space $\Msp_{\Pi_Q}$, we associate a quiver with dimension vector $(Q'_x,\vec{m}_x) \in \QuivDim$ given by half of the Ext-quiver and multiplicity vector of the corresponding semisimple $\Pi_Q$-module $\bigoplus_i \CF_i^{m_i}$.
\begin{lemma}
\label{lemma:ExtQuivnotworse}
For all $(Q,\vec{d}) \in \QuivDim$ and for all closed points $x \in \Msp_{\Pi_Q,\vec{d}}$ we have
\begin{enumerate}[(i)]
\item $\abs{\vec{m}_x} \leq \abs{\vec{d}}$ 
\item $\mu(Q'_x,\vec{m}_x) \leq \mu(Q,\vec{d})$ 
\item The following are equivalent 
\begin{enumerate}[(a)]
\item $\mu(Q_{x}',\vec{m}_x) = \mu(Q,\vec{d})$
\item $(Q_{x}',\vec{m}_x) = (Q,\vec{d})$
\item $x$ is in the $\BG_a^{\bar{L}}$-orbit of (the point corresponding to) $S_{\vec{d}}$ (the $\dd$-dimensional $0$-representation of $\Pi_Q$).
\end{enumerate}
\end{enumerate}
\end{lemma}
\begin{proof}
For convenience we write $(Q',\vec{m})$ for $(Q'_x,\vec{m}_x)$.
Let $\CF = \bigoplus_{j\in Q'_{0}}\CF_j^{\oplus m_j}$ be the semisimple $\Pi_Q$-module of dimension vector $\vec{d}$ corresponding to $x$ (such that all $m_i$ are nonzero). We have for all $i \in Q_0$
\begin{equation}
\label{eq:SemiSimpDimvsDimVec}
\sum_{j \in Q'_{0}}m_j \dimvec(\CF_j)_i = d_i.
\end{equation}
Summing \eqref{eq:SemiSimpDimvsDimVec} over all $i$ we have
\begin{equation}
\label{eq:totalSemiSimpDimvsDimVec}
\sum_{j\in Q'_0} m_j \abs{\dimvec(\CF_j)} = \abs{\vec{d}}.
\end{equation}

Part \emph{(i)} of the lemma follows immediately from $\abs{\dimvec(\CF_j)} \geq 1$.
We now prove part \emph{(ii)}.
The case $\abs{\vec{m}} > \abs{\vec{d}}$, is impossible because it contradicts \eqref{eq:totalSemiSimpDimvsDimVec}.
If $\abs{\vec{m}} < \abs{\vec{d}}$, then by definition of $\mu$ we have $\mu(Q',\vec{m}) < \mu(Q,\vec{d})$ (for the lexicographic order).

On the other hand suppose $\abs{\vec{m}} = \abs{\vec{d}}$, then we wish to show $\abs{Q'_0} \geq \abs{Q_0}$. By \eqref{eq:totalSemiSimpDimvsDimVec} we must have $\abs{\dimvec{(\CF_j)}} = 1$, hence each $\CF_j$ is supported at a single vertex, call it $v(j)$. By \eqref{eq:SemiSimpDimvsDimVec} for every $i \in Q_0$ there is a $w(i) \in Q'_0$ such that $v(w(i)) = i$ (choose $w(i)$ so that $m_{w(i)}\dimvec({\CF_{w(i)}})_i\neq 0$). Thus $w\colon Q_0 \to Q'_0$ is an injective map with left inverse $v\colon Q'_0 \to Q_0$, showing $\abs{Q'_0} \geq \abs{Q_0}$.

It remains to characterize the saturation of the inequality. The implications $(c) \implies (b) \implies (a)$ are clear. Suppose $\mu(Q',\vec{m}) = \mu(Q,\vec{d})$. By definition of $\mu$ we can identify the vertex sets of the quivers $Q'$ and $Q$.
As before, by \eqref{eq:SemiSimpDimvsDimVec} we deduce that each $\CF_i$ is a 1-dimensional representation supported at the vertex $i$ and $\vec{m} = \vec{d}$. Hence $\CF_i$ is the data of a scalar $x_l$ for every loop $l$ in $\bar{Q}$ at $i$. Altogether we see that $\CF$ is in the $\BG_a^{\bar{L}}$-orbit of $0_{\vec{d}}$. This proves part \emph{(iii)}.
\end{proof}

\subsubsection{The recursion}
The key ingredient for the recursion is the following lemma.

\begin{lemma}
\label{lemma:InductionStep}
Fix a totally negative quiver $Q$ and a dimension vector $\vec{d} \in \BoN^{Q_0}$. 
Suppose $\Phi_{\Pi_{Q'_x},\vec{m}_x}$ is an isomorphism for all half Ext quivers with dimension vectors $(Q'_x,\vec{m}_x) \in \QuivDim$ for closed points $x \in \Msp_{\Pi_Q,\vec{d}}$ such that $\mu(Q'_x,\vec{m}_x) < \mu(Q,\vec{d})$. Then $\Phi_{\Pi_{Q},\vec{d}}$ is an isomorphism.
\end{lemma}

\begin{proof}
Suppose $\Phi_{\Pi_Q,\vec{d}}$ is not an isomorphism. Its kernel $\mathcal{K}_{\Pi_Q,\vec{d}}$ and cokernel $\mathcal{C}_{\Pi_Q,\vec{d}}$ are semisimple mixed Hodge modules on $\Msp_{\Pi_Q,\vec{d}}$.
Hence there is a (nonzero) simple direct summand $\CT \subset \mathcal{K}_{\Pi_Q,\vec{d}} \oplus \mathcal{C}_{\Pi_Q,\vec{d}}$. 
We have the following chain of inclusions
\begin{align*}
\supp(\CT) &\subset \overline{ \{ x \in \Msp_{\Pi_{Q},\vec{d}} \mid \Phi_{\Pi_x,\vec{m}_x} \text{ is not an iso.} \} } 
&\qquad \text{(by Lemma \ref{lemma:NonIsoImpliesNonIsoforExtQuiver})}\\
&\subset \overline{\{ x \in \Msp_{\Pi_Q,\vec{d}} \mid \mu(Q_x,\vec{m}_x) = \mu(Q,\dd) \}} 
 &\qquad \text{(by hypothesis and Lemma~\ref{lemma:ExtQuivnotworse} \emph{(i)})}\\
&=\BG_a^{\overline{L}} \cdot 0_{\vec{d}} &\qquad \text{(by Lemma~\ref{lemma:ExtQuivnotworse} \emph{(iii)})}
\end{align*}
In words, the hypothesis guarantees that the support of $\CT$ is contained in the $\BG_a^{\bar{L}}$-orbit $\bar{Z} = \BG_a^{\bar{L}}\cdot 0_{\vec{d}} \cong \BoC^{\bar{L}}$ of $0_\vec{d}$.
Since $\CT$ is a $\BG_a^{\bar{L}}$-equivariant mixed Hodge module (Lemma~\ref{lemma:loopequivariant}), its support is $\BG_a^{\bar{L}}$-stable and hence equal to the entire orbit $\bar{Z}$.

The groups $\BG_a^{\bar{L}}$ and $\BG_a^{L}$ are contractible, and so are any of their orbits. Therefore taking total cohomology induces equivalences 
$\MHM_{\BG_a^{\bar{L}}}(\bar{Z}) \simeq \MHM(\pt) \simeq \MHM_{\BG_a^{L}}(Z)$
where $Z$ is the $\BG_a^{L}$-orbit of $0_{\vec{d}}$. Let $\imath_{\SSN}\colon \Msp_{\Pi_{Q},\vec{d}}^{\SSN} \into \Msp_{\Pi_Q,\vec{d}}$ be the inclusion.
We have a Cartesian square of inclusions
\begin{equation*}
\begin{tikzcd}
\BoC^{L} \cong Z \ar[d]\ar[r] & \Msp_{\Pi_Q,\vec{d}}^{\SSN} \ar[d,"\iota_\SSN"] \\
\BoC^{\bar{L}}\cong \bar{Z} \ar[r] & \Msp_{\Pi_Q,\vec{d}}.
\arrow["\lrcorner"{anchor=center, pos=0.125}, draw=none, from=1-1, to=2-2]
\end{tikzcd}
\end{equation*}
Since $\CT$ is simple, we have $\CT = \ul{\IC}_{\BG_a^{\overline{L}}\cdot 0_{\dd}}\otimes V=\underline{\BoQ}_{\BG_a^{\overline{L}}\cdot 0_{\dd}}\otimes\BoL^{-\abs{L}}\otimes V$ for some simple polarizable pure mixed Hodge structure $V$, and $\imath_{\SSN}^{!}\CT \cong \ulBoQ_{\BG_a^{L}}\otimes V$. 

Hence $\HO^0({\imath_{\SSN}^!\CT})\cong \rat(V)$ is a summand of the kernel or cokernel of $\HO^{0}(\imath_{\SSN}^{!}\Phi_{\Pi_Q})$ which contradicts Corollary~\ref{corollary:degree0SSN}.
\end{proof}

\begin{proof}[Proof of Theorem~\ref{theorem:FreeAlgPreProjGraded}]
Fix a quiver with dimension vector $(Q,\vec{d}) \in \QuivDim$. Without loss of generality we assume that $\vec{d}$ is supported on all of $Q$. Consider the following subset of $\BoZ_{>0} \times \BoZ_{<0}$.
\begin{align*}
R(Q,\vec{d}) &\coloneqq \{\mu(Q'_x,\vec{m}_x) \mid x \in \Msp_{\Pi_Q,\vec{d}} \text{ and }\mu(Q'_x,\vec{m}_x) \neq \mu(Q,\vec{d})\}& \\
&= \{\mu(Q'_x,\vec{m}_x) \mid x \in \Msp_{\Pi_Q,\vec{d}} \text{ and }\mu(Q'_x,\vec{m}_x) < \mu(Q,\vec{d}) \}. & (\text{Lemma~\ref{lemma:ExtQuivnotworse}})
\end{align*}
In order to apply Lemma~\ref{lemma:InductionStep} we need to show that for all $(Q^{\mathrm{new}},\vec{d}^\mathrm{new}) \in \mu^{-1}(R(Q,\vec{d}))$ the morphism
$\Phi_{\Pi_{Q^{\mathrm{new}}},\vec{d}^\mathrm{new}}$ is an isomorphism. We do so inductively.

For all $(Q^{\mathrm{new}},\vec{d}^\mathrm{new}) \in \mu^{-1}(R(Q,\vec{d}))$\footnote{The preimage $E(Q,\vec{d})$ of $R(Q,\vec{d})$ under $\mu$ is precisely the set of ``better'' half Ext-quivers with dimension vectors appearing for $(Q,\vec{d})$.} we have the strict inclusion of finite sets $R(Q^{\mathrm{new}},\vec{d}^{\mathrm{new}}) \subsetneqq R(Q,\vec{d})$.
By Lemma~\ref{lemma:ExtQuivnotworse} \emph{(i)}, the set $R(Q,\vec{d})$ is bounded hence finite. Thus, after iterating we eventually end at the case where $R(Q^{\mathrm{fin}},\vec{d}^{\mathrm{fin}}) = \emptyset$, so that the hypothesis in Lemma~\ref{lemma:InductionStep} for $(Q^{\mathrm{fin}},\vec{d}^{\mathrm{fin}})$ is vacuous.

Applying Lemma~\ref{lemma:InductionStep} recursively we deduce that $\Phi_{\Pi_Q,\vec{d}}$ is an isomorphism.
\end{proof}

\section{PBW theorem for the CoHA of a totally negative $2$-Calabi--Yau category}
In this section, we prove the PBW theorem, Theorem \ref{theorem:pbwtotnegative2CY}.
In the introduction we proposed for the definition of the BPS Lie algebra sheaf of a totally negative 2CY category $\CA$, the free Lie algebra object generated by the intersection complexes 
\begin{equation*}
\ulrelBPSLie{\CA} = \Free_{\boxdot-\Lie}\left( \bigoplus_{a \in \Sigma_{\CA}} \underline{\IC}(\Msp_{\CA,a})\right).
\end{equation*}
To justify the name BPS sheaf this must satisfy the cohomological integrality theorem, meaning that there should be \textit{some} isomorphism as in Theorem \ref{theorem:pbwtotnegative2CY} (not necessarily defined in terms of Hall algebras). In this section we prove Theorem~\ref{theorem:pbwtotnegative2CY} for totally negative 2CY categories, that is, we show that the morphism
\begin{equation*}
\Sym_{\boxdot}\left(\ulrelBPSLie{\CA} \otimes \HO_{\BoC^{\ast}}\right) \longrightarrow \underline{\relCoHA}_{\CA}^{\psi}
\end{equation*}
defined in terms of the relative Hall algebra product of \S \ref{subsubsection:ThecohaproductKV}, twisted as in \eqref{psi_twist_def}, is an isomorphism.
\subsection{BPS Lie algebras for preprojective algebras of totally negative quivers}

\label{subsubsection:3dBPSLiepreproj} 

An a priori different definition of the BPS Lie algebra sheaf for preprojective algebras of quivers, which we denote by $\ulrelBPSLie{\Pi_Q}^{\mathrm{3d},\psi}$, appears in \cite[Theorem B]{davison2018purity} (see also \cite[Theorem/Definition~4.1]{davison2020bps}). We will show that our definition of the BPS Lie algebra agrees with this ``3d'' definition of the BPS Lie algebra. 
\subsubsection{PBW theorem for critical CoHAs}
\label{general_3d_sssec}
In \cite{davison2020cohomological} the authors define the BPS mixed Hodge module $\ulBPS_{Q,W}$ for the 3-Calabi--Yau category of representations of the Jacobi algebra $\Jac(Q,W)$ of a symmetric quiver with potential $(Q,W)$. We define 
\begin{align*}
\ulBPS_{Q,W}\coloneqq& \left(\tau^{\leq 1}\tilde{\JH}_*\phi_{\Tr(W)}\ulBoQ^{\vir}_{\FM_{Q}}\right)[1]\\
\mathfrak{g}_{Q,W}\coloneqq &\HO^*(\CM_{Q,W},\tau^{\leq 1}\tilde{\JH}_*\phi_{\Tr(W)}\ulBoQ^{\vir}_{\FM_{Q}})
\end{align*}
where $\tilde{\JH}\colon \FM_{Q,W}\rightarrow \CM_{Q,W}$ is the usual affinization morphism to the coarse moduli space. 
We choose a bilinear form $\psi$ on $\BoZ^{Q_0}$ satisfying
\begin{equation}
\label{equation:psi_bilinear_form}
\psi(a,b)+\psi(b,a)=\langle a,a\rangle \langle b,b\rangle +\langle a,b\rangle \quad\pmod{2}
\end{equation}
for all $a,b\in\BoZ^{Q_0}$. 
In \cite{kontsevich2010cohomological} Kontsevich and Soibelman explain how to endow the vanishing cycle cohomology
\[
\HO^*\!\!\mathscr{A}_{Q,W}\coloneqq \bigoplus_{\dd\in\BoN^{Q_0}}\HO^*(\FM_{\dd,Q,W},\phi_{\Tr(W)}\ulBoQ^{\vir}_{\FM_{Q}})
\]
with a cohomological Hall algebra structure, defined by pushforward and pullback in vanishing cycle cohomology. We denote by $\HO^*\!\!\mathscr{A}^{\psi}_{Q,W}$ the algebra obtained by multiplying the degree $(\dd,\dd')$-piece of the multiplication by the sign $(-1)^{\psi(\dd,\dd')}$. Then we have the following theorem.
\begin{theorem}[\cite{davison2020cohomological}]
The natural morphism $\mathfrak{g}_{Q,W}\rightarrow \HO^*\!\!\mathscr{A}^{\psi}_{Q,W}$ is injective, and its image is moreover closed under the commutator Lie bracket in $\HO^*\!\!\mathscr{A}^{\psi}_{Q,W}$. The morphism
\[
\Sym\left(\mathfrak{g}_{Q,W}\otimes\HO_{\BoC^*}\right)\rightarrow \HO^*\!\!\mathscr{A}^{\psi}_{Q,W}
\]
induced by the ($\psi$-twisted) multiplication, and the $\HO_{\BoC^*}$-action on the target induced by a determinant line bundle \S\ref{subsection:detlbalgebras}, is an isomorphism.
\end{theorem}

We denote by $\mathfrak{g}_{Q,W}^{\psi}$ the resulting Lie algebra; it is called the \textit{BPS Lie algebra} associated to $(Q,W)$. We emphasize that the moduli stack $\Mst_{(Q,W)}$ of representations of $\Jac(Q,W)$ admits a global critical locus description, which makes vanishing cycle techniques especially effective.

\subsubsection{Dimensional reduction}
There is a specific choice of quiver with potential that is relevant to the study of $\Pi_Q$. We first form the tripled quiver $\tilde{Q}$ by adding a loop $\omega_i$ to every vertex $i$ of the doubled quiver $\overline{Q}$.
Then we consider the canonical cubic potential $\tilde{W}=\left(\sum_{i\in Q_0} \omega_i\right)\left(\sum_{a\in Q_1}[a,a^*]\right)$, and consider the vanishing cycle mixed Hodge module $\phi_{\Tr(\tilde{W})}\ulBoQ^{\vir}_{\FM_{\tilde{Q}}}$ on the stack of finite-dimensional $\BoC\tilde{Q}$-modules, which is supported on $\FM_{(\tilde{Q},\tilde{W})}\coloneqq \crit(\Tr(\tilde{W}))$.
We define $\ulBPS_{\tilde{Q},\tilde{W}}=\left(\tau^{\leq 1}\tilde{\JH}_*\phi_{\Tr(\tilde{W})}\ulBoQ^{\vir}_{\FM_{\tilde{Q}}}\right)[1]$ as above. 

Applying this theory to the tripled quiver with potential, a connection to the category of $\Pi_Q$-representations is made in \cite{davison2016integrality}. The category $\Jac(\tilde{Q},\tilde{W})$ is equivalent to the category of pairs $(M,f)$ of a $\Pi_Q$-module $M$ with an endomorphism $f\colon M \to M$.
Forgetting the endomorphism induces a morphism of stacks $\Mst_{(\tilde{Q},\tilde{W})} \to \Mst_{\Pi_Q}$. 
The method of dimensional reduction is used to define $\ul{\BPS}_{\Pi_Q}^{\mathrm{3d}}$ from $\ul\BPS_{\tilde{Q},\tilde{W}}$ \cite{davison2016integrality}: we define
\[
\ul{\BPS}_{\Pi_Q}^{\mathrm{3d}}=(\CM_{(\tilde{Q},\tilde{W})}\rightarrow \CM_{\Pi_Q})_*\ul{\BPS}_{\tilde{Q},\tilde{W}}[-1].
\]
This is a mixed Hodge module (this is an application of the ``support lemma'' of \cite{davison2020bps}), and $\ul{\BPS}_{\Pi_Q}^{\mathrm{3d}}$ is moreover closed under the commutator Lie bracket in $\ulrelCoHA^{\psi}_{\Pi_Q}$ and satisfies the cohomological integrality theorem for $\ulrelCoHA_{\JH}$ via a PBW-type isomorphism (cf. Theorem~\ref{theorem:pbwPiQ}). We denote by $\ulrelBPSLie{\Pi_Q}^{\mathrm{3d},\psi}$ the resulting $\boxdot$-Lie algebra.

In order to relate the $\psi$-twists occurring in the general theory of \S \ref{general_3d_sssec} with the rest of this paper, we use that for all pairs of $\dd,\dd'\in\BoN^{Q_0}$ we have
\begin{align}
\langle \dd,\dd'\rangle_{\tilde{Q}}=(\dd,\dd')_{Q}& \quad\textrm{mod } 2.
\end{align}
It follows that $\psi(\dd,\dd')\coloneqq \langle \dd,\dd'\rangle_Q$ is a valid choice for the bilinear form $\psi$ as in \eqref{equation:psi_bilinear_form}.

For the comparison of $\ulrelBPSLie{\Pi_Q}$ with $\ulrelBPSLie{\Pi_Q}^{\mathrm{3d},\psi}$ we use the following theorem that relates the ``3d'' BPS Lie algebra to the BPS algebra for preprojective algebras of quivers.

\begin{theorem}[{\cite[Theorem~6.1]{davison2020bps}}]
\label{theorem:envBPSLieisBPSalg}
Let $Q$ be a quiver. There is a canonical inclusion $\ulrelBPSLie{\Pi_Q}^{\mathrm{3d},\psi} \into \ulrelBPSalg{\Pi_Q}^{\psi}$ of $\boxdot$-Lie algebras which induces an isomorphism of $\boxdot$-algebras $\relEnv{\ulrelBPSLie{\Pi_Q}^{\mathrm{3d},\psi}} \isoto \ulrelBPSalg{\Pi_Q}^{\psi}$.
\end{theorem}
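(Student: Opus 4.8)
\textbf{Proof proposal for Theorem~\ref{theorem:envBPSLieisBPSalg} (applied to the present setting).}
Wait --- on re-reading, the statement we are asked to prove is Theorem~\ref{theorem:envBPSLieisBPSalg} itself as quoted at the end of the excerpt: the canonical inclusion $\ulrelBPSLie{\Pi_Q}^{\mathrm{3d},\psi} \hookrightarrow \ulrelBPSalg{\Pi_Q}^{\psi}$ of $\boxdot$-Lie algebras inducing an isomorphism $\relEnv{\ulrelBPSLie{\Pi_Q}^{\mathrm{3d},\psi}} \isoto \ulrelBPSalg{\Pi_Q}^{\psi}$ of $\boxdot$-algebras. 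The plan is to reduce this to the two structural inputs that are already available: the description of $\ulrelBPSalg{\Pi_Q}^\psi$ as $\pH{0}(\mathscr{A}_{\JH}^{\psi})$ (the zeroth perverse cohomology of the relative CoHA), and the dimensional-reduction construction of $\ulrelBPSLie{\Pi_Q}^{\mathrm{3d},\psi}$ as the pushforward of $\ulBPS_{\tilde Q,\tilde W,\Lie}[-1]$ along $\CM_{(\tilde Q,\tilde W)}\to\CM_{\Pi_Q}$, together with the PBW-type isomorphism for $\ulrelCoHA_{\Pi_Q}$ coming from the cohomological integrality theorem for $(\tilde Q,\tilde W)$.

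First I would recall that dimensional reduction gives an isomorphism $\mathscr{A}_{\tilde Q,\tilde W}\cong \mathscr{A}_{\Pi_Q}$ of cohomological Hall algebras (after the appropriate shift and the sign twist matching $\psi(\dd,\dd')=\langle\dd,\dd'\rangle_Q$, using the congruence $\langle\dd,\dd'\rangle_{\tilde Q}\equiv(\dd,\dd')_{\Pi_Q}\bmod 2$ noted in the text), and that this isomorphism is compatible with the good moduli space morphisms, hence lifts to an isomorphism of relative CoHAs $\ulrelCoHA^{\psi}_{\tilde Q,\tilde W}\cong\ulrelCoHA^{\psi}_{\Pi_Q}$ as $\boxdot$-algebras in $\CD^+(\MHM(\CM_{\Pi_Q}))$. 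Under this identification the ``support lemma'' of \cite{davison2020bps} shows that the image of $\ulBPS_{\tilde Q,\tilde W,\Lie}[-1]$ is a genuine mixed Hodge module (not just a complex) on $\CM_{\Pi_Q}$, closed under the commutator bracket; by definition this is $\ulrelBPSLie{\Pi_Q}^{\mathrm{3d},\psi}$, and this produces the canonical inclusion of $\boxdot$-Lie algebras. The PBW theorem for critical CoHAs (the theorem of \cite{davison2020cohomological} recalled in \S\ref{general_3d_sssec}) then yields, after dimensional reduction, a PBW-type isomorphism $\Sym_\boxdot(\ulrelBPSLie{\Pi_Q}^{\mathrm{3d},\psi}\otimes\HO_{\BoC^*})\isoto\ulrelCoHA_{\Pi_Q}^\psi$ of $\boxdot$-complexes (Theorem~\ref{theorem:pbwPiQ} referred to in the text).

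The remaining work is to pass from this PBW isomorphism on the full CoHA to the statement about $\pH{0}$. The key point is that the $\HO_{\BoC^*}$-factor lives strictly in positive perverse degrees away from degree zero in a controlled way, so that applying $\pH{0}$ (equivalently $\ptau{\le 0}$, using Lemma~\ref{lemma:nonnegativeperverse}) to the PBW isomorphism kills all the $\Sym^{\ge 1}$ of the positive-degree part of $\HO_{\BoC^*}$ and all $\boxdot$-products of two or more nonzero-degree pieces of $\ulrelBPSLie{\Pi_Q}^{\mathrm{3d},\psi}$: one checks that $\ulrelBPSLie{\Pi_Q}^{\mathrm{3d},\psi}$ is concentrated in perverse degree zero (it is a mixed Hodge module), while $\HO_{\BoC^*}=\HO^*(\B\BoC^*)$ contributes a copy of $\BoL^{\ge 1}$, so $\boxdot$-products of positive-degree generators land in strictly positive perverse degree by the purity/semisimplicity statements (Lemma~\ref{lemma:BPSsemisimple}, Proposition~\ref{proposition:FreeSemiSimp}) combined with the fact that $\oplus$ is finite hence $\oplus_*$ is perverse $t$-exact. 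Taking $\pH{0}$ of $\Sym_\boxdot(\ulrelBPSLie{\Pi_Q}^{\mathrm{3d},\psi}\otimes\HO_{\BoC^*})$ therefore returns exactly $\relEnv{\ulrelBPSLie{\Pi_Q}^{\mathrm{3d},\psi}}$ (the $\boxdot$-enveloping algebra is by PBW the $\Sym_\boxdot$ of the Lie algebra, and the degree-zero truncation selects the ``no $\HO_{\BoC^*}$-fattening'' summand), and $\pH{0}$ of the target returns $\ulrelBPSalg{\Pi_Q}^\psi$ by definition. Finally one must verify that the resulting isomorphism is one of $\boxdot$-algebras, not merely of $\boxdot$-complexes: this follows because the PBW morphism is built from the CoHA multiplication, $\ptau{\le 0}$ is lax monoidal (Corollary~\ref{corollary:bpsalgissubalg}), and the enveloping-algebra structure on $\relEnv{\ulrelBPSLie{\Pi_Q}^{\mathrm{3d},\psi}}$ is induced by the same commutator bracket, so the two algebra structures match on the subobject $\ulrelBPSLie{\Pi_Q}^{\mathrm{3d},\psi}$, hence everywhere by the universal property of $\relEnv{-}$.

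The main obstacle I anticipate is the bookkeeping around perverse degrees in the truncation step: one needs a clean argument that every $\boxdot$-monomial in the generators of $\ulrelBPSLie{\Pi_Q}^{\mathrm{3d},\psi}$ tensored with a \emph{positive}-degree class from $\HO_{\BoC^*}$ is supported strictly above perverse degree $0$, uniformly across the infinitely many connected components of $\CM_{\Pi_Q}$, and that no cancellation in the perverse spectral sequence can resurrect such a class into $\pH{0}$. This is exactly the place where the semisimplicity/purity of $\ulrelBPSalg{\Pi_Q}$ (decomposition theorem for 2CY categories, \cite{davison2021purity}) and the finiteness of $\oplus$ --- hence $t$-exactness of $\boxdot$ --- do the real work, so the proof hinges on assembling those inputs rather than on any new geometric idea. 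Everything else is a formal consequence of the PBW theorem for critical CoHAs plus the dimensional-reduction dictionary, and I would present it in that order: (i) dimensional reduction identifies $\ulrelCoHA_{\Pi_Q}^\psi$ with $\ulrelCoHA^\psi_{\tilde Q,\tilde W}$ and defines the inclusion $\ulrelBPSLie{\Pi_Q}^{\mathrm{3d},\psi}\hookrightarrow\ulrelCoHA_{\Pi_Q}^\psi$; (ii) the critical PBW theorem gives $\Sym_\boxdot(\ulrelBPSLie{\Pi_Q}^{\mathrm{3d},\psi}\otimes\HO_{\BoC^*})\isoto\ulrelCoHA_{\Pi_Q}^\psi$; (iii) apply $\ptau{\le 0}$ and use perverse $t$-exactness of $\boxdot$ to obtain $\relEnv{\ulrelBPSLie{\Pi_Q}^{\mathrm{3d},\psi}}\isoto\ulrelBPSalg{\Pi_Q}^\psi$; (iv) check multiplicativity via the universal property of $\relEnv{-}$.
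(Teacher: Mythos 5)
Your proposal attempts to prove a result that the paper itself does not prove: Theorem~\ref{theorem:envBPSLieisBPSalg} is imported verbatim from \cite[Theorem~6.1]{davison2020bps}, so there is no in-paper argument to compare against. Your reconstruction has the right skeleton --- dimensional reduction to $(\tilde Q,\tilde W)$, the critical PBW isomorphism $\tilde{\Phi}^{\psi}$ of Theorem~\ref{theorem:pbwPiQ}, truncate to $\pH{0}$, then conclude --- and the perverse-degree bookkeeping in your step~(iii) is right: since $\ulrelBPSLie{\Pi_Q}^{\mathrm{3d},\psi}$ sits in perverse degree $0$ and $\oplus_*$ is perverse $t$-exact (finiteness of $\oplus$), $\pH{0}$ of $\Sym_{\boxdot}(\ulrelBPSLie{\Pi_Q}^{\mathrm{3d},\psi}\otimes \HO_{\BoC^*})$ is $\Sym_{\boxdot}(\ulrelBPSLie{\Pi_Q}^{\mathrm{3d},\psi})$ and $\pH{0}$ of the target is $\ulrelBPSalg{\Pi_Q}^{\psi}$ by definition.

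The weak point is step~(iv). Applying $\pH{0}$ to $\tilde{\Phi}^{\psi}$ gives an isomorphism of \emph{objects} $\Sym_{\boxdot}(\ulrelBPSLie{\Pi_Q}^{\mathrm{3d},\psi})\isoto\ulrelBPSalg{\Pi_Q}^{\psi}$, not of algebras --- the PBW morphism $\tilde\Phi^\psi$ is explicitly not an algebra map, and the symmetric algebra and the enveloping algebra are distinct as algebra objects even though they are isomorphic as objects (via the PBW symmetrization $\sigma\colon \Sym_{\boxdot}\to\relEnv{-}$, an isomorphism in a $\BoQ$-linear semisimple symmetric tensor category). Your conclusion ``the two algebra structures match on the subobject $\ulrelBPSLie{\Pi_Q}^{\mathrm{3d},\psi}$, hence everywhere by the universal property of $\relEnv{-}$'' only produces the algebra morphism $\hat\iota\colon\relEnv{\ulrelBPSLie{\Pi_Q}^{\mathrm{3d},\psi}}\to\ulrelBPSalg{\Pi_Q}^{\psi}$ (which already follows from the Lie inclusion); it does not show that $\hat\iota$ is an isomorphism. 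The missing observation is the factorization $\pH{0}(\tilde\Phi^\psi)=\hat\iota\circ\sigma$: both maps evaluate a symmetric $\boxdot$-tensor of elements of $\ulrelBPSLie{\Pi_Q}^{\mathrm{3d},\psi}$ by iterated multiplication, and the multiplication on $\ulrelBPSalg{\Pi_Q}^{\psi}$ is by construction the truncation of the CoHA multiplication. Since $\pH{0}(\tilde\Phi^\psi)$ and $\sigma$ are isomorphisms, so is $\hat\iota$. Once you insert this factorization, your argument closes cleanly; without it, you have constructed an isomorphism of objects and an algebra map between the same objects but have not linked them.
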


Suppose now that $Q$ is a totally negative quiver. By similar arguments as in the proof of Lemma~\ref{lemma:ICintoBPSAlg}, there exist morphisms 
$\underline{\IC}(\Msp_{\Pi_Q,\vec{d}}) \to \ulrelBPSalg{\Pi_Q}^{\psi}\subset \relCoHA_{\Pi_Q}^{\psi}$ which 
factor through the inclusion $\ulrelBPSLie{\Pi_Q}^{\mathrm{3d},\psi} \into \ulrelBPSalg{\Pi_Q}^{\psi}$ (see \cite[§7.1.1]{davison2020bps} for details). Thus, by the universal property of free Lie algebras, they induce a morphism of Lie algebras 
\begin{equation*}
\hat{\Phi}_{\Pi_Q}^{\psi}\colon \ulrelBPSLie{\Pi_Q} \longto \ulrelBPSLie{\Pi_Q}^{\mathrm{3d},\psi}.
\end{equation*}
By composing universal properties, the universal enveloping algebra of a free Lie algebra is always the free
algebra on the same generator(s). Altogether we have the following commutative diagram.
\begin{equation} \label{equation:FreeBPSLievsFreeBPSAlg}
\begin{tikzcd}
\Free_{\boxdot-\mathrm{Lie}}(\underline{\IC}(\Msp_{\Pi_Q,\Sigma})) 
\ar[d,hook] \ar[r,"\hat{\Phi}^{\psi}_{\Pi_Q}"]
& \ulrelBPSLie{\Pi_Q}^{\mathrm{3d},\psi} \ar[d,hook]
\\
\Free_{\boxdot-\mathrm{Alg}}(\underline{\IC}(\Msp_{\Pi_Q,\Sigma}))
\ar[r,"\Phi^{\psi}_{\Pi_Q}"]
& \ulrelBPSalg{\Pi_Q}^{\psi}.
\end{tikzcd}
\end{equation}

\begin{corollary} 
\label{corollary:FreeLiePreproj}
For all totally negative quivers $Q$ the morphism $\hat{\Phi}^{\psi}_{\Pi_Q}$ of $\boxdot$-Lie algebras is an isomorphism.
\end{corollary}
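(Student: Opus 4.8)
The strategy is to deduce the isomorphism $\hat\Phi^\psi_{\Pi_Q}$ at the Lie-algebra level purely formally from the already-established isomorphism $\Phi^\psi_{\Pi_Q}$ at the algebra level (Theorem~\ref{theorem:FreeAlgPreProj}, equivalently its $\psi$-twisted version via Lemma~\ref{free_signs}), together with the commutative square \eqref{equation:FreeBPSLievsFreeBPSAlg} and Theorem~\ref{theorem:envBPSLieisBPSalg}. The point is that both vertical arrows in \eqref{equation:FreeBPSLievsFreeBPSAlg} realise their sources as $\boxdot$-Lie subalgebras of their targets — on the left this is tautological (the free Lie algebra on a collection of generators sits inside the free associative algebra on the same generators as the Lie subalgebra generated by the generators), and on the right it is exactly the content of Theorem~\ref{theorem:envBPSLieisBPSalg}. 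So I would first observe that $\hat\Phi^\psi_{\Pi_Q}$ is the restriction of the isomorphism $\Phi^\psi_{\Pi_Q}$ to these Lie subalgebras, once we know that $\Phi^\psi_{\Pi_Q}$ carries the left Lie subalgebra \emph{onto} the right Lie subalgebra.

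First I would record that $\hat\Phi^\psi_{\Pi_Q}$ is injective: this is immediate from the commutativity of \eqref{equation:FreeBPSLievsFreeBPSAlg}, since the left vertical map is a monomorphism and $\Phi^\psi_{\Pi_Q}$ is an isomorphism, so the composite $\Phi^\psi_{\Pi_Q}\circ(\text{left vertical})=(\text{right vertical})\circ\hat\Phi^\psi_{\Pi_Q}$ is a monomorphism, forcing $\hat\Phi^\psi_{\Pi_Q}$ to be one. For surjectivity I would argue as follows. Write $\mathfrak{f}=\Free_{\boxdot-\Lie}(\underline{\IC}(\Msp_{\Pi_Q,\Sigma}))$ for the source and $F=\Free_{\boxdot-\Alg}(\underline{\IC}(\Msp_{\Pi_Q,\Sigma}))$ for its enveloping algebra. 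By the universal property of free Lie algebras, $\ulrelBPSLie{\Pi_Q}^{\mathrm{3d},\psi}$ is generated as a $\boxdot$-Lie algebra by the images of the $\underline{\IC}(\Msp_{\Pi_Q,\vec d})$, $\vec d\in\Sigma_{\Pi_Q}$, since those morphisms $\underline{\IC}(\Msp_{\Pi_Q,\vec d})\to\ulrelBPSLie{\Pi_Q}^{\mathrm{3d},\psi}$ are the generating maps (\cite[§7.1.1]{davison2020bps}) and under the identification $\relEnv{\ulrelBPSLie{\Pi_Q}^{\mathrm{3d},\psi}}\isoto\ulrelBPSalg{\Pi_Q}^\psi$ these images agree with the canonical inclusions $\underline{\IC}(\Msp_{\Pi_Q,\vec d})\hookrightarrow\ulrelBPSalg{\Pi_Q}^\psi$. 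Hence $\hat\Phi^\psi_{\Pi_Q}(\mathfrak{f})$ is a $\boxdot$-Lie subalgebra of $\ulrelBPSLie{\Pi_Q}^{\mathrm{3d},\psi}$ containing all of the generators, and therefore equals $\ulrelBPSLie{\Pi_Q}^{\mathrm{3d},\psi}$.

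I expect the only genuine subtlety to lie in making the last surjectivity argument rigorous in the category of mixed Hodge modules on $\Msp_{\Pi_Q}$: namely, that the $\boxdot$-Lie subalgebra generated by the generating sub-objects really is the whole of $\ulrelBPSLie{\Pi_Q}^{\mathrm{3d},\psi}$. This follows because $\ulrelBPSLie{\Pi_Q}^{\mathrm{3d},\psi}$ is, by construction of the BPS Lie algebra via dimensional reduction and the PBW theorem (\S\ref{subsubsection:3dBPSLiepreproj} and Theorem~\ref{theorem:envBPSLieisBPSalg}), generated as a Lie algebra inside $\ulrelBPSalg{\Pi_Q}^\psi$ by its primitive part, which by Lemma~\ref{lemma:ICPreprojPrimitive} and the freeness Theorem~\ref{theorem:FreeAlgPreProj} coincides exactly with $\bigoplus_{\vec d\in\Sigma_{\Pi_Q}}\underline{\IC}(\Msp_{\Pi_Q,\vec d})$; alternatively, one invokes that a sub-Lie-algebra of a free Lie algebra $\mathfrak{f}$ whose enveloping algebra is all of $U(\mathfrak{f})$ must be all of $\mathfrak{f}$. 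Either way, combining injectivity with surjectivity gives that $\hat\Phi^\psi_{\Pi_Q}$ is an isomorphism of $\boxdot$-Lie algebras in $\MHM(\Msp_{\Pi_Q})$, and applying $\rat$ (which is exact) yields the corresponding statement for perverse sheaves.
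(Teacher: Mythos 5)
The monomorphism step in your proposal matches the paper's exactly: commutativity of \eqref{equation:FreeBPSLievsFreeBPSAlg}, monomorphicity of the left vertical arrow, and isomorphy of $\Phi^{\psi}_{\Pi_Q}$ (Theorem~\ref{theorem:FreeAlgPreProj}) force $\hat\Phi^{\psi}_{\Pi_Q}$ to be a monomorphism. The gap is in your surjectivity argument.

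Your statement that ``$\ulrelBPSLie{\Pi_Q}^{\mathrm{3d},\psi}$ is generated as a $\boxdot$-Lie algebra by the images of the $\underline{\IC}(\Msp_{\Pi_Q,\vec d})$'' does not follow from the universal property of free Lie algebras --- that property merely supplies the map $\hat\Phi^{\psi}_{\Pi_Q}$ and says nothing about whether the image exhausts the target; the claim is essentially equivalent to the surjectivity you want. Your first fallback (that this is ``by construction'') is also not right: $\ulrelBPSLie{\Pi_Q}^{\mathrm{3d},\psi}$ is defined in \S\ref{subsubsection:3dBPSLiepreproj} via dimensional reduction of the vanishing-cycle complex on $\Mst_{(\tilde Q,\tilde W)}$ and a support lemma, not as the Lie subalgebra of $\ulrelBPSalg{\Pi_Q}^{\psi}$ generated by the $\underline{\IC}$ pieces. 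Your second fallback is misstated: the target is not a free Lie algebra (that is precisely what the corollary is trying to conclude), and $\hat\Phi^{\psi}_{\Pi_Q}(\mathfrak f)$ is a Lie subalgebra of $\ulrelBPSLie{\Pi_Q}^{\mathrm{3d},\psi}$, not of $\mathfrak f$. What you actually need is that $U(\hat\Phi^{\psi}_{\Pi_Q})=\Phi^{\psi}_{\Pi_Q}$ being an isomorphism forces $\hat\Phi^{\psi}_{\Pi_Q}$ to be one, i.e.\ conservativity of the enveloping-algebra functor on $\boxdot$-Lie algebra objects --- which requires an object-level PBW theorem in $(\MHM(\Msp_{\Pi_Q}),\boxdot)$ that you do not establish. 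The paper circumvents this: it applies PBW only at the level of Grothendieck groups, deducing $\bigl[\Free_{\boxdot-\Lie}(\underline{\IC}(\Msp_{\Pi_Q,\Sigma}))\bigr]=\bigl[\ulrelBPSLie{\Pi_Q}^{\mathrm{3d},\psi}\bigr]$ in $\K(\MHM(\Msp_{\Pi_Q}))$ from Theorems~\ref{theorem:FreeAlgPreProj} and~\ref{theorem:envBPSLieisBPSalg}, notes both objects are semisimple mixed Hodge modules, and then concludes because a monomorphism of semisimple objects with equal $\K$-class is automatically an isomorphism. This last step is the piece your argument is missing.
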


\begin{proof}
By the PBW theorem for Lie algebra objects in symmetric tensor categories we
have the equations in $\K(\MHM(\Msp_{\Pi_Q}))$
\begin{align*}
\left[ \Sym_{\boxdot}(\Free_{\boxdot-\Lie}(\underline{\IC}(\Msp_{\Pi_Q,\Sigma})))\right] &= \left[\Free_{\boxdot-\Alg}(\underline{\IC}(\Msp_{\Pi_Q,\Sigma}))\right] & (\relEnv{\Free_{\boxdot-\mathrm{Lie}}}=\Free_{\boxdot-{\mathrm{Alg}}}) \\
&= \left[\ulrelBPSalg{\Pi_Q}\right] & (\text{Theorem~\ref{theorem:FreeAlgPreProj}})\\
&= \left[\Sym_{\boxdot}(\ulrelBPSLie{\Pi_Q}^{\mathrm{3d}})\right] &(\text{Theorem~\ref{theorem:envBPSLieisBPSalg} = \cite[Theorem~6.1]{davison2020bps}}).
\end{align*}
It follows that in $\K(\MHM(\Msp_{\Pi_Q}))$ we have $\left[\Free_{\boxdot-\Lie}(\underline{\IC}(\Msp_{\Pi_Q,\Sigma}))\right] = \left[ \ul{\BPS}_{\Pi_Q}^{\mathrm{3d}}\right]$. 

Since $\Phi^{\psi}_{\Pi_Q}$ is a monomorphism (by Theorem~\ref{theorem:FreeAlgPreProj})
and the diagram \eqref{equation:FreeBPSLievsFreeBPSAlg} is commutative, it follows that $\hat{\Phi}_{\Pi_Q}^{\psi}$ is also a monomorphism.
Thus $\hat{\Phi}_{\Pi_Q}^{\psi}$ is a monomorphism between two semisimple mixed Hodge modules of the same class in $\K(\MHM(\Msp_{\Pi_Q}))$, hence
it must be an isomorphism.
\end{proof}

\subsection{PBW and Cohomological integrality for totally negative 2CY categories}

\label{subsection:PBW-2CY}

We recall the following theorem:
\begin{theorem}[{\cite{davison2016integrality}}]
\label{theorem:pbwPiQ}
Let $Q$ be a quiver. The morphism $\ulrelBPSLie{\Pi_Q}\otimes \HO_{\BoC^{\ast}} \to \ulrelCoHA_{\Pi_Q}^{\psi}$ induced by the $\HO_{\BoC^{\ast}}$-action on the target coming from a positive determinant line bundle on $\FM_{\Pi_Q}$ \S\ref{subsection:detlbalgebras}, along with the multiplication on the target, induces an isomorphism in $\D^{+}(\MHM(\Msp_{\Pi_Q}))$
\begin{equation*}
\tilde{\Phi}_{\Pi_Q}^{\psi}\colon \Sym_{\boxdot}(\ulrelBPSLie{\Pi_Q}^{\mathrm{3d},\psi} \otimes \HO_{\BoC^{\ast}}) \longisoto \ulrelCoHA_{\Pi_Q}^{\psi}.
\end{equation*}
\end{theorem}
We can restrict this isomorphism to the submonoid $\imath_{\Nil}\colon\BoN^{Q_0}\rightarrow\mathcal{M}_{\Pi_Q}$.

\begin{corollary}
\label{corollary:PBWfibre}
 We have a PBW isomorphism
 \[
 \imath_{\Nil}^!\tilde{\Phi}_{\Pi_Q}^{\psi}\colon\Sym_{\boxdot-\Alg}\left(\imath_{\Nil}^!(\ulrelBPSLie{\Pi_Q}^{\mathrm{3d},\psi})\otimes \HO_{\BoC^{\ast}}\right)\rightarrow \imath_{\Nil}^!(\underline{\mathscr{A}}_{\Pi_Q}^{\psi})=\ulrelCoHA_{\Pi_Q}^{\nil,\psi},
 \]
 in the symmetric tensor category $\D^+(\MHM(\BoN^{Q_0}))$.
\end{corollary}
\begin{proof}
 The functor $\imath^{!}_{\Nil}\colon \D^{+}(\MHM(\Msp_{\Pi_Q})) \to \D^+(\MHM(\BoN^{Q_0}))$ is a strict monoidal functor by Lemma \ref{lemma:strictmonoidalfunctor}. 
\end{proof}

\begin{proof}[Proof of Theorem \ref{theorem:pbwtotnegative2CY}]
We want to prove that the morphism of objects in $\D^{+}(\MHM(\Msp_{\CA}))$ 
\[
 \tilde{\Phi}^{\psi}\colon \Sym_{\boxdot}\left(\Free_{\boxdot-\Lie}(\underline{\IC}(\mathcal{M}_{\CA,\Sigma}))\otimes \HO_{\BoC^{\ast}}\right)\rightarrow \ulrelCoHA_{\JH}^{\psi}
\]
defined via the Hall algebra product on the target is an isomorphism.

Let $\cone(\tilde{\Phi}^{\psi})$ be the cone of $\tilde{\Phi}^{\psi}$, so that we have a distinguished triangle
\[
 \Sym_{\boxdot}\left(\Free_{\boxdot-\Lie}(\underline{\IC}(\mathcal{M}_{\CA,\Sigma}))\otimes \HO_{\BoC^{\ast}}\right)\xrightarrow{\tilde{\Phi}^{\psi}} \ulrelCoHA_{\JH} \rightarrow \cone(\tilde{\Phi}^{\psi})\rightarrow.
\]
If $\imath_S\colon S\rightarrow \mathcal{M}$ is a subspace, we have a distinguished triangle 
 \[
 \imath_S^!\Sym_{\boxdot}\left(\Free_{\boxdot-\Lie}(\underline{\IC}(\mathcal{M}_{\CA,\Sigma}))\otimes \HO_{\BoC^{\ast}}\right)\xrightarrow{\imath_S^!\tilde{\Phi}^{\psi}} \imath_S^!\ulrelCoHA_{\JH}\rightarrow \imath_S^!\cone(\tilde{\Phi}^{\psi})\rightarrow.
\]
The set $S$ will later be specialised to the closed discrete submonoid of $\mathcal{M}$ corresponding to a $\Sigma$-collection. 

If, for a contradiction, $\cone(\tilde{\Phi}^{\psi})$ does not vanish, there exists a closed $\BoC$-point $x\in\mathcal{M}_{\CA}$ such that, if $\imath_x$ is the inclusion of $x$ in $\mathcal{M}_{\CA}$, $\imath_x^!\cone(\tilde{\Phi}^{\psi})\neq 0$ (by Corollary~\ref{corollary:nonvanishingcomplexMHM}). Let $\underline{\mathcal{F}}=\{\mathcal{F}_1,\hdots,\mathcal{F}_r\}$ be the $\Sigma$-collection of simple objects of $\mathcal{A}$ associated to $x$. We let $Q$ be a half of the Ext-quiver of $\underline{\mathcal{F}}$. The morphism of monoids $\imath_{\underline{\mathcal{F}}}\colon \BoN^{\underline{\mathcal{F}}}\rightarrow \mathcal{M}_{\CA}$ of \S\ref{subsubsection:extquiver} induces a distinguished triangle
\[
 \Sym_{\boxdot}\left(\Free_{\boxdot-\Lie}(\imath_{\underline{\mathcal{F}}}^!\underline{\IC}(\mathcal{M}_{\CA,\Sigma}))\otimes \HO_{\BoC^{\ast}}\right)\xrightarrow{\imath_{\underline{\mathcal{F}}}^!\tilde{\Phi}^{\psi}} \imath_{\underline{\mathcal{F}}}^!(\ulrelCoHA_{\JH})\rightarrow \imath_{\underline{\mathcal{F}}}^!\cone(\tilde{\Phi}^{\psi})\rightarrow.
\]

By the compatibility of the CoHA multiplication on fibres (Corollary~\ref{corollary:CoHAcompatibiltiySigmacoll}), and by the PBW theorem for preprojective algebras of quivers (more specifically by Corollary~\ref{corollary:PBWfibre} and Corollary~\ref{corollary:FreeLiePreproj}), $\imath_{\underline{\mathcal{F}}}^!\tilde{\Phi}^{\psi}$ is an isomorphism. Therefore, $\imath_{\underline{\mathcal{F}}}^!\cone(\tilde{\Phi}^{\psi})$ vanishes and so does $\iota_x^!\cone(\tilde{\Phi}^{\psi})$.  We obtain a contradiction, proving that $\tilde{\Phi}^{\psi}$ is an isomorphism.
\end{proof}
\subsection{Comparison with the ``3d'' definition of BPS sheaves and BPS cohomology for Higgs bundles}
\label{KK_Higgs_sec}
Let $C$ be a smooth projective connected curve of genus $g \geq 2$. In \cite{kinjo2021cohomological} Kinjo--Koseki give a definition of the BPS sheaf $\ulBPS_\theta^{\Dol,\mathrm{3d}}$ and BPS cohomology $\aBPS_\theta^{\Dol,\mathrm{3d}}$ for the category $\Higgs_{\theta}^{\sst}(C)$.

We recall their definition of $\ulBPS_{\theta}^{\Dol,\mathrm{3d}}(C)$.
In \cite[Theorem~5.6]{kinjo2021global} the moduli stack $\Mst_{X,\theta} $ of 1-dimensional semistable sheaves on $X \coloneqq \Tot_{C}(K_C \oplus \CO_C)$ of slope $\theta$ is realized as a critical locus of a function $f\colon \Mst^{\sst}_{\Tot_{C}(K_C(p)),\theta} \to \BoC$ on the moduli stack of one-dimensional semistable sheaves on $\Tot_{C}(K_C(p))$ of the same slope $\theta$, where $p \in C$ is a closed point.
The BPS sheaf for 1-dimensional semistable sheaves on $X$ is defined just as in the case of symmetric quivers with potential. Let $\Mst_{X,\theta}^{\sst} \subset \Mst_{X,\theta}$ be the open substack of semistable sheaves of slope $\theta$.
Consider the vanishing cycle sheaf $\phi_f \ulBoQ^{\vir}_{\Mst^{\sst}_{X}}$ and the good moduli space $\tilde{\JH}\colon \Mst^{\sst}_{X} \to \Msp^{\sst}_{X}$. Define the BPS sheaf by $\ulBPS_{X} \coloneqq \left(\tau^{\leq 1} \tilde{\JH}_{*}\phi_f \ulBoQ^{\vir}_{\Mst^{\sst}_{X}}\right)[1]$. By pushing forward along the projection $\Msp^{\sst}_{X} \to \Msp_{\Tot_C(K_C),\theta}^{\sst} = \Msp^{\Dol}_{\theta}(C)$ we give the ``3d'' definition of the BPS sheaf 
$\ulBPS_{\theta}^{\Dol,\mathrm{3d}} \coloneqq (\Msp^{\sst}_{X} \to \Msp^{\Dol}_{\theta}(C))_{\ast} \ulBPS_{X}[-1]$, which is a semisimple mixed Hodge module by \cite[Proposition 5.12]{kinjo2021cohomological}.

Let $\underline{\IC}(\Msp_{\theta}^{\Dol}(C)) \coloneqq \bigoplus_{d/r=\theta}\underline{\IC}(\Msp_{d,r}^{\Dol}(C))$. Since $\ulBPS_{\theta}^{\Dol,\mathrm{3d}}$ and $\Free_{\boxdot-\Lie}(\underline{\IC}(\Msp_{\theta}^{\Dol}(C))$ are semisimple mixed Hodge modules, the following computation in $\K(\D^+(\MHM(\Msp_{\theta}^{\Dol}(C))))$ implies that they represent the same class in $\K(\MHM(\Msp_{\theta}^{\Dol}(C)))$. 
\begin{align*}
[\Sym_{\boxdot}(\ulBPS_{\theta}^{\Dol,\mathrm{3d}}(C) \otimes \HO_{\BoC^{\ast}})] &= [\ulrelCoHA_{\theta}^{\Dol}(C)] &\text{\cite[Theorem~5.16]{kinjo2021cohomological}}\\
&= \left[\Sym_{\boxdot}\left(\Free_{\boxdot-\Lie}\left(\underline{\IC}(\Msp_{\theta}^{\Dol}(C))\right) \otimes \HO_{\BoC^{\ast}}\right)\right] & (\text{Theorem~\ref{theorem:pbwtotnegative2CY}}).
\end{align*}
This implies that there is some non-canonical isomorphism 
\begin{equation}
\label{3dHiggsIT}
\ulBPS_{\theta}^{\Dol,\mathrm{3d}}(C) \cong \Free_{\boxdot-\Lie}(\underline{\IC}(\Msp_{\theta}^{\Dol}(C)))
\end{equation}

Unlike for the case of quivers, the 3d-definition of the BPS sheaf $\ulBPS_{\theta}^{\Dol,\text{3d}}(C) \subset \ulrelCoHA_{\theta}^{\Dol}(C)$ and BPS cohomology $\aBPS_{\theta}^{\Dol,\text{3d}}(C)\subset \HO^{*}(\ulrelCoHA_{\theta}^{\Dol}(C))$ have not been shown to be closed under the commutator bracket and therefore are not obviously Lie algebra objects\footnote{Since the first version of this paper appeared, this situation has been remedied: see \cite[\S 8.2.8]{BDINKP}}. Without showing this, we cannot mimic \S\ref{subsubsection:3dBPSLiepreproj} to obtain a canonical morphism $\Free_{\boxdot-\Lie}(\underline{\IC}(\Msp_{\theta}^{\Dol}(C))) \to \ulBPS_{\theta}^{\Dol,\mathrm{3d}}(C)$.
Here we see the advantage of defining the BPS sheaf and BPS cohomology as we do in this paper: they automatically have the structure of a Lie algebra object.

\section{Nonabelian Hodge theory for stacks}
\label{NAHT_sec}

Let $C$ be a smooth projective complex curve of genus $g$. For $\theta\in\BoQ\cup \{\infty\}$ we recall from \S\ref{Higgs_background_sec} the definition of the category $\Higgs^{\sst}_{\theta}(C)$: the full subcategory of the category of Higgs sheaves on $C$ that are either semistable of slope $\theta$, or the zero Higgs sheaf. This category is Abelian, finite length, and closed under extensions in $\Higgs(C)$.

By the classical nonabelian Hodge isomorphism \cite{hitchin1987self,donaldson1987twisted,simpson1992higgs}, there is a homeomorphism
\[
\Psi_{r,d}\colon \CM^{\Dol}_{r,d}(C)\xrightarrow{\cong}\CM^{\Betti}_{g,r,d}.
\]
This homeomorphism induces an isomorphism in Borel--Moore homology
\begin{equation}
\label{NAHTBoMo}
\Phi\colon \HO^{\BoMo}_*(\CM^{\Dol}_{r,d}(C),\BoQ)\rightarrow \HO^{\BoMo}_*(\CM^{\Betti}_{g,r,d},\BoQ).
\end{equation}
One of our main motivations was to produce an isomorphism in Borel--Moore homology for moduli \textit{stacks}, analogous to \eqref{NAHTBoMo}. Note that if the genus of $C$ is zero or one, such an isomorphism is known to exist, since the Borel--Moore homology of the Dolbeault and Betti stacks can be explicitly calculated, see \cite{davison2023nonabelian} for details. So in this paper we concentrate on the case $g\geq 2$, i.e. the case in which $\Higgs^{\sst}_{\theta}(C)$ is totally negative (Proposition \ref{g_t_n}).
\smallbreak
For a nonzero rational number $\theta$, which we may write as $\theta=a/b$ with $a,b\in\BoZ$, $a>0$ and $\gcd(a,b)=1$, we define
\[
\CM^{\Dol}_{\theta}(C)\coloneqq\coprod_{n\in\BoZ_{\geq 0}}\CM^{\Dol}_{na,nb}(C)\quad\quad
\CM^{\Betti}_{g,\theta}\coloneqq\coprod_{n\in \BoZ_{\geq 0}}\CM^{\Betti}_{g,na,nb}.
\]
We define $\FM^{\Dol}_{\theta}(C)$ and $\FM^{\Betti}_{g,\theta}$ similarly. We define 
\begin{align}
\aBPS^{\Betti}_{\Lie,g,\theta}\coloneqq &\Free_{\Lie}\left(\bigoplus_{n\geq 0}\ICA(\Msp^{\Betti}_{g,na,nb} \label{naht_bps})\right)\\
\aBPS^{\Dol}_{\Lie,\theta}(C)\coloneqq &\Free_{\Lie}\left(\bigoplus_{n\geq 0}\ICA(\Msp^{\Dol}_{na,nb}(C))\right). \nonumber
\end{align}
The Lie algebra $\aBPS^{\Dol}_{\Lie,\theta}(C)$ carries a bigrading (taking into account ranks and degrees), and for $r\geq 1$ and $d\in\BoZ$ we define $\aBPS^{\Dol}_{\Lie,r,d}(C)$ to be the piece of $\aBPS^{\Dol}_{\Lie,\theta}(C)$ of bidegree $(r,d)$, where $\theta=d/r$. We define $\aBPS^{\Betti}_{\Lie,g,r,d}$ similarly.
\begin{lemma}
\label{Lem:monNAHT}
Let $\theta\in\BoQ$. The morphism $\Psi\colon \CM^{\Dol}_{\theta}(C)\rightarrow \CM^{\Betti}_{g,\theta}$ is an isomorphism of monoid objects in the category of topological spaces.
\end{lemma}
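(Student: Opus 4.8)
The statement has two parts: first, that the homeomorphism $\Psi$ (assembled from the individual $\Psi_{r,d}$) is well-defined as a map of the disjoint unions $\CM^{\Dol}_\theta(C) \to \CM^{\Betti}_{g,\theta}$ respecting the $\BoZ_{\geq 0}$-grading by $n$ (equivalently by rank/degree along slope $\theta$); second, that it intertwines the direct-sum monoid structures. The grading claim is immediate: classical nonabelian Hodge theory, via the associated polystable/semisimple objects, sends a rank-$na$ degree-$nb$ semistable Higgs bundle to a semisimple local system on $C \setminus \{p\}$ of rank $na$ with the prescribed monodromy $\zeta_{\ul r}^{\ul d}$ around $p$, so $\Psi$ restricts to $\Psi_{na,nb}\colon \CM^{\Dol}_{na,nb}(C) \to \CM^{\Betti}_{g,na,nb}$ for each $n$, and these assemble to a homeomorphism of the disjoint unions. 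The unit is sent to the unit (the zero object maps to the zero object on both sides).

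\textbf{The monoid compatibility.} The plan is to use the fact that the nonabelian Hodge correspondence at the level of coarse moduli spaces is induced by an \emph{equivalence of Abelian categories} between polystable Higgs bundles of slope $\theta$ on $C$ and semisimple representations of $\BoC[\pi_1(\Sigma_g \setminus \{p\}), \lambda]$ (this is the Corlette--Simpson correspondence for the parabolic/twisted setting, together with the fact that a semistable Higgs bundle and its associated polystable object are identified in the coarse moduli space, and similarly a representation and its semisimplification are identified in the Betti coarse moduli space). Write $\mathcal{E}_{\Dol}$ for the category of polystable Higgs bundles of slope $\theta$ and $\mathcal{E}_{\Betti}$ for the category of semisimple twisted local systems; both are semisimple Abelian categories, and their sets of isomorphism classes are precisely the $\BoC$-points of $\CM^{\Dol}_\theta(C)$ and $\CM^{\Betti}_{g,\theta}$ respectively. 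The correspondence gives an equivalence $F\colon \mathcal{E}_{\Dol} \isoto \mathcal{E}_{\Betti}$, and since $F$ is additive it sends $E_1 \oplus E_2$ to $F(E_1) \oplus F(E_2)$. Thus the bijection on $\BoC$-points induced by $\Psi$ satisfies $\Psi(x_1 \oplus x_2) = \Psi(x_1) \oplus \Psi(x_2)$.

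\textbf{From $\BoC$-points to morphisms of schemes/topological spaces.} Since $\CM^{\Dol}_\theta(C)\times\CM^{\Dol}_\theta(C)$ and $\CM^{\Betti}_{g,\theta}$ are of finite type over $\BoC$, and $\Psi$ is continuous, the two continuous maps $\Psi \circ \oplus_{\Dol}$ and $\oplus_{\Betti} \circ (\Psi \times \Psi)$ from $\CM^{\Dol}_\theta(C)^{\times 2}$ to $\CM^{\Betti}_{g,\theta}$ agree on the dense set of $\BoC$-points (the complex points are dense in the analytic topology on a finite-type $\BoC$-scheme, and $\CM^{\Betti}_{g,\theta}$ is separated so the locus where two continuous maps agree is closed), hence they coincide. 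Associativity and unitality of $\oplus_{\Betti}$ then transport through $\Psi$ to the required compatibility diagrams. This gives that $\Psi$ is a morphism of monoid objects in topological spaces, and since $\Psi^{-1}$ is also continuous and (by the same argument applied to the inverse equivalence) additive on points, $\Psi$ is an isomorphism of monoids.

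\textbf{Main obstacle.} The only genuine point requiring care is the identification of $\Psi$ (a priori just a homeomorphism of coarse moduli spaces produced by Hodge-theoretic analysis) with the map induced by an additive equivalence of categories of polystable/semisimple objects; one must check that the homeomorphism $\Psi_{r,d}$ of Hitchin--Donaldson--Corlette--Simpson is indeed, on $\BoC$-points, the Corlette--Simpson bijection between isomorphism classes of polystable Higgs bundles and semisimple twisted local systems — i.e., that passing to coarse moduli spaces is compatible with passing to associated graded/semisimplification on both sides. This is standard (it is how $\Psi_{r,d}$ is constructed in the references \cite{hitchin1987self,donaldson1987twisted,simpson1992higgs} together with the twisted versions), so no new argument is needed, but it is the conceptual heart of the statement. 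Once that identification is in hand, everything else is formal.
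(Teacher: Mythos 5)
Your proof is correct and takes essentially the same route as the paper: reduce to checking equality on $\BoC$-points, then observe that the classical nonabelian Hodge correspondence on polystable objects is constructed summand-by-summand and hence respects direct sums. One small over-complication worth flagging: in the category of topological spaces (which is what the lemma asserts), the underlying set of the analytification of a finite-type $\BoC$-scheme \emph{is} its set of $\BoC$-points, so there is no density/separatedness argument to make — two continuous maps coincide if and only if they agree on every point, full stop. The paper simply says ``it is sufficient to show that the two morphisms\ldots are the same at the level of points,'' which is all you need. Your invocation of density and of the closedness of the agreement locus reads as if you were confusing the Zariski topology (where closed points are a dense proper subset) with the analytic topology (where they exhaust the space). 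The conceptual heart you identify — that $\Psi_{r,d}$ on $\BoC$-points agrees with the Corlette--Simpson bijection between isomorphism classes of polystable Higgs bundles and semisimple twisted local systems, and that this bijection is induced by an additive equivalence — is exactly the same observation the paper makes by citing Simpson.
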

\begin{proof}
Since we are working in the category of topological spaces it is sufficient to show that the two morphisms
\[
\CM^{\Dol}_{n'a,n'b}(C)\times \CM^{\Dol}_{n''a,n''b}(C)\rightarrow \CM^{\Betti}_{g,(n'+n'')a,(n'+n'')b}
\]	
given by $\Psi_{(n'+n'')a,(n'+n'')b}\circ \oplus$ and $\oplus\circ(\Psi_{n'a,n'b}\times\Psi_{n''a,n''b})$ are the same at the level of points. This follows from the construction of $\Psi$: given a polystable Higgs bundle $\CF=\CF_1\oplus\ldots\oplus \CF_l$ the corresponding semisimple twisted $\pi_1(C)$-module is explicitly constructed via the nonabelian Hodge isomorphism applied to the summands $\CF_1,\ldots,\CF_l$, see \cite{simpson1992higgs} for details.
\end{proof}

We denote by 
\[
p_{\Dol}\colon \FM^{\Dol}_{\theta}(C)\rightarrow \CM^{\Dol}_{\theta}(C),\quad\quad
p_{\Betti}\colon \FM^{\Betti}_{g,\theta}\rightarrow \CM^{\Betti}_{g,\theta}
\]
the morphisms from the moduli stacks to the moduli spaces of objects, and define $\ulrelCoHA^{\Dol}_{\theta}(C)\coloneqq p_{\Dol,*}\BD\ulBoQ_{\FM^{\Dol}_{\theta}(C)}^{\vir}$ and $\ulrelCoHA^{\Betti}_{g,\theta}\coloneqq p_{\Betti,*}\BD\ulBoQ_{\FM^{\Betti}_{g,\theta}}^{\vir}$. For the next theorems we assume that the genus of $C$ is at least $2$ so that the relevant categories of Higgs bundles and twisted representations of the fundamental group are totally negative. The following is a special case of (the MHM version of) Theorem \ref{theorem:freenesspreprojective}.
\begin{theorem}
Assume $g(C)\geq 2$. There are isomorphisms of algebra objects in $\MHM(\Msp^{\Dol}_{\theta}(C))$ and $\MHM(\Msp^{\Betti}_{g,\theta})$ respectively:
\begin{align*}
\Free_{\boxdot\mathrm{-Alg}}\left(\underline{\IC}(\CM^{\Dol}_{\theta}(C))\right)\rightarrow &\CH^0(\ulrelCoHA^{\Dol}_{\theta}(C))\\
\Free_{\boxdot\mathrm{-Alg}}\left(\underline{\IC}(\CM^{\Betti}_{\theta})\right)\rightarrow &\CH^0(\ulrelCoHA^{\Betti}_{g,\theta})
\end{align*}
extending the inclusions $\underline{\IC}(\CM^{\Dol}_{\theta}(C))\hookrightarrow \CH^0(\ulrelCoHA^{\Dol}_{\theta}(C))$ and $\underline{\IC}(\CM^{\Betti}_{g,\theta}))\hookrightarrow \CH^0(\ulrelCoHA^{\Betti}_{g,\theta})$ from \S \ref{primitives_sec}.
\end{theorem}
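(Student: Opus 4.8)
The plan is to observe that the statement is a direct consequence of Theorem~\ref{theorem:freenesspreprojective} (equivalently its MHM incarnation Theorem~\ref{theorem:FreeAlgPreProj}), once we verify that the categories $\Higgs^{\sst}_{\theta}(C)$ and $\Rep(\BoC[\pi_1(\Sigma_g\setminus\{p\}),\lambda])$ both fall within the scope of that theorem. Concretely, for the Dolbeault side we use the BNR correspondence of \S\ref{subsubsection:BNRcorrespondence} to identify $\Higgs^{\sst}_{\theta}(C)$ with an Abelian category of semistable coherent sheaves on the quasi-projective symplectic surface $S=\Tan^*\!C$ with $K_S\cong \CO_S$, which by \S\ref{geometry_constr_sec} and \S\ref{Higgs_background_sec} satisfies Assumptions~\ref{p_assumption}--\ref{BPS_cat_assumption}; for the Betti side we use Proposition~\ref{proposition:fundamentalgrouppreprojec} and the discussion of \S\ref{subsubsection:dgrepfundgroup} to identify the category of twisted representations of the fundamental group with an open subcategory of $\Rep(\Lambda^{\lambda}(S_g))$ for the multiplicative preprojective algebra of the genus-$g$ one-vertex quiver, which likewise satisfies all the assumptions by \cite{kaplan2019multiplicative}, \S\ref{algebra_constr_sec}, and \cite{brav2019relative}.

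The main point after this reduction is the total negativity hypothesis: this is exactly where $g\geq 2$ enters. For Higgs bundles this is Proposition~\ref{g_t_n}, which follows from the Euler form formula \eqref{equation:EulerformHiggs} giving $(\overline{\CF},\overline{\CG}) = 2(1-g)\rank(\overline{\CF})\rank(\overline{\CG}) < 0$; for the deformed fundamental group algebra this is the Corollary following \eqref{equation:eulerformfundamentalgroup}, with the same form $2(1-g)de$. Having checked that both are totally negative $2$CY categories meeting Assumptions~\ref{p_assumption}--\ref{BPS_cat_assumption}, I would then simply invoke Theorem~\ref{theorem:freenesstotneg2CY}, whose MHM version asserts precisely that the canonical morphism $\Phi_{\CA}$ out of the free $\boxdot$-algebra on the intersection complexes $\bigoplus_{a\in\Sigma_{\CA}}\underline{\IC}(\Msp_{\CA,a})$ is an isomorphism of $\boxdot$-algebras in $\MHM(\Msp_{\CA})$.

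The only remaining bookkeeping is to match the indexing sets. On the Dolbeault side one must identify $\Sigma_{\Higgs^{\sst}_{\theta}(C)}$ with the full set $\{(na,nb)\mid n\geq 0,\ (a,b)=(\underline{r},\underline{d})\}$ of classes of slope $\theta$, i.e. to check that $\Higgs^{\sst}_{\theta}(C)$ admits a simple (= stable) object of every such class; this is classical, since moduli spaces of semistable Higgs bundles of fixed rank and degree are nonempty and contain stable objects for $g\geq 2$ (e.g. via the Hitchin map and the nonemptiness of $\CM^{\Dol}_{r,d}(C)$). On the Betti side the analogous statement follows by transporting this along the homeomorphism $\Psi$, or directly: irreducible representations exist in every dimension divisible by $\underline{r}$. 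Thus $\bigoplus_{a\in\Sigma_{\CA}}\underline{\IC}(\Msp_{\CA,a})$ is, in the Dolbeault case, $\underline{\IC}(\CM^{\Dol}_{\theta}(C)) = \bigoplus_{n\geq 0}\underline{\IC}(\CM^{\Dol}_{na,nb}(C))$ (and we discard the zero component, or rather it contributes only the unit), and similarly in the Betti case. Feeding this identification and the already-constructed inclusions $\underline{\IC}(\CM^{\Dol}_{\theta}(C))\hookrightarrow \CH^0(\ulrelCoHA^{\Dol}_{\theta}(C))$ from \S\ref{primitives_sec} (noting $\CH^0(\ulrelCoHA^{\Dol}_{\theta}(C)) = \BPS_{\Alg}$ for this category) into $\Phi_{\CA}$ yields the two asserted isomorphisms.

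\textbf{Main obstacle.} There is no serious mathematical difficulty here — everything is a specialisation of Theorem~\ref{theorem:freenesstotneg2CY}. The one genuinely delicate point to write carefully is the verification, on the Betti side, that passing from $\Lambda^{\lambda}(S_g)$ to its localisation $\Lambda^{\lambda}(S_g)'\cong \BoC[\pi_1(\Sigma_g\setminus\{p\}),\lambda]$ is compatible with the CoHA/BPS formalism — i.e. that the stack $\FM^{\Betti}_{g,\theta}$ is an open substack of the stack of $\Lambda^{\lambda}(S_g)$-modules in a way that makes the RHom-complex, the good moduli space, and Assumptions~\ref{p_assumption}--\ref{BPS_cat_assumption} restrict correctly. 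This is handled by the ``saturated open substack'' argument already used in Proposition~\ref{geometry_sumup_prop} and in \S\ref{subsubsection:dgrepfundgroup}, so it amounts to citing those, but it is the step where one could accidentally lose track of hypotheses.
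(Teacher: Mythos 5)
Your proposal is correct and matches the paper's approach exactly: the paper states this theorem as an immediate special case of the general freeness theorem for totally negative 2CY categories (Theorem~\ref{theorem:freenesstotneg2CY} / its MHM version Theorem~\ref{theorem:FreeAlg2CY}), with the relevant verification of Assumptions~\ref{p_assumption}--\ref{BPS_cat_assumption} and total negativity having been carried out in \S\ref{geometry_constr_sec}, \S\ref{Higgs_background_sec} and \S\ref{subsubsection:dgrepfundgroup}. Your more careful bookkeeping of $\Sigma_{\CA}$ and the ``saturated open substack'' point for the Betti side just spells out what the paper leaves implicit.
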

The following is a special case of Theorem \ref{theorem:pbwtotnegative2CY}.
\begin{theorem}
\label{Thm:naFree}
Assume $g(C)\geq 2$. The natural morphisms of mixed Hodge structures
\begin{align*}
\Sym_{\boxdot}\left(\Free_{\mathrm{Lie}}(\ICA(\CM^{\Dol}_{\theta}(C)))\otimes\HO_{\BoC^{\ast}}\right)&\rightarrow \HO^*(\ulrelCoHA^{\Dol}_{\theta}(C))\\
\Sym_{\boxdot}\left(\Free_{\mathrm{Lie}}(\ICA(\CM^{\Betti}_{g,\theta}))\otimes\HO_{\BoC^{\ast}}\right)&\rightarrow \HO^*(\ulrelCoHA^{\Betti}_{g,\theta})
\end{align*}
are isomorphisms.
\end{theorem}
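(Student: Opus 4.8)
The statement is explicitly labelled as a special case of Theorem~\ref{theorem:pbwtotnegative2CY}, so the plan is simply to verify that the categories $\Higgs^{\sst}_{\theta}(C)$ (on the Dolbeault side) and $\Rep(\BoC[\pi_1(\Sigma_g\setminus\{p\}),\lambda])$ (on the Betti side) fall within the scope of that theorem, and then apply it. Concretely, for the Dolbeault side I would invoke the results of \S\ref{Higgs_background_sec}: the BNR correspondence identifies $\Higgs^{\sst}_{\theta}(C)$ with an open subcategory of $\Coh^{\sst}_{p(t)}(\overline{S})$ for $\overline{S}=\mathbf{P}_C(\CO_C\oplus K_C)$, which has $K_{\overline{S}}$ trivial on the open part $S=\Tan^*C$; the proposition at the end of \S\ref{Higgs_background_sec} records that this category, together with its stack of objects, satisfies Assumptions \ref{p_assumption}--\ref{BPS_cat_assumption}; and Proposition~\ref{g_t_n} records that for $g\geq 2$ it is totally negative. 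For the Betti side I would invoke \S\ref{subsubsection:dgrepfundgroup}: Proposition~\ref{proposition:fundamentalgrouppreprojec} realises $\BoC[\pi_1(\Sigma_g\setminus\{p\}),\lambda]$ as a universal localisation of the multiplicative preprojective algebra $\Lambda^{\lambda}(S_g)$, the stack of modules satisfies Assumptions \ref{p_assumption}--\ref{ds_fin} by \S\ref{algebra_constr_sec}, the 2CY property and Assumption~\ref{BPS_cat_assumption} follow from \cite{kaplan2019multiplicative} as explained there, and the corollary following \eqref{equation:eulerformfundamentalgroup} gives total negativity for $g\geq 2$.

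With those verifications in hand, Theorem~\ref{theorem:pbwtotnegative2CY} applies to each category, producing the isomorphism
\[
\tilde{\Phi}^{\psi}\colon \Sym_{\boxdot}\left(\ulBPS_{\Lie}\otimes\HO_{\BoC^{\ast}}\right)\longisoto \ulrelCoHA^{\psi}_{\JH}
\]
of mixed Hodge module complexes on the respective good moduli space. Here, by the definitions \eqref{naht_bps}, $\ulBPS_{\Lie}$ is exactly $\Free_{\boxdot-\Lie}\big(\bigoplus_{a\in\Sigma}\ul{\IC}(\CM_{a})\big)$, which for the Dolbeault category is $\Free_{\boxdot-\Lie}(\ul{\IC}(\CM^{\Dol}_{\theta}(C)))$ after unwinding that $\Sigma_{\Higgs^{\sst}_{\theta}(C)}$ is indexed by pairs $(na,nb)$ (this uses Proposition~\ref{proposition:geometryofgms}, and the fact that by the BNR identification $\Sigma_\CA$ sits inside the set of rank/degree pairs of fixed slope admitting a stable object), and similarly on the Betti side. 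Taking derived global sections $\HO^*(\CM,-)$ turns $\tilde\Phi^\psi$ into the stated morphism of mixed Hodge structures, using $\HO^*(\CM,\ulrelCoHA^{\psi}_{\JH})\cong\HO^{\BoMo}_*(\FM,\BoQ^{\vir})$ and the fact that, on underlying vector spaces and mixed Hodge structures, the $\psi$-twist of the product does not change the underlying object (as remarked after Corollary~\ref{Y_corollary}), so the source $\Sym_{\boxdot}(\Free_{\Lie}(\ICA(\CM))\otimes\HO_{\BoC^\ast})$ is unchanged.

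The one genuinely substantive point — and the step I expect to require the most care rather than the most difficulty — is matching the abstract indexing set $\Sigma_\CA$ of \S\ref{CrBo_geom_sec} with the concrete enumeration ``all multiples $(na,nb)$ of the primitive class of slope $\theta$'' used in \eqref{naht_bps}, i.e. checking that for every such $(r,d)$ of slope $\theta$ the stack $\FM^{\Dol}_{r,d}(C)$ (resp.\ $\FM^{\Betti}_{g,r,d}$) has a simple object, equivalently that $\CM^{\Dol}_{r,d}(C)$ is positive-dimensional and $\JH$ is generically a $\BoC^\ast$-gerbe. This holds because the Euler form \eqref{equation:EulerformHiggs} gives $p(r,d)=2-2(1-g)r^2>0$ for $g\geq2$, $r\geq1$, so $(r,d)\in R^+_\CA$ and indeed in $\Sigma_\CA$ by the superadditivity characterisation in \S\ref{CrBo_geom_sec}; the same computation works on the Betti side via the localised multiplicative preprojective algebra. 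Once this bookkeeping is done the theorem is immediate. I would also remark explicitly, as the paper does around Theorem~\ref{NAHT_main_thm}, that the two isomorphisms are \emph{a priori} unrelated — each side is proved independently — and that the comparison between them is the content of Theorem~\ref{NAHT_main_thm}, not of the present statement.
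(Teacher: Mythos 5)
Your proposal is correct and follows the same route as the paper, which simply cites Theorem~\ref{theorem:pbwtotnegative2CY} as giving the result as a special case. Your more detailed verifications — that the Dolbeault and Betti categories satisfy Assumptions \ref{p_assumption}--\ref{BPS_cat_assumption} and total negativity via \S\ref{Higgs_background_sec}, Proposition~\ref{g_t_n}, \S\ref{subsubsection:dgrepfundgroup}, and \eqref{equation:eulerformfundamentalgroup}; that $\Sigma_\CA$ is the full set of classes $(na,nb)$ because $p(r,d)=2+2(g-1)r^2$ and the strict superadditivity in \S\ref{CrBo_geom_sec} holds for totally negative forms; and that the $\psi$-twist is invisible after passing to underlying mixed Hodge structures — are exactly the bookkeeping the paper implicitly delegates to earlier sections, and they are all accurate.
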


The following theorem is proved for $g(C)\leq 1$ in \cite{davison2023nonabelian}. It is our version of the nonabelian Hodge isomorphism for stacks:
\begin{theorem}[=Theorem~\ref{NAHT_main_thm}]
\label{theorem:NAHT-body}
There is a natural isomorphism in $\CD_{\mathrm{c}}^+(\CM^{\Betti}_{g,r,d})$
\begin{equation}
\label{relNAHT}
\Psi_*p_{\Dol,*}\BD\BoQ^{\vir}_{\FM^{\Dol}_{r,d}(C)}\cong p_{\Betti,*}\BD\BoQ^{\vir}_{\FM^{\Betti}_{g,r,d}}.
\end{equation}
Taking derived global sections, we deduce that there is a natural isomorphism in Borel--Moore homology between the Dolbeault and the Betti stacks:
\begin{equation}
\label{absNAHT}	
\Phi\colon\HO^{\BoMo}_*(\FM^{\Dol}_{r,d}(C),\BoQ)\cong \HO^{\BoMo}_*(\FM^{\Betti}_{g,r,d},\BoQ).
\end{equation}
\end{theorem}
\begin{proof}
The intersection complex is a topological invariant \cite{goresky1983intersection}. Since $\Psi_{r,d}$ is a diffeomorphism, there is a canonical isomorphism
\[
\Psi_{r,d,*}\IC(\CM^{\Dol}_{r,d}(C))\cong \IC(\CM^{\Betti}_{g,r,d}).
\]	
Fix $\theta=r/d$. By Lemma \ref{Lem:monNAHT} there is an isomorphism
\begin{equation}
\label{ac_na}
\Sym_{\boxdot}\left(\Free_{\boxdot \mathrm{-Lie}}(\Psi_*\IC(\CM^{\Dol}_{\theta}(C)))\otimes\HO_{\BoC^*}\right)\cong \Psi_*\!\left(\Sym_{\boxdot}\left(\Free_{\boxdot \mathrm{-Lie}}(\IC(\CM^{\Dol}_{\theta}(C))\otimes\HO_{\BoC^*}\right)\right).
\end{equation}
Combining \eqref{ac_na} with the two isomorphisms of Theorem \ref{Thm:naFree} yields the isomorphism \eqref{relNAHT}. Then restricting to $\CM^{\Betti}_{g,r,d}$ and taking derived global sections yields the isomorphism \eqref{absNAHT}.
\end{proof}

We list some immediate consequences of the construction of the isomorphism $\Phi$:
\begin{enumerate}
\item	
The isomorphism $\Phi$ respects the perverse filtration on $\HO^{\BoMo}_*(\FM^{\Dol}_{r,d}(C),\BoQ)$ (respectively, $\HO^{\BoMo}_*(\FM^{\Betti}_{g,r,d},\BoQ)$) induced by the morphisms $p_{\Dol}$ (respectively $p_{\Betti}$). 
\item
There are natural inclusions $\ICA(\CM^{\Dol}_{r,d}(C))\subset \HO^{\BoMo}_*(\FM^{\Dol}_{r,d}(C),\BoQ^{\vir})$ and $\ICA(\CM^{\Betti}_{g,r,d})\subset \HO^{\BoMo}_*(\FM^{\Betti}_{g,r,d},\BoQ^{\vir})$, and $\Phi(\ICA(\CM^{\Dol}_{r,d}(C)))=\ICA(\CM^{\Betti}_{g,r,d})$.  Furthermore the isomorphism
\begin{equation}
\label{ACI_iso}
\Phi\colon \ICA(\CM^{\Dol}_{r,d}(C))\rightarrow \ICA(\CM^{\Betti}_{g,r,d})
\end{equation}
is the isomorphism induced by the homeomorphism $\Psi_{r,d}$.
\end{enumerate}
\smallbreak
We recall the definition of the \textit{Hitchin morphism}
\begin{equation}
\label{hmap}
\Hit_{r,d}\colon \Msp^{\Dol}_{r,d}(C)\rightarrow \Lambda_r\coloneqq \prod_{i=1}^r\HO^0(C,\omega_C^{\otimes i}),\quad\quad
(\CF,\eta)\mapsto \mathrm{char}(\eta)= (\Tr(\eta),\Tr(\wedge^2\eta)),\ldots).
\end{equation}
The target is a monoid: it records the supports of possible spectral curves (with multiplicity). The intersection cohomology $\ICA(\CM^{\Dol}_{r,d}(C))$ carries a perverse filtration, defined with respect to the Hitchin morphism. Precisely, we set
\[
\mathfrak{P}^i_{\Hit}\ICA(\CM^{\Dol}_{r,d}(C))\coloneqq \HO^*(\Lambda_r,{}^{\mathfrak{p}}\!\tau^{\leq i}\Hit_{r,d,*}\IC(\CM^{\Dol}_{r,d}(C))).
\]
The PI=WI conjecture of de Cataldo and Maulik \cite{de2018perverse} states that we have the identities
\[
\Phi(\mathfrak{P}^i_{\Hit}\ICA(\CM^{\Dol}_{r,d}(C)))=W^{2i}\ICA(\CM^{\Betti}_{g,r,d})
\]
for all $i$. 
By \cite{davison2021purity} the Borel--Moore homology $\HO^{\BoMo}_*(\FM^{\Betti}_{g,r,d},\BoQ^{\vir})$ carries a perverse $\FL^{\bullet}\HO^{\BoMo}_*(\FM^{\Betti}_{g,r,d},\BoQ^{\vir})$ filtration with respect to the morphism $p^{\Betti}$, while $\HO^{\BoMo}_*(\FM^{\Dol}_{r,d}(C),\BoQ^{\vir})$ carries a perverse filtration $\FL^{\bullet}\HO^{\BoMo}_*(\FM^{\Dol}_{r,d}(C),\BoQ^{\vir})$ with respect to $p^{\Dol}$, as well as a perverse filtration $\FP^{\bullet}_{\Hit^{\sta}}\HO^{\BoMo}_*(\FM^{\Dol}_{r,d}(C),\BoQ^{\vir})$ with respect to $\Hit^{\sta}\coloneqq \Hit\circ p^{\Dol}$.   By construction of $\Phi$, it preserves the $\FL^{\bullet}$ filtrations.  
To state the next theorem we restrict to slope zero Higgs sheaves, and (untwisted) representations of $\pi_1(\Sigma_g)$. In \cite{davison2023nonabelian} it is explained how to prove the following theorem in the presence of the nonabelian Hodge isomorphism \eqref{relNAHT}.
\begin{theorem}
The following three statements are equivalent
\begin{itemize}
\item
The PI=WI conjecture is true.
\item
There is an equality of filtrations of $\mathfrak{P}_{p^{\Betti}}^0\HO^{\BoMo}_*(\FM^{\Betti}_{g,r,0},\BoQ^{\vir})$
\[
\Phi\left(\mathfrak{P}^i_{\Hit^{\sta}}\HO^{\BoMo}_*(\FM^{\Dol}_{r,0}(C),\BoQ^{\vir})\cap \mathfrak{P}^0_{p^{\Dol}}\HO^{\BoMo}_*(\FM^{\Dol}_{r,0}(C),\BoQ^{\vir})\right)=W^{2i}\mathfrak{P}^0_{p^{\Betti}}\HO^{\BoMo}_*(\FM^{\Betti}_{g,r,0},\BoQ^{\vir})
\]
\item
There is an equality of filtrations of $\mathrm{Gr}^{\FL}_j\HO^{\BoMo}_*(\FM^{\Betti}_{g,r,0},\BoQ^{\vir})$
\[
\Phi\left( \FP^{i-j}_{\Hit^{\sta}}\mathrm{Gr}^{\FL}_j\!\HO^{\BoMo}_*(\Mst^{\Dol}_{r,0}(C),\BoQ^{\vir})\right)=W^{2i-j}\mathrm{Gr}^{\FL}_j\HO^{\BoMo}_*(\FM^{\Betti}_{g,r,0},\BoQ^{\vir}).
\]
\end{itemize}
\end{theorem}

\subsection{$\bs{\chi}$-independence results}
\subsubsection{Dolbeault side}
Note that the target of the Hitchin map $\Hit\colon \CM^{\Dol}_{r,d}(C)\rightarrow \Lambda_r$ depends only on $r$ and not $d$. 
We start by recalling the following theorem of Kinjo and Koseki:
\begin{theorem}\cite{kinjo2021cohomological}
\label{KKthm}
Let $C$ be a smooth complex projective curve of arbitrary genus. For all $d,d'\in\BoZ$ and all $r\in\BoZ_{>0}$ there is an isomorphism of mixed Hodge module complexes \[
(\Hit_{r,d})_*\ulBPS_{r,d}^{\Dol,\mathrm{3d}}(C)\cong (\Hit_{r,d'})_*\ulBPS_{r,d'}^{\Dol,\mathrm{3d}}(C).
\]
Taking derived global sections, we deduce that there is an isomorphism 
\[
\HO^*(\CM^{\Dol}_{r,d}(C),\ulBPS_{r,d}^{\Dol,\mathrm{3d}}(C))\cong \HO^*(\CM^{\Dol}_{r,d'}(C),\ulBPS_{r,d'}^{\Dol,\mathrm{3d}}(C))
\]
respecting the perverse filtrations induced by the Hitchin maps $\Hit_{r,d}, \Hit_{r,d'}$.
\end{theorem}
We give $\CD^{\mathrm{b}}(\MHM(\Lambda))$ the tensor structure $\boxdot$ induced by the monoid structure $+\colon\Lambda\times\Lambda\rightarrow \Lambda$, i.e. we set $\CF\boxdot\CG\coloneqq +_*(\CF\boxtimes\CG)$.  Comparing $d=0,1$ in Theorem \ref{KKthm} and applying our main theorem (more precisely, \eqref{3dHiggsIT} for $\theta=0$), we get:
\begin{corollary}
\label{chi_cor}
Let $C$ be a smooth complex projective curve of genus at least two. There is an isomorphism of complexes of pure mixed Hodge modules
\[
\Free_{\boxdot-\Lie}\left(\bigoplus_{r\geq 1}\Hit_*\ul{\IC}(\CM^{\Dol}_{r,0}(C))\right)\cong \bigoplus_{r\geq 1}\Hit_*\ulBoQ^{\vir}_{\CM^{\Dol}_{r,1}(C)}.
\]
Taking derived global sections, there is a $\BoZ_{\geq 1}$-graded isomorphism
\begin{equation}
\label{mystery_Lie}
\Free_{\Lie}\left(\bigoplus_{r\geq 1}\ICA(\CM^{\Dol}_{r,0}(C))\right)\cong \bigoplus_{r\geq 1}\HO^*(\CM^{\Dol}_{r,1}(C),\ulBoQ^{\vir}).
\end{equation}
respecting the perverse filtrations induced by the Hitchin maps $\Hit$ on both sides.
\end{corollary}

\begin{remark}
In particular, the cohomology of the moduli scheme of degree one semistable Higgs bundles carries a Lie algebra structure via the isomorphism \eqref{mystery_Lie}. Although there is surely a more down-to-earth way of writing it down, we are not sure what it is. 
\end{remark}
The Hodge-to-Singular correspondence of Mauri and Migliorini \cite{mauri2022hodge} (see also \cite{mauri2022combinatorial}) asserts that (after restricting to the reduced locus in $\Lambda$) the direct image $\Hit_*\IC(\CM^{\Dol}_{r,d}(C))$ (for arbitrary $d$) decomposes into isotypic components of tensor powers of $\Hit_*\IC(\CM^{\Dol}_{r_i,0}(C))$ (for decompositions $r=(r_1,\ldots,r_l)$). Corollary \ref{chi_cor} gives a new interpretation of this fact, as well as showing that it extends over the whole of $\Lambda$. We finish this subsection with a final corollary, extending \cite[Corollary 1.9]{mauri2022hodge} over the entire Hitchin base:
\begin{corollary}
Let $C$ be a complex projective curve of genus at least two. Let $r\in\BoZ_{\geq 1}$ and let $d,d'$ satisfy $\gcd(r,d)=\gcd(r,d')$. Then there is an isomorphism in $\CD^{\mathrm{b}}(\MHM(\Lambda_r))$:
\[
(\Hit_{r,d})_*\ul{\IC}(\CM^{\Dol}_{r,d}(C))\cong (\Hit_{r,d'})_*\ul{\IC}(\CM^{\Dol}_{r,d'}(C)).
\]
\end{corollary}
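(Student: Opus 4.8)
The plan is to induct on $h\coloneqq\gcd(r,d)=\gcd(r,d')$. Write $\underline r=r/h$, $\underline d=d/h$, $\underline{d'}=d'/h$, so $\gcd(\underline r,\underline d)=\gcd(\underline r,\underline{d'})=1$, and observe that $\CM^{\Dol}_{r,d}(C)$ is the degree $h$ piece of the slope $d/r$ family $\coprod_{n\geq 0}\CM^{\Dol}_{n\underline r,n\underline d}(C)$, and likewise for $d'$. By the $\K$-theoretic computation of \S\ref{KK_Higgs_sec}, together with Theorem~\ref{theorem:pbwtotnegative2CY} and the semisimplicity of the objects involved (a Möbius/plethystic-log inversion of $\Sym_{\boxdot}$ that respects the $\pi_0$-grading), the isomorphism \eqref{3dHiggsIT} holds graded piece by piece: for every $n\geq 1$ there is an isomorphism of semisimple perverse sheaves $\ulBPS^{\Dol,\mathrm{3d}}_{n\underline r,n\underline d}(C)\cong\big(\Free_{\boxdot-\Lie}(\bigoplus_{m\geq 1}\ul\IC(\CM^{\Dol}_{m\underline r,m\underline d}(C)))\big)_n$, and similarly with $\underline{d'}$. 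On the other hand, Theorem~\ref{KKthm} gives $(\Hit_{r,d})_*\ulBPS^{\Dol,\mathrm{3d}}_{r,d}(C)\cong(\Hit_{r,d'})_*\ulBPS^{\Dol,\mathrm{3d}}_{r,d'}(C)$ in $\CD^{\mathrm b}(\MHM(\Lambda_r))$. Combining these is the whole idea; the point of the hypothesis $\gcd(r,d)=\gcd(r,d')$ is that it forces the two families to share the same ``primitive rank'' $\underline r$, so that the corrections relating BPS sheaves to intersection complexes match up degree by degree.

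Two structural inputs feed the recursion. First, for a $\BoZ_{>0}$-graded object $V=\bigoplus_{m\geq 1}V_m$ in a $\BoQ$-linear symmetric monoidal semisimple abelian category, the degree $h$ part of the free Lie algebra splits as $\Free_{\boxdot-\Lie}(V)_h\cong V_h\oplus L_h(V_1,\dots,V_{h-1})$, where $L_h$ is a fixed functorial construction built from antisymmetrized iterated monoidal products of the generators in degrees strictly less than $h$ only. Second, the Hitchin map is compatible with direct sums: the characteristic polynomial of a block-diagonal Higgs field is the product of the characteristic polynomials, so the square relating $\oplus$ on $\coprod_n\CM^{\Dol}_{n\underline r,n\underline d}(C)$ to the multiplication $\Lambda_{\rho}\times\Lambda_{\rho'}\to\Lambda_{\rho+\rho'}$ of monic polynomials commutes; hence, by base change and Künneth, $(\Hit)_*$ intertwines $\boxdot_{\oplus}$ on $\coprod_n\CM^{\Dol}_{n\underline r,n\underline d}(C)$ with $\boxdot_{\mathrm{mult}}$ on $\coprod_n\Lambda_{n\underline r}$. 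I would then observe that everything in sight ($\ul\IC$'s, $\ulBPS^{\Dol,\mathrm{3d}}$, the $L_h$'s, and all of their direct images under the projective morphism $\Hit$) lies in the semisimple category of pure mixed Hodge module complexes over $\Lambda_r$, where a complex is recovered from its class in $\K$; so it suffices to argue with $\K$-classes, and there the monoidality of $(\Hit)_*$ yields $[\Hit_*\ulBPS^{\Dol,\mathrm{3d}}_{r,d}(C)]=[\Hit_*\ul\IC(\CM^{\Dol}_{r,d}(C))]+P_h([\Hit_*\ul\IC(\CM^{\Dol}_{\underline r,\underline d}(C))],\dots,[\Hit_*\ul\IC(\CM^{\Dol}_{(h-1)\underline r,(h-1)\underline d}(C))])$ for the universal (necklace-type) polynomial $P_h$ encoding $[L_h]$, and the same with $d'$.

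Now run the induction. For $h=1$ there is no correction term and the statement is exactly Theorem~\ref{KKthm}. For $h\geq 2$, the $m$-th argument of $P_h$ (with $1\leq m\leq h-1$) is $[\Hit_*\ul\IC(\CM^{\Dol}_{m\underline r,m\underline d}(C))]$, and since $\gcd(m\underline r,m\underline d)=m=\gcd(m\underline r,m\underline{d'})$ the inductive hypothesis (applied with $m<h$ in place of $h$ and rank $m\underline r$ in place of $r$) gives $[\Hit_*\ul\IC(\CM^{\Dol}_{m\underline r,m\underline d}(C))]=[\Hit_*\ul\IC(\CM^{\Dol}_{m\underline r,m\underline{d'}}(C))]$ for every such $m$. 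Therefore $P_h$ evaluates to the same class on the $d$- and $d'$-sides, and subtracting it from the $\K$-class identities of the previous paragraph and invoking Theorem~\ref{KKthm} yields $[\Hit_*\ul\IC(\CM^{\Dol}_{r,d}(C))]=[\Hit_*\ul\IC(\CM^{\Dol}_{r,d'}(C))]$ in $\K(\CD^{\mathrm b}(\MHM(\Lambda_r)))$. Since both are pure mixed Hodge module complexes — direct images of pure objects under the projective morphism $\Hit$ — equality of $\K$-classes upgrades to an isomorphism (recover $\CH^i$ from the weight $i$ part of the class, then reassemble), completing the induction and the proof.

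The delicate point is the one flagged in the second paragraph: $(\Hit)_*$ is only a derived pushforward, so one must be careful that it genuinely commutes with the graded free-Lie-algebra construction, or — as in the approach above — reduce the manipulations to the level of $\K$-classes, where the monoid-morphism property of $\Hit$ and $\BoQ$-linearity of $\K$ make the identity $[\Hit_*\ulBPS^{\Dol,\mathrm{3d}}_{r,d}(C)]=[\Hit_*\ul\IC]+[\Hit_*L_h]$ unproblematic, and then re-upgrade to an isomorphism using purity. A secondary check is that \eqref{3dHiggsIT} is genuinely graded (done via the $\Sym_{\boxdot}$-inversion, which is $\pi_0$-graded), since the argument crucially compares the degree $h$ pieces of the two free Lie algebras rather than the total objects; this is also precisely where Corollary~\ref{chi_cor} (the $d=0$ versus $d=1$ case) is subsumed, and where the present statement extends \cite[Corollary~1.9]{mauri2022hodge} over the whole Hitchin base.
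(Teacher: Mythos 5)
Your proposal is correct and follows essentially the same route as the paper's proof: combine Theorem~\ref{KKthm} with the (non-canonical) identification \eqref{3dHiggsIT} of the BPS sheaf with a free Lie/associative algebra on intersection complexes, exploit that $\Hit$ is a morphism of monoids so that $\Hit_*$ is monoidal, and cancel the contributions from strictly lower degrees in a recursion on $\gcd(r,d)$. The paper leaves the recursion and the cancellation implicit in the phrase ``compare summands at $n=m$'' and works with $\Free_{\boxdot-\Alg}=\Sym_{\boxdot}\circ\Free_{\boxdot-\Lie}$ directly as semisimple complexes, whereas you spell the induction out and pass through $\K$-classes before re-upgrading via purity --- a slightly heavier but perfectly valid implementation of the same idea.
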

\begin{proof}
Write $(r,d)=m(\overline{r},\overline{d})$ and $(r,d')=m(\overline{r},\overline{d}')$ where $m=\gcd(r,d)$. By Theorem \ref{KKthm} there is an isomorphism of complexes of mixed Hodge modules
\[
\Sym_{\boxdot}\left(\bigoplus_{n\geq 1}\Hit_*\ulBPS_{n\overline{r},n\overline{d}}^{\Dol}(C)\right) \cong \Sym_{\boxdot}\left(\bigoplus_{n\geq 1}\Hit_*\ulBPS_{n\overline{r},n\overline{d}'}^{\Dol}(C)\right).
\]
Thus by \eqref{3dHiggsIT} there is an isomorphism of complexes of mixed Hodge modules
\[
\Free_{\boxdot-\Alg}\left(\bigoplus_{n\geq 1}\Hit_*\ul{\IC}(\CM^{\Dol}_{n\overline{r},n\overline{d}}(C))\right)\cong\Free_{\boxdot-\Alg}\left(\bigoplus_{n\geq 1}\Hit_*\ul{\IC}(\CM^{\Dol}_{n\overline{r},n\overline{d}'}(C))\right)
\]
and the result follows by comparing summands at $n=m$.
\end{proof}
\subsubsection{Betti side}
We record the following Betti $\chi$-independence result
\begin{theorem}
\label{Betti_chi_thm}
Let $g\geq 2$. Then for $\aBPS^{\Betti}_{\Lie,r,d}$ as defined in \eqref{naht_bps}, there is an isomorphism of cohomologically graded vector spaces $\aBPS^{\Betti}_{\Lie,g,r,d}\cong \aBPS^{\Betti}_{\Lie,g,r,d'}$ for all $d,d'\in\BoZ$.
\end{theorem}
\begin{proof}
From $\aBPS^{\Betti}_{\Lie,g,\theta}=\Free_{\Lie}(\ICA(\Msp_{g,\theta}^{\Betti}))$, $\aBPS^{\Dol}_{\Lie,\theta}(C)=\Free_{\Lie}(\ICA(\Msp_{\theta}^{\Dol}(C))$ and the nonabelian Hodge homeomorphism \eqref{ACI_iso} it follows that
\begin{equation}
\label{BPSata}
\aBPS^{\Dol}_{\Lie,r,d}(C)\cong \aBPS^{\Betti}_{\Lie,g,r,d}
\end{equation}

By \cite{kinjo2021cohomological} and \S \ref{KK_Higgs_sec} there are isomorphisms
\begin{equation}
\label{KKiso}
\aBPS^{\Dol}_{\Lie,r,d}(C)\cong \aBPS^{\Dol}_{\Lie,r,d'}(C)
\end{equation}
for all $d,d'$, and the result follows.
\end{proof}
Note that if $(r,d)=1$, by definition 
\[
\aBPS^{\Dol}_{\Lie,r,d}(C)\cong \HO^*(\Msp_{r,d}^{\Dol}(C),\BoQ^{\vir})\quad\text{and}\quad \aBPS^{\Betti}_{\Lie,r,d}(C)\cong \HO^*(\Msp_{g,r,d}^{\Betti},\BoQ^{\vir}).
\]
By the proof of the P=W conjecture \cite{maulik2022p, hausel2022p} if $\gcd(r,d)=1$ the isomorphism \eqref{BPSata} carries the perverse filtration on the Dolbeault side to (twice) the weight filtration on the Betti side. Furthermore, \cite{kinjo2021cohomological} shows that the isomorphism \eqref{KKiso} respects the perverse filtration for all $d,d'$. It follows that for coprime $d,d'$ the isomorphism of Theorem \ref{Betti_chi_thm} respects the weight filtration. We conjecture that this is in fact the case for \textit{all} $d,d'$, regardless of coprimality. This would imply, amongst other things, the PI=WI conjecture (see \cite{davison2023nonabelian} for details). Note that by \cite{hausel2008mixed} the E-polynomials (recording weights, but taking alternating sums over cohomological degrees) of $\aBPS^{\Betti}_{\Lie,g,r,d}$ and $\aBPS^{\Betti}_{\Lie,g,r,d'}$ coincide. See \cite{davison2016cohomological} for details.

\section{Cuspidal polynomials of totally negative quivers}
\label{cuspidals_sec}
Let $Q=(Q_0,Q_1)$ be a quiver. Ringel and Green defined in the beginning of the 1990s in \cite{ringel1990hall,ringel1992hall,green1995hall} the \emph{Hall algebra} of $Q$ over a finite field $\BoF_q$ as an algebra structure on the vector space having as basis the set of isomorphism classes of finite-dimensional representations of $Q$ over $\BoF_q$:
\[
 H_{Q,\BoF_q}:=\bigoplus_{[M]\in\Rep_Q(\BoF_q)/\sim}\BoC[M].
\]
Green \cite{green1995hall} defined a coproduct $\Delta$ on $ H_{Q,\BoF_q}$ and Xiao \cite{xiao1997drinfeld} expressed the antipode, so that $H_{Q,\BoF_q}$ has the structure of a twisted\footnote{The twist is explained in Xiao's paper. It is not relevant here.} Hopf algebra. A good introduction to this subject is \cite{schiffmann2012hall}. Sevenhant and Van den Bergh proved \cite{sevenhant2001relation} that $H_{Q,\BoF_q}$ has the structure of the quantized enveloping algebra of a Borcherds--Kac--Moody algebra with deformation parameter specialized at $\sqrt{q}$, where the generators are given by a basis of the space of \emph{cuspidal functions}
\[
 H_{Q,\BoF_q}^{\cusp}=\{f\in H_{Q,\BoF_q}\mid \Delta(f)=f\otimes 1+1\otimes f\},
\]
satisfying Serre relations\footnote{They assume the quiver is loop-free. This assumption can be removed.}, see \cite[Theorem 3.4]{hennecart2021isotropic} for a formulation.

For $\dd\in\BoN^{Q_0}$, let $M_{Q,\dd}(q)$ be the number of isomorphism classes of $\BoF_q$--representations of $Q$ of dimension $\dd$, $I_{Q,\dd}(q)$ be the number of isomorphism classes of \emph{indecomposable} $\BoF_q$--representations of $Q$ of dimension $\dd$, and $A_{Q,\dd}(q)$ be the number of isomorphism classes of \emph{absolutely indecomposable} $\BoF_q$--representations of $Q$ of dimension $\dd$. By \cite{kac1983root}, these counting functions are polynomials in $q$ and moreover, by \cite{hausel2013positivity} (or \cite{davison2018purity,dobrovolska2016moduli} for different approaches), the coefficients of $A_{Q,\dd}(q)$ are nonnegative.

The graded character of the Hall algebra is given by the formula
\[
 \ch(H_{Q,F_q})\coloneqq\sum_{\dd\in\BoN^{Q_0}}M_{Q,\dd}(q)z^{\dd}=\Exp_{z}\left(\sum_{\dd \neq 0}I_{Q,\dd}(q)z^{\dd}\right)=\Exp_{q,z}\left(\sum_{\dd \neq 0}A_{Q,\dd}(q)z^{\dd}\right),
\]
where $\Exp_z$ and $\Exp_{z,t}$ denote the plethystic exponentials, see \cite[Section 1.5]{bozec2019counting}. The second equality follows from the Krull--Schmidt property of the category of representations of the quiver and the third from Galois descent for quiver representations.

The space $H_{Q,\BoF_q}^{\cusp}$ is naturally graded by the dimension vector: $H_{Q,\BoF_q}^{\cusp}=\bigoplus_{\dd\in\BoN^{Q_0}}H_{Q,\BoF_q}^{\cusp}[\dd]$. There has been a growing interest in understanding this space and to find a parametrisation of cuspidal functions \cite{bozec2019counting, hennecart2021isotropic}, in connection and analogy with the Langlands programme for smooth projective curves. The first step was to compute its dimension. We have the following result.

\begin{theorem}[{\cite[Theorem 1.1]{bozec2019counting}}]
 The dimension $\dim_{\BoC} H_{Q,\BoF_q}^{\cusp}[\dd]$ is given by a polynomial with rational coefficients $C_{Q,\dd}(q)\in \BoQ[q]$.
\end{theorem}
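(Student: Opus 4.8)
The plan is to establish the polynomiality of $C_{Q,\dd}(q)$ by relating the cuspidal functions in the constructible Hall algebra to the BPS Lie algebra of the totally negative $2$-Calabi--Yau category $\Rep(\Pi_Q)$, for which we have an explicit freeness result. More precisely, the interpretation (due to Sevenhant--Van den Bergh and its refinements) expresses $H_{Q,\BoF_q}$ as the twisted quantized enveloping algebra of a Borcherds--Kac--Moody algebra whose generators, graded by dimension vector, span the space of cuspidal functions. On the cohomological side, by Theorem~\ref{theorem:freenesspreprojective} (and its consequence Corollary~\ref{corollary:BPSalgebraFree}) the zeroth perverse piece $\mathfrak{P}_{\JH}^0\HO^*\!\!\mathscr{A}$ for a totally negative quiver is the free algebra $\Free_{\Alg}\big(\bigoplus_{\dd\in\Sigma_{\Pi_Q}}\ICA(\mathcal{M}_{\Pi_Q,\dd})\big)$, while the strongly seminilpotent variant is identified in Theorem~\ref{theorem:degree0SSN} with $\UEA(\mathfrak{n}_Q^+)$ for the Borcherds--Bozec Lie algebra, which for totally negative $Q$ is \emph{free}. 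The starting point is therefore to match the generating series of cuspidal dimensions with the generating series of the graded pieces of $\bigoplus_{\dd\in\Sigma_{\Pi_Q}}\ICA(\mathcal{M}_{\Pi_Q,\dd})$: one has
\[
\sum_{\dd\in\BoN^{Q_0}}M_{Q,\dd}(q)z^{\dd}=\Exp_{q,z}\left(\sum_{\dd\neq 0}A_{Q,\dd}(q)z^{\dd}\right),
\]
and the inversion of the Hopf algebra structure (passing from the full Hall algebra to its primitive generators, i.e. the cuspidals) produces a plethystic relation between $C_{Q,\dd}(q)$ and the Kac polynomials $A_{Q,\dd}(q)$, together with the ``real'' vertex contributions.

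Concretely, I would proceed as follows. First, recall the combinatorial description of $\Sigma_{\Pi_Q}$ from \cite{crawley2001geometry} (recalled in \S\ref{CrBo_geom_sec}): these are exactly the dimension vectors supporting a simple $\Pi_Q$-module, and they coincide with the positive roots appearing as generators. Second, use the decomposition theorem for $2$CY categories \cite{davison2021purity} together with the purity statements to express the Poincar\'e polynomial of $\bigoplus_{\dd}\ICA(\mathcal{M}_{\Pi_Q,\dd})$ in terms of Kac polynomials — this is the cohomological integrality/BPS cohomology package, which for preprojective algebras identifies (graded dimensions of) BPS cohomology with Kac polynomials via the work of Davison--Meinhardt and Bozec--Schiffmann--Vasserot. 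Third, translate the Hopf-algebraic characterisation of $H^{\cusp}_{Q,\BoF_q}$: since $H_{Q,\BoF_q}$ is (a twisted form of) the enveloping algebra of a BKM algebra whose Chevalley generators are indexed by a basis of $H^{\cusp}_{Q,\BoF_q}$, Poincar\'e--Birkhoff--Witt over $\BoC$ gives
\[
\ch(H_{Q,\BoF_q})=\Exp_{z}\!\left(\frac{1}{1-q^{-1}}\sum_{\dd\neq 0}\dim_{\BoC}H^{\cusp}_{Q,\BoF_q}[\dd]\,z^{\dd}\right)
\]
(up to the precise form of the torus/Cartan contribution, which must be handled carefully for quivers with loops). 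Comparing this with the Kac polynomial generating series and solving the resulting plethystic equation expresses $\dim_{\BoC}H^{\cusp}_{Q,\BoF_q}[\dd]$ as a $\BoQ$-linear combination of products of values $A_{Q,\ee}(q)$, hence as a polynomial in $q$ with rational coefficients; one then sets $C_{Q,\dd}(q)$ to be this polynomial. Finally, check that this polynomial indeed computes the dimension for every prime power $q$, which follows because both sides agree for infinitely many $q$ (all prime powers) and one side is manifestly polynomial.

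The main obstacle I anticipate is bookkeeping the precise Hopf-algebraic and plethystic dictionary, in particular handling the contributions of loops (isotropic and hyperbolic vertices) to the Cartan part of the enveloping algebra, since for totally negative quivers every vertex carries at least two loops and the naive ``$1/(1-q^{-1})$ per simple root'' formula must be replaced by the correct BKM version; the set of simple roots $I_\infty=(Q_0^{\real}\times\{1\})\sqcup(Q_0^{\imag}\times\BoZ_{>0})$ already signals that divided powers of imaginary roots intervene. A secondary subtlety is ensuring that the passage from the \emph{finite field} Hall algebra (a specialisation at $\sqrt q$) to a genuine polynomial identity is legitimate; this is standard once one knows the Kac polynomials $A_{Q,\dd}(q)$ are polynomials and that the plethystic operations preserve polynomiality, but it should be stated carefully. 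I do not expect to need the full strength of our sheaf-theoretic results here — polynomiality of $C_{Q,\dd}(q)$ is essentially a consequence of Kac polynomiality plus the Hopf algebra structure — but the cohomological integrality theorem we have established (Corollary~\ref{Y_corollary}) provides the conceptual bridge and will be reused in the companion positivity statement, where one genuinely needs that the generators are intersection cohomology groups of irreducible varieties.
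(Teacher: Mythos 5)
This statement is not proved in the paper at all — it is cited verbatim from Bozec--Schiffmann \cite[Theorem 1.1]{bozec2019counting} as an external input, so there is no internal proof to compare against.

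More substantively, your proposal has a scope problem: the cited theorem holds for \emph{arbitrary} quivers $Q$, including Dynkin and affine cases, whereas every ingredient you invoke — freeness of the BPS algebra (Theorem~\ref{theorem:freenesspreprojective}), freeness of $\mathfrak{n}_Q^+$ (Theorem~\ref{theorem:degree0SSN}), the plethystic inversion producing $C_{Q,\dd}^{\abso}=C_{Q,\dd}$ — genuinely requires total negativity of $Q$. For a general quiver the Hall algebra is not free, real and isotropic vertices contribute Serre relations and divided powers, and the BPS Lie algebra is a nontrivial quotient of a free Lie algebra, so the character comparison you propose breaks. Your argument, if completed, would establish only the totally negative case.

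Beyond the scope issue, the architecture is also inverted relative to the paper's logic: the geometric/sheaf-theoretic results you cite (Corollary~\ref{corollary:BPSalgebraFree}, Corollary~\ref{Y_corollary}) are among the main theorems of \emph{this} paper, and the Bozec--Schiffmann polynomiality result is background used downstream of them in \S\ref{cuspidals_sec} to compare $C_{Q,\dd}(q)$ with the graded dimensions of $\bigoplus_\dd\ICA(\mathcal{M}_{\Pi_Q,\dd})$. Proving the background with the foreground is not circular, but it is a detour, and you yourself notice this in your last paragraph: polynomiality of $C_{Q,\dd}(q)$ comes from Kac polynomiality (\cite{kac1983root}) together with the Hopf-algebraic inversion through the Sevenhant--Van den Bergh description of $H_{Q,\BoF_q}$ as the quantized enveloping algebra of a BKM algebra, and that is essentially the actual Bozec--Schiffmann argument. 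The genuine work there is exactly the ``bookkeeping'' you flag as an obstacle — the Cartan/torus contribution and the $I_\infty=(Q_0^{\real}\times\{1\})\sqcup(Q_0^{\imag}\times\BoZ_{>0})$ indexing of simple roots — which is not a side issue but the whole content; nothing geometric is needed, and the geometric machinery does not help with it.
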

Bozec and Schiffmann combinatorially defined a new family of polynomials $(C_{Q,\dd}^{\abso}(q))_{\dd\in\BoN^{Q_0}}$ from the family $(C_{Q,\dd}(q))_{\dd\in\BoN^{Q_0}}$, expected to enjoy more favourable properties:
\[
 \left\{
 \begin{aligned}
 & C_{Q,\dd}^{\abso}(q)=C_{Q,\dd}(q) \text{ if $\langle\dd,\dd\rangle<0$},\\
 &\Exp_{z}\left(\sum_{l\in\BoZ_{>0}}C_{Q,l\dd}(q)z^{l \dd}\right)=\Exp_{q,z}\left(\sum_{l\in\BoZ_{>0}}C_{Q,l\dd}^{\abso}(q)z^{l \dd}\right) \text{ if $\dd\in(\BoN^{Q_0})_{\prim}$ and $\langle\dd,\dd\rangle=0$},\\
 &C_{Q,\dd}^{\abso}(q)=0\quad \text{ otherwise.}
 \end{aligned}
 \right.
\]
The set $(\BoN^{Q_0})_{\prim}\subset (\BoN^{Q_0})$ is the subset of $\dd$ that cannot be written in the form $\dd=n\dd'$ with $\dd\in\BoN^{Q_0}$ and $n\geq 2$.  The definition is motivated by the fact that if there exists a $\BoN\times \BoN^{Q_0}$-graded Borcherds Lie algebra $\mathfrak{n}_{Q}^{\BoN}$ associated with the lattice $(\BoZ^{Q_0},(-,-))$, with graded character
\[
 \ch(\mathfrak{n}_Q^{\BoN})=\sum_{\dd\in\BoN^{Q_0}}A_{Q,\dd}(q)z^{\dd},
\]then the $\BoN\times\BoN^{Q_0}$-graded dimension of the space of simple roots is given by the generating series
\[
 \dim_{\BoC}\mathfrak{n}_Q^{\BoN}/[\mathfrak{n}_Q^{\BoN},\mathfrak{n}_Q^{\BoN}]=\sum_{\dd\in\BoN^{Q_0}}C_{Q,\dd}^{\abso}(q)z^{\dd},
\]
see \cite{bozec2019counting}.

For totally negative quivers, $\langle\dd,\dd\rangle<0$ for every dimension vector $\dd\in\BoN^{Q_0}$ so that the two families of polynomials $(C_{Q,\dd}(q))_{\dd\in\BoN^{Q_0}}$ and $(C_{Q,\dd}^{\abso}(q))_{\dd\in\BoN^{Q_0}}$ coincide.
\begin{theorem}[{\cite[Theorem 1.4, Theorem 1.6]{bozec2019counting}}]
 The polynomials $C_{Q,\dd}^{\abso}(q)$ have integer coefficients. If $Q$ is a totally negative quiver, they satisfy the equality
 \[
 1-\sum_{\dd>0}C_{Q,\dd}^{\abso}(q)z^{\dd}=\Exp_{q,z}\left(-\sum_{\dd>0}A_{Q,\dd}(q)z^{\dd}\right).
 \]

\end{theorem}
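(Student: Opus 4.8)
The plan is to deduce the identity relating cuspidal polynomials to absolutely indecomposable polynomials as a numerical shadow of the freeness theorem (Theorem~\ref{theorem:freenesspreprojective}, or rather its Corollary~\ref{corollary:BPSalgebraFree}). The starting point is the observation, due to Bozec--Schiffmann and recalled in \S\ref{subsubsection:cohaSSN}, that for a totally negative quiver the degree zero strictly seminilpotent CoHA is the universal enveloping algebra of the free Borcherds--Bozec Lie algebra $\mathfrak{n}_Q^+$, whose simple roots are indexed by $I_\infty = Q_0\times\BoZ_{>0}$ (since there are no real vertices). The graded dimensions of $\mathfrak{n}_Q^+$, and hence of $\UEA(\mathfrak{n}_Q^+)$, are governed by the Kac polynomials $A_{Q,\dd}(q)$: this is precisely the content of \cite{bozec2016quivers,hausel2013positivity} together with the purity/integrality results of \cite{davison2018purity} that identify the BPS cohomology with the space of primitive generators.

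First I would make precise the passage between counting over $\BoF_q$ and taking (weight) Poincar\'e series of the relevant cohomology. Via the dimensional reduction isomorphism \cite{davison2017critical} and the identification of the strongly seminilpotent CoHA as a critical/vanishing-cycle CoHA, the generating series $\sum_\dd A_{Q,\dd}(q)z^\dd$ is the weight series of the BPS cohomology $\aBPS_{\Pi_Q,\Lie}$ restricted to the nilpotent/seminilpotent locus. By Corollary~\ref{corollary:BPSalgebraFree} and Theorem~\ref{theorem:pbwtotnegative2CY}, this BPS cohomology is the \emph{free} Lie algebra on $\bigoplus_{\dd\in\Sigma_{\Pi_Q}}\ICA(\CM_{\Pi_Q,\dd})$, and the space of primitive generators of the free Lie algebra is exactly $\bigoplus_{\dd\in\Sigma_{\Pi_Q}}\ICA(\CM_{\Pi_Q,\dd})$. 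On the other hand, the space of primitive generators of $\UEA(\mathfrak{n}_Q^+)$ restricted to a fixed dimension vector $\dd$ is, by the cuspidal-function interpretation of \cite{sevenhant2001relation,hennecart2021isotropic}, exactly $H_{Q,\BoF_q}^{\cusp}[\dd]$ up to the point-count/weight-series dictionary. So the key equality to establish is
\begin{equation*}
\sum_{\dd>0} C_{Q,\dd}(q)\, z^\dd \;=\; \sum_{\dd\in\Sigma_{\Pi_Q}} \mathbf{P}_q\!\left(\ICA(\CM_{\Pi_Q,\dd})\right) z^\dd,
\end{equation*}
where $\mathbf{P}_q$ denotes the weight polynomial; and then the freeness statement gives
\begin{equation*}
\ch\big(\UEA(\mathfrak{n}_Q^+)\big) = \sum_\dd A_{Q,\dd}(q)z^\dd\big/\text{(no denominator needed)} \quad\Longleftrightarrow\quad 1-\sum_{\dd>0}C_{Q,\dd}(q)z^\dd = \Exp_{q,z}\!\left(-\sum_{\dd>0}A_{Q,\dd}(q)z^\dd\right),
\end{equation*}
by taking the plethystic logarithm of the PBW identity for the free algebra on the generating space $\bigoplus_\dd\ICA(\CM_{\Pi_Q,\dd})$ (the Sym of a free Lie algebra is a free associative algebra, whose Hilbert series inverts via $\Exp$). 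The integrality of the $C_{Q,\dd}$ is then automatic since each $\ICA(\CM_{\Pi_Q,\dd})$ is a finite-dimensional graded vector space with integral weight polynomial, and the equality $C_{Q,\dd}^{\abso}=C_{Q,\dd}$ for totally negative $Q$ holds because $\langle\dd,\dd\rangle<0$ always.

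The main obstacle, and the step that requires genuine care rather than bookkeeping, is the identification of $H_{Q,\BoF_q}^{\cusp}[\dd]$ with the weight series of the primitive part of the seminilpotent BPS cohomology --- i.e. matching the \emph{Hopf-algebraic} notion of cuspidal (kernel of the reduced coproduct in the Ringel--Green Hall algebra) with the \emph{CoHA-theoretic} notion of primitive generator of the cohomological Hall algebra. This is where one must invoke the chain of comparisons: Hall algebra over $\BoF_q$ $\leftrightarrow$ constructible Hall algebra $\leftrightarrow$ cohomological Hall algebra of $\Pi_Q$ (via the strongly seminilpotent stack and the results of \cite{bozec2016quivers,hennecart2022geometric}), checking at each stage that the coproduct / primitivity structures correspond. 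Once this dictionary is in place, the numerical identity is a formal consequence of the PBW theorem for the free algebra, applied to the explicit generating space furnished by Theorem~\ref{theorem:freenesspreprojective}, combined with the known generating series $\sum_\dd A_{Q,\dd}(q)z^\dd$ for $\mathbf{P}_q(\aBPS_{\Pi_Q,\Lie})$ on the nilpotent locus. I would also remark that, since all the spaces involved are pure (by \cite{davison2021purity}), there is no ambiguity between ``E-polynomial'' and ``Poincar\'e polynomial in $\sqrt q$'', which streamlines the weight-series computation.
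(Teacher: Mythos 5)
This theorem is \emph{cited} from Bozec--Schiffmann, not proved in the paper: the paper simply recalls it as \cite[Theorem 1.4, Theorem 1.6]{bozec2019counting} and then explains (in the following sentence and the subsequent discussion) how, combined with the paper's own freeness theorem, it yields the positivity result. There is therefore no internal proof to compare against. The referenced proof in Bozec--Schiffmann is purely algebraic: Sevenhant--Van den Bergh realises $H_{Q,\BoF_q}$ as the positive part of a quantum BKM algebra with generators given by a basis of cuspidal functions and with Serre-type relations; for a totally negative quiver there are no real or isotropic simple roots, so no Serre relations, so $H_{Q,\BoF_q}$ is \emph{free} on cuspidal functions. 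The graded dimension of $H_{Q,\BoF_q}$ is $\sum_{\dd} M_{Q,\dd}(q)z^{\dd}=\Exp_{q,z}(\sum_{\dd>0}A_{Q,\dd}(q)z^{\dd})$ by Kac's theorem and Galois descent, while the graded dimension of a free algebra on a graded space $V$ is $(1-\ch(V))^{-1}$. Equating and using $\Exp_{q,z}(-f)=\Exp_{q,z}(f)^{-1}$ gives the identity. Integrality (which in Bozec--Schiffmann is proved for \emph{all} quivers, not just totally negative ones) requires a separate combinatorial argument.

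Your proposal takes a genuinely different, cohomological route, but it has a logical problem. The ``key equality'' you propose to establish, namely $\sum_{\dd>0}C_{Q,\dd}(q)z^{\dd}=\sum_{\dd\in\Sigma_{\Pi_Q}}\mathbf{P}_q(\ICA(\CM_{\Pi_Q,\dd}))z^{\dd}$, is precisely the content of the paper's \emph{final} theorem in \S\ref{cuspidals_sec}, and that theorem is proved \emph{using} the present Bozec--Schiffmann identity as an input (it is what lets one pass from the Kac-polynomial character formula \eqref{equation:charBPS} to the cuspidal-polynomial presentation via \eqref{equation:Ngraded}). Using the paper's last theorem to re-derive the Bozec--Schiffmann identity would therefore be circular. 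The ``main obstacle'' you identify --- matching the Hopf-algebraic notion of cuspidal with the CoHA notion of primitive generator --- is real, but it is exactly the step that the paper short-circuits by \emph{importing} Bozec--Schiffmann and Sevenhant--Van den Bergh as black boxes, rather than establishing this dictionary independently. To make your route non-circular you would need an independent proof that the space of cuspidals in $H_{Q,\BoF_q}[\dd]$ has the same $q$-character as the primitive part of the CoHA in degree $\dd$, without appealing to the Bozec--Schiffmann formula; no such argument appears in the paper, and it is not needed, since for a totally negative quiver the identity follows in a few lines from Sevenhant--Van den Bergh alone, with no cohomological input required.
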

The equality for totally negative quivers witnesses the fact that by the Sevenhant and Van den Bergh theorem, the constructible Hall algebra of such a quiver is free, generated by the subspace of cuspidal functions.

Bozec and Schiffmann conjecture the following.

\begin{conj}[]
\label{conjecture:BozecSchiffmann}
 For any $\dd\in\BoN^{Q_0}$, $C_{Q,\dd}^{\abso}(q)\in\BoN[q]$.
\end{conj}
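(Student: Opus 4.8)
The plan is to deduce Conjecture~\ref{conjecture:BozecSchiffmann} from the PBW/cohomological integrality theorem for the totally negative $2$CY category $\Rep(\Pi_Q)$ (Corollary~\ref{corollary:BPSalgebraFree} and Theorem~\ref{theorem:pbwtotnegative2CY}), via the purity of the BPS cohomology. The key point is that for a totally negative quiver the BPS algebra is \emph{free} on the generating spaces $\ICA(\Msp_{\Pi_Q,\dd})$, and these generating spaces, being the intersection cohomologies of irreducible varieties, carry \emph{pure} mixed Hodge structures. One then has to identify the generating function $\sum_{\dd}C_{Q,\dd}^{\abso}(q)z^{\dd}$ with the graded dimension of $\bigoplus_{\dd\in\Sigma_{\Pi_Q}}\ICA(\Msp_{\Pi_Q,\dd})$, after replacing Poincaré polynomials by their weight-refined versions (virtual Poincaré polynomials recording the weight filtration). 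Positivity of the coefficients of $C_{Q,\dd}^{\abso}(q)$ then follows from purity: a pure Hodge structure of weight $w$ contributes a single monomial $q^{w/2}$ (up to dimension), so the weight polynomial has non-negative coefficients.

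Concretely, first I would set up the dictionary between the counting side and the cohomological side. By Kac's theorem and Hua's formula, $\sum_{\dd}A_{Q,\dd}(q)z^{\dd}$ equals the generating function of (weight-refined) Borel--Moore homology of the stacks $\Mst_{Q,\dd}$ of representations of $Q$ itself; via dimensional reduction \cite{davison2017critical} this matches the CoHA $\HO^{\BoMo}_*(\Mst_{\Pi_Q},\BoQ^{\vir})$ of the preprojective algebra studied in this paper. The Bozec--Schiffmann relation $1-\sum_{\dd>0}C_{Q,\dd}^{\abso}(q)z^{\dd}=\Exp_{q,z}(-\sum_{\dd>0}A_{Q,\dd}(q)z^{\dd})$, together with the plethystic formalism, shows that $\sum_{\dd>0}C_{Q,\dd}^{\abso}(q)z^{\dd}$ is precisely the generating series of the free-generator space of the CoHA, i.e. of $\bigoplus_{\dd\in\Sigma_{\Pi_Q}}\ICA(\Msp_{\Pi_Q,\dd})$, once one takes the refined (weight) version of all the Poincaré series. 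Here Corollary~\ref{corollary:BPSalgebraFree} is exactly what identifies the free generators with the intersection cohomologies of the $\Msp_{\Pi_Q,\dd}$ for $\dd\in\Sigma_{\Pi_Q}$, and the PBW theorem (Corollary~\ref{Y_corollary}) accounts for the extra $\HO^*(\B\BoC^*,\BoQ)$ factor, which on the level of plethystic exponentials is precisely the passage $\Exp_z\leftrightarrow\Exp_{q,z}$ appearing in the definition of $C^{\abso}_{Q,\dd}$.

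Next I would invoke purity: by \cite{davison2021purity} the mixed Hodge structure on $\HO^{\BoMo}_*(\Mst_{\Pi_Q},\BoQ^{\vir})$ is pure, and hence so is each summand $\ICA(\Msp_{\Pi_Q,\dd})$ in the free-generator space (it is the intersection cohomology of an irreducible variety, which is always pure by Saito's theory). For a pure Hodge structure all weights in cohomological degree $i$ equal $i$, so its virtual Poincaré polynomial, which computes $C^{\abso}_{Q,\dd}(q)$ after the substitution $q=$ the square root of the Lefschetz motive squared, is literally $\sum_i \dim \HO^i \cdot q^{i/2}$ with non-negative coefficients. Combining this with the identification of the previous paragraph gives $C^{\abso}_{Q,\dd}(q)=\sum_i \dim \ICA^{i}(\Msp_{\Pi_Q,\dd})\, q^{i/2}\in\BoN[q]$ for $\dd\in\Sigma_{\Pi_Q}$, and $C^{\abso}_{Q,\dd}(q)=0$ otherwise.

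The main obstacle I anticipate is the bookkeeping of the combinatorial/plethystic translation: one must check carefully that the definition of $C^{\abso}_{Q,\dd}$ via $\Exp_z$ versus $\Exp_{q,z}$, which for totally negative quivers collapses because $\langle\dd,\dd\rangle<0$ forces the "else" clause to never apply, really does match the $\HO^*(\B\BoC^*,\BoQ)$-torsor structure of the PBW decomposition, and that the weight grading on intersection cohomology matches the $q$-grading on cuspidal polynomials with the right normalisation (the shift by $(\dd,\dd)_{\Pi_Q}$ in $\BoQ^{\vir}$ has to be tracked). A secondary subtlety is making precise the comparison of the cohomological Hall algebra studied here with the Hall algebra over $\BoF_q$ and its cuspidal subspace, i.e. relating the Sevenhant--Van den Bergh generators to the $\ICA(\Msp_{\Pi_Q,\dd})$; but since for totally negative quivers all the relevant algebras are free, this amounts to matching graded dimensions, which is exactly what the plethystic identities above provide. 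Once these identifications are in place the positivity is immediate from purity, with no further estimates needed.
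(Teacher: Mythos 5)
Your proposal takes essentially the same route as the paper: freeness of the BPS (Lie) algebra, identification of the free generators with $\ICA(\CM_{\Pi_Q,\dd})$, and the plethystic dictionary between cuspidal polynomials and the PBW generating series, yielding $C^{\abso}_{Q,\dd}(q)$ as an IC Poincar\'e polynomial. The key ingredients you name — Theorem~\ref{theorem:freenesspreprojective}/Corollary~\ref{corollary:BPSalgebraFree}, Corollary~\ref{Y_corollary}, and the $\HO^*(\B\BoC^*)$ factor matching the passage $\Exp_z\leftrightarrow\Exp_{q,z}$ — are precisely what the paper uses.

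Two points where the paper is cleaner, and where your write-up would need tightening. First, the dictionary between $A_{Q,\dd}(q)$ and BPS cohomology is not re-derived from Hua's formula, Kac's theorem, and dimensional reduction; instead the paper cites the pre-established character formula $\ch(\mathfrak{g}^{\aBPS}_{\Pi_Q})=\sum_\dd A_{Q,\dd}(q^{-2})z^{\dd}$ from \cite{davison2020bps}, which already packages the purity, parity, and point-counting comparisons you would otherwise have to redo. Your sketch of that derivation is not wrong in spirit, but as written it conflates the Borel--Moore homology of $\Mst_{Q,\dd}$ with the CoHA of $\Pi_Q$, which are related only after passing through the tripled quiver with potential and the cohomological integrality theorem. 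Second, your positivity step via purity of $\ICA$ is a detour: once the character formula identifies $C^{\abso}_{Q,\dd}$ with a shifted Poincar\'e polynomial $\IP(\CM_{\Pi_Q,\dd},q)$, nonnegativity is automatic since Poincar\'e polynomials are dimensions. If one instead insists on arguing through weight (virtual/E-) polynomials as you propose, purity alone does not suffice; you also need the BPS cohomology to be concentrated in even cohomological degree so the signs $(-1)^i$ do not cause cancellation. This parity statement does hold and is part of what makes the cited character formula a polynomial in $q$ rather than $q^{1/2}$, but your proposal does not flag it, and a careful write-up would need to.
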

This conjecture is known for isotropic dimension vectors \cite{deng2003new,bozec2019counting, hennecart2021isotropic}, but is open in general.

We fix now a totally negative quiver $Q$. Such quivers have no isotropic dimension vectors. As mentioned above, for such quivers the cuspidal polynomials $C_{Q,\dd}(q)$ and absolutely cuspidal polynomials $C_{Q,\dd}^{\abso}(q)$ coincide. The case of general quivers will be the object of a subsequent paper. The theorem of Sevenhant and Van den Bergh \cite[Theorem 1.1]{sevenhant2001relation} implies that $H_{Q,\BoF_q}$ is the free associative algebra having $\dim_{\BoC} H_{Q,\BoF_q}[\dd]$ generators in dimension $\dd$.

Assuming that the coefficients of the polynomials $C_{Q,\dd}^{\abso}(q)$ are nonnegative, there exists a free $\BoN^{Q_0}\times \BoN$-graded Lie algebra $\mathfrak{n}^{\BoN}_Q$ with the dimension of the $\BoN$-graded space of generators in dimension $\dd$ given by $C^{\abso}_{Q,\dd}(q)$ such that 
\begin{equation}
\label{equation:charnQN}
 \ch(\UEA(\mathfrak{n}^{\BoN}_Q))=\Exp_{q,z}\left(\sum_{\dd\in\BoN^{Q_0}\setminus\{0\}}A_{Q,\dd}(q)z^{\dd}\right).
\end{equation}
By the graded PBW theorem, this is equivalent to the equality
\begin{equation}
 \ch(\mathfrak{n}_Q^{\BoN})=\sum_{\dd\in\BoN^{Q_0}\setminus\{0\}}A_{Q,\dd}(q)z^{\dd}.
\end{equation}
Conversely, if such a free Lie algebra $\mathfrak{n}_Q^{\BoN}$ exists, then the polynomials $C^{\abso}_{Q,\dd}(q)$ have nonnegative coefficients: they are given by the equality

\begin{equation}
\label{equation:Ngraded}
 \sum_{\dd\in\BoN^{Q_0}}C_{Q,\dd}^{\abso}(q)z^{\dd}=\ch(\mathfrak{n}_Q^{\BoN}/[\mathfrak{n}_Q^{\BoN},\mathfrak{n}_Q^{\BoN}]).
\end{equation}
By \cite[Section 1.2]{davison2020bps}, the 3d BPS Lie algebra of \S\ref{subsubsection:3dBPSLiepreproj}, $\mathfrak{g}_{\Pi_Q}^{\aBPS}\coloneqq\HO^*\BPS_{\Pi_Q,\Lie}^{\mathrm{3d}}$, is a $2\BoN_{\leq 0}$-graded Lie algebra with character

 \begin{equation}
 \label{equation:charBPS}
 \ch(\mathfrak{g}_{\Pi_Q}^{\aBPS})=\sum_{\dd\in\BoN^{Q_0}}A_{Q,\dd}(q^{-2})z^{\dd}
 \end{equation}
which by Corollary \ref{corollary:FreeLiePreproj} is isomorphic to $\aBPS_{\Pi_Q,\Lie}$.  Via this isomorphism $\mathfrak{g}_{\Pi_Q}^{\aBPS}$ is identified with the free Lie algebra with graded dimension of the spaces of generators given by
\begin{equation}
\label{equation:BPSchar}
 \ch(\mathfrak{g}_{\Pi_Q}^{\aBPS}/[\mathfrak{g}_{\Pi_Q}^{\aBPS},\mathfrak{g}_{\Pi_Q}^{\aBPS}])=\sum_{\vec{d}\in\BoN^{Q_0}}\IP(\mathcal{M}_{\Pi_Q,\dd},q)z^{\dd},
\end{equation}
where\footnote{Note that since we take the derived global sections of the perverse intersection complex here, this differs from the standard definition of the intersection Poincar\'e polynomial by a factor of $q^{\dim(\mathcal{M}_{\Pi_Q,\dd})/2}$.} $\IP(\mathcal{M}_{\Pi_Q,\dd},q)\coloneqq \sum_{i\in \BoZ} \dim(\mathrm{IH}^i(\mathcal{M}_{\Pi_Q,\dd}))q^{i}$ is the intersection Poincar\'e polynomial.
\begin{theorem}
 The Conjecture \ref{conjecture:BozecSchiffmann} is true for totally negative quivers: $C^{\abso}_{Q,\dd}(q)\in\BoN[q]$. Moreover, for any $\dd\in\BoN^{Q_0}$, $\dd\in\Sigma_Q$,
 \[
 C_{Q,\dd}(q^{-2})=\IP(\mathcal{M}_{\Pi_Q,\dd},q).
 \]
\end{theorem}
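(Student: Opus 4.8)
The plan is to deduce this theorem from the cohomological integrality results already established, by comparing two generating-series identities. First I would recall the setup: by Corollary~\ref{corollary:FreeLiePreproj} (together with the identification of $\ulrelBPSLie{\Pi_Q}^{\mathrm{3d},\psi}$ with the free Lie algebra on the intersection complexes $\ul{\IC}(\Msp_{\Pi_Q,\vec{d}})$, $\vec{d}\in\Sigma_{\Pi_Q}$), the $2\BoN_{\leq 0}$-graded BPS Lie algebra $\mathfrak{g}_{\Pi_Q}^{\aBPS}=\HO^*\BPS_{\Pi_Q,\Lie}^{\mathrm{3d}}$ is a \emph{free} Lie algebra, with space of generators in dimension vector $\vec{d}$ given (as a graded vector space) by $\ICA(\Msp_{\Pi_Q,\vec{d}})$ when $\vec{d}\in\Sigma_Q$ and $0$ otherwise. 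This gives \eqref{equation:BPSchar}. On the other hand, \eqref{equation:charBPS} records that $\ch(\mathfrak{g}_{\Pi_Q}^{\aBPS})=\sum_{\vec{d}}A_{Q,\vec{d}}(q^{-2})z^{\vec{d}}$; combining this with the PBW theorem for the free Lie algebra, $\ch(\UEA(\mathfrak{g}_{\Pi_Q}^{\aBPS}))=\Exp_{q^{-1},z}\!\big(\sum_{\vec{d}\neq 0}A_{Q,\vec{d}}(q^{-2})z^{\vec{d}}\big)$, while the freeness of $\mathfrak{g}_{\Pi_Q}^{\aBPS}$ gives $\ch(\UEA(\mathfrak{g}_{\Pi_Q}^{\aBPS}))$ as the inverse of $1-\sum_{\vec{d}\in\Sigma_Q}\IP(\Msp_{\Pi_Q,\vec{d}},q)z^{\vec{d}}$ in the appropriate completed ring.

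Second, I would invoke the defining relation of the absolutely cuspidal polynomials. For a totally negative quiver $\langle\vec{d},\vec{d}\rangle_Q<0$ for all $\vec{d}\neq 0$, so $C_{Q,\vec{d}}^{\abso}(q)=C_{Q,\vec{d}}(q)$, and by \cite[Theorem~1.4, Theorem~1.6]{bozec2019counting} we have the identity
\[
1-\sum_{\vec{d}>0}C_{Q,\vec{d}}^{\abso}(q)z^{\vec{d}}=\Exp_{q,z}\!\Big(-\sum_{\vec{d}>0}A_{Q,\vec{d}}(q)z^{\vec{d}}\Big).
\]
The point is that $\Exp_{q,z}(-\,\cdot\,)$ is the multiplicative inverse of $\Exp_{q,z}(\,\cdot\,)$, and the latter is exactly the graded character of the (conjectural, but now realised) free Lie algebra. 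More precisely, substituting $q\mapsto q^{-2}$ (a harmless invertible change of variable, since all these are Laurent polynomials and we match degrees) and comparing with the character computation of the previous paragraph, I would identify the series $1-\sum_{\vec{d}}C_{Q,\vec{d}}(q^{-2})z^{\vec{d}}$ with $\big(\ch\UEA(\mathfrak{g}_{\Pi_Q}^{\aBPS})\big)^{-1}=1-\sum_{\vec{d}\in\Sigma_Q}\IP(\Msp_{\Pi_Q,\vec{d}},q)z^{\vec{d}}$. Equating coefficients of $z^{\vec{d}}$ yields $C_{Q,\vec{d}}(q^{-2})=\IP(\Msp_{\Pi_Q,\vec{d}},q)$ for $\vec{d}\in\Sigma_Q$, and $C_{Q,\vec{d}}(q^{-2})=0$ for $\vec{d}\notin\Sigma_Q$ (consistent with $\Msp_{\Pi_Q,\vec{d}}$ carrying no stable objects). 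Since $\IP(\Msp_{\Pi_Q,\vec{d}},q)\in\BoN[q]$ — it is the Poincaré polynomial of an intersection cohomology group, hence has nonnegative coefficients — this simultaneously proves the positivity statement $C_{Q,\vec{d}}^{\abso}(q)\in\BoN[q]$ and the explicit formula.

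I expect the main obstacle to be purely bookkeeping rather than conceptual: carefully matching the plethystic exponential conventions of \cite{bozec2019counting} (the variable $q$, the grading by $\BoN^{Q_0}$, and whether the extra weight-grading variable is $q$ or $q^{1/2}$ or $q^{2}$) with the cohomological gradings appearing in $\mathfrak{g}_{\Pi_Q}^{\aBPS}$ (which lives in even cohomological degrees $2\BoN_{\leq 0}$, whence the $q^{-2}$ substitution in \eqref{equation:charBPS}), and making sure the PBW/freeness manipulations take place in a ring where infinite products converge — the completion of $\BoQ(q)[[z^{\vec{d}}:\vec{d}\in\BoN^{Q_0}]]$ with respect to the obvious grading. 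One should also double-check that the generators of the free Lie algebra $\mathfrak{g}_{\Pi_Q}^{\aBPS}$ are indeed concentrated in dimension vectors $\vec{d}\in\Sigma_Q$ and that no cancellation occurs — but this is exactly the content of Corollary~\ref{corollary:FreeLiePreproj} combined with Proposition~\ref{proposition:geometryofgms}, so it is already in hand. Once the conventions are pinned down, the proof is a one-line comparison of two identities both asserting that a certain generating function is the reciprocal of $\Exp_{q,z}\!\big(\sum A_{Q,\vec{d}}(q)z^{\vec{d}}\big)$.
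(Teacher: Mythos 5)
Your proposal is correct and takes essentially the same approach as the paper: both rest on the fact that $\mathfrak{g}_{\Pi_Q}^{\aBPS}$ is a free graded Lie algebra (Corollary~\ref{corollary:FreeLiePreproj}) with total character given by Kac polynomials, so that the graded dimensions of its generating space on the geometric side, $\IP(\Msp_{\Pi_Q,\vec{d}},q)$, and on the combinatorial side, $C^{\abso}_{Q,\vec{d}}(q^{-2})$, must agree. The paper states this as a terse comparison of Equations~\eqref{equation:Ngraded} and~\eqref{equation:BPSchar} given~\eqref{equation:charBPS}; you make the same comparison explicit by passing to enveloping algebras, invoking PBW, and inverting the plethystic exponential via Bozec--Schiffmann's Theorem~1.6, which is exactly what that comparison amounts to. The bookkeeping concern you flag (matching the $q\leftrightarrow q^{-2}$ convention and the $\Exp$ normalisation) is real but was anticipated by the paper's own change-of-variables remark, and your proposed resolution is the right one.
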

\begin{proof}
This comes from the comparison of Equations \eqref{equation:Ngraded} and \eqref{equation:BPSchar}, given that the character of the free Lie algebra $\mathfrak{g}_{\Pi_Q}^{\aBPS}$ is given by Equation \eqref{equation:charBPS}, which is \eqref{equation:charnQN} up to the change of variables $q\leftrightarrow q^{-2}$.
\end{proof}

\appendix

\section{Cohomological Hall algebras for preprojective algebras}
\label{section:relativeCoHA}
In this section, we explain the original construction of the product for cohomological Hall algebras of preprojective algebras of quivers following \cite{schiffmann2020cohomological,davison2020bps}. The goal is to prove that the product defined using the $3$-term RHom complex \S\ref{subsubsection:ThecohaproductKV} agrees with this original definition \S\ref{subsection:cohaproductpreproj}, Theorem \ref{theorem:cohappcoincide}. This allows us to use the results of \cite{davison2020bps} and \cite{davison2020cohomological} in the body of this paper.

\subsection{Notations for quiver representations}
\label{subsection:notationsquivreps}

Let $Q=(Q_0,Q_1)$ be a quiver and let $ \Pi_Q\coloneqq \BoC\overline{Q}/\langle\rho\rangle$ be the associated preprojective algebra defined as in \S\ref{subsection:preprojectivealgebra}. 

We establish the notations for the rest of the section, following on from \S \ref{subsection:preprojectivealgebra}:

\begin{enumerate}
 
\item \label{item:repspacepp}The representation space of $\dd$-dimensional representations of the preprojective algebra is $X_{\Pi_Q,\dd}=\mu^{-1}_{\dd}(0)$. We have a closed immersion $i_{\dd}\colon X_{\Pi_Q,\dd}\rightarrow X_{\overline{Q},\dd}$. The stack of $\dd$-dimensional representations of the preprojective algebra is denoted $\mathfrak{M}_{\Pi_Q,\dd}\simeq\mu^{-1}_{\dd}(0)/\GL_{\dd}$. The scheme parametrising semisimple $\dd$-dimensional representations of $\Pi_Q$ is denoted $\mathcal{M}_{\Pi_Q,\dd}\cong\mu_{\dd}^{-1}(0)\cms \GL_{\dd}$. The semisimplification map is $\JH_{\dd}\colon \mathfrak{M}_{\Pi_Q,\dd}\rightarrow\mathcal{M}_{\Pi_Q,\dd}$.
 
\item For $\dd^{(1)},\dd^{(2)}\in\BoN^{Q_0}$, we fix a $ \dd\coloneqq \dd^{(1)}+\dd^{(2)}$-dimensional $Q_0$-graded complex vector space $\BoC^{\dd^{(1)}+\dd^{(2)}}$ and a $\dd^{(1)}$-dimensional $Q_0$-graded subspace $\BoC^{\dd^{(1)}}\subset \BoC^{\dd}$.

\item 
\label{Pequivstr}
We let $P_{\dd^{(1)},\dd^{(2)}}\subset \GL_{\dd}$ be the stabiliser of the subspace $\BoC^{\dd^{(1)}}$. It is a parabolic subgroup. Its unipotent radical is $U_{\dd^{(1)},\dd^{(2)}}$ and its Levi quotient is $\GL_{\dd^{(1)}}\times\GL_{\dd^{(2)}}$. Its Lie algebra is denoted by $\mathfrak{p}_{\dd^{(1)},\dd^{(2)}}$. The Lie algebra of the Levi subgroup is $\mathfrak{l}_{\dd^{(1)},\dd^{(2)}}=\mathfrak{gl}_{\dd^{(1)}}\times\mathfrak{gl}_{\dd^{(2)}}$. We have a $P_{\dd^{(1)},\dd^{(2)}}$-equivariant quotient map
\[
l\colon \mathfrak{p}_{\dd^{(1)},\dd^{(2)}}\rightarrow \mathfrak{l}_{\dd^{(1)},\dd^{(2)}}
\]
where $P_{\dd^{(1)},\dd^{(2)}}$ acts on the target via the quotient morphism $P_{\dd^{(1)},\dd^{(2)}}\rightarrow \GL_{\dd^{(1)}}\times\GL_{\dd^{(2)}}$ and the conjugation action. We give $\mathfrak{n}_{\dd^{(1)},\dd^{(2)}}\coloneqq \ker(l)$ the induced $P_{\dd^{(1)},\dd^{(2)}}$-equivariant structure. We let $i_{\mathfrak{n},\dd^{(1)},\dd^{(2)}}\colon\mathfrak{n}_{\dd^{(1)},\dd^{(2)}}\rightarrow\mathfrak{p}_{\dd^{(1)},\dd^{(2)}}$ be the natural inclusion.

\item We set 
\[
\overline{\mathfrak{M}_{\overline{Q},\dd^{(1)}}\times \mathfrak{M}_{\overline{Q},\dd^{(2)}}}\coloneqq (X_{\overline{Q},\dd^{(1)}}\times X_{\overline{Q},\dd^{(2)}})/P_{\dd^{(1)},\dd^{(2)}}
\]
where $P_{\dd^{(1)},\dd^{(2)}}$ acts on $X_{\overline{Q},\dd^{(1)}}\times X_{\overline{Q},\dd^{(2)}}$ via the quotient morphism $P_{\dd^{(1)},\dd^{(2)}}\rightarrow \GL_{\dd^{(1)}}\times\GL_{\dd^{(2)}}$.

\item \label{item:extensionoverlineQ} We let $F_{\overline{Q},\dd^{(1)},\dd^{(2)}}$ be the closed subvariety of $X_{\overline{Q},\dd}$ consisting of elements preserving the subspace $\BoC^{\dd^{(1)}}\subset\BoC^{\dd}$. It is acted on by $P_{\dd^{(1)},\dd^{(2)}}$ and we set $\mathfrak{M}_{\overline{Q},\dd^{(1)},\dd^{(2)}}=F_{\overline{Q},\dd^{(1)},\dd^{(2)}}/P_{\dd^{(1)},\dd^{(2)}}$. This is the stack of short exact sequences of $\overline{Q}$-representations for which the subrepresentation has dimension vector $\dd^{(1)}$ and the quotient $\dd^{(2)}$. We denote by
$
\overline{p}_{\dd^{(1)},\dd^{(2)}}\colon F_{\overline{Q},\dd^{(1)},\dd^{(2)}}\rightarrow X_{\overline{Q},\dd}
$
the obvious closed immersion. We also denote by 
\[\overline{p}_{\dd^{(1)},\dd^{(2)}}\colon\mathfrak{M}_{\overline{Q},\dd^{(1)},\dd^{(2)}}\rightarrow\mathfrak{M}_{\overline{Q},\dd^{(1)}+\dd^{(2)}}\]
the induced map between the stacks. We denote by 
\[
\overline{q}_{\dd^{(1)},\dd^{(2)}}\colon F_{\overline{Q},\dd^{(1)},\dd^{(2)}}\rightarrow X_{\overline{Q},\dd^{(1)}}\times X_{\overline{Q},\dd^{(2)}}
\]
the projection. It induces a vector bundle $\mathfrak{M}_{\overline{Q},\dd^{(1)},\dd^{(2)}}\rightarrow \overline{\mathfrak{M}_{\overline{Q},\dd^{(1)}}\times\mathfrak{M}_{\overline{Q},\dd^{(2)}}}$ and a vector bundle stack $\mathfrak{M}_{\overline{Q},\dd^{(1)},\dd^{(2)}}\rightarrow \mathfrak{M}_{\overline{Q},\dd^{(1)}}\times\mathfrak{M}_{\overline{Q},\dd^{(2)}}$.

\item \label{item:extensionsPi_Q} We let $F_{\Pi_Q,\dd^{(1)},\dd^{(2)}}$ be the closed subvariety of $X_{\Pi_{Q},\dd}$ of elements preserving the subspace $\BoC^{\dd^{(1)}}\subset\BoC^{\dd}$. It is acted on by $P_{\dd^{(1)},\dd^{(2)}}$ and we let $\mathfrak{M}_{\Pi_Q,\dd^{(1)},\dd^{(2)}}=F_{\Pi_Q,\dd^{(1)},\dd^{(2)}}/P_{\dd^{(1)},\dd^{(2)}}$. This is the stack of short exact sequences of $\Pi_Q$-representations where the subrepresentation has dimension vector $\dd^{(1)}$ and the quotient has dimension vector $\dd^{(2)}$. We let 
\[
q_{\dd^{(1)},\dd^{(2)}}\colon F_{\Pi_Q,\dd^{(1)},\dd^{(2)}}\rightarrow X_{\Pi_Q,\dd^{(1)}}\times X_{\Pi_Q,\dd^{(2)}}
\]
be the natural projection and denote by $p_{\dd^{(1)},\dd^{(2)}}\colon F_{\Pi_Q,\dd^{(1)},\dd^{(2)}}\rightarrow X_{\Pi_Q,\dd}$ the inclusion. We still denote by $p_{\dd^{(1)},\dd^{(2)}}\colon\mathfrak{M}_{\Pi_Q,\dd^{(1)},\dd^{(2)}}\rightarrow\mathfrak{M}_{\Pi_Q,\dd^{(1)}+\dd^{(2)}}$ the map between the stacks. It is proper and representable.

\item \label{item:extensionoverlineQPiQ} We set $\overline{F}_{\overline{Q},\dd^{(1)},\dd^{(2)}}=\overline{q}_{\dd^{(1)},\dd^{(2)}}^{-1}(\mu_{\dd^{(1)}}^{-1}(0)\times \mu_{\dd^{(2)}}^{-1}(0))$. We have closed immersions 
\[
F_{\Pi_Q,\dd^{(1)},\dd^{(2)}}\xrightarrow{i'_{\dd^{(1)},\dd^{(2)}}}\overline{F}_{\overline{Q},\dd^{(1)},\dd^{(2)}}\xrightarrow{i_{\dd^{(1)},\dd^{(2)}}} F_{\overline{Q},\dd^{(1)},\dd^{(2)}}
\]
and the projection $q_{\dd^{(1)},\dd^{(2)}}'\colon \overline{F}_{\overline{Q},\dd^{(1)},\dd^{(2)}}\rightarrow X_{\Pi_Q,\dd^{(1)}}\times X_{\Pi_Q,\dd^{(2)}}$ induced by $\overline{q}_{\dd^{(1)},\dd^{(2)}}$. The stack $\overline{\mathfrak{M}}_{\Pi_Q,\dd^{(1)},\dd^{(2)}}=\overline{F}_{\overline{Q},\dd^{(1)},\dd^{(2)}}/P_{\dd^{(1)},\dd^{(2)}}$ is the stack of short exact sequences of $\overline{Q}$-representations such that the subobject and the quotient are representations of $\Pi_Q$, respectively of dimension $\dd^{(1)}$ and $\dd^{(2)}$. The morphism $q'_{\dd^{(1)},\dd^{(2)}}$ induces a morphism $\overline{\mathfrak{M}}_{\Pi_Q,\dd^{(1)},\dd^{(2)}}\rightarrow \overline{\mathfrak{M}_{\Pi_Q,\dd^{(1)}}\times\mathfrak{M}_{\Pi_Q,\dd^{(2)}}}$, still denoted $q'_{\dd^{(1)},\dd^{(2)}}$. We let $\tilde{i}_{\dd^{(1)},\dd^{(2)}}\coloneqq i_{\dd^{(1)},\dd^{(2)}}\circ i_{\dd^{(1)},\dd^{(2)}}'$ and the same letter denotes the map after quotient by $P_{\dd^{(1)},\dd^{(2)}}$.

\item We let
\[\overline{\mathfrak{M}_{\Pi_Q,\dd^{(1)}}\times\mathfrak{M}_{\Pi_Q,\dd^{(2)}}}=(X_{\Pi_Q,\dd^{(1)}}\times X_{\Pi_Q,\dd^{(2)}})/P_{\dd^{(1)},\dd^{(2)}}
\]
where $P_{\dd^{(1)},\dd^{(2)}}$ acts on $X_{\Pi_{Q},\dd^{(1)}}\times X_{\Pi_{Q},\dd^{(2)}}$ via the quotient $P_{\dd^{(1)},\dd^{(2)}}\rightarrow \GL_{\dd^{(1)}}\times\GL_{\dd^{(2)}}$.

\item We still denote by $q_{\dd^{(1)},\dd^{(2)}}$ the map $\mathfrak{M}_{\Pi_Q,\dd^{(1)},\dd^{(2)}}\rightarrow\overline{\mathfrak{M}_{\Pi_Q,\dd^{(1)}}\times\mathfrak{M}_{\Pi_Q,\dd^{(2)}}}$ obtained from the map $q_{\dd^{(1)},\dd^{(2)}}$ of \eqref{item:extensionsPi_Q} after quotienting by $P_{\dd^{(1)},\dd^{(2)}}$.

\item \label{item:stackZ} We let $\mathfrak{Z}_{\dd^{(1)},\dd^{(2)}}=Z_{\dd^{(1)},\dd^{(2)}}/P_{\dd^{(1)},\dd^{(2)}}$ where $Z_{\dd^{(1)},\dd^{(2)}}\subset X_{\overline{Q},\dd^{(1)}}\times X_{\overline{Q},\dd^{(2)}}\times\mathfrak{p}_{\dd^{(1)},\dd^{(2)}}$ is the space of triples $(\rho^{(1)},\rho^{(2)},g)$ defined by the condition $\mu_{\dd^{(1)}}(\rho^{(1)})\times \mu_{\dd^{(1)}}(\rho^{(2)})=l(g)$ ($l$ is the projection onto the Levi \eqref{Pequivstr}). We have a closed immersion 
$
\overline{i_{\dd^{(1)}}\times i_{\dd^{(2)}}}:=(i_{\dd^{(1)}}\times i_{\dd^{(2)}}\times \{0\})\colon X_{\Pi_Q,\dd^{(1)}}\times X_{\Pi_Q,\dd^{(2)}}\rightarrow Z_{\dd^{(1)},\dd^{(2)}}.
$
We still denote by 
\[
\overline{i_{\dd^{(1)}}\times i_{\dd^{(2)}}}\colon \overline{\mathfrak{M}_{\Pi_Q,\dd^{(1)}}\times\mathfrak{M}_{\Pi_Q,\dd^{(2)}}}\rightarrow \mathfrak{Z}_{\dd^{(1)},\dd^{(2)}}
\]
the morphism obtained after quotienting by $P_{\dd^{(1)},\dd^{(2)}}$. We denote by
\[
\overline{q}'_{\dd^{(1)},\dd^{(2)}}\colon F_{\overline{Q},\dd^{(1)},\dd^{(2)}}\rightarrow Z_{\dd^{(1)},\dd^{(2)}}
\]
the morphism taking $\rho\mapsto (q_{\dd^{(1)},\dd^{(2)}}(\rho),\mu_{\dd}(\rho))$, and we denote by the same symbol the induced morphism of $P_{\dd^{(1)},\dd^{(2)}}$-quotients.

\item \label{item:vectorbundlesection} We let $\overline{\mathfrak{Z}}_{\dd^{(1)},\dd^{(2)}}=\overline{Z}_{\dd^{(1)},\dd^{(2)}}/P_{\dd^{(1)},\dd^{(2)}}$ where
$
\overline{Z}_{\dd^{(1)},\dd^{(2)}}\subset F_{\overline{Q},\dd^{(1)},\dd^{(2)}}\times\mathfrak{p}_{\dd^{(1)},\dd^{(2)}}
$
is the subspace of pairs $(\rho,g)$ such that $(\mu_{\dd^{(1)}}\times\mu_{\dd^{(2)}})q_{\dd^{(1)},\dd^{(2)}}(\rho)=l(g)$, with its natural $P_{\dd^{(1)},\dd^{(2)}}$-equivariant structure. The moment map induces a section $s_{\mu}\colon F_{\overline{Q},\dd^{(1)},\dd^{(2)}}\rightarrow \overline{Z}_{\dd^{(1)},\dd^{(2)}}$, defined by $s_{\mu}(x)=(x,\mu_{\dd}(x))$. By taking the quotient by $P_{\dd^{(1)},\dd^{(2)}}$, we obtain a morphism
\[
r\colon \overline{\mathfrak{Z}}_{\dd^{(1)},\dd^{(2)}}\rightarrow\FM_{\overline{Q},\dd^{(1)},\dd^{(2)}}
\]
with a section still denoted $s_{\mu}$. 

We let $\mathfrak{F}_{\Pi_Q,\dd^{(1)},\dd^{(2)}}=(\overline{F}_{\overline{Q},\dd^{(1)},\dd^{(2)}}\times\mathfrak{n}_{\dd^{(1)},\dd^{(2)}})/P_{\dd^{(1)},\dd^{(2)}}$. The morphism $r$ restricts to the projection of the total space of the vector bundle 
\[
\mathfrak{F}_{\Pi_Q,\dd^{(1)},\dd^{(2)}}\rightarrow \overline{\mathfrak{M}}_{\Pi_Q,\dd^{(1)},\dd^{(2)}}
\] 
with a section denoted by $s'_{\mu}$. Note that $\FM_{\Pi_Q,\dd^{(1)},\dd^{(2)}}$ is precisely the vanishing locus of this section.

\item \label{item:qtilde}We let $\tilde{q}_{\dd^{(1)},\dd^{(2)}}\colon \overline{\mathfrak{M}_{\Pi_Q,\dd^{(1)}}\times\mathfrak{M}_{\Pi_Q,\dd^{(2)}}}\rightarrow\mathfrak{M}_{\Pi_Q,\dd^{(1)}}\times\mathfrak{M}_{\Pi_Q,\dd^{(2)}}$ be the map induced by the group homomorphism $P_{\dd^{(1)},\dd^{(2)}}\rightarrow \GL_{\dd^{(1)}}\times\GL_{\dd^{(2)}}$. It is smooth of dimension $-\dd^{(1)}\cdot\dd^{(2)}=-\sum_{i\in Q_0}(\dd^{(1)})_i(\dd^{(2)})_i$.

\end{enumerate}
\subsection{The relative cohomological Hall algebra product for a preprojective algebra}
\label{subsection:cohaproductpreproj}
In this section, we recall how the relative cohomological Hall algebra product is defined for the stack of representations of the preprojective algebra of a quiver, following \cite{schiffmann2013cherednik,yang2018cohomological,davison2020bps}.

Let $\ulrelCoHA_{\Pi_Q}\coloneqq (\JH_{\Pi_Q})_*\BD\ulBoQ_{\mathfrak{M}_{\Pi_Q}}^{\vir}$. The relative cohomological Hall algebra is an algebra structure on the object $\ulrelCoHA_{\Pi_Q}$ of $\CD^+(\MHM(\mathcal{M}_{\Pi_Q}))$, i.e. a map
\[
 m\colon \ulrelCoHA_{\Pi_Q}\boxdot\ulrelCoHA_{\Pi_Q}\rightarrow \ulrelCoHA_{\Pi_Q}
\]
as in Definition \ref{ma_alg_def} and \S \ref{unbounded_cplx_sec}.

\subsubsection{Construction at the sheaf level}
\label{subsubsection:constructionproductSV}
Let $\dd^{(1)}, \dd^{(2)}\in\BoN^{Q_0}$ and $\dd=\dd^{(1)}+\dd^{(2)}$. Consider the following commutative diagram.

\begin{equation}
\label{equation:cohadiagram}
\begin{tikzcd}[column sep=large]
	{\mathfrak{M}_{\Pi_Q,\dd^{(1)}}\times\mathfrak{M}_{\Pi_Q,\dd^{(2)}}} & {\overline{\mathfrak{M}_{\Pi_Q,\dd^{(1)}}\times\mathfrak{M}_{\Pi_Q,\dd^{(2)}}}} & {\mathfrak{M}_{\Pi_Q,\dd^{(1)},\dd^{(2)}}} & {\mathfrak{M}_{\Pi_Q,\dd}} \\
	& {\mathfrak{Z}_{\dd^{(1)},\dd^{(2)}}} & {\mathfrak{M}_{\overline{Q},\dd^{(1)},\dd^{(2)}}} & {\mathfrak{M}_{\overline{Q},\dd}}
	\arrow["\tilde{q}_{\dd^{(1)},\dd^{(2)}}"',"\eqref{item:qtilde}", from=1-2, to=1-1]
	\arrow["{\overline{i_{\dd^{(1)}}\times i_{\dd^{(2)}}}}"',"\eqref{item:stackZ}", from=1-2, to=2-2]
	\arrow["{\tilde{i}_{\dd^{(1)},\dd^{(2)}}}","\eqref{item:extensionoverlineQPiQ}"', from=1-3, to=2-3]
	\arrow["{i_{\dd}}","\eqref{item:repspacepp}"', from=1-4, to=2-4]
	\arrow["{\overline{p}_{\dd^{(1)},\dd^{(2)}}}"',"\eqref{item:extensionoverlineQ}", from=2-3, to=2-4]
	\arrow["{\overline{q}'_{\dd^{(1)},\dd^{(2)}}}","\eqref{item:stackZ}"', from=2-3, to=2-2]
	\arrow["q_{\dd^{(1)},\dd^{(2)}}"',"\eqref{item:extensionsPi_Q}", from=1-3, to=1-2]
	\arrow["p_{\dd^{(1)},\dd^{(2)}}","\eqref{item:extensionsPi_Q}"', from=1-3, to=1-4]
	\arrow["\lrcorner"{anchor=center, pos=0.125, rotate=-90}, draw=none, from=1-3, to=2-2]
	\arrow["\lrcorner"{anchor=center, pos=0.125}, draw=none, from=1-3, to=2-4]
\end{tikzcd}
 \end{equation}
where both the squares are Cartesian. The vertical arrows are closed immersions. In the sequel, we drop the indices $\dd^{(1)},\dd^{(2)}$ for horizontal maps since the dimension vectors $\dd^{(1)},\dd^{(2)}$ are fixed.

The map $p=p_{\dd^{(1)},\dd^{(2)}}$ is proper and representable so that we have a morphism of complexes
\begin{equation}
\label{equation:pushforward}
\alpha_{\dd^{(1)},\dd^{(2)}}\colon p_*\BD\BoQ_{\mathfrak{M}_{\Pi_Q,\dd^{(1)},\dd^{(2)}}}\rightarrow \BD\BoQ_{\mathfrak{M}_{\Pi_Q,\dd}} 
\end{equation}
obtained by dualizing the canonical adjunction map $\BoQ_{\mathfrak{M}_{\Pi_Q,\dd}}\rightarrow p_*\BoQ_{\mathfrak{M}_{\Pi_Q,\dd^{(1)},\dd^{(2)}}}$.

Since $\mathfrak{Z}_{\dd^{(1)},\dd^{(2)}}$ and $\mathfrak{M}_{\overline{Q},\dd^{(1)},\dd^{(2)}}$ are smooth, the map $\overline{q}'=\bar{q}'_{\dd^{(1)},\dd^{(2)}}$ is strongly l.c.i., and therefore there exists a refined pullback by $q=q_{\dd^{(1)},\dd^{(2)}}$ which we recall at the level of constructible complexes (see also \S \ref{lci_pb_sec}). The map $\overline{q}'=\overline{q}'_{\dd^{(1)},\dd^{(2)}}$ induces the adjunction map
\begin{equation}
\label{equation:adjunctionqbar}
\BoQ_{\mathfrak{Z}_{\dd^{(1)},\dd^{(2)}}}\rightarrow \overline{q}'_*\BoQ_{\mathfrak{M}_{\overline{Q},\dd^{(1)},\dd^{(2)}}} \end{equation}
which dualizes to
\begin{equation}
 \label{equation:dualadjunctionqbar}
 \overline{q}'_!\BD\BoQ_{\mathfrak{M}_{\overline{Q},\dd^{(1)},\dd^{(2)}}}\rightarrow \BD\BoQ_{\mathfrak{Z}_{\dd^{(1)},\dd^{(2)}}}.
\end{equation}
Since for a smooth stack $\mathfrak{X}$ , we have $\BD\BoQ_{\mathfrak{X}}=\BoQ_{\mathfrak{X}}[2\dim\FX]$, we can rewrite \eqref{equation:dualadjunctionqbar} as
\begin{equation}
 \label{equation:dualadjunctionqbarrewritten}
\overline{q}'_!\BoQ_{\mathfrak{M}_{\overline{Q},\dd^{(1)},\dd^{(2)}}}[2\dim\mathfrak{M}_{\overline{Q},\dd^{(1)},\dd^{(2)}}]\rightarrow \BoQ_{\mathfrak{Z}_{\dd^{(1)},\dd^{(2)}}}[2\dim\mathfrak{Z}_{\dd^{(1)},\dd^{(2)}}].
\end{equation}
Applying $(\overline{i_{\dd^{(1)}}\times i_{\dd^{(2)}}})^*$ to \eqref{equation:dualadjunctionqbarrewritten} together with the base change isomorphism $(\overline{i_{\dd^{(1)}}\times i_{\dd^{(2)}}})^*\overline{q}'_!\cong q_!\tilde{i}_{\dd^{(1)},\dd^{(2)}}^*$, we get
\begin{equation}
 \label{equation:basechange}
 q_!\BoQ_{\mathfrak{M}_{\Pi_Q,\dd^{(1)},\dd^{(2)}}}[2\dim\mathfrak{M}_{\overline{Q},\dd^{(1)},\dd^{(2)}}]\rightarrow\BoQ_{\overline{\FM_{\Pi_Q,\dd^{(1)}\times\FM_{\Pi_Q,\dd^{(2)}}}}}[2\dim\mathfrak{Z}_{\dd^{(1)},\dd^{(2)}}].
\end{equation}
Dualizing \eqref{equation:basechange}, we get the morphism
\begin{equation}
 \label{equation:virtualpullback}
(\BD\BoQ_{\overline{\FM_{\Pi_Q,\dd^{(1)}}\times\FM_{\Pi_Q,\dd^{(2)}}}})[-2\dim\mathfrak{Z}_{\dd^{(1)},\dd^{(2)}}]\rightarrow q_*(\BD\BoQ_{\mathfrak{M}_{\Pi_{Q},\dd^{(1)},\dd^{(2)}}})[-2\dim \mathfrak{M}_{\overline{Q},\dd^{(1)},\dd^{(2)}}]
\end{equation}
which is the virtual pullback by $q$ at the level of complexes of sheaves.

The map $\tilde{q}$ is smooth so that denoting by $\dim \tilde{q}$ its relative dimension, we may identify $(\tilde{q})^!= (\tilde{q})^* [2\dim \tilde{q}]$. Using this identity, the inverse of the isomorphism $(\tilde{q})^*\BoQ_{\mathfrak{M}_{\Pi_Q,\dd^{(1)}}\times\mathfrak{M}_{\Pi_Q,\dd^{(2)}}}\rightarrow \BoQ_{\overline{\mathfrak{M}}_{\Pi_Q,\dd^{(1)},\dd^{(2)}}}$ can be rewritten
\begin{equation}
\BoQ_{\overline{\mathfrak{M}}_{\Pi_Q,\dd^{(1)},\dd^{(2)}}}\rightarrow(\tilde{q})^!\BoQ_{\mathfrak{M}_{\Pi_Q,\dd^{(1)}}\times\mathfrak{M}_{\Pi_Q,\dd^{(2)}}}[-2\dim \tilde{q}].
\end{equation}
By the adjunction $((\tilde{q})_!,(\tilde{q})^!)$, we get a map
\begin{equation}
 (\tilde{q})_!\BoQ_{\overline{\mathfrak{M}}_{\Pi_Q,\dd^{(1)},\dd^{(2)}}}\rightarrow\BoQ_{\mathfrak{M}_{\Pi_Q,\dd^{(1)}}\times\mathfrak{M}_{\Pi_Q,\dd^{(2)}}}[-2\dim \tilde{q}].
\end{equation}
whose Verdier dual is the pullback by $\tilde{q}$:
\begin{equation}
 \label{equation:pullbackqprime}
 (\BD\BoQ_{\mathfrak{M}_{\Pi_Q,\dd^{(1)}}\times\mathfrak{M}_{\Pi_Q,\dd^{(2)}}})[2\dim \tilde{q}]\rightarrow (\tilde{q})_*\BD\BoQ_{\overline{\mathfrak{M}}_{\Pi_Q,\dd^{(1)},\dd^{(2)}}}.
\end{equation}

By composing \eqref{equation:pullbackqprime} shifted by $2(\dim\mathfrak{M}_{\overline{Q}\,\dd^{(1)},\dd^{(2)}}-\dim\mathfrak{Z}_{\dd^{(1)},\dd^{(2)}})$ with $(\tilde{q})_*$ applied to \eqref{equation:virtualpullback} shifted by $2\dim\mathfrak{M}_{\overline{Q},\dd^{(1)},\dd^{(2)}}$, we get the morphism
\begin{equation}
 \label{equation:pullbackqqprime}
v_{\dd',\dd''}\colon \BD\BoQ_{\mathfrak{M}_{\Pi_Q,\dd^{(1)}}\times\mathfrak{M}_{\Pi_Q,\dd^{(2)}}}[2(\dim \tilde{q}+\dim\mathfrak{M}_{\overline{Q},\dd^{(1)},\dd^{(2)}}-\dim\mathfrak{Z}_{\dd^{(1)},\dd^{(2)}})]\rightarrow (\tilde{q})_*q_*\BD\BoQ_{\mathfrak{M}_{\Pi_{Q},\dd^{(1)},\dd^{(2)}}} 
\end{equation}
which is the virtual pullback by $\tilde{q}\circ q$.
\begin{lemma}
\label{lemma:shifts}
 We have the equalities
 \[
 \begin{aligned}
 \dim \tilde{q}+\dim\mathfrak{M}_{\overline{Q},\dd^{(1)},\dd^{(2)}}-\dim\mathfrak{Z}_{\dd^{(1)},\dd^{(2)}}&=-\langle\dd^{(1)},\dd^{(2)}\rangle_{Q}-\langle\dd^{(2)},\dd^{(1)}\rangle_{Q}\\
 &=-\frac{1}{2}(\dd,\dd)_{Q}+\frac{1}{2}(\dd^{(1)},\dd^{(1)})_{Q}+\frac{1}{2}(\dd^{(2)},\dd^{(2)})_{Q}.
 \end{aligned}
 \]
 \end{lemma}
 \begin{proof}
 This is a straightforward calculation.
 \end{proof}

Recall that we define $\BoQ_{\FM_{\Pi_Q,\dd}}^{\vir}\coloneqq \BoQ_{\FM_{\Pi_Q,\dd}}[-(\dd,\dd)_{Q}]$. We now introduce the coarse moduli space of the preprojective algebra with the first row of the diagram \eqref{equation:cohadiagram} as in the following commutative diagram:
\begin{equation}
\label{equation:diagramcoarsemodspace}
 \begin{tikzcd}
	{{\mathfrak{M}_{\Pi_Q,\dd^{(1)}}\times\mathfrak{M}_{\Pi_Q,\dd^{(2)}}}} & {{\overline{\mathfrak{M}_{\Pi_Q,\dd^{(1)}}\times\FM_{\Pi_Q,\dd^{(2)}}}}} & {{\mathfrak{M}_{\Pi_Q,\dd^{(1)},\dd^{(2)}}}} & {{\mathfrak{M}_{\Pi_Q\dd}}} \\
	{\mathcal{M}_{\Pi_Q,\dd^{(1)}}\times\mathcal{M}_{\Pi_Q,\dd^{(2)}}} &&& {\mathcal{M}_{\Pi_Q,\dd}.}
	\arrow["{\tilde{q}}"', from=1-2, to=1-1]
	\arrow["q"', from=1-3, to=1-2]
	\arrow["p", from=1-3, to=1-4]
	\arrow["\oplus"', from=2-1, to=2-4]
	\arrow["{\JH_{\dd}}", from=1-4, to=2-4]
	\arrow["{\JH_{\dd^{(1)}}\times\JH_{\dd^{(2)}}}"', from=1-1, to=2-1]
\end{tikzcd}
\end{equation}
By composing $\oplus_{*}(\JH_{\dd^{(1)}}\times\JH_{\dd^{(2)}})_*$ applied to \eqref{equation:pullbackqqprime} shifted by $(\dd,\dd)_{Q}$ with $\JH_{*}$ applied to $\eqref{equation:pushforward}$ shifted by $(\dd,\dd)_{Q}$ and using that \eqref{equation:diagramcoarsemodspace} commutes, we get the map
\[
 m_{\dd^{(1)},\dd^{(2)}}\colon \oplus_*(\JH_{\dd^{(1)}}\times\JH_{\dd^{(2)}})_*\BD\BoQ^{\vir}_{\mathfrak{M}_{\Pi_Q,\dd^{(1)}}\times \mathfrak{M}_{\Pi_Q,\dd^{(2)}}}\rightarrow(\JH_{\dd})_*\BD\BoQ^{\vir}_{\mathfrak{M}_{\Pi_Q,\dd}}
\]
which is the $(\dd^{(1)},\dd^{(2)})$-graded piece of the relative CoHA product. 
\subsubsection{Upgrade to mixed Hodge modules}

Since each $\FM_{\Pi_Q,\dd}$ is a global quotient stack, we can upgrade the morphisms $m_{\dd^{(1)},\dd^{(2)}}$ in the standard way recalled in \S \ref{unbounded_cplx_sec}. The pullback of \eqref{equation:pullbackqqprime} along a smooth morphism $j\colon U_N\rightarrow \FM_{\Pi_Q,\dd^{(1)}}\times\FM_{\Pi_Q,\dd^{(2)}}$ admits a canonical upgrade to the category of mixed Hodge modules on $U_N$, which we then apply $\oplus_{*}(\JH_{\dd^{(1)}}\times\JH_{\dd^{(2)}})_*j_*$ to. Picking $U_N$ the schemes appearing in an acyclic cover of $\FM_{\Pi_Q,\dd^{(1)}}\times\FM_{\Pi_Q,\dd^{(2)}}$ enables us to upgrade the morphisms $\ptau{\leq n}v_{\dd',\dd''}$ to morphisms of mixed Hodge module complexes, yielding a morphism $\ul{v}_{\dd',\dd''}$ in $\CD^+(\MHM(\CM_{\Pi_Q}))$. We upgrade $\JH_*\alpha_{\dd^{(1)},\dd^{(2)}}$ the same way, and composing the resulting morphisms, we define the Hall algebra multiplication on $\ulrelCoHA_{\Pi_Q}$. We generalise this Hall algebra, defined in the category of complexes of mixed Hodge modules, to arbitrary categories satisfying Assumptions \ref{p_assumption}-\ref{ds_fin} and \ref{assumption:associativity} in the next section.

\subsection{Comparison of the products for preprojective CoHAs}
\label{subsection:comparison_preproj}
If $\mathfrak{M}_{\Pi_Q}$ is the moduli stack of representations of the preprojective algebra of a quiver, the constructions of \S\ref{subsection:cohaproductpreproj} and \S\ref{subsubsection:ThecohaproductKV} provide us with two a priori different products on $\ulrelCoHA_{\varpi}=\varpi_*\BD \ulBoQ^{\vir}_{\mathfrak{M}}$. In this section, we show that these two products coincide.

\subsubsection{Total space of the RHom complex for the doubled quiver}

We give an explicit presentation of the RHom complex (in the category of $\BoC\overline{Q}$-modules) on $\mathfrak{M}_{\overline{Q},\dd^{(1)}}\times\mathfrak{M}_{\overline{Q},\dd^{(2)}}$ for $\dd^{(1)},\dd^{(2)}\in\BoN^{Q_0}$.

Let $V_{\dd^{(j)}}$, $j=1,2$ be trivial $\BoN^{Q_0}$-graded vector bundles on $X_{\overline{Q},\dd^{(j)}}$ of rank $\dd^{(j)}$. We have a $2$-term complex of vector bundles on $X_{\overline{Q},\dd^{(1)}}\times X_{\overline{Q},\dd^{(2)}}$, situated in cohomological degrees $-1$ and $0$:
\[
 \mathcal{C}^{\bullet}_{\overline{Q}}=\left(\bigoplus_{i\in Q_0}\Hom_{\BoC}((V_{\dd^{(2)}})_i,(V_{\dd^{(1)}})_i)\xrightarrow{d}\bigoplus_{i\xrightarrow{\alpha}j\in \overline{Q}_1}\Hom_{\BoC}((V_{\dd^{(2)}})_i,(V_{\dd^{(1)}})_j)\right).
\]
Let $x=(x_{\alpha})_{\alpha\in \overline{Q}_1}\in X_{\overline{Q},\dd^{(1)}}$ and $y=(y_{\alpha})_{\alpha\in \overline{Q}_1}\in X_{\overline{Q},\dd^{(2)}}$. For $z\in\bigoplus_{i\in Q_0}\Hom_{\BoC}((V_{\dd^{(1)}})_i,(V_{\dd^{(2)}})_i)$, we have
\[
 d_{(x,y)}z=(z_jy_{\alpha}-x_{\alpha}z_i)_{i\xrightarrow{\alpha} j\in\overline{Q}_1}.
\]
We have a natural $\GL_{\dd^{(1)}}\times\GL_{\dd^{(2)}}$-equivariant structure on $\mathcal{C}_{\overline{Q}}$ so that we can consider $\mathcal{C}_{\overline{Q}}$ as a $2$-term complex on $\mathfrak{M}_{\overline{Q},\dd^{(1)}}\times \mathfrak{M}_{\overline{Q},\dd^{(2)}}$. The following is easy and well-known (cf. Appendix \ref{algebra_constr_sec}).

\begin{proposition}
 The $2$-term complex $\mathcal{C}^{\bullet}_{\overline{Q}}$ on $\mathfrak{M}_{\overline{Q},\dd^{(1)}}\times \mathfrak{M}_{\overline{Q},\dd^{(2)}}$ is quasi-isomorphic to the RHom complex shifted by one.
\end{proposition}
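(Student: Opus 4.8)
The statement to prove is that the explicit two-term complex $\mathcal{C}^{\bullet}_{\overline{Q}}$ on $\mathfrak{M}_{\overline{Q},\dd^{(1)}}\times\mathfrak{M}_{\overline{Q},\dd^{(2)}}$ computes the shifted RHom complex. The plan is to compare this with the bimodule resolution machinery already set up in \S\ref{subsubsection:projectiveresolutionPP} and \S\ref{Dalg_sec}.

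\textbf{Approach via the bar-type resolution of the path algebra.} First I would recall that for the free path algebra $\BoC\overline{Q}$ (which is formally smooth, and has global dimension one), there is the standard short bimodule resolution
\[
0\to \bigoplus_{i\xrightarrow{\alpha}j\in\overline{Q}_1}\BoC\overline{Q}e_j\otimes_{\BoC}e_i\BoC\overline{Q}\xrightarrow{\ \iota\ }\bigoplus_{i\in Q_0}\BoC\overline{Q}e_i\otimes_{\BoC}e_i\BoC\overline{Q}\xrightarrow{\ m\ }\BoC\overline{Q}\to 0,
\]
exactly as in Lemma \ref{A_bimod_lemma} (this is the special case $R=\emptyset$, $\tilde A = \BoC\overline Q$ of that lemma). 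For finite-dimensional representations $M$ of dimension $\dd^{(1)}$ and $N$ of dimension $\dd^{(2)}$, applying $\Hom_{\BoC\overline{Q}}(-\otimes_{\BoC\overline{Q}}M,N)$ to this resolution gives a two-term complex whose terms are precisely $\bigoplus_{i\in Q_0}\Hom_{\BoC}(M_i,N_i)$ in degree $0$ and $\bigoplus_{i\xrightarrow{\alpha}j}\Hom_{\BoC}(M_i,N_j)$ in degree $1$, with differential $f\mapsto (N(\alpha)f_{s(\alpha)}-f_{t(\alpha)}M(\alpha))_{\alpha}$. This is exactly Corollary \ref{dalg_dhom} in the case $R=\emptyset$, and it is exactly the complex $\mathcal{C}^{\bullet}_{\overline{Q}}[1]$ read fibrewise (note the shift: $\mathcal{C}^{\bullet}_{\overline{Q}}$ sits in degrees $-1,0$, so $\mathcal{C}^{\bullet}_{\overline{Q}}[1]$ sits in degrees $0,1$).

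\textbf{Globalization and equivariance.} The second step is to observe that this pointwise identification is the fibrewise restriction of an identification of complexes of $\GL_{\dd^{(1)}}\times\GL_{\dd^{(2)}}$-equivariant vector bundles on $X_{\overline{Q},\dd^{(1)}}\times X_{\overline{Q},\dd^{(2)}}$, since the differential $d_{(x,y)}z=(z_jy_\alpha - x_\alpha z_i)_\alpha$ varies algebraically in $(x,y)$ and is manifestly equivariant. Descending along the quotient by $\GL_{\dd^{(1)}}\times\GL_{\dd^{(2)}}$ gives the complex on the product of stacks. Since the universal family over $\mathfrak{M}_{\overline{Q},\dd}$ is the descent of the tautological family on $X_{\overline{Q},\dd}$, and RHom is computed by a flat resolution of the diagonal bimodule exactly as above, this realizes $\mathcal{C}^{\bullet}_{\overline{Q}}$ as a representative of $\tau^{\ge -1}\mathrm{RHom}$ shifted by one; the fact that $\BoC\overline{Q}$ has global dimension one means there is no higher Ext, so $\mathcal{C}^{\bullet}_{\overline{Q}}[1]$ is the full RHom complex. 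Alternatively, and perhaps more cleanly, I would invoke Corollary \ref{dalg_dhom} directly with $\tilde B = \BoC\overline{Q}$ (so $B=\BoC\overline{Q}$ and $R=\emptyset$), which already packages the relative/global version of this statement over the stack of representations.

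\textbf{Expected main obstacle.} There is no genuine mathematical obstacle here — this is a "well-known" bookkeeping statement, as the text itself signals. The only care needed is in tracking the cohomological shift conventions (the RHom complex "shifted by one" means $\mathrm{RHom}[1]$, placing $\Hom$ in degree $-1$ is wrong — it goes in degree $0$ for $\mathcal{C}^\bullet_{\overline Q}$, hence degree $-1$ after the shift; I must state this consistently with the earlier convention $\mathcal{C}^\bullet = \mathcal{H}^\bullet[1]$ in \S\ref{subsubsection:assumptionCoHAproduct}) and in confirming that the differential $d$ written in the proposition matches the differential $\beta$ of Corollary \ref{dalg_dhom} after identifying $M_i$ with $(V_{\dd^{(1)}})_i$ and $N_j$ with $(V_{\dd^{(2)}})_j$. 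So the proof I would write is short: cite Lemma \ref{A_bimod_lemma} / Corollary \ref{dalg_dhom} for $B=\BoC\overline Q$, note $\mathrm{gldim}(\BoC\overline Q)=1$ so the two-term complex is the entire (shifted) RHom complex, and check that the displayed differential agrees with $\beta$, which is immediate from the formulas.
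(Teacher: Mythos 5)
Your approach is essentially what the paper intends: the proposition is stated without proof as ``easy and well-known (cf.\ \S\ref{sec:2dim_categories})'', and the content of that reference is precisely the bimodule-resolution machinery of Lemma~\ref{A_bimod_lemma} and Corollary~\ref{dalg_dhom} that you invoke, specialized to $B=\tilde{B}=\BoC\overline{Q}$, $R=\emptyset$. Your fibrewise identification, the equivariant globalization over $X_{\overline{Q},\dd^{(1)}}\times X_{\overline{Q},\dd^{(2)}}$, and the observation that $\mathrm{gldim}(\BoC\overline{Q})=1$ kills higher $\Ext$ together give a complete proof of what the paper takes as known. One small bookkeeping slip: with the paper's convention from \S\ref{subsubsection:assumptionCoHAproduct} that $\CC^{\bullet}=\CH^{\bullet}[1]$, shifting sends a complex in degrees $[-1,0]$ to degrees $[-2,-1]$, so the RHom complex itself is $\mathcal{C}^{\bullet}_{\overline{Q}}[-1]$, not $\mathcal{C}^{\bullet}_{\overline{Q}}[1]$ as written in your parenthetical; since you explicitly flag the shift convention as the thing to tidy before writing up, this does not affect the argument.
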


\begin{corollary}
 The total space of $\mathcal{C}^{\bullet}_{\overline{Q}}$ is isomorphic as a vector bundle stack over $\mathfrak{M}_{\overline{Q},\dd^{(1)}}\times \mathfrak{M}_{\overline{Q},\dd^{(2)}}$ to the map
 \[
 \pi_{\dd^{(1)},\dd^{(2)}}\colon\mathfrak{M}_{\overline{Q},\dd^{(1)},\dd^{(2)}}\rightarrow \mathfrak{M}_{\overline{Q},\dd^{(1)}}\times \mathfrak{M}_{\overline{Q},\dd^{(2)}}
 \]
 given in \eqref{item:extensionoverlineQ}.
\end{corollary}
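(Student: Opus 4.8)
The plan is to unwind both sides of the claimed identification explicitly and match them $\GL_{\dd^{(1)}}\times\GL_{\dd^{(2)}}$-equivariantly over $X_{\overline{Q},\dd^{(1)}}\times X_{\overline{Q},\dd^{(2)}}$.

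First I would recall from \S\ref{3t_sec} what the total space of the two-term complex $\mathcal{C}^{\bullet}_{\overline{Q}}$ is. Since $\mathcal{C}^{\bullet}_{\overline{Q}}$ is concentrated in degrees $-1$ and $0$, has no degree-$1$ part, and carries the derivation $Q=d^{\vee}$ dual to its linear differential, the general description of \S\ref{3t_sec} degenerates: $t_{0}(\Tot(\mathcal{C}^{\bullet}_{\overline{Q}}))$ is the quotient of the total space of the degree-$0$ bundle $\mathcal{C}^{0}_{\overline{Q}}=\bigoplus_{i\xrightarrow{\alpha}j\in\overline{Q}_{1}}\Hom((V_{\dd^{(1)}})_{i},(V_{\dd^{(2)}})_{j})$ by the fibrewise additive action of the total space of $\mathcal{C}^{-1}_{\overline{Q}}=\bigoplus_{i\in Q_{0}}\Hom((V_{\dd^{(1)}})_{i},(V_{\dd^{(2)}})_{i})$ through the differential. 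After passing to $\GL_{\dd^{(1)}}\times\GL_{\dd^{(2)}}$-quotients this exhibits $\Tot(\mathcal{C}^{\bullet}_{\overline{Q}})$ as a vector bundle stack over $\mathfrak{M}_{\overline{Q},\dd^{(1)}}\times\mathfrak{M}_{\overline{Q},\dd^{(2)}}$ whose structure map is the projection.

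Second I would write $\mathfrak{M}_{\overline{Q},\dd^{(1)},\dd^{(2)}}=F_{\overline{Q},\dd^{(1)},\dd^{(2)}}/P_{\dd^{(1)},\dd^{(2)}}$ in block form with respect to $\BoC^{\dd}=\BoC^{\dd^{(1)}}\oplus\BoC^{\dd^{(2)}}$. A point of $F_{\overline{Q},\dd^{(1)},\dd^{(2)}}$ is a tuple of block matrices, one per arrow of $\overline{Q}$, preserving $\BoC^{\dd^{(1)}}$; recording the two diagonal blocks and the off-diagonal block identifies $F_{\overline{Q},\dd^{(1)},\dd^{(2)}}$, $\GL_{\dd^{(1)}}\times\GL_{\dd^{(2)}}$-equivariantly, with $X_{\overline{Q},\dd^{(1)}}\times X_{\overline{Q},\dd^{(2)}}\times\Tot(\mathcal{C}^{0}_{\overline{Q}})$ (after matching the indexing of $\mathcal{C}^{0}_{\overline{Q}}$ to the convention for which summand is the sub-object), in such a way that $\overline{q}_{\dd^{(1)},\dd^{(2)}}$ is the projection to the first two factors. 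The parabolic fits in $1\to U_{\dd^{(1)},\dd^{(2)}}\to P_{\dd^{(1)},\dd^{(2)}}\to\GL_{\dd^{(1)}}\times\GL_{\dd^{(2)}}\to 1$, and its abelian unipotent radical $U_{\dd^{(1)},\dd^{(2)}}$ is canonically the additive group $\Tot(\mathfrak{n}_{\dd^{(1)},\dd^{(2)}})=\Tot(\mathcal{C}^{-1}_{\overline{Q}})$. A direct matrix computation shows that conjugation by $U_{\dd^{(1)},\dd^{(2)}}$ fixes the two diagonal blocks and translates the off-diagonal block by exactly the differential $d$ of $\mathcal{C}^{\bullet}_{\overline{Q}}$, while the residual $\GL_{\dd^{(1)}}\times\GL_{\dd^{(2)}}$-action is the natural one on $X_{\overline{Q},\dd^{(1)}}\times X_{\overline{Q},\dd^{(2)}}\times\Tot(\mathcal{C}^{0}_{\overline{Q}})$.

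Combining the two, $\mathfrak{M}_{\overline{Q},\dd^{(1)},\dd^{(2)}}=\big(F_{\overline{Q},\dd^{(1)},\dd^{(2)}}/U_{\dd^{(1)},\dd^{(2)}}\big)\big/\big(\GL_{\dd^{(1)}}\times\GL_{\dd^{(2)}}\big)$ is precisely the vector bundle stack $\Tot(\mathcal{C}^{\bullet}_{\overline{Q}})$ produced in the first step; since the identification is built from the same equivariant bundles by the same quotient operations it is automatically one of vector bundle stacks over $\mathfrak{M}_{\overline{Q},\dd^{(1)}}\times\mathfrak{M}_{\overline{Q},\dd^{(2)}}$, and under it the structure map $\pi_{\dd^{(1)},\dd^{(2)}}$ of \eqref{item:extensionoverlineQ} corresponds to the projection of $\Tot(\mathcal{C}^{\bullet}_{\overline{Q}})$. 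The only genuinely fiddly point is the bookkeeping in the second step — aligning the indexing convention of $\mathcal{C}^{0}_{\overline{Q}}$ with the choice of sub-object and getting the signs right in the conjugation-versus-differential comparison; I do not expect any conceptual obstacle. Alternatively, one may simply observe that this corollary is the relation-free special case of the explicit presentation of the stack of short exact sequences established in \S\ref{new_exts_sec}: for $\overline{Q}$ there are no relations, so the quadratic/moment-map part of the derivation $Q$ vanishes and the three-term complex there reduces to the two-term complex $\mathcal{C}^{\bullet}_{\overline{Q}}$, whence no separate argument is really needed.
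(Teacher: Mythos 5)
Your argument is correct and supplies, at the level of explicit block matrices, precisely the standard computation that the paper leaves implicit when it states the corollary without proof immediately after identifying $\mathcal{C}^{\bullet}_{\overline{Q}}$ with the shifted $\RHom$-complex. The bookkeeping you flag does close cleanly: for $u=(u_i)_{i\in Q_0}\in U_{\dd^{(1)},\dd^{(2)}}$ with $u_i\in\Hom\bigl((V_{\dd^{(2)}})_i,(V_{\dd^{(1)}})_i\bigr)$, conjugation sends the off-diagonal block $b_\alpha$ of a point of $F_{\overline{Q},\dd^{(1)},\dd^{(2)}}$ to $b_\alpha+u_jd_\alpha-a_\alpha u_i$, which is exactly $d_{(x,y)}(u)$ with $x=a$, $y=d$; this also pins down the Hom-direction in the paper's displayed $\mathcal{C}^{\bullet}_{\overline{Q}}$ to be $\Hom\bigl((V_{\dd^{(2)}})_i,(V_{\dd^{(1)}})_i\bigr)$, as in the $\Pi_Q$ formula of \S\ref{subsubsection:rhompreprojectivealgebra}, so that the stated differential $z\mapsto(z_jy_\alpha-x_\alpha z_i)$ typechecks.
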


\subsubsection{Total space of the RHom complex for preprojective algebras}
Recall the explicit presentation of the RHom complex for preprojective algebras $\CC_{\mathscr{G}_2(Q)}$ given in \eqref{equation:RHompreprojective}. From the definitions of the complexes $\mathcal{C}^{\bullet}_{\mathscr{G}_2(Q)}$ and $\mathcal{C}^{\bullet}_{\overline{Q}}$, the following three lemmas are immediate.

\begin{lemma}
 The restriction of $\mathcal{C}^{\bullet}_{\overline{Q}}$ to $\mathfrak{M}_{\Pi_{Q},\dd^{(1)}}\times \mathfrak{M}_{\Pi_{Q},\dd^{(2)}}$ is equal to $\mathcal{C}_{\mathscr{G}_2(Q)}^{\leq 0}$.
\end{lemma}

\begin{lemma}
 The total space $\Tot(\mathcal{C}_{\mathscr{G}_2(Q)}^{\leq 0})$ is isomorphic as a vector bundle stack over $\mathfrak{M}_{\Pi_Q,\dd^{(1)}}\times\mathfrak{M}_{\Pi_Q,\dd^{(2)}}$ to the map $q'_{\dd^{(1)},\dd^{(2)}}\colon\overline{\mathfrak{M}}_{\Pi_Q,\dd^{(1)},\dd^{(2)}}\rightarrow\mathfrak{M}_{\Pi_Q,\dd^{(1)}}\times\mathfrak{M}_{\Pi_Q,\dd^{(2)}}$ defined in \eqref{item:extensionoverlineQPiQ} \S\ref{subsection:notationsquivreps}.
\end{lemma}

\begin{lemma}
 The vector bundle $V=(q'_{\dd^{(1)},\dd^{(2)}})^*
 \mathcal{C}_{\mathscr{G}_2(Q)}^1$ on $\overline{\mathfrak{M}}_{\Pi_Q,\dd^{(1)},\dd^{(2)}}$ is isomorphic to the vector bundle $\mathfrak{F}_{\Pi_Q,\dd^{(1)},\dd^{(2)}}\rightarrow\overline{\mathfrak{M}}_{\Pi_Q,\dd^{(1)},\dd^{(2)}}$ defined in \eqref{item:vectorbundlesection} of \S\ref{subsection:notationsquivreps}. The section of this vector bundle induced by $\mu$ is the section $s'_{\mu}$ described in \eqref{item:vectorbundlesection}.
\end{lemma}

\subsubsection{The comparison diagram}

We have the following commutative diagram with Cartesian squares.

\[
\begin{tikzcd}[column sep=large]
	{\mathfrak{M}_{\Pi_Q,\dd^{(1)}}\times\mathfrak{M}_{\Pi_Q,\dd^{(2)}}} & {\overline{\mathfrak{M}_{\Pi_Q,\dd^{(1)}}\times\mathfrak{M}_{\Pi_Q,\dd^{(2)}}}} & {\overline{\mathfrak{M}}_{\Pi_Q,\dd^{(1)},\dd^{(2)}}} & {\mathfrak{M}_{\Pi_Q,\dd^{(1)},\dd^{(2)}}} \\
	&& {\mathfrak{F}_{\Pi_Q,\dd^{(1)},\dd^{(2)}}} & {\overline{\mathfrak{M}}_{\Pi_Q,\dd^{(1)},\dd^{(2)}}} \\
	& {\mathfrak{Z}_{\dd^{(1)},\dd^{(2)}}} & {\overline{\mathfrak{Z}}_{\overline{Q},\dd^{(1)},\dd^{(2)}}} & {\mathfrak{M}_{\overline{Q},\dd^{(1)},\dd^{(2)}}}
	\arrow["\overline{i_{\dd^{(1)}}\times i_{\dd^{(2)}}}"',from=1-2, to=3-2]
	\arrow[from=1-3, to=2-3,"0_V"]
	\arrow[from=2-3, to=3-3]
	\arrow[from=3-3, to=3-2]
	\arrow[from=1-3, to=1-2]
	\arrow[from=1-4, to=1-3]
	\arrow[from=1-4, to=2-4]
	\arrow[from=2-4, to=3-4]
	\arrow[from=3-4, to=3-3,"s_{\mu}"]
	\arrow[from=2-4, to=2-3, "s'_{\mu}"]
	\arrow["\lrcorner"{anchor=center, pos=0.125, rotate=-90}, draw=none, from=1-3, to=3-2]
	\arrow["\lrcorner"{anchor=center, pos=0.125, rotate=-90}, draw=none, from=1-4, to=2-3]
	\arrow["\lrcorner"{anchor=center, pos=0.125, rotate=-90}, draw=none, from=2-4, to=3-3]
	\arrow["\tilde{q}_{\dd^{(1)},\dd^{(2)}}"',from=1-2, to=1-1]
\end{tikzcd}
\]
This diagram (without the left-most map) comes from the quotient by $P_{\dd^{(1)},\dd^{(2)}}$ of the following diagram (the numbers refer to the item of \S\ref{subsection:notationsquivreps} where the corresponding map is defined):
\[
 \begin{tikzcd}[column sep=large]
	{X_{\Pi_Q,\dd^{(1)}}\times X_{\Pi_Q,\dd^{(2)}}} & {\overline{F}_{\overline{Q},\dd^{(1)},\dd^{(2)}}} & {F_{\Pi_Q,\dd^{(1)},\dd^{(2)}}} \\
	& {\overline{F}_{\overline{Q},\dd^{(1)},\dd^{(2)}}\times\mathfrak{n}_{\dd^{(1)},\dd^{(2)}}} & {\overline{F}_{\overline{Q},\dd^{(1)},\dd^{(2)}}} \\
	{Z_{\dd^{(1)},\dd^{(2)}}} & {\overline{Z}_{\dd^{(1)},\dd^{(2)}}} & {F_{\overline{Q},\dd^{(1)},\dd^{(2)}}}
	\arrow["{q'_{\dd^{(1)},\dd^{(2)}}}"',"\eqref{item:extensionoverlineQPiQ}", from=1-2, to=1-1]
	\arrow["{i_{\dd^{(1)}}\times i_{\dd^{(2)}}\times 0}"',"\eqref{item:stackZ}", from=1-1, to=3-1]
	\arrow["\id\times 0"',from=1-2, to=2-2]
	\arrow["{i_{\dd^{(1)},\dd^{(2)}}\times i_{\mathfrak{n},\dd^{(1)},\dd^{(2)}}}"',"\eqref{Pequivstr}", from=2-2, to=3-2]
	\arrow["{q_{\dd^{(1)},\dd^{(2)}}}", from=3-2, to=3-1]
	\arrow["{s_{\mu}}","\eqref{item:vectorbundlesection}"', from=3-3, to=3-2]
	\arrow["{i'_{\dd^{(1)},\dd^{(2)}}}"',"\eqref{item:extensionoverlineQPiQ}", from=1-3, to=1-2]
	\arrow["{s'_{\mu}}"',"\eqref{item:vectorbundlesection}", from=2-3, to=2-2]
	\arrow["{i'_{\dd^{(1)},\dd^{(2)}}}","\eqref{item:extensionoverlineQPiQ}"', from=1-3, to=2-3]
	\arrow["{i_{\dd^{(1)},\dd^{(2)}}}","\eqref{item:extensionoverlineQPiQ}"', from=2-3, to=3-3]
	\arrow["\lrcorner"{anchor=center, pos=0.125, rotate=-90}, draw=none, from=1-2, to=3-1]
	\arrow["\lrcorner"{anchor=center, pos=0.125, rotate=-90}, draw=none, from=1-3, to=2-2]
	\arrow["\lrcorner"{anchor=center, pos=0.125, rotate=-90}, draw=none, from=2-3, to=3-2]
\end{tikzcd}
\]
The outer square
\[
 \begin{tikzcd}[column sep=large]
	{X_{\Pi_Q,\dd^{(1)}}\times X_{\Pi_Q,\dd^{(2)}}} & {F_{\Pi_Q,\dd^{(1)},\dd^{(2)}}} \\
	{Z_{\dd^{(1)},\dd^{(2)}}} & {F_{\overline{Q},\dd^{(1)},\dd^{(2)}}}
	\arrow["q_{\dd^{(1)}\times\dd^{(2)}}"',"\eqref{item:extensionsPi_Q}",from=1-2, to=1-1]
	\arrow["\overline{i_{\dd^{(1)}}\times i_{\dd^{(2)}}}"',"\eqref{item:stackZ}",from=1-1, to=2-1]
	\arrow["\tilde{i}_{\dd^{(1)},\dd^{(2)}}","\eqref{item:extensionoverlineQPiQ}"',from=1-2, to=2-2]
	\arrow["\overline{q}'_{\dd^{(1)},\dd^{(2)}}","\eqref{item:stackZ}"',from=2-2, to=2-1]
	\arrow["\lrcorner"{anchor=center, pos=0.125, rotate=-90}, draw=none, from=1-2, to=2-1]
\end{tikzcd}
\]
is the left-most Cartesian square of \ref{equation:cohadiagram} (before quotienting by $P_{\dd^{(1)},\dd^{(2)}}$) used to defined the refined pullback in the relative version of the cohomological Hall algebra product of Schiffmann--Vasserot \S\ref{subsubsection:constructionproductSV}. The top-right square together with the top-left arrow is the diagram used to define the relative version of the cohomological Hall algebra in \S \ref{subsubsection:ThecohaproductKV} (the version with $3$-term complexes).

The map $q'_{\dd^{(1)},\dd^{(2)}}$ is smooth, so in particular strongly l.c.i. and so the left-most square has no excess intersection bundle ($q'_{\dd^{(1)},\dd^{(2)}}$ and $q_{\dd^{(1)},\dd^{(2)}}$ are both of codimension $-\sum_{\i\xrightarrow{\alpha}j\in\overline{Q}_1}(\dd^{(1)})_i(\dd^{(2)})_j$). The morphisms $s_{\mu}$, $s'_{\mu}$ are also strongly l.c.i. (as sections of vector bundles) of the same codimension (that equals $\dim\mathfrak{n}_{\dd^{(1)},\dd^{(2)}}$). By Lemma \ref{zei_lemma} we obtain the identification of the two Hall algebra products constructed in \S\ref{subsection:cohaproductpreproj} and \S\ref{subsection:relativecohaproductKV}, yielding the following theorem:

\begin{theorem}
\label{theorem:cohappcoincide}
 For any $\dd^{(1)},\dd^{(2)}\in\BoN^{Q_0}$, the morphisms $m_{\dd^{(1)},\dd^{(2)}}$ defined in \S\ref{subsection:cohaproductpreproj} and $m_{\dd^{(1)},\dd^{(2)}}$ defined in \S\ref{subsubsection:ThecohaproductKV} (for the stack of representations of a preprojective algebra) coincide.
\end{theorem}

\section{2-dimensional categories from geometry: sheaves on surfaces}
\label{sec:2dim_categories}
The categories conforming to the geometric setup of \S \ref{subsubsection:gsetup} and \S \ref{subsubsection:assumptionCoHAproduct} that we consider in this paper come from two constructions, and are of geometric or algebraic origin. In the next two appendices we explain these two constructions, and check that the various assumptions of \S \ref{subsubsection:gsetup} and \S \ref{subsubsection:assumptionCoHAproduct} are met.
\label{geometry_constr_sec}

Let $X$ be a complex projective variety. To start with we do not make any assumption on the smoothness or dimension of $X$.
\subsection{The moduli stack of semistable coherent sheaves}
\label{subsection:propernessp}
We fix an embedding $X\rightarrow \mathbf{P}^N$ of $X$ in some projective space, so that we have a distinguished choice of a very ample line bundle $\mathcal{O}_X(1)$ on $X$ obtained as the restriction to $X$ of $\mathcal{O}_{\mathbf{P}^N}(1)$. For $\CF$ a coherent sheaf on $X$ we write $\CF(n)\coloneqq \CF\otimes\CO_X(1)^{\otimes n}$.

Let $\mathcal{F}$ be a coherent sheaf on $X$. We say that $\mathcal{F}$ is strongly generated by $\mathcal{O}_X(n)$ if the natural map
\[
 \Hom(\mathcal{O}_X(n),\mathcal{F})\otimes_{\BoC}\mathcal{O}_X(n)\rightarrow\mathcal{F}
 \]
is an epimorphism of coherent sheaves and
\[
 \Ext^i(\mathcal{O}_X(n),\mathcal{F})=\HO^i(X,\mathcal{F}(-n))=0
 \]
 for $i>0$.

 Let $P(t)\in\BoQ[t]$ be a polynomial and $\mathfrak{Coh}_{n,P(t)}(X)$ be the substack of $\mathfrak{Coh}_{P(t)}(X)$ parametrising coherent sheaves on $X$ with Hilbert polynomial $P(t)$, strongly generated by $\mathcal{O}_X(-n)$. As the conditions of being an epimorphism and of vanishing of cohomology are open conditions, this is an open substack of $\mathfrak{Coh}_{P(t)}(X)$. It can be realised as the global quotient of an open subscheme $Q_X(n,P(t))$ of the Quot-scheme $\Quot_X(n,P(t)):=\Quot_X(\mathcal{O}_X(-n)\otimes\BoC^{P(n)},P(t))$ of quotients $\mathcal{O}_X(-n)^{P(n)}\xrightarrow{\varphi}\mathcal{F}$ where $\mathcal{F}$ has Hilbert polynomial $P(t)$. The open subscheme 
 \[
 Q_X(n,P(t))\subset \Quot_X(n,P(t))
 \]
 consists of epimorphisms $\mathcal{O}_X(-n)\otimes\BoC^{P(n)}\xrightarrow{\varphi}\mathcal{F}$ such that $\mathcal{F}$ is strongly generated by $\mathcal{O}_X(-n)$ and $\varphi$ induces an isomorphism $\BoC^{P(n)}\rightarrow \HO^0(X,\mathcal{F}(n))$. 

 There is a natural action of $\GL_{P(n)}$ on $Q_X(n,P(t))$, via the action on the factor $\BoC^{P(n)}$ of $\mathcal{O}_X(-n)\otimes\BoC^{P(n)}$. We have
 \[
 \mathfrak{Coh}_{n,P(t)}(X)\simeq Q_X(n,P(t))/\GL_{P(n)}.
 \]
It is known (see \cite[Sec.2]{huybrechts2010geometry}) that the $\GL_{P(n)}$ action on $\Quot_X(n,P(t))$ admits a linearisation, from which it follows that the $\GL_{P(n)}$-quotient $\mathfrak{Coh}_{n,P(t)}(X)$ satisfies the resolution property (Assumption \ref{q_assumption1}).

\subsection{Projectivity}
 Let $P(t)$ and $R(t)$ be polynomials. We let $\Quot_X(n,P(t),R(t))$ be the Quot-scheme parametrising quotients $\mathcal{O}_X(-n)^{P(n)}\xrightarrow{\varphi}\mathcal{F}\xrightarrow{\psi}\mathcal{G}$ where $\mathcal{F}$ (resp. $\mathcal{G}$) has Hilbert polynomial $P(t)$ (resp. $R(t)$). We let $Q_X(n,P(t),R(t))$ be the open subscheme of such quotients for which $\mathcal{F}$ is strongly generated by $\mathcal{O}_X(-n)$ and $\varphi$ induces an isomorphism $\BoC^{P(n)}\rightarrow \HO^0(X,\mathcal{F}(n))$. The action of $\GL_{P(n)}$ on $\mathcal{O}_X(-n)\otimes\BoC^{P(n)}$ induces an action of $\GL_{P(n)}$ on $Q_X(n,P(t),R(t))$. Then,
 \[
 Q_{X}(n,P(t),R(t))/\GL_{P(n)}\simeq \mathfrak{Exact}_{n,P(t),R(t)}(X),
 \]
 where $\mathfrak{Exact}_{n,P(t),R(t)}(X)$ is the stack of short exact sequences $0\rightarrow\mathcal{F}\rightarrow\mathcal{H}\rightarrow\mathcal{G}\rightarrow0$ where $\mathcal{H}$ is strongly generated by $\mathcal{O}_X(-n)$, $\mathcal{H}$ has Hilbert polynomial $P(t)$ and $\mathcal{G}$ has Hilbert polynomial $R(t)$. We have a map $\Quot_X(n,P(t),R(t))\rightarrow \Quot_X(n,P(t))$ forgetting the map $\psi$, and a similar map obtained by composition $\Quot_X(n,P(t),R(t))\rightarrow \Quot_X(n,R(t))$, $(\varphi,\psi)\mapsto \psi\circ\varphi$.

 The map $\Quot_X(n,P(t),R(t))\rightarrow \Quot_X(n,P(t))$ is projective as both $\Quot_X(n,P(t),R(t))$ and $\Quot_X(n,P(t))$ are projective schemes over $\BoC$.
 
By definition, we have a Cartesian square
\[
\begin{tikzcd}
	{Q_X(n,P(t),R(t))} & {Q_{X}(n,P(t))} \\
	{\Quot_X(n,P(t),R(t))} & {\Quot_X(n,P(t))}.
	\arrow[from=1-1, to=2-1]
	\arrow[from=2-1, to=2-2]
	\arrow[from=1-1, to=1-2]
	\arrow[from=1-2, to=2-2]
	\arrow["\lrcorner"{anchor=center, pos=0.125}, draw=none, from=1-1, to=2-2]
\end{tikzcd} 
\]

 By base change, the morphism $Q_X(n,P(t),R(t))\rightarrow Q_X(n,P(t))$ is projective.

\begin{proposition}
\label{gl_pr}
The morphism of stacks
\begin{align*}
p_{P(t),R(t)}\colon\mathfrak{Exact}_{n,P(t),R(t)}(X)&\longrightarrow\mathfrak{Coh}_{n,P(t)}(X)\\
(0\rightarrow\mathcal{F}\rightarrow\mathcal{H}\rightarrow\mathcal{G}\rightarrow0)&\longmapsto\mathcal{H}
\end{align*}
is a representable projective morphism of stacks. In particular, $p_{P(t),R(t)}$ satisfies Assumption \ref{p_assumption} from \S\ref{subsubsection:gsetup}.
\end{proposition}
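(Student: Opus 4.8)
The plan is to realise $p_{P(t),R(t)}$ as the morphism of quotient stacks induced by a $\GL_{P(n)}$-equivariant projective morphism of schemes, and then to deduce representability and projectivity (hence properness) by descent along the presenting atlas.

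First I would recall the two presentations from \S\ref{subsubsection:propernessp}, namely $\mathfrak{Coh}_{n,P(t)}(X)\simeq Q_X(n,P(t))/\GL_{P(n)}$ and $\mathfrak{Exact}_{n,P(t),R(t)}(X)\simeq Q_X(n,P(t),R(t))/\GL_{P(n)}$, and introduce the morphism $\rho\colon Q_X(n,P(t),R(t))\rightarrow Q_X(n,P(t))$ forgetting the quotient $\psi$. This $\rho$ is $\GL_{P(n)}$-equivariant for the natural actions on the factor $\BoC^{P(n)}$, and under the quotients it descends to exactly $p_{P(t),R(t)}$, the morphism sending a short exact sequence to its middle term. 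Writing $u\colon Q_X(n,P(t))\rightarrow\mathfrak{Coh}_{n,P(t)}(X)$ for the presenting atlas, which is a $\GL_{P(n)}$-torsor and in particular a smooth surjection, I would record the Cartesian square
\[
\begin{tikzcd}
{Q_X(n,P(t),R(t))} & {\mathfrak{Exact}_{n,P(t),R(t)}(X)} \\
{Q_X(n,P(t))} & {\mathfrak{Coh}_{n,P(t)}(X)}
\arrow[from=1-1,to=1-2]
\arrow["\rho"',from=1-1,to=2-1]
\arrow["{p_{P(t),R(t)}}",from=1-2,to=2-2]
\arrow["u"',from=2-1,to=2-2]
\end{tikzcd}
\]
obtained by pulling back $p_{P(t),R(t)}$ along $u$: pulling back $Q_X(n,P(t),R(t))/\GL_{P(n)}$ along the torsor $u$ returns $Q_X(n,P(t),R(t))$.

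The remaining ingredient is the projectivity of $\rho$, which was already established in the discussion preceding the proposition: $\rho$ is the base change of the projective morphism of Quot-schemes $\Quot_X(n,P(t),R(t))\rightarrow\Quot_X(n,P(t))$ along the open immersion $Q_X(n,P(t))\hookrightarrow\Quot_X(n,P(t))$, and projective morphisms are stable under base change; in particular $\rho$ is representable and proper. I would then conclude by invoking that representability, properness and projectivity of a morphism of algebraic stacks can be checked after base change along a smooth surjection onto the target, so that the Cartesian square transports all three properties from $\rho$ to $p_{P(t),R(t)}$; in particular $p_{P(t),R(t)}$ is proper and representable, which is what Assumption \ref{p_assumption} of \S\ref{subsubsection:assumptionCoHAproduct} requires. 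I do not anticipate a real obstacle here: the only slightly delicate point is the descent of these morphism properties along the smooth atlas, and this is standard — for representability and properness it is immediate from the scheme-theoretic statements together with the definition of these notions for stacks, and projectivity is handled in the same way using relative ampleness. The one piece of genuine geometric input, the projectivity of the morphism of Quot-schemes, has already been used.
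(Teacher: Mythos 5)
Your proposal is correct and follows essentially the same route as the paper's proof: the paper also obtains $p_{P(t),R(t)}$ as the quotient by $\GL_{P(n)}$ of the projective morphism $Q_X(n,P(t),R(t))\to Q_X(n,P(t))$ and concludes representability and properness from that. You have merely spelled out the Cartesian square and the smooth-descent of these morphism properties, which the paper leaves implicit.
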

\begin{proof}
This morphism is obtained from the quotient by $\GL_{P(n)}$ of the natural projective morphism $Q_X(n,P(t),R(t))\rightarrow Q_X(n,P(t))$ between schemes, so it is representable and proper.
\end{proof}

We define the set of \textit{reduced polynomials} as equivalence classes, by imposing the equivalence relation on $\BoQ[t]$: $P(t)\sim Q(t)$ if $P(t)=\lambda Q(t)$ for some $\lambda\in\BoQ\setminus\{0\}$. We denote equivalence classes by lowercase letters $p(t)\in\BoQ[t]/\sim$. For $p(t)\in\BoQ[t]/\sim $ we denote by $\mathfrak{Coh}^{\sst}_{p(t)}(X)$ the stack of coherent sheaves which are either the zero sheaf, or semistable with reduced Hilbert polynomial $p(t)$. We denote by $\mathfrak{Exact}^{\sst}_{p(t)}(X)$ the stack of short exact sequences of semistable coherent sheaves with reduced Hilbert polynomial $p(t)$.

\begin{corollary}
Let $p(t)\in\BoQ[t]/\sim$ be a reduced polynomial. The morphism of stacks
\[
 \mathfrak{Exact}^{\sst}_{p(t)}(X)\rightarrow\mathfrak{Coh}^{\sst}_{p(t)}(X)
\]
mapping a short exact sequence of Gieseker semistable coherent sheaves on $X$ to the middle term is representable and projective, i.e. $\mathfrak{Coh}^{\sst}_{p(t)}(X)$ satisfies Assumption \ref{p_assumption} from \S\ref{subsubsection:gsetup}.
\end{corollary}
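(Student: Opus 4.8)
The plan is to deduce the statement from Proposition \ref{gl_pr} by a boundedness argument, after decomposing source and target by (honest) Hilbert polynomials. First I would write $\mathfrak{Coh}^{\sst}_{p(t)}(X) = \bigsqcup_{P(t)} \mathfrak{Coh}^{\sst}_{P(t)}(X)$, the disjoint union over the Hilbert polynomials $P(t)$ proportional to $p(t)$ (together with $P(t)=0$), where $\mathfrak{Coh}^{\sst}_{P(t)}(X) \subseteq \mathfrak{Coh}_{P(t)}(X)$ denotes the open substack of sheaves that are zero or semistable. By the boundedness theorem for Gieseker semistable sheaves (e.g. Simpson--Maruyama, or Grothendieck's lemma), the family of semistable sheaves on $X$ with Hilbert polynomial $P(t)$ is bounded, so there is an $n = n(P(t))$ such that every such sheaf is strongly generated by $\CO_X(-n)$ in the sense of \S\ref{subsubsection:propernessp}; fixing such an $n$, the open substack $\mathfrak{Coh}^{\sst}_{P(t)}(X)$ is contained in $\mathfrak{Coh}_{n,P(t)}(X)$, and hence open in it.

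Next I would decompose $\mathfrak{Exact}^{\sst}_{p(t)}(X)$ over a fixed component $\mathfrak{Coh}^{\sst}_{P(t)}(X)$ by the Hilbert polynomial $R(t)$ of the quotient term, which is locally constant on the stack of short exact sequences. There are only finitely many relevant $R(t)$, namely those for which both $R(t)$ and $P(t)-R(t)$ are proportional to $p(t)$ --- equivalently, with multiplicity between $0$ and that of $P(t)$. The key elementary observation is that, over this component, requiring all three terms of a short exact sequence $0 \to \mathcal{F} \to \mathcal{H} \to \mathcal{G} \to 0$ to be semistable of reduced Hilbert polynomial $p(t)$ imposes nothing beyond semistability of $\mathcal{H}$ (which holds, as $\mathcal{H}$ lies in $\mathfrak{Coh}^{\sst}_{P(t)}(X)$) together with the constraint that the reduced Hilbert polynomials of $\mathcal{F}$ and $\mathcal{G}$ equal $p(t)$: indeed, if $\mathcal{H}$ is semistable and $p_{\mathcal{F}} = p_{\mathcal{H}}$ then every $\mathcal{F}' \subseteq \mathcal{F} \subseteq \mathcal{H}$ satisfies $p_{\mathcal{F}'} \preceq p_{\mathcal{H}} = p_{\mathcal{F}}$, so $\mathcal{F}$ is semistable, and dually every quotient of $\mathcal{G}$ is a quotient of $\mathcal{H}$, so $\mathcal{G}$ is semistable; moreover, additivity of Hilbert polynomials forces $p_{\mathcal{G}} = p(t)$ once $p_{\mathcal{F}} = p_{\mathcal{H}} = p(t)$. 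Consequently the part of $\mathfrak{Exact}^{\sst}_{p(t)}(X)$ lying over $\mathfrak{Coh}^{\sst}_{P(t)}(X)$ with quotient Hilbert polynomial $R(t)$ is precisely the preimage, under $p_{P(t),R(t)}$, of the open substack $\mathfrak{Coh}^{\sst}_{P(t)}(X) \subseteq \mathfrak{Coh}_{n,P(t)}(X)$ (using the identification of $\mathfrak{Exact}_{n,P(t),R(t)}(X)$ with a Quot-scheme quotient, which applies since only the middle term is required to be strongly generated by $\CO_X(-n)$).

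Finally I would conclude by invoking Proposition \ref{gl_pr}: each $p_{P(t),R(t)}$ is representable and projective, and both properties are preserved under base change along the open immersion $\mathfrak{Coh}^{\sst}_{P(t)}(X) \hookrightarrow \mathfrak{Coh}_{n,P(t)}(X)$ and under (finite and arbitrary) disjoint unions. Since the map $\mathfrak{Exact}^{\sst}_{p(t)}(X) \to \mathfrak{Coh}^{\sst}_{p(t)}(X)$ is the disjoint union, over the components $P(t)$ and the finitely many $R(t)$ attached to each, of these base-changed morphisms, it is representable and projective, so $\mathfrak{Coh}^{\sst}_{p(t)}(X)$ satisfies Assumption \ref{p_assumption}. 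The only input beyond Proposition \ref{gl_pr} is the boundedness statement, used to pick $n$ uniformly on each component; I expect the main (mild) obstacle to be the bookkeeping with reduced versus honest Hilbert polynomials, and in particular checking carefully that this component-wise decomposition into pieces of the form $\mathfrak{Exact}_{n,P(t),R(t)}(X)$ is exhaustive and disjoint.
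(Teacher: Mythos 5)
Your proof is correct and follows essentially the same route as the paper's: boundedness to fix $n$ with $\mathfrak{Coh}^{\sst}_{P(t)}(X)\subset\mathfrak{Coh}_{n,P(t)}(X)$, the observation that subsheaves and quotients of a semistable sheaf with the same reduced Hilbert polynomial are again semistable, finiteness of the relevant $R(t)$, and base change of Proposition \ref{gl_pr}. You simply spell out the Serre-subcategory step and the $P(t),R(t)$ bookkeeping in more detail than the paper does.
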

\begin{proof}
Fix a polynomial $P(t)$ in the equivalence class $p(t)$. Since the leading term of any $Q(t)$ for which $\mathfrak{Coh}_{Q(t)}(X)$ is nonempty is a positive integer divided by $\deg(p(t))!$, there are finitely may $R(t)\sim P(t)$ for which $p_{P(t),R(t)}^{-1}(\mathfrak{Coh}_{n,P(t)}(X))$ is nonempty.
If $j\colon \mathcal{G}\hookrightarrow\mathcal{F}$ is a monomorphism of sheaves having the same reduced Hilbert polynomial where $\mathcal{F}$ is semistable, then $\mathcal{G}$ and $\coker(j)$ are semistable. By boundedness of the moduli stack \cite{huybrechts2010geometry}, for sufficiently large $n$ 
 \[
 \mathfrak{Coh}^{\sst}_{P(t)}(X)\subset \mathfrak{Coh}_{n,P(t)}(X).
 \]
Projectivity of $\mathfrak{Exact}^{\sst}_{p(t)}(S)\rightarrow\mathfrak{Coh}_{p(t)}^{\sst}(X)$ then follows from Proposition \ref{gl_pr} by base change.
\end{proof}
\subsection{Resolving RHom}
Fix a reduced Hilbert polynomial $p(t)$. So far we have placed few constraints on the projective variety $X$. In order for the stack $\FM=\mathfrak{Coh}^{\sst}_{p(t)}(X)$ to satisfy all of the assumptions detailed in \S\ref{subsubsection:gsetup}, we next assume that $S=X$ has dimension at most 2 and is smooth.

Fix a polynomial $P(t)$ in the equivalence class $p(t)$. We denote by $\FM_{P(t)}\subset \FM$ the substack of coherent sheaves with Hilbert polynomial $P(t)$. Let $\CU\in\Coh(\FM_{P(t)}\times S)$ be the universal coherent sheaf. By boundedness of the moduli space $\FM_{P(t)}$ we can pick $n_1\gg 0$ such that $\CU_{y}$ is strongly generated by $\CO_S(-n_1)$ for every sheaf $\CU_y$ corresponding to a point $y$ of $\FM_{P(t)}$. Let 
\[
\CL^1=\CH om_{\FM_{P(t)}\times S}(\pi_2^*\CO_S(-{n_1}),\CU)\otimes \pi_2^*\CO_S(-{n_1}),
\]
and define $\CK$ to be the kernel of the adjunction morphism $\CL^1\rightarrow \CU$. By boundedness again, there is a $n_2\gg n_1$ such that all sheaves $\CK_y$ parametrised by points $y$ of $\FM_{P(t)}$ are strongly generated by $\CO_S(-n_2)$ and we define $\CL^2=\CH om_{\FM_{P(t)}\times S}(\pi_2^*\CO_S(-{n_2}),\CK)\otimes \pi_2^*\CO_S(-{n_2})$. Let $\CL^3$ be the kernel of the adjunction morphism $\CL^2\rightarrow \CL^1$. The following is well-known
\begin{lemma}
The coherent sheaf $\CL^3$ is a flat family of vector bundles on $S$.
\end{lemma}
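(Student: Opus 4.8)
The plan is to prove two statements at once: that $\CL^3$ is flat over $\FM_{P(t)}$ with formation commuting with base change, and that for every closed point $y$ of $\FM_{P(t)}$ the fibre $(\CL^3)_y$ is a locally free $\CO_S$-module; together these are exactly the assertion that $\CL^3$ is a flat family of vector bundles on $S$. Since flatness and local freeness may be tested after pulling back along a smooth surjection, I would first reduce to the case where the base is a finite type scheme — e.g.\ the relevant open subscheme of the Quot scheme $Q_S(n,P(t))$ with its tautological quotient in place of $\CU$ — so that cohomology-and-base-change applies cleanly. The only input beyond routine flatness bookkeeping is the homological fact that, $S$ being smooth of dimension at most $2$, each local ring $\CO_{S,s}$ is regular of Krull dimension $\le 2$, hence of global dimension $\le 2$; in particular $\operatorname{pd}_{\CO_{S,s}}M\le 2$ for every finitely generated $\CO_{S,s}$-module $M$.

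First I would run cohomology-and-base-change for $\CL^1$. The projection $\pi_1\colon\FM_{P(t)}\times S\to\FM_{P(t)}$ is proper and flat, $\CU$ is $\FM_{P(t)}$-flat, and by the strong generation hypothesis $H^i(S,\CU_y(n_1))=0$ for $i>0$ at every closed point $y$, so $\dim_\BC H^0(S,\CU_y(n_1))=\chi(S,\CU_y(n_1))=P(n_1)$ is constant in $y$. Hence $\mathcal{E}_1\coloneqq\pi_{1*}\!\big(\CU\otimes\pi_2^*\CO_S(n_1)\big)$ is locally free of rank $P(n_1)$ and commutes with base change, so $\CL^1=\pi_1^*\mathcal{E}_1\otimes\pi_2^*\CO_S(-n_1)$ is $\FM_{P(t)}$-flat with fibres $(\CL^1)_y\cong\CO_S(-n_1)^{\oplus P(n_1)}$, and the adjunction morphism $\CL^1\to\CU$ is surjective and restricts on each fibre to the (surjective) evaluation map. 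Since $\CL^1$ and $\CU$ are $\FM_{P(t)}$-flat and $\CL^1\to\CU$ is surjective, the long exact $\operatorname{Tor}$ sequence together with $\operatorname{Tor}_1(\CU,k(y))=0$ shows that $\CK\coloneqq\ker(\CL^1\to\CU)$ is $\FM_{P(t)}$-flat with $(\CK)_y=\ker\big((\CL^1)_y\to\CU_y\big)$ for all $y$; in particular the Hilbert polynomial of $\CK_y$ is independent of $y$.

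Next I would repeat the argument one level down. By boundedness of the flat family $\{\CK_y\}$ one may choose $n_2\gg n_1$ such that, uniformly in $y$, $\CK_y$ is strongly generated by $\CO_S(-n_2)$, so that $H^{>0}(S,\CK_y(n_2))=0$ and $\dim_\BC H^0(S,\CK_y(n_2))$ is constant; cohomology-and-base-change then gives $\mathcal{E}_2\coloneqq\pi_{1*}\!\big(\CK\otimes\pi_2^*\CO_S(n_2)\big)$ locally free and base-change compatible, $\CL^2=\pi_1^*\mathcal{E}_2\otimes\pi_2^*\CO_S(-n_2)$ an $\FM_{P(t)}$-flat family of vector bundles, and a surjection $\CL^2\twoheadrightarrow\CK$ that is surjective on every fibre. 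The morphism $\CL^2\to\CL^1$ is the composite $\CL^2\twoheadrightarrow\CK\hookrightarrow\CL^1$, so $\CL^3\coloneqq\ker(\CL^2\to\CL^1)=\ker(\CL^2\to\CK)$, and the same flatness argument (flatness of $\CL^2$ and $\CK$ plus surjectivity of $\CL^2\to\CK$) shows $\CL^3$ is $\FM_{P(t)}$-flat with $(\CL^3)_y=\ker\big((\CL^2)_y\to\CK_y\big)$ for every closed point $y$.

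Finally I would check fibrewise local freeness. Assembling the above fibrewise descriptions produces, for each closed point $y$, an exact sequence of coherent sheaves on $S$
\[
0\to (\CL^3)_y\to (\CL^2)_y\to (\CL^1)_y\to \CU_y\to 0
\]
in which $(\CL^1)_y$ and $(\CL^2)_y$ are locally free. Passing to the stalk at an arbitrary $s\in S$ and splitting this four-term sequence into two short exact sequences of $\CO_{S,s}$-modules yields the dimension shift $\Ext^{i}_{\CO_{S,s}}\big(((\CL^3)_y)_s,-\big)\cong\Ext^{i+2}_{\CO_{S,s}}\big((\CU_y)_s,-\big)$ for all $i\ge 1$; since $\operatorname{pd}_{\CO_{S,s}}(\CU_y)_s\le 2$ the right-hand side vanishes for $i\ge 1$, so $((\CL^3)_y)_s$ has projective dimension $0$, i.e.\ is free. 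Hence $(\CL^3)_y$ is locally free for every $y$, and together with the flatness of $\CL^3$ over $\FM_{P(t)}$ this is the claim. The one step that genuinely needs care is tracking base-change compatibility through the two successive kernels $\CK$ and $\CL^3$; everything else (cohomology-and-base-change, boundedness, and the Auslander--Buchsbaum-type syzygy argument) is standard.
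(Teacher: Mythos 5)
Your proof is correct, and the key step — the dimension shift $\Ext^{i}\bigl((\CL^3)_y,-\bigr)\cong\Ext^{i+2}\bigl(\CU_y,-\bigr)$ for $i\geq 1$, together with the fact that a smooth surface has homological dimension $\le 2$ — is exactly the argument the paper gives (phrased there as $\Ext_{S_K}^i(\CL^3_y,\CO_x)\cong\Ext_{S_K}^{i+2}(\CU_y,\CO_x)=0$ for $i\geq 1$). You additionally spell out the flatness and base-change bookkeeping for $\CL^1$, $\CK$, $\CL^2$, $\CL^3$ via cohomology-and-base-change and the long exact $\operatorname{Tor}$ sequence, which the paper leaves implicit under its ``well-known'' preamble; that is harmless extra detail and does not change the route.
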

\begin{proof}
Let $y$ be a $K$-point of $\FM_{P(t)}$ for some field $K\supset \BoC$ and let $x$ be a point of $S_K$. Then $\Ext_{S_K}^i(\CL_y^3,\mathcal{O}_x)\cong \Ext_{S_K}^{i+2}(\CU_y,\mathcal{O}_x)=0$ if $i\geq 1$, and the result follows.
\end{proof}
We have thus defined a global resolution $\CL^{\bullet}\rightarrow \CU$ of the universal sheaf by vector bundles. Since the resolution depends on numbers $n_2\gg n_1\gg 0$ we will denote this resolution by $\CL^{\bullet}_{n_1,n_2}$ when the dependence on these numbers is significant. Fix $R(t)\sim P(t)$. We abuse notation and also denote by $\CL^{\bullet}_{n_1,n_2}$ the resolution of the universal sheaf on $\FM_{R(t)}\times S$. For the next lemma we use boundedness again:

\begin{lemma}
\label{lemma:vector-bundle}
Pick numbers $n'_2\gg n'_1\gg n_2\gg n_1\gg 0$. Let $y,y'$ be $K$-points of $\FM_{R(t)}$ and $\FM_{P(t)}$ respectively for a field $K \supset \BoC$. Then $\Ext_{S_K}^n((\CL^i_{n'_1,n'_2})_{y'},(\CL^j_{n_1,n_2})_{y})=0$ for $n\geq 1$ and all $i,j\in\{1,2,3\}$.
\end{lemma}
The lemma implies that the dimension of $((\pi_{1,2})_*\CH om(\pi_{1,3}^*\CL^i_{n'_1,n'_2},\pi_{2,3}^*\CL^j_{n_1,n_2}))_{y''}$ is constant across all $K$-points $y''$ of $\FM_{P(T)}\times \FM_{R(t)}$, from which we deduce the following.
\begin{corollary}
Pick numbers $n'_2\gg n'_1\gg n_2\gg n_1\gg 0$, and $i,j\in\{1,2,3\}$. For numbers $a\neq b$ let $\pi_{a,b}$ denote the projection of $\FM_{P(t)}\times\FM_{R(t)}\times S$ onto its $a$th and $b$th factors. The coherent sheaf
\[
\CV^{i,j}_{(n'_1,n'_2),(n_1,n_2)}\coloneqq (\pi_{1,2})_*\CH om(\pi_{1,3}^*\CL^i_{n'_1,n'_2},\pi_{2,3}^*\CL^j_{n_1,n_2})
\]
is a vector bundle on $\FM_{P(t)}\times \FM_{R(t)}$.
\end{corollary}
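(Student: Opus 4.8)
The plan is to obtain the statement as a direct application of the theorem on cohomology and base change, fed by the Ext-vanishing of the preceding Lemma. The first step is to upgrade the information that $\CL^i_{n_1,n_2}$ is a \emph{flat family of vector bundles on $S$} to the stronger assertion that it is a genuine vector bundle on the total space $\FM_{P(t)}\times S$ (and likewise $\CL^j_{n_1,n_2}$ on $\FM_{R(t)}\times S$). For $i=1,2$ this is immediate from the evaluation presentation $\CL^i=\pi_1^*W^i\otimes\pi_2^*\CO_S(-n_i)$, in which $W^1=(\pi_1)_*(\CU(n_1))$, resp. $W^2=(\pi_1)_*(\CK(n_2))$, is locally free by cohomology and base change, its higher direct images vanishing fibrewise by the strong-generation conditions cutting out $\FM_{n_1,P(t)}$ and used to define $\CL^2$. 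For $i=3$ one combines the Lemma (which gives fibrewise local freeness on $S$) with flatness over the base and the standard fact that a finitely presented sheaf that is flat over the base and locally free on every fibre over the base is itself locally free. Consequently the pullbacks $\pi_{1,3}^*\CL^i_{n'_1,n'_2}$ and $\pi_{2,3}^*\CL^j_{n_1,n_2}$ are vector bundles on $\FM_{P(t)}\times\FM_{R(t)}\times S$, hence so is
\[
\CG\coloneqq\CH om(\pi_{1,3}^*\CL^i_{n'_1,n'_2},\pi_{2,3}^*\CL^j_{n_1,n_2})\cong(\pi_{1,3}^*\CL^i_{n'_1,n'_2})^{\vee}\otimes\pi_{2,3}^*\CL^j_{n_1,n_2};
\]
in particular $\CG$ is flat over $\FM_{P(t)}\times\FM_{R(t)}$, and $\pi_{1,2}$ is proper because $S$ is projective. (All the sheaves here are defined $\GL$-equivariantly on the relevant Quot-scheme products, so it is harmless to argue on a smooth atlas.)

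The second step is the application of cohomology and base change to the proper flat morphism $\pi_{1,2}$ and the flat sheaf $\CG$. Locally on an affine $\Spec A$ in an atlas of $\FM_{P(t)}\times\FM_{R(t)}$, the object $R(\pi_{1,2})_*\CG$ is computed by a bounded complex $K^{\bullet}$ of finite free $A$-modules in degrees $[0,\dim S]$ with the base-change property $\HO^n(K^{\bullet}\otimes_A M)\cong\HO^n(S,\,\CG\otimes M)$ for every $A$-module $M$. By the preceding Lemma, for every geometric point $y''=(y',y)$ we have
\[
\HO^n(S_K,\CG_{y''})=\Ext^n_{S_K}\big((\CL^i_{n'_1,n'_2})_{y'},(\CL^j_{n_1,n_2})_{y}\big)=0\qquad\text{for all }n\geq 1,
\]
and since $\FM_{P(t)}\times\FM_{R(t)}$ is of finite type over $\BoC$ it suffices to know this at closed points. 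A descending induction on $n$ — using that the top cohomology of $K^{\bullet}$ commutes with base change together with Nakayama's lemma — then yields $\HO^n(K^{\bullet})=0$ for all $n\geq 1$; the resulting exact sequence $0\to\HO^0(K^{\bullet})\to K^0\to K^1\to\cdots\to K^{\dim S}\to 0$ splits because each $K^j$ is projective, so $\HO^0(K^{\bullet})=(\pi_{1,2})_*\CG$ is a direct summand of a finite free module, i.e. projective, hence locally free. Thus $\CV^{i,j}_{(n'_1,n'_2),(n_1,n_2)}=(\pi_{1,2})_*\CG$ is a vector bundle. Equivalently, once one knows (as noted in the Remark) that $y''\mapsto\dim_K\Hom_{S_K}((\CL^i)_{y'},(\CL^j)_y)=\chi(S_K,\CG_{y''})$ is locally constant, the same conclusion follows from Grauert's theorem on the reduced locus; but the formulation above avoids any reducedness hypothesis.

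I do not anticipate a genuine obstacle: all the mathematical content sits in the Ext-vanishing Lemma, and the rest is the standard cohomology-and-base-change machine. The only points deserving care are (i) promoting each $\CL^i$ from a flat family of vector bundles on $S$ to a vector bundle on $\FM_{P(t)}\times S$ (resp. $\FM_{R(t)}\times S$), which is what guarantees flatness of $\CG$ over the base, and (ii) the fact that $\FM_{P(t)}\times\FM_{R(t)}$, being a product of open substacks of Quot schemes, need not be reduced — which is why one should run the argument through the perfect complex $K^{\bullet}$ and the \emph{vanishing} of all positive-degree fibre cohomology, rather than invoking Grauert's semicontinuity theorem in the form that requires a reduced base.
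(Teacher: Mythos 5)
Your proof is correct, and it follows the same route as the paper: the Ext-vanishing of the preceding Lemma is fed into cohomology and base change for the proper flat morphism $\pi_{1,2}$. The one point where you go beyond what the paper writes is worth flagging. The paper's one-line justification --- that the fibre dimension of the pushforward is constant --- reads like an invocation of Grauert's theorem, which requires an integral (in particular reduced) base. Your version observes, correctly, that this hypothesis is superfluous: once all positive-degree fibre cohomology vanishes, the descending-induction/Nakayama argument on the perfect complex $K^{\bullet}$ (Hartshorne III.12.11, say) already kills every $\HO^{n}(K^{\bullet})$ for $n\geq 1$ and exhibits $\HO^{0}(K^{\bullet})$ as a direct summand of $K^{0}$, hence locally free, without any appeal to the base being reduced. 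Since $\FM_{P(t)}\times\FM_{R(t)}$ is built from open subschemes of Quot schemes, which need not be reduced, this is a genuine refinement rather than pedantry.

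Your preliminary step --- promoting each $\CL^{i}$ from a flat family of vector bundles on $S$ to an honest vector bundle on $\FM_{P(t)}\times S$ --- is also handled correctly: for $i=1,2$ this is visible from the evaluation presentation $\pi_{1}^{*}W^{i}\otimes\pi_{2}^{*}\CO_{S}(-n_{i})$ with $W^{i}$ locally free by cohomology and base change, and for $i=3$ it is the local criterion for flatness (finite module, flat over the base, free on the fibre, hence free). This step is needed to know that $\CH om(\pi_{1,3}^{*}\CL^{i},\pi_{2,3}^{*}\CL^{j})$ is flat over the base before running cohomology and base change, and the paper leaves it implicit.
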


Note that $\CV^{\bullet,\bullet}_{(n'_1,n'_2),(n_1,n_2)}$ is a double complex, with differentials induced by the differentials on $\CL^{\bullet}_{n_1,n_2}$ and $\CL^{\bullet}_{n'_1,n'_2}$. We denote by $\CR^{\bullet}_{(n_1,n_2),(n'_1,n'_2)}$ the resulting total complex.
\begin{corollary}
	\label{corollary:RHom_complexes_surfaces}
The complex $\CR^{\bullet}_{(n_1,n_2),(n'_1,n'_2)}$ provides a 5-term complex of vector bundles resolving the RHom complex on $\FM_{P(t)}\times \FM_{R(t)}$.
\end{corollary}
\subsection{Stacks of extensions}
\label{new_exts_sec}
Set $\FX=t_0(\Tot_{\FM_{P(t)}\times\FM_{R(t)}}(\CR^{\bullet}_{(n_1,n_2),(n'_1,n'_2)}[1]))\times S$. We denote by $\pi'_{a,b}$ the projection of $\FX$ onto the $a$th and $b$th factors of $\FM_{P(t)}\times\FM_{R(t)}\times S$. By construction, $\FX$ carries a universal cone double complex
\[
\begin{tikzcd}
&\pi'^*_{2,3}\CL^3_{n_1,n_2}
\\
\pi'^*_{1,3}\CL^3_{n'_1,n'_2}&\pi'^*_{2,3}\CL^2_{n_1,n_2}
\\
\pi'^*_{1,3}\CL^2_{n'_1,n'_2}&\pi'^*_{2,3}\CL^1_{n_1,n_2}
\\
\pi'^*_{1,3}\CL^1_{n'_1,n'_2}
\arrow[from=1-2,to=2-2]
\arrow[from=2-1,to=2-2]
\arrow[from=2-1,to=3-1]
\arrow[from=2-2,to=3-2]
\arrow[from=3-1,to=3-2]
\arrow[from=3-1,to=4-1]
\end{tikzcd}
\]
The first column is the pullback of the universal resolution $\CL^{\bullet}_{n'_1,n'_2}$ along the projection to $\FM_{P(t)}\times S$, the second column is the pullback of the universal resolution $\CL^{\bullet}_{n_1,n_2}$ along the projection to $\FM_{R(t)}\times S$, and the horizontal morphisms are determined by the $\CR^{\bullet}_{(n_1,n_2),(n'_1,n'_2)}$-fibre. We denote the total complex by $\CE^{\bullet}$. Then since $\CE^i=0$
for $i>0$ there is a morphism of complexes
\[
\CE^{\bullet}\rightarrow \CH^0(\CE^{\bullet})
\]
which is moreover a quasi-isomorphism, defining a morphism $q'\colon \FX\rightarrow \FM_{a+b}$ and a tautological morphism
\begin{equation}
\label{taut_out_morph}
T\colon \CE^{\bullet}\rightarrow q'^*\CU.
\end{equation}
We obtain the standard
\begin{proposition}
\label{cone_prop}
There is an equivalence of stacks
\[
t_0(\Tot_{\FM_{P(t)}\times\FM_{R(t)}}((\tau^{\geq 0}\CR^{\bullet}_{(n_1,n_2),(n'_1,n'_2)})[1]))\simeq \mathfrak{Exact}^{\sst}_{R(t),P(t)}(S).
\]
\end{proposition}
\begin{remark}
\label{3t_complex}
The complex $\CR^{\bullet}_{(n_1,n_2),(n'_1,n'_2)}$ is situated in cohomological degrees $[-2,2]$. There is a more economical presentation of the RHom complex, that will suffice for defining the relative CoHA product. We define 
\[
\CR^{i}_{(n_1,n_2),-}\coloneqq (\pi_{1,2})_*\CH om(\pi_{1,3}^*\CL^i_{n_1,n_2},\pi_{2,3}^*\CU),
\]
For $n_2 \gg n_1\gg0$ this defines a complex of vector bundles, in cohomological degrees $[0,2]$, and for $n_2 \gg n_1\gg n'_2\gg n'_1\gg 0$ the quasi-isomorphism $\CL^j_{n'_1,n'_2}\rightarrow \CU$ induces a quasi-isomorphism $\CR^{\bullet}_{(n_1,n_2),(n'_1,n'_2)}\rightarrow \CR^{\bullet}_{(n_1,n_2),-}$. Therefore, if $d^{-1}\colon \CR^{-1}_{(n_1,n_2),(n'_1,n'_2)}\rightarrow\CR^{0}_{(n_1,n_2),(n'_1,n'_2)}$ is the differential, there is a morphism of complexes $\tau^{\geq 0}\CR^{\bullet}_{(n_1,n_2),(n'_1,n'_2)}=[\coker(d^{-1})\rightarrow \CR^{1}_{(n_1,n_2),(n'_1,n'_2)}\rightarrow \CR^{2}_{(n_1,n_2),(n'_1,n'_2)}]\rightarrow \CR^{\bullet}_{(n_1,n_2),-}$ that is a quasi isomorphism.
\end{remark}
In particular we obtain the following (well-known) proposition:
\begin{proposition}
\label{proposition:3t-complex}
Fix two non-zero polynomials $P(t),R(t)$ with the same reduction (i.e. differing by scalar multiplication). Fix numbers $n_2\gg n_1\gg 0$. Then the RHom complex on $ \mathfrak{Coh}^{\sst}_{P(t)}(S)\times \mathfrak{Coh}^{\sst}_{R(t)}(S)$ is resolved by the 3-term complex of vector bundles $\CR^{\bullet}_{(n_1,n_2),-}$. In particular, Assumption \ref{q_assumption1} is satisfied by the morphism $\bs{q}\colon \Tot_{\FM_{P(t)}\times\FM_{R(t)}}(\CR^{\bullet}_{(n_1,n_2),-})\rightarrow \mathfrak{Coh}^{\sst}_{P(t)}(S)\times \mathfrak{Coh}^{\sst}_{R(t)}(S)$.
\end{proposition}

We summarize our results as follows.
\begin{proposition}
Let $n'_2\gg n'_1\gg n_2\gg n_1\gg 0$, $\CR^{\bullet}_{(n_1',n_2'),(n_1,n_2)}$, $\CR^{\bullet}_{(n_1,n_2),-}$ and $\CR^{\bullet}_{(n'_1,n'_2),-}$ be as in \ref{corollary:RHom_complexes_surfaces} and Remark~\ref{3t_complex}. Then, there are equivalences
\[
 \Tot_{\FM_{P(t),R(t)}} (\tau^{\geq -1}\CR^{\bullet}_{(n_1',n_2'),(n_1,n_2)}[1]) \rightarrow \Tot_{\FM_{P(t),R(t)}}(\CR^{\bullet}_{(n'_1,n'_2),-}[1])
\]
and
\[
 \Tot_{\FM_{P(t),R(t)}} (\CR^{\bullet}_{(n'_1,n'_2),-}[1]) \simeq \Tot_{\FM_{P(t),R(t)}}(\CR^{\bullet}_{(n_1,n_2),-}[1])
\]
such that the virtual pullbacks defined in terms of the $3$-term complexes $\tau^{\geq -1}\CR^{\bullet}_{(n_1,n_2),(n'_1,n'_2)}[1]$, $\CR^{\bullet}_{(n_1,n_2),-}[1]$, and $\CR^{\bullet}_{(n'_1,n'_2),-}[1]$ all coincide.
\end{proposition}
\begin{proof}
The equivalences of stacks come from Propositions~\ref{cone_prop} and \ref{proposition:3t-complex}. The virtual pullbacks for the $3$-term complexes $\tau^{\geq -1}\CR^{\bullet}_{(n_1,n_2),(n'_1,n'_2)}[1]$ and $\CR^{\bullet}_{(n'_1,n'_2),-}[1]$ coincide since they are related by a morphism of complex that is a quasi-isomorphism (Remark~\ref{3t_complex}). The virtual pullbacks defined by the $3$-term complexes $\CR^{\bullet}_{(n'_1,n'_2),-}[1]$ and $\CR^{\bullet}_{(n_1,n_2),-}[1]$ coincide by Proposition~\ref{glob_res_prop}.
\end{proof}

\subsection{Three-step filtrations}
Fix non-zero polynomials $P_m(t)$, with $m=1,2,3$, such that all three polynomials have the same reduction. We denote $\FM_m=\mathfrak{Coh}_{P_m(t)}^{\sst}(S)$ and $\FM_{m+m'}=\mathfrak{Coh}_{P_m(t)+P_{m'}(t)}^{\sst}(S)$. We consider some $3$-term complexes $\CR_{1,2}$ and $\CR_{1+2,3}$ (constructed as in \S\ref{new_exts_sec}) resolving the RHom complexes over $\FM_1\times \FM_2$ and $\FM_{1+2}\times\FM_3$ respectively. There is a natural morphism $p_{1+2}\colon \Tot_{\FM_1\times\FM_2}(\CR_{(n_1^{(2)},n_2^{(2)}),-}[1])\rightarrow\FM_{1+2}$.

We define
\[
 \bsF^l\coloneqq \Tot_{\Tot_{\FM_1\times\FM_2}(\CR_{1,2}[1])\times\FM_3}((p_{1+2}\times\id_{\FM_3})^*\CR_{1+2,3}[1])\,.
\]

Considering instead $3$-term complexes of vector bundles $\CR_{2,3}$ and $\CR_{1,2+3}$ resolving the RHom complexes over $\FM_2\times \FM_3$ and $\FM_1\times\FM_{2+3}$ respectively, there is a natural morphism $p_{2+3}\colon\Tot_{\FM_2\times\FM_3}(\CR_{2,3}[1])$ and we define
\[
 \bsF^r\coloneqq \Tot_{\FM_1\times\Tot_{\FM_2\times\FM_3}(\CR_{2,3}[1])}((\id_{\FM_1}\times p_{2+3})^*\CR_{1,2+3}[1])\,.
\]

The following is easily proven by comparing the functor of points of the stacks under consideration using Proposition~\ref{cone_prop}, Remark~\ref{3t_complex}.
\begin{proposition}
There are equivalences of stacks $\bsF^l\simeq\bsF_{P_1(t),P_2(t),P_3(t)}\simeq \bsF^r$, where $\bsF_{P_1(t),P_2(t),P_3(t)}$ is the stack of 3-term filtrations $0=\CF_0\subset \CF_1\subset \CF_2\subset \CF_3$ of semistable coherent sheaves on $S$, with the Hilbert polynomials of $\CF_i/\CF_{i-1}$ equal to $P_i(t)$ for $i=1,2,3$ respectively.
\end{proposition}

\begin{proposition}
The stack $\mathfrak{Coh}^{\sst}_{p(t)}(S)$ satisfies Assumption \ref{assumption:associativity}, that is, the CoHA product on $\underline{\SA}_{\varpi}$ is associative.
\end{proposition}

\emph{Strategy of the proof: } We fix non-zero Hilbert polynomials $P_{m}(t)$ with $m=1,2,3$ such that they have the same reduction (they coincide modulo multiplication by non-zero scalars).  We denote by $\mathfrak{Exact}_{i,j}^{\sst}(S)$ the stack of short exact sequences over $\FM_i\times\FM_j$. There are natural morphisms $\pr_i$, $\pr_j$, $p_{i+j}$ from $\mathfrak{Exact}_{i,j}^{\sst}(S)$ to $\FM_i, \FM_j, \FM_{i+j}$ respectively keeping the last, first or middle term of a short exact sequence. There are various projections $\pr_{i,j}\colon \FM_1\times\FM_2\times\FM_3\rightarrow\FM_i\times\FM_j$ forgetting one summand.

We let $\CC_{i,j}$ be the RHom complex over $\FM_i\times\FM_j$, $\CC_{1+2,3}$ the RHom complex over $\FM_{1+2}\times\FM_3$, etc. We give explicit resolutions $\CA^{\bullet}$ of the RHom complex $\CC_{1,2}$ over $\FM_1\times\FM_2$, $\CB^{\bullet}$ of $(\pr_{1+2}\times\id_{\FM_3})^*\CC_{1+2,3}$ over $\mathfrak{Exact}^{\sst}_{1,2}\times\FM_3$, $\CD^{\bullet}$ of the RHom complex over $\FM_2\times\FM_3$ and $\CE^{\bullet}$ of $(\id_{\FM_1}\times\pr_{2+3})^*\CC_{1,2+3}$ over $\FM_1\times\mathfrak{Exact}_{2,3}^{\sst}(S)$ such that the compositions of the projections for total spaces of the naively truncated (i.e. the complexes obtained by removing the negative parts) complexes
\[
 \alpha\colon\Tot_{\Tot_{\FM_1\times\FM_2\times\FM_3}(\pr_{1,2}^*\CA^{\geq 0})}(\pr_{\geq 0}^*\CB^{\geq 0})\rightarrow\Tot_{\FM_1\times\FM_2\times\FM_3}(\pr_{1,2}^*\CA^{\geq 0})\rightarrow\FM_1\times\FM_2\times\FM_3
\]
and
\[
 \beta \colon \Tot_{\Tot_{\FM_1\times\FM_2\times\FM_3}(\pr_{2,3}^*\CD^{\geq 0})}(\pr_{\geq 0}^*\CE^{\geq 0})\rightarrow\Tot_{\FM_1\times\FM_2\times\FM_3}(\pr_{2,3}^*\CD^{\geq 0})\rightarrow\FM_1\times\FM_2\times\FM_3
\]
admit a common presentation as the total space of a complex concentrated in degrees $0$ and $1$ with derivation.  Here, in the first and second compositions, we denote by $\pr_{\geq 0}$ the respective (distinct) morphisms
\[
 \Tot_{\FM_1\times\FM_2\times\FM_3}(\pr_{1,2}^*\CA^{\geq 0})\rightarrow\Tot_{\FM_1\times\FM_2\times\FM_3}(\pr_{1,2}^*\CA)\cong\mathfrak{Exact}_{1,2}\times\FM_3
\]
and
\[
 \Tot_{\FM_1\times\FM_2\times\FM_3}(\pr_{2,3}^*\CD^{\geq 0})\rightarrow\Tot_{\FM_1\times\FM_2\times\FM_3}(\pr_{2,3}^*\CD)\cong\FM_1\times\mathfrak{Exact}_{2,3}.
\]

 Then, the sources of the morphisms $\alpha$ and $\beta$ are the same and the pullback morphisms by $\alpha$ and $\beta$ naturally coincide. We then use Proposition~\ref{proposition:naivetruncation-associativitytool}, which allows us to forget about negative terms of complexes when comparing pullback morphisms, to conclude the associativity of the CoHA multiplication.

The upshot is that, given resolutions  $\CP^{\bullet}$ and $\CL^{\bullet}$ of the tautological sheaves $\CU_{i}$ ($i=1,2$) on $\FM_1\times S$ and $\FM_2\times S$, although it is not a priori clear how to produce a resolution of the tautological sheaf on $\FM_{1+2}\times S$, it is possible to build out of $\CP^{\bullet}$ and $\CL^{\bullet}$ a resolution of the pullback of the tautological sheaf $(p_{1+2}\times\id_S)^*\CU_{1+2}$ over $\mathfrak{Exact}_{1,2}^{\sst}(S)\times S$, such that the underlying vector bundles of this resolution are $\pr_{3}^*\CP^j\oplus\pr_1^*\CL^j$, where $\pr_i\colon\mathfrak{Exact}_{1,2}^{\sst}(S)\times S\rightarrow \FM_i\times S$ are the natural projections keeping the extreme terms of exact sequences.
\begin{proof}
Consider the following diagram. To show associativity is suffices to show that the two possible compositions of the virtual pullbacks and proper pushforwards along the boundary of the big square from the bottom left to the top right are equal. By base-change, it suffices to show that the two possible virtual pullbacks to $\mathfrak{Filt}$ by the compositions $(q_{1,2}\times\id_3)\circ r_{1+2,3}$ and $(\id_1\times q_{2,3})\circ r_{1,2+3}$ are the same.
\[\begin{tikzcd}
	{\mathfrak{M}_{1+2} \times \mathfrak{M}_3} & {\mathfrak{Exact}_{1+2,3}} & {\mathfrak{M}_{1 + 2 + 3}} \\
	{\mathfrak{Exact}_{1,2} \times \mathfrak{M}_3} & {\mathfrak{Filt}} & {\mathfrak{Exact}_{1,2+3}} \\
	{\mathfrak{M}_1 \times \mathfrak{M}_2 \times \mathfrak{M}_3} & {\mathfrak{M}_1 \times \mathfrak{Exact}_{2,3}} & {\mathfrak{M}_1 \times \mathfrak{M}_{2+3}.}
	\arrow[from=1-2, to=1-1]
	\arrow[from=1-2, to=1-3]
	\arrow[from=2-1, to=1-1]
	\arrow["{q_{1,2} \times \mathrm{id}_3}"', from=2-1, to=3-1]
	\arrow[from=2-2, to=1-2]
	\arrow["{r_{1+2,3}}", from=2-2, to=2-1]
	\arrow[from=2-2, to=2-3]
	\arrow["{r_{1,2+3}}", from=2-2, to=3-2]
	\arrow[from=2-3, to=1-3]
	\arrow[from=2-3, to=3-3]
	\arrow["{\mathrm{id}_{1} \times q_{2,3} }", from=3-2, to=3-1]
	\arrow[from=3-2, to=3-3]
\end{tikzcd}\]
We construct a presentation of the morphism  $\bsF \to \FM_1 \times \FM_2 \times \FM_3$ by combining presentations of $q_{1,2} \times \id_3$ and $r_{1+2,3}$. Similarly, we construct a presentation $\bsF \to \FM_1 \times \FM_2 \times \FM_3$ that can be obtained by combining the presentations of $\id_1 \times q_{2,3}$ and $r_{1,2+3}$.
The equality of the two virtual pullbacks then follows by observing that the two presentations (when pulled back to a certain atlas of $\bsF$) are the same.

We now proceed with the constructions of the explicit resolutions $\CA^{\bullet}$ and $\CB^{\bullet}$.

\textbf{The first presentation of $\bsF \to \FM_1 \times \FM_2 \times \FM_3 $.}
We let
\begin{equation}
\label{CPdef}
\CP^{\bullet}\quad\colon\quad \CP^2\xrightarrow{u_2}\CP^1\xrightarrow{u_1}\CP^0
\end{equation}
be a resolution by vector bundles of the universal sheaf $\CU_1$ over $\FM_1\times S$ and
\begin{equation}
\label{CLdef}
\CL^{\bullet}\quad\colon\quad \CL^2\xrightarrow{v_2}\CL^1\xrightarrow{v_1}\CL^0
\end{equation}
a resolution by vector bundles of the universal sheaf $\CU_2$ over $\FM_2\times S$. We assume that the vector bundles $\CP^i$ and $\CL^i$ are chosen such that $\CP^1\ll\CP^0\ll\CL^1\ll\CL^0\ll 0$. One can take $\CP^1=\CL_{n'_1,n'_2}^2, \CP^0=\CL_{n_1',n_2'}^1, \CL^1=\CL_{n_1,n_2}^2, \CL^0=\CL_{n_1,n_2}^1$ as in Lemma~\ref{lemma:vector-bundle} and $\CP^2=\ker(\CP^1\rightarrow\CP^0)=\CL_{n'_1,n'_2}^3$, $\CL^2=\ker(\CL^1\rightarrow\CL^0)=\CL_{n_1,n_2}^3$.

\textit{Presentation of $q_{1,2}$.}

We let $\mathfrak{Exact}_{1,2}^{\sst}(S)$ be the stack of short exact sequences over $\FM_1\times\FM_2$. As we have seen in Proposition~\ref{cone_prop}, it can be presented as the total complex $\CC_{1,2}^{\bullet}$ of the double complex $(\pr_{1,2})_*\CH om(\pr_{1,S}^*\CP^{\bullet},\pr_{2,S}^*\CL^{\bullet})$ over $\FM_1\times\FM_2$ (Here $\pr_{j,S}$ are the projections $\FM_1\times\FM_2\times S\rightarrow\FM_j\times S$). It admits the middle term projection $p_{1+2}\colon \mathfrak{Exact}_{1,2}^{\sst}(S)\rightarrow\FM_{1+2}$.

The degree $0$ part of the complex $(\pr_{1,2})_*\CH om(\pr_{1,S}^*\CP^{\bullet},\pr_{2,S}^*\CL^{\bullet})[1]$  is
\[
(\pr_{1,2})_*\CH om(\pr_{1,S}^*\CP^1,\pr_{2,S}^*\CL^0)\oplus (\pr_{1,2})_*\CH om(\pr_{1,S}^*\CP^2,\pr_{2,S}^*\CL^1)\,.
\]
By the choice of $\CP^\bullet$ and $\CL^{\bullet}$, it is a vector bundle over $\FM_1\times\FM_2$.

The degree $1$ part of the complex $(\pr_{1,2})_*\CH om(\pr_{1,S}^*\CP^{\bullet},\pr_{2,S}^*\CL^{\bullet})[1]$ is
\[
 (\pr_{1,2})_*\CH om(\pr_{1,S}^*\CP^2,\pr_{2,S}^*\CL^0)\,.
\]
It is a vector bundle over $\FM_1\times\FM_2$.

There is a homomorphism over $\FM_1\times\FM_2\times S$
\[
 \begin{matrix}
  \CH om(\pr_{1,S}^*\CP^1,\pr_{2,S}^*\CL^0)\oplus \CH om(\pr_{1,S}^*\CP^2,\pr_{2,S}^*\CL^1)&\rightarrow& \CH om(\pr_{1,S}^*\CP^2,\pr_{2,S}^*\CL^0)\\
  (f,g)&\mapsto&(\pr_{2,S}^*v_1)\circ g+f\circ (\pr_{1,S}^*u_2)\, ,
 \end{matrix}
\]
where the morphisms $v_1,u_2$ are from \eqref{CPdef} and \eqref{CLdef}.  After applying $(\pr_{1,2})_*$, this gives a morphism of vector bundles over $\FM_1\times\FM_2$
\[
 D^0\colon(\pr_{1,2})_*\CH om(\pr_{1,S}^*\CP^1,\pr_{2,S}^*\CL^0)\oplus (\pr_{1,2})_*\CH om(\pr_{1,S}^*\CP^2,\pr_{2,S}^*\CL^1)\rightarrow (\pr_{1,2})_*\CH om(\pr_{1,S}^*\CP^2,\pr_{2,S}^*\CL^0).
\]
By the definition of total spaces of complexes, $\Tot_{\FM_1\times\FM_2}(\CC^{\bullet}_{1,2})$ is the quotient by the smooth unipotent group scheme $\Tot_{\FM_1\times\FM_2}(\CC^{\leq -1}_{1,2}[-1])$ over $\FM_1\times\FM_2$ of $\ker(D^0)$.

The following diagram summarises the maps that appear in the construction of $\CQ$ below.

\[\begin{tikzcd}
	& {\mathfrak{Exact}_{1,2} \times \mathfrak{M}_3} && {\mathfrak{M}_{1+2} \times \mathfrak{M}_3} \\
	& {\mathfrak{Exact}_{1,2} \times \mathfrak{M}_3 \times S} \\
	{\mathfrak{M}_1 \times \mathfrak{M}_2 \times \mathfrak{M}_3 \times S} & {\mathfrak{Exact}_{1,2} \times S} && {\mathfrak{M}_{1+2} \times S} \\
	& {\mathfrak{M}_1 \times \mathfrak{M}_2 \times S} \\
	{\mathfrak{M}_1 \times \mathfrak{M}_2 } & {\mathfrak{M}_1 \times S} & {\mathfrak{M}_2 \times S}
	\arrow["{p_{1+2} \times \mathrm{id}_{\mathfrak{M}_3}}", from=1-2, to=1-4]
	\arrow["{\mathrm{pr}_{1,2}}"', from=2-2, to=1-2]
	\arrow[from=2-2, to=3-1]
	\arrow[from=2-2, to=3-2]
	\arrow[from=3-1, to=4-2]
	\arrow["{p_{1+2} \times \mathrm{id}_S}", from=3-2, to=3-4]
	\arrow["{q_{1,2} \times \mathrm{id}_S}", from=3-2, to=4-2]
	\arrow["{\mathrm{pr}_{1,2}}"', from=4-2, to=5-1]
	\arrow["{\mathrm{pr}_{1,S}}", from=4-2, to=5-2]
	\arrow["{\mathrm{pr}_{2,S}}", from=4-2, to=5-3]
\end{tikzcd}\]

\textit{Presentation of $r_{1+2,3}$.}

We obtain the resolution
 by vector bundles of the pullback of the universal sheaf
$(p_{1+2}\times\id_S)^*\CU_{1+2}$ to $\mathfrak{Exact}_{1,2}^{\sst}(S)\times S$ by the complex $\CQ^{\bullet}$ whose fiber over $(f,g)\in\ker(D^0)$ is given by
\[
 \CQ^{\bullet}|_{(f,g)}\quad\colon\quad\CP^2\oplus\CL^2\xrightarrow{\begin{pmatrix}
                               u_2&0\\
                               g&v_2
                              \end{pmatrix}
}\CP^1\oplus\CL^1\xrightarrow{\begin{pmatrix}
                               u_1&0\\
                               f&v_1
                              \end{pmatrix}}\CP^0\oplus\CL^0.
\]
Therefore, we obtain a resolution of $(p_{1+2}\times\id_{\FM_3})^*\CC_{1+2,3}$ over $\mathfrak{Exact}_{1,2}^{\sst}(S)\times\FM_3$ by the complex
\[
 (\pr_{1,2})_*\CH om(\pr_{1,S}^*\CQ^{\bullet},\pr_{2,S}^*\CU_3)[1]\,.
\]

Here $\pr_{i,j}$ refers to the projection from $\mathfrak{Exact}_{1,2}^{\sst}(S)\times\FM_3\times S$ onto the $i,j$ factors.

The degree $0$ part of this complex is
\[
 (\pr_{1,2})_*\CH om(\pr_{1,S}^*\CQ^1,\pr_{2,S}^*\CU_3)=(\pr_{1,2})_*\CH om(\pr_{1,S}^*\pr_3^*\CP^1\oplus\pr_{1,S}^*\pr_1^*\CL^1,\pr_{2,S}^*\CU_3)\,.
\]
This is a vector bundle over $\mathfrak{Exact}_{1,2}^{\sst}(S)\times\FM_3$.  The degree $1$ part is
\[
 (\pr_{1,2})_*\CH om(\pr_{1,S}^*\CQ^2,\pr_{2,S}^*\CU_3)=(\pr_{1,2})_*\CH om(\pr_{1,S}^*\pr_3^*\CP^2\oplus\pr_{1,S}^*\pr_1^*\CL^2,\pr_{2,S}^*\CU_3)\,.
\]

There is a homomorphism over $\mathfrak{Exact}_{1,2}^{\sst}(S)\times\FM_3\times S$
\[
\begin{matrix}
 \CH om(\pr_{1,S}^*\pr_3^*\CP^1\oplus\pr_{1,S}^*\pr_1^*\CL^1,\pr_{2,S}^*\CU_3)&\rightarrow&\CH om(\pr_{1,S}^*\pr_3^*\CP^2\oplus\pr_{1,S}^*\pr_1^*\CL^2,\pr_{2,S}^*\CU_3)\\
f&\mapsto&f\circ\begin{pmatrix}
                 u_2&0\\
                 g&v_2
                \end{pmatrix}\,.
\end{matrix}
\]
which, after applying $(\pr_{1,2})_*$, gives a morphism of vector bundles
\[
 D'^0\colon (\pr_{1,2})_*\CH om(\pr_{1,S}^*\pr_3^*\CP^1\oplus\pr_{1,S}^*\pr_1^*\CL^1,\pr_{2,S}^*\CU_3)\rightarrow(\pr_{1,2})_*\CH om(\pr_{1,S}^*\pr_3^*\CP^2\oplus\pr_{1,S}^*\pr_1^*\CL^2,\pr_{2,S}^*\CU_3)
\]
such that $\Tot_{\mathfrak{Exact}_{1,2}^{\sst}(S)\times\FM_3}((p_{1+2}\times\id_{\FM_3})^*\CC_{1+2,3})$ is the quotient of $\ker(D'^0)$ by a smooth unipotent group scheme.

We let $\CA^{\bullet}\coloneqq(\pr_{1,2})_*\CH om(\pr_{1,S}^*\CP^{\bullet},\pr_{2,S}^*\CL^{\bullet})[1]$, a complex of vector bundles over $\FM_1\times\FM_2$. We let $\CB^{\bullet}\coloneqq (\pr_{1,2})_*\CH om(\pr_{1,S}^*\CQ^{\bullet},\pr_{2,S}^*\CU_3)[1]$, a complex of vector bundles over $\mathfrak{Exact}_{1,2}^{\sst}(S)\times\FM_3$. We let $\pr_{\geq 0}\colon\Tot_{\FM_1\times\FM_2}(\CA^{\geq 0})\rightarrow\Tot_{\FM_1\times\FM_2}(\CA^{\bullet})\simeq\mathfrak{Exact}_{1,2}^{\sst}(S)$ be the natural projection.

\textit{Combining the presentations to obtain a presentation of $(q_{1,2}\times \id_3) \circ r_{1+2,3}$}.

Then, by the preceding discussion, the composition
\[
 \Tot_{\Tot_{\FM_1\times\FM_2\times\FM_3}(\pr_{1,2}^*\CA^{\geq 0})}((\pr_{\geq 0}\times\id_{\FM_3})^*\CB^{\geq 0})\rightarrow\Tot_{\FM_1\times\FM_2\times\FM_3}(\pr_{1,2}^*\CA^{\geq 0})\rightarrow\FM_1\times\FM_2\times\FM_3
\]
admits the global presentation by the two-term complex with derivation over $\FM_1\times\FM_2\times\FM_3$
\begin{equation}
\label{equation:global2termpresentation}
 \begin{matrix}
  \begin{matrix}
(\pr_{1,2,3})_*\CH om(\pr_{1,S}^*\CP^2,\pr_{2,S}^*\CL^1) \\
\oplus \\
(\pr_{1,2,3})_*\CH om(\pr_{1,S}^*\CP^1,\pr_{3,S}^*\CU_3)  \\
\oplus \\
(\pr_{1,2,3})_*\CH om(\pr_{2,S}^*\CL^1,\pr_{3,S}^*\CU_3) \\
\oplus\\
(\pr_{1,2,3})_*\CH om(\pr_{1,S}^*\CP^1,\pr_{2,S}^*\CL^0)
\end{matrix}&\rightarrow&
\begin{matrix}
(\pr_{1,2,3})_*\CH om(\pr_{1,S}^*\CP^2,\pr_{2,S}^*\CL^0) \\
\oplus\\
(\pr_{1,2,3})_*\CH om(\pr_{1,S}^*\CP^2,\pr_{3,S}^*\CU_3) \\
\oplus\\
(\pr_{1,2,3})_*\CH om(\pr_{2,S}^*\CL^2,\pr_{3,S}^*\CU_3)
 \end{matrix}\\
 (g,h,k,f)&\mapsto&(fu_2+v_1g,hu_2+kg,kv_2)
\end{matrix}
\end{equation}
The derivation comes from the fact that the morphism is not linear but has instead a quadratic part (more precisely the term $kg$).

\textbf{The second presentation of $\bsF \to \FM_1 \times \FM_2 \times \FM_3 $}

Going the other way, that is by first considering the resolution of the RHom complex over $\FM_2\times\FM_3$ obtained using the resolution $\CL^{\bullet}$ of $\CU_2$ and then a resolution of $(\id_{\FM_1}\times p_{2+3})^*\CC_{1,2+3}$ gives the complexes $\CD^{\bullet}$ and $\CE^{\bullet}$ as in the sketch of the proof, and yields exactly the same $2$-term complex with derivation.

More precisely the stack of short exact sequences $\mathfrak{Exact}_{2,3}^{\mathrm{sst}}(S)$ can be presented as the complex $\CC_{2,3}^{\bullet}=(\pr_{1,2})_*\CH om(\pr_{1,S}^*\CL^{\bullet},\pr_{2,S}^*\CU_3)$ over $\FM_2\times\FM_3$. It admits the middle term projection $p_{2+3}\colon\mathfrak{Exact}_{2,3}^{\mathrm{sst}}(S)\rightarrow\FM_{2+3}$. The degree $0$ and one parts of the complex $\CH om(\pr_{1,S}^*\CL^{\bullet},\pr_{2,S}^*\CU_3)$ are given by
\[
\begin{matrix}
 \CH om(\pr_{1,S}^*\CL^1,\pr_{2,S}^*\CU_3)&\rightarrow&\CH om(\pr_{1,S}^*\CL^2,\pr_{2,S}^*\CU_3)\\
 k&\mapsto&kv_2
\end{matrix}
\]
After applying $(\pr_{1,2})_*$, it gives a morphism of vector bundles
\[
 \tilde{D}_0\colon (\pr_{1,2})_* \CH om(\pr_{1,S}^*\CL^1,\pr_{2,S}^*\CU_3)\rightarrow(\pr_{1,2})_*\CH om(\pr_{1,S}^*\CL^2,\pr_{2,S}^*\CU_3)\,.
\]
By definition, $\Tot_{\FM_2\times\FM_3}(\CC_{2,3}^{\bullet})$ is the quotient by the smooth unipotent group scheme $\Tot_{\FM_2\times\FM_3}(\CC_{2,3}^{\leq 1}[-1])$ over $\FM_2\times\FM_3$ of $\ker(\tilde{D}_0)$. We obtain a resolution of the pullback of the universal sheaf $(p_{2+3}\times\id_S)^*\CU_{2+3}$ over $\mathfrak{Exact}_{2+3}^{\mathrm{sst}}(S)\times S$ by the complex whose fiber over $k\in\ker(\tilde{D}_0)$ is
\[
 \CQ'^{\bullet}\quad\colon\quad \CL^2\xrightarrow{v_2}\CL^1\xrightarrow{\begin{pmatrix}v_1\\k\end{pmatrix}}\CL^0\oplus\CU_3
\]
Therefore, we obtain a resolution of $(\id_{\FM_1}\times p_{2+3})^*\CC_{1,2+3}$ over $\FM_1\times\mathfrak{Exact}_{2,3}^{\mathrm{sst}}(S)$ by the complex
\[
 (\pr_{1,2})_*\CH om(\pr_{1,S}^*\CP^{\bullet},\pr_{2,S}^*\CQ'^{\bullet})[1]
\]
Here, $\pr_{i,j}$ refers to the projection from $\mathfrak{Exact}_{1,2}^{\mathrm{sst}}(S)\times\FM_3\times S$ onto the $i,j$ factors.

The degree zero part of this complex is
\[
\begin{split}
 (\pr_{1,2})_*\CH om(\pr_{1,S}^*\CP^{1},\pr_{2,S}^*\CQ'^0)\oplus(\pr_{1,2})_*\CH om(\pr_{1,S}^*\CP^2,\pr_{2,S}^*\CQ'^1)\\=(\pr_{1,2})_*\CH om(\pr_{1,S}^*\CP^{1},\pr_{2,S}^*\CL^0\oplus\pr_{2,S}^*\CU_3)\oplus(\pr_{1,2})_*\CH om(\pr_{1,S}^*\CP^2,\pr_{2,S}^*\CL^1)
\end{split}
\]

The degree one part of this complex is given by
\[
 (\pr_{1,2})_*\CH om(\pr_{1,S}^*\CP^2,\pr_{2,S}^*\CQ'^0)=(\pr_{1,2})_*\CH om(\pr_{1,S}^*\CP^2,\pr_{2,S}^*\CL^0\oplus\pr_{2,S}^*\CU_3)
\]

There is a homomorphism over $\FM_1\times\mathfrak{Exact}_{2,S}^{\mathrm{sst}}(S)\times S$:
\[
\begin{matrix}
 \tilde{D}'^0&\colon &\begin{matrix}\CH om (\pr_{1,S}^*\CP^1,\pr_{2,S}^*\pr_1^*\CL^0\oplus\pr_{2,S}^*\pr_3^*\CU_3)\\ \oplus \\ \CH om (\pr_{1,S}^*\CP^2,\pr_{2,S}^*\CL^1)\end{matrix}&\rightarrow &\CH om(\pr_{1,S}^*\CP^2,\pr_{2,S}^*\pr_1^*\CL^0\oplus\pr_{2,S}^*\pr_3^*\CU_3)\\
&&(g,h)&\mapsto&g\circ u_2+\begin{pmatrix}v_1\\k\end{pmatrix}\circ h\,.
\end{matrix}
\]
such that $\Tot_{\FM_1\times\mathfrak{Exact}_{2,3}^{\mathrm{sst}}(S)}((\id_{\FM_1}\times p_{2+3})^*\CC_{1,2+3})$ is the quotient of $\ker(\tilde{D}'^0)$ by a smooth unipotent group scheme.

We let $\CD^{\bullet}\coloneqq (\pr_{1,2})_*\CH om(\pr_{1,S}^*\CL^{\bullet},\CU_3)[1]$, a complex of vector bundles over $\FM_2\times\FM_3$ and $\CE^{\bullet}\coloneqq (\pr_{1,2})_*\CH om(\pr_{1,S}^*\CP^{\bullet},\pr_{2,S}^*\CQ'^{\bullet})[1]$. We let $\pr_{\geq 0}\colon\Tot_{\FM_2\times\FM_3}(\CD^{\geq 0})\rightarrow\Tot_{\FM_2\times\FM_3}(\CD^{\bullet})\simeq\mathfrak{Exact}_{2,3}^{\mathrm{sst}}(S)$. Then, by the preceding discussion, the composition
\[
 \Tot_{\Tot_{\FM_1\times\FM_2\times\FM_3}(\pr_{2,3}^*\CD^{\geq 0})}((\id_{\FM_1}\times\pr_{\geq 0})^*\CE^{\geq 0})\rightarrow \Tot_{\FM_1\times\FM_2\times\FM_3}(\pr_{2,3}^*\CD^{\geq 0})\rightarrow \FM_1\times\FM_2\times\FM_3
\]
has exactly the presentation \eqref{equation:global2termpresentation}.

Therefore, we may conclude that the two possible pullbacks to the stack of three-step filtrations coincide thanks to Proposition~\ref{proposition:naivetruncation-associativitytool}. This proves that the sheafified cohomological Hall algebra multiplication on $\underline{\SA}_{\varpi}$ is associative.
\end{proof}

\subsection{Good moduli spaces}

The existence of a good moduli space $\JH\colon \Cohst^{\sst}_{P(t)}(X) \to \Cohsp^{\sst}_{P(t)}(X)$ is well-known and can be deduced from Alper's original paper on good moduli spaces \cite[§13]{alper2013good} and the standard GIT construction of moduli spaces of sheaves on projective varieties as in \cite{simpson1994moduliI}. The stack $\Cohst^{\sst}_{p(t)}(X)$ also admits a good moduli space, because it is a disjoint union of stacks of the form $\Cohst^{\sst}_{P(t)}(X)$. Thus we have
\begin{proposition}
The stack $\Cohst^{\sst}_{p(t)}(X)$ satisfies Assumption~\ref{gms_assumption}.
\end{proposition}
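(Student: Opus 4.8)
The plan is to reduce to the case of a fixed Hilbert polynomial and then quote the classical GIT construction of Simpson together with Alper's comparison between GIT quotients and good moduli spaces. First I would fix a polynomial $P(t)$ in the reduction class $p(t)$ and recall from \S\ref{subsubsection:propernessp} the presentation $\Cohst_{n,P(t)}(X)\simeq Q_X(n,P(t))/\GL_{P(n)}$. By boundedness of the family of Gieseker semistable sheaves with reduced Hilbert polynomial $p(t)$ \cite{huybrechts2010geometry}, for $n\gg 0$ one has $\Cohst^{\sst}_{P(t)}(X)\subset \Cohst_{n,P(t)}(X)$, so $\Cohst^{\sst}_{P(t)}(X)\simeq Q^{\sst}_X(n,P(t))/\GL_{P(n)}$, where $Q^{\sst}_X(n,P(t))\subset Q_X(n,P(t))$ is the open subscheme parametrising semistable quotients. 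The essential input is that, for $n\gg 0$, Simpson's linearisation of the $\GL_{P(n)}$-action on the Quot scheme has GIT-semistable locus equal to $Q^{\sst}_X(n,P(t))$ --- this is the Le Potier--Simpson comparison between GIT stability and sheaf stability \cite{simpson1994moduliI, huybrechts2010geometry}. Granting this, I would invoke \cite[\S 13]{alper2013good}: the quotient stack $Q^{\sst}_X(n,P(t))/\GL_{P(n)}$ of the GIT-semistable locus by a reductive group admits the projective GIT quotient as a good moduli space. This exhibits Simpson's projective moduli scheme as a good moduli space $\JH\colon \Cohst^{\sst}_{P(t)}(X)\to \Cohsp^{\sst}_{P(t)}(X)$ with scheme (in fact projective) target.

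The second step is to pass from a fixed $P(t)$ to the reduction class $p(t)$. By definition $\Cohst^{\sst}_{p(t)}(X)$ is the disjoint union $\coprod_{P(t)\sim p(t)}\Cohst^{\sst}_{P(t)}(X)$, together with the component parametrising the zero sheaf (whose good moduli space is a point). The good moduli space formalism is compatible with disjoint unions, since the two defining conditions --- that $\JH_{*}$ is exact on quasi-coherent sheaves and that $\CO_{\CM}\to\JH_{*}\CO_{\Cohst}$ is an isomorphism --- are Zariski-local on the target, and quasi-compactness of $\JH$ on each component is preserved. Hence $\coprod_{P(t)\sim p(t)}\JH$ is a good moduli space $\Cohst^{\sst}_{p(t)}(X)\to \Cohsp^{\sst}_{p(t)}(X)\eqqcolon\coprod_{P(t)\sim p(t)}\Cohsp^{\sst}_{P(t)}(X)$, and a disjoint union of schemes is a scheme. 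Thus Assumption~\ref{gms_assumption} holds. Note that $p(t)$ contains infinitely many $P(t)$, so $\Cohsp^{\sst}_{p(t)}(X)$ has infinitely many connected components; this is harmless and indeed explicitly allowed in \S\ref{subsection:monoidalstructure}.

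I do not expect a genuine obstacle: every ingredient is classical. The one point that needs care is the identification of GIT semistability with Gieseker semistability on $Q_X(n,P(t))$, which forces the choice of $n$ to depend on $P(t)$; this is the technical heart of \cite{simpson1994moduliI} and \cite{huybrechts2010geometry}, and once such an $n$ has been fixed the remainder of the argument is purely formal --- a combination of the construction of the projective GIT quotient with the general comparison theorem of \cite{alper2013good} and the compatibility of good moduli spaces with disjoint unions.
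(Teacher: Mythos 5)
Your proof is correct and follows essentially the same route as the paper's, which is a two-line citation of Alper \cite[\S 13]{alper2013good} and Simpson \cite{simpson1994moduliI} followed by the observation that $\Cohst^{\sst}_{p(t)}(X)$ is a disjoint union of stacks $\Cohst^{\sst}_{P(t)}(X)$. You simply unwind those citations in detail --- the quotient presentation of $\Cohst^{\sst}_{P(t)}(X)$, the Le Potier--Simpson matching of GIT and Gieseker stability for $n\gg 0$, Alper's comparison theorem, and compatibility of good moduli spaces with disjoint unions --- but the structure of the argument is identical.
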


\subsubsection{The quasi-projective case}
If $X$ is a quasi-projective variety, we let $X\subset \overline{X}$ be a projective compactification. We fix an ample divisor of $\overline{X}$ and for $P(t)\in\BoQ[t] $, define $\Cohst_{P(t)}^{\sst}(X)\subset \Cohst_{P(t)}^{\sst}(\overline{X})$ to be the open substack of sheaves with support avoiding $\overline{X}\setminus X$. Since Assumptions \ref{p_assumption}-\ref{q_assumption1} and \ref{assumption:associativity} are stable under base change along open inclusions, we can generalise the above discussion with the following
\begin{proposition}
\label{geometry_sumup_prop}
Let $X$ be a smooth quasi-projective variety. Then the moduli stack $\Cohst_{p(t)}^{\sst}(X)$ satisfies Assumptions \ref{p_assumption} and \ref{gms_assumption}. If $X=S$ is a surface, then $\Cohst_{p(t)}^{\sst}(S)$ additionally satisfies Assumptions~\ref{q_assumption1} and \ref{assumption:associativity}.
\end{proposition}
\begin{proof}
It remains to check Assumption~\ref{gms_assumption}. For the good moduli spaces $\JH\colon \Cohst^{\sst}_{P(t)}(\overline{X}) \to \Cohsp^{\sst}_{P(t)}(\overline{X})$ we have $\JH^{-1}(\JH(\Cohst^{\sst}_{P(t)}(X))) =\Cohst^{\sst}_{P(t)}(X)$ (i.e. $\Cohst^{\sst}_{P(t)}(X) \subset \Cohst^{\sst}_{P(t)}(\overline{X})$ is \emph{saturated} in the sense of \cite{alper2013good}). Hence the good moduli space $\Cohst_{P(t)}^{\sst}(X)\rightarrow \Cohsp_{P(t)}^{\sst}(X)$ is obtained from $\Cohst_{P(t)}^{\sst}(\overline{X})\rightarrow \Cohsp_{P(t)}^{\sst}(\overline{X})$ by restriction.
\end{proof}

\subsection{Finiteness of $\oplus$}
Recall that one of the assumptions in the construction of the BPS Lie algebra is the finiteness of the direct sum map $\oplus$ on the good moduli space. For a reduced Hilbert polynomial $p(t)\in\BoQ[t]/\sim$ we let $\Cohsp^{\sst}_{p(t)}(X)$ be the disjoint union of all $\Cohsp^{\sst}_{P(t)}(X)$ with $P(t)$ either zero, or in the equivalence class $p(t)$.
\begin{proposition}
\label{proposition:coh_pfinite}
Let $X$ be a quasiprojective variety, with a fixed ample divisor on a compactifictaion $\overline{X}\supset X$. Then,
\[
\oplus\colon \Cohsp^{\sst}_{p(t)}(X)\times\Cohsp^{\sst}_{p(t)}(X)\rightarrow \Cohsp^{\sst}_{p(t)}(X)
\]
is finite, and thus proper. In particular, the morphism $\oplus$ satisfies Assumption \ref{ds_fin} from \S \ref{subsubsection:gsetup}.
\end{proposition}
\begin{proof}
The fibre of a point $y$ parametrising a coherent sheaf $\CF$ under the morphism $\oplus$ is identified with the set of pairs $(\CF',\CF'')$ for which there is an isomorphism $\CF\cong\CF'\oplus\CF''$. Since the category of semistable coherent sheaves on $X$ with fixed normalised Hilbert polynomial is of finite length, this fibre is finite. So the proposition boils down to proving properness.
\smallbreak
If $X=\overline{X}$ properness is immediate, since each connected component of the the variety $\Cohsp^{\sst}_{p(t)}(X)$ is itself projective. For the general case, we use the valuative criterion for properness. Denote by $\overline{\oplus}$ the extension of $\oplus$ to the direct sum morphism for $\Cohsp^{\sst}_{p(t)}(\overline{X})$; we have just seen that this morphism is finite. Denote by $R$ a discrete valuation ring with residue field $k$, and by $K$ its field of fractions. Then given the diagram
\[
\xymatrix{
&\mathrm{Spec}(K)\ar[d]\ar[r]&\ar[d]\Cohsp^{\sst}_{p(t)}(X)\times\Cohsp^{\sst}_{p(t)}(X)\ar[d]^{\oplus}\ar@{^{(}->}[r]^j&\Cohsp^{\sst}_{p(t)}(\overline{X})\times\Cohsp^{\sst}_{p(t)}(\overline{X})\ar[d]^{\overline{\oplus}}\\
\mathrm{Spec}(k)\ar[r]^{r}&\mathrm{Spec}(R)\ar[r]^l&\Cohsp^{\sst}_{p(t)}(X)\ar@{^{(}->}[r]^{j'}&\Cohsp^{\sst}_{p(t)}(\overline{X})
}
\]
there is a unique morphism $\mathrm{Spec}(R)\rightarrow \Cohsp^{\sst}_{p(t)}(\overline{X})\times\CM^{\sst}_{p(t)}(\overline{X})$ making the whole diagram commute. This lift factors through $j$ if the induced morphism $\alpha\colon \mathrm{Spec}(k)\rightarrow \Cohsp^{\sst}_{p(t)}(\overline{X})\times\Cohsp^{\sst}_{p(t)}(\overline{X})$ does. But this morphism corresponds to a direct sum decomposition of the coherent sheaf $\CF$ corresponding to the morphism $j'lr$. Since this morphism factors through $j'$, the sheaf $\CF$ is supported on $X$, and so in any direct sum decomposition $\CF\cong \CF'\oplus\CF''$ both $\CF'$ and $\CF''$ are supported on $X$. It follows that $\alpha$ factors through $j$.
\end{proof}

For $S$ a smooth quasi-projective surface, the only remaining assumption that will go into the construction of the BPS Lie algebra is Assumption \ref{BPS_cat_assumption}. In order to satisfy this assumption we will either restrict to coherent sheaves on smooth quasi-projective $S$ with $\mathcal{O}_S\cong K_S$, or consider coherent sheaves on smooth quasi-projective $S$ with zero-dimensional support.

\subsection{Determinant line bundles}
\label{subsection:detlbsurfaces}
We continue to use the notation $\Mst = \Cohst^{\sst}_{p(t)}(X)$.
Let $n \in \BZ$ be such that $p(n) \neq 0$. 
Let $\CU \in \Coh(\Mst \times S)$
be the universal sheaf.

The derived pushforward $(\R\pi_1)_*(\CU \otimes \pi_2^*\CO_X(-n))$ is a perfect complex on $\Mst$ and thus has a well-defined determinant $\CL \coloneqq \det((\R\pi_1)_*(\CU \otimes \pi_2^{*}\CO_X(-n)))$. 
Let $x \in \FM$ be a closed point corresponding to a polystable sheaf $\CF$ with reduced Hilbert polynomial $p(t)$ and let $i_x^* \colon \B \BoC^{\times} \to \FM$ be the inclusion of the scalar automorphisms at $x$.
Then $i_x^* \CL $ corresponds to the one-dimensional $\BoC^{\times}$-representation of weight $\chi(i_x^*(\R\pi_1)_*(\CU \otimes \pi_2^{*}\CO_X(n)))=\chi(\CF(n))\neq 0$.
Thus $\Mst$ satisfies Assumption~\ref{det_bun_assumption}.

\section{2-dimensional categories from algebra: representations of algebras of homological dimension $2$}
\label{algebra_constr_sec}
For categories of representations over algebras, checking that the Assumptions \ref{p_assumption}-\ref{ds_fin} of \S \ref{section:modulistackobjects2CY} hold is much more straightforward than the analogous problems for coherent sheaves. Nonetheless, we spell out some of the details below.

Let $B$ be an algebra, and let us assume that we have presented $B$ in the form 
\begin{align}
\label{standard_pres}
B=A/\langle R\rangle.
\end{align}
Here, $A$ is the universal localisation (as in \cite{schofield1985representations}) of the free path algebra of a quiver $kQ$, obtained by formally inverting a finite set of elements $b_1,\ldots,b_l$, where each $b_i$ is a linear combination of cyclic paths with the same start-points. We assume that $R=\{r_1,\ldots,r_e\}$ is a finite set of relations in $A$, and without loss of generality we assume that each $r_i$ is a linear combination of paths with the same starting point, and the same ending point. We define the dimension vector $(M_i\coloneqq e_i\cdot M)_{i\in Q_0}\in \BoN^{Q_0}$ of a $B$-module $M$ in the usual way.
\subsection{Good moduli spaces}
For $\dd\in Q_0$ the stack $\FM_{B,\dd}$ of $\dd$-dimensional $B$-modules is a global quotient of an affine scheme, and thus satisfies the resolution property. The coarse moduli space $\CM_{B,\dd}$ is the affinization of $\FM_{B,\dd}$, and the morphism
\[
\JH\colon \FM_{B,\dd}\rightarrow \CM_{B,\dd}
\]
is a good moduli space for $\FM_{B,\dd}$, i.e. $\FM_B$ satisfies Assumption \ref{gms_assumption}. The following is proved in exactly the same way as Proposition \ref{proposition:directsumproperpreprojective}:
\begin{proposition}
The direct sum map $\oplus\colon\CM_{B}\times \CM_B\rightarrow \CM_B$ is finite. In particular, the category of finite-dimensional $B$-modules satisfies Assumption \ref{ds_fin}.
\end{proposition}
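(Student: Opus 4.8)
The plan is to follow the proof of Proposition~\ref{proposition:directsumproperpreprojective}, with the doubled quiver $\overline Q$ there replaced by the quiver $Q$ underlying the presentation~\eqref{standard_pres}. Let $X_{Q,\dd}$ be the representation space of $kQ$ in dimension $\dd$, let $\CM_{kQ,\dd}\coloneqq X_{Q,\dd}\cms\GL_\dd$, and set $\CM_{kQ}\coloneqq\bigsqcup_{\dd\in\BoN^{Q_0}}\CM_{kQ,\dd}$, the coarse moduli scheme of semisimple finite-dimensional $kQ$-modules, with its direct sum morphism $\oplus$. By \cite[Lemma 2.1]{meinhardt2019donaldson} the map $\oplus\colon\CM_{kQ}\times\CM_{kQ}\to\CM_{kQ}$ is finite (it is proper by that lemma, and quasi-finite because a finite-dimensional semisimple module has only finitely many direct-sum decompositions, its isotypic components being finite in number and of finite multiplicity). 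Since finiteness is stable under base change, it will be enough to realise $\CM_B$ as a locally closed subscheme of $\CM_{kQ}$ over which $\oplus$ restricts in the evident way.

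First I would exhibit this subscheme. Put $B_0\coloneqq kQ/\langle R\rangle$; the relations $R\subset kQ$ cut out a $\GL_\dd$-stable closed subscheme of $X_{Q,\dd}$, namely the representation space of $B_0$, and by reductivity of $\GL_\dd$ its GIT quotient is a closed subscheme $\CM_{B_0,\dd}\subset\CM_{kQ,\dd}$. A finite-dimensional $B$-module is exactly a finite-dimensional $B_0$-module on which each of $b_1,\dots,b_l$ acts invertibly; invertibility of $b_j$ is the non-vanishing of a $\GL_\dd$-invariant regular function on $X_{Q,\dd}$ (a determinant), which therefore descends to $\CM_{B_0,\dd}$, and $\CM_{B,\dd}$ is the basic open subscheme obtained by inverting $\prod_j\det(b_j)$. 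Thus $\CM_{B,\dd}\hookrightarrow\CM_{kQ,\dd}$, and hence $\CM_B\hookrightarrow\CM_{kQ}$, is a locally closed immersion. Next I would check that the square
\[
\begin{tikzcd}
{\CM_B\times\CM_B} & {\CM_B} \\
{\CM_{kQ}\times\CM_{kQ}} & {\CM_{kQ}}
\arrow["\oplus", from=1-1, to=1-2]
\arrow["\oplus", from=2-1, to=2-2]
\arrow[from=1-1, to=2-1]
\arrow[from=1-2, to=2-2]
\end{tikzcd}
\]
is Cartesian: a semisimple $kQ$-module $M\oplus N$ satisfies the relations $R$ and has every $b_j$ acting invertibly if and only if both $M$ and $N$ do, since every element of $kQ$ acts on a direct sum of modules as the direct sum of its actions on the summands. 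Then $\oplus\colon\CM_B\times\CM_B\to\CM_B$ is the base change of the finite morphism $\oplus\colon\CM_{kQ}\times\CM_{kQ}\to\CM_{kQ}$ along $\CM_B\hookrightarrow\CM_{kQ}$, hence finite; this is precisely Assumption~\ref{ds_fin}.

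The only step requiring genuine care is the claim that passing to good moduli spaces turns the locally closed immersion of stacks $\FM_{B,\dd}\hookrightarrow\FM_{kQ,\dd}$ into a locally closed immersion of \emph{schemes}, which is what legitimises the base-change argument. This is where the universal localisation in the presentation of $B$ enters: one must observe that ``$b_j$ acts invertibly'' is the non-vanishing of a $\GL_\dd$-invariant regular function and so descends to a saturated basic open condition on $\CM_{B_0,\dd}$, rather than to an arbitrary open substack whose good moduli space need not embed openly. Granting this, the remaining ingredients — the closed immersion $\CM_{B_0,\dd}\hookrightarrow\CM_{kQ,\dd}$, the Cartesian square above, and the harmlessness of working one dimension vector at a time — are routine and run exactly parallel to the proof of Proposition~\ref{proposition:directsumproperpreprojective}.
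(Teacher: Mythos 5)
Your proposal takes the same base-change-from-$\CM_{kQ}$ route as the paper, which simply says the statement is proved in the same way as Proposition~\ref{proposition:directsumproperpreprojective}; the overall strategy is right, and you correctly identify that the substantive issue is why $\CM_B$ embeds as a locally closed subscheme of $\CM_{kQ}$ and why the square is Cartesian.

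There is, however, a small but genuine misstep in the middle of the argument. You define $B_0\coloneqq kQ/\langle R\rangle$ and claim that ``the relations $R\subset kQ$ cut out a $\GL_{\dd}$-stable closed subscheme of $X_{Q,\dd}$.'' But in the presentation~\eqref{standard_pres}, $R$ is a finite set of relations in the \emph{localised} algebra $A = kQ[b_1^{-1},\dots,b_l^{-1}]$, not in $kQ$. The $r_i$ therefore do not define regular functions on all of $X_{Q,\dd}$, so $B_0$ and the closed subscheme you describe are not well-defined. The repair is to reverse your order of operations: first pass to the $\GL_{\dd}$-stable basic open subset $X_{A,\dd}\subset X_{Q,\dd}$ where the invariant determinants $\det(b_j|_{e_{v_j}\BoC^{\dd}})$ are nonzero (these descend because each $b_j$ is a sum of cyclic paths with a common base vertex, so the determinant is $\GL_{\dd}$-invariant, and the open locus is saturated since invertibility passes to subquotients); over this open locus the $r_i$ are honest regular functions and cut out a $\GL_{\dd}$-stable closed subscheme $X_{B,\dd}$. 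GIT then gives $\CM_{B,\dd}$ as a closed subscheme of the basic open $\CM_{A,\dd}\subset\CM_{kQ,\dd}$, hence locally closed in $\CM_{kQ,\dd}$. With this correction, your Cartesian-square argument goes through verbatim — though note that when you invoke ``every element of $kQ$ acts block-diagonally'' you really need the same for elements of $A$, which holds once you know each $b_j$ acts invertibly on both summands. The rest of the proof, and the appeal to \cite[Lemma 2.1]{meinhardt2019donaldson} for finiteness of $\oplus$ on $\CM_{kQ}$, matches the paper's intent.
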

We have shown that the geometric Assumptions \ref{gms_assumption} and \ref{ds_fin} in \S \ref{BPS_assumptions_sec} that go into the definition of the BPS algebra are always met for $B\lmod$. In the rest of this section we check Assumptions \ref{p_assumption}-\ref{q_assumption1} and \ref{assumption:associativity}.
\subsection{Properness}
Base changing from the stack of $Q$-representations also deals with Assumption \ref{p_assumption}:
\begin{proposition}
Let $\mathfrak{Exact}_B$ denote the stack of short exact sequences of finite-dimensional $B$-modules. Then the forgetful morphism $p\colon \mathfrak{Exact}_B\rightarrow \FM_B$ taking a short exact sequence to its central term is a projective representable morphism of stacks. In particular, the category of finite-dimensional $B$-modules satisfies Assumption \ref{p_assumption}.
\end{proposition}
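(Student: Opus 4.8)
The plan is to deduce this from the analogous statement over the free path algebra $kQ$ by base change, following the pattern of Proposition~\ref{proposition:directsumproperpreprojective}. The one non-formal ingredient will be that the restriction functor along $kQ\rightarrow A\rightarrow B$ identifies $B\lmod$ with a Serre subcategory of $kQ\lmod$: closure under subobjects and quotients is automatic for the relations $R$, and for the universal localisation one argues that if $b_i$ --- being a linear combination of cyclic paths, hence an element of $kQ$ --- acts invertibly on a finite-dimensional $kQ$-module $M$, then it preserves every submodule $M'\subseteq M$ and acts there injectively, so bijectively since $M'$ is finite-dimensional; likewise it descends to a bijection of $M/M'$. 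Hence a short exact sequence of $kQ$-modules has all three terms in $B\lmod$ precisely when its middle term does. I expect this Serre-subcategory check --- really only the localisation part of it --- to be the sole point that requires a genuine, if short, argument; everything else is formal.

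With that in hand, I would first treat $kQ$ (and, by the same description restricted to the open locus where the $b_i$ act invertibly, the localisation $A$). Fix dimension vectors $\dd'$, $\dd''$ and put $\dd=\dd'+\dd''$. Let
\[
Z_{\dd',\dd''}\subseteq X_{Q,\dd}\times\prod_{i\in Q_0}\operatorname{Gr}(d'_i,d_i)
\]
be the closed subscheme of pairs $\bigl(x,(V_i)_{i\in Q_0}\bigr)$ such that the $Q_0$-graded subspace $\bigoplus_i V_i\subseteq\BoC^{\dd}$ is a subrepresentation of $x$. The group $\GL_{\dd}$ acts on $Z_{\dd',\dd''}$, one has $\mathfrak{Exact}_{kQ,\dd',\dd''}\simeq Z_{\dd',\dd''}/\GL_{\dd}$, and the forgetful morphism to $\FM_{kQ,\dd}\simeq X_{Q,\dd}/\GL_{\dd}$ is induced by the $\GL_{\dd}$-equivariant projection $Z_{\dd',\dd''}\rightarrow X_{Q,\dd}$. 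This projection factors as a closed immersion followed by the projection off a factor that is a projective scheme, hence is projective; passing to quotient stacks it becomes a representable projective morphism of stacks. Taking the disjoint union over $\dd'$, $\dd''$ (and over $\dd$), the morphism $p\colon\mathfrak{Exact}_{kQ}\rightarrow\FM_{kQ}$ is representable and projective, and the identical argument applies with $kQ$ replaced by $A$.

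Finally I would form the square
\[
\begin{tikzcd}
\mathfrak{Exact}_B & \mathfrak{Exact}_{kQ}\\
\FM_B & \FM_{kQ}
\arrow[from=1-1, to=1-2]
\arrow["p"', from=1-1, to=2-1]
\arrow["p", from=1-2, to=2-2]
\arrow[from=2-1, to=2-2]
\arrow["\lrcorner"{anchor=center, pos=0.125}, draw=none, from=1-1, to=2-2]
\end{tikzcd}
\]
in which the bottom morphism is the monomorphism (in fact locally closed immersion) induced by restriction of scalars. The Serre-subcategory observation of the first paragraph says exactly that $\FM_B\times_{\FM_{kQ}}\mathfrak{Exact}_{kQ}$, which a priori parametrises short exact sequences of $kQ$-modules whose middle term is a $B$-module, coincides with $\mathfrak{Exact}_B$; thus the square is Cartesian. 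Since representability and projectivity of morphisms of stacks are stable under arbitrary base change, $p\colon\mathfrak{Exact}_B\rightarrow\FM_B$ is representable and projective, which is Assumption~\ref{p_assumption}. (Alternatively one can run the Grassmannian construction of the previous paragraph directly for $B$, replacing $X_{Q,\dd}$ by the representation scheme of $B$ and $Z_{\dd',\dd''}$ by its closed subscheme of $B$-linear flags; this avoids routing through $kQ$ at the cost of repeating the construction.)
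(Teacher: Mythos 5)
Your proof takes the same route as the paper: base change along the Cartesian square relating $\mathfrak{Exact}_B\to\FM_B$ to $\mathfrak{Exact}_{kQ}\to\FM_{kQ}$, with projectivity of the latter morphism established by realising the stack of short exact sequences as a quotient of a closed subscheme of $X_{Q,\dd}\times\prod_i\operatorname{Gr}(d'_i,d_i)$. The paper simply asserts the Cartesian property and that the $kQ$-case is ``easily shown'', so the content you supply --- in particular the observation that invertibility of the localising elements $b_i$ is inherited by submodules and quotients of a finite-dimensional module, because an injective endomorphism of a finite-dimensional space is bijective --- is exactly the point that makes the base-change step legitimate, and is correct.
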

\begin{proof}
We have a Cartesian square of stacks
\[
\begin{tikzcd}
\mathfrak{Exact}_B& \FM_B\\
\mathfrak{Exact}_{kQ}&\FM_{ kQ}
\arrow[hookrightarrow,from=1-1,to=2-1]
\arrow[hookrightarrow,from=1-2,to=2-2]
\arrow["p",from=1-1,to=1-2]
\arrow["p'",from=2-1,to=2-2]
\arrow["\lrcorner"{anchor=center, pos=0.125}', draw=none, from=1-1, to=2-2]
\end{tikzcd}
\]
in which $p'$ is easily shown to be representable and proper. The result follows by base change.
\end{proof}
\subsection{Derived enhancement of $B$}
\label{Dalg_sec}
Our explicit presentation of $B$ determines a derived enhancement of it. If $R=\{r_1,\ldots,r_e\}$ is a finite set of relations on the localised path algebra $A$, and each $r_i$ is a linear combination of paths with the same starting point and ending point, we first define the graded quiver $\tilde{Q}$ by setting $\tilde{Q}_0=Q_0$ and
\begin{align*}
(\tilde{Q}_1)^0&=Q_1,&
(\tilde{Q}_1)^{-1}&=x_1,\ldots ,x_e,&
(\tilde{Q}_1)^i&=\emptyset \quad\textrm{if }i\neq 0,-1.
\end{align*}
We set $t(x_i)=t(r_i)$ and $s(x_i)=s(r_i)$. Then we localise $k\tilde{Q}$ with respect to the same localising set as $A$, and denote by $\tilde{A}$ the resulting graded algebra. Finally we set $\tilde{B}=(\tilde{A},d)$, where $d$ is determined by the Leibniz rule and by setting $d(x_i)=r_i$. Then $\tilde{B}$ is a dga, concentrated in nonpositive degrees, with $\HO^0(\tilde{B})\cong B$, which we call the \textit{derived enhancement} of $B$. We remark that $\tilde{B}$ depends on a presentation of $B$, and not just $B$ itself. We will be particularly interested in examples in which the natural morphism of dgas $\tilde{B}\rightarrow B$ is a quasi-isomorphism.
\begin{example}
For $Q$ a quiver, $\overline{Q}$ its double, $A=k \overline{Q}$ and $R=\{e_i\sum_{a\in Q_1}[a,a^*]e_i\;\lvert\; i\in Q_0\}$, $B=\Pi_Q$ is the preprojective algebra, while $\tilde{B} = \SG_{2}(Q)$ is the derived preprojective algebra.
\end{example}
\subsection{Bimodule resolution}
Consider the complex of $\tilde{A}$-bimodules
\[
0\rightarrow \ker(m')\xrightarrow{i} \bigoplus_{i\in Q_0}\tilde{A}e_i\otimes e_i \tilde{A}\xrightarrow{m'} \tilde{A}\rightarrow 0
\]
where $m'$ is the multiplication.  This is in fact a double complex, since each term carries an internal differential induced by $d$. 
\begin{lemma}
\label{A_bimod_lemma}
There is an isomorphism of $\tilde{A}$-bimodules
\[
\ker(m')\cong\bigoplus_{a\in \tilde{Q}_1}\tilde{A}e_{t(a)}\otimes e_{s(a)} \tilde{A}
\]
and under this isomorphism $i$ is sent to the morphism $\iota$ sending $e_{t(a)}\otimes e_{s(a)}$ to $a\otimes 1-1\otimes a$. 
\end{lemma}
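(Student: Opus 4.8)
The plan is to recognise $\ker(m')$ as the bimodule of noncommutative differentials of $\tilde A$ over the semisimple base $S\coloneqq\bigoplus_{i\in Q_0}\BoC e_i$, and to compute that bimodule by base change from the free path algebra $\BoC\tilde Q$. For an $S$-algebra $B$ having the $e_i$ as a complete set of orthogonal idempotents one has $B\otimes_S B\cong\bigoplus_{i\in Q_0}Be_i\otimes_\BoC e_iB$, so with $\Omega^1_B\coloneqq\ker(m'_B\colon B\otimes_S B\to B)$ and universal $S$-linear derivation $d_B(x)=x\otimes1-1\otimes x$ one has $\Der_S(B,N)\cong\Hom_{B\text{-bimod}}(\Omega^1_B,N)$, naturally in the $B$-bimodule $N$. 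Under this dictionary the complex in the statement is $0\to\Omega^1_{\tilde A}\to\tilde A\otimes_S\tilde A\xrightarrow{m'}\tilde A\to0$, and what has to be produced is the claimed description of $\Omega^1_{\tilde A}$ together with the identification of its inclusion with $\iota$.

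First I would recall the classical fact that $\BoC\tilde Q$ is formally smooth over $S$: the map sending the free generator of the $a$-th summand of $\bigoplus_{a\in\tilde Q_1}\BoC\tilde Q\, e_{t(a)}\otimes_\BoC e_{s(a)}\BoC\tilde Q$ to $d_{\BoC\tilde Q}(a)=a\otimes1-1\otimes a$ is an isomorphism onto $\Omega^1_{\BoC\tilde Q}$, i.e. $0\to\Omega^1_{\BoC\tilde Q}\to\BoC\tilde Q\otimes_S\BoC\tilde Q\to\BoC\tilde Q\to0$ is exact (a direct leading-path argument; compare \cite[Prop.~2.4]{crawley2022deformed} for the analogous undeformed statement). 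The heart of the proof is then that $\Omega^1$ base changes along the universal localisation $\phi\colon\BoC\tilde Q\to\tilde A$, i.e. that
\[
\tilde A\otimes_{\BoC\tilde Q}\Omega^1_{\BoC\tilde Q}\otimes_{\BoC\tilde Q}\tilde A\longrightarrow\Omega^1_{\tilde A}
\]
is an isomorphism of $\tilde A$-bimodules. I would prove this by Yoneda: for every $\tilde A$-bimodule $N$, restriction along $\phi$ is a bijection $\Der_S(\tilde A,N)\xrightarrow{\sim}\Der_S(\BoC\tilde Q,N)$. Injectivity is clear, since $\tilde A$ is generated as a $\BoC$-algebra by $\phi(\BoC\tilde Q)$ together with the inverses $b_j^{-1}$ of the localised elements, and any derivation is forced on the latter by $\delta(b_j^{-1})=-b_j^{-1}\delta(b_j)b_j^{-1}$. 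For surjectivity, a derivation $\delta\colon\BoC\tilde Q\to N$ is the same datum as an algebra map $\BoC\tilde Q\to N\rtimes\tilde A$ to the trivial square-zero extension whose composite with the projection to $\tilde A$ equals $\phi$; the elements $b_j$ map to units of $N\rtimes\tilde A$ (being invertible modulo the square-zero ideal $N$), so by the universal property of universal localisation \cite{schofield1985representations} this map factors uniquely through $\tilde A$, and the factorisation is automatically a section of $N\rtimes\tilde A\to\tilde A$, hence of the form $(\epsilon,\id)$ with $\epsilon\colon\tilde A\to N$ a derivation extending $\delta$. Combining this bijection with the defining adjunctions for $\Omega^1_{\tilde A}$ and $\Omega^1_{\BoC\tilde Q}$ and with extension of scalars for bimodules, all natural in $N$, yields the displayed isomorphism; tracking the universal derivations through it shows it is the evident map sending the free generator of the $a$-th summand of $\bigoplus_{a\in\tilde Q_1}\tilde A\,e_{t(a)}\otimes_\BoC e_{s(a)}\tilde A$ to $d_{\tilde A}(a)=a\otimes1-1\otimes a$.

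Assembling the two steps gives $\ker(m')=\Omega^1_{\tilde A}\cong\bigoplus_{a\in\tilde Q_1}\tilde A\,e_{t(a)}\otimes_\BoC e_{s(a)}\tilde A$ with the inclusion $i$ corresponding precisely to $\iota$, which is the assertion of the lemma. Finally, since the internal differentials on all three terms of the sequence are the ones induced by $d$, and the universal derivation $d_{\tilde A}$ commutes with $d$ (both being governed by the same Leibniz rule), the identification automatically respects the double-complex structure, so nothing extra needs to be checked. The only genuinely delicate point is the base-change isomorphism for $\Omega^1$ along $\phi$; it can alternatively be obtained from the vanishing $\Tor_1^{\BoC\tilde Q}(\tilde A,\tilde A)=0$ (a standard homological-epimorphism property of universal localisations) by applying $\tilde A\otimes_{\BoC\tilde Q}(-)\otimes_{\BoC\tilde Q}\tilde A$ to the free-case resolution, using that it splits as a sequence of right $\BoC\tilde Q$-modules and that $\BoC\tilde Q\,e_i\otimes_\BoC e_i\BoC\tilde Q$ becomes left $\BoC\tilde Q$-projective after the first tensor. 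Everything else is classical or formal.
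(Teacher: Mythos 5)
Your proof is correct, and it is a genuinely different argument from the one in the paper. Both proofs use the same base case --- the explicit exact bimodule sequence for the free path algebra $\BoC\tilde{Q}$ --- but they diverge in how they pass to the universal localisation $\tilde{A}$. The paper's proof first establishes formal smoothness of $\tilde{A}$ (via Lemma \ref{prep_lemma}), concluding that $\ker(m')$ is projective, then uses right exactness of the base-change functors to produce a split surjection $\bigoplus_a\tilde{A}e_{t(a)}\otimes e_{s(a)}\tilde{A}\twoheadrightarrow\ker(m')$, argues that each cyclic summand is indecomposable (by computing idempotent endomorphisms), and finally pins down which summands occur by comparing $\ext^1(N_i,N_j)$ with the corresponding Ext-groups over $k\tilde{Q}$, invoking Schofield's Theorem 4.7 (invariance of $\Ext^1$ under universal localisation) to conclude $\Omega=\tilde{Q}_1$. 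Your proof instead recognises $\ker(m')$ as $\Omega^1_{\tilde{A}/S}$ and proves the base-change isomorphism $\tilde{A}\otimes_{\BoC\tilde{Q}}\Omega^1_{\BoC\tilde{Q}}\otimes_{\BoC\tilde{Q}}\tilde{A}\xrightarrow{\sim}\Omega^1_{\tilde{A}}$ directly, via the Yoneda-style observation that restriction of $S$-derivations along $\phi$ is a bijection on any $\tilde{A}$-bimodule, using the universal property of universal localisation applied to the square-zero extension $N\rtimes\tilde{A}$. This eliminates the need for Schofield's Ext-comparison theorem, the indecomposability/Krull--Schmidt considerations (which are somewhat delicate, since "no nontrivial idempotents" is weaker than "local endomorphism ring"), and the Ext computations with the simples $N_i$; it also makes the compatibility of the inclusion $i$ with $\iota$ transparent rather than something that has to be checked after the fact. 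The only mild caveat is your closing alternative via $\Tor_1^{\BoC\tilde{Q}}(\tilde{A},\tilde{A})=0$: vanishing of this Tor is not a formal consequence of universal localisation in general (it is the condition of being stably flat), though it does hold here because $\BoC\tilde{Q}$ is hereditary; this is worth flagging if you keep that remark, but your main Yoneda argument does not rely on it.
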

We introduce some preparation for the proof of this lemma.
\begin{definition}
An algebra $C$ is called \textit{formally smooth} if and only if it satisfies the two equivalent properties
\begin{enumerate}
\item
For any algebra $D$ with two-sided nilpotent ideal $I$, any morphism $C\rightarrow D/I$ lifts to a morphism $C\rightarrow D$.
\item
The bimodule $\ker(C\otimes C\xrightarrow{m} C)$ is a projective $C$-bimodule.
\end{enumerate}
\end{definition}
\begin{lemma}
\label{prep_lemma}
Let $C$ be a formally smooth algebra, and let $C'$ be the universal localisation at a set of elements $c_1,\ldots,c_i$. Then $C'$ is formally smooth.
\end{lemma}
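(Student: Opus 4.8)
The plan is to verify the first (lifting) characterisation of formal smoothness for $C'$, since this interacts transparently with the universal property of the universal localisation $\ell\colon C\to C'$: recall that $\ell$ makes each $c_j$ invertible and is initial among such homomorphisms, and in particular $\ell$ is an epimorphism of rings (see \cite{schofield1985representations}). So let $D$ be an algebra equipped with a nilpotent two-sided ideal $I$, say $I^N=0$, and let $\phi\colon C'\to D/I$ be a ring homomorphism; the task is to lift $\phi$ along the quotient $D\to D/I$.

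First I would pull back along $\ell$: the composite $\phi\circ\ell\colon C\to D/I$ lifts, by formal smoothness of $C$, to a homomorphism $\tilde\psi\colon C\to D$ with $(D\to D/I)\circ\tilde\psi=\phi\circ\ell$. The key point is then that $\tilde\psi(c_j)$ is a unit in $D$ for every $j$: its image in $D/I$ equals $\phi(\ell(c_j))$, which is invertible because $\ell(c_j)$ is invertible in $C'$; and an element of $D$ that becomes invertible modulo the nilpotent ideal $I$ is already invertible in $D$ (if $e\in D$ satisfies $de-1,\ ed-1\in I$, then $de$ and $ed$ are each of the form $1-n$ with $n^N=0$, hence invertible, so $d$ has both a left and a right inverse, whence a two-sided one). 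Granting this, the universal property of $\ell$ produces a unique homomorphism $\tilde\phi\colon C'\to D$ with $\tilde\phi\circ\ell=\tilde\psi$.

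It remains to check that $\tilde\phi$ genuinely lifts $\phi$, i.e. that $(D\to D/I)\circ\tilde\phi=\phi$. Both sides, precomposed with $\ell$, equal $\phi\circ\ell$ (for the left-hand side this is the defining property of $\tilde\psi$ together with $\tilde\phi\circ\ell=\tilde\psi$); since $\ell$ is an epimorphism of rings, the two maps $C'\to D/I$ must agree. This establishes the lifting property, hence that $C'$ is formally smooth. I do not anticipate a serious obstacle: the only non-formal ingredients are the standard fact that universal localisation is a ring epimorphism and the elementary ``invertible modulo a nilpotent ideal is invertible'' lemma, after which the argument is a short diagram chase. One could instead use the bimodule criterion and track how $\ker(C'\otimes C'\to C')$ is obtained from $\ker(C\otimes C\to C)$ under localisation, but the lifting argument is cleaner and avoids bookkeeping with bimodule structures.
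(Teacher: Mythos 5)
Your proof is correct and follows essentially the same approach as the paper's: both hinge on identifying $\Hom_{\Alg}(C',D)$ with the subset of $\Hom_{\Alg}(C,D)$ consisting of maps sending each $c_j$ to a unit, lifting $C\to D/I$ using formal smoothness of $C$, and observing that an element invertible modulo a nilpotent ideal is invertible. The only cosmetic difference is that the paper reduces to a single square-zero step $D/I^2\to D/I$ and (implicitly) iterates, while you lift directly using $I^N=0$; you are also slightly more explicit about invoking the epimorphism property of $\ell$ to check the factorization, but this is precisely what the paper's ``subset'' formulation encodes.
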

\begin{proof}
Let $C'$ be as in the statement of the lemma.  By the universal property of universal localisation, for any algebra $D$ there is an embedding
\[
\Hom_{\Alg}(C',D)\subset \Hom_{\Alg}(C,D)
\]
as the subset of homomorphisms sending all the elements $c_i$ to units in $D$. Let $f\in \Hom_{\Alg}(C',D/I)$. It is sufficient to show that $f$ lifts to a homomorphism in $\Hom_{\Alg}(C',D/I^2)$. By formal smoothness of $C$ there is a $g\in\Hom_{\Alg}(C,D/I^2)$ inducing $f$ under restriction. By assumption, for each $c_j$ there is a $d_j\in D/I^2$ such that $g(c_j)d_j=1+n$, where $n\in I$. But then $g(c_j)d_j(1-n)=1+n'$ where $n'\in I^2$, and so $g\in\Hom_{\Alg}(C',D/I^2)$.
\end{proof}
\begin{proof}[Proof of Lemma \ref{A_bimod_lemma}]
For the purposes of the proof, we may forget the cohomological grading on $\tilde{A}$, and we denote the resulting ungraded algebra $\tilde{A}_{\ungr}$. It is known that
\[
0\rightarrow \bigoplus_{a\in \tilde{Q}_1}k\tilde{Q}_{\ungr}e_{t(a)}\otimes e_{s(a)} k\tilde{Q}_{\ungr}\xrightarrow{\iota}\bigoplus_{i\in Q_0}k \tilde{Q}_{\ungr}e_i\otimes e_ik\tilde{Q}_{\ungr}\xrightarrow{m_{\mathrm{red}}} k\tilde{Q}_{\ungr}\rightarrow 0
\]
is exact, where $m_{\mathrm{red}}$ is the restriction of the multiplication morphism $m\colon k \tilde{Q}_{\ungr}\otimes k \tilde{Q}_{\ungr} \rightarrow k \tilde{Q}_{\ungr}$. Since 
\[
\ker(m)\cong \ker(m_{\mathrm{red}})\oplus\bigoplus_{\substack{i,j\in Q_0\\ i\neq j}} k\tilde{Q}_{\ungr}e_i\otimes e_jk\tilde{Q}_{\ungr}
\]
we deduce that $k\tilde{Q}_{\ungr}$ is formally smooth, and so $\tilde{A}$ is by Lemma \ref{prep_lemma}.

Consider the complex
\[
0\rightarrow\bigoplus_{a\in \tilde{Q}_1}\tilde{A}_{\ungr}e_{t(a)}\otimes e_{s(a)} \tilde{A}_{\ungr}\xrightarrow{\iota'} \bigoplus_{i\in Q_0}\tilde{A}_{\ungr} e_i\otimes e_i\tilde{A}_{\ungr}\xrightarrow{m'}\tilde{A}_{\ungr}\rightarrow 0.
\]
Since the image of $\iota$ lies in the kernel of $m$, and $\tilde{A}_{\ungr}\otimes_{k\tilde{Q}_{\ungr}}-$ and $-\otimes_{k\tilde{Q}_{\ungr}}\tilde{A}_{\ungr}$ are right exact, we obtain the split surjection of projective bimodules
\[
p\colon \bigoplus_{a\in \tilde{Q}_1}\tilde{A}_{\ungr}e_{t(a)}\otimes e_{s(a)} \tilde{A}_{\ungr}\rightarrow \ker(m'_{\mathrm{red}}).
\]
Each of the summands appearing in the domain are indecomposable: since each summand $P$ is a cyclic $\tilde{A}\otimes\tilde{A}^{\mathrm{op}}$-module, an endomorphism of $P$ is provided by an element $p=p'\otimes p''\in P$, which is a projection if and only if $p'p'=p'$ (up to scalars) and $p''p''=p''$ (up to scalars): this forces $p'=e_{t(a)}$ and $p''=e_{s(a)}$ (up to scalars). We deduce that there is a subset $\Omega\subset \tilde{Q}_1$ such that 
\[
\bigoplus_{a\in \Omega}\tilde{A}_{\ungr}e_{t(a)}\otimes e_{s(a)} \tilde{A}_{\ungr}\cong \ker(m'_{\mathrm{red}}).
\]
By assumption we localised at linear combinations of cyclic paths with the same start points, hence for every $i\in Q_0$ there is an $\tilde{A}$-module $N_i$ of dimension $1_i$. Let $i,j\in Q_0$. Then we may calculate $\Ext^n_{k\tilde{Q}_{\ungr}}(N_i,N_j)$ as the $n$th cohomology of the Hom complex
\[
\Hom\left(\left(\bigoplus_{a\in \tilde{Q}_1}k\tilde{Q}_{\ungr}e_{t(a)}\otimes e_{s(a)} k\tilde{Q}_{\ungr}\xrightarrow{\iota'}\bigoplus_{i\in Q_0}k \tilde{Q}_{\ungr}e_i\otimes e_ik\tilde{Q}_{\ungr}\right)\otimes_{k\tilde{Q}_{\ungr}} N_i,N_j\right)
\]
and we find that $\ext^1_{k\tilde{Q}_{\ungr}}(N_i,N_j)$ is the number of arrows $i\rightarrow j$. The same calculation gives that $\ext^1_{\tilde{A}}(N_i,N_j)$ is the number of such arrows belonging to $\Omega$. By \cite[Thm.4.7]{schofield1985representations} we have the equality $\ext^1_{k\tilde{Q}_{\ungr}}(N_i,N_j)=\ext^1_{\tilde{A}}(N_i,N_j)$ and so $\Omega=\tilde{Q}_1$ and the result follows.
\end{proof}
We consider $\ker(m')\xrightarrow{\iota'} \bigoplus_{i\in Q_0}\tilde{B}e_i\otimes e_i\tilde{B}\xrightarrow{m'}\tilde{B}$ as a double complex, with the differential $d$ on $\tilde{B}$ inducing the second differential. This double complex has exact rows by Lemma \ref{A_bimod_lemma}, the total complex of the double complex of $\tilde{B}$-modules $\ker(m')\xrightarrow{\iota'} \tilde{B}\otimes\tilde{B}$ provides a resolution of the diagonal bimodule $\tilde{B}$ by perfect $\tilde{B}$-bimodules. The following corollaries of this construction are immediate.
\begin{corollary}
There is a fully faithful embedding of categories $B\lmod\rightarrow \Perf(\tilde{B})$ from the category of finite-dimensional $B$-modules to the perfect derived category of $\tilde{B}$-modules.
\end{corollary}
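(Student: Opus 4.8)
The functor in question is restriction of scalars $\imath_\ast$ along the canonical dga map $\tilde B\to \HO^0(\tilde B)\cong B$: a finite-dimensional $B$-module $M$, placed in cohomological degree $0$, becomes a dg $\tilde B$-module. The plan is to establish two things separately --- that $\imath_\ast M$ is a perfect $\tilde B$-module, so that $\imath_\ast$ genuinely lands in $\Perf(\tilde B)$, and that $\imath_\ast$ is fully faithful. The first step uses the explicit perfect bimodule resolution just constructed; the second is a general feature of connective dgas.

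For perfectness, let $\Theta^\bullet$ denote the total complex of the double complex $\ker(m')\xrightarrow{i}\bigoplus_{i\in Q_0}\tilde B e_i\otimes_k e_i\tilde B$. By Lemma~\ref{A_bimod_lemma}, which identifies $\ker(m')\cong\bigoplus_{a\in\tilde Q_1}\tilde B e_{t(a)}\otimes_k e_{s(a)}\tilde B$, the complex $\Theta^\bullet$ is a finite complex of finitely generated projective $\tilde B$-bimodules quasi-isomorphic to the diagonal bimodule $\tilde B$; in particular $\tilde B$ is homologically smooth. Being perfect over $\tilde B\otimes_k\tilde B^{\mathrm{op}}$, the complex $\Theta^\bullet$ is h-flat over $\tilde B$ on each side, so $\imath_\ast M\simeq \tilde B\otimes^{\mathrm{L}}_{\tilde B}\imath_\ast M\simeq \Theta^\bullet\otimes_{\tilde B}\imath_\ast M$. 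The terms of the right-hand complex are $\bigoplus_{a\in\tilde Q_1}\tilde B e_{t(a)}\otimes_k e_{s(a)}M$ and $\bigoplus_{i\in Q_0}\tilde B e_i\otimes_k e_iM$; since $\tilde Q$ has finitely many vertices and arrows and $M$ is finite-dimensional, each is a finite direct sum of shifts of the modules $\tilde B e_j$, and each $\tilde B e_j$ is a direct summand of the free rank-one module $\tilde B$, hence perfect. A finite iterated cone of perfect modules is perfect, so $\imath_\ast M\in\Perf(\tilde B)$.

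For full faithfulness, I would invoke the connective $t$-structure on the derived category $D(\tilde B)$. Since $\tilde B$ is concentrated in nonpositive cohomological degrees with $\HO^0(\tilde B)\cong B$, the category $D(\tilde B)$ carries a $t$-structure whose heart is a full abelian subcategory canonically equivalent, via $\HO^0$, to $\Mod(B)$, with quasi-inverse equivalence given by $\imath_\ast$. Since $\imath_\ast M$ has cohomology concentrated in degree $0$ it lies in this heart, and because the morphisms of the heart are by construction the morphisms of $D(\tilde B)$, we obtain a natural isomorphism $\Hom_{\Perf(\tilde B)}(\imath_\ast M,\imath_\ast N)=\Hom_{D(\tilde B)}(\imath_\ast M,\imath_\ast N)\cong\Hom_B(M,N)$. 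Combined with the previous step, this shows that $\imath_\ast$ restricts to a fully faithful functor $B\lmod\to\Perf(\tilde B)$.

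The main obstacle is not conceptual but a matter of care with homological algebra over $\tilde B$, which is connective but in general unbounded below: one has to verify that the total complexes above are genuinely h-flat (equivalently cofibrant) over $\tilde B$, so that the underived tensor product computes the derived one, and that the connective $t$-structure together with the identification of its heart with $\Mod(B)$ behaves as usual in this unbounded setting. All of this is standard. If one prefers to avoid the $t$-structure, full faithfulness can also be checked directly: $\Theta^\bullet\otimes_{\tilde B}\imath_\ast M$ is a semifree model of $\imath_\ast M$ with generators in nonpositive degrees only, so a closed degree-$0$ $\tilde B$-linear map from it to $\imath_\ast N$ is the same as its restriction to the degree-$0$ generators --- on which $\tilde B$ acts through $\HO^0(\tilde B)=B$ --- i.e.\ a $B$-module homomorphism $M\to N$, while there are no degree-$(-1)$ maps since $\imath_\ast N$ is concentrated in degree $0$; hence $\HO^0\RHom_{\tilde B}(\imath_\ast M,\imath_\ast N)=\Hom_B(M,N)$.
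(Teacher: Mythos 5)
Your proposal is correct and follows the paper's approach: the functor is restriction of scalars along $\tilde B\to \HO^0(\tilde B)\cong B$, and perfectness is obtained by tensoring $M$ with the two-term perfect bimodule resolution furnished by Lemma~\ref{A_bimod_lemma}, exactly as the paper does. The paper's proof stops there, treating full faithfulness as immediate from $\tilde B$ being connective with $\HO^0(\tilde B)\cong B$; your $t$-structure (or semifree-model) argument is a correct and slightly more explicit justification of that last step.
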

\begin{proof}
If $M$ is a finite-dimensional $B$-module, we may consider it as a finite-dimensional $\tilde{B}$-module via the canonical morphism $\tilde{B}\rightarrow \HO^0(\tilde{B})\cong B$. Then 
\[
\left(\ker(m')\xrightarrow{\iota'} \bigoplus_{i\in Q_0}\tilde{B}e_i\otimes e_i\tilde{B}\right)\otimes_{\tilde{B}} M\rightarrow \tilde{B}\otimes_{\tilde{B}} M 
\]
provides a resolution of $M$ by perfect $\tilde{B}$-modules.
\end{proof}
Let $\pi\colon \ker(m)\rightarrow \ker(m')$ be the natural projection. For the next corollary, we first define
\begin{align*}
D\colon &\tilde{A}\rightarrow \ker(m')\\
&\alpha\mapsto \pi(\alpha\otimes 1-1\otimes \alpha).
\end{align*}
Given an element $\alpha\otimes \alpha'\in \tilde{A}e_{t(a)}\otimes e_{s(a)} \tilde{A}$, $\tilde{A}$-modules $M,N$, and a homomorphism $f_a\colon \Hom_k(N_{s(a)},M_{t(a)})$ we define the evaluation $(M,f,N)(\alpha\otimes \alpha')=M(\alpha)\circ f_a\circ N(\alpha')$.
\begin{corollary}
\label{der_exact_cor}
\label{dalg_dhom}
Let $M$ and $N$ be two finite-dimensional $B$-modules. Then the cohomology of the complex
\begin{align}
\label{dhom_cmplx}
0\leftarrow \bigoplus_{r\in R} \Hom_k(M_{s(r)},N_{t(r)})\xleftarrow{\alpha} \bigoplus_{a\in Q_1}\Hom_k(M_{s(a)},N_{t(a)}) \xleftarrow{\beta} \bigoplus_{i\in Q_0}\Hom_k(M_i,M_j)\leftarrow 0
\end{align}
calculates $\Ext^n_{\tilde{B}}(M,N)$, where we define
\begin{align*}
\alpha((f_a)_{a\in Q_1})&=((M,f,N)(Dr))_{r\in R}\\
\beta((f_i)_{i\in Q_0})&=(N(a)f_{s(a)}-f_{t(a)}M(a))_{a\in Q_1}.
\end{align*}
In particular, Assumption \ref{q_assumption1} holds for $\FM_{B}$, considered as a substack of $t_0(\bsM_{\tilde{B}})$. 
\end{corollary}
\begin{remark}
If $B=\HO^0(\tilde{B})$ it follows that the complex \eqref{dhom_cmplx} calculates $\Ext^n_B(M,N)$.
\end{remark}
The following proposition is a result of the well-known explicit presentation of $\Filt$ over $\FM_{\dd^{(1)},\dd^{(2)},\dd^{(3)}}$. Since its proof is strictly easier than the analogous statement for coherent sheaves, provided in \S \ref{geometry_constr_sec}, we omit it.
\begin{proposition}
Assumption \ref{assumption:associativity}, for the choice of presentation of the stack of short exact sequences given by Corollary~\ref{der_exact_cor}, holds for the stack $\FM_B$.
\end{proposition}
In conclusion, all of the geometric assumptions (Assumptions \ref{p_assumption},\ref{q_assumption1} and \ref{assumption:associativity}) that are required in \S \ref{subsubsection:ThecohaproductKV} to define the CoHA structure on (an appropriate shift of) $\JH_*\BD k_{\FM_{B,\dd}}$ hold for arbitrary $B$ presented in the general form \eqref{standard_pres}. Moreover, the geometric conditions (Assumptions \ref{gms_assumption} and \ref{ds_fin}) in \S \ref{BPS_assumptions_sec} are also met. So as long as $\tilde{B}$ is a 2-Calabi--Yau algebra, Assumption \ref{BPS_cat_assumption} holds and we may define the BPS algebra for $B\lmod$ as in \S \ref{section:lessperverse} above.

\subsection{Determinant line bundle}
\label{subsection:detlbalgebras}
For each vertex $i \in Q_0$ there is a line bundle $\CL_i$ on $\FM_Q$
which is the determinant bundle of the tautological vector bundle on $\FM_Q$ whose fibre over a representation is the underlying vector space sitting at the vertex $i$.
For each dimension vector $\vec{d} \in \BN^{Q_0}$ of $Q$ let $\CL_{\vec{d}}$ be product $\otimes_{i \in \supp(\vec{d})} \CL_i$ of the determinant line bundles for the vertices supported by the dimension vector $\vec{d}$.

Let $x \in \Mst_{Q,\vec{d}}$ be a closed point and $i_x\colon \B \BoC^{\times} \to \Mst_{Q}$ be the inclusion of the scaling automorphisms.
Then $i_x^*\CL_{\vec{d}}$ is the one-dimensional $\BoC^{\times}$-representation of weight $\abs{\vec{d}} \neq 0$.

More generally for an algebra $B$ as in \eqref{standard_pres}, the restriction of $\CL_{\vec{d}}$ along the inclusion $\Mst_{B,\vec{d}} \to \Mst_{Q,\vec{d}}$ is a positive determinant line bundle. Thus $\Mst_B$ satisfies Assumption~\ref{det_bun_assumption}.

\bibliographystyle{alpha}
{\small{\bibliography{freealg_references.bib}}
\end{document}